\newtheorem{lemma}{Lemma}[section]
\newtheorem{theorem}{Theorem}[section]
\newtheorem{prop}{Proposition}[section] 
\newtheorem{remark}{Remark}
\newtheorem{definition}{Definition}
\newcommand{\ve}{\epsilon}
\newcommand{\R}{\mathbf{R}}
\newcommand{\ap}{\rs} %parameter in top-order multiplier to ensure good sign at bdy
\newcommand{\sBo}{X_1}
\newcommand{\sBt}{X_2}
\newcommand{\Sbo}{X_1} %to avoid annoying typos
\newcommand{\Sbt}{X_2}
\newcommand{\mB}{m_{B}}
\newcommand{\mBB}{m_{B,a}} % mB plus harmless null terms
\newcommand{\mZB}{\mathcal{Z}_{\mB}}
\newcommand{\mZ}{\mathcal{Z}_{m}}
\newcommand{\ZB}{Z_{\mB}}
\newcommand{\Z}{{\mathcal{Z}_m}}
\newcommand{\dbeta}{\frac{d}{ds} \beta}
\newcommand{\bbeta}{\widetilde{\beta}}
\renewcommand{\S}{\mathbb{S}}
\renewcommand{\div}{\textrm{\,div\,}}
\newcommand{\curl}{\textrm{\,curl\,}}
\newcommand{\pa}{\partial}
\newcommand{\nas}{\slashed{\nabla}}
\newcommand{\pas}{\nas}
\newcommand{\sDelta}{\slashed{\Delta}}
\newcommand{\opa}{\overline{\pa}} %slightly worse derivatives
\newcommand{\pg}{\widetilde{\gamma}} %good components of \gamma
\newcommand{\mK}{\widetilde{K}} %good components of K
\newcommand{\mJ}{{\widetilde{J}}} %modified energy current J
\newcommand{\mQ}{{\widetilde{Q}}} %modified energy-momentum tensor
\newcommand{\ev}{\ell} %``v'' null derivative
\newcommand{\evmB}{\ev^{\mB}} %``v'' null derivative
\newcommand{\eu}{n} %``u'' null derivative
\newcommand{\evg}{\ev^g}
\newcommand{\evm}{{\ev^m}}
\newcommand{\eug}{\eu}
\newcommand{\Xo}{X^{\ev}}
\newcommand{\Xt}{X^{\eu}}
\newcommand{\ls}{\xi} %left shock parameter
\newcommand{\rs}{\eta} %right shock parameter
\newcommand{\SL}{+} %decoration for norm in spacelike case
\newcommand{\wK}{\widehat{K}} %bad components of X
\definecolor{psychedelicpurple}{rgb}{0.87, 0.0, 1.0}
\definecolor{brightpink}{rgb}{1.0, 0.0, 0.5}
\numberwithin{equation}{section}
\begin{document}
\title{The Stability of Irrotational Shocks and the Landau Law of Decay}

\author{Daniel Ginsberg and Igor Rodnianski}

\date{\today}

\maketitle
\abstract{We consider the long-time behavior of irrotational solutions of the three-dimensional
compressible Euler equations with shocks, hypersurfaces of discontinuity across which
the Rankine-Hugoniot conditions for irrotational flow hold.
Our analysis is motivated by Landau's analysis of spherically-symmetric shock waves,
who predicted that at large times, not just one, but two shocks emerge. These
shocks are
logarithmically-separated from the Minkowskian light cone and the fluid
velocity decays at the non-time-integrable rate $1/(t(\log t)^{1/2})$. 
We show that for initial data, which need not be spherically-symmetric, with two shocks in it
and which 
is sufficiently close, in appropriately weighted Sobolev norms,
to an $N$-wave profile, the solution to the shock-front initial value
problem can be continued for all time and does not develop any further
singularities. In particular this is the first proof of global existence
for solutions (which are necessarily singular) of a quasilinear wave equation in three space dimensions
which does not verify the null condition. The proof requires carefully-constructed multiplier
estimates and analysis of the geometry of the shock surfaces.}

\noindent

\tableofcontents

\section{Introduction}

We consider the isentropic compressible Euler equations
describing an ideal gas in $\mathbb{R}^3$,
\begin{align}
  \label{intromass}
  \pa_t \rho + \pa_i(\rho v^i) &=0,\\
  \pa_t(\rho v_i) + \pa_j(\rho v^j v_i) + \pa_i p &= 0,
  \qquad i = 1,2,3.
  \label{introeul}
\end{align}
Here, $v = (v_1, v_2, v_3)$ denotes the fluid velocity,
$\rho \geq 0$ denotes the mass density, $p$
denotes the pressure, and we are summing over repeated
upper and lower indices. The pressure $p$ is determined
from the density $p = P(\rho)$ for a given equation of
state $P$, which is assumed to be smooth, monotone and convex.
The local
well-posedness theory for the system
\eqref{intromass}-\eqref{introeul} with initial data lying
in appropriate function spaces is classical
\cite{Kato75}. On the other
hand, it is well-known that regular solutions to
\eqref{intromass}-\eqref{introeul} can develop singularities
in finite time \cite{Sideris85,Christodoulou07,Speck16,MRRS22}. 
 In particular, they may develop \emph{shocks},
surfaces across which the velocity $v$ and density
$\rho$ are bounded but not differentiable. It was shown by Majda \cite{Majda83,MajdaThomann87}
that given initial data for \eqref{intromass}-\eqref{introeul}
which already has a shock in it, the solution and the shock can be
continued for a short time.  Given this, it is natural to
ask what happens at large times, after the formation of a shock.

This question was first addressed by Landau \cite{Landau65} (whose conclusions were rediscovered in a somewhat sharper form
by Whitham \cite{Whitham}),
who considered the long-time behavior of \emph{irrotational} and \emph{spherically symmetric}
solutions to \eqref{intromass}-\eqref{introeul}.  Using a combination of geometric and approximation arguments, 
Landau argued that far away from a  spherically symmetric body, where sound waves decay like $1/r$,
not just one but \emph{two} shocks eventually emerge; these shocks
are approximately located at $\{r = t \pm (\log t)^{1/2}\}$,
and the velocity along the shocks decays
at the non-integrable rate $|v| \sim \frac{1}{t(\log t)^{1/2}}$.

To translate Landau's picture into precise mathematical language, we first observe 
that 
the \emph{irrotational} isentropic Euler equations reduce to a quasilinear wave equation
for the potential $\Phi$ such that $v = \nabla \Phi$, 
\begin{equation}
  \label{intromodel}
  \Box \Phi + \pa_\alpha({\gamma}^{\alpha\beta}(\pa \Phi) \pa_\beta \Phi) = 0,
\end{equation}
where $\Box$ denotes the Minkowskian wave operator and ${\gamma}(0) = 0$.
We note that under the conditions $P'(1)>0,  P''(1)\ne 0$,  \eqref{intromodel} does not satisfy the classical null
condition and as a consequence solutions may develop singularities in finite time even for initial data that are small, smooth 
and well-localized. In some situations these singularities are shocks in which case one can attempt to extend the local classical 
solution to a global weak solution containing shocks. 

{Landau's result can be interpreted as consisting of two statements:
\begin{itemize}
\item At least in the small data regime, or alternatively, far out, the final state of any solution contains two spherical shocks,
\item At large times $t$, the shocks are located at  $\{r - t\sim \pm (\log t)^{1/2}\}$,
and the velocity along the shocks decays with the rate  $\sim \frac{1}{t(\log t)^{1/2}}$.
\end{itemize}
These types of statements, but with the shock separation $\sim t^{1/2}$ and the shock strength decay $\sim t^{-1/2}$, 
are known for $1+1$-dimensional system of conservation laws  \cite{Lax1957}, \cite{Liu}, and even for large 
data in the case of {\it scalar} conservation laws. They are however completely out of reach for higher dimensional problems, and
even more so outside of spherical symmetry.}

The result of Landau can be motivated by the following heuristic, {very different from his original arguments.}
Introduce null coordinates $u = r-t$ and
$v = r+t$ and define $\Psi = r\Phi$. Restricting to the wave zone
$r \sim t$ and dropping nonlinear terms which verify
the null condition, and which should play no role
in the long-time behavior ({see below however}), in spherical symmetry
the equation \eqref{intromodel} takes the form
\begin{equation}
  \label{}
  -4\pa_v \pa_u \Psi + \frac{2}{v} \pa_u (\pa_u\Psi)^2 = 0,
\end{equation}
(see \eqref{wave0} and \eqref{ffs0})
and introducing $s = \log v$ and $B = \pa_u\Psi$, we find that
$B$ satisfies Burgers' equation
\begin{equation}
  \label{introburger}
    \pa_s B + \frac{1}{2} \pa_u B^2 = 0.
\end{equation}
It was shown by Hopf \cite{Hopf1950,Lax1957},
that at large times, the solution of \eqref{introburger} converges
to
 \begin{equation}
  \Sigma = \begin{cases} \frac{u}{s},  \qquad
\text{ when } |u|\leq s^{1/2},\\
0, \qquad \text{ otherwise }, \end{cases}
  \label{introprofile}
 \end{equation}
 which is a classical solution of \eqref{introburger}
 away from
 \begin{equation}
   \label{modelshockdef}
   \Gamma^L_{\Sigma} = \{u = s^{1/2}\},
   \qquad
   \Gamma^R_{\Sigma} = \{u = -s^{1/2}\},
 \end{equation}
 across which the classical Rankine-Hugoniot conditions for
 Burgers' equation hold; that is, the above is a solution of
 \eqref{introburger} with (Burgers') shocks.
 Unwinding definitions, the solution \eqref{intromodel}-\eqref{modelshockdef}
 has velocity of size $|v| \sim |B|/r \sim \frac{1}{t(\log t)^{1/2}}$ at the shocks
 $u = \pm s^{1/2}$. Thus, Landau's result {could be possibly} understood as the statement
 that at large times, the solution to \eqref{intromodel} should (in appropriate
 variables) approach the profile \eqref{introprofile}-\eqref{modelshockdef}.

{ It is worth pointing out that in the above heuristic replacing the original equation by the effective Burgers equation 
required removing the nonlinear terms satisfying the null condition. We argued that one can do so since such terms do not
influence the long term behavior. This assertion however is well established only for {\it smooth} solutions of quasilinear 
wave equations. A priori there is no reason as to why the same logic should apply to solutions with shocks. In fact, it is 
not difficult to see that the dropped terms, evaluated on the profile \eqref{introprofile}, will contribute $\delta$-functions to the equation.
To properly account for this one needs to observe that in fact the profile $\Sigma$ can be upgraded to a 2-dimensional
family of profiles 
  \begin{equation}
  \label{prof}
  \Sigma_{\ls,\rs} = \begin{cases} \frac{u}{s},  \qquad
\text{ when } -\rs\, s^{1/2}\leq u\leq \ls\, s^{1/2},\\
0, \qquad \text{ otherwise }, \end{cases}
 \end{equation}
 with {\it arbitrary} constants {$\ls,\rs\ge 0$}. Then, first, the correct statement about the solutions of the Burgers equation 
 is that they converge to {\it one} of the profiles $\Sigma_{\ls,\rs}$. This, of course, is already in \cite{Hopf1950}. And, second, is that it is precisely the freedom of choice of $\ls,\rs$ 
 that could allow one to {\it modulate}, that is to make $\rs$ and $\ls$ $s$-dependent, to have any hope to account for the terms neglected 
 in the original equation. None of this has been implemented even in spherical symmetry.}

The goal of this paper is to partially justify Landau's description
of the late-time behavior of irrotational solutions to
\eqref{intromass}-\eqref{introeul}. We do not address the formation
of a second shock ({or the first one, for that matter}), nor do we show that arbitrary solutions with
two shocks must behave as in Landau's prediction. What we do show is
that initial data, \emph{not necessarily spherically symmetric},
which is sufficiently close to the model shock
profile \eqref{introprofile}-\eqref{prof} (which
has two shocks already in the initial data) leads to a solution
to the shock front problem {which remains close to the modulated model shock
for all times. What that means is that the solution can be decomposed into the sum of the profile 
$\Sigma_{\rs(s,\omega),\ls(s,\omega)}$
with functions $\rs(s,\omega), \ls(s,\omega)$ depending on $s=\log (t+r)$ and $\omega\in \mathbb{S}^2$ (the shock surfaces are no longer spherically
symmetric) which converge to bounded limits $\rs(\omega),\ls(\omega)$ as $s\to+\infty$, and {\it sound waves}, which are smooth away from the shock 
surfaces } {(note that $u=t-r$ and thus the right shock {lies in the region} $u<0$)}
$$
  \Gamma^L_{\Sigma} = \{u = \ls(s,\omega)s^{1/2}\},
   \qquad
   \Gamma^R_{\Sigma} = \{u = -\rs(s,\omega) s^{1/2}\},
   $$
{and which decay faster (this statement applies only to the region between the shocks where the profile is nontrivial) than the profile itself.
The functions $\rs(\omega), \ls(\omega)$ encode the 
asymptotic behavior of the shocks and, together with the asymptotic behavior of the profile $\Sigma$, 
provide the precise statement of the {\it Landau law of decay} for weak compressible shocks.} {We note that the $N$-wave shape 
of the profile $\Sigma$ in \eqref{introprofile}, which we assume our initial data is close to, is precisely the shape that Landau claims 
should emerge at late times. }

{The statement about asymptotic behavior of such solutions contains their global existence as weak solutions containing two shocks.
 In particular we show that such solutions do not develop any further singularities, either away from the shocks or on their surfaces.
 The latter is particularly interesting in view of the fact that in the absence of spherical symmetry shock surfaces may be unstable 
 to {\it corrugation} \cite{LLbook}.}
 
 {The question of existence of higher dimensional {\it global} solutions containing shocks had been raised by Majda in his work 
on local well-posedness of shock solutions. This paper in particular resolves open problem 4.6.2 
from \cite{Majda84} in the irrotational setting. }

{From the point of view of theory of general quasilinear wave equations \eqref{intromodel}, our result is the first proof of 
global well-posedness (for solutions with initial data given in a small neighborhood, in a weighted Sobolev norm, of the two-shock 
profile) for such an equation that {\it does not} verify the null condition;
we emphasize that such solutions are not (and cannot be expected to be)
smooth, but instead are smooth away from two hypersurfaces across
which natural jump conditions hold. While the question of global well-posedness  (with small initial data in appropriately weighted Sobolev spaces) for quasilinear wave equations (even systems) of the type
\begin{equation}
  \notag
  \Box \Phi^i + \pa_\alpha(h^{\alpha\beta}(\pa \Phi) \pa_\beta \Phi^i)+q^{\alpha\beta}_{ijk}(\pa\Phi) \pa_\alpha\Phi^j\pa_\beta\Phi^k = 0,
\end{equation} 
satisfying the null condition, \cite{Kla1},
\begin{equation}
\label{null}
\pa_{\ell^\gamma} h^{\alpha\beta}(0)\ell_\gamma \ell_\alpha \ell_\beta=0,\qquad  q^{\alpha\beta}_{ijk}(0) \ell_\alpha\ell_\beta=0, \quad{\text{for all}}\,\, i,j,k \,\,{\text{and all null}}\, \,\ell:\, m^{-1}(\ell, \ell)=0
\end{equation}
with $m$ -- the Minkowski metric, is always answered in the affirmative and has been very well understood, going back to \cite{Christ86,Kla} and can, in some cases, be 
even extended with the same answer to systems satisfying the {\it weak null condition} \cite{LR} and nonlinearities depending on $\Phi$ 
instead of $\pa\Phi$, see e.g. \cite{LindbladRodnianski2010}, \cite{Keir}, in the absence of the null or the weak null conditions, the question has been completely open. In those cases, the analysis stopped at the statement of singularity formation, going back to \cite{John} and \cite{Sideris} and, in the specific context of the compressible Euler equations, followed by the more recent results referred to earlier, or the 
statement of {\it almost global existence}, going back to \cite{JK}, asserting that a classical solution will exist on the time interval exponential in the inverse size of initial data. 

To our knowledge, no examples of global solutions, classical or weak,  are known 
for either the wave equation \eqref{intromodel} without the null condition or the compressible Euler equations on 
$\mathbb{R}^3$ in the regime of small data or  
near the equilibrium state $v=0, \rho=1$, respectively. Even in other regimes, we are not aware of any results on the wave 
equation, and for the compressible Euler equations, the only exceptions are the results in \cite{Grassin,Sid,Jang} (and related works) where 
global {\it classical} solutions (in \cite{Sid,Jang} considered as a free boundary problem
with physical vacuum) had been constructed for initial data with velocity satisfying an {\it expansion}
condition with the density $\rho$ vanishing outside of compact set. Such problems and the corresponding solutions, of course, 
lie far away from the problem on the whole $\mathbb{R}^3$ near the equilibrium state $v=0, \rho=1$ studied here.
}

We now formulate the equation and jump conditions
as well as the notion of shock front initial data more precisely.
A rough version of our main theorem, Theorem \ref{mainthm} can be found in
Theorem \ref{roughthm}.

In terms of the enthalpy $w$,
\begin{equation}
  \label{introenthdef}
    w(\rho) = \int_1^\rho \frac{P'(\lambda)}{\lambda} d\lambda,
\end{equation}
the equations \eqref{introeul} read
\begin{equation}
  \label{introeulenth}
    \pa_t v_i + \pa_j(v^j v_i) + \pa_i w = 0.
\end{equation}
It follows from this equation in the usual way that if $\omega = \curl v$
vanishes initially and the solution remains smooth, then $\omega = 0$
at later times as well. It is therefore sensible to look for solutions
of the form $v = \nabla \Phi$, and inserting this into \eqref{introeulenth}
we find
\begin{equation}
  \label{introbern}
    \pa_t \Phi + \frac{1}{2} |\nabla_x \Phi|^2 = - w(\rho).
\end{equation}
If $P' > 0$, we can solve \eqref{introenthdef} for $\rho = \rho(w)$ and
we can then solve \eqref{introbern} for $\rho = \varrho(\pa \Phi)$.
The dynamics are then completely determined by the continuity equation \eqref{intromass},
which is the following quasilinear wave equation, 
\begin{equation}
  \label{introHeqn}
  \pa_\mu H^\mu(\pa\Phi) = 0,\qquad \text{ where } H^0(\pa \Phi) = \varrho(\pa \Phi)
  \text{ and }
  H^i(\pa \Phi) = \varrho(\pa \Phi) \nabla^i\Phi.
\end{equation}
Here, and in what
follows, Greek indices $\mu, \nu,...$ run over 0,1,2,3 and
Latin indices $i, j,...$ run over spatial indices 1,2,3. This is precisely the wave equation \eqref{intromodel}. Under the convexity assumption on the equation of state $P'(1)>0$ and $P''(1)\ne 0$, see Appendix \ref{derivation1}, the coefficients
$\gamma^{\alpha\beta}$ {\it do not} satisfy the null condition \eqref{null}:
\begin{equation}
\label{nullg'}
\pa_{\ell^\delta}\gamma^{\alpha\beta}(0)\ell_{\alpha} \ell_{\beta} \ell_\delta \ne 0,\quad \forall \ell: \, m^{-1}(\ell, \ell)=0
\end{equation}
In fact, if we parametrize all null vectors $\ell=\lambda (-1,\omega)$ with $\lambda\in \Bbb R$ and $\omega\in \Bbb S^2$, then 
the right hand side of the above is simply $c\lambda^3$ for some $c\ne 0$.
By rescaling $\Phi$ one can actually assume 
\begin{equation}
\label{nullg}
\pa_{\ell^\delta}\gamma^{\alpha\beta}(0)\ell_{\alpha} \ell_{\beta} \ell_\delta=-\lambda^3
\end{equation}

Now, let $\Gamma \subset \mathbb{R}^{1+3}$ be a $C^2$ hypersurface.
We say that $\Phi$
has a \emph{shock} along $\Gamma$ if 
$\Phi$ is a classical solution to \eqref{introHeqn} away from
$\Gamma$, and along $\Gamma$ the Rankine-Hugoniot conditions hold,
\begin{align}
  \label{introRH1}
  \zeta_\mu[H^\mu(\pa \Phi)] &= 0,\\
  \label{introRH2}
  [\Phi] &= 0.
\end{align}
Here, $\zeta$ is a space-time one-form whose null space
at each point $(t, x)$ is the tangent space $T_{(t,x)}\Gamma$
to $\Gamma$, and $[q]$ denotes the jump in the quantity $q$
across $\Gamma$: if $D^{\pm}$ denote the regions to either side
of $\Gamma$ and $q_{\pm}$ denote the limits of $q$ at $\Gamma$ taken from
the regions $D^{\pm}$, then $[q] = q_+ - q_-$.

We discuss the nature of the conditions \eqref{introRH1}-\eqref{introRH2}
in Section \ref{metricdefsection} and their relation to the compressible
Euler equations \eqref{intromass}-\eqref{introeul} in Section
\ref{introenth}. For now, just note that \eqref{introRH1} ensures that $\Phi$
is a weak solution to \eqref{introHeqn} and \eqref{introRH2} ensures that
$v = \nabla_x\Phi$ is a weak solution to $\curl v = 0$.
The surface $\Gamma$ needs to
be determined along with $\Phi$ so that \eqref{introRH1}-\eqref{introRH2}
hold. We then come to the following initial value problem.

\begin{definition}[The (restricted) shock front initial value problem]
\label{sfpdef} 
    Let $\Gamma_0 \subset \mathbb{R}^3$ be a $C^2$ surface and let
    $(\Phi_0^-,\Phi_1^-)$ and $(\Phi_0^+, \Phi_1^+)$ be 
    initial data posed at $t = t_0$ for the
    wave equation \eqref{introHeqn} defined on either side of $\Gamma^0$.
    We say that $(\Gamma, \Phi^-, \Phi^+)$
    is a solution to the (restricted) shock front problem if the 
    hypersurface $\Gamma \subset \mathbb{R}^{1+3}$ satisfies
    $\Gamma\cap\{t = t_0\} = \Gamma_0$, and if the
    $\Phi^{\pm}$ are classical solutions of \eqref{introHeqn} on either
    side of the surface $\Gamma$ with initial data $(\Phi_0^{\pm}, \Phi_1^{\pm})$
    so that at $\Gamma$, the jump conditions
    \eqref{introRH1}-\eqref{introRH2} hold.
\end{definition}
The above definition is extended to the case of more than one shock in the natural way.
It was shown in \cite{MajdaThomann87} that the above initial-value problem
has a unique local-in-time solution for \emph{shock front initial data}, initial
data $(\Gamma_0, \Phi^{\pm}_0, \Phi^{\pm}_1)$ satisfying certain 
compatibility and determinism conditions, discussed in Section
\ref{metricdefsection}.

We can now give the rough statement of our main theorem, Theorem \ref{mainthm}. 
\begin{theorem}
  \label{roughthm}
  Fix shock front initial data, posed at a large initial time, which is sufficiently close, in appropriate
weighted Sobolev norms, to the model shock profile
\eqref{prof}. That is, we assume for some sufficiently large time $t_0$, the data for the 
potential $\Phi$  is close to the profile 
$$
  \Phi= 
  \begin{cases} 
      \frac{(t_0-r)^2}{r\log (t_0+r)},  &\qquad
        \text{ when } -r^R(t_0,\omega)\, \log t_0^{1/2}\leq t_0-r\leq r^L(t_0,\omega)\,
\log t_0^{1/2},
        \\
    0, &\qquad \text{ otherwise }, \end{cases}
 $$
with the functions $r^L(t_0,\omega), r^R(t_0,\omega)$ sufficiently close to {constants $\xi,\eta>0$, bounded away from $0$}.
Then the shock front initial
value problem from Definition \ref{sfpdef} {for the equation \eqref{intromodel}, satisfying the condition \eqref{nullg},} has a unique \underline{global-in-time} solution
$(\Gamma^L, \Gamma^R, \Phi^L, \Phi^C, \Phi^R)$ with two shocks
$\Gamma^L, \Gamma^R$, where
$\Gamma^R$ lies to the exterior of $\Gamma^L$, and where
$\Phi^L$ is defined in the region $D^L$ to the
left of the left shock $\Gamma^L$, $\Phi^C$ is defined in the region $D^C$
between the shocks $\Gamma^L, \Gamma^R,$ and $\Phi^R$ is
defined in the region $D^R$ to the right of the right shock $\Gamma^R$.
See Figure \ref{shockdiagram}.

The solution has the following asymptotic behavior.

\begin{itemize}
    \item 
The time slices $\Gamma^A_{t'} = \Gamma^A \cap \{t = t'\}$
are described by
\begin{equation}
  \label{}
  \Gamma^L_t = \{ x  \in \mathbb{R}^3 : r = t - (\log t)^{1/2} r^L(t,\omega)\},
  \quad
  \Gamma^R_t = \{ x  \in \mathbb{R}^3 : r = t + (\log t)^{1/2} r^R(t,\omega)\},
\end{equation}
where $r = |x|$ and $\omega = x/r$,
for sufficiently smooth functions $r^L, r^R$ with bounded limits as $t\to \infty$.

\item The potentials $\Phi^L, \Phi^C, \Phi^R$ enjoy the following pointwise
    decay estimates along $D^A_{t'} = D^A \cap \{t = t'\}$,
\begin{equation}
  \label{pwbdrough}
  \lim_{t \to \infty}
  (1 + t) (1 + \log t)^{1/2} 
    \left( 
        \|\pa \Phi^L(t)\|_{L^\infty(D^L_t)} + \|\pa (\Phi^C - \tfrac{(t-r)^2}{2r\log (t+r)})\|_{L^\infty(D^C_t)}
        + \|\Phi^R(t)\|_{L^\infty(D^R_t)}
    \right) = 0.
\end{equation}
\end{itemize}
That is, the solution remains close to an appropriately modulated version of
the model shock profile \eqref{introprofile}-\eqref{modelshockdef}
for all time and \underline{no further singularities emerge}.
\end{theorem}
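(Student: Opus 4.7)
My plan is a bootstrap/continuation argument resting on the Majda--Thomann local well-posedness theorem for shock fronts: assuming that a weighted Sobolev norm of the solution stays in a small ball around the modulated profile on $[t_0,T]$, I would close by strictly improving every bound in that norm, which then permits continuation past $T$ and, iterated, to all times. The setup rests on a decomposition of the solution into a \emph{modulated profile} plus a \emph{sound-wave remainder}. In the central region $D^C$, working with $\Psi^C = r\Phi^C$ and coordinates $u = t-r$, $s=\log(t+r)$, $\omega\in\S^2$, I write
\[
\Psi^C(t,r,\omega) = \frac{u^2}{2s}\,\chi(u,s,\omega) + \psi^C(t,r,\omega),
\]
where $\chi$ is a smooth cutoff of the indicator of $\{-\rs(s,\omega)\,s^{1/2}\le u\le \ls(s,\omega)\,s^{1/2}\}$, the functions $\ls(s,\omega),\rs(s,\omega)$ are the modulation parameters attached to the two shocks, and $\psi^C$ is the small sound-wave remainder. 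In $D^L,D^R$, where the profile vanishes, the potential itself plays the role of the remainder.

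\textbf{Shock geometry and modulation equations.} For each shock $\Gamma^A$, $A\in\{L,R\}$, I would introduce an optical/eikonal defining function so that $\Gamma^A$ becomes a level set; the jump conditions \eqref{introRH1}--\eqref{introRH2} then recast into an evolution system on $\S^2$ for $\ls(s,\omega),\rs(s,\omega)$ and the defining functions. Evaluated on the bare profile $\Sigma_{\ls,\rs}$, these are exactly the Burgers shock condition $\dot u_{\text{shock}} = \tfrac12(B_+ + B_-)$, which forces $\pa_s\ls = \pa_s\rs = 0$ at leading order and produces the bounded limits $\ls(\omega),\rs(\omega)$ claimed in the theorem. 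Away from the bare profile, the right-hand sides pick up contributions from $\psi^{L,C,R}$ and their angular derivatives, and I would fix $\ls,\rs$ precisely so as to kill the leading resonant source in the equation for $\psi^C$ coming from the null-condition-violating cubic interaction on the profile.

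\textbf{Energy estimates.} For the sound-wave remainder I would run vector field multiplier estimates adapted to both the profile and the shock surfaces. Inside $D^C$ the linearization about $\Sigma_{\ls,\rs}$ has characteristic cones tilted by a factor $1+O(u/s)$, so the usual Minkowskian commuting fields must be deformed: I would commute only with shock-tangential translations, the rotations $\Omega_{ij}$, and the scaling $S = t\pa_t + r\pa_r$, avoiding the Lorentz boosts, which are incompatible with the shock surfaces. As multiplier I would take a weighted combination $a(u,s)\pa_t + b(u,s)\pa_r$ with weights tuned so that the bulk term coerces the norm $\|(1{+}t)(1{+}\log t)^{1/2}\pa\psi\|_{L^2}$ with a small constant, matching the decay asserted in \eqref{pwbdrough}. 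The boundary integrals on $\Gamma^L$ and $\Gamma^R$ must be non-negative; I expect this to follow from the Lax shock condition together with the convexity condition \eqref{nullg}, and it is precisely this positivity that prevents shock corrugation and closes the energy. A Klainerman--Sobolev inequality on the time slices then upgrades the weighted $L^2$ bounds to the pointwise estimate \eqref{pwbdrough}.

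\textbf{Principal obstacle.} The main difficulty is the failure of the null condition \eqref{null}: the cubic self-interaction in \eqref{intromodel}, evaluated on the $N$-wave profile $\Sigma_{\ls,\rs}$, produces source terms of exactly critical size $1/(s\cdot t(\log t)^{1/2})$ in the equation for $\psi^C$, so that a naive Gr\"onwall argument would give a logarithmic loss at each order of commutation and fail to close. Handling this demands three coordinated mechanisms: the modulation functions $\ls(s,\omega),\rs(s,\omega)$ must be chosen to annihilate the projection of the source onto the unstable mode of the linearized Burgers operator; the remaining critical contributions in $D^C$ must be shown to be exact $\pa_s$-derivatives, so integration by parts in $s$ converts them into an initial datum and strictly lower-order terms; and the shock boundary integrals in the multiplier identity must retain a definite sign, both to absorb angular modes and to rule out corrugation. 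Balancing these three cancellations inside a single multiplier/energy identity is, in my view, the heart of the argument.
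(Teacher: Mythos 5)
Your overall skeleton (Majda--Thomann local theory plus a bootstrap, a profile-plus-remainder decomposition with the shock positions as unknowns fixed by the Rankine--Hugoniot conditions, and region-by-region weighted multiplier estimates upgraded by Klainerman--Sobolev inequalities) matches the paper. But two of the three mechanisms you rely on to close the argument are not available, and this is where the proposal breaks. First, the boundary fluxes at the shocks are \emph{not} sign-definite: each shock is spacelike for the linearized metric on one side but \emph{timelike} for the linearized metric on the other ($\Gamma^L$ is timelike for $m$, $\Gamma^R$ is timelike for $m_B$), and on the timelike sides the flux $Q(X,N)$ contains a term $-X^\ell(\ell^g\psi)^2$ with the wrong sign, plus an angular term whose sign depends delicately on the multiplier. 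No Lax/convexity condition rescues this; indeed the paper points out that already for $X=\pa_t$ in $D^L$ the angular flux cannot be controlled from the boundary condition, so even a standard energy estimate fails there. The actual resolution is to use the nonlinear Rankine--Hugoniot boundary conditions to trade $\ell^g\psi$ on the timelike side for data from the spacelike side (which forces a decomposition of the non-tangential commutators along the shock and high-order control of the boundary-defining functions $B^A$), together with a hierarchy of multipliers whose strength is capped by a compatibility condition of the form $X^\ell/(1+v)\lesssim |X^n|^{3}$; in the central region two different multipliers ($X_C$ and $X_T$) must be coupled precisely because the angular flux of the decay multiplier has the wrong sign.

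Second, your plan for the failure of the null condition --- choosing $\ls(s,\omega),\rs(s,\omega)$ to annihilate the projection of the critical source onto an ``unstable mode'' of the linearized Burgers operator, and exhibiting the remaining critical terms as exact $\pa_s$-derivatives --- does not correspond to anything that can be arranged here. The shock positions are not free modulation parameters available for an orthogonality condition: they are determined by the jump conditions through transport equations of the form $\frac{d}{ds}\beta^A - \frac{1}{2s}\beta^A = \frac12(\pa_u\psi_C+\pa_u\psi_A)+N$, and they couple to $\psi$ linearly and at the same order of differentiability (this coupling itself threatens a loss of derivatives, handled by a separate identity for $\nas\beta$). The dangerous quadratic term is instead absorbed into the linearized operator: on the profile it produces the Burgers metric $m_B$, whose outgoing characteristics spread like $u\sim K\log v$, and the extra decay is extracted by commuting with fields adapted to $m_B$ (namely $s\pa_u$, $v\pa_v$ and rotations, not shock-tangential translations plus the standard scaling) and multiplying by $X_C$. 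In $D^L$ the pointwise bound is only marginally non-integrable and no cancellation makes it integrable; the paper instead proves the time-integrated criterion $\int (1+t)^{-1}\|\pa^2\psi\|_{L^\infty}\,dt\lesssim\epsilon$ directly through a Morawetz estimate with the logarithmically amplified multiplier $X_M$, whose non-coercive slice and boundary terms are dominated by the $X_L$-energy. Without these replacements your bootstrap cannot close. Finally, discarding the Lorentz boosts in the exterior regions gives up the $(1+|t-r|)$ weights that the estimates in $D^R$ rely on; transversality of the commutators to the shocks is unavoidable and is handled, not avoided.
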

The function $(t-r)^2/(2r\log(t+r))=u^2/(2rs)$ is just the profile \eqref{introprofile}
expressed at the level of $\Phi$ instead of $B = \pa_u(r\Phi)$.
Note that $\pa (u^2/(2rs)) \sim \frac{1}{(1+t)(1+\log t)^{1/2}}$ when
$u \sim \pm \log t^{1/2}$. This is precisely the rate given by the Landau law for the decay of the shock strength.
Of course, we prove and will
require more detailed information than \eqref{pwbdrough}.

Note that at the level of the fluid variables $\rho, v$ the data at time $t_0$ is assumed to be close to 
$$
v= \begin{cases} \frac{(t_0-r)}{r\log (t_0+r)} \frac {x}r,  &\qquad
\text{ when } -r^R(t_0,\omega)\, \log t_0^{1/2}\leq t_0-r\leq r^L(t_0,\omega)\, 
\log t_0^{1/2},\\
0, &\qquad \text{ otherwise }, \end{cases}
$$
while $\rho$ can be found from the Bernoulli equation \eqref{introbern}. For the initial profile, $v$ vanishes identically both in front of the right 
shock and also behind the left shock. As a consequence, in those regions $\rho=1$. Moreover, globally, the value of $v$ is bounded 
by $1/(t_0\sqrt log t_0)$, so that in $L^\infty$ norm $v$ is globally close to $0$. By the same token, provided that the equation of state 
$p=P(\rho)$ is convex in the neighborhood of $\rho=1$, the density $\rho$ is uniformly globally close to $1$.  

\begin{figure}[h!]\label{shockdiagram}
\begin{center}
  \includegraphics[width=.42 \linewidth]{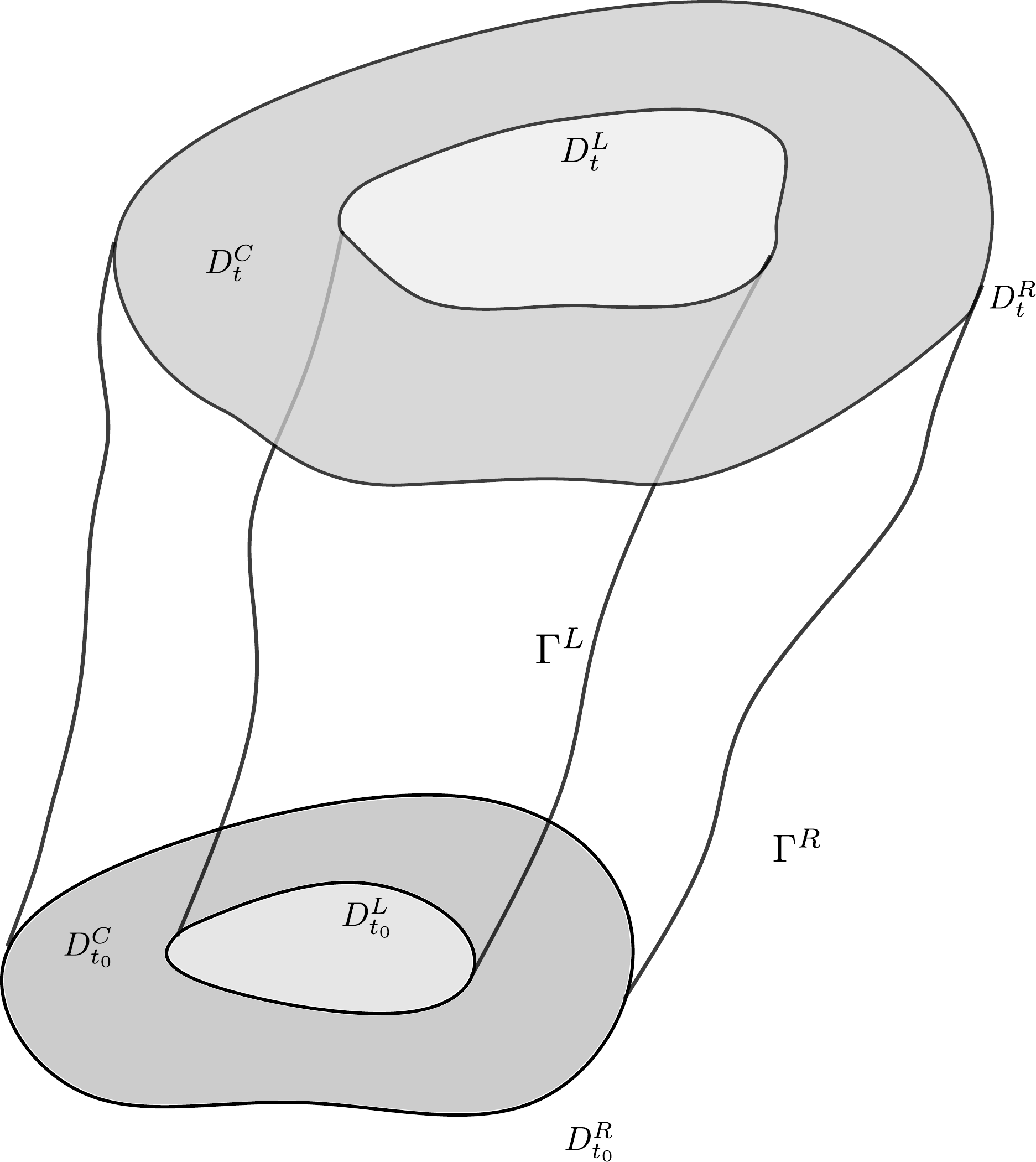}\qquad
  \caption{The surfaces in Theorem \ref{roughthm}. The initial data is
  posed along the time slices $D_{t_0}^L, D_{t_0}^C, D_{t_0}^R$. 
  The shocks are logarithmically-separated from the 
  Minkowskian light cone and satisfy $\Gamma^L \sim \{(t, x): t-|x| = \xi (\log t)^{1/2}\}$
  and $\Gamma^R \sim \{(t,x) : t-|x| = -\eta(\log t)^{1/2}\}$
  for positive constants $p, q$.
  Each shock
  is spacelike with respect to the
  wave equation \eqref{modelql} in the region to the exterior of
  the shock but timelike with respect to the 
  wave equation \eqref{modelql} in
  the region to the interior of the shock.
}
\end{center}
\end{figure}

\subsection{Strategy of the proof}
We now describe the nature of the problem \eqref{introHeqn} with jump conditions
\eqref{introRH1}-\eqref{introRH2} and the strategy we use to
prove our main theorem. In section \ref{metricdefsection} we reformulate
the system as an initial-boundary value problem for the potentials
$\psi_A$ and the positions of the shocks. 
In section \ref{introensec}, we describe
the construction of the energy norms that we will use, and in sections
\ref{issuesection}, we describe the main issues that come up
in the course of the proof of the energy estimates and their resolutions.

In the following sections we will continue using the null variables
 $$
 u = t-r,\quad v=t+r,\quad 
 s = \log v.
 $$

\subsubsection{The determinism conditions and formulation as an initial-boundary value problem}
\label{metricdefsection}

    Given the regions $D^L, D^C, D^R$ as in the above theorem,
    we let $\sigma$ denote the following approximate solution
    (the ``model shock profile'')
    to \eqref{introHeqn}
    \begin{equation}
      \label{sigmadef}
      \sigma(t, x) = 
      \begin{cases}
          \frac{u^2}{2rs}, \qquad &\text{ in } D^C\\
          0,\qquad &\text{ in } D^L, D^R,
      \end{cases}
    \end{equation}
    whose definition is motivated by \eqref{introprofile}.    If we let $\phi_A = \Phi_A - \sigma$, where index $A$ refers to the 
    regions $L, C$ and $R$, the perturbations $\phi_A$
    satisfy quasilinear wave equations of the form
    \begin{equation}
      \label{modelql}
      \pa_\alpha(h_A^{\alpha\beta}(\pa \phi_A)\pa_\beta \phi_A)
      = \pa_\alpha( g_A^{\alpha \beta}\pa_\beta\phi^A) + \pa_\alpha \widetilde{j}^\alpha(\pa \phi_A) 
      = 0, \qquad \text{ in } D^A,
    \end{equation}
    where $\widetilde{j}$ is a quadratic nonliearity,
    and where the linearized metrics $g_A$ are given by
    $g_L = g_R = m$ and $g_C = \mB$, where $m$ denotes the Minkowski metric and $\mB$
    is the ``Burgers' metric '',
    \begin{equation}
      \label{}
        m = -dt^2 + dx^2,
        \qquad
        \mB = m + \frac{u}{vs} dv^2.
    \end{equation}
    In \eqref{modelql}, we are suppressing various small and rapidly-decaying error terms 
    that appear in the central region,
    which arise from the fact that $\sigma$ is not an exact solution to
    \eqref{introHeqn}. 

    We then need to solve the equations \eqref{modelql} in the regions $D^L, D^C, D^R$. These
    regions are separated
    by the shocks $\Gamma^L, \Gamma^R$ (see Figure \ref{shockdiagram}), which are assumed close to the model
    shocks $\Gamma^L_{\Sigma} = \{u = s^{1/2}\}$, $\Gamma^R_{\Sigma} = \{u = -s^{1/2}\}$. A calculation
    (see Lemma \ref{causallemma}) reveals that the shocks and the metrics $m, m_B$ satisfy the following
    \emph{determinism conditions}: the right shock $\Gamma^R$ is spacelike with respect to the Minkowski
    metric (and thus with respect to small perturbations of the Minkowski metric), but
    \emph{timelike} with respect to the metric $m_B$. As a consequence, the solution to 
    \eqref{intrompsi} in the rightmost region $D^R$, and in particular along the right side of the right shock
    $\Gamma^R$,
    is entirely determined for $t \geq t_0$ by initial data posed in $D^R_{t_0}$. On the other hand,
    the solution of the equation \eqref{intromBpsi} in the central region $D^C$ is determined both 
    by initial data in $D^C_{t_0}$ and boundary data along $\Gamma^R$, which needs to be chosen
    so that the Rankine-Hugoniot conditions \eqref{introRH1}-\eqref{introRH2} hold.  
    Similarly, the left shock $\Gamma^L$ is spacelike with respect to the metric
    $\mB$ but timelike with respect to $m$, and so in the leftmost region we need to
    prescribe boundary data along the left side of $\Gamma^L$.

    In what follows, it will be more natural
    to work in terms of the variable $\psi_A = r\phi_A$, in which case, in
    $D^L$ and $D^R$, \eqref{modelql} takes the form
    \begin{equation}
      \label{intrompsi}
      -4\pa_v\pa_u \psi_A + \sDelta \psi_A + \pa_\alpha j^{\alpha}(\pa \psi_A) = 0, 
      \qquad \text{ for } A \in \{L, R\},
    \end{equation}
    for a nonlinearity $j$,
    and in $D^C$, it takes the form
    \begin{equation}
      \label{intromBpsi}
      -4 \left(\pa_v + \frac{u}{vs} \pa_u \right)\pa_u \psi_C  
      +\sDelta \psi_C + \pa_\alpha j^\alpha(\pa \psi_C)
      = 0.
    \end{equation}

    In Section \ref{RHsec}, we show that at the right shock, the Rankine-Hugoniot conditions
    \eqref{introRH1}-\eqref{introRH2} imply a nonlinear boundary condition
    for $\psi_C$ of the form
    \begin{equation}
      \label{modelbcrightshock}
      \left(\pa_v + \frac{u}{vs} \pa_u \right) \psi_C + N(\pa \psi_C)
      = \pa_v \psi_R + N(\pa \psi_R)
      \qquad \text{ at } \Gamma^R, 
    \end{equation}
    for a quadratic nonlinearity $N$, which determines (at least in the linearized sense when $N$ can be ignored) 
    $\psi_C$ along $\Gamma^R$
    in terms of $\psi_R$.
    At the left shock, we instead have a nonlinear boundary condition for $\psi_L$
    of the form
    \begin{equation}
      \label{modelbcleftshock}
      \pa_v \psi_L +  N(\pa \psi_L)
 = \left(\pa_v + \frac{u}{vs}  \pa_u\right) \psi_C 
 + N(\pa \psi_C)
 \qquad \text { at } \Gamma^L,
    \end{equation}
    which, together with \eqref{intrompsi}, determines $\psi_L$ along $\Gamma^L$ in terms of $\psi_C$.
    The above expressions are motivated by the fact that
    the fields $\pa_v, \pa_v + \frac{u}{vs}\pa_u$ 
    are null vectors for the metrics $m, \mB$ respectively.

    Since $\psi_R$ is determined entirely from initial data, once
    the position of the right shock is known, the 
    condition \eqref{modelbcrightshock} gives boundary data for
    $\psi_C$ along $\Gamma^R$ in terms of the data coming from
    $\psi_R$. This data and the equation \eqref{intromBpsi}
    determine $\psi_C$ uniquely in the region $D^C$ if 
    the position of the left shock $\Gamma^L$ is known.
    The condition \eqref{modelbcleftshock} then plays
    the same role at the left shock and determines $\psi_L$
    along $\Gamma^L$ in terms of the data coming from
    $\psi_C$.
    In the above discussion, we have assumed that the shocks were fixed
    but in reality we need to determine them at the same time as we
    determine the $\psi_A$. In Section \ref{RHsec}, we show
    that the Rankine-Hugoniot conditions \eqref{introRH1}-\eqref{introRH2}
    give evolution equations for the positions of the shocks. 
    We parametrize the shocks by $\Gamma^A = \{(t, x) \in \mathbb{R}^{1+3} : 
    u = \beta_s^A(\omega)\}$ with  $\omega = x/|x| \in \mathbb{S}^2$,
    for functions $\beta_s^A:\mathbb{S}^2\to\mathbb{R}$, and \eqref{introRH1}-\eqref{introRH2}
    lead to the following evolution equation,
    \begin{equation}
 \dbeta^A_s(\omega) - \frac{1}{2s}\beta_s^A(\omega) = 
 \left(\frac{1}{2} (\pa_u \psi_A + \pa_u \psi_C)
 + N(\pa \psi_A, \pa \psi_C)\right)\bigg|_{u = \beta_s(\omega)},
 \label{}
\end{equation}
where $N$ collects nonlinear error terms. Note that if
the right-hand side is negligible this gives $\beta_s^A \sim \beta_{s_0}^As^{1/2}$
for initial data $\beta_{s_0}^A$ and thus we can recover the assumption
that the shocks are close to the model shocks if this holds initially.

We have arrived at the following initial-boundary value problem.
Given functions $\beta_0^A$
for $A \in \{L, R\}$ which describe the positions of the initial shocks
and so that the initial shocks are close to the initial model shock surfaces
\eqref{modelshockdef}, and given
small initial data for the wave equations \eqref{intrompsi},\eqref{intromBpsi} 
on the initial time slices $D^L_{t_0},D^C_{t_0},D^R_{t_0}$ (defined in the natural
way in terms of the data $\beta^A_0$), solve the wave equations
\begin{alignat}{2}
    -4\pa_v\pa_u\psi_R + \sDelta \psi_R + \pa_\alpha \gamma(\pa \psi_R) &= 0, &&\qquad \text{ in } D^R,\\
    -4\left(\pa_v + \frac{u}{vs}\pa_u\right) \pa_u\psi_C + \sDelta \psi_C + \pa_\alpha \gamma(\pa \psi_C) &= 0, &&\qquad \text{ in } D^C,\\
    -4\pa_v\pa_u\psi_L + \sDelta \psi_L + \pa_\alpha \gamma(\pa \psi_L) &= 0, &&\qquad \text{ in } D^L,
\end{alignat}
subject to the boundary conditions
\begin{alignat}{2}
 \left(\pa_v  + \frac{u}{vs} \pa_u\right)\psi_C + N(\pa \psi_C) &=
 \pa_v \psi_R + N(\pa \psi_R) && \qquad \text{ along } \Gamma^R,\label{introbc1}\\
 \pa_v \psi_L + N(\pa \psi_L) &=
 \left(\pa_v  + \frac{u}{vs} \pa_u\right)\psi_C + N(\pa \psi_C)&& \qquad \text{ along } \Gamma^L,
 \label{introbc2}
\end{alignat}
and where the surfaces $\Gamma^A$ are given by 
$\Gamma^A = \{(t, x) : u = \beta_s^A(\omega)\}$ for 
$\beta_s^A$ solving
\begin{alignat}{2}
 \dbeta_s^A(\omega) - \frac{1}{2s} \beta_s^A(\omega) &= \left(\frac{1}{2} (\pa_u \psi_C + \pa_u \psi_A)
 + N(\pa \psi_A, \pa \psi_C) \right)\bigg|_{u = \beta_s^A(\omega)},
 \label{introevolution}
\end{alignat}
By the local existence theory from
\cite{MajdaThomann87} and the above-mentioned determinism conditions, 
we are guaranteed a local-in-time {\it unique} (in the class of 2-shock solutions) solution to the above problem.
Our goal
is to continue this local-in-time solution for all time.

\subsubsection{The energy estimates and the basic energy identity} 
\label{introensec}
Our proof of global existence uses a carefully constructed hierarchy
of weighted high-order energy estimates whose weights are designed to capture
the expected decay rate of solutions in each of the three regions
$D^L, D^C, D^R$.
These energy estimates are obtained by commuting the equations
\eqref{modelql} with families of vector fields (the ``commutator fields'')
that commute well with the linearized wave operators and then multiplying
the resulting equation by
by $X\psi_A^I$ for well-chosen vector fields $X$ (the ``multiplier fields'')
and integrating 
over the region bounded between two time slices $D^A_t$ and the
shocks, where $\psi_A^I = Z^I \psi_A$ denotes a collection of vector fields $Z$
applied to $\psi_A$. 

In the exterior regions $D^L, D^R$,
we use the standard Minkowskian vector fields as commutator fields and in the central
region $D^C$ we use the commutator fields $\mathcal{Z}_{\mB} = \{s \pa_u, v\pa_v, x_i\pa_j - x_j\pa_i\}$.
The multiplier fields we use are described below, and all of the fields we use 
are recorded in sections \ref{fields} and \ref{vfsection}.
The multiplier fields need to be chosen large
enough that bounds for the resulting energies are strong enough to imply
good pointwise decay estimates, but small enough that the nonlinear error 
terms we encounter in the course of proving the energy estimates can be handled.

The basic calculation that leads to energy estimates is as follows.
If the time slices $D^A_t$ are bounded
between a spacelike (with respect to the linearized metric $g_A$)
shock $\Gamma^S$ and a timelike shock $\Gamma^T$ (either of which
can be empty), integrating with respect to the measure $r^{-2}dxdt$, we arrive at the identity
\begin{equation}
  \label{intromult}
  \int_{D^A_{t_1}} Q_{h_A}(X, N_{h_A}^{D_t^A}) = \int_{D^A_{t_0}} 
  Q_{h_A}(X, N_{h_A}^{D^A_{t_0}}) +
  \int_{\Gamma^S_{t_0, t_1}} Q_{h_A}(X, N_{h_A}^{\Gamma_S})- \int_{\Gamma^T_{t_0, t_1}} Q_{h_A}(X, N_{h_A}^{\Gamma_T})
  + \int_{t_0}^{t_1}\int_{D^A_t}  K_{X,h_A}.
\end{equation}
On the spacelike surfaces $S \in\{D^A_t, \Gamma^S\}$ above,
$N^{\Gamma^S}_{h_A}$ denotes the future-directed normal vector field to $S$
defined with respect to the metric $h_A$, and on the timelike surface
$\Gamma^T$, $N^{\Gamma^T}_{h_A}$ denotes the outward-pointing
normal vector field.
We are also abbreviating $\Gamma_{t_0, t_1} = \Gamma\cap\{t_0 \leq t \leq t_1\}$,
and all surface integrals are taken with respect to the measure induced by
$r^{-2}dxdt$.

The quantity $Q_{h_A}$ is the energy-momentum tensor associated to 
the metric $h_A$ and $\psi_A^I$,
\begin{equation}
  \label{}
  Q_{h_A}(X, Y) = X\psi_A^I Y\psi_A^I - \frac{1}{2} h_A(X, Y)
  h_A^{-1}(\pa \psi_A^I, \pa \psi_A^I),
\end{equation}
and the scalar current $K_{X, h_A}$ associated to $X$ and $h_A$ takes the form
$ K_{X, h_A} = K_{X, g_A} + K_{X, \text{nonlinear}}$, where
$K_{X, \text{nonlinear}}$ collects the nonlinear terms and 
$K_{X, g_A}$ is the scalar current associated to the linearized
metric $g_A$. For the moment,
the exact expressions for these quantities are not important.

We now work out how we expect the above quantities to behave
if the shocks are close to the model shocks \eqref{modelshockdef}
and the potentials $\psi_A$ are sufficiently small.
First, the vector field $n = \pa_u$ is a null vector for both of the
linearized metrics $g_A \in \{m, \mB\}$, and these metrics each admit an additional
null field $\ev^{g_A}$ with $\evm = \pa_v, \evmB = \pa_v + \frac{u}{vs} \pa_u$.
If the multiplier field $X$ takes the form $X = X^n_{g_A} n + X^{\ev}_{g_A} \ell^{g_A}$,
and if $\psi_A$ is small enough that $h_A(\pa \psi_A) \sim g_A$,
then the quantity on the time slices is
\begin{equation}
  \label{}
  Q_{h_A}(X, N^{D_t^A}_{h_A}) \sim X_{g_A}^n (n\psi_A^I)^2 + X^\ell_{g_A}
  \left((\ev^{g_A} \psi^I_A)^2
  + |\nas \psi^I_A|^2\right),
\end{equation}
which is coercive (positive definite) if $X$ is future-directed and timelike
with respect to $g_A$, $g_A(X, X) < 0$.

Along the spacelike surface
$\Gamma^S$, provided $\Gamma^S$ is sufficiently close to
the appropriate model shock \eqref{prof}
and $\psi_A$ is sufficiently small,
we instead have (see Section \ref{emformulageneral})
\begin{equation}
  \label{introQpwSL}
  Q_{h_A}(X, N^{\Gamma_S}_{h_A}) \sim \lambda(v) X^n_{g_A} (n\psi_A^I)^2 + X^\ell_{g_A} (\ev^{g_A} \psi_A^I)^2
  + \left(\lambda(v) X^\ev_{g_A} + X^n_{g_A} \right) |\nas \psi_I^A|^2,
\end{equation}
where the weight $\lambda$ is given by $\lambda(v) = {\rs^A}(1+v)^{-1}(1+s)^{-1/2}$,
{with $\rs^L = \ls$ and $\rs^R = \rs$, the positive constants appearing in \eqref{prof}}.
The expression in \eqref{introQpwSL} is positive-definite if $X$ is timelike and future-directed.

On the other hand, even if $X$ is timelike and future-directed, the energy-momentum
tensor along the timelike surface $\Gamma_T$ is not coercive and we instead have
\begin{equation}
  \label{introQpwTL}
  -Q_{h_A}(X, N^{\Gamma_T}_{h_A} )\sim  \lambda(v) X^n_{g_A} (n\psi_A^I)^2 - X^\ell_{g_A}
  (\ev^{g_A} \psi_A^I)^2 + \left(\lambda(v) X^\ev_{g_A} - X^n_{g_A} \right) |\nas \psi_A^I|^2
\end{equation}
Note that the coefficient of $|\nas \psi_A^I|^2$ need not be positive. Combining the above,
for spacelike and future-directed multiplier fields $X$ we arrive at an energy identity
of the form
\begin{equation}
  \label{roughenident}
  E_X(t_1) + S_X(t_1) +  B^+_{X}(t_1) + \slashed{B}_X(t_1) \lesssim E_X(t_0) + B^-_X(t_1) + 
  \int_{t_0}^{t_1} \int_{D^A_t} |K_{X, \text{nonlinear}}|,
\end{equation}
where the energies on the time slices are
\begin{equation}
  \label{}
  E_X(t) = \int_{D^A_t}  X^n_{g_A} (n\psi_A^I)^2 + X^\ell_{g_A}
  \left((\ev^{g_A} \psi_A^I)^2 + |\nas \psi_A^I|^2\right),
\end{equation}
the space-time integrated quantity $S_X(t_1)$ is contributed
by the linear part of the scalar current,
\begin{equation}
  \label{}
  S_X(t_1) = \int_{t_0}^{t_1} \int_{D^A_t} -K_{X, g_A},
\end{equation}
and the boundary terms $B^{\pm}_X, \slashed{B}_X$ are
\begin{align}
  \label{}
  B^+_X(t_1) &= \int_{\Gamma^S_{t_0, t_1}} \lambda(v) X^n_{g_A} (n \psi_A^I)^2
  + X^\ell_{g_A} (\ev^{g_A} \psi_A^I)^2 +(\lambda(v) X^\ell_{g_A} + X^n_{g_A}) |\nas \psi|^2
  \\
             &\qquad\qquad+ \int_{\Gamma^T_{t_0, t_1}} \lambda(v) X^n_{g_A} (n \psi_A^I)^2\\
  B^-_X(t_1) &= \int_{\Gamma^T_{t_0, t_1}}
      X_{g_A}^\ell (\ev^{g_A} \psi)^2,
      \qquad
      \slashed{B}_X(t_1) = \int_{\Gamma^T_{t_0, t_1}}
      \left(\lambda(v)X^\ev_{g_A} - X^n_{g_A}\right)  |\nas \psi_A^I|^2.
\end{align}
{To illustrate the methodology of these energy estimates consider the exact Minkowski wave equation 
$
\Box \phi=0$ which, relative to $\psi=r\phi$, takes the form}
$$
-4\pa_v\pa_u \psi + \sDelta \psi=0,
$$
{in the right region $D^R$ where $u\le -\rs s^{1/2}$, i.e. $t-r\le \rs \log^{1/2} (t+r)$. Take $X$ to be the Killing field $X=\pa_t=2(\pa_u+\pa_v)$. 
Then
$
K_X=0,
$
the surface $\Gamma^R=\{u=-\rs s^{1/2}\}$ is spacelike, and our energy identity takes the form}
$$
\int_{D^R_{t_1}} \left(|\pa_u \psi|^2 + |\pa_v\psi|^2+|\nas\psi|^2\right) + \int_{\Gamma^R_{t_0,t_1}}  \left(\frac {|\pa_u\psi|^2}{(1+v)(1+s)^{1/2}}
+|\pa_v\psi|^2+|\nas\psi|^2\right)=\int_{D^R_{t_0}} \left(|\pa_u \psi|^2 + |\pa_v\psi|^2+|\nas\psi|^2\right) 
$$
{The small weight $\lambda(v)=\rs (1+v)^{-1}(1+s)^{1/2}$ appears in the above estimate due to the fact that the surface $\Gamma^R$ is 
    very close (within $\sim \log^{1/2}v$) to the null cone $u=0$. If we extended this estimate all the way to the null cone $\{u=0\}$, the corresponding 
energy flux would not contain the term $|\pa_u\psi|^2$ at all. }

On the other hand, the corresponding energy estimate in the left region $D^L=\{u\ge \ls s^{1/2}\}$, where the surface 
$\Gamma^L=\{u=\ls s^{1/2}\}$ is timelike, is 
\begin{align*}
\int_{D^L_{t_1}} &\left(|\pa_u \psi|^2 + |\pa_v\psi|^2+|\nas\psi|^2\right) + \int_{\Gamma^L_{t_0,t_1}}  \frac {|\pa_u\psi|^2}{(1+v)(1+s)^{1/2}}
\\ &=\int_{D^L_{t_0}} \left(|\pa_u \psi|^2 + |\pa_v\psi|^2+|\nas\psi|^2\right) +\int_{\Gamma^L_{t_0,t_1}} \left( |\pa_v\psi|^2+
\left(1-\frac 1{(1+v)(1+s)^{1/2}}\right)|\nas\psi|^2\right)
\end{align*}
{Unlike the previous case, the future energy at time $t_1$ requires not just the control of the energy at $t_0$ but also part of the 
energy flux along $\Gamma^L$. Note that the boundary condition \eqref{introbc2} would allow control 
of $|\pa_v\psi|^2$ along $\Gamma^L$ but {\it not} of the term involving $|\nas\psi|^2$. This indicates that even for 
 local existence theory, standard energy estimates with $X=\pa_t$ would not be sufficient.}

In general, as in the above example, the estimate \eqref{roughenident} only gives very weak control
over $n\psi_A^I$ along the spacelike and timelike sides
of the shocks, but strong control over $\ev\psi_A^I$ along the spacelike sides
of the shocks. On the other hand, in the regions
$D^L, D^C$, we need to treat $X^\ell_{g_A} (\ev \psi_A^I)^2$ as an error term
along the timelike sides of the shocks. Also, the term
$\slashed{B}_X$ need not have a sign and if $\lambda(v) X^\ev_{g_A} - X^n_{g_A} < 0$
we also need to be able to bound this term.

In reality, the argument we use to establish our energy estimates
is more delicate than the calculation
described above. In particular, the fact that our nonlinearities
do not satisfy the null condition means that we need to treat
the nonlinear error terms carefully; this is described in more
detail in Section \ref{modmultsec}. 
In Section \ref{energy0}, we collect various estimates involving
the energy momentum tensors which are used to justify
estimates of the form \eqref{introQpwSL} and \eqref{introQpwTL},
and we also derive expressions for the linear scalar currents
$K_{X, g_A}$. Finally, the basic energy estimates
(which are analogous to \eqref{roughenident})
we rely on are carried out in section \ref{ensec2}.

\subsubsection{The bootstrap argument, the decay estimates, and the choice
of multipliers}
\label{issuesection}

Our proof of global existence rests on a bootstrap argument, which requires
propagating a bound of the form $E_X(t) + B_X^+(t) \lesssim \epsilon^2$
for a small parameter $\epsilon$ from $t = t_0$ to $t = t_1$. All of the multipliers we will consider
will have the property that $S_X \geq 0$,
and in light of the identity \eqref{roughenident},
propagating this bound
requires getting control over \textbf{(a)} the nonlinear part
of the scalar current $K_{X, \text{nonlinear}}$,
\textbf{(b)} the boundary integral $B_X^-$, which does not
come with a favorable sign and must be treated as another 
error term, and \textbf{(c)} $\slashed{B}_X$, if
the multiplier $X$ is such that $\lambda(v)X^\ev_{g_A} - X^n_{g_A} < 0$. 
These issues are not independent and must be resolved in tandem
with one another.
We discuss these issues below.

\subsubsection*{Issue \textbf{(a)}: Controlling the nonlinear scalar current}
{As is the case with every supercritical nonlinear equation, the mechanism behind any global existence and stability statement is 
{\it decay}. For quasilinear wave equations \eqref{intromodel} on $\Bbb R^{3+1}$ this is a well known subtle issue in view of the 
slow decay rate of linear waves, i.e., solutions of the linear wave equation $\Box\phi=0$ on Minkowski space. Using the methods 
of energy estimates with appropriate commutators and multipliers which can then be adapted to the study of the nonlinear problems, 
such waves can be shown, \cite{Kla}, to satisfy 
the bounds}
\begin{equation}
\label{pointd}
|\pa^k_u\pa_v^j\nas^i\phi|\leq \frac {C_{ijk}}{(1+t)^{i+j} (1+|t-r|)^{1/2+k}}.
\end{equation}
In view of the fact that for the nonlinear wave equation \eqref{intromodel} with quadratic nonlinearities dependent on $\pa\phi$, the statement of global existence for {\it classical} small data solutions requires time integrability of the pointwise norm of the second derivatives of $\phi$,
that is 
\begin{equation}
\label{crit}
\int_{t_0}^\infty \|\pa^2\phi\|_{L^\infty} dt < \epsilon,
\end{equation}
{for a generic equation of the type \eqref{intromodel} such a statement will not hold true, since the linear waves already violate the required 
integrability criterion. It is precisely this phenomenon that led to the notion of the null condition, imposing structure on the form of the 
quadratic terms, which guarantees that for equations satisfying the null condition \eqref{crit} is not necessary and \eqref{pointd} is
sufficient, and also to the result that for (scalar) equations that do not satisfy the null condition small data solutions develop singularities 
in finite time.}

{For our solutions, which are no longer classical and contain shocks, the mechanism behind their global existence and stability statements
is still {\it decay}. As before, to control quadratic terms (which do not satisfy the null condition) requires the time integrability of the pointwise
norm of the second derivatives. The alert reader will notice that second derivatives for shock solutions contain $\delta$-functions 
of the shock surfaces and that even away from the shocks, such an estimate {\it does not} hold for either the model shock solution,
for which in the central region $\pa^2\Phi\sim 1/(t(\log t)^{1/2})$, or the linear waves (still).}

{The first issue is resolved by observing that 
the integrability statement should hold in {\it each} region $D^L, D^R, D^C$ separately. Of course, since the integrability/decay properties 
are derived from the energy estimates, both the latter and the derivation of the former from the latter now have to be properly localized.}

{The second merely suggests that we should rewrite 
our equation \eqref{intromodel} for  the perturbation 
$\phi_A=\Phi_A-\sigma$ as is done in \eqref{modelql} and hope that $\phi_A$ (and the source terms, omitted in \eqref{modelql}, 
coming from the profile $\sigma$) decays faster than the model shock profile. One of the challenges here is that the improvement of the rate of decay of $\phi_A$ over the one for the shock profile is truly marginal. In fact, pointwise, we can only establish that}
\begin{equation}
|\pa^2\phi_L|\lesssim \frac 1{t\log t (\log\log t)^{1/2}}
\label{marginal}
\end{equation}
which is still non-integrable. One of the novelties in this work is that {in the absence of an integrable pointwise estimate}, \eqref{crit} is established directly.

{Finally, to overcome slow decay of the linear waves we must take advantage of the geometries 
of the regions $D^L, D^R, D^C$. We begin with the region $D^R$ which is bounded from above by a spacelike (relative to the Minkowski metric) hypersurface which is close to the model shock $t=r-\eta (\log r)^{1/2}$. The solution of \eqref{modelql} in such a region is determined completely from its initial data. The region is located (logarithmically) below the light cone $t=r$. This indicates that the uniform bound on 
free waves $|\pa^2\phi_R|\lesssim 1/t$ is not sharp. In fact, \eqref{pointd} already suggests that using the fact that in such a region 
$|u|=|t-r|\ge (\log t)^{1/2}$ we could have the bound }
$$
|\pa^2\phi|\lesssim \frac 1{t(\log t)^{5/4}}
$$
{which is integrable. In this region, using the multiplier $X_R =(1 + |u|)^\mu \pa_t + r (\log r)^\nu\pa_v$,
with sufficiently large $\mu, \nu$ we can derive even stronger estimates. The analysis of both the linear and the full nonlinear 
problem is straightforward. This particular choice of the multiplier is motivated by the weighted
estimates from \cite{LindbladRodnianski2010} and the $r^p$ method from  \cite{DafermosRodnianski2010}. We note that 
the existence problem in regions which lie strictly below the light cone is connected with the so-called "boost problem" considered 
in \cite{ChristodoulouOMurchadha1981}, see also the recent work \cite{Wang}.}

{In the region $D^C$ the profile $\sigma$ is non-trivial and, as a result, the linearized problem \eqref{modelql} contains the wave 
equation with respect to the "Burgers metric" }
$$
m_B=m+\frac{t-r}{(t+r) \log (t+r)} dv^2.
$$
{Even though the deviation from the Minkowski metric is of order $1/(t(\log t)^{1/2})$ (since in this region $|t-r|\lesssim (\log (t+r))^{1/2}$) 
and decays, its influence on the behavior of linear waves is nontrivial and that behavior is very different from that of free waves on Minkowski space. 
The outgoing (radial) characteristics of the metric $m_B$ can be parametrized as }
$$
u=K\log v
$$
{(compare with the outgoing characteristics in Minkowski space given by $u=K$.) As with the 1-dimensional rarefaction waves, the characteristics are {\it spreading}. The quantitative effect of spreading on the behavior of linear waves on such background is additional 
decay. To capture it we use the multiplier $X_C=\log v \pa_u + v\pa_v$. In fact, both the multipliers and the commutators, employed 
in the energy estimates in this region, should be adapted to the metric $m_B$ and its properties. The result is that in this region }
$$
|\pa^2\phi|\lesssim \frac 1{t(\log t)^{3/2}}
$$
{The most difficult region is $D^L$. It is bounded on the right by a timelike (relative to the Minkowski metric) hypersurface close to the 
model shock $t=r+\xi (\log r)^{1/2}$. We are faced with the quasilinear wave equation \eqref{modelql} supplemented with the boundary condition \eqref{introbc2} along the timelike hypersurface. The behavior of free waves in Minkowski space given by \eqref{pointd} 
indicates that they decay faster in the interior of the light cone $t=r$. In particular, in the region $D_L$ \eqref{pointd} would suggest 
the bound }
\begin{equation}
  \label{fakeDLbd}
    |\pa^2\phi_L|\lesssim \frac 1{t(\log t)^{5/4}}.
\end{equation}
{We are however no longer dealing with the free waves on Minkowski space but rather with the solutions of the Minkowski wave equation
    on a bounded domain with a boundary condition, and as such there is
    no reason to expect \eqref{fakeDLbd} to hold. For an obvious example, consider such an equation in the cylindrical domain $r\le 1$ with Dirichlet or Neumann boundary conditions along $r=1$. Linear waves for such an equation do not decay at all! The behavior of linear 
(and nonlinear) waves in $D_L$ is entirely determined by the domain itself  and the boundary condition. To take advantage of both 
we employ two different multipliers. The first is a logarithmically amplified version of the scaling vector field }
\begin{equation}
    \label{introXLdef}
  X_L = uf(u) \pa_u + vf(v) \pa_v, \qquad f(z) = z \log z (\log \log z)^{\alpha},
  \qquad \text{ where } 1 < \alpha < 3/2
\end{equation}
  
and the second, a logarithmically enhanced version of the Morawetz multiplier
\begin{equation}
X_M =( \log(1+r)^{1/2} f(\log(1+r)) + 1)\pa_r.
\label{amplified}
\end{equation}
The latter is critical to establishing the integrability estimate \eqref{crit}.
{The logic behind our choice of multipliers will be explained momentarily.}

To summarize: to control the nonlinear scalar current, the crucial point is to show that a bound
for the energy $E_X \lesssim \epsilon^2$ implies the following time-integrated estimate  (recall that $\phi=r\psi$)
\begin{equation}
  \label{goalofestim}
\int_{t_0}^{t_1} \frac{1}{1+t} \|\pa^2 \psi_A\|_{L^\infty(D^A_t)}\, dt
  \lesssim \epsilon.
\end{equation}
Taking into account the definition of the energies $E_X$, by the Klainerman-Sobolev
inequality, simple properties of our commutator fields, and the fact that
by assumption $|u|\gtrsim s^{1/2} \sim (1+\log t)^{1/2}$ in the exterior regions
$D^L, D^R$, we find the pointwise bounds
\begin{equation}
  \label{goalofestimpw}
  |\pa^2 \psi_A| \lesssim \frac{1}{(1 + \log t)^{3/4}} \frac{1}{|X^n_{g^A}|^{1/2}}  \epsilon,
  \quad \text{ in } D^L, D^R, \quad\text{and}
  \qquad
  |\pa^2 \psi_C| \lesssim \frac{1}{1 + \log t} \frac{1}{|X^n_{g^C}|^{1/2}} \epsilon.
\end{equation}
We therefore want to pick $X$ so that the coefficients $X^n_{g^A}$ are large enough
that the right-hand sides here are time-integrable; in either case, we are ``just''
missing a few factors of $\log t$. However, we are not free to choose arbitrary
multiplier fields $X$; for one thing, we need to guarantee that $-K_{X, g_A} \leq 0$.
Moreover, the lack of null structure in this problem and the need to
be able to control various nonlinear error terms places a limit on the size
of the multipliers we consider.
We will see that this is relatively straightforward
to handle in the rightmost and central regions, but it presents serious
difficulties in the leftmost region, discussed in more detail in the
next section.

\subsubsection*{Issue \textbf{(b)}: Controlling the error terms along the timelike shocks}
We now consider issue \textbf{(b)}, which is only relevant in the central
region and the region to the left of the left shock.
To control the boundary term $B_X^-$, we use
the boundary conditions \eqref{introbc1}-\eqref{introbc2};
note that these
identities, at the linear level, relate $\ev^{g_A} \psi_A$ along 
the timelike side of $\Gamma_A$ to $\ev^{g_A^+} \psi_{A}^+$ with
$g_L^+ = g_C, g_C^+ = g_R$ denoting the linearized metric on the
spacelike side of $\Gamma_A$ and $\psi_A^+$ the corresponding potential.
Since our commutator fields are not tangent to the shocks, 
getting control of $\ev^{g_A} \psi_A^I$ requires first decomposing
the commutator fields into components which are tangent to the shock
and components which are transverse to the shock. This in turn requires
getting bounds for high-order derivatives of the function $\beta$, which
defines the shock, and for which we will need to differentiate the evolution
equation \eqref{introevolution}. This decomposition is performed in 
Section \ref{bcbootstrapsection} and control of high-order derivatives
of $\beta$ is established in Section \ref{bdsforbdfsec}.

Handling the above is somewhat involved, but the main difficulties
in handling the nonlinear boundary terms can
be understood already when $|I| = 0$.
If we directly use \eqref{introbc1}-\eqref{introbc2}
as appropriate, we find
\begin{equation}
  \label{}
  \int_{\Gamma^T_{t_0, t_1}} X^\ell_{g_A} (\ev^{g_A} \psi_A)^2
  \lesssim
  \int_{\Gamma^T_{t_0, t_1}} \frac{X^\ell_{g_A} }{(1+v)^2} 
  |\pa \psi_A|^4 
  + 
  \int_{\Gamma^T_{t_0, t_1}} X^\ell_g (\ev^{g_A^+} \psi_{A}^+)^2
  + \frac{X^\ell_{g_A}}{(1+v)^2} |\pa \psi_A^+|^4.
\end{equation}
The last two terms will not cause any serious difficulties: we will always
have better estimates available for $\psi_A^+$ than for $\psi_A$. 
We therefore focus on the first term.
In order to handle this term, it turns out that the main difficulty lies in establishing the estimate
\begin{equation}
  \label{}
  \int_{\Gamma^T_{t_0, t_1}} \frac{X^\ell_{g_A} }{(1+v)^2} 
   (n\psi_A)^4 \lesssim \epsilon^2 \int_{\Gamma^T_{t_0, t_1}}
   \lambda(v) X^n_{g_A} (n \psi_A)^2,
\end{equation}
where the quantity on the right-hand side is essentially the only
control we get over the solution along the timelike side of the shock from
\eqref{roughenident}. We remark that the fact that we need
to handle a term of this form ultimately derives from the fact that our nonlinearities 
do not satisfy the null condition.
Recalling $\lambda(v) = \ls (1+v)^{-1} (1+s)^{-1/2}$, this bound requires that
\begin{equation}
  \label{}
  \frac{X^\ell_{g_A}}{1+v} (n \psi_A)^2 \lesssim \frac{X^n_{g_A}}{(1+s)^{1/2}}.
\end{equation}
This places a limitation on the size of the multipliers we can afford to use, because
the Klainerman-Sobolev inequality and the bounds for our energies give us
\begin{equation}
  \label{}
  (n \psi_A)^2 \lesssim \frac{1}{1 + |u|} \frac{1}{X^n_{g^A}} E_X(t)
  \lesssim \frac{1}{(1 + s)^{1/2}}  \frac{1}{X^n_{g^A}} \epsilon
\end{equation}
along the shocks. Inserting this into the above we find that if we want
to close estimates for the nonlinear boundary terms, we must choose the 
multipliers so that the following condition holds true,
\begin{equation}
  \label{hardlimit}
  \frac{X^\ell_{g^A}}{1+v}\lesssim |X^n_{g^A}|^{3}.
\end{equation}
This same restriction also appears even in deriving the energy
identity \eqref{roughenident} and is needed to guarantee that
the statements \eqref{introQpwSL}-\eqref{introQpwTL} hold; see
in particular Section \ref{pertsec} and Lemmas 
\ref{spacelikeperturb}-\ref{timelikeperturb} where we prove bounds for the
nonlinear energy currents along the shocks.

We now come to the main difficulty:
we want to choose our multiplier field large enough that the pointwise
bounds \eqref{goalofestimpw} imply the time-integrated bound
\eqref{goalofestim}, but not so large that the condition \eqref{hardlimit} fails,
and at the same time we must ensure that the linear scalar current
satisfies $K_{X, g_A} \leq 0$. 
The above difficulty at the shocks is of course not present in $D^R$
since there is no timelike shock to contend with, and there, as mentioned
above, we can afford to 
use the multiplier $X_R =(1 + |u|)^\mu \pa_t + r (\log r)^\nu\pa_v$,
for large $\mu, \nu$.

In the central region, it turns out that the above issues are
not difficult to resolve and we can afford to use the multiplier
$X = \log v\pa_u + v\pa_v$, which is more than
large enough for our purposes. This strategy however raises issues in dealing
with point \textbf{(c)} above, see the next subsection.

In the leftmost region, on the other hand, this issue is nontrivial
to resolve,
and the above considerations lead to the fact that we cannot afford
to use a larger multiplier than \eqref{introXLdef}.

The estimate \eqref{roughenident} coming from $X_L$ is still useful,
since it gives control over the quantity $u f(u) (n \psi_A^I)^2
\sim (\log t)^{1/2} f((\log t)^{1/2}) (n\psi_A^I)^2$
near the shock (which is the most dangerous region from
the point of view of our estimates). This allows us to prove a Morawetz
inequality, obtained using the spacelike multiplier field \eqref{amplified}.
If we use this
in \eqref{intromult}, the resulting integrals over the time slices
are not positive-definite, but the field $X_M$ has been chosen so
that these integrals can be bounded in terms of the energies
$E_{X_L}$, which leads to a bound of the form
$S_{X_M}(t_1)\lesssim \epsilon^2$.
The advantage of using this multiplier is that the
scalar current $-K_{X_M, m}$ comes with a favorable sign
and it turns out that the above bound for $S_{X_M}$ directly implies
the bound \eqref{goalofestim}, which ultimately allows us to close our estimates.
This argument is carried out in section \ref{genmorsec}.

\subsubsection*{Issue \textbf{(c)}: Controlling the angular error term {$\slashed{B}_X$}}
\label{angularderivsshockpbm}
We now consider issue \textbf{(c)} above. Again, since there
is no timelike boundary to contend with in the rightmost region,
this only plays a role in the leftmost and central regions.

{In the leftmost region $D^L$, one interesting and important aspect of the choice of the multiplier $X_L$ \eqref{introXLdef} 
is that the angular 
flux $\slashed{B}_{X_L}$ along $\Gamma^L$ is actually positive. We have discussed earlier that if $X_L=\pa_t$ that term
is negative  and, unlike the term involving 
$|\pa_v\psi|^2$ in  ${B}_{X_L}^-$ , it could not be controlled from the boundary condition.}

 In the central region,
 however, the multiplier $X_C = {\left(s + \frac{u}{s}  \right) } \pa_u + v \pa_v$
which we use to establish our pointwise decay estimates
does not satisfy this condition.
As a result, {$\slashed{B}_X$} needs to be treated as an
error term and we need to find a way to control it.

For this, we couple the estimate obtained from $X_C$
with an estimate obtained by using the much weaker
multiplier $X_T = v\evmB + \left(  \frac{u}{s} + \frac{\ap}{4s^{1/2}}  \right)n$
(the ``top-order multiplier''). 
The resulting
estimate is too weak to give useful decay estimates, but this
multiplier has been chosen so that $\slashed{B}_{X_T} \geq 0$. 
To get the needed decay estimate, the idea is to prove the multiplier
estimate with $X_C$, but after commuting fewer vector fields
than we commute with in the estimate for $X_T$. It turns out that
one can control the resulting angular boundary term $\slashed{B}_{X_C}$
by integrating along the shock, after bounding $|\nas \psi_A| \lesssim
(1+v)^{-1} |\Omega \psi_A|$. This relies on the Hardy estimate
from Lemma \ref{controlangularhardyright} and is carried out
in Lemma \ref{bdsforpsiCalongshock}; see in particular the bound
\eqref{rightangularbound}.

\subsection{Modulated profiles and location of the shocks}
  Recall that the shocks 
  $$\Gamma^A = \{(t, x) \in \mathbb{R}^{1+3} : 
    t-r = \beta_s^A(\omega)\}
    $$ with  $A=L,R$ and $\omega = x/|x| \in \mathbb{S}^2$
 are parametrized by the  functions $\beta_s^A:\mathbb{S}^2\to\mathbb{R}$ with $s=\log (t+r)$ which satisfy the following evolution equation
\begin{equation}
\label{betaa}
 \dbeta_s^A(\omega) - \frac{1}{2s} \beta_s^A(\omega) = \left(\frac{1}{2} (\pa_u \psi_C + \pa_u \psi_A)
 + N(\pa \psi_A, \pa \psi_C) \right)\bigg|_{u = \beta_s^A(\omega)}.
\end{equation}
These functions appear as modulation parameters of our shock profile 
$$
  \sigma(t, x) = 
      \begin{cases}
          \frac{u^2}{2rs}, \qquad &\text{ in } \beta^L\le u\le \beta^R\\
          0,\qquad &\text{ otherwise}.
      \end{cases}
      $$
      When $\beta^A= C_A s^{1/2}$ with constant $C_A$, $\sigma$ is a 2-shock solution of the Burgers equation 
      $$
      \pa_s \sigma+\frac 12 \pa_u (\sigma^2)=0.
      $$
Modulating the profile $\sigma$ by making $\beta^A/s^{1/2}$ to depend nontrivially on $s$ and $\omega$ allows to adapt the profile 
to fit the equation \eqref{intromodel} and, in particular, account for the correct location of the shocks. The Rankine-Hugoniot conditions 
then lead to the evolution equations for $\beta^A$ and connect $\beta^A$ to the solutions of \eqref{modelql} or \eqref{intrompsi}.
Due to the dependence of  $\beta$ on $\omega$ and from the point of view of \eqref{betaa}, the space of modulation parameters is
infinite dimensional. 

As discussed in the previous section dealing with the higher derivatives of $\psi$ requires decomposing them along the shocks
into its transversal and tangential parts. This is done in order to take advantage of the (higher order) boundary conditions. Let $Z$ 
be an arbitrary vector field. Along the shock, it can be decomposed in the form 
$$
Z=Z_T+ Z(\beta-u) \pa_u 
$$
where $Z_T$ is 
%projection of $Z$ 
tangent to the shock. Applying this repeatedly we see that the decomposition for $\pa^n(\pa \psi)$
will involve $(n+1)$ derivatives (with respect to $s$ or $\omega$) of $\beta$. Going back to \eqref{betaa} then shows that to 
control those would require either the  control of $(n+1)$ derivatives of $\psi$, if one the derivatives is the $s$-derivative, or 
even $(n+2)$ derivatives of $\psi$ otherwise. Even in the best case scenario, $\beta$ and $\psi$ couple to each other linearly 
at the highest order. This was already a major issue in the local existence theory of Majda \cite{Majda83,Majda83b,MajdaThomann87}.
In his work, the general approach is different as the shock is straightened at the expense of making the linearized equations for 
$\psi$ more complicated. For a global problem like the one considered here shock straightening can be costly and is avoided. To avoid the loss 
of derivatives which can arise when one commutes \eqref{betaa}
{and the boundary conditions for $\psi$}
with $(n+1)$ $\omega$-derivatives, we observe that $\beta$ also 
satisfies another equation
$$
\nas\beta=-\frac su [\nas\psi]+N(\pa\psi)
$$
see Appendix \ref{RHsec} and Remark \ref{eliminaterem}. 

Nonetheless, the conclusion of  this discussion  is that, unlike other problems where the modulation space is finite dimensional and 
the modulation parameters couple quadratically to the unknown fields, in this problem the coupling is linear and at the same order 
of differentiability. 

The linear coupling also has a major effect on the asymptotic behavior of the shocks. The logic of the proof requires that the shocks 
are close to the surfaces $u=\pm s^{1/2}$ (taking $\rs = \ls = 1$).
Quantitatively, at the very least, we need that the functions $\beta^A/s^{1/2}$ are uniformly 
bounded. From \eqref{betaa},
$$
\left|\frac {\beta^A_{s_1}}{s_1^{1/2}}\right|\lesssim
\left|\frac {\beta^A_{s_0}}{s_0^{1/2}} \right|
+ \int_{s_0}^{s_1} s^{-1/2} |\pa_u\psi| ds
$$ 
The energy estimate \eqref{roughenident} contains the boundary term 
$$
\int_{\Gamma_{t_0,t_1}} \frac 1{(1+v)(1+s)^{1/2}} X^n_{g_A} (n\psi_A)^2
$$
which, in view of our choices of multipliers $X$, discussed above, can be replaced by 
$$
\int_{\Gamma_{t_0,t_1}} \frac {\log s(\log\log s)^\alpha} {(1+v)} (\pa_u \psi_A)^2
$$
The same estimate also holds for the angular derivatives of $\psi_A$. Taking that into account (and that $ds=1/v dv$), 
$$
\int_{s_0}^{s_1} s^{-1/2} |\pa_u\psi| ds\lesssim \left(\int_{s_0}^{s_1} s^{-1} (\log s)^{-1}(\log\log s)^{-\alpha}\right)^{\frac 12} 
\left(\int_{s_0}^{s_1} \log s (\log\log s)^\alpha  |\pa_u\psi|^2 ds\right)^{\frac 12} \lesssim E_X^{\frac 12}
$$
since $\alpha>3/2$. This tells us that not only the functions $\beta/s^{1/2}$ are bounded but that they also have asymptotic 
limits as $s\to \infty$.

\subsection{The full compressible Euler equations and the restricted shock
front problem}
\label{introenth}
We now discuss how the problem \eqref{introHeqn}
with jump conditions \eqref{introRH1}-\eqref{introRH2}
is related to the original problem \eqref{intromass}-\eqref{introeul}.
For \emph{smooth} solutions, it is well-known that if the initial
data for \eqref{intromass}-\eqref{introeul} is irrotational,
then the solution is irrotational at later times as well. However,
this is \emph{not} true for solutions with shocks.
Indeed, consider \eqref{intromass}-\eqref{introeul} 
satisfying the classical Rankine-Hugoniot conditions
\begin{align}
  \label{clRH1}
  \zeta_t[\rho] + \zeta_i[\rho v^i] &=0,\\
  \label{clRH2}
  \zeta_t[\rho v_i] + \zeta_i[\rho v^iv_j] + \zeta_j[p]&=0, \qquad j = 1,2,3,
\end{align}
across a shock, where here $\zeta = \zeta_tdt + \zeta_i dx^i$
is a one-form as in \eqref{introRH1}, conormal to the shock.
These guarantee that \eqref{intromass}-\eqref{introeul} hold
in the weak sense across the shock. One can show that if 
\eqref{clRH1}-\eqref{clRH2} hold then in general $[\omega]\not=0$;
in particular, one cannot expect to have a solution to \eqref{intromass}-\eqref{introeul}
satisfying \eqref{clRH1}-\eqref{clRH2} which is irrotational on \emph{both} sides
of the shock.

To see what is behind the above, recall that \eqref{intromass}-\eqref{introeul} 
describe an isentropic fluid. If we want to take entropy into account, 
these
equations need to be supplemented with the conservation law for energy
\begin{equation}
  \label{encons}
  \pa_t (\rho E) + \pa_i (\rho v^i E + p v^i) = 0,
\end{equation}
where $E = \frac{1}{2} |v|^2 + e(\rho, p)$, where $e$ is the specific internal energy.
Here, $p$ is no longer determined by the density $\rho$ alone but instead $p = P(\rho, S)$
where $S$ is the specific entropy, related to the variables $e$, $p$, $\rho$
and the temperature $T$ by the second law of thermodynamics $de = TdS - p d(\rho^{-1})$. 
If $dP/dS\not=0$, irrotationality is not
preserved even for smooth solutions.
On the other hand, for classical solutions, the 
equation \eqref{encons} together with the other equations is equivalent to
\eqref{intromass}-\eqref{introeul} supplemented with
\begin{equation}
  \label{}
    \pa_t S + v^i\pa_i S = 0.
\end{equation}
As a result, if $S$ is initially constant and the solution remains smooth,
the motion is determined entirely by \eqref{intromass}-\eqref{introeul},
and the equation \eqref{introHeqn} completely determines the motion
if we additionally assume that the vorticity is initially zero.

However, if the solution develops a shock we need to supplement
the Rankine-Hugoniot conditions \eqref{clRH1}-\eqref{clRH2} 
with the jump condition
\begin{equation}
  \label{}
  \zeta_t[\rho E] + \zeta_i [\rho v^i E + p v^i] = 0.
\end{equation}
In order for the equations \eqref{intromass}-\eqref{introeul},
\eqref{encons} and the jump conditions \eqref{clRH1}-\eqref{clRH2}
to be deterministic, it turns out that one needs the entropy to 
have a nonzero jump aross the shock. In particular, the solution cannot
remain isentropic on both sides of the shock and as a result it 
cannot remain irrotational on both sides either, in light of the
fact that 
$[\omega] = O([S])$, see equation (1.275) in \cite{Christodoulou2019}.

The system \eqref{introHeqn} and jump conditions \eqref{introRH1}-\eqref{introRH2}
can then be understood as a version of the above non-isentropic problem where
we ignore variations due to entropy, even after the shock has formed.
This is precisely the setting of Christodoulou's ``restricted'' shock development program
\cite{Christodoulou2019}. The main advantage of working with restricted shocks, beyond
the conceptual simplifications of working with
\eqref{introHeqn} instead of \eqref{intromass}-\eqref{introeul}, \eqref{encons}, is that
one can can ignore the vorticity, which there is no known way to control at large times.

This problem is of interest in its own right from the point
of view of quasilinear wave equations, and
as explained in \cite{Christodoulou2019} and \cite{MajdaThomann87},
it is still physically relevant despite the above. First,
a calculation (see (1.260) in \cite{Christodoulou2019}) shows that the jump
in entropy is small if the jump in pressure is small, 
$[S] = O([p])^3$ and as a result $[\omega] = O([p])^3$ is also small, and so
solutions to \eqref{introHeqn} with jump conditions 
\eqref{introRH1}-\eqref{introRH2} are approximate solutions to the full problem
if $[p]$ is small (which is the case in our setting). {In fact, $[p]\sim 1/(t(\log t)^{1/2})$ and, as a 
 result, $[S],[\omega]= O\left(1/(t^3(\log t)^{3/2})\right)$ -- negligible from the point of
view of decay.}
 
We also note that \eqref{introRH1}-\eqref{introRH2} imply that three
of the conditions \eqref{clRH1}-\eqref{clRH2} hold. Indeed,
the condition \eqref{introRH1} is nothing but \eqref{clRH1},
and if we decompose \eqref{clRH2} into
its components parallel and transverse to $\zeta$, we find the relations
\begin{align}
  \label{projRH1}
  [\overline{v}_i] &= 0,\\
  \label{projRH2}
  \zeta_t[\rho v_\zeta] + [\rho v_\zeta^2] + [p] &=0,
\end{align}
with $v_\zeta = \zeta_i v^i$ and with $\overline{v}$ the component
of $v$ tangent to the shock. If $v = \nabla \Phi$ where $\Phi$
satisfies the jump condition \eqref{introRH2} then \eqref{projRH1}
holds, but there is no guarantee that \eqref{projRH2} holds.
Our jump conditions \eqref{introRH1}-\eqref{introRH2} then ensure
that the jump condition \eqref{clRH1} associated to the continuity
equation holds, and the tangential components of the conditions
\eqref{clRH2} associated to the momentum equations hold, but we do not
enforce the normal component of \eqref{clRH2}. 

\subsection{Further background on the problem and related results}
The mathematical theory of the compressible Euler equations has a long history,  with an enormous amount of literature devoted to it.
It would be impossible for us to survey it, but see
for example \cite{Christodoulou07, Dafermos2010, Lax1972} and the references therein. We will concentrate on the results 
more related to the subject of this paper which can be put into two categories: asymptotic behavior of  solutions for the equations in 
one space dimension and 
more recent work on the problems of breakdown and shock formation and evolution in higher dimensions.

Breakdown for smooth solutions of \eqref{intromass}-\eqref{introeul}
in one space dimension dates back to work of Challis \cite{Challis1848} and Riemann \cite{Riemann}. 
The long-time behavior of solutions of Burgers' equation, which serves
as an important model for the Euler equations, was studied
by Hopf \cite{Hopf1950} who was able to extract the asymptotic shape of
solutions after shock formation. This was generalized to other one-dimensional
scalar conservation laws by Lax \cite{Lax1957,Lax1972}. For a single convex scalar conservation law the 
following sharp result,  which is particularly instructive to compare to the main result of this paper, is proven in \cite{DiPerna}.
\begin{theorem}{\cite{DiPerna}}
Let $v$ be a BV solution with initial data of compact support of the equation 
$$
\pa_t v+\pa_x f(v)=0
$$
with $f''>0$ and $f'(0)=0$, $f''(0)=1$,
and let $N(t,\rs,\ls)$ denote the $N$-wave (cf. \eqref{prof})
$$
N(t,p,q)= \begin{cases} \frac{x}{t},  \qquad
\text{ when } -\rs\,t^{1/2}\le x\leq \ls\, t^{1/2},\\
0, \qquad \text{ otherwise }, \end{cases}
$$
Then there exist constants $\rs,\ls \ge  0$ depending on the initial data such that
$$
\|v(t,\cdot)-N(t,\rs,\ls)\|_{L^1}\lesssim t^{-1/2}
$$
for all sufficiently large $t$. 
\end{theorem}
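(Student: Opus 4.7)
The natural tool here is the Hopf--Lax--Oleinik variational formula for convex scalar conservation laws. First, introduce the potential $U(t,x) := \int_{-\infty}^x v(t,y)\,dy$, so that $\pa_x U = v$ and $U$ is the unique viscosity solution of the Hamilton--Jacobi equation $\pa_t U + f(\pa_x U) = 0$. Let $g = f^*$ be the Legendre transform; the hypotheses $f''>0$, $f'(0)=0$, $f''(0)=1$ give $g$ strictly convex with $g(0)=g'(0)=0$ and $g''(0)=1$, hence $g(p) = p^2/2 + O(p^3)$ near the origin. Since $v_0$ has compact support in some $[a,b]$, the potential $U_0$ is constant outside: $U_0 \equiv 0$ for $y \leq a$ and $U_0 \equiv M := \int v_0$ for $y \geq b$. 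The entropy solution is then recovered by
$$U(t,x) = \inf_{y \in \R}\bigl[ U_0(y) + t\, g\bigl((x-y)/t\bigr) \bigr], \qquad v(t,x) = g'\bigl((x - y^*(t,x))/t\bigr),$$
where $y^*(t,x)$ denotes an (almost-everywhere unique) minimizer.

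The $N$-wave parameters are identified as follows. Let $m_- := \inf_y U_0(y) \leq 0$, attained at some $y_- \in [a,b]$, and set $p := \sqrt{-2m_-}$ and $q := \sqrt{2(M-m_-)}$. These satisfy $q^2 - p^2 = 2M$, consistent with the mass conservation $\int N(t,p,q)\,dx = \int v_0$. For large $t$ and $x$ in the interior region between $y_- - \sqrt{-2tm_-}$ and $y_- + \sqrt{2t(M - m_-)}$, the infimum is achieved at an interior minimizer $y^*(t,x) \in [a,b]$: the ``far-field'' contributions from $y \leq a$ and $y \geq b$ cost at least $0$ and $M$ respectively, whereas the interior candidate $y = y_-$ yields the strictly smaller cost $m_- + tg((x-y_-)/t) \approx m_- + (x-y_-)^2/(2t)$. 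A Taylor expansion of $g'$ at the origin then gives
$$v(t,x) = \frac{x - y^*(t,x)}{t} + O\!\left(\frac{(x-y^*)^2}{t^2}\right) = \frac{x}{t} + O(1/t),$$
uniformly on the interior region, using $|x| \lesssim \sqrt{t}$ and the boundedness of $y^*$.

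The shock positions are pinned down by the transitions of $y^*(t,\cdot)$: equating the interior cost with the outer levels $0$ and $M$ and inverting via $g(p) = (p^2/2)(1 + O(p))$ yields $x_L(t) = -p\sqrt{t} + O(1)$ and $x_R(t) = q\sqrt{t} + O(1)$. For the $L^1$ estimate I split $\R$ into three pieces. Outside $[x_L(t), x_R(t)]$ the true solution $v$ vanishes identically for large $t$ (the minimizer selects $y^* = x$, giving $v = g'(0) = 0$), while $N(t,p,q)$ is nonzero only on intervals of length $O(1)$ near the shocks with $|N| \lesssim t^{-1/2}$, contributing $O(t^{-1/2})$ to $\|v-N\|_{L^1}$. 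Inside $[x_L(t), x_R(t)]$, the uniform bound $|v - x/t| = O(t^{-1})$ integrated over a length $\sim \sqrt{t}$ again contributes $O(t^{-1/2})$. I expect the main technical work to be the careful bookkeeping of the $O(1)$ correction to the shock positions together with verification that the cubic correction $g(p) - p^2/2 = O(p^3)$ enters only at subleading order: since $|(x-y^*)/t| \lesssim t^{-1/2}$ on the interior region, the cubic error in $g'$ produces an $O(t^{-1})$ correction to $v$ which, after integration over a length $\sqrt{t}$, remains $O(t^{-1/2})$ and does not affect the leading rate.
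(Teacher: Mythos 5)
Your argument is correct in substance, but note that the paper does not prove this theorem at all: it is quoted as a known result of DiPerna, whose own method (designed to extend to systems) runs through approximate/generalized characteristics and Glimm-type interaction estimates rather than any variational formula. For the convex scalar case your Hopf--Lax/Lax--Oleinik route is the cleanest proof, and the skeleton you give is sound: $U_0$ is Lipschitz since $v_0\in BV$ has compact support, $v=g'((x-y^*)/t)$ with $g=f^*$, the constants $p=\sqrt{-2\min U_0}$, $q=\sqrt{2(M-\min U_0)}$ agree with the classical formulas and with mass conservation, and the competition between the three regions $y\le a$, $y\in[a,b]$, $y\ge b$ gives exactly the structure you describe (for $x\le b$ the interior minimum automatically undercuts the right far field because $U_0(b)=M$, and symmetrically on the left, so the only genuine crossovers are the two shock locations, each monotone in $x$). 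Two points deserve explicit attention when you write this up. First, normalize $f(0)=0$ so that $g(0)=0$; otherwise $g(0)=-\inf f\neq 0$ and the "cost $0$ versus cost $M$" bookkeeping is off by a constant. Second, the sharp rate $t^{-1/2}$ genuinely uses the cubic control $g(\xi)=\tfrac12\xi^2+O(\xi^3)$ (i.e.\ regularity of $f$ beyond $f''(0)=1$): with only $g(\xi)=\tfrac12\xi^2+o(\xi^2)$ the crossover analysis places the shocks within $o(\sqrt t)$ rather than $O(1)$ of $\mp p\sqrt t,\, q\sqrt t$, and the sliver contribution degrades to $o(1)$. Finally, the degenerate cases $p=0$ or $q=0$ (one-signed data) need a one-line separate remark, since there the crossover equation you invert becomes trivial; the estimate still holds but not by the same formula for the shock location.
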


This result was generalized (for small initial data) to systems of 2 conservation laws in \cite{GLax} and, finally, to systems of $n$ conservation 
laws in \cite{Liu}. These results should be compared with our Theorem \ref{roughthm} which gives the asymptotic behavior of 
2-shock solutions of the equation \eqref{intromodel} corresponding to the irrotational compressible Euler equations on $\Bbb R^3$
or, generally, the wave equation \eqref{intromodel} without the null condition, and the convergence
statement in $L^\infty$.

The first proof of singularity formation for the compressible
Euler equations
in higher dimensions was given by Sideris in \cite{Sideris85}.
There,
the proof is by a virial argument and does not give any information
about the nature of the singularity. Alinhac's work \cite{Al1,Al2} on the 2-dimensional 
version of the equation \eqref{intromodel} gave the first constructive proof of the "first time" singularity formation.

In the monumental work \cite{Christodoulou07} (see also \cite{ChM}),
Christodoulou was able to describe the maximal classical development for the solutions of the compressible Euler equations 
contained in the domain of dependence of the exterior of a sphere
of arbitrary, small, regular initial data which is constant
outside of a larger sphere, and gave a detailed description of the singular boundary.
These results were extended to different regimes,
 of initial data forming small open sets of specific profiles for the problem on $\Bbb R\times \Bbb T^2$  in \cite{ASpeck} and allowing for nontrivial vorticity at the
 singular boundary, {and where the authors were able to give a more complete description of the portion of the maximal development
 near the {\it crease} -- first singularity, {even in the absence of strict convexity.}} 
 For the corresponding results in 2d see also \cite{LukSpeck18} and \cite{ShVi23}, {
     where in the latter reference, the authors
{ gave a detailed description of a maximal development including the portion of a Cauchy horizon} for the problem on $\Bbb T^2$ for a 
 specific small open set of initial data.}
The "first time" singularity formation for the full problem, again for a specific small open set of initial data, 
were given in  \cite{LukSpeck21}, \cite{BuShVi23}.  Shock formation for a class of quasilinear wave equations in 2d was investigated 
in \cite{Holz} and for a class of large data in 3d in \cite{Miao}.

A different mechanism
for blowup for the compressible Euler equations with smooth data in three dimensions
with a very different character was recently discovered 
in \cite{MRRS22}. The singularities constructed
there arise from large initial data, they are not shocks and instead the density blows up in finite time. 

The problem of local-in-time existence for the multi-dimensional shock front
problem was solved by Majda in the works \cite{Majda83,Majda83b}. There, Majda
considered initial data for a large class of hyperbolic systems, including the compressible
Euler equations, which already has a shock in it and constructed a local-in-time solution
to the shock front problem. In \cite{MajdaThomann87},
Majda and Thomann gave a different proof of local existence for the restricted
shock front problem described in Definition \ref{sfpdef}. 

In recent years, starting with the breakthrough results 
\cite{Christodoulou2019}
of Christodoulou, there has also been a great interest in 
the \emph{shock development problem}, wherein one starts with the singular solution constructed in the process of the 
of solving the shock development problem and replaces/extends it with the {\it weak} solution containing a shock. 
A recent result for the 2d problem with azimuthal symmetry is in
\cite{BuDrShVi22}. For earlier results in spherical symmetry see \cite{ChLi},\cite{Yin04}. 
\subsection{Further developments}
As we mentioned earlier, our work addresses only part of the picture described by Landau (in spherical symmetry). 
In particular, the question of whether solutions arising from small smooth initial data for large times approach a 2-shock 
profile remains open (even in spherical symmetry). Already, constructing an example of the above scenario would be 
very interesting.

The next obvious step is to address the full problem \eqref{intromass}-\eqref{introeul}, \eqref{encons}, 
without the irrotational condition and allow for the production of vorticity and entropy across the shocks.  Such a problem 
in the whole space is completely intractable for the same reasons as the corresponding 3d problem of shock formation.
Vorticity and entropy waves propagate with the speed of the fluid  and do not decay. As a result, assuming that 
initially vorticity and entropy are of compact support, the support will remain compact and, eventually, will be contained 
in the interior of the left region 
$D^L$, where the vorticity waves could undergo vorticity stretching and form singularities of a very different kind. Nonetheless,
it would still be possible and desirable to consider the problem for the points which lie in the domain of dependence of 
the exterior of a sphere. Such a domain would necessarily contain the right shock $\Gamma^R$  in our  picture and the 
vorticity would decay there since it would be eventually transported away from this domain. As was discussed in Section 
\ref{introenth}, the vorticity and entropy produced by the shock are proportionate to the third power of the strength of the 
shock which is $\sim 1/(t(\log t)^{1/2})$. As a result, their influence is much weaker than that of the sound waves and should 
be easily controlled.

Landau's paper also discusses the 2-dimensional case. There, two shocks are supposed to be 
separated by distance of $\sim t^{1/4}$ and 
the strength of shocks should decay with the rate $\sim t^{-3/4}$. This rate is even further away from integrable than in the 3d case. 
Additionally, 2d free waves decay considerably slower than in 3d. Nonetheless, the shocks are further away from 
the null cone and both the geometry and preliminary analysis of the problem indicate that the 2d statements analogous to 
the ones proven in this paper likely hold true.

In this paper we considered the global problem involving spherical (but not spherically symmetric) shocks which are expected 
to emerge from compactly (or rapidly decaying) initial data. Of separate interest would be to consider other geometries and, in particular,
investigate the problem of stability of planar (non-symmetric) shocks.

\subsection{Acknowledgements}
IR acknowledges support through NSF grants DMS-2005464 and a Simons Investigator Award.
DG acknowledges support from the Simons Collaboration on Hidden Symmetries and a startup
grant from Brooklyn College.

\section{Notation and definitions}

Let $t, x^1, x^2, x^3$ denote the usual rectangular coordinates. We will work
in terms of the Minkowskian null coordinates
\begin{equation}
 u = t - |x|, \qquad v = t + |x|, \qquad \theta^1(x),\qquad \theta^2(x),
 \label{mainminknull}
\end{equation}
where $\theta^1, \theta^2$ are an arbitrary local coordinate system on the unit
sphere $\S^2$. We will use $s$ to denote
$$
s=\log v.
$$
We will write
\begin{equation}
 \omega_i = \delta_{ij} \omega^j = \frac{x_i}{|x|},
 \qquad
 \slashed{\Pi}_i^j = \delta_i^j - \omega_i \omega^j,
 \qquad
 \nas_i = \slashed{\Pi}_i^j \nabla_j,\quad i, j = 1,2,3,
 \label{angularproj}
\end{equation}
where $\nabla$ denotes the covariant derivative defined with respect to the
Minkowski metric,
\begin{equation}
 m = -dt^2 + dx^2 = -dudv + \frac{1}{4}(v-u)^2 dS(\omega),
 \label{minkowskidef}
\end{equation}
where $dS(\omega)$ denotes the metric on the unit sphere $\S^2$.

In the region between the shocks, the perturbation will satisfy
 a quasilinear wave equation which is a
perturbation of a wave equation with respect to the ``Burgers' '' metric $\mB$,
\begin{equation}
 \mB = -dt^2 + dx^2 + \frac{u}{vs} dv^2
 =
  - du dv+ \frac{u}{vs} dv^2 + \frac{1}{4} (v-u)^2 dS(\omega)
 \label{mBdef}
\end{equation}

We also record that the inverse metrics are given by
\begin{equation}
m^{-1}(\xi, \xi) = -4\xi_u\xi_v + 4(v-u)^{-2}|\slashed{\xi}|^2
 \label{}
\end{equation}
and
\begin{equation}
 \mB^{-1}(\xi, \xi) = -4\xi_u \xi_v -\tfrac{{4}u}{vs} \xi_u^2
 + 4(v-u)^{-2} |\slashed{\xi}|^2.
 \label{}
\end{equation}
The metrics $g = m, \mB$ admit two null vectors $(n, \ev^g)$ where
\begin{equation}
 \ev^m = \pa_v,\qquad
 \evmB = \pa_v + \frac{u}{vs} \pa_u,\qquad {n=\pa_u}
 \label{nullfields}
\end{equation}
which satisfy, in either case,
\begin{equation}
 g(n, \ev^g) = -\frac{1}{2}.
 \label{normalization}
\end{equation}

For a vector field $X$ we write
\begin{equation}
 X = \Xo_g \ev^g + \Xt_g \eu^g + \slashed{X}
 \label{}
\end{equation}
where
\begin{equation}
 \Xo_m = X^v, \quad
 \Xt_m = X^u,
 \qquad\qquad
 \Xo_{\mB} = X^v,
 \quad
 \Xt_{\mB} = X^u - \frac{u}{vs} X^v,
 \label{Xnullexplicit}
\end{equation}
and where the angular part
$\slashed{X} = \Pi\cdot X$ with $\Pi$ as in \eqref{angularproj}.
Note that from \eqref{normalization},
\begin{equation}
 g(X, Y) =-\frac{1}{2}(X^n_g Y^{\ev}_g + X^\ev_g Y^n_g)
 + g(\slashed{X}, \slashed{Y}).
 \label{metricnull}
\end{equation}

\subsection{The multiplier and commutator fields}
\label{fields}

For the convenience of the reader, we record here the
multiplier fields
we use in each of the three regions $D^R, D^C, D^L$.
\begin{center}
\begin{tabular}{ |c|c|c| }
  \hline
 Region & Multipliers  & Commutators\\[1ex]
 \hline
 $D^R$ $(u \lesssim -s^{1/2})$ & $X_R =w(u)(\pa_u + \pa_v) + r(\log r)^\nu \pa_v$ & $\mathcal{Z} = \{\pa_\mu,\Omega_{\mu\nu}= x_\mu\pa_\nu - x_\nu\pa_\mu, S =x^\alpha\pa_\alpha\}$
 \\[1ex]
 \hline
 $D^C$ $(|u| \lesssim
 s^{1/2})$ & $X_C ={ \left(s + \frac{u}{s}  \right)} \pa_u +v\pa_v$,
 $X_T = {\big(\tfrac{u}{s} + \tfrac{\ap}{{4}s^{1/2}}\big)\pa_u + v\pa_v}$
  & $\mZB = \{\Omega_{ij}, \sBo = s\pa_u, \sBt = v\pa_v\}$
 \\[1ex]
 \hline
 $D^L$ $(u \gtrsim s^{1/2})$ & $X_L = uf(u) \pa_u + vf(v) \pa_v$, $X_M = (g(r)+1) (\pa_v - \pa_u)$
  & $\mathcal{Z}$
  %f(z) = \log z (\log \log z)^\alpha$
  \\[1ex]
 \hline
\end{tabular}
\end{center}
In the above, we use the convention that $x^i = x_i$ for $i = 1,2,3$
and $x^0 = -x_0 = t$. The functions $f, g$ are 
\begin{equation}
 f(z) = \log z(\log \log z)^{\alpha} ,\qquad g(z) = (\log(1+z))^{1/2}f(\log(1+z))
 \label{fgdefintro}
\end{equation}
and the parameters $\mu, \alpha$ will be chosen subject to
\eqref{parameters}.

The roles of these multipliers are explained in section
\ref{issuesection} and the energies associated to these multipliers
along with the corresponding energy estimates can be found
in section \ref{ensec2}.

\subsection{Basic assumptions about the positions of the shocks}
\label{shocksetupsec}

We let $\Gamma^R, \Gamma^L$ denote the right and left shock respectively,
and write $\Gamma^A_{t'} = \Gamma^A \cap \{t = t'\}$ for $A = L, R$.
We will parametrize the shocks by functions $\beta_s^L, \beta_s^R:\S^2 \to \R$
where the parameter $s \in [s_0, s_1]$ for some $s_0,s_1 > 0$, so that the shocks
are given by
\begin{equation}
 \Gamma^A = \{(t, x) | u = \beta^A_{\log (t+|x|)}(x/|x|)\}.
 \label{betadef}
\end{equation}

We will prove energy estimates assuming that the left shock is sufficiently
close to the surface $ u = -\ls (\log v)^{1/2}$ and the right shock
is sufficiently close to the surface $u = \rs (\log v)^{1/2}$ for constants
$\ls, \rs > 0$.
In particular we will assume that the initial positions of the shocks
are parametrized by
\begin{equation}
 t_0 - r = \beta^L_0(\omega),
 \qquad
 t_0 - r = \beta^R_0(\omega),
 \label{}
\end{equation}
for functions $\beta^L_0, \beta^R_0$ which are sufficiently close to
the positions of the model shocks, 
\begin{equation}
    \label{eq:betainitial}
    \left|\nabla_\omega^j\left({\ls-}\frac{\beta^L_{0}(\omega)}{(1+ s^L_0)^{1/2}}\right)\right|
 +
 \left|\nabla_\omega^j\left( {\rs +}\frac{\beta^R_{0}(\omega)}{(1+ s^R_0)^{1/2}}\right)\right| \leq \ve,
\end{equation}
for $j = 0,1$, and
where $s^L_0, s^R_0$ denote the values of $s = \log (t + |x|)$ along the
shocks $\Gamma^L, \Gamma^R$ at $t = t_0$.

We will assume that that for $\ve_1, \ve_2$ sufficiently small,
we have the bounds
\begin{equation}
    | \beta^L_s(\omega) - \beta_{0}^L(\omega)s^{1/2}|
  + (1+s) \left|\dbeta^L_s - \frac{1}{2s} \beta^L_s\right|
  \leq \ve_1 (1+s)^{1/2},
  \qquad
  |\nabla_\omega \beta_s^L(\omega)| \leq \ve_2 (1+s)^{1/2},
 %  \qquad
 %  c_0 \frac{1}{1+v} \frac{1}{(1+s)^{1/2}}
 %  \leq |\dbeta^L_s(\omega)| \leq c_1 \frac{1}{1+v} \frac{1}{(1+s)^{1/2}}
 \label{betaLassump}
\end{equation}
with the same assumptions at the right shock,
\begin{equation}
    | \beta^R_s(\omega) - \beta_{0}^R(\omega) s^{1/2}|
  + (1+s) \left|\dbeta^R_s - \frac{1}{2s} \beta^R_s\right|
  \leq \ve_1 (1+s)^{1/2},
  \qquad
  |\nabla_\omega \beta_s^R(\omega)| \leq \ve_2 (1+s)^{1/2}.
 %  \qquad
 %  c_0 \frac{1}{1+v} \frac{1}{(1+s)^{1/2}}
 %  \leq |\dbeta^L_s(\omega)| \leq c_1 \frac{1}{1+v} \frac{1}{(1+s)^{1/2}}
 \label{betaRassump}
\end{equation}
To close the estimates along the timelike sides of the shocks,
we will also need to assume control of higher-order norms
of the functions $\beta^A_s$, see Section \ref{geomnormsection}.

% for $\ve_2$ whose size will be set in the course of the proof.
% \begin{equation}
%   | \beta^R_s(\omega) - \beta_0^R(\omega) s^{1/2}|
%   + (1+s)^{1/2} |\dbeta^L_s\leq c_0(1+s)^{1/2},
%   \qquad
%   c_0 \frac{1}{1+v} \frac{1}{(1+s)^{1/2}}
%   \leq |\dbeta^R_s(\omega)| \leq c_1 \frac{1}{1+v} \frac{1}{(1+s)^{1/2}}
%  \label{betaRassump}
% \end{equation}
% and
% \begin{equation}
%  |\nabla_\omega \beta_s(\omega)|\lesssim \ve (1+s)^{1/2}.
%  \label{}
% \end{equation}
%
% It will also be important for some of our estimates that
% \begin{equation}
%  t_0 \geq \frac{1}{\epsilon_0}
%  \label{largestart}
% \end{equation}
% for sufficiently large $\epsilon_0 > 0$. This assumption could be instead
% replaced by the assumption that the shocks are sufficiently far apart
% and sufficiently large initially.

It is convenient to introduce the following extension of $\beta^A_s$
to a neighborhood of the shocks,
\begin{equation}
 B^A(t, x) = \beta^A_{\log (t+|x|)}( x/|x|),
 \label{BAdef}
\end{equation}
which satisfies
\begin{equation}
 \pa_u B^A(t,x) = 0,
 \qquad
 \pa_v B^A(t,x) = \frac{1}{v} \dbeta^A_{\log (t+|x|)}(x/|x|),
 \qquad
 \nas B^A(t,x) = \frac{1}{r} \left(\nabla_\omega \beta^A_{\log (t+|x|)}\right)(x/|x|),
 \label{derivsofB}
\end{equation}
where in the last expression we have identified the abstract sphere $\S^2$
with the subset $\{|x| = 1\} \subset \R^4$.
Then the tangent space to $\Gamma^A$ at each point lies in the null space of the
one-form $\zeta^A$ given by
\begin{equation}
 \zeta^A = -\frac{1}{2}d(u-B^A) = - \frac{1}{2} du +\frac{1}{2} \pa_v B^A dv
 +\frac{1}{2} \nas B^A \cdot \slashed{dx}
 = -\frac{1}{2} du + \frac{1}{2v} \dbeta_s^A dv +
  \frac{1}{2r} \nabla_\omega \beta_s^A \cdot \slashed{dx},
 \label{zetadef}
\end{equation}
where $\slashed{dx}$ denotes the projection of $dx$ to the cotangent
space to the unit spheres and where
$s = \log (t+|x|)$.

We will work in terms of a vector field
$N_m^{\Gamma^A}$ which is normal to $\Gamma^A$ with respect to the
Minkowski metric, given by raising the index of \eqref{zetadef} with
the Minkowski metric,
\begin{equation}
 N^{\Gamma^A}_m = \pa_v - \pa_v B^A \pa_u {+\frac 12}\nas B^A\cdot \nas,
 \label{Nmdef}
\end{equation}
and similarly we will work in terms of a vector field
$N^{\Gamma^A}_{\mB}$ which is normal to $\Gamma^A$ with
respect to the Burgers' metric
$\mB$,
\begin{equation}
 N^{\Gamma^A}_{\mB} = \pa_v + \left( 2 \tfrac{u}{vs} - \pa_v B^A \right) \pa_u
 {+\frac 12}\nas B^A \cdot \nas.
 \label{NmBdef}
\end{equation}
We will often just write $N^A_g$ in place of $N^{\Gamma^A}_g$. We have chosen
$N^A_g$ so that when $\Gamma^A$ is spacelike with respect to $g$, $N^A_g$ is the future-directed
normal to $\Gamma^A$, and when $\Gamma^A$ is timelike with respect to $g$, $N^A_g$
is inward-pointing.
It will be convenient later on to write these formulas in terms of
the null vectors $(n, \ev^g)$ defined in \eqref{nullfields}. Writing $N_g = N_g^nn + N_g^{\ev} \ev^g
+\slashed{N}_g$ where $\slashed{N}_g$ denotes the angular part of $N_g$,
in either case we have
\begin{equation}
 N^\ev_g = \frac{g(N_g, n)}{g(\ev^g, n)} = -2\zeta(n)= 1,
 \qquad
 N^n_g = \frac{g(N_g, \ev^g)}{g(\ev^g, n)} = -2\zeta(\ev^g)
 = -\ev^g (B-u).
 \label{normalnullframe}
\end{equation}
We also record the following identities
\begin{equation}
	g(X, N_g) = -\frac{1}{2} X(u-B)
	= -\frac{1}{2}(X^n_g N_g^{\ev} + X^{\ev}_gN_g^n)
    -\frac{1}{2} X\cdot \nas B
	= -\frac{1}{2} (X^{n}_g - X^\ell_g \ev^g(B-u) ) - \frac{1}{2} X\cdot \nas B
 \label{innerproduct}
\end{equation}
where we used \eqref{metricnull} and \eqref{normalnullframe}.

A vector field $X$ is called timelike with respect to the
metric $g$ when $g(X,X) < 0$ and spacelike if $g(X, X) > 0$.
We say a surface $\Sigma$ is timelike (respectively spacelike) with respect to $g$
if the normal field $N_g^\Sigma$ to $\Sigma$, associated to the metric $g$ is
spacelike (respectively timelike). If we use \eqref{innerproduct} with $X = N_g$,
we find
\begin{equation}
 g(N_g^{\Gamma^A}, N_g^{\Gamma^A}) = \ev^g (B^A-u)
 + \frac 14 |\nas B^A|^2.
 \label{glength}
\end{equation}
When $g = m$ so $\evg = \pa_v$, along $\Gamma^A$ where $u = B^A$, the above reads
    \begin{equation}
      \label{}
        \ev^g (B^A - u) + \frac{1}{4} |\nas B^A|^2 
        = \pa_v B^A + \frac{1}{4} |\nas B^A|^2
        = \frac{u}{2vs} + \frac{1}{v} \left(\pa_s B^A - \frac{B^A}{2s}  \right)  + \frac{1}{4} |\nas B^A|^2,
    \end{equation}
    where the last two terms are negligible, 
    by \eqref{betaLassump}-\eqref{betaRassump} (which are written at the level of $\beta^A = B^A|_{\Gamma^A}$).
    When instead $g = \mB$, so $\evg = \pa_v + \frac{u}{vs} \pa_u$, we have
    \begin{equation}
      \label{}
        \evg (B^A -u) + \frac{1}{4} |\nas B^A|^2
        = \pa_v B^A - \frac{u}{vs} + \frac{1}{4} |\nas B^A|^2
        = -\frac{u}{2vs} + \frac{1}{v} \left(\pa_s B^A - \frac{B^A}{2s}  \right)  + \frac{1}{4} |\nas B^A|^2.
    \end{equation}
    
    Recalling that at $\Gamma^R$, $u \sim -s^{1/2} \rs$ and at $\Gamma^L$, $u \sim s^{1/2} \ls$,
    where $\rs, \ls$ are the positive constants from \eqref{eq:betainitial},
    if the assumptions \eqref{betaLassump}-\eqref{betaRassump} about the positions
    of the shocks hold, then in particular 
\begin{equation}
 \ev^g(B^A - u) \sim
 \begin{cases}-\frac{{\rs^A}}{2(1+v)(1+s)^{1/2}},
 \qquad &\text{ when } \Gamma^A \text{ is spacelike with respect to } g,\\
 \frac{{\rs^A}}{2(1+v)(1+s)^{1/2}},
 \qquad &\text{ when } \Gamma^A \text{ is timelike with respect to } g,
\end{cases}
 \label{signchange}
\end{equation}
with $\rs^L = \ls, \rs^R = \rs$ positive constants.

Since, by the same assumptions, the angular derivatives of $B^A$
are small, from \eqref{glength} it follows the left shock
is timelike with respect to the Minkowski
metric but spacelike with respect to $\mB$, while the right shock is
timelike with respect to $\mB$ but spacelike with respect to $m$.
We record the result of the above calculation.
\begin{lemma}
  \label{causallemma}
	For $g = m, \mB$, we have $g(N_g^{\Gamma^A}, N_g^{\Gamma^A}) = \ev^g (B^A-u)
	+ \frac 14 |\nas B^A|^2$. Explicitly,
  \begin{equation}
   m(N^{\Gamma^A}_m,N^{\Gamma^A}_m) =  \pa_v B^A +{\frac 14}|\nas B^A|^2,
   \qquad
   \mB(N^{\Gamma^A}_{\mB}, N^{\Gamma^A}_{\mB})
   = \pa_v B^A  - \frac{u}{vs}+ {\frac 14}|\nas B^A|^2.
   \label{Nlength}
  \end{equation}
  In particular, if the assumptions \eqref{betaLassump},\eqref{betaRassump}
  about the positions of the shocks hold,
  the left shock is timelike with respect to $m$ and spacelike with
  respect to $\mB$,
 \begin{equation}
  m(N^{\Gamma^L}_m, N^{\Gamma^L}_m) > 0, \qquad
  \mB(N^{\Gamma^L}_{\mB}, N^{\Gamma^L}_{\mB}) < 0,
  \label{causal1}
 \end{equation}
 and the right shock is timelike with respect to $\mB$ and
 spacelike with respect to $m$,
 \begin{equation}
  \mB(N^{\Gamma^R}_{\mB}, N^{\Gamma^R}_{\mB}) > 0,
  \qquad
  m(N^{\Gamma^R}_{m}, N^{\Gamma^R}_m) < 0.
  \label{causal2}
 \end{equation}

 {There is a constant $c_0$ so that if $h$ is a metric with
 $h^{-1} = m^{-1} + \gamma$ where $|\gamma|
 \leq c_0 \frac{1}{1+v}\frac{1}{(1+s)^{1/2}}$ then the same statements \eqref{causal1}, \eqref{causal2}
 hold with $m$ replaced by $h$. In the same way, if
 $h^{-1} = \mB^{-1} + \gamma$ where $|\gamma| \leq c_0 \frac{1}{1+v} \frac{1}{(1+s)^{1/2}}$
 then the same statements hold with $\mB$ replaced by $h$.}
\end{lemma}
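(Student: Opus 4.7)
The plan is to establish the algebraic identities in \eqref{Nlength} by direct specialization of the already-derived formula \eqref{glength}, read off the causal character of each shock by evaluating along $\Gamma^A$ using \eqref{betaLassump}--\eqref{betaRassump}, and then handle the perturbation statement by a direct comparison with the dominant term. The only step requiring genuine care is this last comparison; everything else is computation.

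For the explicit formulas in \eqref{Nlength}, I start from \eqref{glength}, which reads $g(N_g^{\Gamma^A}, N_g^{\Gamma^A}) = \ev^g(B^A - u) + \tfrac{1}{4}|\nas B^A|^2$. Since $B^A$ was extended off the shock by \eqref{BAdef} so that $\pa_u B^A = 0$, and since $\pa_v u = 0$ in the null coordinates, substituting $\ev^m = \pa_v$ gives $\ev^m(B^A - u) = \pa_v B^A$, while substituting $\evmB = \pa_v + \tfrac{u}{vs}\pa_u$ gives $\evmB(B^A - u) = \pa_v B^A - \tfrac{u}{vs}$. This is exactly \eqref{Nlength}.

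For \eqref{causal1}--\eqref{causal2}, I evaluate along $\Gamma^A = \{u = \beta^A_s(\omega)\}$. Using \eqref{derivsofB},
\begin{equation*}
  \pa_v B^A\big|_{\Gamma^A} = \frac{1}{v}\dbeta^A_s
  = \frac{\beta^A_s}{2vs} + \frac{1}{v}\left(\dbeta^A_s - \frac{\beta^A_s}{2s}\right).
\end{equation*}
The bracketed remainder is of size $O(\ve_1 v^{-1}(1+s)^{-1/2})$ by \eqref{betaLassump}--\eqref{betaRassump}, and the angular piece $\tfrac{|\nabla_\omega \beta^A_s|^2}{4 r^2}$ is smaller still (recalling $r \sim v = e^s$), so both are dominated by the leading term $\tfrac{\beta^A_s}{2vs}$ once $s_0$ is large and $\ve_1,\ve_2$ are small compared to $\min(\ls,\rs)$. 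Since $\beta^L_s > 0$ and $\beta^R_s < 0$ by \eqref{eq:betainitial} and the bootstrap, this fixes the sign of $m(N_m^{\Gamma^A}, N_m^{\Gamma^A}) \approx \tfrac{\beta^A_s}{2vs}$, while $\mB(N_\mB^{\Gamma^A}, N_\mB^{\Gamma^A}) = \pa_v B^A - \tfrac{u}{vs} + \tfrac{1}{4}|\nas B^A|^2 \approx -\tfrac{\beta^A_s}{2vs}$ carries the opposite sign. This gives \eqref{causal1} and \eqref{causal2} together, and also reproduces the quantitative asymptotics \eqref{signchange}.

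For the perturbation statement I use the identity $h(N_h^{\Gamma^A}, N_h^{\Gamma^A}) = h^{-1}(\zeta^A, \zeta^A)$, valid whenever $N_h^{\Gamma^A}$ is the $h$-dual of the conormal $\zeta^A$ in \eqref{zetadef}. If $h^{-1} = m^{-1} + \gamma$ with $|\gamma| \leq c_0(1+v)^{-1}(1+s)^{-1/2}$, then $h(N_h^{\Gamma^A}, N_h^{\Gamma^A}) = m(N_m^{\Gamma^A}, N_m^{\Gamma^A}) + \gamma(\zeta^A, \zeta^A)$. From \eqref{zetadef} and \eqref{betaLassump}--\eqref{betaRassump}, the components of $\zeta^A$ are uniformly bounded in the null frame, so $|\gamma(\zeta^A, \zeta^A)| \leq C c_0 (1+v)^{-1}(1+s)^{-1/2}$, whereas the main term has magnitude $\gtrsim \rs^A (1+v)^{-1}(1+s)^{-1/2}$ with a fixed sign. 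Choosing $c_0$ small compared to $\min(\ls,\rs)$ preserves the sign, and running the identical argument with $m$ replaced by $\mB$ handles perturbations of the Burgers metric. The hard part here is simply being careful about the frame in which $|\gamma|$ is measured so that the pointwise hypothesis truly controls the contraction $\gamma(\zeta^A, \zeta^A)$; once that is fixed, the rest is straightforward.
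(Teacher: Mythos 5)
Your proposal is correct and follows essentially the same route as the paper: it specializes the identity \eqref{glength} to $\ev^m = \pa_v$ and $\evmB = \pa_v + \tfrac{u}{vs}\pa_u$ to get \eqref{Nlength}, then evaluates along $\Gamma^A$ via \eqref{derivsofB}, isolating the dominant term $\tfrac{\beta^A_s}{2vs} = \tfrac{u}{2vs}$ and using \eqref{betaLassump}--\eqref{betaRassump} to show the $\dbeta^A_s - \tfrac{1}{2s}\beta^A_s$ and angular contributions are negligible, so the signs of $u \approx \pm \rs^A s^{1/2}$ determine \eqref{causal1}--\eqref{causal2}. Your treatment of the perturbative statement, comparing $\gamma(\zeta^A,\zeta^A)$ (with the conormal's components bounded in the null frame) against the main term of size $\sim \rs^A(1+v)^{-1}(1+s)^{-1/2}$, is the intended argument and is in fact spelled out more explicitly than in the paper, which records this part without proof.
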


We also record for later use that if \eqref{betaLassump}-\eqref{betaRassump}
hold then at the shock $\Gamma^A$
\begin{equation}
 -\frac{1}{2} g(X, N_g^{\Gamma^A})
 \sim
 \begin{cases}
     \frac{1}{4}\left( X^n_g + \frac{{\rs^A}}{{2}(1+v)(1+s)^{1/2}} X^\ell_g\right),
		&\qquad \text{ when } \Gamma^A \text{ is spacelike with respect to } g,\\
        \frac{1}{4}\left( X^n_g - \frac{{\rs^A}}{{2}(1+v)(1+s)^{1/2}} X^\ell_g\right),
		&\qquad \text{ when } \Gamma^A \text{ is timelike with respect to } g,\\
 \end{cases}
 \label{signchange2}
\end{equation}
{again with $\rs^R = \rs, \rs^L = \ls$.}
This follows from the formula \eqref{innerproduct} and \eqref{signchange}.
In particular we note that if $X$ is timelike and future-directed,
in the spacelike case, this quantity is positive-definite
but in the timelike case it may take either sign.

\subsection{The basic structure of the equations}

We assume that $\rho$
is given in terms of the density by an equation of state $p = P(\rho)$.
We will assume that the equation of state satisfies {$P'(1)>0, P''(1)\ne 0$} and $P \in C^{\infty}(\R\setminus \{0\}).$
The enthalpy $w = w(\rho)$ is defined by
\begin{equation}
 w(\rho) = \int_1^{\rho} \frac{P'(\lambda)}{\lambda} d\lambda.
 \label{}
\end{equation}
From Bernoulli's equation, $w$ is determined from $\pa \Phi$ according to
\begin{equation}
 w(\pa\Phi) = -\pa_t \Phi - \frac{1}{2} |\nabla \Phi|^2.
 \label{bern}
\end{equation}
Since $p' >0$ it follows that $\rho \mapsto w(\rho)$ is an invertible function,
which we denote $\rho = \rho(w)$. We then define
$\varrho$ by
$\varrho = \varrho(\pa\Phi) = \rho(w(\pa\Phi))$. For the
convenience of the reader we record that for the ``polytropic''
 equation of state $p(\rho) = \rho^\gamma$ with $\gamma > 1$, we have
\begin{equation}
 w(\rho) = \int_1^\rho \gamma \lambda^{\gamma-2}\, d\lambda
 = \frac{\gamma}{\gamma -1} \left(\rho^{\gamma-1} - 1\right),\qquad
 \rho(w) = \left( \frac{\gamma-1}{\gamma} w + 1\right)^{1/(\gamma-1)}.
 \label{}
\end{equation}

With the above notation, define
\begin{equation}
 H^0(\pa\Phi) = \varrho(\pa \Phi),
 \qquad
 H^i(\pa\Phi) = \varrho(\pa\Phi) \nabla^i\Phi.
 \label{Hdefintro}
\end{equation}
Then the continuity equation takes the form
\begin{equation}
 \pa_\alpha H^\alpha(\pa \Phi) = 0,
 \label{contderivintro}
\end{equation}
with $\pa_\alpha = \pa_{x^\alpha}$ where $x^\alpha$ denote
Cartesian coordinates on $\R^4$, and the jump conditions \eqref{introRH1},
\eqref{introRH2} take the
form
\begin{equation}
 [H^\alpha(\pa\Phi)]\zeta_\alpha = 0,
 \qquad
 [\Phi] = 0.
 \label{RHintro}
\end{equation}

After an appropriate rescaling of the dependent and
independent variables (see Lemma \ref{basicstructure}),
the quantities in \eqref{Hdefintro} take the form
\begin{equation}
  H^\alpha(\pa \Phi)
  = m^{\alpha\beta} \pa_\beta \Phi + \gamma^{\alpha \beta \delta}
  \pa_\beta \Phi \pa_\delta \Phi + G^\alpha(\pa\Phi),
 \label{Hexpintro}
\end{equation}
for constants $ \gamma^{\alpha\beta \delta}$, where $G^\alpha$ is a cubic
nonlinearity,
and where the quadratic terms are of the form
\begin{equation}
 \gamma^{\alpha\beta\delta}\pa_\beta\Phi \pa_\delta \Phi
 = -\delta^{\alpha}_{u} (\pa_u \Phi)^2 + \overline{\gamma}^{\alpha\beta\delta}
 \pa_\beta \Phi \pa_\delta \Phi.
 \label{}
\end{equation}
Here, we are writing
\begin{equation}
 \delta^{\alpha}_u = \delta^{\alpha\beta}\pa_\beta u = \delta^\alpha_0
 - \delta^{\alpha}_i \omega^i, \qquad \overline{\gamma}^{\alpha\beta\delta}
 = \gamma^{\alpha\beta\delta} + \delta_u^\alpha \delta_u^\beta\delta_u^\delta.
 \label{}
\end{equation}
(we have normalized so that  $\gamma^{uuu} = \gamma^{\alpha\beta\delta}
\pa_\alpha u \pa_\beta u\pa_\delta u = -1$).
With $\overline{\gamma}^{u\beta\delta} = \overline{\gamma}^{\alpha\beta\delta}
\pa_\alpha u$, the second term satisfies
\begin{equation}
  |\overline{\gamma}^{u \beta\delta}\pa_\beta \Psi_1 \pa_\delta \Psi_2|
  \lesssim
  |\opa \Psi_1| |\pa \Psi_2| + |\pa \Psi_1| |\opa \Psi_2|,\qquad \opa_\alpha:=\pa_\alpha -
  \pa_\alpha u\,\pa_u
 \label{}
\end{equation}
with $\overline{\gamma}^{u\beta\delta} = \overline{\gamma}^{0\beta\delta}
-\omega_i \overline{\gamma}^{i\beta\delta}$,
and the coefficients $\overline{\gamma}^{\alpha\beta\delta}
= \gamma^{\alpha\beta\delta} - \delta^{\alpha \alpha'}\gamma^{\alpha''\beta\delta}\pa_{\alpha'} u
\pa_{\alpha''} u$ satisfy the bound
\begin{equation}
 (1+v)^k|\pa^k \overline{\gamma}^{\alpha\beta\delta}|\lesssim
 1,\qquad \text{ when }|u| \leq \min(t/10, 1).
 \label{gammasymbs}
\end{equation}
Therefore the continuity equation takes the form
\begin{equation}
 \pa_\alpha (m^{\alpha\beta }\pa_\beta \Phi)
 - \pa_u (\pa_u\Phi)^2 + \pa_\alpha \left(\overline{\gamma}^{\alpha\beta\delta}
 \pa_\beta \Phi \pa_\delta \Phi\right) + \pa_\alpha G^\alpha(\pa\Phi) = 0,
 \label{contintro}
\end{equation}
and with $\zeta$ as in \eqref{zetadef}, the first jump condition in \eqref{RHintro}
reads
\begin{equation}
 [\pa_v \Phi ] - [\pa_u\Phi] \pa_v B + \frac 12[\nas_i\Phi] \nas^iB
 + [\gamma^{\alpha}(\pa \Phi)]\zeta_\alpha = 0.
 \label{basicRH}
\end{equation}
See Section \ref{bcbootstrapsection}.

\subsection{The wave equation for the perturbations}
Our results are more natural to state in terms of the variable
$\Psi = r\Phi$. The equation \eqref{contintro} then
takes the form
\begin{equation}
 -4\pa_u\pa_v \Psi + \sDelta \Psi + \pa_\mu (\gamma^{\mu\nu}(\pa \Phi)
 \pa_\nu \Psi )= F.
 \label{}
\end{equation}
We expand $\Psi = \Sigma + \psi$ where $\Sigma$ is the model shock profile
\begin{equation}
 \Sigma =\begin{cases}
\frac{u^2}{2s},\qquad \text{ in } D^C,\\
0,\qquad \text{ otherwise. } \end{cases}
 \label{introsigmadef}
\end{equation}
By Lemma \ref{higherorderexterioreqns}, in the exterior regions (where $\Sigma$ vanishes),
the perturbation $\psi$ satisfies
the following quasilinear perturbation of the usual Minkowskian wave equation,
\begin{equation}
   -4\pa_u\pa_v\psi + \sDelta\psi + \pa_\mu (\gamma^{\mu\nu}(\pa\phi) \pa_\nu \psi)
   + \pa_\mu Q^\mu = F,
 \label{}
\end{equation}
where $\phi = \psi/r$ and where $Q, F$ are given in Lemma
\ref{higherorderexterioreqns}.

In the region between the
shocks, the model shock profile contributes a non-perturbative top-order term
and the perturbation $\psi$ instead satisfies an equation of the form
\begin{equation}
  -4\pa_u\left(\pa_v  + \frac{u}{vs} \pa_u\right)\psi
  +\pa_\mu (\mBB^{\mu\nu} \pa_\nu \psi) + \sDelta \psi
  + \pa_\mu (\gamma^{\mu\nu}(\pa\phi) \pa_\nu \psi)
  + \pa_\mu Q^\mu = F,
 \label{setupmBeqn}
\end{equation}
where $P, F$ are given in Lemma \ref{higherorderexterioreqns} and where
$\pa_\mu (\mBB^{\mu\nu} \pa_\nu \psi)$ involves linear terms which can be
treated perturbatively.
The equation \eqref{setupmBeqn} is a quasilinear perturbation of the wave equation
with respect to the metric $\mB$ from \eqref{mBdef}.

\subsection{The commutator fields in each region}
\label{vfsection}
In the exterior regions $D^L, D^R$, we will commute the continuity equation
\eqref{contintro} with the usual family of Minkowski vector fields,
\begin{equation}
 \Z = \{\pa_\alpha, \Omega_{ij}, \Omega_{0i}, S\},
 \label{Zdefs}
\end{equation}
where $\pa_\alpha$ denotes differentiation with respect to the usual
rectangular coordinate system on $\R^4$ and
\begin{equation}
 \Omega_{ij} = x_i\pa_j - x_j \pa_i,
 \qquad
 \Omega_{0i} = t\pa_i + x_i \pa_t,
 \qquad
 S = x^\alpha\pa_\alpha.
 \label{Zdef}
\end{equation}
% We record here the well-known identities
% \begin{equation}
%  \pa_u = \frac{1}{2} \frac{S - \omega^i\Omega_{0i}}{u}, \qquad
%  \pa_v = \frac{1}{2} \frac{S + \omega^i \Omega_{0i}}{v}.
%  \label{paudivide}
% \end{equation}
It is well-known that these vector fields form an algebra and
satisfy the following commutation
properties with the Minkowskian wave operator $\Box = -\pa_t^2 + \delta^{ij}\pa_i\pa_j$,
\begin{equation}
 Z \Box q - \Box Z q = c_Z q,
 \qquad \text{ where } c_S = -2, \quad c_Z = 0 \text{ otherwise. }
\end{equation}
% We also record the fact that
% \begin{equation}
%  [\pa_u, S] = \pa_u, \qquad [\pa_u, \Omega_{ij}] = 0,
%  \qquad
%  [\pa_u, \Omega_{0i}] = \frac{1}{r^2} c_i^{\Omega}\cdot \Omega
%  \label{}
% \end{equation}
% for functions $c_i^\Omega$ which are smooth away from $r = 0$ and
% satisfy $(1+v)^k|\pa^k c|\lesssim 1$.

In the region between the shocks, we will work with the family
\begin{equation}
 \mZB = \{\Omega_{ij}, \sBo = s\pa_u, \sBt = v\pa_v\},
 \label{ZBdef}
\end{equation}
which spans the tangent space at each point.
The field $\sBo$ satisfies
\begin{equation}
 \sBo \evmB q - \evmB \sBo q = 0,
 \label{}
\end{equation}
and so it commutes with the spherically-symmetric part of the equation
in the central region,
\begin{equation}
 \sBo \pa_u \evmB q-\pa_u \evmB \sBo q = 0.
 \label{}
\end{equation}
The field $\sBt$ satisfies
\begin{equation}
 \sBt \evmB q - \evmB \sBt q = -\evmB - \frac{u}{vs^2} \pa_u,
 \label{}
\end{equation}
so that in particular,
\begin{equation}
 \sBt \pa_u\evmB q - \pa_u\evmB \sBt q = -\pa_u \evmB q
 - \pa_u \left( \frac{u}{vs^2} \pa_uq\right).
 \label{}
\end{equation}

\subsection{Volumes and areas}
\label{volumesec}

In what follows, unless mentioned explicitly, all integrals over
spacetime regions are taken with respect to the measure
$d\widehat{\mu} = \frac{1}{r^2}dxdt$ as opposed to the standard $d\mu = dxdt$.
We have made this choice because we will be working in terms of the rescaled variables
 $\psi = r \phi$ and this simplifies many of the integration-by-parts identities
 we will encounter.

 As a result, all the surface integrals we encounter
 are taken with respect to the
 surface measure induced by $d\widehat{\mu}$. We will let
 $dS$ denote the induced surface measure on the spheres $\Gamma^A_t$.
 At each time $t$, $\Gamma^A_t$ is the graph over $\mathbb{S}^2$
 of the function $r^A(t, \omega)$, which is defined by the relation
 \begin{equation}
   \label{}
     t - r^A(t, \omega) = \beta^A_s(\omega),\qquad
     s = \log(t + r^A(t,\omega)).
 \end{equation}
 Under the assumptions \eqref{betaLassump}-\eqref{betaRassump}
 on $\beta^A_s(\omega)$, it follows
 that $dS$ is equivalent to $dS(\omega)$, the usual surface
 measure on the unit sphere $\mathbb{S}^2$,
 \begin{equation}
   \label{volumeformula}
     dS \sim dS(\omega).
 \end{equation}

\section{Multiplier identities}
\label{generalenergies}

The goal of this section is to collect the basic identities we will use to
construct energies for the continuity equation \eqref{introHeqn}.
We consider a linear wave equation of the form
\begin{equation}
  \pa_\mu(h^{\mu\nu}\pa_\nu \psi) + \pa_\mu P^\mu =  F,
  \label{modelwave}
\end{equation}
in a region $D$.
Here, and for the remainder of this section, the indices $\mu, \nu$
refer to quantities expressed in the following (Minkowskian) null
coordinate system,
\begin{equation}
 x^0 = u = t -r, \quad
 x^1 = v = t +r, \quad
 x^2 = \theta^1, \quad x^3 = \theta^2,
 \label{minknull0}
\end{equation}
where $(\theta^1, \theta^2)$ are an arbitrary local coordinate system
on the unit sphere $\mathbb{S}^2$. For our applications,
in the exterior regions the metric $h$ will take the form
will either take the form $h^{\mu\nu} = m^{\mu\nu} + \gamma^{\mu\nu}$
where $m^{\mu\nu}$ denote the components of the reciprocal
of the Minkowski metric.
\begin{equation}
  m = -dudv + \frac{1}{4} (v-u)^2 d \sigma_{\S^2}.
\end{equation}
We note that with our conventions, the Minkowskian wave operator takes the form
\begin{equation}
  \label{}
    \pa_\mu(m^{\mu\nu} \pa_\nu \psi) = -4\pa_u\pa_v \psi + \sDelta \psi.
\end{equation}

In the region between the shocks, the metric $h$ will take the form
$h^{\mu\nu} = \widetilde{\mB}^{\mu\nu} + \gamma^{\mu\nu}
= \mB^{\mu\nu} + \gamma^{\mu\nu}_a + \gamma^{\mu\nu}$,
where
$\mB^{\mu\nu}$ denote the components of the reciprocal of the metric
\begin{equation}
 \mB = -dudv + \frac{u}{vs} dv^2 + \frac{1}{4} (v-u)^2 d \sigma_{\S^2}.
\end{equation}
and where  $\gamma_a^{\mu\nu} = \frac{u}{vs} a^{\mu\nu}$. Here,
the $a^{\mu\nu} = a^{\mu\nu}(u,v,\omega)$ are smooth functions
satisfying the symbol condition $(1+v)^k|\pa^k a|\lesssim 1$ as well as the null condition
\begin{equation}
	a^{\mu\nu}\pa_\mu u\pa_\nu u = 0.
 \label{intronullcondn0}
\end{equation}
If $\overline{\xi}$ denotes projection of a one-form $\xi$ away from the cotangent
space to $\{u = const.\}$,
\begin{equation}
 \overline{\xi}_\mu = \left(\delta_\mu^\nu - \tfrac{1}{2} \delta^{\nu\nu'} \pa_{\nu'} u \pa_\mu u\right)\xi_\nu,
 \qquad
 |\overline{\xi}|\lesssim |\xi_v| + |\slashed{\xi}|,
 \label{xibardef}
\end{equation}
where $\slashed{\xi}$ denotes the angular part of $\xi$,
then for any one-forms $\xi, \tau$, writing $a(\xi, \tau) = a^{\mu\nu}\xi_\mu \tau_\nu$,
we have
$a(\xi, \tau) = a(\overline{\xi}, \tau) + a(\xi, \overline{\tau})$.
In particular, \eqref{intronullcondn0} implies the bound
\begin{equation}
 |a(\xi, \tau)| \lesssim |\overline{\xi}| |\tau| + |\xi| |\overline{\tau}|.
 \label{intronullcondn}
\end{equation}

For any symmetric (2,0)-tensor $g$ and a vectorfield $X$, define the energy current $J_{X, g}$ by
\begin{equation}
 J_{X,g}^\mu[\psi] =
 g^{\mu\nu}\pa_\nu \psi X\psi -
  \frac{1}{2} g^{\alpha\beta}\pa_\alpha \psi\pa_\beta\psi X^\mu
 \label{energycurrent}
\end{equation}
and the scalar current $K_{X, g}$ by
\begin{equation}
 K_{X,g}[\psi] =
 \frac{1}{2} \pa_\alpha (g^{\mu\nu} X^\alpha)
 \pa_\mu \psi \pa_\nu \psi - \pa_\mu X^\alpha g^{\mu\nu}
 \pa_\nu \psi \pa_\alpha \psi
 \label{scalarcurrent}
\end{equation}
Then we have the basic identity
\begin{equation}
 \pa_\mu(g^{\mu\nu}\pa_\nu \psi) X\psi = \pa_\mu J^\mu_{X, g} + K_{X, g}.
 \label{JKrelation}
\end{equation}
To keep track of lower-order  terms, it is helpful to introduce
\begin{equation}
 J_{X, g, P}^\mu[\psi] = J_{X, g}^\mu + P^\mu X\psi - X^\mu P\psi
 \label{JPdef}
\end{equation}
and
\begin{equation}
 K_{X, g, P}[\psi] = K_{X, g}[\psi] +
 {(X^\alpha\pa_\alpha P^\mu - P^\alpha\pa_\alpha X^\mu)\pa_\mu \psi + (\pa_\alpha X^\alpha) P\psi},
 \label{KPdef}
\end{equation}
which are defined so that
\begin{equation}
 \pa_\mu( g^{\mu\nu}\pa_\nu \psi + P^\mu) X\psi
 = \pa_\mu J_{X, g, P}^\mu
 + K_{X, g, P}.
 \label{mainlinident}
\end{equation}
If $\zeta$ is any one-form with $|\zeta| \leq 1$, the energy current
$J_{X, g, P}$ satisfies
\begin{align}
 |\zeta(J_{X, g,P})|
 &\lesssim |g| |\pa \psi| |X\psi| + |\zeta(X)| |g(\pa \psi, \pa \psi)|
 + |P| |X\psi| + |\zeta(X)| |P| |\psi| \label{energycurrentbd0}\\
 &\lesssim |g| |X| |\pa\psi|^2
 + |P| |X| |\pa\psi|
 \label{zetaJbound0}
\end{align}
where $g(\pa\psi, \pa \psi) = g^{\mu\nu}\pa_\mu\psi\pa_\nu\psi$
and where $\zeta(X)$ denotes the usual action of a one-form on a vector field.
 The first bound will be useful along the shocks.
 The scalar current satisfies the bound
 \begin{equation}
  |K_{X, g, P}| \lesssim \left( |\pa g| |X| + |g||\pa X| \right) |\pa \psi|^2
	+ (|\pa P| |X| + |P| |\pa X|)|\pa \psi|
  \label{naiveKbd}
 \end{equation}

For our applications, we will need to keep better track of the structure
of $K$. It is convenient to work in terms of covariant
derivatives $\nabla_X = X^\mu \nabla_\mu$ defined relative
to the Minkowksi metric.
We write
\begin{multline}
 K_{X, g, P}
 =
 \frac{1}{2} (\nabla_X g^{\mu\nu})\pa_\mu \psi \pa_\nu \psi+ \frac{1}{2} \pa_\alpha X^\alpha g^{\mu\nu}\pa_\mu \psi
 \pa_\nu \psi
 - \pa_\mu X^\alpha g^{\mu\nu}\pa_\nu \psi
 \pa_\alpha \psi\\
 + \nabla_X P^\mu \pa_\mu \psi
 - P^\alpha\pa_\alpha X^\mu \pa_\mu \psi
 + (\pa_\alpha X^\alpha)P\psi
 \\
 + X^\alpha\left(\Gamma_{\mu'\alpha}^\nu g^{\mu \mu'} + \Gamma_{\mu'\alpha}^\mu g^{\nu\mu'}\right)
 \pa_\mu\psi\pa_\nu \psi
 + X^\alpha \Gamma^{\mu}_{\nu \alpha} P^\nu \pa_\mu \psi,
 \label{KXgammacov}
\end{multline}
where the Christoffel symbols (relative to the null coordinate system $(u,v,\theta^1, \theta^2)$) $\Gamma_{\nu\alpha}^\mu$ satisfy $|\Gamma|\lesssim
\frac{1}{r}$.
In what follows, we just consider the case of
a spherically-symmetric multiplier $X$,
\begin{equation}
 X = X^u(u, v)\pa_u + X^v(u,v)\pa_v,
 \label{}
\end{equation}
and in this case we have
\begin{align}
 |K_{X, g, P}|
 &\lesssim
 |(\nabla_X g)(\pa \psi, \pa \psi)|
 + |\pa X| |g(\pa \psi, \pa \psi)|
 + |g||\pa \psi| \left(|\pa X^u| |\pa \psi| + |\pa X^v| |\pa_v \psi|\right)\\
 &+ |(\nabla_X P)\cdot \pa \psi|
 + |\pa X| |P\psi|
 + |P| \left(|\pa X^u| |\pa \psi| + |\pa X^v| |\pa_v \psi|\right)\\
 &+ \frac{1}{r} |X| \left( |g| |\pa \psi|^2 + |P| |\pa \psi|\right).
 % + \frac{1}{r} |\gamma| |X| |\pa \psi|^2
 % + |\gamma| \left(|\pa X^u| |\pa_u\psi| + |\pa X^v| |\pa_v \psi|\right)|\pa \psi|
 % \\
 % +
 % (|\nabla_X P| + |\pa X| |P|)|\pa \psi| + \frac{1}{r} |P||X||\pa \psi|.
 \label{scalarcurrentbd0}
\end{align}
We also note at this point that it is possible to write the above in terms
of the Lie derivative of $g$. If we return to \eqref{scalarcurrent} and \eqref{KPdef}
and recall from the definitions that $X^\alpha \pa_\alpha g^{\mu\nu} =
\mathcal{L}_X g^{\mu\nu} + g^{\mu\alpha}\pa_\alpha X^\nu + g^{\nu\alpha}\pa_\alpha X^\mu$
and that $\mathcal{L}_X P^\mu = X^\alpha\pa_\alpha P^\mu - P^\alpha\pa_\alpha X^\mu$,
we find instead the bound
\begin{equation}
 |K_{X, g, P}|
  \lesssim |(\mathcal{L}_X g)(\pa \psi, \pa \psi)| + |\pa X| |g| |\pa \psi|^2
	+ |\mathcal{L}_X P| |\pa \psi|,
  \label{scalarcurrentbd0Lie}
\end{equation}
which we will use near $r = 0$ in place of \eqref{scalarcurrentbd0}
to avoid spurious singularities at $r = 0$.

\subsection{The energy-momentum tensor}

Given a metric $h$, define the energy-momentum tensor
\begin{equation}
 Q^h[\psi](X, Y) = h(J_{X, h}[\psi], Y)
 = X\psi Y\psi - \frac{1}{2} h(X, Y) h^{-1}(\pa \psi, \pa \psi).
 \label{Qdef}
\end{equation}
We will frequently drop $\psi$ from the notation
and just write $Q^h(X, Y)$.
For a vector field $P$ we also set
\begin{equation}
 Q^h_P(X, Y) = Q^h(X, Y) + h(P, Y) X\psi - h(X, Y) P\psi.
 \label{QPdef}
\end{equation}
With notation as in \eqref{JPdef}, \eqref{KPdef},
integrating the identity
\begin{equation}
 \pa_\mu \left( h^{\mu\nu} \pa_\nu \psi + P^\mu\right)X\psi =
 \pa_\mu J_{X, h, P}^\mu + K_{X, h, P}
 \label{basicnlident}
\end{equation}
over a region $D$ bounded by two time slices and a lateral boundary
$\Gamma$ and using Lemma \ref{divthm2}, we have the following
integral identity which will be used to get energy estimates.
\begin{lemma}
  \label{integralident}
  Fix a metric $h$, vector fields $X, P$ and define $Q^h_P$ as in \eqref{Qdef}-\eqref{QPdef}.
 Let $D = \cup_{t_0 \leq t \leq t_1} D_t$ be a region bounded by a
 (possibly empty) timelike boundary $\Gamma_{-}$  and a (possibly empty)
 spacelike boundary $\Gamma_{+}$,  lying to the future of $D_{t_0}$.
 Suppose that either $\{r = 0\}$ is
 not contained in $D$, or else that $\{r = 0\}$ is contained in
 $D$ and $\lim_{r\to 0}(h^{rr} X^r) = 0$. For a spacelike surface
 $\Sigma$, let $N_h^\Sigma$ denote the future-directed
 normal vector field to $\Sigma$ defined relative to the metric $h$,
 and for a timelike surface $\Sigma$, let $N_{h}^\Sigma$ denote
 the inward-pointing normal vector field defined relative to $h$.
 Suppose that $\lim_{r\to 0} |\psi/r| <\infty$.
 Then the following identity holds
 \begin{multline}
  \int_{D_{t_1}} Q^h_P(X, N_h^{D_{t_1}})
  +\int_{\Gamma_+} Q^h_P(X, N_h^{\Gamma_+})
  - \int_{\Gamma_-} Q^h_P(X, N_h^{\Gamma_-})
  - \int_{t_0}^{t_1} \int_{D_t} K_{X, h, P}\\
  =
  \int_{D_{t_0}} Q^h_P(X, N_h^{D_{t_0}})
  +
  \int_{t_0}^{t_1} \int_{D_t} -\pa_\mu( h^{\mu\nu} \pa_\nu \psi + P^\mu) X\psi
  \label{stokesident}
\end{multline}
 where all integrals over $D$ are taken with respect to the measure
 $dudv d\sigma_{\S^2} = \tfrac{1}{r^2}dxdt $, the integrals over the boundary terms are taken
 with respect to the induced surface measure, and $K_{X, h, P}$ is defined
 in \eqref{KPdef}.
\end{lemma}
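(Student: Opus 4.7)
The plan is to derive the identity \eqref{stokesident} by integrating the pointwise identity \eqref{basicnlident} over the region $D$ against the measure $du\,dv\,d\sigma_{\mathbb{S}^2} = \tfrac{1}{r^2}dx\,dt$ and invoking the divergence theorem (Lemma \ref{divthm2}) to convert $\int_D \pa_\mu J_{X,h,P}^\mu$ into boundary integrals over $D_{t_0}, D_{t_1}, \Gamma_{+}, \Gamma_{-}$. The pointwise identity itself requires no work since \eqref{basicnlident} is simply the combination of \eqref{JKrelation} and \eqref{KPdef}, rearranged so that the flux current $J_{X,h,P}$ and scalar current $K_{X,h,P}$ absorb the lower-order term $\pa_\mu P^\mu$. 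Once we apply Stokes, the boundary integrals appear in the form $\int_\Sigma \zeta(J_{X,h,P})$ for $\zeta$ the conormal; the task is then to rewrite them as $Q^h_P(X, N_h^\Sigma)$ and fix the signs.

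First I would handle the spacelike boundaries. On $D_{t_j}$ and on $\Gamma_+$, the future-directed unit conormal $\zeta$ is proportional to $h(N_h,\cdot)$ where $N_h$ is the future-directed $h$-normal. Since $h(J_{X,h},Y) = Q^h(X,Y)$ by the definition \eqref{Qdef}, and the $P$-correction in \eqref{JPdef} matches exactly the $P$-correction in \eqref{QPdef}, we get $\zeta(J_{X,h,P}) = Q^h_P(X, N_h^\Sigma)$ once the induced volume elements from $du\,dv\,d\sigma$ are identified with the $h$-induced surface measures (these match up to the conventions built into $N_h^\Sigma$, which encodes both the direction and the normalization). On the timelike boundary $\Gamma_-$ we use the inward-pointing $h$-normal, which flips the sign of the contribution relative to the divergence theorem's outward convention, producing the minus sign in front of $\int_{\Gamma_-} Q^h_P(X,N_h^{\Gamma_-})$ in \eqref{stokesident}.

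The main obstacle is the behavior at $\{r=0\}$. The null coordinate system $(u,v,\theta)$ degenerates there, and if $\{r=0\}$ lies inside $D$ then the divergence theorem as applied naively produces a spurious boundary contribution from the coordinate singularity; this must be shown to vanish. To control it, I would work in a truncated region $D\cap \{r\geq \delta\}$ and examine the flux through $\{r=\delta\}$ as $\delta \to 0$. The integrand is of the form $h^{r\nu}\pa_\nu\psi \cdot X\psi$, whose dangerous piece is $h^{rr} X^r (\pa_r \psi)^2$ together with similar terms in $X\psi\cdot\pa_v\psi$ and $X\psi\cdot\pa_u\psi$. Writing $\psi = r\phi$ with $\phi$ bounded up to $r=0$ (as guaranteed by $\lim_{r\to 0}|\psi/r|<\infty$), the derivatives $\pa \psi$ remain bounded, while the induced measure on $\{r=\delta\}$ carries a factor $\delta^2 /r^2 = 1$ relative to $d\sigma_{\mathbb{S}^2}$ (recall we integrate with respect to $r^{-2}dx\,dt$). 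The hypothesis $\lim_{r\to 0}(h^{rr}X^r) = 0$ is then exactly what forces the dominant $h^{rr}X^r(\pa_r\psi)^2$ contribution to vanish in the limit, while the remaining terms involve tangential $\pa_u, \pa_v$ derivatives whose coefficients also degenerate appropriately. A parallel argument shows the $P$-corrections vanish as well, since they are linear in $\pa\psi$ and $\psi/r$ and carry the same $h^{r\nu}X^r$-type coefficients. Using the alternative form \eqref{scalarcurrentbd0Lie} for $K$ avoids introducing fake $1/r$ singularities in the interior integrand from the Christoffel symbols in \eqref{KXgammacov}.

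Passing to the limit $\delta\to 0$ in the truncated identity then yields \eqref{stokesident} exactly as stated.
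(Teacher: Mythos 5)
Your proposal is correct and follows essentially the same route as the paper: integrate the pointwise identity \eqref{basicnlident} with respect to $r^{-2}dx\,dt$, apply the weighted divergence theorem (the paper's Appendix lemmas \ref{stokesapp}, \ref{Jvanishorigin}, \ref{divthm2}), identify the boundary fluxes with $Q^h_P(X,N_h^\Sigma)$ with the sign conventions for the future-directed and inward-pointing normals, and kill the flux through $\{r=\delta\}$ as $\delta\to 0$ by writing $\psi=r\varphi$, where the only surviving term carries the coefficient $h^{rr}X^r$ and vanishes by hypothesis. This matches the paper's computation in the proof of Lemma \ref{divthm2}, so no further comment is needed.
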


\subsection{Modified multiplier identites}
\label{modmultsec}

For our estimates, we will be considering multipliers $X = X^v \pa_v + X^u \pa_u$
where the coefficient $X^v$ is much larger than $X^u$.
This causes an issue when closing the nonlinear estimates because will not be
able to control bulk error terms of the form $|\pa X^v| |\gamma| |\pa \psi|^2$,
which are present in $K_{X, \gamma}$.
For our applications, $X^v \sim v$ (or larger) and thinking of $|\gamma|
\sim \frac{1}{v} |\pa \psi|$, controlling such a term uniformly in time would require
a bound of the form $\int_{t_0}^{t_1} \frac{1}{1+t} \|\pa \psi(t)\|_{L^\infty}\, dt
\lesssim \epsilon^{1/2}$. We only expect to have such a bound with
$\pa \psi$ replaced by $\pa^2\psi$.

These bad terms can be traced back to
the terms $\pa_u (\gamma^{uu}\pa_u \psi)X^v \pa_v \psi$
and $\pa_v(\gamma^{vu}\pa_u\psi)X^v \pa_v \psi$ in \eqref{scalarcurrent}.
To handle terms of this type,
we need to proceed more carefully and the idea is to exploit the fact that
the combination $\pa_u\pa_v \psi$ is expected to be better-behaved than a
generic second-order derivative $\pa^2 \psi$. To highest order,
this combination is already
present in the second term mentioned above and after integrating by parts
it is also present in the first order term. Using the equation for
$\pa_u\pa_v\psi$, we generate additional terms involving either
$\sDelta \psi$ (which is expected to be better behaved than a generic
second-order derivative), or nonlinear terms.

This leads to a modified version of the identity
\eqref{basicnlident} for perturbations $h^{-1} = g^{-1} + \gamma$ of $g \in\{m, \mB\}$,
\begin{equation}
 \left( \pa_\mu(h^{\mu\nu}\pa_\nu \psi) + \pa_\mu P^\mu\right) X\psi
 = \pa_\mu J_{X, g, P}^\mu + K_{X, g, P}
 +\pa_\mu \mJ_{X, \gamma, P}^\mu + \mK_{X, \gamma, P},
 \label{mainmodifiedident}
\end{equation}
where the modified energy current $\mJ$ and scalar current
$\mK$ satisfy better bounds than those in \eqref{zetaJbound0}, \eqref{naiveKbd}.

This calculation is carried out in Section
\ref{modifiedproofsecm}. The quantities $\mJ$ and $\mK$ have rather complicated
expressions (see \eqref{explicitmJ}-\eqref{explicitmK} in the Minkowskian case
and \eqref{explicitmJmB}-\eqref{explicitmKmB} for the version in the central region)
and in this section we will just record the estimates for these quantities
that we will need. These estimates and formulas
are proved in
Propositions \ref{mainminkidentprop} and \ref{mainmBidentprop}.

For our applications, we will be using \eqref{mainmodifiedident}
with $\psi$ replaced with $Z^I\psi$ for a product of vector fields
$Z^I$ and in that case, $\gamma$ and $P$ will be of the form
\begin{equation}
 \gamma \sim \frac{1}{1+v} \pa \psi,
 \qquad
 P \sim \frac{1}{1+v} \pa Z^{I_1} \psi \cdot \pa Z^{I_2} \psi,
 \qquad |I_1| + |I_2| \leq |I|-1
 \label{gammaPheur}
\end{equation}
To prove our estimates, we will assume some bounds for the quantities
$\gamma$ and $P$ which are designed to capture the expectation that
they are of the form \eqref{gammaPheur} and that our bootstrap assumptions
hold; see in particular Lemmas \ref{timeintegrability-right},
\ref{timeintegrability-center} and \ref{timeintegrability-left}.

\subsubsection{Assumptions on perturbative quantities}
\label{pertsec}
We fix a metric $g \in \{m, \mB\}$ and a multiplier $X = X^n_g n + X^\ell_g \ell^g$
with $X^n_g, X^\ell_g \geq 0$. We will only consider vector fields $X$ satisfying
$X^n_g \leq X^\ell_g$. We define
\begin{equation}
 |\pa \psi|_{X,g}^2 = X^\ell_g \left(|\ell^g \psi|^2 + |\nas \psi|^2\right) + X^n_g |n\psi|^2.
 \label{Xnorm0}
\end{equation}
% We will assume that
% \begin{equation}
%  \int_{D_t} |\pa \psi|^2_{X,g} =
%  \int_{D_t} X^\ell_g \left(|\ell^g \psi|^2 + |\nas \psi|^2\right)
%  + X^n_g |n\psi|^2 \leq \epsilon.
%  \label{enassump}
% \end{equation}

We assume that for $\epsilon$ sufficiently small, the perturbation
$\gamma$ and our multipliers satisfy the bound
\begin{align}
&|\gamma| \leq \epsilon\frac{1}{(1+v)(1+s)^{1/2}},
\qquad
X^\ell_g |\gamma| \leq \epsilon X^n_g.
\label{pert1}
\end{align}
Note that \eqref{pert1} implies that
\begin{equation}
 |X||\gamma||\pa \psi|^2 \leq \epsilon |\pa \psi|_{X,g}^2.
 \label{pert1useful}
\end{equation}
which will be used to handle many of our error terms. We will also assume
that the initial time $t_0$ has been chosen sufficiently large,
\begin{equation}
 \frac{1}{t_0} \leq \epsilon_0,
 \label{largestart0}
\end{equation}
which will be needed to absorb some error terms in the central region.

For most of our multipliers $X$, the first bound in \eqref{pert1} will automatically
imply the second bound there. It is only for the estimate
in the leftmost region that the last bound in \eqref{pert1} is
actually needed, but it makes proving the estimates more convenient.

\subsection{Estimates for the modified scalar and energy currents in the
exterior regions}

We now collect some estimates for the quantities $\mJ, \mK$ that will
be used in $D^R$ and $D^L$.
In $D^R$ we will need to multiply by the field $X_R$ and in
$D^L$ we will need to multiply by both $X_L$ and $X_M$, where
\begin{equation}
 X_R = (1 + |u|)^\mu \pa_t + u\pa_u + v\pa_v,
 \qquad
 X_L = uf(u)\pa_u + vf(v) \pa_v,\qquad
 X_M = ((g(r) +1))(\pa_v - \pa_u),
 \label{multiplierslocal}
\end{equation}
where $\mu > 0$ and where, with $\alpha$ as in \eqref{parameters},
\begin{equation}
 f(z) = \log z(\log \log z)^{\alpha},\qquad
 g(z) = (\log(1+z))^{1/2})f(\log(1+z)).
 \label{fgdeflocal}
\end{equation}

Note that by
contrast with \eqref{mainlinident}, which involves only a reference metric $g$,
the right hand side of the identity below is expressed in
terms of both $g = m$ and the perturbation $\gamma=h-m$.
\begin{prop}
	 \label{effectivemmmink}
	 Suppose that $\psi$ satisfies \eqref{modelwave} in either
	 $D^L$ or $D^R$ and set
	 $\gamma^{\mu\nu} = h^{\mu\nu} - m^{\mu\nu}$.
	 Let $X$ denote any of $X_L, X_M, X_R$ as defined in
	 \eqref{multiplierslocal},
	and suppose that $\gamma$ satisfies the conditions in \eqref{pert1} for some $\epsilon > 0$.
	  Then
	 \begin{equation}
	 \left( \pa_\mu(h^{\mu\nu}\pa_\nu \psi) + \pa_\mu P^\mu\right) X\psi
	 = \pa_\mu J_{X, m}^\mu + K_{X, m}
	 +\pa_\mu \mJ_{X, \gamma, P}^\mu + \mK_{X, \gamma, P},
	  \label{}
	 \end{equation}
	 where the perturbed energy current $\mJ^\mu_{X, \gamma, P}$ satisfies
	 the following bounds. With notation as
	 in \eqref{Xnorm0}, if $\zeta$ is any one-form with $|\zeta| \leq 1$,
	 if $|u| \leq v/8$ then for any $\delta > 0$,
	 \begin{multline}|\zeta(\mJ_{X, \gamma, P})|
	\lesssim
	 \delta |X^\ell_m||\evm \psi|^2
	+ \left(1 + \frac{1}{\delta}\right)|\gamma| |\pa \psi|_{X,m}^2
	+  |\zeta(X)| |\gamma| |\pa \psi|^2
	+
	|\slashed{\zeta}|^2 |\pa \psi|_{X,m}^2
	\\
  +  \left(1 + \frac{1}{\delta}\right) |X| |P|^2
  +  |X^n_m|^{1/2} |P| |\pa \psi|_{X, m}.
		% \delta X^\ell_m |\ell^m \psi|^2 + \delta X^\ell_m |\slashed{\zeta}|^2 |\nas \psi|^2 + (1+ \delta^{-1})|\gamma||\pa \psi|^2_{X,m}
		%  + |\gamma| |\zeta(X)| |\pa \psi|^2\\
		% +(1+\delta^{-1} )X^\ell_m |P|^2 + (X^n_m)^{1/2} |P| |\pa \psi|_{X, m},
	  \label{zetaclose}
	 \end{multline}
	 and if $|u| \geq v/8$, then
	 \begin{equation}
	  |\zeta(\mJ_{X, \gamma, P})| \lesssim
		|X||\gamma||\pa \psi|^2 + |X||P||\pa \psi|.
	  \label{zetafar}
	 \end{equation}

The modified scalar current $\mK$ satisfies the following bounds. If $|u| \leq v/8$ then
\begin{multline}
	 |\mK_{X, \gamma, P}|\lesssim
	 \left( |\nabla \gamma| + \frac{1}{1+|u|} |\gamma|
 	+ \frac{|X^\ell_m|^{1/2}}{|X^n_m|^{1/2}} |\nabla_{\evm} \gamma|
	+ \frac{|X^\ell_m|^{1/2}}{|X^n_m|^{1/2}} |\nas \gamma|
 	\right)  |\pa \psi|_{X, m}^2
 	+ |X^n_m| |F|  |\pa \psi|_{X, m}\\
	+ \left(|\nabla P| + \frac{|P|}{1+|u|} +
	\frac{|X^\ell_m|^{1/2}}{|X^n_m|^{1/2}} |\nabla_{\evm} P|
	+|X^\ell_m| |\nas P| \right) |X^n_m|^{1/2} |\pa \psi|_{X, m}
	\\
  +
   |P| |\pa_u X^v| |\evm \psi|
	+
	|P||X|  \left( |F| + \frac{1}{1+v} |P|\right)
	 \label{modifiedKbound0}
	\end{multline}
		and in the region $|u| \geq v/8$, we instead have
	\begin{equation}
	 |\mK_{X, \gamma, P}|
	 \lesssim |\nabla \gamma | |X| |\pa \psi|^2 + |\gamma| |\pa X| |\pa \psi|^2
	 + |\nabla P| |X| |\pa \psi|
	 +
	  \left(\frac{1}{r} +\frac{1}{1+v}\right)|X| \left(|\gamma| |\pa \psi|^2+ |P| |\pa \psi|\right)
	 \label{trivialKbound0}
	\end{equation}
 \end{prop}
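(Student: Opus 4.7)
The plan is to start from the decomposition $h^{\mu\nu} = m^{\mu\nu} + \gamma^{\mu\nu}$, so that
$$\left(\pa_\mu(h^{\mu\nu}\pa_\nu\psi) + \pa_\mu P^\mu\right)X\psi = \pa_\mu(m^{\mu\nu}\pa_\nu\psi)X\psi + \pa_\mu(\gamma^{\mu\nu}\pa_\nu\psi + P^\mu)X\psi.$$
The first term contributes $\pa_\mu J^\mu_{X,m} + K_{X,m}$ via \eqref{JKrelation}, so my task is to rearrange the second piece as $\pa_\mu \mJ^\mu_{X,\gamma,P} + \mK_{X,\gamma,P}$ in a way that produces the refined bounds.

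Applying the naive divergence identity \eqref{mainlinident} to this second piece would produce $J_{X,\gamma,P}$ and $K_{X,\gamma,P}$, but as highlighted in Section \ref{modmultsec} the scalar current $K_{X,\gamma,P}$ contains the dangerous contributions of the form $(\pa X^v)\gamma^{u\mu}(\pa_\mu\psi)(\pa_v\psi)$ whose control would require a non-integrable bound on $\|\pa\psi\|_{L^\infty}$. My plan is to isolate these terms, integrate by parts once more to expose a factor of $\pa_u\pa_v\psi$, and then eliminate that factor using the wave equation $-4\pa_u\pa_v\psi + \sDelta\psi + \pa_\mu(\gamma^{\mu\nu}\pa_\nu\psi) + \pa_\mu P^\mu = F$, that is,
$$\pa_u\pa_v\psi = \tfrac{1}{4}\left(\sDelta\psi + \pa_\mu(\gamma^{\mu\nu}\pa_\nu\psi) + \pa_\mu P^\mu - F\right).$$
Each such substitution trades a generic second derivative for a $\sDelta\psi$ contribution (which costs only an angular derivative of $\psi$ and hence feeds into $|X^n_m||F||\pa\psi|_{X,m}$-shaped bounds after redistributing the $\sDelta$), a cubic-in-$\gamma$ term (absorbable by \eqref{pert1useful}), and source-type contributions involving $F$ and $P$. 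The corresponding integration-by-parts boundary pieces are packaged into the definition of $\mJ$.

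With $\mJ,\mK$ produced in this way, the stated bounds follow by term-by-term inspection. In the wave zone $|u|\leq v/8$, I split $X = X^n_m n + X^\ell_m \evm$ and use \eqref{pert1} together with \eqref{pert1useful} to bound every contribution of the form $|X||\gamma||\pa\psi|^2$ by $\epsilon|\pa\psi|^2_{X,m}$; the free parameter $\delta$ in \eqref{zetaclose} arises from a Cauchy-Schwarz that separates the pure $(\evm\psi)^2$ component of $\mJ$ from a $|\gamma|$-weighted remainder. The smaller weight $|X^\ell_m|^{1/2}/|X^n_m|^{1/2}$ attached to $|\nabla_{\evm}\gamma|$ and $|\nas\gamma|$ in \eqref{modifiedKbound0} reflects the fact that in those terms one of the derivatives of $\psi$ is forced into the $\evm\psi$ or $\nas\psi$ slot of $|\pa\psi|_{X,m}$, which already carries the larger weight $X^\ell_m$. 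In the far region $|u|\geq v/8$, the three multipliers $X_R, X_L, X_M$ all satisfy $|\pa X|\lesssim (1/r + 1/(1+v))|X|$ and no wave-zone null structure is needed, so the crude bounds \eqref{zetafar} and \eqref{trivialKbound0} follow directly from \eqref{zetaJbound0} and \eqref{naiveKbd}.

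The main obstacle is the bookkeeping in the wave-zone substitution. When the wave equation is used to remove an instance of $\pa_u\pa_v\psi$, its right-hand side reintroduces the expression $\pa_\mu(\gamma^{\mu\nu}\pa_\nu\psi)$, which after one further differentiation generates terms of exactly the shape $(\pa X)\cdot|\gamma|\cdot|\pa\psi|^2$ that we were trying to eliminate. The saving feature is that each reintroduced term now carries an \emph{extra} factor of $\gamma$, so under \eqref{pert1} it is absorbable as an $\epsilon$-small cubic correction rather than a genuine top-order error. Verifying that this absorption indeed closes and that no residual term of the form $|\pa X^v||\gamma||\pa\psi|^2$ survives — while simultaneously tracking that the $\sDelta\psi$ byproduct of the substitution is compatible with the structure of $|\pa\psi|_{X,m}$ through the coefficient $|X^\ell_m|/|X^n_m|$ — is the heart of the computation; the remainder reduces to matching the weights generated by $X_R, X_L, X_M$ against the bounds \eqref{zetaclose}--\eqref{trivialKbound0}.
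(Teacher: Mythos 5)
Your proposal is correct and follows essentially the same route as the paper: the paper proves this statement by reducing to the general modified-multiplier identity of Proposition \ref{mainminkidentprop} (checking that $X_L,X_M,X_R$ satisfy the weight condition \eqref{Xgrowth}), and that identity is established exactly as you describe — isolating the dangerous $\gamma^{uu},\gamma^{uv},\gamma^{vu}$ contributions near the light cone, integrating by parts to expose $\pa_u\pa_v\psi$, substituting the wave equation to trade it for $\sDelta\psi$, $F$, $P$ and quadratic-in-$\gamma$ terms absorbable under \eqref{pert1}, packaging the boundary pieces into $\mJ$, and using the crude bounds \eqref{zetaJbound0}, \eqref{naiveKbd} when $|u|\geq v/8$. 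The one organizational difference is that the paper implements the near/far split with a smooth cutoff $\chi$ rather than a sharp one, but this is cosmetic.
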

 \begin{proof}
	 It is straightforward to verify that each of the given multipliers
	 satisfy the condition \eqref{Xgrowth}, and so the bounds follow
	 from Proposition \ref{mainminkidentprop}.
 \end{proof}

%
% \begin{prop}
% 	 \label{effectivemmmink}
% 	Fix metrics $g \in \{m, \mB\}$, $h$ and a vector field $X = X^n_g n + X^\ell_g \ell^g$,
% 	and suppose that the condition \eqref{pert1} holds for some $\epsilon > 0$.
%    If $g = \mB$ suppose that additionally
% 	  $X^v = v, X^n_{\mB} \gtrsim (1+s)^{-1/2}$, $|\pa X|\lesssim 1$. Then
% 	 \begin{equation}
% 	 \left( \pa_\mu(h^{\mu\nu}\pa_\nu \psi) + \pa_\mu P^\mu\right) X\psi
% 	 = \pa_\mu J_{X, g, P}^\mu + K_{X, g, P}
% 	 +\pa_\mu \mJ_{X, \gamma, P}^\mu + \mK_{X, \gamma, P},
% 	  \label{}
% 	 \end{equation}
% 	 where the perturbed energy current $\mJ^\mu_{X, \gamma, P}$ satisfies
% 	 the following bound: if $\zeta$ is any one-form with $|\zeta| \leq 1$,
% 	 if $|u| \leq v/8$ then for any $\delta > 0$,
% 	 \begin{multline}
% 	  |\zeta(\mJ_{X, \gamma, P})|
% 		\lesssim
% 		\delta X^\ell_g |\ell^g \psi|^2 + \delta X^\ell_g |\slashed{\zeta}|^2 |\nas \psi|^2 + (1+ \delta^{-1})|\gamma||\pa \psi|^2_X + |\gamma| |\zeta(X)| |\pa \psi|^2\\
% 		+(1+\delta^{-1} )X^\ell_g |P|^2 + (X^n_g)^{1/2} |P| |\pa \psi|_X,
% 	  \label{zetaclose}
% 	 \end{multline}
% 	 and if $|u| \geq v/8$, then
% 	 \begin{equation}
% 	  |\zeta(\mJ_{X, \gamma, P})| \lesssim
% 		|X||\gamma||\pa \psi|^2 + |X||P||\pa \psi|.
% 	  \label{zetafar}
% 	 \end{equation}
%  \end{prop}

\begin{remark}
 For our applications, $\zeta$ will be the one-form dual to the
 outward-pointing normal to a surface $\Sigma$. When $\Sigma = D_t$ is
 a time slice then it will suffice to bound $|\zeta(X)| \leq |X|$.
 When $\Sigma = \Gamma$ is one of our shocks, we will instead
 have a bound of the form $|\zeta(X)| \lesssim
 X^n_g + (1+v)^{-1}(1+s)^{-1/2}X_g^\ell$. That is, the ``large'' component
 $X^\ell_g$ is suppressed. This, and the smallness of $|\gamma|$
 expressed in \eqref{pert1}, will be needed to close our estimates.
\end{remark}

We will need a version of the above when $h$ is instead a perturbation of
the reciprocal of the metric $\mB$, up to terms with small coefficients that
verify a null condition.
For this we set
\begin{equation}
 \mBB^{\mu\nu} = \mB^{\mu\nu} + \frac{u}{vs} a^{\mu\nu}
 =\mB^{\mu\nu} + \gamma_a^{\mu\nu}
 \label{mBBdef}
\end{equation}
where $a^{\mu\nu} = a^{\mu\nu}(u,v,\omega)$ are smooth functions verifying
$(1+v)^k |\pa^k a|\lesssim 1$ and the null condition \eqref{intronullcondn}.

The following is an immediate consequence of Proposition \ref{mainmBidentprop}.
\begin{prop}
	 \label{effectivemmmB}
	 Suppose that $\psi$ satisfies \eqref{modelwave} and set
	 $\gamma^{\mu\nu} = h^{\mu\nu} - \mBB^{\mu\nu}$
	 with notation as in \eqref{mBBdef}.
	Fix a vector field $X = X^n_{\mB} n + X^\ell_{\mB} \ell^{\mB}$
	with $X^\ell_{\mB} = v$ and $X^n_{\mB} \gtrsim (1+s)^{-1/2}$ and
	$|\pa X|\lesssim 1$.
	Suppose that $\gamma$ satisfies the conditions in \eqref{pert1} for some $\epsilon > 0$
	and that \eqref{largestart0} holds for sufficiently small $\epsilon_0$.
	  Then
	 \begin{equation}
	 \left( \pa_\mu(h^{\mu\nu}\pa_\nu \psi) + \pa_\mu P^\mu\right) X\psi
	 = \pa_\mu J_{X, \mBB}^\mu + K_{X, \mBB}
	 +\pa_\mu \mJ_{X, \gamma, P}^\mu + \mK_{X, \gamma, P},
	  \label{}
	 \end{equation}
	 where the perturbed energy current $\mJ^\mu_{X, \gamma, P}$ satisfies
	 the following bound: if $\zeta$ is any one-form with $|\zeta| \leq 1$,
	 \begin{align}
	|\zeta(\mJ_{X, \gamma, P})|
	&\lesssim
	\delta v|\evmB \psi|^2
	+\left(1 + \frac{1}{\delta}\right)|\gamma| |\pa \psi|_{X,\mB}^2
	+ |\zeta(X)| |\gamma| |\pa \psi|^2
	+
	 \epsilon |\zeta(J_{X,\gamma_a})|
	 \\
	 &+  |\slashed{\zeta}|^2 |\pa \psi|_{X, \mB}^2
	 +\left(1 + \frac{1}{\delta}\right) v |P|^2
	 + \frac{1}{(1+s)^{1/2}} |P| |\pa \psi|,
	  \label{zetaclosemBtilde}
	 \end{align}
	 and the modified scalar current $\mK_{X, \gamma, P}$
	satisfies
	\begin{align}
		\label{mBmodifiedKboundstatement}
		|\mK_{X, \gamma, P}| &\lesssim
		\left( |\nabla \gamma| + \frac{|\gamma|}{1+s}
		+ \frac{|X^\ell_{\mB}|^{1/2}}{|X^n_{\mB}|^{1/2}}
		\left(|\nabla_{\evm} \gamma|+ |\nas \gamma|\right)
		\right)  |\pa \psi|_{X, \mB}^2
		+ \frac{1}{(1+v)^{1/4}} |F|  |\pa \psi|_{X, \mB}
		\\
		&+   \left( |\nabla P^u| + \frac{|P^u|}{1+s}
		+|X^\ell_{\mB}|^{1/2} \left(|\nabla_{\evmB} P| + |\nas P|
		 \right) \right)|X^n_{\mB}|^{1/2}|\pa \psi|_{X, \mB}\\
		&+ \epsilon\left( \frac{1}{(1+v)^{3/2}} |\pa \psi|^2 + \frac{1}{(1+v)^{1/2}} (|\evmB \psi|^2 + |\nas \psi|^2)
		\right)\\
&+
\frac{1}{(1+s)^{1/2}} \left( |\nabla P^u| + \frac{|P^u|}{1+v}\right) |\pa \psi|
+v|P| \left(|\nabla P| + \frac{|P|}{1+v} + |F|\right).
	\end{align}

 \end{prop}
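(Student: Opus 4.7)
The proposition is advertised as an immediate consequence of Proposition \ref{mainmBidentprop}, so the plan is to verify the structural hypotheses of that result for the given class of multipliers and then unpack its conclusions. My starting point is the universal identity $\pa_\mu(h^{\mu\nu}\pa_\nu \psi + P^\mu) X\psi = \pa_\mu J^\mu_{X,h,P} + K_{X,h,P}$, which I rewrite by splitting $h^{-1} = \mBB^{-1} + \gamma$ into a principal piece $J_{X,\mBB,P}, K_{X,\mBB}$ plus a $\gamma$-remainder. The naive remainder contains terms like $\pa_v X^v \cdot \gamma^{vv}(\pa_v \psi)^2 \sim v\,\gamma^{vv}(\pa_v\psi)^2$, which cannot be absorbed because $X^\ell_{\mB} = v$ is much larger than $X^n_{\mB} \gtrsim (1+s)^{-1/2}$. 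Following Section \ref{modmultsec}, I integrate by parts the dangerous products $\pa_u(\gamma^{uu}\pa_u\psi) X\psi$ and $\pa_v(\gamma^{uv}\pa_u\psi) X\psi$, trading one factor of $\pa\psi$ for $\pa_u\pa_v\psi$, which I then eliminate via the wave equation, replacing it by $\sDelta\psi$, nonlinear contributions of $\gamma$, and the $F$ and $P$ terms. The surface contributions collect into $\mJ_{X,\gamma,P}$ and the remaining volume terms (now enjoying either an angular derivative, a coefficient decaying in $s$, or the smallness in \eqref{pert1}) into $\mK_{X,\gamma,P}$.

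The hypotheses to check are: (i) $X^\ell_\mB = v$, $X^n_\mB \gtrsim (1+s)^{-1/2}$, $|\pa X|\lesssim 1$, given in the statement; (ii) the smallness conditions \eqref{pert1}, which by \eqref{pert1useful} yield $|X||\gamma||\pa\psi|^2 \leq \epsilon|\pa\psi|_{X,\mB}^2$; and (iii) the largeness condition \eqref{largestart0} on $t_0$, whose role is to absorb the small residual contributions from $\gamma_a = \tfrac{u}{vs} a$ that appear in the third line of \eqref{mBmodifiedKboundstatement}. The null condition \eqref{intronullcondn0} on $a^{\mu\nu}$ is crucial here: it removes the otherwise fatal $a^{uu}(\pa_u \psi)^2 X^v$ contribution outright, and via the decomposition \eqref{intronullcondn} converts the remaining $a$-contributions into products of $|\overline{\pa}\psi| \sim |\evmB \psi| + |\nas\psi|$ with the generic $|\pa\psi|$. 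This is precisely what enables the favorable $\epsilon\bigl((1+v)^{-3/2}|\pa\psi|^2 + (1+v)^{-1/2}(|\evmB\psi|^2 + |\nas\psi|^2)\bigr)$ bound in \eqref{mBmodifiedKboundstatement} and the retention of the boundary term $\epsilon|\zeta(J_{X,\gamma_a})|$ in \eqref{zetaclosemBtilde} rather than a worse one.

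To extract the stated bounds, I apply Cauchy--Schwarz with parameter $\delta$ to the quadratic-in-$\pa\psi$ terms coming from $\mJ$, keeping $\delta v |\evmB\psi|^2$ explicit because $X^\ell_\mB = v$ cannot be absorbed by the smallness of $|\gamma|$ alone, and folding the remaining cross-terms into $|\pa\psi|_{X,\mB}^2$. The ratios $|X^\ell_\mB|^{1/2}/|X^n_\mB|^{1/2}$ appearing in \eqref{mBmodifiedKboundstatement} are the price of moving a factor of a well-controlled component $|\evmB\psi|$ or $|\nas\psi|$ across Cauchy--Schwarz while keeping the coefficient of $(n\psi)^2$ of size $X^n_\mB$. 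The quadratic-in-$P$ remainders and the $|F|$ terms are then read off directly from the corresponding terms in the basic scalar current after the modification. The main technical obstacle I anticipate is the bookkeeping: one must ensure that no bad term has been overlooked, particularly in the interplay between the Burgers background perturbation $\gamma_a$ and the generic $\gamma$, and in tracking which component of $\pa\psi$ ends up paired with which derivative of $\gamma$ after the integration-by-parts step. It is precisely the favorable pairings $\nabla_{\evmB}\gamma, \nas\gamma \mapsto |X^\ell_\mB|^{1/2}/|X^n_\mB|^{1/2}$ (rather than the cruder $|X^\ell_\mB|/|X^n_\mB|$) that ultimately let the estimates close in $D^C$, so getting the modification to deposit the derivatives on $\gamma$ in exactly this configuration is the decisive point of the argument.
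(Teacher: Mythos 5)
Your proposal follows the same route as the paper: there, Proposition \ref{effectivemmmB} is recorded as an immediate consequence of Proposition \ref{mainmBidentprop}, whose proof (Section \ref{modifiedproofsecmB}) is exactly the modification you describe — integrate by parts the dangerous products involving $\gamma^{uu},\gamma^{vu}$ paired with $X^v\pa_v\psi$, substitute the wave equation for $\pa_u\evmB\psi$, and treat the Burgers-background term $\gamma_a=\tfrac{u}{vs}a$ using its null structure (Lemmas \ref{nulllemmacurrent}--\ref{nulllemmascalarcurrent}) — after checking the multiplier hypotheses $X^\ell_{\mB}=v$, $X^n_{\mB}\gtrsim (1+s)^{-1/2}$ and the perturbative assumption \eqref{pert1}. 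Two cosmetic slips that do not affect the argument: the prototypical bad term is $\pa_u(\gamma^{uu}\pa_u\psi)\,X^v\pa_v\psi$ rather than $\pa_vX^v\,\gamma^{vv}(\pa_v\psi)^2$ (you name the correct products later), and the $\epsilon$-weighted third line of \eqref{mBmodifiedKboundstatement} arises from the product $\check{\gamma}\,\gamma_a$ (smallness of $\gamma$) rather than from \eqref{largestart0}, whose role is only to make the pure $\gamma_a$ contributions small via largeness of $v$.
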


We now record an analogue of Lemma \ref{integralident}. For this we introduce
the modified energy-momentum tensor
\begin{equation}
 \mQ_P^h(X, Y) = Q_P^g(X, Y)
 + h(\mJ_{X, P}, Y).
 \label{mQdef}
\end{equation}
where $Q_P^g$ is as in \eqref{QPdef}.
By Lemma \ref{divthm3}, we have
 \begin{lemma}
   \label{integralidentmodified}
   Fix a metric $h$, vector fields $X, P$ and define $\mQ^h_P$ as in \eqref{mQdef}.
  Let $D = \cup_{t_0 \leq t \leq t_1} D_t$ be a region bounded by
  two time slices, a
  (possibly empty) timelike boundary $\Gamma_{T}$  and a (possibly empty)
  spacelike boundary $\Gamma_{S}$, with $\Gamma_S$ lying to the future of $D_{t_0}$.
  Suppose that either $\{r = 0\}$ is
  not contained in $D$, or else that $\{r = 0\}$ is contained in
  $D$ and that
  with $\gamma = h^{-1}-g^{-1}$, we have
$\lim_{r\to 0} ((g^{rr}+\gamma^{rr} )X^r) = 0$. For a
spacelike surface
  $\Sigma$, let $N_h^\Sigma$ denote the future-directed
  normal vector field defined relative to the metric $h$
  and if $\Sigma$ is timelike, let $N_h^\Sigma$ denote the inward-pointing
  normal vector field defined relative to the metric $h$.
  Suppose that $\lim_{r\to 0} |\psi/r| <\infty$.
  Then the following identity holds
  \begin{multline}
   \int_{D_{t_1}}\mQ^h_P(X, N_h^{D_{t_1}})
   +\int_{\Gamma_S} \mQ^h_P(X, N_h^{\Gamma_+})
   - \int_{\Gamma_T} \mQ^h_P(X, N_h^{\Gamma_-})
   - \int_{t_0}^{t_1} \int_{D_t} \mK_{X, \gamma, P} \\
   =
   \int_{D_{t_0}} \mQ^h_P(X, N_h^{D_{t_0}})
   +
   \int_{t_0}^{t_1} \int_{D_t} -\pa_\mu( h^{\mu\nu} \pa_\nu \psi + P^\mu) X\psi
   \label{modifiedstokesident}
 \end{multline}
  where all integrals over $D_t$ are taken with respect to the measure
  $dudv d\sigma_{\S^2} = \tfrac{1}{r^2}dxdt $, the integrals over the boundary terms are taken
  with respect to the induced surface measure, and $\mK$ is as
  in the previous two results.
 \end{lemma}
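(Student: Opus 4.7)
The plan is to derive this identity as the integrated form of the pointwise modified multiplier identity \eqref{mainmodifiedident} established in Propositions \ref{mainminkidentprop} and \ref{mainmBidentprop}, by applying the divergence theorem from Lemma \ref{divthm3}. The argument exactly parallels the proof of the unmodified Lemma \ref{integralident} from \eqref{basicnlident}, the only new ingredient being the presence of the perturbation current $\mJ^\mu_{X,\gamma,P}$ and its associated modified scalar current.

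First I would rearrange \eqref{mainmodifiedident} so that the divergence of the combined current $V^\mu := J^\mu_{X,g,P} + \mJ^\mu_{X,\gamma,P}$ stands alone, and integrate over $D$ against the measure $du\,dv\,d\sigma_{\S^2} = r^{-2}dxdt$. Lemma \ref{divthm3} converts $\int_D \pa_\mu V^\mu$ into the signed sum of four surface integrals over $D_{t_1},D_{t_0},\Gamma_S,\Gamma_T$, the signs being dictated by the convention that $N_h^\Sigma$ is future-directed on the spacelike components and inward-pointing on the timelike component $\Gamma_T$. On each boundary piece one identifies the pairing of $V^\mu$ with the $h$-conormal as $\mQ^h_P(X, N_h^\Sigma)$ via the definition \eqref{mQdef}, since $h(J_{X,g,P}, N_h^\Sigma)$ agrees with $Q^g_P(X, N_h^\Sigma)$ up to terms that have already been packaged into $\mJ_{X,\gamma,P}$ by the modification procedure of Section \ref{modmultsec}. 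The $r=0$ hypothesis then rules out any axis contribution: the $r$-component of $V^\mu$ there is schematically of size $((g^{rr}+\gamma^{rr})X^r)|\pa\psi|^2 + (h^{r\nu}\pa_\nu\psi)(X\psi)$, together with $P$-dependent pieces bounded by the same factors, all of which vanish as $r\to 0$ under the stated assumption $\lim_{r\to 0}((g^{rr}+\gamma^{rr})X^r) = 0$ together with $|\psi/r|<\infty$ and the boundedness of $\pa\psi$.

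The main point requiring care is the bookkeeping of the scalar currents. Naively, integrating \eqref{mainmodifiedident} produces two bulk terms $\int_D K_{X,g,P}$ and $\int_D \mK_{X,\gamma,P}$, whereas \eqref{modifiedstokesident} records only the single combined quantity $\int_D \mK_{X,\gamma,P}$. The convention must therefore be that the linear reference-metric scalar current $K_{X,g,P}$ has been absorbed into $\mK_{X,\gamma,P}$ as the latter is actually constructed in Section \ref{modmultsec}, which is consistent with the all-inclusive bound \eqref{mBmodifiedKboundstatement} that collects every scalar-current contribution on the right-hand side. Apart from this bookkeeping and the standard care with Lorentzian orientation conventions, the identity follows at once from the divergence theorem; there is no substantive analytic obstacle, only algebraic verification that the modified energy-momentum tensor, as defined in \eqref{mQdef}, reproduces the correct boundary pairing of $V^\mu$ on each component of $\pa D$.
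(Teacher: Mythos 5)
Your route is the paper's route: the paper proves this lemma simply by invoking the divergence theorem of Lemma \ref{divthm3} applied to the pointwise modified identity, with the boundary pairings identified as $\mQ^h_P(X,N_h)$ via \eqref{mQdef} and the axis contribution killed by the hypotheses $\lim_{r\to 0}((g^{rr}+\gamma^{rr})X^r)=0$ and $\lim_{r\to 0}|\psi/r|<\infty$; so in substance your proposal is correct and essentially identical.

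Two small points of bookkeeping, though. First, your resolution of the scalar-current discrepancy is not the paper's convention: the paper does \emph{not} absorb $K_{X,g}$ into $\mK_{X,\gamma,P}$ — in every application of \eqref{modifiedstokesident} (see \eqref{stokesER}, \eqref{stokesEC}, \eqref{usestokesleft}) the linear current $K_{X,g}$ reappears as a separate bulk term, which is essential because it is computed exactly and supplies the positive quantities $S_X$, whereas $\mK_{X,\gamma,P}$ is only the perturbative piece controlled by \eqref{modifiedKbound0} and \eqref{mBmodifiedKboundstatement}; the lemma statement is simply loose in listing only $\mK$, and citing \eqref{mBmodifiedKboundstatement} as evidence that $\mK$ "collects every scalar-current contribution" is inaccurate. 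Second, in the $r\to 0$ analysis the term that survives the expansion in powers of $r$ (cf. the proofs of Lemmas \ref{divthm2}--\ref{divthm3}) is $\tfrac12 X^r(g^{rr}+\gamma^{rr})(\psi/r)^2$, not a term proportional to $|\pa\psi|^2$; the latter terms carry explicit factors of $r$ and vanish automatically, so the hypothesis is needed precisely for the $(\psi/r)^2$ term. Neither point affects the validity of your argument, but the first would matter if you carried your convention forward into the energy estimates.
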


\section{Formulas for the energy-momentum tensor and scalar currents}
\label{energy0}
In this section we consider a metric $h$ which is a perturbation of either
the Minkowski metric
$m$, or the metric $\mB$ defined in \eqref{mBdef}.
We collect here some basic formulas and estimates for the modified
energy-momentum tensor $\mQ^h_P$ defined in \eqref{mQdef},
the modified energy-momentum tensor
$\mQ_P^h$ defined in \eqref{mQdef}, and the linear part of the scalar current
$K_{X, g}$ defined in \eqref{KPdef}.

Each of our metrics $g$ admit spherically-symmetric null vectors
$(n, \ell^g)$ which we have normalized with $g(\evg, \eu) = -\frac{1}{2}$.
Since $g(\nabla_g \psi, X) = X\psi$ for any vector field
$X$, we have
\begin{equation}
 \nabla_g\psi = -2 (\eu\psi) \evg - 2 (\evg\psi) \eu + \nas\psi,
 \qquad
  g(\nabla_g \psi, \nabla_g \psi)
  = - 4\evg\psi \eu\psi + |\nas \psi|^2,
 \label{eik}
\end{equation}
and so the energy-momentum tensor takes the form
\begin{equation}
 Q^g(X, Y) = X\psi Y\psi +2 g(X,Y) \evg \psi \eu\psi
 - \tfrac{1}{2} g(X, Y)|\nas \psi|^2.
 \label{genEM1}
\end{equation}
If $X =X^\ell_g \evg + X^n_g n$ is spherically symmetric and
$Y = Y^\ell_g \evg + Y^n_g \eu + \slashed{Y}$ where $\slashed{Y}$
is the angular part of $Y$,
we also have
\begin{equation}
 g(X, Y) = -\tfrac{1}{2} (\Xo_g Y^n_g + \Xt_g Y^\ell_g),
 \label{ggenEM2}
\end{equation}
and so in this case
% \begin{equation}
%  Q^g(X, Y)
%  = \Xo_g Y^\ell_g (\evg \psi)^2
%  + \Xt_g Y^n_g (\eug \psi)^2
%  - \frac{1}{2} g(X, Y) |\nas \psi|^2.
%  \label{genEM}
% \end{equation}
% When $\slashed{Y}$, the tangential part of $Y$, is nonzero, we instead have
\begin{equation}
 Q^g(X, Y)
 = \Xo_g Y^\ell_g (\evg \psi)^2
 + \Xt_g Y^n_g (\eug \psi)^2
 - \frac{1}{2} g(X, Y) |\nas \psi|^2
 + X\psi \slashed{Y}\psi.
 \label{genEM2}
\end{equation}

Before proceeding, we record the following simple result.
\begin{lemma}
	\label{multiplierpointing}
	Suppose that \eqref{betaLassump}-\eqref{betaRassump} hold.
	Then $X_L$ is timelike and future-directed with respect to $m$ in
	$D^L$, $X_R$ is timelike and future-directed with respect to $\mB$
	in $D^R$, and the fields $X_T, X_C$ are future-directed
	and timelike with respect to $\mB$.
\end{lemma}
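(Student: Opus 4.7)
The plan is to reduce each claim to a direct algebraic verification using the null decomposition \eqref{Xnullexplicit} and the inner-product formula \eqref{metricnull}. For a spherically-symmetric multiplier $X = X^n_g n + X^\ell_g \ev^g$, \eqref{metricnull} gives $g(X,X) = -X^n_g X^\ell_g$, so timelikeness reduces to showing both null components are strictly positive in the relevant region. Future-directedness will then be obtained by inspecting the $\pa_t$-component $\tfrac{1}{2}(X^u + X^v)$ (plus any explicit $\pa_t$ contribution in $X_R$).

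First I would handle $X_L$ in $D^L$ with the Minkowski null frame $(n,\ev^m) = (\pa_u, \pa_v)$. In $D^L$ the bound \eqref{betaLassump} gives $u \geq \ls s^{1/2}$, so for $t_0$ sufficiently large both $u$ and $v$ lie in the range where $f(z)=\log z(\log\log z)^{\alpha}$ is defined and positive. Hence $X^n_m = uf(u) > 0$, $X^\ell_m = vf(v) > 0$, which yields $m(X_L,X_L) = -uvf(u)f(v) < 0$, while the $\pa_t$-coefficient $\tfrac{1}{2}(uf(u)+vf(v))$ is manifestly positive.

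For $X_C$ and $X_T$ relative to $\mB$, the key observation is that the $\pa_u$-coefficients of these fields have been chosen precisely so that the correction $-\tfrac{u}{vs}X^v$ in \eqref{Xnullexplicit} produces a clean positive $X^n_{\mB}$. Concretely, for $X_C$ I compute $X^n_{\mB} = (s + u/s) - \tfrac{u}{vs}\cdot v = s$ and $X^\ell_{\mB} = v$, so $\mB(X_C,X_C) = -sv < 0$; for $X_T$ I get $X^n_{\mB} = (u/s + \ap/(4s^{1/2})) - u/s = \ap/(4s^{1/2})$ and $X^\ell_{\mB} = v$, so $\mB(X_T,X_T) = -\ap v/(4s^{1/2}) < 0$. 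Future-directedness follows from positivity of $\tfrac{1}{2}(X^u+X^v)$, using that in $D^C$ we have $|u|/s \lesssim s^{-1/2} \ll v$ by \eqref{betaLassump}--\eqref{betaRassump}.

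Finally, for $X_R$ in $D^R$, I rewrite $X_R = ((1+|u|)^\mu + t)\pa_t + r\pa_r = ((1+|u|)^\mu + u)\pa_u + ((1+|u|)^\mu + v)\pa_v$. In $D^R$ one has $u \leq -\rs s^{1/2} < 0$, so for the admissible range of $\mu$ the bound $(1+|u|)^\mu + u \geq 1 > 0$ holds (and certainly $(1+|u|)^\mu + v > 0$), giving $X^u, X^v > 0$. Then $X^n_{\mB} = X^u - \tfrac{u}{vs}X^v \geq X^u > 0$ because the correction is actually a positive contribution when $u<0$, while $X^\ell_{\mB} = X^v > 0$. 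Hence $\mB(X_R,X_R) = -X^n_{\mB}X^\ell_{\mB} < 0$, and the $\pa_t$-component $(1+|u|)^\mu + t$ is trivially positive. The only place any care is needed is in the sign bookkeeping for $u$ across the three regions and in confirming via \eqref{betaLassump}--\eqref{betaRassump} that $|u|$ stays in the range where the above algebra is uniform; there is no substantive analytic obstacle.
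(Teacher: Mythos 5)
Your proposal is correct and follows essentially the same route as the paper: the paper's proof declares the $X_L,X_R$ cases immediate and, for $X_T,X_C$, performs exactly your computation, writing $X_C = s\,n + v\,\evmB$, $X_T = \tfrac{\ap}{4s^{1/2}}n + v\,\evmB$ and concluding $\mB(X_C,X_C)=-vs<0$, $\mB(X_T,X_T)=-\tfrac{\ap}{4}\tfrac{v}{s^{1/2}}<0$ via \eqref{ggenEM2}. Your extra bookkeeping (positivity of the null components of $X_L,X_R$ in their regions and the $\pa_t$-component check for future-directedness) just fills in the details the paper leaves as ``immediate.''
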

\begin{proof}
 The statements for $X_L, X_R$ are immediate. For $X_T, X_C$, we
first note that expressing the fields $X_C, X_T$ in terms of $n, \evmB$, we have
\begin{equation}
  \label{}
  X_C = sn + v\evmB, \qquad X_T = \frac{\ap}{{4}s^{1/2}} n + v\evmB,
\end{equation}
and it follows from \eqref{ggenEM2} that
\begin{align}
  \label{}
  \mB(X_T, X_T) &= -X_{T,\mB}^{\ev} X_{T,\mB}^n = -\frac{\ap}{4}\frac{v}{s^{1/2}} < 0 ,\\
  \mB(X_C, X_C) &= -X_{C,\mB}^{\ev} X_{C,\mB}^n = -vs < 0.
\end{align}

\end{proof}

\subsection{The energy-momentum tensor on the constant time slices}
The normals to the time slices $D^A_t$ are
\begin{equation}
 N_m^{D_t} = \pa_v + \pa_u = \ev^m + \eu^m
 \qquad
 N_{\mB}^{D_t} = \pa_v + \left(1 + 2\frac{u}{vs} \right) \pa_u
 = \evmB + \left(1+\frac{u}{vs}\right) \eu^{\mB},
 \label{ndtcoefs}
\end{equation}
so using \eqref{genEM2} and writing
$N_g^{D_t} = N_g^\ell \evg + N_g^n \eu^g$, we have
\begin{equation}
 Q^g(X, N_g^{D_t})
 = N_g^\ell \Xo_g (\evg\psi)^2
 + N_g^n \Xt_g (\eug\psi)^2
 - \frac{1}{2} g(X, N_g^{D_t}) |\nas \psi|^2.
 \label{generalEMdecomp}
\end{equation}
As a result,
\begin{align}
 Q^m(X, N_m^{D_t})
 &= X_m^\ell (\ev^g \psi)^2 + X_m^n  (n \psi)^2
 + {\frac{1}{4}} (X^n_m + X_m^\ell) |\nas \psi|^2,
 \\
  Q^{\mB}(X, N_{\mB}^{D_t})
  &= X_{\mB}^\ell (\ev^g \psi)^2 + X_{\mB}^n\left(1 + \tfrac{u}{vs}\right)  (n \psi)^2
  + {\frac{1}{4}} \left(X_{\mB}^n + X_{\mB}^\ell\left(1 + \tfrac{u}{vs}\right)\right) |\nas \psi|^2.
 \label{}
\end{align}
Note that in this setting $X$ is timelike when $X^\ell_g X^n_g >0$ and
future-directed when $X^\ell_g + X^n_g >0$ when $g = m$ and
$X^\ell_g + X^n_g(1 + \tfrac{u}{vs}) >0$ when $g = \mB$ so in
particular these quantities are positive definite when $X$ is timelike and
future-directed. In fact, recalling the definition \eqref{Xnorm0} from the previous section,
\begin{equation}
 |\pa \psi|_{X,g}^2 = X^\ell_g \left(|\ell^g \psi|^2 + |\nas \psi|^2\right)
 + X^n_g |n\psi|^2,
 \label{Xnormdef1}
\end{equation}
we have the bound
\begin{equation}
 Q^g(X, N_g^{D_t}) \geq C_0 |\pa \psi|_{X, g}^2
 \label{QXnorm}
\end{equation}
for a constant $C_0 > 0$,
when $X^\ell_g, X^n_g \geq 0$, for $g = m, \mB$.

By the bounds \eqref{zetaclose} and \eqref{zetafar},
this implies the following bounds for the perturbed energy-momentum tensor
$\mQ^h_P(X, N_h^{D_t})$ when $h$ is a perturbation of one of our
metrics $m, \mBB$.

\begin{lemma}
	\label{timeslicelemma}
	Suppose that either
	\begin{alignat}{2}
	 g &= m &&\quad \text{ and } X = X_L,\text{ or } X_R, \text{ or }\label{choice1}\\
	 g &= \mB &&\quad \text{ and } X = X_{T}, \text{ or } X_{C}.
	 \label{choice2}
 \end{alignat}
 Fix a metric $h$ and let $\gamma = h^{-1} - m^{-1}$ when $g = m$ and $\gamma = h^{-1} - \mBB^{-1}$
 when $g = \mB$,
 with notation as in \eqref{mBBdef}.
 There is a constant $\epsilon^\prime$ so that if
 $\gamma$ and $X$ satisfy the perturbative assumptions \eqref{pert1}
 with $\epsilon < \epsilon^\prime$ and if \eqref{largestart0}
 holds with
 $\epsilon_0 < \epsilon^\prime$,
 then with $|\pa \psi|^2_{X, g}$ defined as in \eqref{Xnormdef1}
 and the modified energy-momentum tensor $\mQ$ defined as in
 \eqref{mQdef},
 on the time slices $D_t$ we have
 \begin{equation}
  |\pa \psi|^2_{X, g} \lesssim \mQ_P^h(X, N^{D_t}_h) +  X^\ell_g |P|^2.
  \label{timesliceperturbbd}
 \end{equation}
\end{lemma}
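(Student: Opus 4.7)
The plan is to unfold the definition of $\mQ^h_P(X, N_h^{D_t})$ so that the coercive unperturbed estimate \eqref{QXnorm} drives the bound, and then show that all correction terms can be absorbed under the smallness hypotheses \eqref{pert1} and \eqref{largestart0}. Expanding \eqref{mQdef} together with \eqref{QPdef},
\[
\mQ^h_P(X, N_h^{D_t}) = Q^g(X, N_h^{D_t}) + g(P, N_h^{D_t}) X\psi - g(X, N_h^{D_t}) P\psi + h(\mJ_{X, \gamma, P}, N_h^{D_t}).
\]

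For the leading term I write $Q^g(X, N_h^{D_t}) = Q^g(X, N_g^{D_t}) + Q^g(X, N_h^{D_t} - N_g^{D_t})$. Since $N_h^{D_t} - N_g^{D_t} = O(|\gamma|)$ and $|Q^g(X, Y)| \lesssim |X||Y||\pa\psi|^2$, the correction is bounded by $C|X||\gamma||\pa\psi|^2 \leq C\epsilon|\pa\psi|^2_{X,g}$ using \eqref{pert1useful}, while the main term satisfies $Q^g(X, N_g^{D_t}) \geq C_0 |\pa\psi|^2_{X,g}$ by \eqref{QXnorm}. For the two linear $P$-corrections, I use $|g(P, N_h^{D_t})| \lesssim |P|$ and $|g(X, N_h^{D_t})| \lesssim X^\ell_g$, together with $|X\psi|^2 \leq 2 X^\ell_g |\pa\psi|^2_{X,g}$ (valid under $X^n_g \leq X^\ell_g$), to get via Cauchy–Schwarz a bound of the form $\delta|\pa\psi|^2_{X,g} + C\delta^{-1}X^\ell_g |P|^2$ for any $\delta > 0$.

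The $\mJ$-correction $h(\mJ_{X,\gamma,P}, N_h^{D_t})$ is controlled by \eqref{zetaclose} (in the case $g=m$) or \eqref{zetaclosemBtilde} (in the case $g=\mB$) with $\zeta$ the one-form metric-dual of $N_h^{D_t}$. On a time slice this $\zeta$ is a bounded multiple of $dt$, so the angular part $\slashed{\zeta}$ vanishes and $|\zeta(X)|\lesssim |X|$. The resulting bound produces (i) a piece $\delta X^\ell_g |\ell^g\psi|^2$ absorbable into the main coercive term, (ii) pieces of type $(1+\delta^{-1})|\gamma||\pa\psi|^2_{X,g}$ and $|X||\gamma||\pa\psi|^2$ absorbable by \eqref{pert1} and \eqref{pert1useful}, (iii) a piece $(1+\delta^{-1})|X||P|^2 \lesssim \delta^{-1} X^\ell_g |P|^2$ of the desired form, and (iv) a mixed term $|X^n_g|^{1/2}|P||\pa\psi|_{X,g}$ that Cauchy–Schwarz splits into $\delta|\pa\psi|^2_{X,g} + C\delta^{-1} X^n_g|P|^2 \leq \delta|\pa\psi|^2_{X,g} + C\delta^{-1} X^\ell_g |P|^2$. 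In the $g=\mB$ case there is an extra null-structured term $\epsilon|\zeta(J_{X,\gamma_a})|$ whose estimate via \eqref{zetaJbound0} and the null bound \eqref{intronullcondn} reduces it to a term of one of the types already handled.

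The main obstacle — and the reason for the refined form $X^\ell_g|\gamma|\leq \epsilon X^n_g$ of \eqref{pert1} rather than just $|\gamma|$ small — is the absorption of the unweighted error $|X||\gamma||\pa\psi|^2$: without the second half of \eqref{pert1} this would effectively carry a factor $X^\ell_g/X^n_g$ which is unbounded for our multipliers, destroying coercivity on the $n\psi$ component. With the refined hypothesis, \eqref{pert1useful} makes this error $\leq \epsilon |\pa\psi|^2_{X,g}$, completing the absorption. Choosing $\delta$ small relative to $C_0$ and then $\epsilon^\prime$ small relative to $\delta$ yields \eqref{timesliceperturbbd}.
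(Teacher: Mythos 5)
Your overall strategy is the same as the paper's (coercive unperturbed term via \eqref{QXnorm}, corrections absorbed through \eqref{pert1}, \eqref{pert1useful} and \eqref{zetaclose}/\eqref{zetaclosemBtilde}), but your literal expansion of \eqref{mQdef} creates a term you then handle incorrectly. The correction $-g(X,N_h^{D_t})P\psi$ has size $\sim X^\ell_g\,|P|\,|\pa\psi|$, and $|\pa\psi|$ contains $|n\psi|$, which carries only the weight $X^n_g$ in $|\pa\psi|^2_{X,g}$; Cauchy--Schwarz therefore produces $\delta|\pa\psi|^2_{X,g}+C\delta^{-1}(X^\ell_g)^2(X^n_g)^{-1}|P|^2$, and $(X^\ell_g)^2/X^n_g\gg X^\ell_g$ for every multiplier in play, so the bound $\delta|\pa\psi|^2_{X,g}+C\delta^{-1}X^\ell_g|P|^2$ you claim does not follow; indeed, with such a term genuinely present, \eqref{timesliceperturbbd} would fail (take $|n\psi|\sim (X^\ell_g/X^n_g)|P|$ with the unfavorable sign). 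The resolution, which is how the paper reads \eqref{mQdef} in its proof, is that on a time slice $\mQ^h_P(X,N_h^{D_t})$ is the contraction with $dt$ of the full modified current of Propositions \ref{effectivemmmink} and \ref{effectivemmmB}, i.e. $Q^g(X,N_g^{D_t})+dt(\mJ_{X,\gamma,P})$ (plus $dt(J_{X,\gamma_a})$ in the central case); all $P$-dependence then sits inside $\mJ$, and \eqref{zetaclose}, \eqref{zetaclosemBtilde} only produce the benign combinations $|X||P|^2$ and $|X^n_g|^{1/2}|P||\pa\psi|_{X,g}$, which you do treat correctly. There is no separate $X^\ell_g P\psi$ term to absorb, and your proof should not introduce one.

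The second gap is in the case $g=\mB$. Since $\gamma=h^{-1}-\mBB^{-1}$, the difference of normals is $O(|\gamma|+|\gamma_a|)$, not $O(|\gamma|)$, and $\gamma_a=\frac{u}{vs}a$ does not satisfy \eqref{pert1}. In the decomposition actually used in the energy identity (see \eqref{linearizeQmB}) this appears as the term $\zeta(J_{X,\gamma_a})$ carrying no $\epsilon$ prefactor. A crude estimate of type \eqref{zetaJbound0} gives $|X||\gamma_a||\pa\psi|^2\sim s^{-1/2}|\pa_u\psi|^2$, which is only comparable to $X^n_{T,\mB}|\pa_u\psi|^2$ (since $X^n_{T,\mB}\sim s^{-1/2}$), hence not absorbable for $X=X_T$. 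One needs the refined bound \eqref{QAtimeslice} of Lemma \ref{nulllemmacurrent}, which exploits the null condition $a^{\mu\nu}\pa_\mu u\,\pa_\nu u=0$ together with the largeness of the initial time to gain the factor $c_0(\epsilon_0)$; this is exactly where the hypothesis \eqref{largestart0} enters the lemma, and the fact that your argument never uses $\epsilon_0$ is the symptom of the missing step. The term $\epsilon|\zeta(J_{X,\gamma_a})|$ inside \eqref{zetaclosemBtilde} that you do mention is the harmless, $\epsilon$-weighted copy; it is the unweighted one that requires the null structure.
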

\begin{proof}
	We first consider the Minkowskian case.
	We write
	\begin{equation}
	 \mQ_P^h(X, N^{D_t}_h) =
	 \zeta(\mJ_{X, h, P}) ={Q^m(X, N^{D_t}_m)} + \zeta(\mJ_{X, \gamma, P}),
	 \label{linearizeQ}
	\end{equation}
	where $\zeta = dt$. We just prove the bound in the region
	$|u| \leq v/8$, the other region being simpler.

	We first use that by the assumption \eqref{pert1}, $|\gamma| \leq \epsilon$.
	By the bounds \eqref{zetaclose} and \eqref{pert1useful} to get
	\begin{multline}
	 |\zeta(\mJ_{X, \gamma, P})|
	 \lesssim \delta |\pa \psi|^2_{X,m} + \delta^{-1} \epsilon |\pa \psi|^2_{X,m}
	 + |X||\gamma| |\pa \psi|^2
	 + \left(\delta^{-1}   + 1\right)X^\ell_m |P|^2 +
	 (X^n_g)^{1/2} |P| |\pa \psi|_{X,m}\\
	 \lesssim
	 \delta |\pa \psi|^2_{X,m} +
	 \left(\delta^{-1}   + 1\right)\epsilon|\pa \psi|^2_{X,m}
	 + \left(\delta^{-1}   + 1\right)X^\ell_m  |P|^2,
	 \label{}
	\end{multline}
	where we bounded $X^n_m\leq X^\ell_m$. Taking $\delta$ and then $\epsilon$
	sufficiently small, we can arrange for
	\begin{equation}
	 |\zeta(\mJ_{X, \gamma, P})| \leq \frac{1}{4} C_0 |\pa \psi|_{X,m}^2 + C X^\ell_g |P|^2,
	 \label{}
	\end{equation}
	with $C_0$ as in \eqref{QXnorm}, for a constant $C > 0$,
	 and the result now follows from
	\eqref{linearizeQ} and \eqref{QXnorm}.

	When $g$ is instead a perturbation of $\mBB$, the argument is similar
	but we use \eqref{zetaclosemBtilde} in place of \eqref{zetaclose}. We first
	write
	\begin{equation}
	 \mQ_P^h(X, N^{D_t}_h) = {Q^{\mB}(X, N^{D_t}_{\mB})} + \zeta(\mJ_{X, \gamma, P})
	 + \zeta(J_{X, \gamma_a, P}),
	 \label{linearizeQmB}
	\end{equation}
	where $\zeta = dt$, where we wrote $h = \mB + \gamma + \gamma_a$.
	Using \eqref{zetaclosemBtilde} to bound the second term on the right-hand side,
	we have
	\begin{align}
	 |\zeta(\mJ_{X, \gamma, P})|
	 &\lesssim
	 \delta v|\evmB \psi|^2
 	+\delta^{-1}\epsilon |\pa \psi|_{X,\mB}^2
 	+ |X| |\gamma| |\pa \psi|^2
 	+
 	 \epsilon |\zeta(J_{X,\gamma_a})|
 	 \\
 	 &\qquad+\left(\delta^{-1}   + 1\right) v |P|^2
 	 + \frac{1}{(1+s)^{1/2}} |P| |\pa \psi|\\
	 &\lesssim \delta |\pa \psi|^2_{X,\mB} + \left(\delta^{-1}   + 1\right) \epsilon |\pa \psi|^2_{X, \mB}
	 + \left(\delta^{-1}   + 1\right) X^\ell_{\mB} |P|^2
	 \label{}
	\end{align}
	where we used that $(1+s)^{-1/2} \lesssim X^n_{\mB}$ for both multipliers
	in this region and used \eqref{QAtimeslice} to handle the term
	$|\zeta(J_{X,\gamma_a})|$. Using \eqref{QAtimeslice} again to handle
	the last term in \eqref{linearizeQmB}, we find that
	\begin{multline}
	 |\zeta(\mJ_{X, \gamma, P})|
	 + |\zeta(J_{X, \gamma_a, P})|\\
	 \lesssim
	 \delta |\pa \psi|^2_{X,\mB} + \left(\delta^{-1}   + 1\right) \epsilon |\pa \psi|^2_{X, \mB}
	 + \left(\delta^{-1}   + 1\right) X^\ell_{\mB} |P|^2
	 +  c_0(\epsilon_0) |\pa \psi|^2_{X,\mB},
	 \label{}
	\end{multline}
	where $c_0$ is a continuous function with $c_0(0) = 0$. Taking $\epsilon_0$,
	$\delta$, and then $\epsilon$ sufficiently small, we again get the needed bound
	from \eqref{QXnorm}.
\end{proof}

\subsection{The energy-momentum tensor along the shocks}
\label{emformulageneral}

We start by recording the fact that by \eqref{genEM2} and the formulas
\eqref{normalnullframe} for the normal $N_g = N_g^{\Gamma}$ to either of the shocks
$\Gamma$, the energy-momentum tensor takes the form
\begin{multline}
 Q^g(X, N_g) = X^n_gN^n_g (n\psi)^2 + X^{\ev}_g N^{\ev}_g (\ev^g \psi)^2
 -\frac{1}{2} g(X, N_g) |\nas \psi|^2
 + X\psi \slashed{N}\psi\\
 =-X^n_g \ev^g (B-u) (n\psi)^2 + X^\ev_g (\ev^g \psi)^2-\frac{1}{2} g(X, N_g) |\nas \psi|^2
 + X\psi \slashed{N}\psi,
 \label{Qnulldecomp0}
\end{multline}
when $X = X^n_g n + X^\ell_g \ev^g$.
We note that by \eqref{signchange} and \eqref{signchange2},
if the assumptions \eqref{betaLassump}-\eqref{betaRassump} hold,
we have 
\begin{multline}
 Q^g(X, N_g^A)\\
 \sim  \frac{{\rs^A}}{2(1+v)(1+s)^{1/2}} X^n_g (n\psi)^2
+ X^\ell_g(\ev^g \psi)^2
+ \frac{1}{4}\left(X^n_g + \frac{{\rs^A}}{2(1+v)(1+s)^{1/2}} X^\ell_g\right) |\nas \psi|^2
+ X\psi \slashed{N}\psi,
\label{slemg}
\end{multline}
when $\Gamma^A$ is spacelike with respect to $g$, {where $\rs^R = \rs, \rs^L = \ls$,
the positive constants from \eqref{eq:betainitial}}. When $\Gamma^A$ is timelike
with respect to $g$, we instead have
\begin{multline}
	 Q^g(X, N_g^A)
	 \\ \sim
     -\frac{{\rs^A}}{2(1+v)(1+s)^{1/2}} X^n_g (n\psi)^2
+ X^\ell_g(\ev^g \psi)^2
+ \frac{1}{4}\left(X^n_g - \frac{{\rs^A}}{2(1+v)(1+s)^{1/2}} X^\ell_g\right) |\nas \psi|^2
+ X\psi \slashed{N}\psi.
 \label{tlemg}
\end{multline}

The formula \eqref{slemg} suggests that we should work in terms of
the quantities
\begin{equation}
    |\pa \psi|^2_{X, g, \SL} = X^{\ev}_g(\evg \psi)^2 +   \left(X_g^n + \frac{{\rs^A}}{(1+v)(1+s)^{1/2}}X^\ell_g\right) |\nas \psi|^2
    +\frac{{\rs^A}}{(1+v)(1+s)^{1/2}} X_g^n(n\psi)^2.
 \label{plusnorm}
\end{equation}
{The constants $\rs^A$ will not play an important role in what follows, and
we allow all the implicit constants in the following to depend on $\rs^A$.}

To deal with some of the upcoming perturbative quantities, it will be
helpful to record the following result.

\begin{lemma}
	Suppose that $g$ and $X$ are as in \eqref{choice1}-\eqref{choice2}.
	Fix $h$ and let $\gamma = h^{-1} - m^{-1}$ when $g = m$ and $\gamma = h^{-1} - \mBB^{-1}$
	when $g = \mB$.
	Suppose that $X, \gamma$ satisfy the perturbative assumption
	\eqref{pert1} and that the positions of the shocks \eqref{betaLassump}-\eqref{betaRassump} hold.
	Let $\mJ_{X, \gamma}$ denote the modified energy current defined in
	Proposition \ref{effectivemmmink} (when $g = m$) and \ref{effectivemmmB} (when $g = \mB$).
	Define $\zeta^{\Gamma^A}$ is as in \eqref{zetadef}
	so that $N^{\Gamma^A}_g = g^{-1} \zeta^{\Gamma^A}$.
	Then when $g = m$, for any $\delta > 0$, at the shock $\Gamma^A$ we have the bound
		\begin{multline}
		 |\zeta^{A}(\mJ_{X, \gamma, P})|
		 \lesssim
		 \left(\delta + \epsilon_2 + \epsilon + \frac{\epsilon}{\delta} \right)
		 |\pa \psi|_{X, m, \SL}^2 + \epsilon_2 \frac{|X^\ell_m|}{(1+v)^2}(1+s) |\nas \psi|^2
			\\
            +
			\left(1 + \frac{1}{\delta}\right)|X| |P|^2
			+ \frac{1}{\delta}(1+s)^{1/2}(1+v)  |X^n_{m}||P|^2.
			\label{pertgammaPshockm}
		\end{multline}
		If $g = \mB$, there is a continuous function $c_0$ with $c_0(0) = 0$
		so that for any $\delta > 0$, at the shock $\Gamma^A$
	we have the bound
	\begin{align}
	|\zeta^{A}(\mJ_{X, \gamma, P})|
&\lesssim
\left(\delta + \epsilon_2 + \epsilon + \frac{\epsilon}{\delta}  + c_0(\epsilon_0)\right)
|\pa \psi|_{X, \mB, \SL}^2 + \epsilon_2 \frac{|X^\ell_{\mB}|}{(1+v)^2}(1+s) |\nas \psi|^2
 +v |P|^2.
	 \label{pertgammaPshockmB}
	\end{align}
\end{lemma}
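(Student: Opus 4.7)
The plan is to substitute $\zeta = \zeta^A$ into the general bounds \eqref{zetaclose} (for $g=m$) and \eqref{zetaclosemBtilde} (for $g = \mB$), and convert every appearance of the bulk norm $|\pa\psi|^2_{X,g}$ into the shock-adapted norm $|\pa\psi|^2_{X,g,\SL}$ from \eqref{plusnorm}, using the explicit form of $\zeta^A$ in \eqref{zetadef}, the shock-position bounds \eqref{betaLassump}-\eqref{betaRassump}, the perturbative hypothesis \eqref{pert1}, and the formula \eqref{signchange2} for the pairing with multipliers. As a preliminary step, I would record the pointwise bounds along $\Gamma^A$:
\begin{align}
|\zeta^A(X)| &\lesssim X^n_g + \tfrac{1}{(1+v)(1+s)^{1/2}}\,X^\ell_g + |X\cdot \slashed{\zeta^A}|, \\
|\slashed{\zeta^A}| &\lesssim \tfrac{1}{r}|\nabla_\omega \beta^A_s| \lesssim \epsilon_2\,\tfrac{(1+s)^{1/2}}{1+v},
\end{align}
the first following from \eqref{signchange2} and \eqref{innerproduct}, the second directly from the angular part of \eqref{betaLassump}-\eqref{betaRassump}.

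For the Minkowskian case, apply \eqref{zetaclose} term by term. The $\delta|X^\ell_m||\evm\psi|^2$ piece is already of the form $\delta|\pa\psi|^2_{X,m,\SL}$. The piece $(1+1/\delta)|\gamma||\pa\psi|^2_{X,m}$ splits into three: the $X^\ell_g(\evm\psi)^2$ and $X^\ell_g|\nas\psi|^2$ parts absorb via the second inequality in \eqref{pert1} giving $\epsilon |\pa\psi|^2_{X,m,\SL}$ (after bounding $\epsilon X^n_g$ by the shock-norm); the remaining $X^n_g(n\psi)^2$ part absorbs via the first inequality in \eqref{pert1}, since $|\gamma|X^n_g \lesssim \epsilon\tfrac{X^n_g}{(1+v)(1+s)^{1/2}}$, which is exactly the weight in the $(n\psi)^2$ coefficient of $|\pa\psi|^2_{X,m,\SL}$. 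The term $|\zeta^A(X)||\gamma||\pa\psi|^2$ is controlled analogously after inserting the first preliminary bound above, while $|\slashed{\zeta^A}|^2|\pa\psi|^2_{X,m}$ produces the explicit angular contribution $\epsilon_2\tfrac{X^\ell_m}{(1+v)^2}(1+s)|\nas\psi|^2$ and smaller terms absorbable into $\epsilon_2|\pa\psi|^2_{X,m,\SL}$. Finally, the source term $|X^n_m|^{1/2}|P||\pa\psi|_{X,m}$ is split by weighted Cauchy-Schwarz with weight $\tfrac{1}{(1+v)(1+s)^{1/2}}$, producing the $\delta|\pa\psi|^2_{X,m,\SL}$ contribution and the quadratic-in-$P$ remainder in \eqref{pertgammaPshockm}.

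The Burgers case proceeds in the same way using \eqref{zetaclosemBtilde}, with one additional term: $\epsilon|\zeta^A(J_{X,\gamma_a})|$. To handle it, use the null structure of $a^{\mu\nu}$ expressed in \eqref{intronullcondn}: since the $du$-component of $\zeta^A$ is a pure gradient term of $(u-B^A)$, the projection $\overline{\zeta^A}$ (in the sense of \eqref{xibardef}) is controlled purely by $\pa_v B^A$ and $\nas B^A$, which are of order $(1+v)^{-1}(1+s)^{-1/2}$ along the shock. Combined with the factor $u/vs$ in $\gamma_a$ and the largeness assumption \eqref{largestart0} on $t_0$ to absorb zero-th-order commutator terms, this produces the $c_0(\epsilon_0)|\pa\psi|^2_{X,\mB,\SL}$ contribution. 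The rest of the bound \eqref{pertgammaPshockmB} then follows by the same reductions as in the Minkowski case, now using the Burgers analogues $|X^\ell_{\mB}|\gtrsim v$ and $X^n_{\mB} \gtrsim (1+s)^{-1/2}$ to match weights.

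The main obstacle will be the bookkeeping to ensure that the angular coupling term $|\slashed{\zeta^A}|^2|\pa\psi|^2_{X,g}$ generates exactly the explicit $\epsilon_2 \tfrac{|X^\ell_g|}{(1+v)^2}(1+s)|\nas\psi|^2$ term rather than something larger, and—in the $\mB$ case—the isolation of the small factor in $|\zeta^A(J_{X,\gamma_a})|$ via the null condition on $a^{\mu\nu}$, since naively this term is not obviously small given the largeness of $X^\ell_{\mB} = v$. Getting both simultaneously to combine with the factor $\epsilon$ in front (rather than $O(1)$) is what forces use of the null structure and not just the pointwise size of $\gamma_a$.
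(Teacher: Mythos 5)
Your proposal follows essentially the same route as the paper: substitute $\zeta^A$ into \eqref{zetaclose}/\eqref{zetaclosemBtilde}, absorb the $\gamma$-terms into $|\pa\psi|^2_{X,g,\SL}$ via \eqref{pert1} and \eqref{signchange2}, extract the explicit angular term from $|\slashed{\zeta}|^2$, apply weighted Cauchy--Schwarz to the $P$-terms, and in the Burgers case control $\epsilon|\zeta^A(J_{X,\gamma_a})|$ through the null condition on $a^{\mu\nu}$ together with the largeness of $t_0$ (the paper packages this last step as the pre-proved bound \eqref{QAspacelike} of Lemma \ref{nulllemmacurrent}). One minor slip: along the shock $|\nas B^A|\lesssim \epsilon_2 (1+s)^{1/2}/(1+v)$, not $(1+v)^{-1}(1+s)^{-1/2}$ as you wrote, but since the argument only needs $|\overline{\zeta^A}|\lesssim (1+s)^{1/2}/(1+v)$ the conclusion is unaffected.
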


\begin{proof}

We start by making some preliminary estimates.  First,
if \eqref{betaLassump}-\eqref{betaRassump} hold, then
$|\nas B^A|\lesssim \ve_2 \frac{(1+s)^{1/2}}{1+v}$ near the shock (see \eqref{derivsofB})
and so, since $\slashed{N} = \frac{1}{2} \nas B\cdot \nas$, when $X^n_g, X^\ell_g > 0$
we have
\begin{multline}
 |X\psi \slashed{N}\psi|
 \lesssim
 \ve_2 \tfrac{(1+s)^{1/2}}{1+v} \left( X^n_g |n\psi| |\nas \psi|
 + X^\ell_g |\ell^g\psi| |\nas \psi|\right)\\
 \lesssim
 \ve_2 \left( \tfrac{{\rs^A}}{(1+v)(1+s)^{1/2}} X^n_g (n\psi)^2 + X^\ell_g (\ell^g \psi)^2
 +  \left( X^n_g + \tfrac{{\rs^A}}{2(1+v)(1+s)^{1/2}} X^\ell_g \right) |\nas \psi|^2\right),
 \label{angularbound}
\end{multline}
where we used $\tfrac{1+s}{1+v} \lesssim 1$.
We also record the fact that in this setting,
\begin{equation}
 |\slashed{\zeta}|\lesssim \ve_2 \frac{(1+s)^{1/2}}{1+v}.
 \label{consofang2}
\end{equation}

We also point out that under the assumptions on $X$, $\gamma$, we have
the bound
\begin{equation}
 |\gamma| |\pa\psi|_{X, g}^2  \lesssim \epsilon |\pa \psi|_{X, g, \SL}^2,
 \label{gammatoSL}
\end{equation}
{where we remind the reader that all implicit constants here and in what follows
depend on $\rs^A$.}
Indeed,
\begin{multline}
 |\gamma| |\pa\psi|_{X, g}^2
 = |\gamma| |X^n_g| |\pa \psi|^2 + |\gamma| |X^\ell| \left(|\evg \psi|^2
 + |\nas \psi|^2\right)\\
 \lesssim \epsilon \frac{1}{(1+v)(1+s)^{1/2}} |n\psi|^2
 + \epsilon |X^n| \left(|\evg \psi|^2 + |\nas \psi|^2\right)
 \lesssim \epsilon |\pa \psi|_{X, g, \SL}^2.
 \label{}
\end{multline}
We also point out the simple fact that
\begin{equation}
 \frac{1}{(1+v)(1+s)^{1/2}} |X^n_g| |\pa \psi|^2
 \lesssim |\pa \psi|_{X, g, \SL}^2,
 \label{smallcoefpapsi}
\end{equation}
which just follows from the definitions.

We now prove the bound. We recall from \eqref{zetaclose}
that when $g= m $ we have the bound
 \begin{align}
  |\zeta^{\Gamma^A}(\mJ_{X, \gamma, P})| \lesssim
	\delta |X^\ell_m||\evm \psi|^2
 + \left(1 + \frac{1}{\delta}\right)|\gamma| |\pa \psi|_{X,m}^2
 +  |\zeta(X)| |\gamma| |\pa \psi|^2
 +
 |\slashed{\zeta}|^2 |\pa \psi|_{X,m}^2
 \\
 +  \left(1 + \frac{1}{\delta}\right) |X| |P|^2
 +  |X^n_m|^{1/2} |P| |\pa \psi|_{X, m},
  \label{bdforzetagammaAm}
 \end{align}
and by \eqref{zetaclosemBtilde}, when $g= \mB$ we instead have
\begin{align}
 |\zeta^{\Gamma^A}(\mJ_{X, \gamma, P})|
 &\lesssim
 \delta v|\evmB \psi|^2
 +\left(1 + \frac{1}{\delta}\right)|\gamma| |\pa \psi|_{X,\mB}^2
 + |\zeta(X)| |\gamma| |\pa \psi|^2
 +  |\slashed{\zeta}|^2 |\pa \psi|_{X, \mB}^2\\
 &
	+\left(1 + \frac{1}{\delta}\right) v |P|^2
	+ \frac{1}{(1+s)^{1/2}} |P| |\pa \psi|+
 	\epsilon |\zeta(J_{X,\gamma_a})|.
 % \delta |X^\ell_{\mB}| |\evmB \psi|^2 + \left( 1 + \frac{1}{\delta}\right) |\gamma|
 % |\pa \psi|_{X, \mB}^2
 % + |\zeta^{\Gamma^A}(X)| |\gamma| |\pa \psi|^2
 % + |\slashed{\zeta}|^2 |\pa \psi|_{X, \mB}^2 + \epsilon |\zeta^{\Gamma^R}(\gamma_a)|,
 \label{bdforzetagammaAmB}
\end{align}
%
% We first prove the needed bound when $P = 0$, in which case we want to prove
% \begin{equation}
% 	 |\zeta^{A}(\mJ_{X, \gamma})|
% 	 \lesssim \left(\delta + \epsilon_2 + \epsilon + \frac{\epsilon}{\delta} + c_0(\epsilon_0)\right)
% 	 |\pa \psi|_{X, g, \SL}^2 + \epsilon_2 \frac{|X^\ell_g|}{(1+v)^2}(1+s) |\nas \psi|^2,
% 	 \label{pertzeta0}
% 	\end{equation}
% 	where $\lim_{\epsilon_0 \to 0} c_0(\epsilon_0) = 0$ and where
% 	$c_0\equiv 0$ when $g= m$.

	We bound the first four terms in each expression in \eqref{bdforzetagammaAm}
	and \eqref{bdforzetagammaAmB}
	in the same way.
 The first term in each expression is bounded bounded by the right-hand side of \eqref{pertgammaPshockm},
 resp. \eqref{pertgammaPshockmB}.
 For the second term we use \eqref{gammatoSL},
 \begin{equation}
  \left( 1 + \frac{1}{\delta}\right) |\gamma|
	|\pa \psi|_{X, g}^2
	\lesssim
	\left( 1 + \frac{1}{\delta}\right)|\pa \psi|_{X, g, \SL}^2.
  \label{}
 \end{equation}
 To handle the third term, we note
that if the assumptions \eqref{betaLassump}-\eqref{betaRassump}
 about the positions of the shocks hold, then
\begin{equation}
    |\zeta^{\Gamma^A}(X)| \lesssim X^n_m + \frac{{\rs^A}}{(1+v)(1+s)^{1/2}} X^\ell_m,
 \label{}
\end{equation}
(see the estimates in \eqref{signchange2} and note that
$\zeta^{\Gamma^A}(X) = g(X, N_g^{\Gamma^A})$ for any metric $g$),
 and so, using \eqref{gammatoSL}, \eqref{pert1} and then
 \eqref{smallcoefpapsi}
 \begin{multline}
  |\zeta^{\Gamma^A}(X)| |\gamma| |\pa \psi|^2
	\lesssim |X^n_g| |\gamma| |\pa \psi|^2
	+ \frac{|X^\ell_g|}{(1+v)(1+s)^{1/2}} |\gamma| |\pa \psi|^2\\
	\lesssim \epsilon |\pa \psi|_{X, g, \SL}^2
	+ \frac{\epsilon}{(1+v)(1+s)^{1/2}} |X^n_g| |\pa \psi|^2
	\lesssim \epsilon |\pa \psi|_{X, g, \SL}^2.
  \label{}
 \end{multline}

	For the fourth term in \eqref{bdforzetagammaAm}-\eqref{bdforzetagammaAmB}, we use
	\eqref{consofang2} to get
	\begin{equation}
	 |\slashed{\zeta}|^2 |\pa \psi|_{X, g}^2
	 \lesssim \epsilon_2^2 \frac{1+s}{(1+v)^2}\left( |X^n_g| |\pa \psi|^2
	 + |X^\ell_g| \left( |\evg \psi|^2 + |\nas \psi|^2\right)\right)
	 \lesssim \epsilon_2^2 |\pa \psi|_{X, g, \SL}^2
	 + \epsilon_2^2\frac{|X^\ell_g|}{(1+v)^2} (1+s) |\nas \psi|^2,
	 \label{}
	\end{equation}
	as needed. When $g = m$ it remains to bound the terms on the last
	line of \eqref{bdforzetagammaAm} and for this we just bound
	\begin{multline}
	 |X^n_m|^{1/2} |P| |\pa \psi|_{X, m}
	 \lesssim \delta \frac{1}{(1+v)(1+s)^{1/2}} |\pa \psi|_{X, m}^2
	 + \frac{1}{\delta} |X^n_m| (1+v)(1+s)^{1/2}|P|^2\\
	 \lesssim \delta |\pa \psi|_{X, m, \SL}^2
	 + \frac{1}{\delta} |X^n_m| (1+v)(1+s)^{1/2}|P|^2
	 \label{}
	\end{multline}
	which gives the needed bounds.

	When $g = \mB$, to handle the contribution from the term in
	\eqref{bdforzetagammaAmB} involving $\gamma_a$, we just use \eqref{QAspacelike},
	and for the terms involving $P$ we just bound
	\begin{equation}
	 \frac{1}{(1+s)^{1/2}} |P||\pa \psi|^2
	 \lesssim \delta \frac{1}{1+v} \frac{1}{1+s} |\pa \psi|^2
	 + \frac{1}{\delta}(1+v) |P|^2
	 \lesssim \delta |\pa \psi|_{X, \mB, +}^2
	 + \frac{1}{\delta}(1+v) |P|^2,
	 \label{}
	\end{equation}
	using that $|X^n_{\mB}| \gtrsim (1+s)^{1/2}$.

\end{proof}

\subsubsection{The energy-momentum tensor on the spacelike side of the shock}
Let $(g, A) = (m, R)$ or $(\mB, L)$ so that $\Gamma^A$ is spacelike with
respect to $g$.
We recall the well-known fact that if $X$ is timelike and future-directed,
$\Sigma$ is a spacelike surface and $N^\Sigma_g$ is the future-directed normal to
$\Sigma$ then $Q^g(X, N_g^\Sigma) \geq 0$. In this setting, this positivity can
be seen easily from \eqref{slemg} and the fact that with our conventions,
$X$ is timelike and future-directed exactly when $X^n_g, X^{\ell}_g > 0$.

Note that if
\eqref{betaLassump}-\eqref{betaRassump} hold,
then by \eqref{angularbound}, provided $\ve_2$ is sufficiently small, if
 $X$ is timelike and future-directed, $|X\psi \slashed{N}\psi|
 \lesssim
 \ve_2 |\pa \psi|_{X,g}^2$, and it follows that there is a constant $C_+$ so that
\begin{equation}
 Q^g(X, N_g^{\Gamma^A}) \geq C_+ |\pa \psi|^2_{X, g, \SL} > 0.
 \label{Qspacelikelower}
\end{equation}

On the spacelike side of the shocks we will need a version of
\eqref{Qspacelikelower} where,
with notation as in \eqref{QPdef}, $Q^g$ is replaced by
$Q^h_P$ where $h$ is a perturbation of $g$.
It will be convenient to state these results separately on the spacelike
side of the right shock and on the spacelike side of the left shock.
We start with the result on the spacelike side of the right shock.
\begin{lemma}
  \label{spacelikeperturb}
	Let $X = X_R$ with notation as in Section \ref{fields} and write $X = X^n_m n + X^\ell_m\ev^m$
	%
	%
	% and suppose that
	% $X^n, X^\ell \geq 0$ and that \eqref{extraXassump} holds.
	Define $|\pa \psi|_{X,m,\SL}$ as in
	\eqref{plusnorm}.
	There is a constant $\epsilon^\prime >0$ so that if $\gamma = h^{-1} - m^{-1}$
	satisfies the perturbative assumptions \eqref{pert1} with
	$\epsilon < \epsilon^\prime$ and
	\eqref{betaRassump} holds with $\epsilon_2 < \epsilon^\prime$, then along $\Gamma^R$,
	\begin{equation}
	 |\pa \psi|^2_{X, m, \SL}
	 \lesssim
	 \mQ^h_P(X, N^\Gamma_h) + 
			\left(|X| + (1+s)^{1/2}(1+v)  |X^n_{m}|\right)|P|^2.
	 \label{mQspacelikelower}
	\end{equation}
\end{lemma}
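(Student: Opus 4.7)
The plan is to decompose $\mQ^h_P(X, N_h^\Gamma)$ into a Minkowskian principal part, which by the spacelike energy-momentum bound \eqref{Qspacelikelower} controls $|\pa\psi|^2_{X, m, \SL}$ from below, plus several small remainders that can be absorbed back into the principal part once $\epsilon$, $\epsilon_2$ (and an auxiliary Cauchy--Schwarz parameter $\delta$) are chosen small enough. Concretely, write $N_h^\Gamma = N_m^\Gamma + V$ with $V = (h^{-1} - m^{-1})\zeta^\Gamma$, so that $|V|\lesssim |\gamma|$. Expanding the definition \eqref{mQdef} with $g = m$ and using bilinearity of $Q^m_P$ in its second slot,
\begin{equation}
\mQ^h_P(X, N_h^\Gamma) = Q^m(X, N_m^\Gamma) + Q^m(X, V) + m(P, N_h^\Gamma)\, X\psi - m(X, N_h^\Gamma)\, P\psi + \zeta^\Gamma(\mJ_{X, \gamma, P}).
\end{equation}

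The principal term satisfies $Q^m(X, N_m^\Gamma) \geq C_+ |\pa\psi|^2_{X, m, \SL}$ by \eqref{Qspacelikelower}, whose hypotheses hold because $\Gamma^R$ is spacelike with respect to $m$ (Lemma \ref{causallemma}), $X_R$ is future-directed timelike with respect to $m$ (Lemma \ref{multiplierpointing}), and \eqref{betaRassump} is in force with small $\epsilon_2$. The correction $Q^m(X, V)$ is $\lesssim |X||\gamma||\pa\psi|^2$, which by \eqref{pert1} and \eqref{gammatoSL} is $\lesssim \epsilon |\pa\psi|^2_{X, m, \SL}$. The modified-current term $\zeta^\Gamma(\mJ_{X, \gamma, P})$ is estimated by the preceding bound \eqref{pertgammaPshockm}, producing a small multiple of $|\pa\psi|^2_{X, m, \SL}$, precisely the $|X||P|^2$ and $(1+s)^{1/2}(1+v)|X^n_m||P|^2$ terms appearing on the right-hand side of the lemma, plus an angular remainder $\epsilon_2 \tfrac{X^\ell_m}{(1+v)^2}(1+s) |\nas\psi|^2$ discussed in the next paragraph. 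The two remaining $P$-dependent terms in the decomposition are handled by Cauchy--Schwarz: using $|m(P, N_h^\Gamma)|\lesssim |P|$ and $|m(X, N_h^\Gamma)|\lesssim X^n_m + \tfrac{X^\ell_m}{(1+v)(1+s)^{1/2}} + \epsilon|X|$ (the first two summands from \eqref{signchange2}), splitting $|P||\pa\psi|$ against the minimal weight $\tfrac{\rs^R X^n_m}{(1+v)(1+s)^{1/2}}$ in $|\pa\psi|^2_{X, m, \SL}$ reproduces the very same $|X||P|^2$ and $(1+s)^{1/2}(1+v)|X^n_m||P|^2$ factors, up to an additional $\delta |\pa\psi|^2_{X, m, \SL}$ contribution to be absorbed.

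The main obstacle, and the only step that is not purely mechanical, is handling the angular remainder $\epsilon_2 \tfrac{X^\ell_m}{(1+v)^2}(1+s) |\nas\psi|^2$ from \eqref{pertgammaPshockm}. Its coefficient exceeds the angular weight $\tfrac{\rs^R X^\ell_m}{(1+v)(1+s)^{1/2}}$ in $|\pa\psi|^2_{X, m, \SL}$ by the factor $\tfrac{(1+s)^{3/2}}{\rs^R(1+v)}$; since $s = \log v$, this ratio is uniformly bounded on $v \geq 1$, so the remainder is $\lesssim \epsilon_2 |\pa\psi|^2_{X, m, \SL}$ and absorbs into the principal part. Choosing $\delta$, then $\epsilon$, then $\epsilon_2$ sufficiently small so that the combined coefficient in front of $|\pa\psi|^2_{X, m, \SL}$ among all error terms is less than $C_+/2$, we rearrange to obtain the claimed inequality
$|\pa\psi|^2_{X, m, \SL} \lesssim \mQ^h_P(X, N_h^\Gamma) + \bigl(|X| + (1+s)^{1/2}(1+v)|X^n_m|\bigr)|P|^2$.
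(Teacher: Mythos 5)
Your proposal follows the same skeleton as the paper's proof: isolate the Minkowskian principal term $Q^m(X,N_m^{\Gamma^R})$, bound it below by $C_+|\pa\psi|^2_{X,m,\SL}$ via \eqref{Qspacelikelower}, estimate the remaining flux by \eqref{pertgammaPshockm}, and absorb after choosing $\delta$, $\epsilon_2$, $\epsilon$ small; your absorption of the angular remainder via the boundedness of $(1+s)^{3/2}/(1+v)$ is exactly what the paper does implicitly. The only structural difference is bookkeeping: the paper's proof uses the two-term splitting \eqref{Qperturbshockformula}, $\mQ^h_P(X,N_h^{\Gamma^R})=Q^m(X,N_m^{\Gamma^R})+\zeta^{R}(\mJ_{X,\gamma,P})$, consistent with how the boundary flux arises from \eqref{modifiedstokesident} (the $P$-flux $j_{X,P}$ is already a summand of $\mJ_{X,\gamma,P}$, see \eqref{explicitmJ}), whereas your literal expansion of \eqref{mQdef} produces the three extra terms $Q^m(X,V)$, $m(P,N_h^\Gamma)X\psi$ and $-m(X,N_h^\Gamma)P\psi$. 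Carrying them is not wrong, but you should be aware the paper's own usage does not include them, so they are essentially a re-derivation of pieces already estimated inside \eqref{pertgammaPshockm}.

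Two of your steps for these extra terms are not right as written. First, the chain $|Q^m(X,V)|\lesssim |X||\gamma||\pa\psi|^2\lesssim \epsilon|\pa\psi|^2_{X,m,\SL}$ fails at the second inequality: $|X||\gamma||\pa\psi|^2$ contains $X^\ell_m|\gamma|(n\psi)^2$, which \eqref{pert1} only reduces to $\epsilon X^n_m(n\psi)^2$, while the norm \eqref{plusnorm} carries the much smaller weight $(1+v)^{-1}(1+s)^{-1/2}X^n_m$ on $(n\psi)^2$; note also that \eqref{gammatoSL} bounds $|\gamma||\pa\psi|^2_{X,m}$, not $|X||\gamma||\pa\psi|^2$. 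The conclusion is still true, but you must use the structure of $Q^m(X,V)$: the $(n\psi)^2$ contributions come only with coefficient $X^n_m|\gamma|\le \epsilon(1+v)^{-1}(1+s)^{-1/2}X^n_m$, and the worst remaining term $X^\ell_m|\gamma||\evm\psi||n\psi|$ needs a weighted Cauchy--Schwarz using both bounds of \eqref{pert1} multiplicatively, e.g. $X^\ell_m|\gamma||\evm\psi||n\psi|\le \delta X^\ell_m|\evm\psi|^2+\delta^{-1}(X^\ell_m|\gamma|)\,|\gamma|\,(n\psi)^2\le \delta X^\ell_m|\evm\psi|^2+\delta^{-1}\epsilon^2(1+v)^{-1}(1+s)^{-1/2}X^n_m(n\psi)^2$, both absorbable. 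Second, the crude summand $\epsilon|X|$ in your bound for $|m(X,N_h^\Gamma)|$ is too large for the subsequent Cauchy--Schwarz against the minimal weight (for $X_R$ it generates a $P$-coefficient of size $\epsilon^2(X^\ell_m)^2(1+v)(1+s)^{1/2}/X^n_m$, which is not $\lesssim |X|+(1+v)(1+s)^{1/2}X^n_m$); replace it by $|m(X,N_h^\Gamma-N_m^\Gamma)|\lesssim |X||\gamma|\lesssim \epsilon X^n_m$, after which that piece is handled like the $X^n_m|P||\pa\psi|$ term. With these local repairs your argument is complete and coincides in substance with the paper's.
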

\begin{proof}

We start by splitting $Q$ into a linear part and a perturbative part,
which we write as
\begin{equation}
 \mQ^h_P(X, N_h^{\Gamma^R}) = Q^m(X, N_m^{\Gamma^R}) + \zeta^{R}(\mJ_{X, \gamma, P}),
 \label{Qperturbshockformula}
\end{equation}
	with $\zeta^{R}$ as in \eqref{zetadef}. By \eqref{Qspacelikelower} we
	 have
	\begin{equation}
	 C_+ |\pa \psi|_{X, m, \SL}^2
	 \leq Q^m(X, N_m^{\Gamma^R})
	 \leq \mQ^h(X, N_h^{\Gamma^R}) + |\zeta^{R}(\mJ_{X, \gamma, P})|.
	 \label{plusnormboundbyQm}
	\end{equation}
    The result now follows after using the bound
    \eqref{pertgammaPshockm}, taking
    $\delta, \epsilon_2$, and then $\epsilon$ sufficiently small, and absorbing into
    the left-hand side.

\end{proof}

On the spacelike side of the left shock, we will instead use the following result.
\begin{lemma}
  \label{spacelikeperturbmB}
  Let $X = X_{T}$ or $X = X_{C}$ and write $X = X^n_{\mB} n + X^\ell_{\mB}\ev^{\mB}$
	%
	%
	% and suppose that
	% $X^n, X^\ell \geq 0$ and that \eqref{extraXassump} holds.
	Define $|\pa \psi|_{X,\mB,\SL}$ as in
	\eqref{plusnorm}.
	There is a constant $\epsilon^\prime >0$ so that if $\gamma = h^{-1} - \mBB^{-1}$
	satisfies the perturbative assumptions \eqref{pert1} with
	$\epsilon < \epsilon^\prime$, \eqref{betaLassump} holds with $\epsilon_2 < \epsilon^\prime$
	and \eqref{largestart0} holds with $\epsilon_0 < \epsilon^\prime$, then along $\Gamma^R$,
	\begin{equation}
	 |\pa \psi|^2_{X, \mB, \SL}
	 \lesssim
	 \mQ^h_P(X, N^\Gamma_h) + (1+v) |P|^2.
	 \label{mQspacelikelowermB}
	\end{equation}
\end{lemma}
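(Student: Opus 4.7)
The plan is to mirror the proof of Lemma~\ref{spacelikeperturb}, replacing the Minkowski metric $m$ by the Burgers metric $\mB$ throughout. By Lemma~\ref{causallemma}, under \eqref{betaLassump} the shock that is spacelike with respect to $\mB$ is the left shock $\Gamma^L$ (not $\Gamma^R$, which is timelike with respect to $\mB$), and the flux lower bound only holds on the spacelike side; I therefore read the lemma's assertion ``along $\Gamma^R$'' as referring to $\Gamma^L$. By Lemma~\ref{multiplierpointing}, both $X_T$ and $X_C$ are future-directed timelike for $\mB$, so by \eqref{Qspacelikelower} one has $C_+ |\pa\psi|^2_{X,\mB,\SL} \leq Q^{\mB}(X, N_{\mB}^{\Gamma^L})$. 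Writing $h^{-1} = \mBB^{-1} + \gamma = \mB^{-1} + \gamma_a + \gamma$ and inserting into \eqref{mQdef}, I split
\begin{equation}
\mQ^h_P(X, N_h^{\Gamma^L}) = Q^{\mB}(X, N_{\mB}^{\Gamma^L}) + \zeta^L(J_{X, \gamma_a, P}) + \zeta^L(\mJ_{X, \gamma, P}),
\end{equation}
where $\zeta^L$ is the conormal to $\Gamma^L$ from \eqref{zetadef}, paralleling the analogous decomposition in the proof of Lemma~\ref{timeslicelemma}.

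The modified-current error is estimated via the shock-form bound \eqref{pertgammaPshockmB}, which gives
\begin{equation}
|\zeta^L(\mJ_{X, \gamma, P})| \lesssim \left(\delta + \epsilon_2 + \epsilon + \tfrac{\epsilon}{\delta} + c_0(\epsilon_0)\right) |\pa\psi|^2_{X,\mB,\SL} + \epsilon_2 \tfrac{|X^\ell_{\mB}|}{(1+v)^2}(1+s) |\nas\psi|^2 + v|P|^2.
\end{equation}
The angular correction is the one genuinely new feature compared with the Minkowski argument: both admissible multipliers satisfy $X^\ell_{\mB} = v$ and $X^n_{\mB} \gtrsim (1+s)^{-1/2}$, so
\begin{equation}
\epsilon_2 \tfrac{X^\ell_{\mB}}{(1+v)^2}(1+s)|\nas\psi|^2 \lesssim \epsilon_2 \tfrac{1+s}{1+v}|\nas\psi|^2 \lesssim \epsilon_2 \left(X^n_{\mB} + \tfrac{1}{(1+v)(1+s)^{1/2}} X^\ell_{\mB}\right)|\nas\psi|^2 \lesssim \epsilon_2 |\pa\psi|^2_{X,\mB,\SL},
\end{equation}
by \eqref{plusnorm}. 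The null-structured piece $\gamma_a = \tfrac{u}{vs} a$ is then handled by an estimate of the type \eqref{QAspacelike}: combining the null condition $a(du,du) = 0$ via \eqref{intronullcondn} with the decay $|u/(vs)| \lesssim 1/((1+v)(1+s)^{1/2})$ along $\Gamma^L$, one obtains $|\zeta^L(J_{X, \gamma_a, P})| \lesssim c_0(\epsilon_0)|\pa\psi|^2_{X,\mB,\SL} + (1+v)|P|^2$ for a continuous $c_0$ with $c_0(0) = 0$, where the smallness enters through the large-initial-time hypothesis \eqref{largestart0}.

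Combining all of the above, the total coefficient multiplying $|\pa\psi|^2_{X,\mB,\SL}$ on the error side is $O(\delta + \epsilon + \epsilon_2 + \epsilon/\delta + c_0(\epsilon_0))$; choosing $\delta$ small first and then $\epsilon, \epsilon_2, \epsilon_0$ below a common threshold $\epsilon'$, this coefficient can be made smaller than $C_+/2$ and absorbed using the pointwise lower bound on $Q^{\mB}$, leaving only the harmless source $v|P|^2$ on the right. The main obstacle, which distinguishes this proof from the Minkowski version, is the treatment of $\gamma_a$: this piece does \emph{not} satisfy the perturbative smallness \eqref{pert1} on its own, and its contribution can only be tamed by combining its null structure with the $(1+s)^{-1/2}$ decay built into the coefficient $u/(vs)$, both of which are essential. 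This is precisely why the hypothesis $\epsilon_0 < \epsilon'$ on the initial time appears alongside the more standard smallness of $\epsilon$ and $\epsilon_2$.
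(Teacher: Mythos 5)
Your proof is correct and follows essentially the same route as the paper's: the pointwise lower bound \eqref{Qspacelikelower} on the spacelike side, the decomposition of $\mQ^h_P$ into $Q^{\mB}$ plus the perturbative currents, the shock estimate \eqref{pertgammaPshockmB}, and absorption after taking $\delta$ and then $\epsilon,\epsilon_2,\epsilon_0$ small. Your reading of the statement's ``$\Gamma^R$'' as the left shock is the intended one (the hypotheses invoke \eqref{betaLassump} and the lemma is applied on the spacelike side of $\Gamma^L$), and the only difference is that you make explicit the $J_{X,\gamma_a}$ contribution (via the \eqref{QAspacelike}-type bound) and the absorption of the angular correction term, both of which the paper leaves implicit in its shorter argument.
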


\begin{proof}
	By \eqref{Qspacelikelower}, we have the bound
 \begin{equation}
  C_+ |\pa \psi|_{X, \mB, \SL}^2
	\leq Q^{\mB}(X, N_{\mB}^{\Gamma^R})
	\leq \mQ^h(X, N_h^{\Gamma^R}) + |\zeta^{\Gamma^R}(\mJ_{X, \gamma, P})|,
  \label{plusnormboundbyQmB}
 \end{equation}
 and recalling the bound \eqref{pertgammaPshockmB},
 \begin{equation}
  |\zeta^{A}(\mJ_{X, \gamma, P})|
\lesssim
\left(\delta + \epsilon_2 + \epsilon + \frac{\epsilon}{\delta}  + c_0(\epsilon_0)\right)
|\pa \psi|_{X, \mB, \SL}^2 + \epsilon_2 \frac{|X^\ell_{\mB}|}{(1+v)^2}(1+s) |\nas \psi|^2
 +v |P|^2,
  \label{}
 \end{equation}
 taking $\delta$ and then $\epsilon, \epsilon_2$ sufficiently small we
 get the result.
\end{proof}

% If the assumptions \eqref{betaRassump}-\eqref{betaLassump} on the positions
% of the shocks hold,
% the multipliers we will consider in the rightmost and central region satisfy
% the bounds
% \begin{equation}
%  |X^\ell_g| \leq C_1 (1+v), \qquad
%  |X^n_g| \geq \frac{C_2}{(1+s)^{1/2}},
%  \label{extraXassump}
% \end{equation}
% (see Section \ref{fields})
% along the shocks,
% and we will prove bounds assuming that these conditions hold.
%
% old version:
% \begin{lemma}
%   \label{spacelikeperturb}
% 	Suppose that the assumptions \eqref{betaRassump}-\eqref{betaLassump}
% 	hold.
% 	Let $(g, A) = (m, R)$ or $(\mB, L)$ so that $\Gamma^A$ is spacelike
% 	with respect to $g$. Suppose that
% 	$(X, g) = (X_R, m)$ or else $(X, g) \in \{(X_T, \widetilde{\mB}),
% 	(X_C, \widetilde{\mB}) \}$ and
% 	write $X = X^n_g n + X^\ell_g\ev^g$
% 	%
% 	%
% 	% and suppose that
% 	% $X^n, X^\ell \geq 0$ and that \eqref{extraXassump} holds.
% 	Define $|\pa \psi|_{X,g, \SL}$ as in
% 	\eqref{plusnorm}.
% 	There is a constant $\epsilon_0 >0$ so that if $\gamma = h^{-1} -m^{-1}$
% 	satisfies the perturbative assumptions \eqref{pert1} with
% 	$\epsilon < \epsilon_0$, then along $\Gamma^A$,
% 	\begin{equation}
% 	 |\pa \psi|^2_{X, \SL}
% 	 \lesssim
% 	 \mQ^h_P(X, N^\Gamma_h) + \left(X^\ell_g  + (1+v)(1+s)^{1/2}X^n_g\right) |P|^2.
% 	 \label{mQspacelikelower}
% 	\end{equation}
% \end{lemma}

\subsubsection{The energy-momentum tensor on the timelike side of the shock}
\label{emtimelike}
Let $(g, A) = (\mB, R)$ or $(m, L)$ so that $\Gamma^A$ is timelike with
respect to $g$. In this case the energy-momentum tensor
$Q^g(X, N_g)$ is no longer positive-definite, even when $X$ is
timelike and future-directed. For the purposes of this section,
what is relevant is the sign of $-Q^g(X, N_g^A)$ (see \eqref{stokesident}).
We note that if \eqref{betaLassump}-\eqref{betaRassump} hold then by
\eqref{tlemg} and
\eqref{angularbound}, provided $\ve_2$ is sufficiently small
and $X^n_g, X^\ell_g > 0$, we have
\begin{multline}
    -Q^g(X, N_g) \geq C_1 \frac{{\rs^A}}{(1+v)(1+s)^{1/2}}X^n_g (n\psi)^2 - C_2X^\ell_g (\evg \psi)^2\\
    + C_3 \left(\tfrac{{\rs^A}}{2(1+v)(1+s)^{1/2}}(1-\epsilon_2) X^\ell_g - (1+\epsilon_2)X^n_g \right) |\nas \psi|^2
 \label{basicnegativity}
\end{multline}
Note that the last term here need not be positive; for the multipliers $X_L$
and $X_T$ it winds up being positive for small $\epsilon_2$, but for the multiplier $X_C$
it is negative. Independently of this, the term involving
$\evg$ needs to be bounded and for this we will need to use the boundary
conditions. See Section \ref{bcbootstrapsection}.

We now bound the energy-momentum tensor along the timelike side of the left shock.
\begin{lemma}
  \label{timelikeperturb}
	Let $X = X_L$ and write $X = X^n_m n + X^\ell_m\ev^m$.
  There is a constant $\epsilon^\prime >0$ so that if $\gamma = h^{-1} -m^{-1}$
	and $X$ satisfy the perturbative assumptions \eqref{pert1} with
	$\epsilon < \epsilon^\prime$,
	\eqref{betaLassump} holds with $\epsilon_2 < \epsilon^\prime$,
	and \eqref{largestart} holds with $\epsilon_0 < \epsilon^\prime$,
	then along $\Gamma^L$,
  \begin{multline}
   \frac{1}{(1+v)(1+s)^{1/2}} X^n_m |n\psi|^2
	 + \frac{1}{(1+v)(1+s)^{1/2}} X^\ell_m |\nas \psi|^2
	 % + \left(\frac{1}{2(1+v)(1+s)^{1/2}}(1-2\epsilon_2)X^\ell_m - (1+ 2\epsilon_2)X^n_m \right) |\nas \psi|^2
	 \\
	 \lesssim
	 -\mQ_P^h(X, N_h^L) +
	  X^\ell_m |\ell^m\psi|^2
+ \left(X^\ell_m  + (1+v)(1+s)^{1/2}X^n_m\right) |P|^2   \label{timelikelower}
   \end{multline}
\end{lemma}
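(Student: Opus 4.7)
The plan is to mirror the structure of Lemma \ref{spacelikeperturb}, with the key new feature being that along the timelike side of the left shock the linear energy-momentum tensor is no longer positive-definite.

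First, I would split the perturbed energy-momentum tensor into its linear and perturbative parts, writing
\begin{equation}
  -\mQ_P^h(X, N_h^L) = -Q^m(X, N_m^L) - \zeta^L(\mJ_{X,\gamma,P}),
\end{equation}
with $\zeta^L$ as in \eqref{zetadef}. For the linear piece I would invoke the pointwise lower bound \eqref{basicnegativity}: since $X_L = uf(u)\pa_u + vf(v)\pa_v$ is timelike and future-directed (Lemma \ref{multiplierpointing}) and \eqref{betaLassump} holds with $\epsilon_2$ small, I get
\begin{equation}
  -Q^m(X_L, N_m^L) \geq C_1 \frac{\ls}{(1+v)(1+s)^{1/2}} X^n_m (n\psi)^2 - C_2 X^\ell_m(\evm\psi)^2 + C_3\left(\tfrac{\ls(1-\epsilon_2)}{2(1+v)(1+s)^{1/2}} X^\ell_m - (1+\epsilon_2)X^n_m\right)|\nas\psi|^2.
\end{equation}
The crucial algebraic step is verifying that the coefficient of $|\nas\psi|^2$ has the right sign, \emph{and} is in fact comparable to $X^\ell_m/((1+v)(1+s)^{1/2})$. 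Along $\Gamma^L$ we have $u\sim \ls(1+s)^{1/2}$, so $X^n_m\sim (1+s)^{1/2}\log(1+s)(\log\log(1+s))^\alpha$, while $X^\ell_m/((1+v)(1+s)^{1/2})\sim (1+s)^{1/2}(\log(1+s))^\alpha$, and their ratio is $\sim (\log(1+s))^{\alpha-1}/(\log\log(1+s))^\alpha$. Since $\alpha>1$ by \eqref{parameters}, this ratio blows up, so for $t_0$ large (by \eqref{largestart0}) the first term dominates the second by a large factor and the net angular coefficient is $\gtrsim X^\ell_m/((1+v)(1+s)^{1/2})$. This is the key point where we use the specific choice of $X_L$.

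Next I would handle $-\zeta^L(\mJ_{X,\gamma,P})$ using the estimate \eqref{pertgammaPshockm}, obtaining for any $\delta>0$
\begin{equation}
  |\zeta^L(\mJ_{X,\gamma,P})| \lesssim \bigl(\delta + \epsilon + \epsilon_2 + \tfrac{\epsilon}{\delta}\bigr)|\pa\psi|_{X,m,\SL}^2 + \epsilon_2 \frac{X^\ell_m(1+s)}{(1+v)^2}|\nas\psi|^2 + \bigl(1+\tfrac{1}{\delta}\bigr)|X||P|^2 + \tfrac{1}{\delta}(1+s)^{1/2}(1+v)X^n_m|P|^2.
\end{equation}
The first term is $|\pa\psi|^2_{X,m,\SL}$ as in \eqref{plusnorm}, and combining with the lower bound for $-Q^m$ above, each of its three components (the $n\psi$, $\nas\psi$, and $\evm\psi$ pieces) is controlled: the $n\psi$ and $\nas\psi$ components are dominated by the positive terms produced by $-Q^m$, and the $\evm\psi$ component is moved to the right-hand side (it is one of the allowed RHS terms in the statement). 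The second stray angular error is controlled by comparing weights: its coefficient relative to $X^\ell_m|\nas\psi|^2/((1+v)(1+s)^{1/2})$ carries a factor $(1+s)^{3/2}/(1+v)\to 0$, which can be made small via \eqref{largestart0}. The last two terms are already of the form appearing on the right-hand side of \eqref{timelikelower}.

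Finally, choosing $\delta$ first small enough and then $\epsilon,\epsilon_2,\epsilon_0$ sufficiently small relative to the constants $C_1, C_3$ produced in step two, the perturbative errors are absorbed into the positive contribution from $-Q^m(X_L, N_m^L)$, yielding the stated bound. The main obstacle is the sign verification for the angular coefficient in the lower bound for $-Q^m$, which is precisely the reason the amplification factor $f(z) = \log z(\log\log z)^\alpha$ with $\alpha > 1$ (rather than, say, $\alpha = 0$ corresponding to a plain scaling field) was built into the definition of $X_L$.
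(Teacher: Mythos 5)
Your proposal is correct and follows essentially the same route as the paper: split $\mQ^h_P$ into the linear part bounded below via \eqref{basicnegativity} and the perturbative part bounded via \eqref{pertgammaPshockm}, then observe that along $\Gamma^L$ the choice $f(z)=\log z(\log\log z)^\alpha$ with $\alpha>1$ makes $\tfrac{X^\ell_m}{(1+v)(1+s)^{1/2}}$ dominate $X^n_m$, so the angular coefficient is positive and comparable to $X^\ell_m/((1+v)(1+s)^{1/2})$, after which the error terms are absorbed exactly as you describe.
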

\begin{proof}

	Following a nearly identical argument to the proof
	of Lemma \ref{spacelikeperturb}, but using \eqref{basicnegativity}
in place of \eqref{Qspacelikelower},
we find that for $\epsilon, \epsilon_2$ small enough,
\begin{multline}
    \frac{{\rs^A}}{(1+v)(1+s)^{1/2}} X^n_m |n\psi|^2
    + \left(\frac{{\rs^A}}{2(1+v)(1+s)^{1/2}}(1-2\epsilon_2)X^\ell_m - (1+ 2\epsilon_2)X^n_m \right) |\nas \psi|^2
 \\
 \lesssim
 -\mQ_P^h(X, N_h^L) +
	X^\ell_g |\ell^m\psi|^2
+ \left(X^\ell_g  + (1+v)(1+s)^{1/2}X^n_m\right) |P|^2.   \label{timelikelower0}
 \end{multline}
 Now we note that since $|u| \sim (\log v)^{1/2}$ along $\Gamma^L$ and
 $\alpha > 1$, $X = X_L$ satisfes
 \begin{multline}
  \frac{1}{2(1+v)(1+s)^{1/2}}(1-2\epsilon_2)X^\ell_m - (1+ 2\epsilon_2)X^n_m\\
	= \frac{1}{2(1+v)(1+s)^{1/2}} (1-2\epsilon_2)
	v \log v (\log \log v)^\alpha- (1+ 2\epsilon_2)|u| \log |u| (\log \log |u|)^\alpha
	\\ \gtrsim
	v \log v (\log \log v)^\alpha = X^\ell_m,
  \label{}
 \end{multline}
 along $\Gamma^L$. Therefore the second term on the left-hand side
 of \eqref{timelikelower0} is bounded from below by the second
 term on the left-hand side of \eqref{timelikelower} and the result follows.
\end{proof}

On the timelike side of the right shock, we will need a bound involving
$X_T$ and a bound involving $X_C$. We remind the reader that 
\begin{equation}
    X_T = v\pa_v + \left(  \frac{u}{s} + \frac{{\ap}}{{4}s^{1/2}}\right)\pa_u
    = v \evmB +\frac{{\ap}}{{4}s^{1/2}}n,
 \qquad
 X_C = v\pa_v + \left(s {+ \frac{u}{s}}\right) \pa_u = v \evmB +  sn.
 \label{localXCformula}
\end{equation}
\begin{lemma}
  \label{timelikeperturbmB}
	Let $X = X_C$ or $X_T$ and write $X = X^n_{\mB} n + X^\ell_{\mB}\ev^{\mB}$.
  There is a constant $\epsilon^\prime >0$ so that if $\gamma = h^{-1} - \mBB^{-1}$
	and $X$ satisfy the perturbative assumptions \eqref{pert1} with
	$\epsilon < \epsilon^\prime$,
	\eqref{betaRassump} holds with $\epsilon_2 < \epsilon^\prime$,
	and \eqref{largestart0} holds with $\epsilon_0 < \epsilon^\prime$, then along $\Gamma^R$
	we have the following bounds,
		   \begin{multline}
		    \frac{1}{(1+v)(1+s)^{1/2}} X^n_{T, \mB} |n\psi|^2
		 	 + \frac{1}{(1+v)(1+s)^{1/2}} X^\ell_{T, \mB} |\nas \psi|^2
		 	 % + \left(\frac{1}{2(1+v)(1+s)^{1/2}}(1-2\epsilon_2)X^\ell_m - (1+ 2\epsilon_2)X^n_m \right) |\nas \psi|^2
		 	 \\
		 	 \lesssim
		 	 -\mQ_P^h(X_T, N_h^L) +
		 	  X^\ell_{T, \mB} |\evmB\psi|^2 + (1+v)|P|^2. \label{timelikelowermBXT}
		    \end{multline}
				and
  \begin{multline}
   \frac{1}{(1+v)(1+s)^{1/2}} X^n_{C, \mB} |n\psi|^2
	 % + \left(\frac{1}{2(1+v)(1+s)^{1/2}}(1-2\epsilon_2)X^\ell_m - (1+ 2\epsilon_2)X^n_m \right) |\nas \psi|^2
	 \\
	 \lesssim
	 -\mQ_P^h(X_C, N_h^L) +
	  X^\ell_{\mB} |\evmB\psi|^2
 	 + \frac{1}{(1+v)(1+s)^{1/2}} X^\ell_{C, \mB} |\nas \psi|^2+ (1+v)|P|^2. \label{timelikelowermBXD}
   \end{multline}
\end{lemma}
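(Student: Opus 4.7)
The plan is to adapt the proof of Lemma \ref{timelikeperturb} to the timelike side of the right shock, where now the linearized metric is $\mB$ (instead of $m$) and the relevant modified identity comes from Proposition \ref{effectivemmmB}.

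First I would split
\[
 \mQ_P^h(X, N_h^{\Gamma^R}) = Q^{\mB}(X, N_{\mB}^{\Gamma^R}) + \zeta^R(\mJ_{X,\gamma,P}) + \zeta^R(J_{X,\gamma_a,P}),
\]
using $h^{-1} = \mBB^{-1} + \gamma = \mB^{-1} + \gamma_a + \gamma$ from \eqref{mBBdef}. The leading linear piece $Q^{\mB}(X, N_{\mB}^{\Gamma^R})$ is analyzed via \eqref{basicnegativity} at $(g,A) = (\mB,R)$, after absorbing the cross term $X\psi\,\slashed{N}\psi$ into good terms using \eqref{angularbound} and the smallness of $\epsilon_2$ (which follows from \eqref{betaRassump}). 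The current $\mJ_{X,\gamma,P}$ is controlled by \eqref{pertgammaPshockmB} with $\delta$ chosen small; the $\gamma_a$ contribution $\zeta^R(J_{X,\gamma_a,P})$, which carries the null structure \eqref{intronullcondn}, is handled by the analogue (on the timelike side) of the null-current estimate already used in the proof of Lemma \ref{spacelikeperturbmB}. All perturbative errors are then absorbed into the good terms by successively shrinking $\delta$, $\epsilon$, $\epsilon_2$, $\epsilon_0$.

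The crux of the argument is the sign analysis of the angular coefficient
\[
 \mathsf{A}(X) := \tfrac{\rs(1-\epsilon_2)}{2(1+v)(1+s)^{1/2}} X^\ell_{\mB} - (1+\epsilon_2) X^n_{\mB},
\]
that multiplies $|\nas\psi|^2$ in \eqref{basicnegativity}. For $X = X_T = v\evmB + \tfrac{\ap}{4s^{1/2}}\, n$ one has $X^\ell_{T,\mB} = v$ and $X^n_{T,\mB} = \tfrac{\ap}{4s^{1/2}}$, and since $\ap = \rs$ (the constant from \eqref{eq:betainitial}), a direct calculation in the regime $v \gg 1$ gives $\mathsf{A}(X_T) \gtrsim \rs/(1+s)^{1/2}$, which is positive and of exactly the same order as $\tfrac{1}{(1+v)(1+s)^{1/2}} X^\ell_{T,\mB}$. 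Hence the $|\nas\psi|^2$ contribution stays on the left-hand side with the correct weight, yielding \eqref{timelikelowermBXT}.

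For $X = X_C = v\evmB + s\,n$, however, $X^n_{C,\mB} = s$ and $\mathsf{A}(X_C) \sim \rs/(2(1+s)^{1/2}) - s$, which is \emph{negative} for large $s$. The angular term must therefore be moved to the right-hand side, producing the weaker inequality \eqref{timelikelowermBXD} in which angular control through the shock is effectively sacrificed at this stage. The main obstacle is precisely this loss: the naive rearrangement produces an angular remainder of order $X^n_{C,\mB} |\nas\psi|^2 \sim s\, |\nas\psi|^2$ on the right, which is larger than the $\tfrac{1}{(1+v)(1+s)^{1/2}} X^\ell_{C,\mB}|\nas\psi|^2$ term appearing in the statement. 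Closing the proof in the sharper form of \eqref{timelikelowermBXD} therefore requires exploiting the finer pointwise identity \eqref{tlemg}, together with the observation, developed in Section \ref{angularderivsshockpbm}, that the genuine angular flux on the timelike side of $\Gamma^R$ is recovered not from $X_C$ alone but from the coupling between this estimate, the top-order $X_T$-estimate \eqref{timelikelowermBXT}, and the integrated Hardy bound along $\Gamma^R$ provided by Lemma \ref{controlangularhardyright}.
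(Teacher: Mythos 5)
Your decomposition of $\mQ_P^h(X,N_h^{\Gamma^R})$ into $Q^{\mB}(X,N_{\mB}^{\Gamma^R})$ plus the $\gamma$- and $\gamma_a$-currents, the use of \eqref{basicnegativity} and \eqref{angularbound}, the bound \eqref{pertgammaPshockmB}, and the absorption after taking $\delta,\epsilon,\epsilon_2,\epsilon_0$ small is exactly the paper's argument, and your sign analysis for $X_T$ (the angular coefficient $\mathsf{A}(X_T)\gtrsim \rs\,(1+s)^{-1/2}>0$, using $|u|\gtrsim s^{1/2}$ along $\Gamma^R$ and the factor $\tfrac14$ in $X^n_{T,\mB}$) reproduces the paper's proof of \eqref{timelikelowermBXT}.

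The problem is the last step of your $X_C$ discussion. Having (correctly, and in agreement with the paper) found that $\mathsf{A}(X_C)\sim -s$, so that the angular term must migrate to the right-hand side with weight $\sim X^n_{C,\mB}\sim 1+s$, you then assert that the inequality as printed — whose angular error carries the weight $\frac{1}{(1+v)(1+s)^{1/2}}X^\ell_{C,\mB}\sim (1+s)^{-1/2}$ — requires in addition \eqref{tlemg}, the coupling with the $X_T$ estimate, and the Hardy bound of Lemma \ref{controlangularhardyright}. That route cannot prove this lemma: \eqref{timelikelowermBXD} is a pointwise inequality at each point of $\Gamma^R$, whereas Lemma \ref{controlangularhardyright} and the whole coupling mechanism of Section \ref{angularderivsshockpbm} are spacetime-integrated statements along the shock, from which no pointwise control of $|\nas\psi|^2$ can be extracted. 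The paper's own proof does nothing more than bound the negative angular coefficient in \eqref{timelikelower0mB} by $(1+s)$ and move that term to the right; correspondingly, every subsequent use of the lemma (see \eqref{almostECD} and the boundary error $(1+s)|\nas\psi|^2$ in \eqref{lowordercentralestimate}) takes the angular error with weight $\sim 1+s$, so the smaller weight printed in \eqref{timelikelowermBXD} should be read as a misprint rather than as a sharper claim that needs a separate argument. The ingredients you invoke are indeed how that $(1+s)$-weighted angular flux is eventually controlled — through \eqref{rightangularbound} together with the $\evmB\psi$ flux along $\Gamma^R$, inside the proof of Proposition \ref{abstractenestDC} (see \eqref{rightshockerror}) — but that is a later step of the bootstrap, not part of the proof of this lemma.
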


\begin{remark}
	\label{differentmultiplierscentral}
 The above inequalities are why we need to use two different multipliers in the central
 region. The multiplier $X_C$ is needed to give us energies which are strong enough
 to get good decay estimates, but has the downside that the associated energy-momentum
 tensor along the timelike side of the right shock does not control
 angular derivatives and so we cannot close estimates using this multiplier
 alone. The multiplier $X_T$ has been chosen so that the associated energy-momentum
 tensor does control angular derivatives along the timelike side of
 the right shock, but it is too weak to give good decay estimates.
\end{remark}

\begin{proof}
 	Following a nearly identical proof to the proof
 	of Lemma \ref{spacelikeperturbmB}, but using \eqref{basicnegativity}
 in place of \eqref{Qspacelikelower}, we find that for $\epsilon, \epsilon_0, \epsilon_2$ small enough,
 for either $X = X_T$ or $X_C$,
 \begin{multline}
  \frac{1}{(1+v)(1+s)^{1/2}} X^n_{\mB} |n\psi|^2
  + \left(\frac{1}{2(1+v)(1+s)^{1/2}}(1-2\epsilon_2)X^\ell_{\mB} - (1+ 2\epsilon_2)X^n_{\mB} \right) |\nas \psi|^2
  \\
  \lesssim
  -\mQ_P^h(X, N_h^L) +
 	X^\ell_{\mB} |\evmB\psi|^2
 + \left(X^\ell_{\mB}  + (1+v)(1+s)^{1/2}X^n_{\mB}\right) |P|^2.   \label{timelikelower0mB}
  \end{multline}
	We now bound the coefficient of the angular deriatives in \eqref{timelikelower0mB}.
	When $X = X_T$, recalling \eqref{localXCformula}, for $\epsilon_2$
	sufficiently small we have the bound
	\begin{multline}
\frac{1}{2(1+v)(1+s)^{1/2}}(1-2\epsilon_2)X^\ell_{\mB} - (1+ 2\epsilon_2)X^n_{\mB} 
\\
=
	 	 \frac{v}{2(1+v)(1+s)^{1/2}}(1-2\epsilon_2) -
         \left({ \frac{u}{s}} + \frac{{\ap}}{s^{1/2}}\right)(1+2\epsilon_2)\\
		 \geq
		 \frac{1}{2}(1-2\epsilon_2) \frac{1}{(1+s)^{1/2}}
		 - \frac{1}{4}(1+2\epsilon_2) \frac{1}{s^{1/2}}\geq
	 	 \frac{1}{16} \frac{1}{(1+s)^{1/2}},
	 \label{}
	\end{multline}
	along $\Gamma^R$, where we used that $|u| \geq s^{1/2}$ there. The bound
	\eqref{timelikelowermBXT} follows.

	When $X = X_C$, the coefficient of the angular derivatives
	is no longer positive, and \eqref{localXCformula} instead gives
	the bound
	\begin{equation}
	 \frac{1}{2(1+v)(1+s)^{1/2}}(1-2\epsilon_2)X^\ell_{\mB} - (1+ 2\epsilon_2)X^n_{\mB} 
	 \lesssim
	 (1+s),
	 \label{}
	\end{equation}
	and \eqref{timelikelowermBXD} follows.
\end{proof}

  \subsection{The scalar currents}

We now compute $K_{X, g}$ where $g$ is either the Minkowski metric
$m$ or the metric $\mB$ from \eqref{mBdef}
and where $X = X^u\pa_u + X^v\pa_v$ is spherically-symmetric. Recall $K_{X, g}$ is given by
\begin{equation}
 K_{X, g} = \frac{1}{2} \pa_\alpha (g^{\mu\nu} X^\alpha)
 \pa_\mu \psi \pa_\nu \psi - \pa_\mu X^\alpha g^{\mu\nu}
 \pa_\nu \psi \pa_\alpha \psi
\end{equation}
 First, for both metrics $g^{uv}$ are constants and
 $g^{vv}$ vanishes, so we have
\begin{multline}
 \frac{1}{2}\pa_\alpha( g^{\mu\nu} X^\alpha)
 \pa_\mu \psi \pa_\nu \psi
 =\frac{1}{2} \left( (\pa_u X^u + \pa_vX^v) g^{uu} + X g^{uu}\right)
 (\pa_u\psi)^2
 + (\pa_uX^u + \pa_v X^v) g^{uv}\pa_u\psi \pa_v\psi\\
 + \frac{1}{2} \left( \pa_u X^u +\pa_v X^v
 - \frac{2}{r} Xr\right)|\nas \psi|^2.
\end{multline}
If the coefficients $X$ depend only on $u, v$, we also have
\begin{multline}
 \pa_\mu X^\alpha g^{\mu\nu}\pa_\nu \psi \pa_\alpha \psi
 \\
 = \left( \pa_v X^u g^{uv} + \pa_u X^u g^{uu}\right) (\pa_u\psi)^2
 + \pa_u X^vg^{uv} (\pa_v\psi)^2
 + \left( (\pa_u X^u + \pa_v X^v) g^{uv} + \pa_u X^v g^{uu}\right)
 \pa_u\psi\pa_v\psi,
\end{multline}
so subtracting these two expressions and writing
$Xr = \frac{1}{2} (X^v- X^u)$, and $r = \frac{1}{2} (v-u)$, we find
\begin{multline}
 K_{X, g}
 =
 \left(-\pa_v X^u g^{uv}
 +\frac{1}{2} (\pa_v X^v - \pa_u X^u) g^{uu} + \frac{1}{2}X g^{uu} \right) (\pa_u\psi)^2
 - \pa_u X^v g^{uv} (\pa_v\psi)^2\\
 -\pa_u X^v g^{uu} \pa_u \psi \pa_v\psi
 + \frac{1}{2} \left( \pa_u X^u +\pa_v X^v
 - 2 \frac{X^v-X^u}{v-u}\right)|\nas \psi|^2.
 \label{KXformula}
\end{multline}
For $g = m$ we have $m^{uv} = -2$ and this reads
\begin{equation}
 K_{X, m}
 =
 2\pa_v X^u (\pa_u\psi)^2
 +
  2\pa_u X^v  (\pa_v\psi)^2
 + \frac{1}{2} \left( \pa_u X^u +\pa_v X^v
 - 2 \frac{X^v-X^u}{v-u}\right)|\nas \psi|^2.
 \label{KXmformula}
\end{equation}
When $g = \mB$, we have $g^{uv} = -2, g^{uu} = -4 \frac{u}{vs}$. We have
\begin{equation}
 \frac{1}{2} (\pa_v X^v - \pa_u X^u) g^{uu} + \frac{1}{2}X g^{uu}
 = -2 \frac{u}{vs}(\pa_vX^v - \pa_u X^u)
 -2 \frac{1}{vs} X^u + 2 \left( \frac{u}{v^2s^2} + \frac{u}{v^2 s}\right) X^v,
 \label{}
\end{equation}
and it follows that
\begin{multline}
 K_{X, \mB} =2\left(
                 \left(\pa_v + \frac{u}{vs} \pa_u\right)X^u 
                 + \frac{u}{vs}\left(\frac{1}{v} X^v - \pa_vX^v\right)
                 -\frac{1}{vs}\left(X^u - \frac{u}{vs} X^v \right)
            \right)
     (\pa_u\psi)^2
 {+2} \pa_u X^v  (\pa_v\psi)^2\\
 +4 \frac{u}{vs} \pa_u X^v \pa_u \psi \pa_v\psi
 + \frac{1}{2} \left( \pa_u X^u +\pa_v X^v
 - 2 \frac{X^v-X^u}{v-u}\right)|\nas \psi|^2.
 \label{KXgBformula}
\end{multline}

{Noting that $(\pa_v + \frac{u}{vs}\pa_u) (u/s) = \evmB(u/s) = 0$,
using the formula \eqref{Xnullexplicit} to express $X$ in terms of $n, \evmB$
to re-write the coefficient of the first term here, writing 
$\pa_v X^v - X^v/v = v\pa_v(X^v/v) = v\pa_v(X^\ell_{\mB}/v)$,
the above can be re-written in the form
\begin{multline}
  \label{}
  K_{X, \mB} =2\left(
                \evmB X^n_{\mB} - \frac{1}{vs} X^n_{\mB}
                + \frac{u^2}{s^2} \pa_u\left(\frac{X^\ev_{\mB}}{v} \right)
            \right)
     (\pa_u\psi)^2
 {+2} \pa_u X^v  (\pa_v\psi)^2\\
 +4 \frac{u}{vs} \pa_u X^v \pa_u \psi \pa_v\psi
 + \frac{1}{2} \left( \pa_u X^u +\pa_v X^v
 - 2 \frac{X^v-X^u}{v-u}\right)|\nas \psi|^2.
 \label{KXgBformula2} 
\end{multline}
}

%
%We note that this can be re-written in the form
%\begin{multline}
%  \label{}
%    K_{X, \mB} =2\left(
%     \evmB X^u 
%     - \frac{u}{s}\pa_v\left(\frac{X^v}{v}  \right) + \frac{1}{vs} \left(\frac{u}{s} \frac{X^v}{v} -X^u \right) 
% \right) (\pa_u\psi)^2
% {+2} \pa_u X^v  (\pa_v\psi)^2\\
% +4 \frac{u}{vs} \pa_u X^v \pa_u \psi \pa_v\psi
% + \frac{1}{2} \left( \pa_u X^u +\pa_v X^v
% - 2 \frac{X^v-X^u}{v-u}\right)|\nas \psi|^2.
% \label{KXgBformula2}
%\end{multline}\dg{
%Using the formula \eqref{Xnullexplicit} to re-write the coefficient of the first term above, this
%can further be written in the form
%\begin{multline}
%  \label{KXgBformula3}
%     K_{X, \mB} =
%     2\left(
%         \evmB X^n_{\mB} - \frac{1}{vs} X^n_{\mB} + \frac{u^2}{s^2} \pa_u\left(\frac{X^\ev_{\mB}}{v} \right)
%         \right)
%      (\pa_u\psi)^2
% {+2} \pa_u X^v  (\pa_v\psi)^2\\
% +4 \frac{u}{vs} \pa_u X^v \pa_u \psi \pa_v\psi
% + \frac{1}{2} \left( \pa_u X^u +\pa_v X^v
% - 2 \frac{X^v-X^u}{v-u}\right)|\nas \psi|^2.
%\end{multline}}
%

\section{The energy estimates}
\label{ensec2}

In this section we use the results of the previous two sections
to prove energy estimates for the wave equation
\begin{equation}
 \pa_\mu \left( h^{\mu\nu}_A \pa_\nu \psi
 + P^\mu \right)=  F,
 \qquad \text{ in } D^A,
 \label{ensecabstractwave}
\end{equation}
for $A = R, C, L$.
We assume that the reciprocal acoustical metrics $h_L,h_R$ are perturbations
of the Minkowski metric
and that $h_C$ is a perturbation of the metric $\mB$ defined in \eqref{mBdef},
in a sense made precise in the upcoming results.
\subsection{The energy estimates to the right of the right shock}

In this section, we consider the wave equation
\eqref{ensecabstractwave} when $h_R^{-1} = m^{-1}+ \gamma$ is a perturbation of the Minkowski metric,
\begin{equation}
  \label{modelwavenull}
  -4\pa_u\pa_v \psi + \sDelta \psi + \pa_\mu(\gamma^{\mu\nu}\pa_\nu \psi) + \pa_\mu P^\mu = F,
\end{equation}

The estimates in the region to the right of the right shock are fairly
simple and are based on the weighted energy estimates from
\cite{LindbladRodnianski2010} and \cite{DafermosRodnianski2010}.
We will use the following multiplier,
\begin{equation}
    X = X_R = w(u)(\pa_u + \pa_v) + r(\log r)^\nu \pa_v
 \label{enestrightfv}
\end{equation}
where $w$ is a function with $w(u) \geq 0$, $w'(u) \leq 0$
and $\nu \geq 0$.
In the proof of the main theorem, we will take
$w(u) = (1+|u|)^\mu$ for large $\mu$, but this particular
choice plays no role in the upcoming section. The term $r(\log r)^\nu\pa_v$ is needed
to control some of the boundary terms we will generate along the timelike side of the
right shock when we prove estimates in the central region, but this term is would not
be needed if our only goal was to
close the estimates in the rightmost region.

This field is timelike and future-directed,
\begin{equation}
    m(X_R, X_R) = -2w(u)(w(u) + r(\log r)^\nu) <0.
 \label{}
\end{equation}

We note at this point that if $\gamma$ satisfies the condition
\eqref{pert1}, we have
\begin{equation}
    X^\ell_m |\gamma| \leq \frac{\epsilon }{(1+v)(1+s)^{1/2}} (w(u) + r(\log r)^\nu)
 \leq \epsilon w(u) = X^n_m,
 \label{pert01right}
\end{equation}
where here we used that by \eqref{parameters}, $\nu \leq \mu/2+1/2$ and so
$(\log r)^{\nu}(1+s)^{-1/2} \leq (1+s)^{\nu-1/2} \leq (1+|u|)^{\mu}$ in $D^R$.
As a result, for this multiplier, the
 first bound in \eqref{pert1} implies the second one.

The energies in this region are, with notation as in
\eqref{plusnorm},
\begin{multline}
 E_{X}(t)
 = \int_{D^R_t} w(u) |\pa \psi|^2 + (w(u) + r(\log r)^\nu) \left(|\pa_v \psi|^2 + |\nas
     \psi|^2\right) \\
 +
 \int_{t_0}^t \int_{\Gamma^R_t} (w(u) + r(\log r)^\nu)(\pa_v\psi)^2 +
 \left(w(u)+ \frac{w(u)+r(\log r)^\nu}{(1+v)(1+s)^{1/2}}\right) |\nas \psi|^2
 + \frac{w(u)}{(1+v)(1+s)^{1/2}}(\pa_u\psi)^2\, dS dt\\
 \sim \int_{D^R_t} |\pa \psi|_{X, m}^2 +
 \int_{t_0}^t \int_{\Gamma^R_t} |\pa \psi|^2_{X, m, \SL}\, dS dt,
 \label{EXRXnormrln}
\end{multline}
where $ |\pa \psi|_{X, m}^2$ is defined as in \eqref{Xnorm0}
and $|\pa \psi|_{X,m,\SL}^2$ is defined as in \eqref{plusnorm}.

Since we are assuming $w'(u) \leq 0$,
it turns out that the scalar current $K_{X_R,m}$ contributes an additional
positive time-integrated term,
\begin{equation}
 S_{X}(t_1) = \int_{t_0}^{t_1} \int_{D^R_t}
 \left(-w'(u) + \frac{1}{4} (\log r)^\nu\right)\left(
2(\pa_v \psi)^2 + \frac{1}{2}|\nas \psi|^2\right)\, dt.
 \label{}
\end{equation}

Our estimates will involve the following perturbative error terms,
\begin{align}
 R_{P,X}(t_1)
 &= \int_{D_{t_0}^{R}} |X| |P|^2
 + \int_{D_{t_1}^{R}} |X| |P|^2
 + \int_{t_0}^{t_1}\int_{\Gamma^R_t}
			\left(|X| + (1+s)^{1/2}(1+v)  |X^n_{m}|\right)|P|^2\,  dS dt
 \label{RPXR}
\end{align}

\begin{prop}[Energy estimates in the rightmost region]
  \label{rightenest}
	% Suppose that the assumption \eqref{betaRassump}
  %  about the positions of the right shocks hold.
  Set $\gamma = h^{-1} - m^{-1}$. There is a constant
  $\epsilon^\prime > 0$ so that if the first perturbative assumption
	in \eqref{pert1} holds with $\epsilon < \epsilon^\prime$
	if the assumption \eqref{betaRassump} on the geometry of the right shock
    holds with $\epsilon_2 < \epsilon^\prime$, and so that 
    the assumption \eqref{largestart0} holds
    with $\epsilon_0 < \epsilon^\prime$,
	 then the following bounds hold. With $X = X_R$ as in \eqref{enestrightfv},
	 and with notation as in \eqref{RPXR},
	\begin{equation}
	 E_{X}(t_1) + S_{X}(t_1)  \lesssim E_{X}(t_0)
	 + \int_{t_0}^{t_1} \int_{D^R_t} |\mK_{X, \gamma, P}| + |F| |X\psi|\, dt
	 + R_{P,X}(t_1)
	 \label{rightenestbd}
	\end{equation}
\end{prop}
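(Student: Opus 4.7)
The plan is to apply the modified divergence identity from Lemma \ref{integralidentmodified} with $g=m$, $h=h_R$, $\gamma=h^{-1}-m^{-1}$, and $X=X_R$, in the region $D^R$ bounded by the time slices $D^R_{t_0}$, $D^R_{t_1}$ and the right shock $\Gamma^R$. By Lemma \ref{causallemma} together with the perturbative assumption \eqref{pert1}, $\Gamma^R$ is spacelike with respect to $h_R$, so it plays the role of $\Gamma_S$ in that lemma (there is no timelike boundary). Moreover, $\{r=0\}$ is not contained in $D^R$, so the regularity hypothesis at the origin is automatic. This gives, combining with the basic identity \eqref{mainmodifiedident}, an equation of the form
\begin{multline}
 \int_{D^R_{t_1}} \mQ^h_P(X,N_h^{D^R_{t_1}}) + \int_{\Gamma^R_{t_0,t_1}} \mQ^h_P(X,N_h^{\Gamma^R})
 + \int_{t_0}^{t_1}\int_{D^R_t} (-K_{X,m,P})  \\
 = \int_{D^R_{t_0}} \mQ^h_P(X,N_h^{D^R_{t_0}})
 + \int_{t_0}^{t_1}\int_{D^R_t}\mK_{X,\gamma,P}
 + \int_{t_0}^{t_1}\int_{D^R_t} F\cdot X\psi.
\end{multline}

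The left-hand side will be bounded below by $E_X(t_1)+S_X(t_1)$ (up to admissible error). Lemma \ref{timeslicelemma} yields
$|\pa\psi|^2_{X,m}\lesssim \mQ_P^h(X,N_h^{D^R_{t_1}})+X^\ell_m|P|^2$, which handles the time-slice term at $t_1$ (the bulk integrand of $E_X(t_1)$); the $P$-term is absorbed into $R_{P,X}$. Along the spacelike shock $\Gamma^R$, Lemma \ref{spacelikeperturb} gives $|\pa\psi|^2_{X,m,\SL}\lesssim \mQ_P^h(X,N_h^{\Gamma^R})+(|X|+(1+s)^{1/2}(1+v)|X^n_m|)|P|^2$, which recovers the shock integrand in \eqref{EXRXnormrln}, again with the extra $|P|^2$ terms matching $R_{P,X}$. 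For the linear scalar current $-K_{X,m}$, one uses the explicit formula \eqref{KXmformula}: since $X^u=w(u)$ depends on $u$ only, $\pa_v X^u=0$ kills the $(\pa_u\psi)^2$ contribution; computing $\pa_u X^v = w'(u)-\tfrac12(\log r)^\nu-\tfrac{\nu}{2}(\log r)^{\nu-1}$ and $\pa_u X^u+\pa_v X^v-2(X^v-X^u)/(v-u)=w'(u)-\tfrac12(\log r)^\nu+\tfrac{\nu}{2}(\log r)^{\nu-1}$, the assumption $w'(u)\leq 0$ together with taking $t_0$ large enough so that the lower-order $(\log r)^{\nu-1}$ term is absorbed gives $-K_{X,m}\geq c\,\bigl(2(\pa_v\psi)^2+\tfrac12|\nas\psi|^2\bigr)(-w'(u)+\tfrac14(\log r)^\nu)$, which is the integrand of $S_X(t_1)$.

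For the right-hand side, the time slice at $t_0$ is bounded above using the upper-bound companion to Lemma \ref{timeslicelemma} (which follows from the same bounds \eqref{zetaclose}, \eqref{pert1useful} applied with $\zeta=dt$), giving $\mQ_P^h(X,N_h^{D^R_{t_0}})\lesssim |\pa\psi|^2_{X,m}+X^\ell_m|P|^2$; this reproduces $E_X(t_0)$ plus a contribution absorbed into $R_{P,X}$. The bulk terms $|\mK_{X,\gamma,P}|$ and $|F||X\psi|$ appear directly in the statement. Rearranging, and taking $\epsilon,\epsilon_0,\epsilon_2$ small enough to absorb the perturbative remainders from Lemmas \ref{timeslicelemma} and \ref{spacelikeperturb} into the positive definite quantities on the left, yields \eqref{rightenestbd}.

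The only mild obstacle is verifying that the perturbative assumption $X^\ell_m|\gamma|\leq \epsilon X^n_m$ is consistent with the first bound of \eqref{pert1}; this is carried out in \eqref{pert01right} using the restriction $\nu\leq \mu/2+1/2$ from \eqref{parameters} and the fact that in $D^R$ one has $|u|\gtrsim s^{1/2}$, so $(\log r)^\nu(1+s)^{-1/2}\leq (1+|u|)^\mu$. Beyond that, every step is a bookkeeping exercise: dropping the multiplier $X_R$ into \eqref{KXmformula}, matching integrands against the definitions of $E_X$, $S_X$, and $R_{P,X}$, and invoking the appropriate lemma of Section \ref{energy0} at each boundary piece.
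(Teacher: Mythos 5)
Your proposal follows essentially the same route as the paper's proof: apply the modified divergence identity \eqref{modifiedstokesident} in $D^R$ with the spacelike shock as the only lateral boundary, control the time-slice and shock terms via Lemmas \ref{timeslicelemma} and \ref{spacelikeperturb} (absorbing the $|P|^2$ contributions into $R_{P,X}$), and compute $K_{X_R,m}$ from \eqref{KXmformula}, using $w'\leq 0$ and the largeness of $t_0$ (so $r\geq e^{2\nu}$ in $D^R_t$) to absorb the $(\log r)^{\nu-1}$ term and recover $S_X(t_1)$. The only cosmetic slip is writing the linear scalar current as $K_{X,m,P}$ — in the modified decomposition of Proposition \ref{effectivemmmink} the $P$-dependence sits entirely in $\mK_{X,\gamma,P}$, and your subsequent computation correctly treats it as $K_{X,m}$.
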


	\begin{proof}
		The modified multiplier identity \eqref{modifiedstokesident} yields
		\begin{multline}
			\int_{D^R_{t_1}} \mQ^h_P(X, N_{h}^{D^R_{t_1}})
			+ \int_{t_0}^{t_1} \int_{D^R_t}
			-K_{X, m}\, dt
			+
			\int_{t_0}^{t_1} \int_{\Gamma^R_t} \mQ^h_P(X, N_{h}^{R})
			\\
			=
			\int_{D^R_{t_0}} \mQ_P^h(X, N^{D^R_{t_0}}) + \int_{t_0}^{t_1} \int_{D^R_t}
			\mK_{X,\gamma, P}
			% -
			% \frac{1}{4}X^v\left( \gamma^{uu}_1\pa_u\psi + P_1^u \right)(F-\pa_\mu P^\mu)
			+ F X\psi\, dt,
			\label{stokesER}
		\end{multline}
        where $\mQ_P^h$ is defined as in \eqref{mQdef}, where the scalar current $\mK_{X,m}$
        is as in
		\eqref{KPdef} and the modified scalar current $\mK$ defined as in Proposition 
        \ref{effectivemmmink}. 
        Using Lemma \ref{timeslicelemma} to handle the energy-momentum tensor $\mQ$ on the time slices,
        Lemma \ref{spacelikeperturb} to handle $\mQ$ along the shock, and the identity
        \eqref{EXRXnormrln},
        provided $\epsilon$ is taken small enough we have
		\begin{equation}
			E_{X}(t_1)
			\lesssim
		 \int_{D^R_{t_1}} \mQ^h_P(X, N_h^{D^R_t})
		 + \int_{t_0}^{t_1} \int_{\Gamma^R_t} \mQ^h_P(X, N_h^{\Gamma^R})\, dt
		 + R_{P, X}(t_1),
		 \label{almostER}
	 \end{equation}
	 so by the energy identity \eqref{stokesER} we have the bound
		 \begin{equation}
		  E_{X}(t_1) +\int_{t_0}^{t_1} \int_{D^R_t} -K_{X, m}\, dt
			\lesssim
			E_{X}(t_0) + \int_{t_0}^{t_1} \int_{D^R_t}
			\left(\mK_{X,\gamma,  P}
			+ F X\psi\right)\, dt
			+ R_{P, X}(t_1).
		  \label{}
		 \end{equation}
	 From \eqref{KXmformula}, the scalar current is
	 \begin{multline}
	  K_{X, m} =
      2(w'(u)+\pa_u (r(\log r)^{\nu})  (\pa_v\psi)^2
      + \frac{1}{2} \left(w'(u) + \pa_v (r(\log r)^{\nu})
      - 2 \frac{r(\log r)^{\nu}}{v-u}\right)|\nas \psi|^2\\
      =  2\left(w'(u)-\frac{1}{2}(\log r)^{\nu} - 
      \frac{\nu}{2} (\log r)^{\nu-1}\right)  (\pa_v\psi)^2
      + \frac{1}{2} \left(w'(u) -\frac{1}{2}(\log r)^{\nu} + \frac{\nu}{2} (\log
      r)^{\nu-1}\right)|\nas \psi|^2.
	  \label{}
	 \end{multline}
     We now take $\epsilon_0$ so small that if the initial time $t_0$ satisfies
     \eqref{largestart0}, then $r \geq e^{2\nu}$ in $D^R_t \sim \{r \gtrsim t +
     s^{1/2}\}$
     for $t \geq t_0$, which gives the lower
     bound
     \begin{equation}
       \label{}
       -K_{X, m} \gtrsim  \left(-w'(u) + \frac{1}{4}(\log r)^{\nu}\right)
       \left((\pa_v \psi)^2 + |\nas \psi|^2\right),
    \end{equation}
     and the result follows.
	\end{proof}

\subsection{The energy estimates in the region between the shocks}

In this section, we consider the wave equation \eqref{ensecabstractwave} when
$h_C^{-1} = (\mB + \gamma_a^{-1}) + \gamma$, where $\mB$ is the metric
defined in \eqref{mBdef}, $\gamma_a$ collects some small terms verifying the null
condition and where $\gamma$ is a perturbation. This equation reads
\begin{equation}
  \label{ensecabstractwaveC}
  -4\pa_u\left(\pa_v + \frac{u}{vs} \pa_u \right) \psi + \sDelta \psi + \pa_\mu(\gamma_a^{\mu\nu}\pa_\nu \psi) + 
  \pa_\mu(\gamma^{\mu\nu} \pa_\nu \psi) + \pa_\mu P^\mu = F,
 \end{equation}
 where $ \gamma_a^{\mu\nu} = \frac{u}{vs} a^{\mu\nu}$ with  $a^{\mu\nu} \pa_\mu u \pa_\nu u= 0$.
 For some of our applications, we will need to keep track of the structure of the term $F$
 more carefully than in the other regions, and for this reason we will write the above as
 \begin{equation}
   \label{}
      \label{ensecabstractwaveC2}
  -4\pa_u\left(\pa_v + \frac{u}{vs} \pa_u \right) \psi + \sDelta \psi + \pa_\mu(\gamma_a^{\mu\nu}\pa_\nu \psi) + 
  \pa_\mu(\gamma^{\mu\nu} \pa_\nu \psi) + \pa_\mu P^\mu + F_1 =  F_2,
 \end{equation}
 where the terms in $F_2$ will be treated as error terms and where the terms in $F_1$ will
 need to be manipulated in order to close our estimates. See remark \ref{explanationofF1F2}.

The top-order and decay multipliers we use in the central region are
\begin{equation}
    X_T = \left( \frac{u}{s} + \frac{\ap}{{4}s^{1/2}}\right) \pa_u + v\pa_v,
 \qquad X_C = {\left(s +\frac{u}{s}  \right)}  \pa_u + v\pa_v.
 \label{Cvfsdef}
\end{equation}
In terms of the null vectors $\evmB,\eu$ from \eqref{nullfields},
these multipliers take the form
\begin{equation}
 X_T = 
 {\frac{\ap}{{4}s^{1/2}}}  \eu + v \evmB,
 \qquad
 X_C ={s}\eu +  v\evmB.
 \label{Cvfsdefnull}
\end{equation}

By Lemma \ref{multiplierpointing},
both $X_T, X_C$ are future-directed and timelike
with respect to $\mB$ in the region between the shocks
under our assumptions \eqref{betaLassump}-\eqref{betaRassump}.
%\begin{align}
% \mB(X_T, X_T) &= -X_T^v \left( X^u_T - \frac{u}{vs} X^v_T\right)
% = -v \left( \frac{7}{4} \frac{u}{s} + \frac{1}{s^{1/2}}
% - \frac{u}{s}\right)
% =-v\left( \frac{3}{4} \frac{u}{s} + \frac{1}{s^{1/2}}\right) < 0,
% \\
% \mB(X_C, X_C) &= -v \left( s - \frac{u}{s}\right) < 0.
% \label{}
%\end{align}
%
%Note that if the assumptions \eqref{betaLassump} about the position
%of the left shock hold, at the left shock we have 
%\begin{equation}
% X^n_{T,\mB} \geq {\frac{1}{(1+s)^{1/2}}},
% \qquad X^n_{{C}, \mB} \geq s,
% \label{}
%\end{equation}
%and if the assumptions \eqref{betaRassump} about the position
%of the right shock hold, at the right shock we have the bounds
%\begin{equation}
% X^n_{T, \mB} {\geq}  \frac{1}{(1+s)^{1/2}},
% \qquad
% X^n_{{C}, \mB} \geq \frac{3}{4}s.
% \label{centralnlowers}
%\end{equation}
As a result, by the above formulas
at the left shock the norms $|\pa \psi|_{X,+}^2$ from \eqref{plusnorm}
satisfy
\begin{align}
 |\pa \psi|_{X_T, \SL}^2 &\gtrsim \frac{1}{(1+v)(1+s)} (n\psi)^2
 + v (\ell^{\mB} \psi)^2 +  \frac{1}{(1+s)^{1/2}} |\nas \psi|^2,\\
 |\pa \psi|_{X_C, \SL}^2 &\gtrsim \frac{s^{1/2}}{1+v} (n\psi)^2
 + v  (\ell^{\mB} \psi)^2 + {s} |\nas \psi|^2,
 \label{lowerplusnormbounds}
\end{align}
{where the implicit constant in the first estimate depends on the parameter $\ls > 0$.} 

% We also
% note that by \eqref{centralnlowers}, \eqref{extraXassump} holds for this multiplier.

We also note at this point that the second bound in \eqref{pert1}
for $X_T, X_C$ follows from the first one,
\begin{equation}
 X_{T, \mB}^\ell |\gamma|
 = X_{C, \mB}^\ell |\gamma|
 \leq \epsilon \frac{1}{(1+s)^{1/2}} \lesssim \epsilon |X_{T, \mB}^n|
 \leq \epsilon X_{C,\mB}^n.
 \label{proofofperturbmB}
\end{equation}

The top-order energy is
\begin{multline}
 E_{X_T}(t) = \int_{D^C_t} \frac{1}{(1+s)^{1/2}} (\pa_u\psi)^2
 +v(\pa_v\psi)^2 + v |\nas\psi|^2
 +\int_{t_0}^t \int_{\Gamma_{t'}^L} |\pa \psi|^2_{X_T, \SL}\, dS dt'
 \\
  = \int_{D^C_t} |\pa \psi|_{X_T}^2
 +\int_{t_0}^t \int_{\Gamma_{t'}^L} |\pa \psi|^2_{X_T, \SL}\, dS dt'
 \label{topordercentraldef}
\end{multline}
and the lower-order energy is
\begin{multline}
 E_{X_C}(t) = \int_{D^C_t} s(\pa_u\psi)^2
 + v\left((\pa_v\psi)^2+ |\nas \psi|^2\right)
 +\int_{t_0}^t \int_{\Gamma_{t'}^L} |\pa \psi|^2_{X_{C{,+}}}\, dS dt'
 \\
 =\int_{D^C_t} |\pa \psi|_{X_C}^2
 +\int_{t_0}^t \int_{\Gamma_{t'}^L} |\pa \psi|^2_{X_{C{,+}}}\, dS dt'
 \label{}
\end{multline}
We will see that $-K_{X_T, \mB}$ is positive and this generates
an additional time-integrated term in our estimates,
\begin{equation}
 S_{X_T}(t_1) = \int_{t_0}^{t_1} \int_{D^C_t} \frac{1}{(1+v)(1+s)^{3/2}} (\pa_u\psi)^2
 +  |\nas \psi|^2.
 \label{SXTCdef}
\end{equation}
In the estimate for $E_{X_T}$ we will encounter the following positive term on the
timelike boundary,
\begin{equation}
 B_{X_T}(t_1)
 = \int_{t_0}^{t_1} \int_{\Gamma^R_t}
 \frac{1}{(1+v)(1+s)} |n\psi|^2 + \frac{1}{(1+s)^{1/2}} |\nas \psi|^2\, dS dt
 \label{}
\end{equation}
and in the estimate for $E_{X_C}$ we will encounter the following positive term on the
timelike boundary,
\begin{equation}
 B_{X_C}(t_1) =
 \int_{t_0}^{t_1} \int_{\Gamma^R_t}
 \frac{(1+s)^{1/2}}{1+v}
 |n\psi|^2\, dS dt.
\end{equation}
Note that this term does not involve angular derivatives along the timelike
side of $\Gamma^R_t$.

We will prove bounds for the energies that involve the following perturbative
error terms along the time slices and shocks,
	\begin{equation}
	 R_{X, P}(t_1) = \int_{D_{t_0}^{C}} v |P|^2
	 + \int_{D_{t_1}^{C}}v |P|^2
	 + \int_{t_0}^{t_1}\int_{\Gamma^R_t}v|P|^2\,dS dt
	 % + \int_{t_0}^{t_1}\int_{\Gamma^R_t} \left( X^\ell_{\mB} + (1+v)(1+s)^{1/2}X^n_{\mB}\right)
	 % |P|^2\, dt
	 + \int_{t_0}^{t_1}\int_{\Gamma^L_t}
	 % \left( X^\ell_{\mB} + (1+v)(1+s)^{1/2}X^n_{\mB}\right)
	 v
	 |P|^2\, dS dt.
	 \label{RPXC}
	\end{equation}
    Our estimate will also involve an error term coming from the scalar current $K_{X, \gamma_a}$
    generated by the $\gamma_a$, the linear part of the metric which verifies the null condition.
    This term will of course not cause any serious difficulties in our upcoming estimates.

    %
%
% \begin{multline}
%   R_{P, X_T}(t_1)
%   =
%   \int_{D^C_{t_1}}  (1+v)|P|  |\pa \psi|
%   +
%   \int_{D^C_{t_0}}  (1+v)|P||\pa \psi|\\
%   + \int_{t_0}^{t_1}\int_{\Gamma^R_{t}} |P| \left((1+v)|\evmB \psi| +
%    \frac{1}{(1+s)^{1/2}}|\eu \psi|
%  +\frac{1}{(1+s)^{1/2}}|\nas \psi|\right)\, dS dt\\
%  + \int_{t_0}^{t_1}\int_{\Gamma^L_{t}}  |P| \left((1+v)|\evmB \psi| +
% 	\frac{1}{(1+s)^{1/2}}|\eu \psi|
% +\frac{1}{(1+s)^{1/2}}|\nas \psi|\right)\, dS dt,
%  \label{RPTC}
% \end{multline}
% and
% \begin{multline}
%   R_{P, X_C}(t_1)=
%   \int_{D^C_{t_1}} (1+v)|P|  |\pa \psi|
%   +
%   \int_{D^C_{t_0}}  (1+v)|P| |\pa \psi|\\
%  + \int_{t_0}^{t_1}\int_{\Gamma^R_{t}} |P| \left((1+v)|\evmB \psi| +
% 	(1+s)|\eu \psi|
% + (1+s) |\nas \psi|\right)\, dS dt\\
% + \int_{t_0}^{t_1}\int_{\Gamma^L_{t}}  |P| \left((1+v)|\evmB \psi| +
%  (1+s)|\eu \psi|
% +(1+s)|\nas \psi|\right)\, dS dt
%  \label{RPDC}
% \end{multline}

% The energy estimates are
\begin{prop}[Energy estimates in the central region]
  \label{centralenest}
  % Suppose that the assumptions \eqref{betaLassump}-\eqref{betaRassump}
  %  about the positions of the shocks hold.
  Set $\gamma = h^{-1} - \mB^{-1}$. There is a constant
  $\epsilon^\prime > 0$ so that if the first perturbative assumption
	in
	\eqref{pert1} holds with $\epsilon < \epsilon^\prime$ and if
	\eqref{betaLassump}-\eqref{betaRassump} hold
	with $\epsilon_2 < \epsilon^\prime$,
	 then the following bounds hold.
  With notation as in \eqref{RPXC} and \eqref{ensecabstractwaveC2},
\begin{multline}
  E_{X_T}(t_1) + S_{X_T}(t_1)
  + B_{X_T}(t_1)
  {-C_0 \int_{t_0}^{t_1} \int_{D^C_t} F_1 X_T\psi\, dt}
  \lesssim
  E_{X_T}(t_0) + \int_{t_0}^{t_1} \int_{D^C_t}
  \left( |\mK_{X_T, \gamma, P}| {+ |K_{X_T, \gamma_a}|} + |{F_2}| |X_T\psi|\right)\, dt\\
  + \int_{t_1}^{t_2} \int_{\Gamma^R_t}
  (1+v)|\ev^{\mB}\psi|^2 \, dt
  +R_{X_T, P}(t_1),
  \label{topordercentralestimate}
\end{multline}
and
 \begin{multline}
   E_{X_C}(t_1)
   + B_{X_C}(t_1)
   \lesssim
   E_{X_C}(t_0) +
   S_{X_T}(t_1)
  {-C_0 \int_{t_0}^{t_1} \int_{D^C_t} F_1 X_C\psi\, dt}
   + \int_{t_0}^{t_1} \int_{D^C_t}\left(
   |\mK_{X_C, \gamma, P}|{ + |K_{X_C, \gamma_a}|  } +|{F_2}| |X_{C}\psi|\right)\, dt
   \\
   + \int_{t_0}^{t_1} \int_{\Gamma^R_t}
   (1+s)|\slashed{\nabla} \psi|^2  +
   (1+v) |\ev^{\mB}\psi|^2 \,dt
  + R_{X_C, P}(t_1),
   \label{lowordercentralestimate}
 \end{multline}
 where $C_0>0$.
\end{prop}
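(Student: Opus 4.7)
The plan is to apply the modified multiplier identity \eqref{modifiedstokesident} to the wave equation \eqref{ensecabstractwaveC2} with the multipliers $X_T$ and $X_C$, taking $g = \mB$ as the reference metric and $\gamma = h^{-1} - \mB^{-1}$ as the perturbation (we will treat $\gamma_a$ separately since it contributes a linear scalar current $K_{X, \gamma_a}$ which goes on the right-hand side as indicated in the statement). With $\Gamma_S = \Gamma^L$ (spacelike with respect to $\mB$ by Lemma \ref{causallemma}) and $\Gamma_T = \Gamma^R$ (timelike with respect to $\mB$), this yields, for $X \in \{X_T, X_C\}$,
\begin{multline}
 \int_{D^C_{t_1}} \mQ^h_P(X, N^{D^C_{t_1}}_h) + \int_{\Gamma^L_{t_0,t_1}} \mQ^h_P(X, N^L_h) - \int_{\Gamma^R_{t_0,t_1}} \mQ^h_P(X, N^R_h) \\
 = \int_{D^C_{t_0}} \mQ^h_P(X, N^{D^C_{t_0}}_h) + \int_{t_0}^{t_1}\int_{D^C_t} \Bigl(\mK_{X, \gamma, P} + K_{X, \gamma_a} + K_{X, \mB}\Bigr) + (F_1 - F_2) X\psi\, dt,
\end{multline}
where I have split the linear part of the scalar current as $K_{X, \mBB} = K_{X, \mB} + K_{X, \gamma_a}$.

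The next step is to convert each of the three boundary contributions into the quantities appearing in the energies $E_X$ and $B_X$: on the time slices, Lemma \ref{timeslicelemma} (and verification \eqref{proofofperturbmB} that the perturbative hypothesis \eqref{pert1} is met) gives $\mQ^h_P(X, N^{D^C_t}_h) \gtrsim |\pa\psi|^2_{X, \mB} - X^\ell_\mB |P|^2$; on the spacelike left shock, Lemma \ref{spacelikeperturbmB} controls $|\pa\psi|^2_{X, \mB, \SL}$ from below by $\mQ^h_P(X, N^L_h) + (1+v)|P|^2$; and on the timelike right shock, Lemma \ref{timelikeperturbmB} (using \eqref{timelikelowermBXT} for $X_T$ and \eqref{timelikelowermBXD} for $X_C$) yields lower bounds for the boundary terms $B_{X_T}$ and $B_{X_C}$, at the cost of placing the error terms $(1+v)|\evmB\psi|^2$ (both cases) and $(1+s)|\nas\psi|^2$ (only for $X_C$, since the angular coefficient is then not positive) on the right-hand side. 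Crucially, the appearance of only $|n\psi|^2$ in $B_{X_C}$ reflects the fact that $X_C$ is ``too large'' for the angular term on the timelike boundary to be positive (cf.\ Remark \ref{differentmultiplierscentral}), which is precisely why the $X_T$ estimate must be proved in tandem.

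The heart of the argument is then the explicit evaluation of $-K_{X, \mB}$ via \eqref{KXgBformula2}. For $X = X_T$, with $X^n_{T,\mB} = \tfrac{\ap}{4s^{1/2}}$, $X^\ev_{T,\mB} = v$, a direct computation using $\evmB(s^{-1/2}) = -\tfrac{1}{2vs^{3/2}}$ gives a coefficient of $(\pa_u\psi)^2$ equal to $-\tfrac{3\ap}{4vs^{3/2}}$, while $\pa_u X^v = 0$ kills the $(\pa_v\psi)^2$ and cross terms, and a careful expansion of $\pa_u X^u + \pa_v X^v - 2(X^v-X^u)/(v-u)$ in the central region (where $|u|\lesssim s^{1/2}$ and $v-u\sim v$) shows the angular coefficient is $-1 + O(s^{-1})$. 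Hence $-K_{X_T, \mB}$ is bounded below by $c\bigl(\tfrac{1}{(1+v)(1+s)^{3/2}}(\pa_u\psi)^2 + |\nas\psi|^2\bigr)$, which is exactly $S_{X_T}$. For $X = X_C$ with $X^n_{C, \mB}=s$, $X^\ev_{C,\mB}=v$, the $(\pa_u\psi)^2$ coefficient of $K_{X_C, \mB}$ \emph{cancels} (the contributions $\evmB X^n_\mB = 1/v$ and $-X^n_\mB/(vs) = -1/v$ offset), so $K_{X_C, \mB}$ reduces to a term proportional to $|\nas\psi|^2$ with coefficient $-1+O(s/v)$; this contribution has a favorable sign and can simply be discarded (or, equivalently, absorbed into $S_{X_T}(t_1)$, which explains its appearance on the right-hand side of \eqref{lowordercentralestimate}).

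The final step is to collect the error terms: $\mK_{X, \gamma, P}$, $K_{X, \gamma_a}$, and $|F_2||X\psi|$ go inside the space-time integral on the right-hand side; the term $\int F_1 X\psi\, dt$ is kept as $-C_0\int F_1 X\psi$ on the left-hand side, because the structure in $F_1$ must be preserved (one cannot afford to bound $|F_1||X\psi|$ by absolute values); the nonlinear boundary integrals from $\mQ^h_P$ along $\Gamma^R$ that involve $(1+v)|\evmB\psi|^2$ (and, for $X_C$, $(1+s)|\nas\psi|^2$) collect into the boundary error terms on the right-hand side; and the $|P|^2$ contributions from both time slices and both shocks assemble into $R_{X, P}(t_1)$ as defined in \eqref{RPXC}. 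The main subtlety is ensuring the perturbative constants in Lemmas \ref{timeslicelemma}, \ref{spacelikeperturbmB}, \ref{timelikeperturbmB}, and Proposition \ref{effectivemmmB} are all compatible for a single $\epsilon^\prime$, which is where the smallness hypotheses on $\epsilon$, $\epsilon_2$, and $\epsilon_0$ are simultaneously invoked.
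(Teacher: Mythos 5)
Your proposal follows essentially the same route as the paper: the modified multiplier identity \eqref{modifiedstokesident} with $\mBB=\mB+\gamma_a$ split off, Lemmas \ref{timeslicelemma}, \ref{spacelikeperturbmB} and \ref{timelikeperturbmB} for the time-slice, spacelike ($\Gamma^L$) and timelike ($\Gamma^R$) boundary contributions, and the explicit evaluation of $K_{X,\mB}$ from \eqref{KXgBformula2}, giving $-K_{X_T,\mB}\gtrsim \tfrac{1}{(1+v)(1+s)^{3/2}}(\pa_u\psi)^2+|\nas\psi|^2$ and, for $X_C$, the cancellation $\evmB X^n_{\mB}-\tfrac{1}{vs}X^n_{\mB}=0$ with a nonnegative angular remainder. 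The computations check out (your coefficient $-\tfrac{3\ap}{4vs^{3/2}}$ is consistent with the paper's stated lower bound), so this is a faithful sketch of the paper's argument.
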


\begin{proof}
    We use the identity \eqref{modifiedstokesident} with $h^{-1} = (\mB +\gamma_a)^{-1} + \gamma$,
	where $\gamma_a$ collects linear terms verifying the null condition. This gives
  \begin{multline}
    \int_{D^C_{t_1}} \mQ^h_P(X, N_{h}^{D^C_{t_1}})
    + \int_{t_0}^{t_1} \int_{D^C_t}
    -K_{X, \mB}\, dt
    -
    \int_{t_0}^{t_1} \int_{\Gamma^R_t} \mQ^h_P(X, N_{h}^{R})
    +
    \int_{t_0}^{t_1} \int_{\Gamma^L_t} \mQ^h_P(X, N_h^L)
    \\
    =
    \int_{D^C_{t_0}} \mQ_P^h(X, N^{D^C_{t_0}}) + \int_{t_0}^{t_1} \int_{D^C_t}
    \mK_{X, \gamma, P} {+ K_{X, \gamma_a}}
    + F X\psi\, dt.
    \label{stokesEC}
  \end{multline}
  where the modified energy-momentum tensor $\mQ$ is defined in \eqref{mQdef},
	the modified scalar current $\mK$ is as in Proposition \ref{effectivemmmB}
    {and where $F = F_1 - F_2$}.
  The result now follows from the above computations and Lemmas
  \ref{timeslicelemma} (which deals with the energy-momentum tensor along
	the time slices), \ref{spacelikeperturbmB} (which deals with the energy-momentum
	tensor along the spacelike side of the left shock) and
  \ref{timelikeperturbmB} (which deals with the energy-momentum tensor
	along the timelike side of the right shock).
	Specifically, by \eqref{proofofperturbmB} the
	second perturbative
	assumption in \eqref{pert1} holds and so the conclusions
	of these Lemmas hold. As a result, we have the following bounds for
	the energy-momentum tensors on the time slices and along the shocks,
	\begin{multline}
    E_{X_T}(t_1) + B_{X_T}(t_1)
    \lesssim
   \int_{D^C_{t_1}} \mQ^h_P(X_T, N_h^{D^C_t}) -
   \int_{t_0}^{t_1} \int_{\Gamma^R_t} \mQ^h_P(X_T, N_{h}^{R})\, dS dt
   +
   \int_{t_0}^{t_1} \int_{\Gamma^L_t} \mQ^h_P(X_T, N_h^L)\, dS dt
   \\
   + \int_{t_0}^{t_1} \int_{\Gamma^R_t} (1+v) |\evmB\psi|^2\, dS dt
   +R_{X_T, P}(t_1)
   \label{almostECT}
 \end{multline}
  and
  \begin{multline}
    E_{X_C}(t_1) + B_{X_C}(t_1)
    \lesssim
   \int_{D^C_{t_1}} \mQ^h_P(X_C, N_h^{D^C_t}) -
   \int_{t_0}^{t_1} \int_{\Gamma^R_t} \mQ^h_P(X_C, N_{h}^{R})
   +
   \int_{t_0}^{t_1} \int_{\Gamma^L_t} \mQ^h_P(X_T, N_h^L)
   \\
   + \int_{t_0}^{t_1} \int_{\Gamma^R_t} (1+v) |\evmB\psi|^2
   +(1+s) |\nas \psi|^2
   +R_{X_C, P}(t_1).
   \label{almostECD}
 \end{multline}

 By the energy identity \eqref{stokesEC}, we therefore have the bounds
 \begin{multline}
	 E_{X_T}(t_1) + B_{X_T}(t_1)
	 + \int_{t_0}^{t_1} \int_{D^C_t} -K_{X_T, \mB}\, dt
     { + \int_{t_0}^{t_1} \int_{D^C_t} - F_1 X_T\psi\, dt}
	 \lesssim
	 E_{X_T}(t_0)
	 \\
		+ \int_{t_0}^{t_1}\int_{D^C_t}
        |\mK_{X_T, \gamma, P}|  { +|K_{X_T, \gamma_a}| }
        + |{F_2}| |X_T \psi|
	 + \int_{t_0}^{t_1} \int_{\Gamma^R_t} (1+v) |\evmB\psi|^2
	 +R_{X_T, P}(t_1)
	\label{almostECT1}
\end{multline}
and
\begin{multline}
    E_{X_C}(t_1) + B_{X_C}(t_1) +
    {\int_{t_0}^{t_1} \int_{D^C_t} - F_1X_C \psi\, dt}
	\lesssim
	E_{X_C}(t_0)
	\\ + \int_{t_0}^{t_1}\int_{D^C_t}
	|K_{X_C,\mBB}|
	+
	|\mK_{X_C}|{ +|K_{X_C, \gamma_a}| }
	+ |F| |X_C \psi|
	+ \int_{t_0}^{t_1} \int_{\Gamma^R_t}(1+v) |\evmB\psi|^2
	+ (1+s)|\nas \psi|^2
	+R_{X_C, P}(t_1).
 \label{almostEXD1}
\end{multline}

 We now compute the scalar currents $K_{X,\mB}$ with $X = X_T$ and
 $X = X_C$. Both our fields satisfy $X^\ell_{\mB} = X^v = v$,
 and using the earlier formula \eqref{KXgBformula2} for the scalar current, 
 in this case we have
\begin{equation}
  \label{}
  K_{X, \mB} =2\left( \evmB X^n_{\mB} - \frac{1}{vs} X^n_{\mB} \right) (\pa_u\psi)^2
 - \frac{1}{2} \left(1 + 2 \frac{u}{v-u} - \pa_u X^u - 2 \frac{X^u}{v-u}\right)|\nas \psi|^2,
 \label{} 
\end{equation}
where we wrote $\frac{v}{v-u} = 1 + \frac{u}{v-u} $. For $X = X_T = \frac{\ap}{{4}s^{1/2}} n + v\evmB$, this gives
\begin{multline}
  \label{}
  -K_{X_T, \mB} = \frac{3}{{4}} \frac{\ap}{vs} (\pa_u \psi)^2 + \frac{1}{2} |\nas \psi|^2
  + \left( 2 \frac{u}{v-u} - \pa_u X_T^u - 2 \frac{X_T^u}{v-u}\right) |\nas \psi|^2
  \\
  \gtrsim \frac{1}{(1+v)(1+s)^{3/2}} (\pa_u\psi)^2 + |\nas \psi|^2,
\end{multline}
using that $2|u|/(v-u) + |\pa_u X_T^u| + 2|X_T^u|/(v-u)\leq 1/4$, say, in $D^C_T$.
For $X = X_C$, we note that $\evmB X^n_{\mB} - \frac{1}{vs} X^n_{\mB} =  0$ and
so
\begin{equation}
  \label{}
  -K_{X_C, \mB} =  \frac{1}{2} |\nas \psi|^2
  + \left( 2 \frac{u}{v-u} - \pa_u X_C^u - 2 \frac{X_C^u}{v-u}\right) |\nas \psi|^2
  \gtrsim |\nas \psi|^2.
\end{equation}
It follows that
\begin{equation}
  \label{}
  S_{X_T}(t_1) \lesssim \int_{t_0}^{t_1} -K_{X_T, \mB}\, dt,
  \qquad
  S_{X_C}(t_1) \lesssim \int_{t_0}^{t_1} -K_{X_C, \mB}\, dt,
\end{equation}
and the result follows.

\end{proof}

\subsection{The energy estimates in the region to the left
of the left shock}

In the region to the left of the left shock it will suffice
to use the multiplier
\begin{equation}
 X = X_L = uf(u) \pa_u + vf(v) \pa_v = uf(u) n + vf(v) \evm, \quad \text{ where }
 f(z) = \log z (\log \log z)^\alpha.
 \label{Lvfsdef}
\end{equation}
for $\alpha > 1$. For our applications we will take $1 < \alpha < 3/2$
but for the below argument the upper bound is irrelevant.
It is clear that $X_L$ is timelike and future-directed with respect
to the Minkowski metric in the region to the left of the left shock
since with our conventions $u$ is positive there and $\pa_u, \pa_v$
are future-directed.

% As in the previous section,
% we now compute the coefficient
% \begin{equation}
%   \vartheta_{X} = \frac{1}{2(1+v)(1+s)^{1/2}} X^\ell_{m} - X^n_{m}
% 	= \frac{1}{2(1+v)(1+s)^{1/2}} X^v - X^u,
% \end{equation}
% with $X = X_L$,
% which multiply $|\nas \psi|^2$ along the left shock in our energy estimates
% (recall \eqref{timelikelower}). Clearly the condition
% \eqref{extraXassump} holds for this multiplier.
% We have
% \begin{equation}
%  \vartheta_{X_L} = \frac{vs}{2(1+v)(1+s)^{1/2}} (\log s)^{\alpha}
%  - u \log u (\log \log u)^{\alpha} \gtrsim (1+s)^{1/2} (\log s)^\alpha,
% \end{equation}
% since $\alpha > 1$ and $|u| \lesssim (1+s)^{1/2}$ in $D^L$.

The energies are
\begin{equation}
 E_{X}(t)
 = \int_{D_t^L} u f(u) |n\psi|^2 + v f(v) |\evm \psi|^2
 + vf(v) |\nas \psi|^2.
 \label{EXfdef}
\end{equation}
This will enter our calculations
with an additional positive term on the left shock
(which is timelike with respect to $m$)
\begin{equation}
 B_{X}(t_1) = \int_{t_0}^{t_1} \int_{\Gamma^L_t}
 \frac{1}{1+v} f(s^{1/2}) |n\psi|^2
 + (1+s)^{1/2} f(s^{1/2}) |\nas \psi|^2\, dS dt
 \label{BXDleft}
\end{equation}

We will prove bounds involving the following perturbative error term,
\begin{align}
 R_{X, P}(t_1)
 &=
 \int_{D^L_{t_0}} X^\ev_m |P|^2 + \int_{D^L_{t_1}} X^\ell_m |P|^2
 + \int_{t_0}^{t_1} \int_{\Gamma^L_t} \left( X_m^\ev + (1+v)(1+s)^{1/2} X^n_m\right)
 |P|^2\,dS dt\\
 &\sim \int_{D^L_{t_0}} vf(v) |P|^2 + \int_{D^L_{t_1}} vf(v) |P|^2
  + \int_{t_0}^{t_1} \int_{\Gamma^L_t} vf(v) |P|^2\,dS dt
 \label{RPDL}
\end{align}

To ensure that the second condition in \eqref{pert1} holds, we assume
that $\gamma$ satisfies the estimate
% To close the estimates we will need to make the following assumption on $\gamma$,
\begin{equation}
	(1+v)(1+s)^{1/2}\frac{(\log_+s)^{\alpha-1}}{(\log \log_+s)^{\alpha}}
	 |\gamma| \leq \epsilon,
 \label{pert1left}
\end{equation}
% which, as we will see, implies the second condition in \eqref{pert1}
% for $X = X_L$.
 For our applications it is this condition that forces us to
take $\alpha < 3/2$. We note that this condition is stronger than
the first bound in \eqref{pert1}.

Then we have
\begin{prop}[Energy estimates in the leftmost region]
  \label{leftengen}
	  Set $\gamma = h^{-1} - m^{-1}$
		There is a constant
	  $\epsilon^\prime > 0$ so that if
		% the first assumption
		% in \eqref{pert1} and
		the
		% additional
		assumption
		\eqref{pert1left} holds with $\epsilon < \epsilon^\prime$
		and \eqref{betaLassump} holds with $\epsilon_2 < \epsilon^\prime$,
		 then the following bound holds.
	  With notation as in \eqref{RPDL},
   \begin{multline}
    E_{X_L}(t_1) + B_{X_L}(t_1)
    \lesssim
    E_{X_L}(t_0) + \int_{t_0}^{t_1}\int_{D^L_{t}}
    \left( |\mK_{X_L, \gamma, P}| + |F| |X_L\psi|\right)\\
    + \int_{t_0}^{t_1}\int_{\Gamma^L_t}
    % (1+s)^{1/2} f(s^{1/2}) |\nas \psi|^2 +
		vf(v) |\evm\psi|^2
    \, dSdt
    + R_{P, X_L}(t_1).
    \label{decayleftenergybd}
  \end{multline}
\end{prop}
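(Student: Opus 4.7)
The plan is to mirror the proofs of Propositions \ref{rightenest} and \ref{centralenest}, taking into account that in $D^L$ the shock $\Gamma^L$ is timelike with respect to $m$ and that $D^L$ contains the axis $\{r=0\}$. The boundary hypothesis of Lemma \ref{integralidentmodified} at $r=0$ holds automatically: since $u=v$ there and $X_L^u=uf(u)$, $X_L^v=vf(v)$, we have $X_L^r=(X_L^v-X_L^u)/2=0$, so $(m^{rr}+\gamma^{rr})X_L^r$ vanishes in the limit.

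Applying Lemma \ref{integralidentmodified} with $h^{-1}=m^{-1}+\gamma$, $X=X_L$, $\Gamma_S=\emptyset$, $\Gamma_T=\Gamma^L$ yields the identity
\begin{multline}
\int_{D^L_{t_1}} \mQ^h_P(X_L, N^{D^L_{t_1}}_h)
-\int_{\Gamma^L_{t_0,t_1}} \mQ^h_P(X_L, N_h^L)
+\int_{t_0}^{t_1}\!\!\int_{D^L_t} -K_{X_L, m}\, dt \\
= \int_{D^L_{t_0}} \mQ^h_P(X_L, N^{D^L_{t_0}}_h)
+\int_{t_0}^{t_1}\!\!\int_{D^L_t} \bigl(\mK_{X_L,\gamma,P}+FX_L\psi\bigr)\, dt.
\end{multline}
To absorb the time-slice integrals into $E_{X_L}(t_i)$ I apply Lemma \ref{timeslicelemma}, which requires the two perturbative assumptions in \eqref{pert1}. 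The first is immediate from \eqref{pert1left}; for the second, $X^n_m=uf(u)\gtrsim s^{1/2}\log s(\log\log s)^\alpha$ throughout $D^L$, so a direct computation using \eqref{pert1left} reduces the claim $X^\ell_m|\gamma|\leq\epsilon X^n_m$ to the inequality $(\log s)^{\alpha-1}\gtrsim (\log\log s)^\alpha$, valid precisely because $\alpha>1$. To convert the $\Gamma^L$-boundary term into $B_{X_L}(t_1)$ plus acceptable errors I invoke Lemma \ref{timelikeperturb}: the $X^\ell_m|\evm\psi|^2=vf(v)|\evm\psi|^2$ term on the right of \eqref{timelikelower} becomes the boundary-flux error term in \eqref{decayleftenergybd} (to be controlled downstream via the boundary condition), and the $|P|^2$ contributions match $R_{X_L,P}(t_1)$ upon noting that $X^\ell_m+(1+v)(1+s)^{1/2}X^n_m\sim vf(v)$ on $\Gamma^L$ (again using $\alpha>1$).

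Finally, the linear scalar current $K_{X_L,m}$ is analyzed via \eqref{KXmformula}. Because $X_L^u$ depends only on $u$ and $X_L^v$ only on $v$, both the $(\pa_u\psi)^2$ and $(\pa_v\psi)^2$ coefficients vanish identically and only an angular term survives, with coefficient $\tfrac{1}{2}\bigl(F'(u)+F'(v)-2(F(v)-F(u))/(v-u)\bigr)$ where $F(z):=zf(z)$. A direct computation gives $F'''(z)\sim -(\log\log z)^\alpha/z^2<0$ for $z$ large, so $F'$ is concave and the Hermite-Hadamard inequality yields that this coefficient is nonpositive; hence $-K_{X_L,m}\geq 0$ and this bulk term can be dropped from the identity with a favorable sign. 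Combining these observations with the identity above produces \eqref{decayleftenergybd}. The main subtlety — and what forces the logarithmically amplified choice $f(z)=\log z(\log\log z)^\alpha$ with $\alpha>1$ — is the weight-matching on the timelike shock: the output $X^\ell_m/((1+v)(1+s)^{1/2})\sim s^{1/2}(\log s)^\alpha|\nas\psi|^2$ of Lemma \ref{timelikeperturb} must dominate the desired $(1+s)^{1/2}f(s^{1/2})|\nas\psi|^2\sim s^{1/2}\log s(\log\log s)^\alpha|\nas\psi|^2$, which is exactly the condition $(\log s)^{\alpha-1}\gtrsim (\log\log s)^\alpha$; a non-amplified scaling multiplier would fail both this comparison and the second perturbative bound in \eqref{pert1}.
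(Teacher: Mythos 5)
Your proposal is correct and follows essentially the same route as the paper: the $X_L^r|_{r=0}=0$ check for Lemma \ref{integralidentmodified}, verifying \eqref{pert1} from \eqref{pert1left}, Lemmas \ref{timeslicelemma} and \ref{timelikeperturb} for the time-slice and timelike-shock terms (with exactly the weight comparison $(\log s)^{\alpha-1}\gtrsim(\log\log s)^{\alpha}$ that the amplified multiplier is designed to satisfy), and the sign $-K_{X_L,m}\geq 0$ from concavity of $(zf(z))'$. Your use of Hermite--Hadamard is just a packaged form of the paper's explicit computation that $(zf(z))'''\leq 0$ forces the angular coefficient to have a favorable sign.
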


\begin{proof}

	% It it straightforward to verify that if the basic perturbative assumption
	% \eqref{pert1} holds then $X_T$
	% satisfies the assumption \eqref{pert1multiplier}.
	If \eqref{pert1left} holds, then with $X = X_L = X^\evm \evm + X^n_m n,$
	\begin{multline}
	 X^\ell_{m} |\gamma| \leq (1+v)(1+s) (\log(1+s))^\alpha |\gamma|
	 = (1+s)^{1/2} \log(1+s)\left( (1+v)(1+s)^{1/2} (\log(1+s))^{\alpha-1} |\gamma|\right)
	 \\
	 \leq \epsilon (1+s)^{1/2} \log(1+s) (\log \log(1+s))^{\alpha}
	 \lesssim \epsilon X^n_{m},
	 \label{}
	\end{multline}
	so both bounds (and in particular the second bound) in \eqref{pert1} holds for $X = X_L$.

  Since $X_L$ satisfies
  $X_L^{ r}|_{r =0} = 0$, we can apply \eqref{integralidentmodified}
  which gives
  \begin{multline}
   \int_{D^L_{t_1}} \mQ^h_P(X_L, N^{D_{t_1}}_h) + \int_{t_0}^{t_1} \int_{D^L_t} -K_{X_L, m}
   + \int_{t_0}^{t_1} \int_{\Gamma^L_t} -\mQ^h_P(X_L, N^L_h)\\
   = \int_{D^L_{t_0}} \mQ^h_P(X_L, N_h^{D_{t_0}})
   + \int_{t_0}^{t_1} \int_{D^L_t}
   \mK_{X_L} + F X_L\psi.
   \label{usestokesleft}
 \end{multline}
 The result now follows from the above computations and Lemmas
  \ref{timeslicelemma}, \ref{spacelikeperturb} and
  \ref{timelikeperturb}. We have just shown that the hypotheses of these results
	 hold, and so we have the following bounds for the energy-momentum tensor
	 along the time slices and the left shock,
 \begin{multline}
  E_{X_L}(t_1) + B_{X_L}(t_1) \lesssim
  \int_{D^L_{t_1}} \mQ^h_P(X_L, N^{D^L_{t_1}}_h)
  -\int_{t_0}^{t_1} \int_{\Gamma^L_t}
  \mQ^h_P(X_L, N^{L}_h) \, dt\\
  +
  \int_{t_0}^{t_1} \int_{\Gamma^L_t}
  X_L^v|\evm \psi|^2
  +R_{P, X_L}(t_1).
  \label{}
\end{multline}
% and
% \begin{multline}
%  E_{X_C}(t_1) + B_{X_C}(t_1) \lesssim
%  \int_{D^L_{t_1}} \mQ^h_P(X_C, N^{D^L_{t_1}}_h)
%  -\int_{t_0}^{t_1} \int_{\Gamma^L_t}
%  \mQ^h_P(X_C, N^{L}_h) \, dt\\
%  +
%  \int_{t_0}^{t_1} \int_{\Gamma^L_t}
%  (1+v)f(v)|\ev^m \psi|^2
%  +\blue{[?]} |\nas \psi|^2
%  +R_{P, X_C}(t_1).
%  \label{}
% \end{multline}

From the identity \eqref{usestokesleft} we therefore have
\begin{multline}
  E_{X_L}(t_1) + B_{X_L}(t_1)+\int_{t_0}^{t_1}-K_{X_L,m}
   \lesssim
   E_{X_L}(t_0)
   \\
  + \int_{t_0}^{t_1} |\mK_{X_L,\gamma, P}|
  +|F| |X_L \psi|
  +
  \int_{t_0}^{t_1} \int_{\Gamma^L_t}
  X^v_f|\ev^m \psi|^2\, dt
  +R_{P, X_L}(t_1).
 \label{almostLXD}
\end{multline}
It remains to compute the scalar current $K_{X_L,m}$.
Since $\pa_u X^v_f = \pa_v X^u_f = 0$, \eqref{KXmformula}
gives
  \begin{equation}
  -K_{X_L, m}= \frac{1}{2}\left( \frac{2}{v-u}\left(X^v_f - X_L^u\right)-\pa_u X_L^u - \pa_v X_L^v\right)
   |\nas \psi|^2.
   \label{}
  \end{equation}
	% \begin{multline}
	%  \frac{2}{v-u}\left(X^v_f- X^u_f\right) -  \left( \pa_v X_L^v + \pa_u X^f_u\right)
	%  =\frac{2}{v-u} \left( v f(v) - u f(u)\right)
	%  - f(v) - f(u) - vf'(v) - uf'(u)\\
	%  = \left(\frac{2v}{v-u} - 1\right) f(v)
	%  - \left(\frac{2u}{v-u} + 1\right) f(u)
	%  - v f'(v) - uf'(u)\\
	%  =\frac{t}{r} \left(f(v) -  f(u)\right)
	%  - v f'(v) - uf'(u).
	%  \label{scalar1left}
	% \end{multline}
	% 	Now we write
	% \begin{multline}
	%  \frac{t}{r} \left(f(v) -  f(u)\right)
	%  - v f'(v) - uf'(u)
	%   = \frac{t}{r} \int_{u}^{v} f'(z)\, dz - vf'(v) - uf'(u)
	% 	\\
	% 	= \frac{t}{r} \left( vf'(v) - uf'(v)\right) - vf'(v) - uf'(u)
	% 	- \frac{t}{r}\int_{ u}^v z f''(z)\, dz.
	%  \label{}
	% \end{multline}
	% Since $f''(z)\leq 0$,
	% \begin{align*}
	%  \frac{t}{r} \left(f(v) -  f(u)\right)
	%  - v f'(v) - uf'(u)
	%  &{\geq}
	% \left( \frac{t}{r} - 1\right) vf'(v)
	% - \left(\frac{t}{r} + 1\right) uf'(u)\\
	% \end{align*}
	% and so we have shown
	% \begin{equation}
	%  -K_{m, X_L} \geq \left( \frac{t}{r} - 1\right) vf'(v) |\nas \psi|^2
	%  - \frac{v}{r} uf'(u)|\nas \psi|^2,
	%  \label{}
	% \end{equation}
	% which gives the result.

	Let $\tilde f=zf(z)$. Then
	\begin{align}
	 \frac{2}{v-u}\left(X^v_f- X^u_f\right) -  \left( \pa_v X_L^v + \pa_u X^f_u\right)
	 &=\frac{2}{v-u}\int_u^v \tilde f'(z) \,dz
	 - \tilde f'(v) - \tilde f'(u)\\
	 &=\frac{1}{v-u}\int_u^v \left(2\tilde f'(z)-\tilde f'(v)-\tilde f'(u)\right) \,dz \\
   &=\frac{1}{v-u}\int_u^v  \left(\int_u^z \tilde f''(\tau)\,d\tau - \int_z^v \tilde f''(\tau)\,d\tau)\right) \,dz \\
   &=\frac{1}{v-u}\int_u^v  \left(\tilde f''(\tau) (v-\tau)- \tilde f''(\tau))(\tau-u)\right) \,d\tau
\label{}
	\end{align}
	where in the last line we interchanged the limits of integration and performed an explicit integration with respect to $z$.
	Furthermore,
	\begin{align}
	\int_u^v  \tilde f''(\tau) (v+u-2\tau) \,d\tau
	&=\int_u^{\frac{v+u}2} \tilde f''(\tau) (v+u-2\tau) \,d\tau + \int_{\frac{v+u}2}^v \tilde f''(\sigma) (v+u-2\sigma) \,d\sigma\\
	&=\int_u^{\frac{v+u}2} \tilde f''(\tau) (v+u-2\tau) \,d\tau {+} \int_{\frac{v+u}2}^{{u}} \tilde f''(v+u-\tau) (v+u-2\tau) \,d\tau
	\\
	&=\int_u^{\frac{v+u}2} \left(\tilde f''(\tau) - \tilde f''(v+u-\tau)\right) (v+u-2\tau) \,d\tau \\
	&=-\int_u^{\frac{v+u}2} \left(\int_{\tau}^{v+u-\tau} \tilde f'''(\rho)\,d\rho\right) (v+u-2\tau) \,d\tau.
	\end{align}
	To compute the sign of $\tilde f'''(z)$ we observe
	$$
	\tilde f''(z)=\frac 1z (\log\log z)^\alpha +\frac 1z O\left((\log\log z)^{\alpha-1}\right)
	$$
	where we write $f_1 = O(f_2)$ if $|f_1| \lesssim |f_2|$ and $|f_1'| \lesssim |f_2'|$.
	As a result,
	$$
	\tilde f'''(z)=-\frac 1{z^2} (\log\log z)^\alpha+\frac 1{z^2} O\left((\log\log z)^{\alpha-1}\right)\leq 0,
	$$
	and it follows that
	$$
	-K_{X_L,m}\geq 0
	$$
\end{proof}

\subsection{The Morawetz estimate}
\label{genmorsec}
In order to close our estimates in the region to the left of the left shock
we use the spacelike multiplier
\begin{equation}
 X_M = \left(g(r) + 1\right)
 (\pa_v - \pa_u),
 \qquad g(r) = (\log (1+r))^{1/2} f(\log (1 + r))
 \label{Gdef}
\end{equation}
where $f(z) = \log z (\log\log z)^\alpha$.

The reason for using this multiplier is that the scalar current
$K_{X_M, m}$ is positive-definite and this gives a time-integrated
bound for weighted derivatives.
Since $X_M$ is not timelike, after multiplying the wave equation by $X_M\psi$
and integrating by parts, the terms on the time slices are not
positive-definite, nor are those along the shock. However, $X_M$ has been chosen so that those terms can
be controlled by the energies $E_{X_L}(t)$, see \eqref{importantgbd}.

\begin{prop}
  \label{morawetzlem}
  Suppose that the assumptions \eqref{betaLassump}
  hold and that with $\gamma = h^{-1} - m^{-1}$, we have
 $\lim_{r\to 0} |\gamma^{rr}|\leq \tfrac{1}{4}$.
 There is a constant $\epsilon^\prime > 0$ so that if the bound
 \eqref{pert1left} holds with $\epsilon < \epsilon^\prime$, then with
 $g$ as in \eqref{Gdef}
 and with $E_{X_L}, B_{X_L}$ defined as in
 \eqref{EXfdef} and \eqref{BXDleft} and $R_{X_L,P}$ as in \eqref{RPDL},
 we have
\begin{multline}
 \int_{t_0}^{t_1} \int_{D^L_t}
 g'(r) \left( (\pa_u\psi)^2 + (\pa_v\psi)^2\right) +  \frac{g(r) + 1}{r} |\nas \psi|^2
 + \int_{t_0}^{t_1} (1+t)\lim_{r \to 0} \left| \frac{\psi}{r}\right|^2\, dt
 \\
 \lesssim
 E_{X_L}(t_1) + E_{X_L}(t_0)
 + B_{X_L}(t_1)
 +\int_{t_0}^{t_1} \int_{D^L_t}
 |\mK_{X_M, \gamma, P}|
 +|F| |X_M\psi|^2 \, dt
 \\
 +\int_{t_0}^{t_1} \int_{\Gamma^L_t}v f(v)  (\pa_v\psi)^2\,
 dt
 + R_{X_L, P}(t_1).
 \label{morawetzbd}
\end{multline}
\end{prop}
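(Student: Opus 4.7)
\textbf{Proof plan for Proposition \ref{morawetzlem}.} The plan is to apply the modified multiplier identity \eqref{modifiedstokesident} with $X = X_M = (g(r)+1)(\pa_v - \pa_u) = G(r)\pa_r$ to the wave equation \eqref{ensecabstractwave} in the region $D^L$. The mechanism is classical: $X_M$ is purely radial so the scalar current $-K_{X_M,m}$ is positive definite, producing the desired bulk term on the left-hand side of \eqref{morawetzbd}. The cost is that $X_M$ is spacelike, so none of the boundary terms come with favorable signs; the whole point is that, having been chosen compatible with $X_L$, these unsigned boundary contributions are controlled by $E_{X_L}(t_0) + E_{X_L}(t_1) + B_{X_L}(t_1)$ plus the flux along $\Gamma^L$ already present in \eqref{morawetzbd}.

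First I would compute $K_{X_M,m}$ from the general formula \eqref{KXmformula}. Since $X_M^u = -G(r)$, $X_M^v = G(r)$ depend only on $r = (v-u)/2$, a direct calculation gives
\begin{equation}
-K_{X_M,m} = G'(r)\bigl((\pa_u\psi)^2 + (\pa_v\psi)^2\bigr) + \Bigl(\tfrac{G(r)}{r} - \tfrac{1}{2}G'(r)\Bigr)|\nas \psi|^2,
\end{equation}
and since $G(r) = (\log(1+r))^{1/2}f(\log(1+r)) + 1$ satisfies $rG'(r) \leq cG(r)$ with small $c$ for $r$ large (and the term is harmless for $r$ bounded), this is bounded below by $g'(r)((\pa_u\psi)^2 + (\pa_v\psi)^2) + \frac{g(r)+1}{r}|\nas\psi|^2$, which is exactly the integrand of the bulk term in \eqref{morawetzbd}.

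Next, I would estimate the boundary contributions. On $D^L_t$, since $m(X_M, N_m^{D_t}) = 0$, we get $Q^m(X_M, N_m^{D_t}) = G(r)((\pa_v\psi)^2 - (\pa_u\psi)^2)$, which is unsigned but satisfies $|Q^m(X_M, N_m^{D_t})| \lesssim G(r)((\pa_u\psi)^2 + (\pa_v\psi)^2)$. The key comparison
\begin{equation}
\label{importantgbd}
G(r) \lesssim uf(u) \quad\text{and}\quad G(r) \lesssim vf(v) \qquad\text{in } D^L,
\end{equation}
which holds because $u \geq \ls(\log v)^{1/2}$ there and $f$ is slowly varying, gives $|Q^m(X_M, N_m^{D_t})| \lesssim |\pa\psi|^2_{X_L,m}$, so the time-slice contributions are dominated by $E_{X_L}(t_0) + E_{X_L}(t_1)$. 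For the boundary along $\Gamma^L$, decomposing $X_M$ in the null frame $(n, \ev^m)$ and using formula \eqref{tlemg} with the assumptions \eqref{betaLassump} yields three kinds of terms: an angular term $\lesssim (1+v)^{-1}(1+s)^{1/2}G(r)|\nas\psi|^2$ which is controlled by $B_{X_L}$; a term involving $(\ev^m\psi)^2$ multiplied by $G(r) \lesssim vf(v)$ which is precisely absorbed by the flux $\int vf(v)(\pa_v\psi)^2$ on the right of \eqref{morawetzbd}; and a term with $(n\psi)^2$ multiplied by $G(r)(\log v)^{-1/2}(1+v)^{-1}$, which is again dominated by $B_{X_L}$. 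The perturbative contributions from $\mQ^h_P - Q^m$ are handled by the first bound in \eqref{pert1left} together with the perturbative estimates from Proposition \ref{effectivemmmink}, contributing to $R_{P,X_L}$ and to $\int|\mK_{X_M,\gamma,P}|$.

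The main technical subtlety will be the boundary contribution at $\{r=0\}$. Since $X_M^r|_{r=0} = 1 \ne 0$, Lemma \ref{integralidentmodified} does not immediately apply; instead I would first apply the divergence theorem on the annular region $\{r\geq \delta\}$ and then send $\delta \to 0$. Using the assumed boundedness of $\psi/r$ at the origin and the cancellation structure of $Q^m$ in polar coordinates, the standard Morawetz computation shows that the contributions on the sphere $\{r=\delta\}$ produce, in the limit, precisely the term $(1+t)\lim_{r\to 0}|\psi/r|^2$ with a favorable sign (this is the classical trace of the multiplier at the origin). The assumption $\lim_{r\to 0}|\gamma^{rr}| \leq 1/4$ ensures that the same limit is valid for the perturbed problem. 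Combining all of the above with the energy identity \eqref{modifiedstokesident} and rearranging gives \eqref{morawetzbd}.
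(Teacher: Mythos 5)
Your proposal is correct and follows essentially the same route as the paper: multiply by $X_M$, use the positive-definite scalar current $-K_{X_M,m}$ for the bulk term, control the unsigned time-slice and $\Gamma^L$ boundary terms via the comparison $g(r)+1\lesssim uf(u)\le X_L^u$ (the paper's \eqref{importantgbd}) in terms of $E_{X_L}$, $B_{X_L}$, the $vf(v)(\pa_v\psi)^2$ flux and $R_{X_L,P}$, and recover the origin term from the limiting boundary integral at $\{r=\delta\}$, which is exactly the content of the paper's Lemmas \ref{divthm2}--\ref{divthm3}. The only differences are cosmetic — you verify positivity of the angular coefficient through the asymptotic bound $rG'(r)\lesssim G(r)$ whereas the paper uses concavity of $g$ (namely $g(r)/r-g'(r)\ge 0$), and your exact computation of $Q^m(X_M,N_m^{D_t})$ replaces the paper's cruder pointwise bound — neither of which affects the argument.
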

\begin{proof}
  From \eqref{divthm2} we have the identity
\begin{multline}
 \int_{t_0}^{t_1} \int_{D^L_t} -K_{X_M,m}
 + \int_{t_0}^{t_1} \lim_{r\to 0}X^r_M h^{rr} \left|\frac{\psi}{r}\right|^2
 =
  \int_{D^L_{t_0}} \mQ^h_P(X_M, N_h^{D^L_{t_0}})
 - \int_{D^L_{t_1}} \mQ^h_P(X_M, N_h^{D^L_{t_1}})\\
 + \int_{t_0}^{t_1}\int_{\Gamma^L_t} \,\mQ_P^h(X_M, N_h^{\Gamma})
 + \int_{t_0}^{t_1}\int_{D^L_t} F X_M\psi - \mK_{X_M, \gamma, P}.
 \label{usedivmor}
\end{multline}
We note that $X^r_M = X_Mr = 2 g(r) + 1$ with $g(0) = 0$,
and that by our assumptions, $\lim_{r\to 0}|h^{rr} - 1| = \lim_{r\to 0}|\gamma^{rr}|
\leq \frac{1}{4}$. It follows that
\begin{equation}
 \int_{t_0}^{t_1} \lim_{r \to 0}
 \left|\frac{\psi}{r}\right|^2 \lesssim
 \int_{t_0}^{t_1} \lim_{r\to 0} X^r_M h^{rr}\left|\frac{\psi}{r}\right|^2.
 \label{zeromor}
\end{equation}

We now bound the terms appearing on the right-hand side
of \eqref{usedivmor}. For the integrals over the time slices, we
first use the identity \eqref{generalEMdecomp} which gives
\begin{multline}
 |Q^m(X_M, N^{D_t^L}_m)|
 \lesssim |X_M^v| (\pa_v\psi)^2 + |X_M^u| (\pa_u\psi)^2
 + |X_M| |\nas \psi|^2
 \\
 \lesssim
 (g(r) +1 )\left( (\pa_u \psi)^2 + (\pa_v \psi)^2 + |\nas \psi|^2\right)
 \label{}
\end{multline}
By the definition of $g$ and the fact that $u \gtrsim s^{1/2}$ in $D^L_t$,
{where the implicit constant depends on the constants $\ls, \rs$ from \eqref{betaLassump}-
\eqref{betaRassump}.} 
we have the bound
\begin{align}
 &g(r)+1 = (\log (r+1))^{1/2}f((\log (r+1))^{1/2}) +1 \lesssim (1+s)^{1/2}f(s^{\frac 12}) \lesssim u f(u)
 = X_L^u,
 % \\
 % &(1+s)^{1/2}g(r)\lesssim f(v) = \frac{1}{v} X_L^v.
 \label{importantgbd}
\end{align}
with $X_L$ as in \eqref{Lvfsdef}.
Since clearly $g(r) \leq X_L^v$, using \eqref{timesliceperturbbd},
 our perturbative assumptions,
and the definition of $E_{X_L}$ from \eqref{EXfdef},
we have the bound
\begin{equation}
 \int_{D^L_t} |\mQ_P^h(X_M, N_h^{D_t^L})|
 \lesssim E_{X_L}(t)
 + \int_{D^L_t} |X^v_L| |P|^2.
 \label{mortimeslice}
\end{equation}

We now deal with the integral over the shock $\Gamma^L$ appearing
in \eqref{usedivmor}.
By \eqref{timelikelower}, we have 
 \begin{multline}
  |Q^m(X_M, N_m^{L})|
	 \lesssim
	\frac{1}{(1+v)(1+s)^{1/2}} (g(r)+1) |\pa_u\psi|^2 +  (g(r) + 1)|\pa_v \psi|^2\\
	+ \left(\frac{1}{(1+v)(1+s)^{1/2}} + 1 \right)(g(r)+1) |\nas \psi|^2
	+\left(1 + (1+v)(1+s)^{1/2}\right) (g(r)+1)|P|^2,
  \label{}
 \end{multline}
 and
 after using \eqref{importantgbd} this gives
 \begin{equation}
  |Q^m(X_M, N_m^L)| \lesssim \frac{1}{(1+v)(1+s)^{1/2}} X^u_f |\pa_u\psi|^2
	+ X^u_f |\nas \psi|^2
	+ X^v_f |\pa_v\psi|^2
	+ (1+v)(1+s)^{1/2}X^u_f |P|^2.
  \label{}
 \end{equation}
 From the definition of the boundary term $B_{X_L}$ from \eqref{BXDleft}
 and using our perturbative assumptions, we therefore have
\begin{equation}
 \int_{t_0}^{t_1} \int_{\Gamma^L_t} |\mQ^h_P(X_M, N^L_m)|
 \lesssim
 B_{X_L}(t_1) + \int_{t_0}^{t_1} \int_{\Gamma^L_t} vf(v)|\pa_v \psi|^2
  + \left(|X_L^v| + (1+v)(1+s)^{1/2}|X_L^u|\right) |P|^2.
 \label{morbdy}
\end{equation}

It remains to compute the scalar current $K_{X_M, m}$.
Recall from \eqref{KXmformula} that
\begin{equation}
 -K_{X_M, m} =
 -2\pa_v X_M^u (\pa_u\psi)^2
 -
  2\pa_u X_M^v  (\pa_v\psi)^2
 - \frac{1}{2} \left( \pa_u X_M^u +\pa_v X_M^v
 - 2 \frac{X_M^v-X_M^u}{v-u}\right)|\nas \psi|^2.
 \label{KXMformula2}
\end{equation}
Since $\pa_v r = \frac{1}{2} = -\pa_u r$, we have
\begin{equation}
 \pa_v X_M^u = \pa_v X_M^v = -g'(r),
 \label{}
\end{equation}
and
\begin{equation}
 \pa_u X_M^u + \pa_v X_M^v =  g'(r)
 \qquad
 \frac{2}{v-u}(X^v_M - X^u_M) = \frac{2}{r}(g(r) + 1)
 \label{}
\end{equation}
Therefore, the coefficient of $\frac{1}{2}|\nas \psi|^2$ in \eqref{KXMformula2} is
\begin{equation}
 \frac{2}{v-u}(X^v_M - X^u_M) - \pa_v X^v_M - \pa_u X^u_M
 = \frac{g(r) + 2}{r}  +  \frac{g(r)}{r} - g'(r).
 \label{}
\end{equation}
Since $g(0) = 0$, $\lim_{r \to 0^+} r g'(r) =  0$ and $g'' < 0$, we have
\begin{equation}
	\frac{g(r)}{r} - g'(r) = \frac{1}{r} \int_0^r g'(z)\, dz - g'(r)
	=-\frac{1}{r}\int_0^r z g''(z)\, dz \geq 0,
 \label{}
\end{equation}
and so we have the lower bound
\begin{multline}
 -K_{X_M, m} = 2 g'(r) \left((\pa_u\psi)^2 + (\pa_v \psi)^2\right)
 + \left(\frac{g(r) + 2}{r}  +  \frac{g(r)}{r} - g'(r) \right) |\nas \psi|^2\\
 \geq 2 g'(r) \left((\pa_u\psi)^2 + (\pa_v \psi)^2\right)
 + \frac{g(r) + 2}{r}|\nas \psi|^2.
\end{multline}
%
%
%
% ====
%
%
% We have
% \begin{equation}
%  \pa_v X_M^u = - g'(r)
%  =
%  \pa_u X_M^v,
% \end{equation}
% and
% \begin{equation}
%  \pa_u X^u_M + \pa_v X^v_M - 2 \frac{ X^v_M - X^u_M}{v-u}
%  = 2 g'(r) - 2\frac{g(r)}{r} +\blue{ 1 }- \frac{2v}{v-u}
%  = 2 g'(r) - 2\frac{g(r)}{r} - \frac{t}{r}.
% \end{equation}
% \blue{[need to change constants.]}
% From \eqref{KXMformula2} we therefore find
% \begin{equation}
%  -K_{X_M, m}
%  = 2 g'(r) \left((\pa_u\psi)^2 + (\pa_v \psi)^2\right)
%  + \left(\frac{g(r)}{r}-
%  g'(r) + \frac{t}{r} \right) |\nas \psi|^2.
%  \label{}
% \end{equation}
Combining this with \eqref{zeromor}, \eqref{mortimeslice}, and \eqref{morbdy} gives the result.

\end{proof}

\section{The nonlinear equations and the main theorem}
\label{mainthmsec}
We start by recording the system of equations and boundary conditions
we are considering.

\subsection{The equations for the perturbations in each region}
With $H^\alpha$ defined as in \eqref{Hdefintro},
We consider the wave equation
\begin{equation}
  \pa_\alpha H^\alpha(\pa \Phi) = 0,
 \label{contpf}
\end{equation}
in regions $D^L, D^C, D^R$, subject to the boundary conditions
\begin{equation}
 [H^\alpha(\pa\Phi)] \zeta_\alpha = 0, \qquad [\Phi] = 0
 \label{rhpf}
\end{equation}
across the shocks $\Gamma^L, \Gamma^R$,
where $\zeta = \zeta_\alpha dx^\alpha$ is a one-form whose null space at each
point $(t,x)$ on the shock $\Gamma$ is the tangent space $T_{(t,x)}\Gamma$,
and where in each region $\Phi$ is a
perturbation of the model shock profile $\sigma = \frac{1}{r}\Sigma$
given in \eqref{introsigmadef},
\begin{equation}
 \Phi = \phi + \sigma = \phi + \begin{cases}\frac{u^2}{2rs} \qquad \text{ in } D^C,\\
 0, \qquad \text{ in } D^L, D^R.\end{cases}
 \label{decomp}
\end{equation}

By Lemma \ref{higherorderexterioreqns} with $|I| = 0$,
in the exterior regions $D^L, D^R$, the variables
$\psi_L = r\phi_L, \psi_R = r\phi_R$ satisfy the following quasilinear perturbation
of the Minkowskian wave equation,
\begin{equation}
	(-4\pa_u\pa_v \psi_A + \sDelta \psi_A)  + \pa_\mu(\gamma^{\mu\nu}\pa_\nu \psi_A)
	= F_A,
 \label{waveext0}
\end{equation}
for $A = L, R$,
where $\gamma = \gamma(\pa (\psi_A/r))$. By
Lemma \ref{centraleqnlowestprop}, in the region between the shocks $D^C$,
with notation as in \eqref{centraleqnlowest}-\eqref{FSigmaformula},
$\psi_C = r\phi_C$ satisfies the wave equation
\begin{equation}
 -4 \pa_u \left( \pa_v + \frac{u}{vs} \pa_v \right)\psi_C
 + \sDelta \psi_C + \pa_\mu \left(\gamma_a^{\mu\nu}\pa_\nu \psi_C\right)
 + \pa_\mu\left( \gamma^{\mu\nu}\pa_\nu \psi_C\right)
 + \pa_\mu P^\mu = F  + F_{\Sigma},
 \label{waveint0}
\end{equation}
where 
\begin{equation}
  \label{gammaadef}
  \gamma_a^{\mu\nu} =\frac{u}{vs} a^{\mu\nu}
\end{equation}
verifies the null condition \eqref{intronullcondn}
and is expected to be better-behaved than the other linear terms above.
%The linear
%term involving the
%$a^{\mu\nu}$ verifies the null condition \eqref{intronullcondn}
%and so the third term here is expected
%to be better-behaved than the other linear terms here.
The quantity
$F_{\Sigma}$ collects the error terms involving the model shock
profile $\Sigma = \tfrac{u^2}{2s}$ alone.

For some of our applications, we will use
that \eqref{waveext0} can be written in the form
\begin{equation}
 -4\pa_u\evm \psi_A = -\sDelta \psi_A  - \pa_\mu((1+v)^{-1}Q^\mu(\pa\psi_A, \pa \psi_A))
 + F'_A,
 \label{waveext2}
\end{equation}
where $Q^{\mu}(\pa\psi_A, \pa \psi_A) =
Q^{\mu\nu\delta}\pa_\nu\psi_A \pa_\delta \psi_A$ for
smooth functions
$Q^{\mu\nu\delta}$ satisfying the symbol condition
\eqref{minksymb}. Here, $F'_A = F_A$ up to lower-order terms
with rapidly decaying coefficients.

Similarly, we can write \eqref{waveint0} in the form
\begin{equation}
 -4 \pa_u \evmB \psi_C = - \sDelta \psi_C -
 \pa_\mu((1+v)^{-1}Q^\mu(\pa\psi_A, \pa \psi_A))
 -
 \pa_\mu\left( \frac{u}{vs} a^{\mu\nu}\pa_\nu \psi_C\right)
 + F_A'.
 \label{waveint2}
\end{equation}
Recall that $\evmB =\pa_v + \tfrac{u}{vs}\pa_u$.

\subsection{The boundary conditions along the timelike
sides of the shock}
Along the left shock, By Lemma \ref{tlbc} and
\eqref{YL0}, the Rankine-Hugoniot conditions imply the following
equation which plays the role of a boundary condition for $\psi_L$ (recall
that the left shock is spacelike with respect to the metric in the leftmost
region)
\begin{equation}
 Y_L^-(\pa \psi_L) \psi_L = Y_L^+(\pa \psi_C)\psi_C + G
 \label{introbcL}
\end{equation}
where
\begin{align}
Y_L^-(\pa \psi_L)\psi_L
&=  \pa_v \psi_L + \frac{1}{v} Q_L(\pa \psi_L, \pa \psi_L),
\label{YLdef1}
\\
Y_L^+(\pa \psi_R)\psi_C
&=\left(\pa_v + \frac{1}{vs} \pa_u \right)\psi_C
+ \frac{1}{v} Q_C(\pa \psi_C, \pa \psi_C),
 \label{YLdef2}
\end{align}
where the $Q$ are quadratic nonlinearities and the error term
$G$, which consists of quadratic terms verifying a null condition,
higher-order
nonlinearities and rapidly-decaying inhomogeneous terms,
is given explicitly in \eqref{Gstructure} and \eqref{Gexpression}.
Similarly, along the right shock, we have the following
boundary condition
\begin{equation}
 Y_R^-(\pa \psi_C) \psi_C = Y_R^+(\pa \psi_R, B^R) + G,
 \label{introbcR}
\end{equation}
with
\begin{align}
 Y_R^-(\pa \psi_C) \psi_C
 &=\left(\pa_v + \frac{1}{vs} \pa_u \right)\psi_C
% - \nas^i \psi_C \nas_i B^R
+ \frac{1}{v} Q_C(\pa \psi_C, \pa \psi_C),
\label{YRdef1}
\\
Y_R^+(\pa \psi_R, B^R) \psi_R
&=
\pa_v \psi_R + \frac{1}{v} Q_R(\pa \psi_R, \pa \psi_R)
% - \nas^i\psi_R \nas_i B^R.
 \label{YRdef2}
\end{align}
We note that $Y_L^+\psi_C = Y_R^- \psi_C$.

\subsubsection{The higher-order wave equations}
In the regions outside the shocks, we will work in terms of the quantities
\begin{equation}
 \psi^I_A = r Z^I\phi_A,
 \label{}
\end{equation}
where $Z^I$ denotes a product of the fields in \eqref{Zdef}
and where $A = R$ in the rightmost region and $A = L$ in the leftmost region.
In the region between the shocks we will work in terms of the quantities
\begin{equation}
 \psi^{I}_C =  \ZB^I(r \phi_C),
 \label{}
\end{equation}
where $\ZB^I$ denotes a product of the fields in \eqref{ZBdef}.

In the exterior regions $D^A$ for $A = L, R$, $\psi^I_A = r Z^I \phi$ satisfies
\begin{align}
    -4\pa_u\pa_v \psi^I_A + \sDelta \psi^I_A
    + \pa_\mu(\gamma^{\mu\nu}\pa_\nu \psi^I_A)
 + \pa_\mu P^\mu_{I,A} = F_{I, A},
 \label{higherordereqnext}
\end{align}
where the quantities in the above expression are given in
Lemma \ref{higherorderexterioreqns}.
In the region between the shocks $D^C$, $\psi^I_C = Z^I_{\mB} (r\phi)$ satisfies
\begin{multline}
    -4\pa_u \left(\pa_v + \frac{u}{vs} \pa_u \right) \psi^I_C + \sDelta \psi^I_C
    +
 \pa_\mu\left( \frac{u}{vs} a^{\mu\nu}\pa_\nu \psi^I_C\right) +\pa_\mu(\gamma^{\mu\nu}\pa_\nu \psi^{I}_C)
 + \pa_\mu P^\mu_{I, C} +
 \pa_\mu P^\mu_{I, null}+  {F_{I, \mB}^1} \\= F_{I,C} + F_{\Sigma, I} + {F_{\mB, I}^2},
 \label{higherordereqcentral}
\end{multline}
where the above quantities are given in Lemma \ref{higherordereqncentral}.

Roughly speaking,
in each region
$\gamma$ behaves like $\frac{1}{1+v} \pa \psi^A$.
The nonlinear commutation errors $P_{I, A}$ behaves like a
sum of terms$\frac{1}{1+v} \pa Z^{I_1} \psi^A \cdot \pa Z^{I_2} \psi^A$
for $\max(|I_1|,|I_2|) \leq |I|-1$.
The quantities $a^{\mu\nu}$ verify the null condition
$a^{uu} = 0 $ and are expected to be better behaved
than the other linear terms in \eqref{higherordereqncentral}.
When we commute the equation with our fields, this term generates
additional errors, which are collected in the quantity $P_{I, null}$.
This current satisfies the bounds \eqref{borderlinebd0}-\eqref{borderlinebd2}.

 The quantities $F_{I,A}$ collect various nonlinear
 error terms which behave roughly like
 $\frac{1}{(1+v)^2} \pa Z^{I_1}\psi^A\cdot \pa Z^{I_2} \psi^A$ for
 $\max(|I_1|, |I_2|) \leq |I|$. {The quantities $F_{\mB, I}^1, F_{\mB, I}^2$} collect the error
 terms generated by commuting our fields with the linear part of the equation
 in the central region (note that the fields $\sBo, \sBt$ do not commute with
 $\sDelta$, and that $\sBt$ only approximately commutes with the radial
 part $\pa_u(\pa_v + \frac{u}{vs} \pa_v)$). {The quantity $F_{\mB, I}^2$ can be treated as an error term,
 but $F_{\mB, I}^2$ is slightly too large for this. However, it turns out
 that (see Lemma \ref{FImBibplemma} ), this term can indeed be handled after integration
by parts.}
The quantity $F_{\Sigma, I}$ collects the ``inhomogeneous'' error terms, which involve
 only the model shock profile $\Sigma$ and its derivatives.

\subsection{The definitions of the energies}

We fix parameters $N_L, N_C, N_R, \epsilon_L, \epsilon_{C, D},\epsilon_{C, T}, \epsilon_R, \mu,
{\nu}, \alpha$ satisfying
\begin{align}
	N_L \leq N_C - 6 &\leq N_R - 8,
	\quad N_R \geq 30,
	\quad
	\epsilon_R \leq \epsilon_{C,T}^2 \leq \epsilon_{C, D}^4 \leq \epsilon_L^6,
	% \quad
	% \epsilon_0 \leq \epsilon_{C, T}^2
	\label{epsparameters}\\
                     &\nu \geq N_C \quad
                     \mu \geq \max(2\nu, 2N_C+3/2) \quad 1 < \alpha < 3/2.
 \label{parameters}
\end{align}
We remark that if we only needed to close estimates in the rightmost region, for our
arguments it would suffice to take $\mu \geq 6$. We only need to take it larger because we
need to control some error terms generated along the timelike side of the right 
shock. 
We now define the energies we will use to control the solution.

The energies in the region to the right of the right shock are
\begin{equation}
 \mathcal{E}_{N_R}^R(t)
 = \sum_{|I| \leq N_R} E_I^R(t) + S_I^R(t)
 \label{ERdef0}
\end{equation}
where the energies $E_I^R$ and time-integrated quantities $S_I^R$ are given by
\begin{align}
 E_I^R(t_1) &= \int_{D_{t_1}^R}
 \left(1 + |u|\right)^\mu|\pa \psi_R^I|^2
 +(1 +|u|^\mu+ r(\log r)^{\nu}) \left( |\pa_v \psi_R^I|^2 + |\nas \psi_R^I|^2\right)
 \\
 &\qquad
 +\int_{t_0}^{t_1} \int_{\Gamma^R_t}
  (1+|u|^\mu + r(\log r)^{\nu})|\pa_v \psi_R^I|^2
 + (1+|u|)^\mu |\nas \psi_R^I|^2
+ \frac{(1+|u|)^\mu}{(1+v)(1+s)^{1/2}} |\pa_u \psi_R^I|^2
\, dS dt\\
	 \label{ERdef}
\\
 S_I^R(t_1) &= \int_{t_0}^{t_1} \int_{D^R_t}
 \left(1+|u|^{\mu-1} + (\log r)^{\nu-1}\right) \left(|\pa_v \psi_R^I|^2 +  |\nas \psi_R^I|^2
 \right)\, dt.\label{SRdef}
\end{align}
We remind the reader that all integrals over time slices are taken with respect to
the measure $\frac{1}{r^2} dxdt = dudv d\sigma_{\S^2}$ and the integrals over the shocks
are taken with respect to the corresponding surface measure.

In the central region (see remark \ref{differentmultiplierscentral}), we will work
in terms of the quantities
\begin{equation}
 \mathcal{E}_{N_C}^C(t_1) =
 \mathcal{E}_{N_C, T}^C(t_1) +
 \mathcal{E}_{N_C-1, D}^C(t_1) +
 \mathcal{E}_{N_C-2, D}^C(t_1),
 \label{ECdef}
\end{equation}
where
\begin{align}
  \mathcal{E}_{N_C, T}^C(t_1) &= \sum_{|I| \leq N_C} E_{I, T}^C(t_1) + S_I^C(t_1)
	+ B_{I}^C(t_1)\\
	\mathcal{E}_{N_C-1, D}^C(t_1) &= \sum_{|I| = N_C-1} E_{I,D}^C(t_1)\\
  \mathcal{E}_{N_C-2, D}^C(t_1) &= \sum_{|I| \leq N_C-2} E_{I,D}^C(t_1),
 \label{}
\end{align}
where the top-order energies $E_{I, T}^C$ and the time-integrated
quantities $S_I^C$ are given by
\begin{align}
 E_{I, T}^C(t_1)
 &= \int_{D^C_{t_1}} \frac{1}{(1+s)^{1/2}} (\pa_u \psi_C^{I})^2 + (1+v) ((\evmB \psi^{I}_C)^2 + |\nas \psi^{I}_C|^2)
 \\
 &+ \int_{t_0}^{t_1} \int_{\Gamma^L_{t}}
 \frac{1}{(1+v)(1+s)} (\pa_u\psi^{I}_C)^2
 + (1+v) \left( \evmB \psi^{I}_C\right)^2
 + \frac{1}{(1+s)^{1/2}} |\nas \psi^{I}_C|^2\, dS dt
 \\
 &+ \int_{t_0}^{t_1} \int_{\Gamma^R_{t}}
\frac{1}{(1+v)(1+s)} (\pa_u\psi^{I}_C)^2 +
 \frac{1}{(1+s)^{1/2}} |\nas \psi^{I}_C|^2\, dS dt,
 \label{EtopC}
\end{align}
\begin{equation}
 S_I^C(t_1) =
 \int_{t_0}^{t_1} \int_{D^C_t} \frac{1}{(1+v)(1+s)^{3/2}} (\pa_u\psi^{I}_C)^2 + |\nas \psi^{I}_C|^2\, dt,
 \label{EtopC0}
\end{equation}
the quantities $B_{I}^C(t_1)$ are given by
\begin{equation}
 B_I^C(t_1) = \int_{t_0}^{t_1} \int_{\Gamma^R_t} (1+v) |\evmB \psi^I_C|^2\,dS dt,
 \label{}
\end{equation}
and the lower-order energies (the ``decay'' energies) $E_{I, D}^C$ are given by
\begin{align}
 E_{I, D}^C(t_1)
 &=
 \int_{D^C_{t_1}}
(1+s)(\pa_u \psi^{I})^2
+ (1+v)\left( (\evmB \psi^I_C)^2 + |\nas \psi^I_C|^2\right)\\
 &+\int_{t_0}^{t_1} \int_{\Gamma^L_{t}}
 \frac{(1+s)^{1/2}}{1+v} (\pa_u\psi^{I}_C)^2
 + (1+v)(\evmB \psi^I_C)^2
 + (1+s) |\nas \psi^I_C|^2\, dS dt\\
 &+
 \int_{t_0}^{t_1} \int_{\Gamma^R_{t}}
 \frac{(1+s)^{1/2}}{1+v} (\pa_u\psi^{I}_C)^2\, dSdt.
 \label{loworderenergydef}
\end{align}

In the left-most region the energies are
\begin{equation}
 \mathcal{E}_{N_L}^L(t_1) =
 \sum_{|I| \leq N_L} E_{I}^L(t_1) + M_I(t_1) + B_I^L(t_1)
 \label{ELdef}
\end{equation}
with
\begin{align}
  E_{I}^L(t_1)
  &= \int_{D^L_{t_1}}
  uf(u) (\pa_u \psi_L^I)^2+
  v f(v) ((\pa_v \psi_L^I)^2 + |\nas \psi_L^I|^2)
  \\
	& +
  \int_{t_0}^{t_1} \int_{\Gamma^L_{t}}
	\frac{1}{1+v} f(s^{1/2}) |\pa_u\psi_L^I|^2
  + (1+s)^{1/2} f(s^{1/2}) |\nas \psi_L^I|^2\, dS dt,
  % \frac{f(s^{1/2})}{v}  (\pa_u\psi_L^I)^2 + f(s^{1/2}) |\nas \psi^I_L|^2,
  \label{EDLdef}
\end{align}
where $f(z) = \log_+ z (\log \log_+z)^{\alpha}$,
and where the quantity $M_I$ is defined by
\begin{equation}
 M_I(t) =
 \int_{t_0}^{t_1} \int_{D^L_t}
 g'(r) \left( (\pa_u\psi_L^I)^2 + (\pa_v\psi_L^I)^2\right) + \left( \frac{g(r) + 1}{r}\right) |\nas \psi_L^I|^2
 + \int_{t_0}^{t_1} (1+t)\lim_{r \to 0} \left| \frac{\psi_L^I}{r}\right|^2\, dt
 \label{MIdef}
\end{equation}
where $g(r) = (\log (1+r))^{1/2} f(\log (1 + r))$.
Finally the quantity $B^L_I(t_1)$ is defined by
\begin{equation}
 B^L_I(t_1) = \int_{t_0}^{t_1} \int_{\Gamma^L_t} v f(v) |\pa_v \psi_L^I|^2\,dS dt.
 \label{BILdef}
\end{equation}

\subsection{The quantities that control the geometry of the shocks}
\label{geomnormsection}

At each time $t$, the shocks $\Gamma^L_t, \Gamma^R_t$ are of the form
\begin{equation}
 \Gamma^A_t = \{ x \in \mathbb{R}^3 : t - |x| = B^A(t, x)\}
\end{equation}
where each $B^A$ is defined in a neighborhood of $\Gamma^A_t$ and satisfies
$\pa_u B^A = 0$. For $Z \in \mathcal{Z}_m$ and  $ Z_{\mB} \in \mathcal{Z}_{\mB}$, we define
the tangential vector fields
\begin{equation}
 Z_T^A = Z - Z(u-B^A) \pa_u,
 \qquad
 Z_{\mB, T}^A = Z_{\mB} - Z_{\mB}(u-B^A) \pa_u
 \label{}
\end{equation}
which are tangent to $\Gamma^A$ at $\Gamma^A$. We will often just write
$Z_T$ as the shock we are considering will be clear from context. To control
the functions $B^A$ we will work in terms of the following pointwise quantities,
\begin{equation}
 |B^A|_{I, \mathcal{Z}_m} = \sum_{|J| \leq |I|}
 \frac{1}{(1+s)^{1/2}} |Z_T^J B^A|,
 \qquad
 |B^A|_{I, \mathcal{Z}_{\mB}} =\sum_{|J| \leq |I|}
 \frac{1}{(1+s)^{1/2}} |Z_{\mB, T}^J B^A|,
 \label{rescaledBnormdef}
\end{equation}
where the factor of $(1+s)^{-1/2}$ has been chosen to counter
the expected growth of the functions $B^A$.
The quantities involving $B^A$ that we will control are the following,
\begin{align}
 G_{N_L}^L(t_1) &= \sum_{|I| \leq N_L-1} \sup_{t_0 \leq t \leq t_1}
 \int_{\Gamma^L_t} |Z_T B^L|_{I, m}^2\, dS +
 \sum_{|I| \leq N_L/2+1} \sup_{t_0 \leq t \leq t_1}
 \sup_{\Gamma^L_t} |B^L|_{I, m}^2\label{GLnorm}\\
 G_{N_C}^R(t_1) &= \sum_{|I| \leq N_C-1} \sup_{t_0 \leq t \leq t_1}
 \int_{\Gamma^L_t} \frac{1}{1+s}|Z_{\mB,T} B^R|_{I, \mB}^2\, dS +
 \sum_{|I| \leq N_C/2+1} \sup_{t_0 \leq t \leq t_1}
 \sup_{\Gamma^L_t} | B^R|_{I, \mB}^2.
 \label{GRnorm}
\end{align}
We remind the reader that here, $dS$ denotes the surface measure on $\Gamma^A_t$
induced by the measure $r^{-2}dx$.

The reason we have worse control of $B^R$ at top-order than
$B^L$ is ultimately because we have worse control of the potential
$\psi^C$ at top order; see in particular
the proof of Proposition \ref{rightshockbootstrap}.

In the above, we are abusing notation slightly and denoting
\begin{equation}
 |Zq|_{I, \mathcal{Z}_m} = \sum_{Z \in \mathcal{Z}_m} |Zq|_{I, \mathcal{Z}_m},
 \qquad
 |Z_{\mB} q|_{I, \mathcal{Z}_m} = \sum_{Z \in \mathcal{Z}_{\mB}} |Z_{\mB} q|_{I, \mathcal{Z}_{\mB}}.
 \label{}
\end{equation}

The above quantities will be used to control top-order derivatives of the functions
$B^L, B^R$. Bounds for these quantities are needed in order
to handle certain error terms we encounter on the timelike
sides of the shocks, see Section \ref{bcbootstrapsection}.
These quantities have been defined so that we expect $G^L, G^R \sim 1$.

We will also need some quantities that control how far the shocks are from the
model shocks. It will be convenient in the upcoming proof to keep track
of angular derivatives separately. To this end, we define
\begin{align}
 K^R(t_1) &=
 \sup_{t_0 \leq t \leq t_1}
 \sup_{x \in  \Gamma^R_t} \left( \, \left| \frac{B^R(t,x)}{s^{1/2}} +  {p}\right|
     + (1+s)^{1/2}\left|\pa_s B^R(t,x) - \frac{1}{2s} B^R(t,x)\right|\,\right),\label{KRdef}
 \\
 \slashed{K}^R(t_1) &= \sup_{t_0 \leq t \leq t_1} \sup_{ x \in \Gamma^R_t}
 \left| \frac{\Omega B^R(t,x)}{s^{1/2}}\right| ,\label{sKRdef}\\
 K^L(t_1) &= \sup_{t_0 \leq t \leq t_1}
 \sup_{x \in  \Gamma^L_t} \left(\,\left| \frac{B^L(t,x)}{s^{1/2}} - {q}\right|
 + (1+s)^{1/2}\left|\pa_s B^L(t,x) - \frac{1}{2s} B^L(t,x)\right|
 \,\right),
 \label{KLdef}\\
 \slashed{K}^L(t_1) &= \sup_{t_0 \leq t \leq t_1} \sup_{x \in \Gamma^L_t}
 \left| \frac{\Omega B^L(t,x)}{s^{1/2}}\right|,
 \label{sKLdef}
\end{align}
which we will assume are small at $t = t_0$ and which we will prove remain small at later
times. {Here, $p$ and $q$ are positive constants bounded away from $0$ by a constant $c$ which is assume to be much bigger 
than any of the small constants $\epsilon$ appearing below.}

\subsection{Assumptions about the initial data}
\label{initialdatasec}

Our result concerns data for the shock front problem which is prescribed
at a large initial time $t_0$,
\begin{equation}
	\frac{1}{t_0} \leq \epsilon_0,
  \label{largestart}
\end{equation}
where the size of the small parameter $\epsilon_0$ will be set in the course of
the upcoming proof.
%
%
% and where the initial shocks $\Gamma^L_{t_0}$,
% $\Gamma^R_{t_0}$ are close to spheres of large radius. Specifically,
% we assume that
% \begin{equation}
%  \frac{1}{t_0 + r} \bigg|_{\Gamma^R_{t_0}} +  \frac{1}{t_0 + r}\bigg|_{\Gamma^L_{t_0}}
%  \leq \epsilon_0.
%   \label{largestart}
% \end{equation}
% Of course, the second bound implies the first here.
We also assume that the initial shock surfaces $\Gamma^L_{t_0}, \Gamma^R_{t_0}$ are given as
\begin{equation}
 \Gamma^L_{t_0} = \{ x \in \mathbb{R}^3 : t_0 - |x| = B^L_0 (x)\},
 \qquad
 \Gamma^R_{t_0} = \{x \in \mathbb{R}^3 : t_0 - |x| = B^R_0 (x)\},
 \label{}
\end{equation}
for functions $B^L_0, B^R_0$ defined in a neighborhood of $\Gamma^L_{t_0},
\Gamma^R_{t_0}$, respectively, and which are such that these surfaces
are sufficiently close to the model shocks{ $u = -\rs s^{1/2}$, $u = \ls s^{1/2}$
for constants $\rs, \ls > 0$}
at $t = t_0$. 
Specifically, we will assume that the following quantities are small
initially,
\begin{align}
 \label{KR0}
 \mathring{K}^R &= \sup_{x \in  \Gamma^R_{t_0}} 
 \left( \, \left| \frac{B^R_0(x)}{(\log(t_0+|x|)^{1/2}} +  {p}\right|
     + (1+\log(t_0+|x|))^{1/2}\left|\pa_s B^R_0(x) - \frac{1}{2\log(t_0+|x|)} B^R_0(x)\right|\,\right)
\\
\label{sKR0}
 \mathring{\slashed{K}}^R &=\sup_{\Gamma^R_{t_0}} \left|\frac{\Omega B_0^R}{\log(t_0 + |x|)^{1/2}}\right|,\\
    \label{KL0}
    \mathring{K}^L &= 
 \sup_{x \in  \Gamma^L_{t_0}} \left( \, \left| \frac{B^L_0(x)}{(\log(t_0+|x|)^{1/2}} -  {q}\right|
     + (1+\log(t_0+|x|))^{1/2}\left|\pa_s B^L_0(x) - \frac{1}{2\log(t_0+|x|)} B^L_0(x)\right|\,\right)
 \\
 \label{sKL0}
 \mathring{\slashed{K}}^L &= \sup_{\Gamma^L_{t_0}} \left|\frac{\Omega B_0^L}{\log(t_0 + |x|)^{1/2}}\right|
\end{align}
We will also assume that we have a bound for the following quantities
which control the regularity of the initial shocks,
\begin{align}
    \mathring{G}^R_{N_C} &= \sum_{|I| \leq N_C-1}
 \int_{\Gamma^R_{t_0}} |Z_{\mB,T} B^R_0|_{I, \mB}^2\, dS +
 \sum_{|I| \leq N_C/2+1}
 \sup_{\Gamma^R_{t_0}} |B^R_0|_{I, \mB}^2
 \\
 \mathring{G}^L_{N_L} &= \sum_{|I| \leq N_L-1}
 \int_{\Gamma^L_{t_0}} |Z_T B^L_0|_{I, m}^2\, dS +
 \sum_{|I| \leq N_L/2+1}
 \sup_{\Gamma^L_{t_0}} |B^L_0|_{I, m}^2
\end{align}

%\begin{align}
% |\nabla^j (B^R_0(x) + \log(t_0 + |x|)^{1/2})| &\leq \epsilon_1(1+\log(t_0 + |x|))^{1/2} \text{ at } \Gamma^R_{t_0},
% \qquad j= 0,1\label{shockcontrol0}\\
%|\nabla^j(B^L_0(x) - \log(t_0 + |x|)^{1/2})| &\leq \epsilon_1(1+\log(t_0 + |x|))^{1/2} \text{ at } \Gamma^L_{t_0},
%\qquad j= 0,1\\
%  |\Omega B^R(x)| &\leq \epsilon_2 (1+\log(t_0 + |x|))^{1/2}  \text{ at } \Gamma^R_{t_0},
%	\\
%  |\Omega B^L(x)| &\leq \epsilon_2(1+\log(t_0 + |x|))^{1/2} \text{ at } \Gamma^L_{t_0}
%	\label{shockcontrol4}
%\end{align}
%where the $\epsilon_1, \epsilon_2$ are small parameters whose size will
%be set in the course of the proof. 
Finally, we will assume that we have control of the following norms
of the potentials initially,
\begin{align}
 \mathring{\mathcal{E}}_{N^R}^R &= \mathcal{E}_{N^R}^R(t_0)
 + \sum_{|I| \leq N^R} \int_{\Gamma^R_{t_0}} |\psi^I_{R}|^2\, dS,
 \label{initialenergyR}\\
 \mathring{\mathcal{E}}_{N^C}^C &= \mathcal{E}_{N^C}^C(t_0)
 + \sum_{|I| \leq N^C} \int_{\Gamma^R_{t_0}} |\psi^I_C|^2\, dS
 + \int_{\Gamma^L_{t_0}} |\psi^I_C|^2\, dS,\\
  \mathring{\mathcal{E}}_{N^L}^L &= \mathcal{E}_{N^L}^L(t_0)
  + \sum_{|I| \leq N^L} \int_{\Gamma^L_{t_0}} |\psi^I_{L}|^2\, dS.
 \label{initialenergyL}
\end{align}

\subsection{The statement of the main theorem}
Our main theorem, which establishes nonlinear stability of
the model shock solutions in weighted $L^2$-based norms, is the following. {We consider  the irrotational shock problem
 \eqref{contpf}-\eqref{rhpf}, derived from the compressible Euler equations \eqref{intromass}-\eqref{introeul} 
 under the assumption that the equation of state $p=P(\rho)$ satisfies $P'(1)>0, P''(1)\ne 0$ with $v=\nabla\Phi$. After appropriate 
 rescaling these equations take the form \eqref{Hexpintro}.} 
\begin{theorem}
	\label{mainthm}
 Fix parameters $N_R, N_C, N_L, \mu,\alpha$ as in \eqref{parameters} {and constants $\ls, \rs>0$ for the position of the model shocks
 as in \eqref{KL0},\eqref{KR0}}.
 There are $\epsilon_0, \epsilon_1, \epsilon_2, \epsilon_R, \epsilon_C, \epsilon_L, M_0^R, M_0^L$ with the following property.
 If the initial data is posed at $t = t_0$ where
 $t_0$ satisfies \eqref{largestart},
% the initial data
% $(B^L(t_0), B^R(t_0))$ for the positions of the
% shocks satisfies
% the conditions \eqref{largestart}
 and the initial data for the potential perturbations $(\phi^R_0, \phi^C_0,\phi^L_0)$
 and the shocks $(B^L(t_0), B^R(t_0))$ satisfy the bounds
 \begin{align}
  &\mathring{\mathcal{E}}^R_{N^R} \leq \epsilon_R^{3}, \qquad
	\mathring{\mathcal{E}}^C_{N^C} \leq \epsilon_{C}^{3}, \qquad
	\mathring{\mathcal{E}}^L_{N^L} \leq \epsilon_{L}^{3},
     \label{initialdatabds}
	\\ 
    &\mathring{K}^L + \mathring{K}^R \leq \epsilon_1^2,
    \qquad
  \mathring{\slashed{K}}^L + \mathring{\slashed{K}}^R \leq \epsilon_2^2,
\qquad
  \mathring{G}_{N_C}^R \leq M_0^R,
	\qquad
	\mathring{G}_{N_L}^L \leq M_0^L,
     \label{initialdatabds2}
 \end{align}
 with notation as in Section \ref{initialdatasec},
 then there is a {unique} global-in-time solution
 $(\phi_R, \phi_C, \phi_L, \Gamma^R, \Gamma^L)$ to the irrotational shock problem
{ \eqref{contpf}-\eqref{rhpf}} which corresponds to the decomposition (with  ${\Bbb I}_{D}$ denoting the indicator function of set $D$):
$$
\Phi=\sigma+\phi_L {\Bbb I}_{D^L}+\phi_C {\Bbb I}_{D^C}+\phi_R {\Bbb I}_{D^R}
$$
{with the profile $\sigma$ defined in \eqref{decomp} and 
smooth functions $\phi_L,\phi_C,\phi_R$ defined in the respective regions $D^L,D^C,D^R$ separated by the shocks $\Gamma^L,\Gamma^R$.} These quantities enjoy the following
 estimates. 
\begin{itemize}
    \item 
 There is a constant $C = C(N_L, N_C, N_R, \mu, \epsilon_0, \epsilon_1,
 \epsilon_2, M_0^L, M_0^R)$
 so that with $\mathcal{E}_R(t), \mathcal{E}_C(t), \mathcal{E}_L(t)$ defined as in \eqref{ERdef0}-\eqref{EDLdef},
 for $t \geq t_0$,
 % and with $K^L, K^R$ defined as in \eqref{KLdef}-\eqref{KRdef},
 \begin{equation}
  \mathcal{E}_L(t) \leq C \epsilon_L^2, \qquad
	\mathcal{E}_C(t) \leq C \epsilon_C^2(1+ \log \log t), \qquad
	\mathcal{E}_R(t) \leq C \epsilon_R^2.
	% \qquad K^L(t) \leq C \epsilon_L, \qquad
	% K^R(t) \leq C \epsilon_C.
  \label{}
 \end{equation}

\item 
 The potentials satisfy
 \begin{alignat}{2}
  | \pa Z^I \phi_R| &\leq C \frac{\epsilon_R}{(1 + r + t)(1 + \log(1+r+t))^{(1+\mu)/4}},
	\qquad \text{ in } D^R, &&\qquad |I| \leq N_R - 3\\
	|\pa \ZB^I \phi_C| &\leq C \frac{\epsilon_C}{(1 + r + t)(1 + \log (1+r+t))^{{3/4}}},
	\qquad \text{ in } D^C, &&\qquad |I| \leq N_C - 5\\
	| \pa Z^I \phi_L| &\leq C
	\frac{ \epsilon_C}{(1+r+t)(1+\log(1+r+t))^{1/2}(1+\log(1+\log(1+r+t)))^{\alpha/2}}
	\qquad \text{ in } D^L, &&\qquad |I| \leq N_L - 3.
\end{alignat}
\item 
There is a function $B^L$ defined in a neighborhood of $\Gamma^L$
and a function $B^R$ defined in a neighborhood of $\Gamma^R$ so that
$\pa_u B^A = 0$, $B^A(t_0, x) = B_0^A(x)$, and so that
the shocks $\Gamma^A = \cup_{t \geq t_0} \Gamma^A_t$ have the form
\begin{equation}
 \Gamma^R_t = \{ x \in \mathbb{R}^3 : t - |x| = -B^R(t, x)\}.
 \qquad
 \Gamma^L_t = \{ x \in \mathbb{R}^3 : t - |x| = B^L(t, x)\},
 \label{graphoversphere}
\end{equation}
The functions $B^A$ enjoy the following bounds,
\begin{align}
	\left| \frac{B^R(t,x)}{s^{1/2}} +  1\right|
	+ (1+s)^{1/2}\left|\pa_s B^R(t,x) - \frac{1}{2s} B^R(t,x)\right|
	+ \left| \frac{\Omega B^R(t,x)}{s^{1/2}}\right|
	&\leq C \epsilon_C ,&&\quad \text{ along } \cup_{t' \geq t_0} \Gamma^R_{t'}
 \label{lowestbootstrapofBRthmstatement}
\end{align}
and
\begin{align}
	\left| \frac{B^L(t,x)}{s^{1/2}} -  1\right|
	+ (1+s)^{1/2}\left|\pa_s B^L(t,x) - \frac{1}{2s} B^L(t,x)\right|
	+\left| \frac{\Omega B^L(t,x)}{s^{1/2}}\right|
	&\leq C\epsilon_L ,&&\quad \text{ along } \cup_{t' \geq t_0} \Gamma^L_{t'},
 \label{lowestbootstrapofBLthmstatement}
\end{align}
as well as the higher-order bounds
\begin{equation}
  \label{}
  G^R_{N_C} \leq \mathring{G}^R_{N_C} + C \epsilon_C,\qquad
  G^L_{N_L} \leq \mathring{G}^L_{N_L} + C \epsilon_L,
  \label{lowestbootstrapofBLthmstatementhigherorder}
\end{equation}

\end{itemize}

\end{theorem}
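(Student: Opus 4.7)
The plan is to prove the theorem by a bootstrap argument. Starting from the local-in-time solution furnished by Majda--Thomann \cite{MajdaThomann87}, let $T^\star$ be the supremum of times $T$ for which the solution exists on $[t_0, T]$ and satisfies the bootstrap assumptions
\begin{equation}
\mathcal{E}^R_{N_R}(t) \leq 2 C\epsilon_R^2,\qquad \mathcal{E}^C_{N_C}(t) \leq 2C \epsilon_C^2 (1+\log\log t),\qquad \mathcal{E}^L_{N_L}(t) \leq 2C \epsilon_L^2,
\end{equation}
along with smallness of $K^{A}(t) + \slashed{K}^A(t)$ at a level $\ll 1$ (say $\leq 2\epsilon_1^2 + 2\epsilon_2^2$) and a uniform bound $G^A_{N_A}(t) \leq 2 M_0^A + 1$ on the higher-order shock quantities. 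To close global existence it suffices to improve each of these constants by a factor of (say) $1/2$ and then invoke the local continuation theorem, since the above bounds imply uniform regularity in Majda's sense on compact time intervals.

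The energy estimates are obtained by commuting \eqref{higherordereqnext} in $D^R, D^L$ with $Z^I$ and \eqref{higherordereqcentral} in $D^C$ with $\ZB^I$ for $|I| \leq N_A$, and then applying Propositions \ref{rightenest}, \ref{centralenest}, \ref{leftengen}, together with the Morawetz estimate of Proposition \ref{morawetzlem} in $D^L$. The boundary terms $B_X^-$ on the timelike sides arising from the $(\evg\psi)^2$ component of \eqref{tlemg} are controlled via the Rankine-Hugoniot relations \eqref{introbcL}--\eqref{introbcR}, which (after decomposing the commutators into tangential and transverse parts using $Z = Z_T + Z(\beta-u)\pa_u$) reduce them to quantities controlled by the corresponding energy in the more exterior region together with quadratic nonlinearities involving $n\psi_A$; this will require the higher-order geometric norms $G^A_{N_A}$ from \eqref{GLnorm}--\eqref{GRnorm} in order to absorb the derivatives of $\beta^A$ produced by $Z(\beta-u)$. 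The nonlinear error terms $\mK_{X, \gamma, P}$, $F_{I,A}$, $F_{\Sigma, I}$ produced are treated perturbatively via the bounds in Propositions \ref{effectivemmmink}--\ref{effectivemmmB}, after verifying the hypotheses \eqref{pert1}, \eqref{pert1left} using the bootstrap.

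The main obstacle will be closing the leftmost estimate $\mathcal{E}^L_{N_L}$. The bootstrap on $E_{X_L}$ yields, via a Klainerman-Sobolev inequality adapted to $\mathcal{Z}$, the pointwise bound $|\pa^2 \psi_L^I| \lesssim \epsilon_L / ((1+t)(\log t)(\log\log t)^{\alpha/2})$ for $|I| \leq N_L-3$, which is borderline non-integrable. Integrability of $\|\pa^2 \psi_L\|_{L^\infty}$ in $t$ must therefore be extracted from the Morawetz term $M_I(t)$ defined in \eqref{MIdef}: the positivity of $-K_{X_M, m}$ in Proposition \ref{morawetzlem} gives a time-integrated bound for $g'(r)(\pa\psi_L^I)^2$ whose Sobolev embedding furnishes the required $\int_{t_0}^\infty \frac{1}{1+t}\|\pa^2 \psi\|_{L^\infty}\,dt \lesssim \epsilon_L^{1/2}$, as sketched after \eqref{goalofestim}. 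This is exactly what is needed to absorb the quasilinear error $|X_L| |\gamma| |\pa\psi|^2$ (whose $|\pa_v X_L^v|$ component lies outside the scope of the naive bound \eqref{naiveKbd}) using the modified scalar current from Section \ref{modmultsec}. A secondary obstacle is the linear-at-top-order coupling between $\psi$ and $\beta^A$: to avoid derivative loss, one commutes \eqref{introevolution} tangentially and exploits the alternative equation $\nas\beta = -\frac{s}{u}[\nas\psi] + N(\pa\psi)$ mentioned in the introduction, which allows top-order $\omega$-derivatives of $\beta$ to be recovered from energy norms on $\Gamma^A$ without losing a derivative on $\psi$.

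With time-integrability of $\|\pa^2\psi\|_{L^\infty}$ in each region in hand, the nonlinear quantities entering $\mK$ and the error terms $P^\mu_{I,A}$ can be controlled by $o(1)\cdot\mathcal{E}^A_{N_A}$, improving the energy bootstrap. For the shock geometry, integrating the modulation equation \eqref{betaa} along $\Gamma^A$ and applying Cauchy-Schwarz in $s$ together with the $n\psi_A$ boundary flux from the energy identity (as in the estimate following \eqref{betaa} exploiting $\alpha > 3/2$, which is provided by the Morawetz contribution via the $X_L$ weight $f(u) = \log u (\log\log u)^\alpha$) yields $K^A(t) \leq \mathring{K}^A + O(\epsilon_L)$ and analogously for $\slashed{K}^A$; the higher-order bounds $G^A_{N_A}(t)$ are improved by differentiating \eqref{betaa}, estimating the resulting transport equation with source terms controlled by the energies and by $G^A_{N_A}$ itself via a Grönwall argument using the time-integrability of $\|\pa\psi\|_{L^\infty}$. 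This closes the bootstrap, proves $T^\star = \infty$, and the asymptotic limits of $B^A/s^{1/2}$ follow because the right-hand side of \eqref{betaa} is integrable against $ds/s^{1/2}$. The pointwise decay estimates claimed in the theorem are then read off from the Klainerman-Sobolev bounds applied to the final energy bounds.
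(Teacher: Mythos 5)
Your overall scheme is the paper's own: a continuity/bootstrap argument built on the local theory of Majda--Thomann together with Propositions \ref{rightenest}, \ref{centralenest}, \ref{leftengen} and the Morawetz estimate of Proposition \ref{morawetzlem}, with the timelike boundary fluxes reduced through the Rankine--Hugoniot conditions and the tangential decomposition (hence the need for $G^A_{N_A}$), the Morawetz current supplying the time-integrability \eqref{goalofestim} in $D^L$, the identity $\nas\beta^A=-\tfrac{s}{u}[\nas\psi]+N(\pa\psi)$ used exactly as in the paper to avoid derivative loss in the shock equation, and the shock bounds recovered by integrating the transport equation for $B^A$. So the route is essentially the same, but one structural point in your plan, as written, would not close.

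The issue is the central region. You bootstrap a single quantity $\mathcal{E}^C_{N_C}(t)\leq 2C\epsilon_C^2(1+\log\log t)$, whereas the argument requires the stratified assumptions \eqref{centralboottop}--\eqref{centralbootlower}: the top-order energy built from the multiplier $X_T$ and the $X_C$-decay energies at orders $\leq N_C-2$ must be assumed, and re-proved, \emph{uniformly} bounded, with the double-logarithmic growth confined to the $X_C$-energy at order exactly $N_C-1$. This stratification is not cosmetic. The improvement at order $N_C-1$ (cf.\ \eqref{XDKassumpClog} and Lemma \ref{timeintegrability-center-linear}) produces a term of the form $\epsilon_C(1+\log\log t)\sum_{|J|\leq N_C-2}\sup_t (E_{J,D}^C)^{1/2}$, which stays at the admissible size $\epsilon_C^2\log\log t$ only if the lower-order decay energies do not grow; under your ansatz they may grow like $\log\log t$, this term becomes of size $\epsilon_C^2(\log\log t)^{3/2}$, and there is then no way back to a uniform bound at orders $\leq N_C-2$. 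Likewise, the uniform top-order $X_T$ bound is what controls the flux $\int\!\!\int_{\Gamma^R}(1+v)|\evmB\psi^I_C|^2$ entering \eqref{rightshockerror} and the $\Upsilon$-analysis at the left shock, and the uniform low-order bounds feed the pointwise decay used to verify \eqref{pert1} and to improve the shock geometry. Two smaller points: the claim that the modulation estimate ``exploits $\alpha>3/2$'' is inconsistent with the standing constraint $1<\alpha<3/2$ of \eqref{parameters}, which is forced by \eqref{pert1left}; what is actually used is only $\alpha>1$, i.e.\ integrability of $1/(s\log s(\log\log s)^\alpha)$. Also, the improvement of $G^A_{N_A}$ comes from direct integration of the differentiated transport equation against the boundary fluxes with weight $h(s)=\log s(\log\log s)^\alpha$, not from a Gr\"{o}nwall argument, and you leave implicit the hierarchy among $\epsilon_R,\epsilon_C,\epsilon_L$ (cf.\ \eqref{epsparameters}) that is needed whenever boundary data from the adjacent, better-controlled region is absorbed.
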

We can also get more 
    precise information than \eqref{lowestbootstrapofBRthmstatement}-\eqref{lowestbootstrapofBLthmstatementhigherorder}
    about the position of the shocks as $t \to \infty$. The following result is proven
    in Section \ref{asympsec}.
    \begin{theorem}[The asymptotic behavior of the shocks]
      \label{asympthm}
        Let $\Gamma^L, \Gamma^R$ denote the shocks $\Gamma^A = \cup_{t \geq t_0} \Gamma_t^A$
        constructed in the previous theorem
        and let $N_L' = N_L, N_C' = N_C - 2$.
        For all $t \geq t_0$, there are functions $r^A_t \in H^{N_A'}(\mathbb{S}^2)$
        so that
        \begin{equation}
          \label{}
          \Gamma^L_t = \{x \in \mathbb{R}^3 : r = t - (\log t)^{1/2}r^L_t(\omega) \},
          \qquad
          \Gamma^R_t = \{x \in \mathbb{R}^3 : r = t + (\log t)^{1/2}r^R_t(\omega) \},
        \end{equation}
        where $r = |x|, \omega = x/|x|$.
        Moreover, the functions $r^A$ have limits as $t \to \infty$: there are functions
        ${0<r}^A_\infty \in H^{N_A'}(\mathbb{S}^2)$ with
        \begin{equation}
          \label{}
          \lim_{t \to \infty} \| r^A_t - r^A_\infty\|_{H^{N_A'}(\mathbb{S}^2)} = 0.
        \end{equation}
        {The asymptotic behavior of the shocks and the pointwise estimates on the potentials $\phi_R,\phi_C,\phi_L$ from the previous 
        Theorem also imply the Landau law of decay along the shocks:}
        $$
        |\pa\Phi|\sim \frac 1{t(\log t)^{1/2}},\quad {\text{along}} \,\, \Gamma^L,\Gamma^R.  
        $$
    \end{theorem}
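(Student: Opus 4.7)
The plan is to pass from the $\beta^A_s$-formulation of the shocks to the graph form $r = t \mp (\log t)^{1/2} r^A_t(\omega)$, derive an integrable ODE for the rescaled quantity $\alpha^A_s(\omega) := \beta^A_s(\omega)/s^{1/2}$, and then use the boundary fluxes controlled by Theorem \ref{mainthm} to upgrade pointwise convergence to $H^{N_A'}(\mathbb{S}^2)$-convergence. First, observe that along $\Gamma^A$, $v = t+r = 2t - \beta^A_s = 2t + O((\log t)^{1/2})$, so $s = \log t + \log 2 + O((\log t)^{1/2}/t)$; in particular $s^{1/2}/(\log t)^{1/2} = 1 + O(1/\log t)$. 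The representation $\Gamma^A_t = \{r = t \mp (\log t)^{1/2} r^A_t(\omega)\}$ then follows from the implicit function theorem applied to $F(r) = r - t \pm \beta^A_{\log(t+r)}(\omega) = 0$: we have $F'(r) = 1 \pm (t+r)^{-1}\pa_s\beta^A$, and the bootstrap bound $|\pa_s\beta^A - \beta^A/(2s)| \lesssim (1+s)^{-1/2}$ together with $|\beta^A|\lesssim s^{1/2}$ gives $F'(r) = 1 + O((\log t)^{1/2}/t)$, bounded away from zero for $t\ge t_0$ with $t_0$ as in \eqref{largestart}. This yields $r^L_t(\omega) = \beta^L_s/(\log t)^{1/2}$ and $r^R_t(\omega) = -\beta^R_s/(\log t)^{1/2}$, both well-defined and smooth in $\omega$.

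Next, the evolution equation \eqref{betaa} is equivalent, after dividing by $s^{1/2}$, to
\begin{equation}
\pa_s \alpha^A_s(\omega) = s^{-1/2}\left(\frac{1}{2}(\pa_u \psi_C + \pa_u \psi_A) + N(\pa\psi_A,\pa\psi_C)\right)\Big|_{u=\beta^A_s(\omega)},
\end{equation}
since $\dbeta^A_s - \frac{1}{2s}\beta^A_s = s^{1/2}\pa_s\alpha^A_s$. The key observation is that the boundary integrals appearing in the shock fluxes $B^L_I, B^C_I$ from Theorem \ref{mainthm} control exactly $\int \log s(\log\log s)^{\alpha} |\pa_u\psi|^2\, ds$ after changing variables using the relation $dS\,dt \sim v\,ds\,d\omega$ and extracting powers of $1+v$. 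Applying Cauchy-Schwarz as in the introduction,
\begin{equation}
\int_{s_1}^{s_2} s^{-1/2}|\pa_u\psi_A|\, ds \lesssim \left(\int_{s_1}^{s_2}\frac{ds}{s\log s(\log\log s)^{\alpha}}\right)^{1/2}\left(\int_{s_1}^{s_2}\log s(\log\log s)^{\alpha}|\pa_u\psi_A|^2\, ds\right)^{1/2}.
\end{equation}
The first factor tends to zero as $s_1\to\infty$ (since $\alpha>1$), and the second is uniformly bounded by the energies of Theorem \ref{mainthm}. Hence $\alpha^A_s(\omega)$ is Cauchy as $s\to\infty$ for each fixed $\omega$, with convergence rate $O((\log\log s)^{(1-\alpha)/2})$. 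The nonlinear term $N(\pa\psi)$ is handled by the pointwise decay estimates from Theorem \ref{mainthm}, which give $|N(\pa\psi)|\lesssim (t(\log t)^{1/2})^{-2}$, trivially integrable in $s$ against $s^{-1/2}$.

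To upgrade to $H^{N_A'}(\mathbb{S}^2)$, commute the evolution equation with products of angular rotations $\Omega^J$ for $|J|\leq N_A'$, using the identity $\nas \beta = -\frac{s}{u}[\nas\psi] + N(\pa\psi)$ (see Appendix \ref{RHsec}) to avoid loss of derivatives. The top-order commutator identities produce expressions of the form $\Omega^J\pa_u\psi|_{\Gamma^A}$ plus lower-order terms multiplied by $\Omega^{J'}\beta$ with $|J'|\leq |J|$; the geometric quantities $G^A_{N_A}$ estimated in \eqref{lowestbootstrapofBLthmstatementhigherorder} control the intermediate $\Omega^{J'}\beta$ uniformly, and the higher-order shock flux terms embedded in $\mathcal{E}^A_{N^A}$ give the same Cauchy-Schwarz bound as before. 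This produces a limit $\alpha^A_\infty\in H^{N_A'}(\mathbb{S}^2)$ with $\|\alpha^A_s - \alpha^A_\infty\|_{H^{N_A'}}\to 0$; since $|\alpha^A_s \mp q|,|\alpha^A_s \pm p|\leq \epsilon_1$ by the bootstrap $K^A\leq\epsilon_1^2$, the limits $r^A_\infty := \alpha^A_\infty$ are bounded below by a positive constant. Finally, $r^A_t = \alpha^A_s\cdot(s/\log t)^{1/2}$ has the same limit because $s/\log t = 1+O(1/\log t)$ on the shock and $\alpha^A_s$ is uniformly bounded. The Landau law of decay is then immediate: at the shock $|u|\sim s^{1/2}\sim (\log t)^{1/2}$ and $r\sim t$, so $|\pa\sigma| = |\pa(u^2/(2rs))|\sim |u|/(rs)\sim 1/(t(\log t)^{1/2})$, while $|\pa\phi|$ is strictly smaller by the pointwise bounds of Theorem \ref{mainthm}, giving $|\pa\Phi|\sim 1/(t(\log t)^{1/2})$.

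The main obstacle is the top-order analysis: at $|J|=N_A'$, $\Omega^J\beta^A$ is coupled linearly to $\Omega^J\psi|_{\Gamma^A}$, and even though the energies remain bounded (or grow at worst like $\log\log t$ in the central region), one must verify that the $s^{-1/2}$ weight and the logarithmic gains from the multiplier $X_L = uf(u)\pa_u + vf(v)\pa_v$ with $\alpha>1$ genuinely close the Cauchy-Schwarz argument at the top order without loss of regularity. This is where the borderline exponent $\alpha > 1$ in $f(z) = \log z(\log\log z)^{\alpha}$ is used in an essential way, and where the decomposition into tangential and transversal components near the shock (Section \ref{bcbootstrapsection}) must be applied uniformly in $s$.
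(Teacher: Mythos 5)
Your proposal follows essentially the same route as the paper's proof: rescale $\beta^A_s/s^{1/2}$, integrate the transport equation along the shock, apply Cauchy--Schwarz against the weight $s\log s(\log\log s)^{\alpha}$ (using $\alpha>1$ so the tail $\int \frac{ds}{s\log s(\log\log s)^{\alpha}}$ vanishes), bound the resulting weighted fluxes of $\pa_u\psi$ and its tangential derivatives up to order $N_A'$ by the boundary terms in the energies (commuting with tangential fields and using \eqref{betaangle} to avoid derivative loss), and then convert between $s$ and $\log t$; this is exactly the argument the paper runs through \eqref{timeintegratedbeta}--\eqref{integratedbetaeqn} and Lemma \ref{lem:bdrhsbetaeqn}, and the restriction to $N_C' = N_C-2$ is precisely what makes the "top-order obstacle" you flag at the end moot for this theorem.

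One step is not justified as written: your treatment of the nonlinear term. The error $N(\pa\psi_A,\pa\psi_C)$ (equivalently $F_A$ in \eqref{betaevolF2}) is quadratic in derivatives of $\psi = r\phi$, not of $\phi$, so the claimed rate $(t(\log t)^{1/2})^{-2}$ is the $\phi$-scale and is false here; and the term involving $[\pa_s\psi][\pa_u\psi]$ is not $s$-integrable from one-sided pointwise bounds alone, since $\pa_s\psi = v\pa_v\psi$ on each side is only $O((\log t)^{1/2})$ pointwise. What makes it small is the jump structure: $[\pa_v\psi] = [\evg\psi] + \frac{u}{vs}[\pa_u\psi]$ with $[\evg\psi]$ quadratically small by the boundary conditions (Remark \ref{evoluteqnrmk}), or, more simply, one folds $F_A$ into the same weighted Cauchy--Schwarz/flux estimate used for the linear term, which is how the paper handles it in Lemma \ref{lem:bdrhsbetaeqn}. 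Relatedly, the intermediate claim of convergence "for each fixed $\omega$" does not follow directly, since the fluxes are only controlled after integrating over $\mathbb{S}^2$; the correct (and sufficient) statement is the $L^2(\mathbb{S}^2)$, and after commutation the $H^{N_A'}(\mathbb{S}^2)$, Cauchy property, which is what your final paragraph and the paper actually use.
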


Theorem \ref{mainthm} is a consequence of the following bootstrap argument.
\begin{prop}
	\label{bootstrapprop}
 Fix the parameters $N_R, N_C, N_L, \mu,\alpha$ as in \eqref{parameters}. There is
 $\epsilon^* = \epsilon^*(N_R, N_C, N_L, \mu,\alpha)$ so that if
 \begin{equation}
  \epsilon_L \leq \epsilon_C^2 \leq \epsilon_R^4 \leq \epsilon^*,
  \label{}
 \end{equation}
 then there are $\epsilon_i^* = \epsilon_i^*(\epsilon_L, \epsilon_C, \epsilon_R)$
 for $i = 0,1,2$
 with the following property.
 If the conditions \eqref{largestart}
 and \eqref{initialdatabds}-\eqref{initialdatabds2} hold with $\epsilon_i \leq \epsilon_i^*$,
and the bounds
 \begin{align}
  \sup_{t_0 \leq t \leq t_1} \mathcal{E}^R_{N^R}(t) &\leq \epsilon_R^2,\label{rightboot}\\
	\sup_{t_0 \leq t \leq t_1} \mathcal{E}^C_{N^C ,T}(t) &\leq \epsilon_C^2,\label{centralboottop}\\
	\sup_{t_0 \leq t \leq t_1} \mathcal{E}^C_{N^C-1, D}(t) &\leq \epsilon_C^2
	(1+ \log \log (1+t_1)),
	\label{centralbootextra}\\
	\sup_{t_0 \leq t \leq t_1} \mathcal{E}^C_{N^C - 2 ,D}(t) &\leq \epsilon_C^2,\label{centralbootlower}\\
  \sup_{t_0 \leq t \leq t_1} \mathcal{E}^L_{N^L}(t) &\leq \epsilon_L^2\label{leftboot},\\
	G^L_{N_L}(t_1) &\leq M\label{leftgeom}\\
	G^R_{N_R}(t_1) &\leq M\label{rightgeom}\\
        K^R(t_1) + K^L(t_1)&\leq \epsilon_1,\\
        \slashed{K}^R(t_1) + \slashed{K}^L(t_1) &\leq \epsilon_2,
	\label{logeom}
 \end{align}
 hold for some $t_1 > t_0$, where $M = 4(M^L_0 +  M^R_0)$
 where $M^L_0, M^R_0$ are as in \eqref{initialdatabds}, then in fact
 \begin{align}
  \sup_{t_0 \leq t \leq t_1} \mathcal{E}^R_{N^R}(t) &\leq \epsilon_R^{2+1/2},\label{rightbootconclusion}\\
	\sup_{t_0 \leq t \leq t_1} \mathcal{E}^C_{N^C, T}(t) &\leq \epsilon_C^{2+1/2},\label{centralbootconclusion1}\\
	\sup_{t_0 \leq t \leq t_1} \mathcal{E}^C_{N^C - 1,D}(t) &\leq \epsilon_C^{2+1/2} (1+ \log \log (1+t_1)),
	\label{centralbootconclusion2}\\
	\sup_{t_0 \leq t \leq t_1} \mathcal{E}^C_{N^C-2, D}(t) &\leq \epsilon_C^{2+1/2},
	\label{centralbootconclusion3}\\
  \sup_{t_0 \leq t \leq t_1} \mathcal{E}^L_{N^L}(t) &\leq \epsilon_L^{2+1/2},\label{leftbootconclusion}\\
		G^L_{N_L}(t_1) &\leq M_0^L + \epsilon_L^2 \label{GLimprovement}\\
		G^R_{N_R}(t_1) &\leq M_0^R + \epsilon_C^2\label{GRimprovement}\\
        K^R(t_1) + K^L(t_1)&\leq \epsilon_1^{1 + 1/2},\label{logeomimprovementL}\\
        \slashed{K}^R(t_1) + \slashed{K}^L(t_1) &\leq \epsilon_2^{1 + 1/2}
		\label{logeomimprovement}.
 \end{align}
\end{prop}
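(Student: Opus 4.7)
The proof proceeds by closing the bootstrap one region at a time, working from right to left, exploiting the determinism conditions from Lemma \ref{causallemma}: the right shock $\Gamma^R$ is spacelike with respect to $m$ so $\psi_R$ is determined by its initial data alone; then $\Gamma^R$ is timelike with respect to $\mB$ so $\psi_C$ depends on $\psi_R$ through the boundary condition \eqref{introbcR}; and similarly $\psi_L$ depends on $\psi_C$ through \eqref{introbcL}. At each step we derive pointwise decay from the energy estimates through a Klainerman--Sobolev-type inequality adapted to the geometry of the region (using the fact that $|u| \gtrsim s^{1/2}$ in $D^L, D^R$ and $|u| \lesssim s^{1/2}$ in $D^C$), then insert those pointwise bounds into the nonlinear scalar currents $\mK_{X,\gamma,P}$ and $F$ appearing on the right-hand side of the basic energy identities in Propositions \ref{rightenest}, \ref{centralenest}, \ref{leftengen}. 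The hierarchy $\epsilon_L \leq \epsilon_C^2 \leq \epsilon_R^4$ is used precisely to absorb boundary contributions from the neighboring region into the smaller parameter, so that, e.g., control of $\psi_R$ at size $\epsilon_R$ gives source terms for the $\psi_C$ estimate of size $\epsilon_R \leq \epsilon_C^{1/2} \cdot \epsilon_C$, giving the desired $\epsilon_C^{2+1/2}$ improvement.

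In the rightmost region I would first verify the perturbative hypothesis \eqref{pert1} for $\gamma = \gamma(\pa(\psi_R/r))$, which holds by Klainerman--Sobolev applied to $\mathcal{E}^R_{N_R}(t) \leq \epsilon_R^2$, together with the bound \eqref{pert01right} comparing $X^\ell_m|\gamma|$ to $X^n_m$ via the choice of weight $(1+|u|)^\mu$ with $\mu \geq 6$. Then apply Proposition \ref{rightenest} to $\psi_R^I$ for $|I| \leq N_R$ using the equation \eqref{higherordereqnext}. The nonlinear bulk terms $|\mK_{X_R, \gamma, P}|$ are controlled using the pointwise decay $|\pa^2 \psi_R| \lesssim \epsilon_R (1+t)^{-1}(1+|u|)^{-(1+\mu)/4}$ which is time-integrable since $|u| \gtrsim s^{1/2}$, producing total contributions of size $\epsilon_R^3 \ll \epsilon_R^{2+1/2}$. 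The bound for $B^R$ at the level of $K^R, \slashed{K}^R$ follows by integrating the evolution equation \eqref{betaa} as in the calculation at the end of Section \emph{Modulated profiles and location of the shocks}; higher-order bounds giving $G^R_{N_C}(t_1) \leq M^R_0 + \epsilon_C^2$ follow from commuting \eqref{betaa} with tangential fields and using the trace of $\psi_C$ on $\Gamma^R$.

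For the central region I would run two coupled estimates as in Proposition \ref{centralenest}. First the top-order estimate using $X_T$ gives $\mathcal{E}^C_{N_C, T}(t_1) \leq \epsilon_C^{2+1/2}$; here the boundary term $\int_{\Gamma^R}(1+v)|\ev^{\mB}\psi_C^I|^2$ is controlled via \eqref{introbcR} by $\int_{\Gamma^R}(1+v)|\pa_v \psi_R^I|^2$ plus nonlinear junk, which is inside $\mathcal{E}^R_{N_R} \leq \epsilon_R^2$. The lower-order decay estimate using $X_C$ then gives $\mathcal{E}^C_{N_C-1, D}(t_1) \leq \epsilon_C^{2+1/2}(1+\log\log t_1)$, where the $\log\log$ loss comes from the space--time integral $S_{X_T}(t_1)$ in \eqref{lowordercentralestimate} and reflects the quasilinear coefficient $u/(vs)$ in $\evmB$; the angular boundary term $\int_{\Gamma^R}(1+s)|\nas \psi_C^I|^2$, which cannot be controlled by the boundary condition directly, is handled by commuting fewer fields (so that we pay derivatives to trade $|\nas \psi_C^I| \lesssim (1+v)^{-1}|\Omega \psi_C^I|$ as in Remark \ref{differentmultiplierscentral}), giving the third bound $\mathcal{E}^C_{N_C-2, D}(t_1) \leq \epsilon_C^{2+1/2}$ without any $\log\log$ loss. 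Bounds for $B^L$ follow from \eqref{betaa} exactly as for $B^R$.

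The leftmost region will be the main obstacle, and this is where the argument genuinely uses the choice $f(z) = \log z (\log\log z)^\alpha$ with $1 < \alpha < 3/2$ and the auxiliary Morawetz multiplier \eqref{amplified}. The plan is to first apply Proposition \ref{leftengen} with $X_L$ to obtain a bound for $E_{X_L}(t) + B_{X_L}(t)$, where the boundary $\int_{\Gamma^L}vf(v)|\evm \psi_L|^2$ is controlled through \eqref{introbcL} by the corresponding flux of $\psi_C$ from the already-closed central estimate. This gives the energy, but the pointwise bound obtained from it only gives $|\pa^2 \psi_L| \lesssim \epsilon_L \cdot (1+t)^{-1}(\log t)^{-1}(\log\log t)^{-\alpha/2}$, which is non-integrable in time and therefore insufficient to control the cubic nonlinear current $\int|\pa \gamma||X_L||\pa \psi|^2$ (recall the discussion after \eqref{marginal}). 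The resolution is to apply Proposition \ref{morawetzlem} with $X_M = (g(r)+1)(\pa_v - \pa_u)$: the key point is the inequality \eqref{importantgbd}, which shows that the (non-positive) energy-momentum integrals generated by $X_M$ on time slices and on $\Gamma^L$ are dominated by $E_{X_L}$ and $B_{X_L}$, so \eqref{morawetzbd} gives a space--time integral bound
\begin{equation}
  \int_{t_0}^{t_1}\int_{D^L_t} g'(r)\bigl(|\pa_u \psi_L^I|^2 + |\pa_v \psi_L^I|^2\bigr) + \frac{g(r)+1}{r}|\nas \psi_L^I|^2 \, d\widehat{\mu} \lesssim \epsilon_L^2 .
\end{equation}
Combining this with Sobolev embedding over spheres and integrating in $r$ produces a bound of the form $\int_{t_0}^{\infty}(1+t)^{-1}\|\pa^2 \psi_L\|_{L^\infty(D^L_t)}\,dt \lesssim \epsilon_L$, which is the integrability statement \eqref{goalofestim}. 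Feeding this back into the bulk nonlinear errors in Proposition \ref{leftengen} closes the bound $\mathcal{E}^L_{N_L}(t_1) \leq \epsilon_L^{2+1/2}$, and the higher-order control $G^L_{N_L}(t_1) \leq M_0^L + \epsilon_L^2$ follows from differentiating the evolution equation for $\beta^L$ and using the already-established bounds on $\psi_L, \psi_C$ along $\Gamma^L$ together with the decomposition scheme of Section \ref{bcbootstrapsection}.
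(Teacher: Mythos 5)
Your overall architecture is the paper's: close the bootstrap by feeding Klainerman--Sobolev decay and the boundary conditions into the energy identities of Propositions \ref{rightenest}, \ref{centralenest}, \ref{leftengen}, \ref{morawetzlem}, and recover the shock geometry from the transport equation for $B^A$. But the step you dispose of in one clause is precisely the one that does not work as stated: the control of $B^L_I(t_1)=\int_{\Gamma^L} vf(v)|\evm\psi_L^I|^2\,dSdt$. You claim it is ``controlled through \eqref{introbcL} by the corresponding flux of $\psi_C$ from the already-closed central estimate,'' but the central energies only give $\int_{\Gamma^L}(1+v)|\evmB\psi_C^I|^2$ (and $(1+s)|\nas\psi_C^I|^2$) along $\Gamma^L$, whereas the weight you must beat is $vf(v)=v\log v(\log\log v)^{\alpha}$; the unbounded factor $f(v)$ cannot be absorbed by the boundary condition alone. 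This is the content of Proposition \ref{lefttocenterprop}: after passing to tangential fields (which itself needs the high-order geometry norms $G^L,G^R$ and the derivative-saving identity of Remark \ref{eliminaterem}), the quantity $\Upsilon^+_{I,L}$ is estimated by integrating $Z^JY_L^+\psi_C$ \emph{across} $D^C$ to the right shock (Proposition \ref{UpsilonLplusbd}, Lemmas \ref{UpsilonLplusbulk}--\ref{UpsilonLplusbdy}), trading the width $\sim(\log t)^{1/2}$ of $D^C$ for the equation satisfied by $\pa_u\evmB\psi_C$ and the control of $s\pa_u\psi_C$, and then the trace at $\Gamma^R$ is handled through \eqref{introbcR} together with the large weights $(1+|u|)^{\mu}+r(\log r)^{\nu}$ in $D^R$ --- this is also where the $(1+s)^{|I|/2}$ losses from converting between $\mathcal{Z}$, $\mathcal{Z}_{\mB}$ and tangential fields are paid for, which is why \eqref{parameters} demands $\nu\geq N_C$, $\mu\geq 2\nu$. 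None of this appears in your sketch, and without it the left-region estimate does not close.

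A second, logical, problem is the order of operations in $D^L$: you propose to first close $E_{X_L}+B_{X_L}$, then deduce the Morawetz bound, then the integrability of $\pa^2\psi_L$, then feed it back into $\mK_{X_L}$. But bounding $\mK_{X_L}$ (and $\mK_{X_M}$) already requires that integrability, so neither estimate can be closed ``first''; the paper breaks the circle by putting the Morawetz spacetime integral $M_I$ into the bootstrap norm $\mathcal{E}^L_{N_L}$ itself (\eqref{ELdef}--\eqref{MIdef}), so that \eqref{leftboot} yields Lemma \ref{timeintpsileft} directly and both the $X_L$ and $X_M$ estimates are improved simultaneously (Lemma \ref{abstractscalarleft}, Proposition \ref{abstractenestDL}). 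Two smaller inaccuracies: the $\log\log$ growth at order $N_C-1$ is produced by the linear commutation current $P^u_{I,null}$ handled in Lemma \ref{timeintegrability-center-linear}, not by $S_{X_T}$; and your use of the hierarchy (``$\epsilon_R\leq\epsilon_C^{1/2}\epsilon_C$'') presupposes $\epsilon_R\ll\epsilon_C\ll\epsilon_L$, the ordering of \eqref{epsparameters} that the proofs of Propositions \ref{lefttocenterprop}--\ref{centertorightprop} actually invoke, which is not the inequality you quote.
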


Theorem \ref{mainthm} then follows from Proposition \ref{bootstrapprop},
 a standard continuity argument, and the local existence theory developed
 in \cite{Majda83}, \cite{MajdaThomann87}, and \cite{Metivier1990}. 
 % \begin{proof}[Proof of Theorem \ref{mainthm}, assuming Proposition
% 	\ref{bootstrapprop}]
%  Let
%  $T > t_0$ denote the largest time so that the bounds
%  \begin{align}
%    \mathcal{E}^R(t) &\leq \epsilon_R^2,
%  	\label{mainbootstrapR}
%   \\
%   \mathcal{E}^C_{N_C, T}(t) &\leq  \epsilon_C^2,
%   \qquad
%   \mathcal{E}^C_{N_C-1, D}(t) \leq \epsilon_C^2 (1+ \log\log t),
%   \qquad
%   \mathcal{E}^C_{N_C-2, D}(t) \leq \epsilon^2_C,
%   \label{mainbootstrapC}
%   \\
%   \mathcal{E}_L(t) &\leq  \epsilon_L^2
%   \label{mainbootstrapL}
%  \end{align}
%  hold for $t < T$. ...
% \end{proof}

\subsection{The proof of the bootstrap proposition}
{For the sake of simplicity we assume that the constants
    $\ls,\rs$ determining the positions of the model shocks are equal to one, $\ls,\rs=1$,
but the argument below applies to any $\ls,\rs>0$, since all of the supporting material 
holds for arbitrary $\ls, \rs > 0$.} 

We start by showing that the conclusion of Proposition
\ref{bootstrapprop} follows from
some pointwise and time-integrated estimates for the potentials
and under the assumption that our shocks are close to the positions
of the model shocks $u = \pm s^{1/2}$.
In section \ref{scalarcontrol} and \ref{bcbootstrapsection} (see in particular Lemmas
\ref{abstractscalarright}-\ref{abstractscalarleft}
and Propositions \ref{lefttocenterprop} and \ref{centertorightprop}),
we show that the needed pointwise and time-integrated estimates follow
from the hypotheses of Proposition \ref{bootstrapprop}. Finally, in Propositions \ref{leftshockbootstrap}
and \ref{rightshockbootstrap}, we show how to recover the needed
assumptions on the positions of the shocks.

In the rightmost region, the result is the following.
In the upcoming Lemma \ref{abstractscalarright}, we show that
the below hypotheses on $\psi_R$ follow from the bootstrap assumptions in
Proposition \ref{bootstrapprop}. The fact that the below
hypotheses on the shock $\Gamma^R$ follow from the bootstrap
assumptions is established in Proposition \ref{rightshockbootstrap}.
\begin{prop}[The energy estimate in $D^R$]
	\label{abstractenestDR}
	There are constants $\epsilon^\prime$ and $C$ depending only on
	$N_R$ so that the following statements hold true.
	Let $\Gamma^R_t = \{ (t, x) : u = B^R(t,x)\}$
	and let
 $\psi_R(t)$ be a solution to the wave equation \eqref{waveext0} in the region
 $D^R_t$ to the right of $\Gamma^R_t$ on a time interval $[t_0, T)$.
 Suppose that $B^R$ satisfies the bounds
	\begin{alignat}{2}
		\left| \frac{B^R(t,x)}{s^{1/2}} +  1\right|
		+ (1+s)^{1/2}\left|\pa_s B^R(t,x) - \frac{1}{2s} B^R(t,x)\right|
		&\leq\ve_1,&&\quad \text{ along } \cup_{t_0 \leq t' \leq t_1} \Gamma^R_{t'}
		\label{rightshocklogeomright}
	  \\
	  |\Omega B^R(t, x)| &\leq \ve_2 (1+s)^{1/2},&&\quad \text{ along } \cup_{t_0 \leq t' \leq t_1} \Gamma^R_{t'}
	 \label{rightshocklogeomright2}
 \end{alignat}
	 for $\ve_1, \ve_2\leq \epsilon'$ and
	suppose that for some $C_0 > 0$, the following estimates hold true for
	all $|I| \leq N_R$ and all $t \leq T$, with $\mK$ as in Proposition \ref{effectivemmmink} 
    and with the higher-order current $P_{I} = P_{I, R}$ as in \eqref{higherordereqnext}
	\begin{align}
	 &|\pa \psi_R(t,x)| + \frac{1}{1+v} |\psi_R(t, x)| \leq C_0 \frac{\epsilon_R}{(1+s)^{1/2}},
	 \label{assumppwdecayR}\\
	  &\int_{t_0}^{t} \int_{D^R_t} |\mK_{X_R, \gamma, P_I}[\psi^I_R]|
		+ |F_I| |X \psi^I_R|\, dt' \leq C_0 \epsilon_R^{3},\label{needforER0}\\
			 &\int_{D^R_{t_0}} |X_R| |P_I|^2 + \int_{D^R_{t}} |X_R| |P_I|^2
	 + \int_{t_0}^{t}\int_{\Gamma^R_{t'}} (1+v)(1+s)^{1/2} |X_R||P_I|^2\, dS dt'
	 \leq C_0 \epsilon_R^{3},
	 \label{PIassump}\\
		&\mathring{\mathcal{E}}^R_{N^R} \leq \epsilon_R^{3},
	  \label{needforER}
	 \end{align}
	 where $P_I$ is as in Lemma \ref{higherorderexterioreqns}
	 and where $\psi^I_R = r Z^I \phi$.

	 Then
	 \begin{equation}
	  \mathcal{E}^R_{N^R}(t) \leq C \epsilon_R^{3}.
	  \label{conclrightboot0}
	 \end{equation}
\end{prop}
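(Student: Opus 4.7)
The plan is to reduce Proposition \ref{abstractenestDR} to a direct application of the single-field energy estimate of Proposition \ref{rightenest} to each of the higher-order quantities $\psi_R^I = r Z^I \phi_R$ satisfying the commuted wave equation \eqref{higherordereqnext}, followed by summation over $|I| \leq N_R$.

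First I would verify that the three hypotheses of Proposition \ref{rightenest} are all consequences of the assumptions here. The bounds \eqref{rightshocklogeomright}--\eqref{rightshocklogeomright2} on $B^R$ are exactly the geometric assumption \eqref{betaRassump}, and $1/t_0 \leq \epsilon_0$ is \eqref{largestart0}. The perturbative bound \eqref{pert1} on $\gamma = \gamma(\pa\phi_R)$ needs slightly more care: since $\phi_R = \psi_R/r$ and $r \sim v$ in $D^R$, the pointwise hypothesis \eqref{assumppwdecayR} gives
\begin{equation}
 |\pa \phi_R| \lesssim \frac{|\pa \psi_R|}{1+v} + \frac{|\psi_R|}{(1+v)^2}
 \lesssim \frac{\epsilon_R}{(1+v)(1+s)^{1/2}},
\end{equation}
so $|\gamma| \lesssim \epsilon_R(1+v)^{-1}(1+s)^{-1/2}$ as required; the second condition $X_R^\ell|\gamma| \leq \epsilon X_R^n$ then follows automatically from the relation $\nu \leq \mu/2 + 1/2$ as in \eqref{pert01right}.

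With the hypotheses verified I would apply \eqref{rightenestbd} to each $\psi_R^I$, the forcing and current being the $F_{I,R}$ and $P_{I,R}$ of \eqref{higherordereqnext}. Summing over $|I| \leq N_R$, the left-hand side of \eqref{rightenestbd} reproduces $\mathcal{E}^R_{N_R}(t)$ up to an equivalence of norms coming from \eqref{EXRXnormrln} and the definitions \eqref{ERdef}--\eqref{SRdef}. On the right-hand side, the initial energies are controlled by \eqref{needforER}, the bulk error $\int\!|\mK_{X_R,\gamma,P_I}| + |F_I||X_R \psi_R^I|\, dt'$ is controlled by \eqref{needforER0}, and the perturbative boundary term $R_{P,X_R}(t)$ from \eqref{RPXR} is controlled by \eqref{PIassump}. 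Combining these, one arrives at $\mathcal{E}^R_{N_R}(t) \leq C\epsilon_R^3$, with $C$ depending only on $N_R$.

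I do not expect a genuine obstacle at this stage: the heavy lifting — deriving the pointwise decay \eqref{assumppwdecayR}, the scalar current bound \eqref{needforER0}, and the bound \eqref{PIassump} on $P_I$ from the bootstrap assumptions, as well as propagating the shock-geometry estimates — is carried out elsewhere (in Lemma \ref{abstractscalarright} and Proposition \ref{rightshockbootstrap} according to the surrounding text). The one place that requires minor attention is the mild coupling between $\mu$ and $\nu$ in the multiplier $X_R = w(u)(\pa_u+\pa_v) + r(\log r)^\nu \pa_v$: one must confirm, as already noted in \eqref{pert01right}, that the parameter choice \eqref{parameters} forces the $\pa_v$-weight to remain small against $w(u)(1+s)^{1/2}$ inside $D^R$, so that $\gamma$ can be treated perturbatively uniformly in $|I| \leq N_R$.
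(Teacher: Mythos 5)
Your proposal is correct and follows essentially the same route as the paper's proof: verify the perturbative bound \eqref{pert1} on $\gamma$ from \eqref{assumppwdecayR} using $r\gtrsim v$ and Lemma \ref{higherorderexterioreqns} (with the second condition handled by \eqref{pert01right}), apply Proposition \ref{rightenest} to each $\psi_R^I$, and absorb the error terms using \eqref{needforER0}, \eqref{PIassump}, and \eqref{needforER} before summing over $|I|\leq N_R$. No gaps to report.
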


\begin{proof}
	We first show that if the given assumptions hold, the energy
	estimate from Proposition \ref{rightenest} holds.
	Under our assumptions, $r \gtrsim v$, and so
	writing $\phi_R = \frac{1}{r}\psi_R \sim \frac{1}{1+v} \psi_R$,
	and using Lemma \ref{higherorderexterioreqns}
 \begin{equation}
  |\gamma| \lesssim \frac{1}{1+v} |\pa \psi_R| + \frac{1}{(1+v)^2} |\psi_R|.
  \label{gammapsi0R}
 \end{equation}
 If the bound \eqref{assumppwdecayR} holds, then by \eqref{gammapsi0R},
 the first bound for $\gamma$ in \eqref{pert1} holds, and as a result,
 provided $\epsilon_1, \epsilon_2$ are taken sufficiently small,
 the hypotheses of Proposition \ref{rightenest} hold. As a result,
 for each $|I| \leq N_R$ and $t_1 \leq T$,
 \begin{equation}
  	 E^R_I(t_1) + S^R_I(t_1)  \lesssim
		  \int_{t_0}^{t_1} \int_{D^R_t} |\mK_{X, \gamma, P_I}[\psi_R^I]| + |F| |X\psi^I_R|\, dt
		 + \epsilon_R^{3},
		 % + R_{P_I,X_R}(t_1),
  \label{enbootR0}
 \end{equation}
 where we used \eqref{PIassump} to control the term $R_{P_I, X_R}$
 from Proposition \ref{rightenest} and \eqref{needforER}
 to control the energy at $t = t_0$. The result
 now follows immediately from our assumptions.
\end{proof}

We now record an analogous statement in the central region. The statement
is slightly more complicated because we need to keep track of different energies
and some of the energies are allowed to grow in time. The proof
that the below bounds involving $\psi$ follow from the bootstrap assumptions
appears in Lemma \ref{abstractcentralscalar}.
The fact that the below
hypotheses on the shock $\Gamma^R, \Gamma^L$ follow from the bootstrap
assumptions is established in Propositions \ref{rightshockbootstrap}-\ref{leftshockbootstrap}.
\begin{prop}[The energy estimate in $D^C$]
	\label{abstractenestDC}
There are constants $\epsilon^\prime$ and $C$ depending only on
$N_C$ so that the following statements hold true.
Let $\Gamma^R_t = \{ (t, x) : u = B^R(t,x)\}$,
$\Gamma^L_t = \{ (t, x) : u = B^L(t,x)\}$,
and let
$\psi_C(t)$ be a solution to the wave equation \eqref{waveint0} in the region
$D^C_t$ lying between $\Gamma^R_t$ and $\Gamma^L_t$ on a time interval $[t_0, T)$.
Suppose that $B^R$, $B^L$ satisfy the bounds
\begin{alignat}{2}
	\left| \frac{B^R(t,x)}{s^{1/2}} +  1\right|
	+ (1+s)^{1/2}\left|\pa_s B^R(t,x) - \frac{1}{2s} B^R(t,x)\right|
	&\leq\ve_1,&&\quad \text{ along } \cup_{t_0 \leq t' \leq t_1} \Gamma^R_{t'}
	\\
	|\Omega B^R(t, x)| &\leq \ve_2 (1+s)^{1/2},&&\quad \text{ along } \cup_{t_0 \leq t' \leq t_1} \Gamma^R_{t'}
 \label{BRgeomC0}
\end{alignat}
\begin{alignat}{2}
	\left| \frac{B^L(t,x)}{s^{1/2}} -  1\right|
	+ (1+s)^{1/2}\left|\pa_s B^L(t,x) - \frac{1}{2s} B^L(t,x)\right|
	&\leq\ve_1,&&\quad \text{ along } \cup_{t_0 \leq t' \leq t_1} \Gamma^L_{t'}
	\\
	|\Omega B^L(t, x)| &\leq \ve_2 (1+s)^{1/2},&&\quad \text{ along } \cup_{t_0 \leq t' \leq t_1} \Gamma^L_{t'}
 \label{}
\end{alignat}
for $\ve_1, \ve_2\leq \epsilon'$, and further suppose that the parameter
$\epsilon_0$ from \eqref{largestart} satisfies $\epsilon_0 \leq \epsilon'$.

 Suppose that with $X_C, X_T$ defined as in section \ref{fields}, for some $C_0 > 0$
	the following estimates hold, with $\mK$ defined as in Proposition \ref{effectivemmmB} 
    and the currents $P_{I, C}, P_{I, null}$ as in \eqref{higherordereqncentral}.
    First,
	 \begin{align}
		 |\pa \psi_C| + \frac{1}{1+v} |\psi_C| \leq C_0 \epsilon_C,\label{assumppwdecayC}
     \end{align}
     Next, for all $t_1 \leq T$, writing $\psi_I^C = Z_{\mB}^I(r\phi_C)$, we
     assume that:
\begin{itemize}
    \item (Top-order assumptions) For all $|I| \leq N_C$,
     \begin{multline}
         \int_{t_0}^{t_1} \int_{D^C_t} |\mK_{X_T, \gamma, P_{I,C} + P_{I, null}}[Z_{\mB}^I\psi_C]|
          +|K_{X_T, \gamma_a}[Z_{\mB}^I \psi_C]|
		 + \left(|F_{C,I}| + |F_{\Sigma, I}| + |{F_{\mB, I}^2}|\right) |X_T \psi^I_C|\, dt
		 \\
		 \leq C_0\epsilon_C^{3} + c_0(\epsilon_0)\left(1 + \frac{1}{\delta}\right)\epsilon_C^2
            + c_0(\epsilon_0)
            + C_0 \delta  S_I^C(t_1)
%             + C_0\sum_{\substack{j \leq k-1,\\ |J| \leq |K|+1}} S_{j, J}(t_1),
         \label{XTKassumpC},
      \end{multline}
      \begin{multline}
        {\int_{t_0}^{t_1} \int_{D^C_t} -F^1_{\mB, I} X_T \psi\, dt} \leq
         c_0(\epsilon_0) \epsilon_C^2 + C_0\delta\sum_{|J|\leq |I|}
          \left(E_{J, X_T}(t_1)  + S_{I}^C(t_1)\right)\\
            +\frac{C_0}{\delta} \sum_{|J| \leq |I|-1}
          E_{J, X_T}(t_1) + C_0\sum_{|J| \leq |I|-1} S_{J}^C(t_1) + C_0\epsilon_C^3,
          \label{F1assumpbdtop}
     \end{multline}
     and, with $P_I = P_{I, C} + P_{I, null}$,
     \begin{multline}
       \label{}
         \int_{D_{t_0}^{C}} v |P_{{I}}|^2
		 	 + \int_{D_{t_1}^{C}}v |P_{{I}}|^2
		 	 + \int_{t_0}^{t_1}\int_{\Gamma^R_t}v|P_{{I}}|^2\,dt
		 	 + \int_{t_0}^{t_1}\int_{\Gamma^L_t} v |P_{{I}}|^2\, dt
			 \leq C_0 \epsilon_C^{3}, 
             \label{Pbdscentral}
     \end{multline}

 \item (Below top-order assumptions)
For all $|I| = N_C-1$,
\begin{multline}
  \label{}
    \int_{t_0}^{t_1} \int_{D^C_t} |\mK_{X_C, \gamma, P_I}[Z_{\mB}^I\psi_C]|
          +|K_{X_C, \gamma_a}[Z_{\mB}^I \psi_C]|
          + \left(|F_{C,I}| + |F_{\Sigma, I}| + |{F_{\mB, I}^2}|\right) |X_C \psi^I_C|\, dt\\
			 \leq C_0 \epsilon_C^{3} (1 + \log \log t_1) + c_0(\epsilon_0)\left(1 + \frac{1}{\delta}\right)\epsilon_C^2
			 + c_0(\epsilon_0)
			 + C_0 \delta S_I^C(t_1)
			 \\ + C_0 \epsilon_C(1+\log\log t_1) \sum_{|J| \leq |I|-1} \sup_{t_0 \leq t \leq t_1}
				(E_{D, J}^C(t))^{1/2} 
%                + C_0\sum_{\substack{j \leq k-1,\\ |J| \leq |K|+1}} S_{j, J}(t_1),
				\label{XDKassumpClog},
\end{multline}
and for all $|I| \leq N_C-2$, 
\begin{multline}
      \label{XDKassumpC}
        \int_{t_0}^{t_1} \int_{D^C_t} |\mK_{X_C, \gamma, P_I}[Z_{\mB}^I\psi_C]|
          +|K_{X_C, \gamma_a}[Z_{\mB}^I \psi_C]|
          + \left(|F_{C,I}| + |F_{\Sigma, I}| + |{F_{\mB, I}^2}|\right) |X_C \psi^I_C|\, dt\\
			\leq
			C_0 \epsilon_C^{3} + c_0(\epsilon_0)\left(1 + \frac{1}{\delta}\right)\epsilon_C^2
		  + c_0(\epsilon_0)
			+ \delta\left( \sup_{t_0 \leq t \leq t_1} E^C_{D, I}(t)
		  + S_I^C(t_1)\right),
    \end{multline}
    and, finally, for all $|I| \leq N_C-1$, we assume that
    \begin{multline}
      \label{}
        {\int_{t_0}^{t_1} \int_{D^C_t} -F^1_{\mB, I} X_C \psi\, dt} \lesssim 
             c_0(\epsilon_0) \epsilon_C^2 + C_0\delta\sum_{|J|\leq |I|}
      \left(E_{J, X_T}(t_1)  + S_{I}^C(t_1)\right)\\
            +\frac{C_0}{\delta} \sum_{|J| \leq |I|-1}
      E_{J, X_T}(t_1) + C_0\sum_{|J| \leq |I|-1} S_{J}^C(t_1) + C_0\epsilon_C^3,
          \label{F1assumpbddecay}
    \end{multline}
\end{itemize}

Suppose additionally that the initial data satisfies
\begin{equation}
  \label{}
 \mathring{\mathcal{E}}^C_{N^C} \leq \epsilon_C^{3},
          \label{initialEbdcentral}
\end{equation}

	 and suppose that we have the following
	 estimate at the right shock,
	 \begin{equation}
	\sum_{|I| \leq N_C}  \int_{t_0}^{t_1} \int_{\Gamma^R_t} (1+v) |\evmB \psi^I_C|^2\, dS dt
	+ \sum_{|I| \leq N_C - 1} \int_{t_0}^{t_1} \int_{\Gamma^R_t} (1+s) |\nas \psi^I_C|^2\, dS dt
	\leq C_0 \epsilon_C^{3}
	  \label{rightshockerror}
	 \end{equation}
	 Then
	 \begin{equation}
	  \mathcal{E}^C_{N^C, T}(t) \leq C \epsilon_C^{3},
		\qquad
	  \mathcal{E}^C_{N^C-1, D}(t) \leq C \epsilon_C^{5/2}(1+\log\log t),
		\qquad
		\mathcal{E}^C_{N^C-2, D}(t) \leq C \epsilon_C^{3}.
	  \label{EC52goal}
	 \end{equation}
\end{prop}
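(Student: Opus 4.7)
The plan is to apply Proposition \ref{centralenest} to each commuted potential $\psi^I_C = \ZB^I(r\phi_C)$, using the top-order multiplier $X_T$ when $|I| \leq N_C$ and the decay multiplier $X_C$ when $|I| \leq N_C-1$, and then to sum over $|I|$ and close by induction. First I verify the hypotheses of Proposition \ref{centralenest}: by the structure of the wave equation in $D^C$ given by Lemma \ref{higherordereqncentral}, the perturbation $\gamma = h^{-1} - \mBB^{-1}$ includes a linear-in-$\pa\phi_C$ contribution with coefficient $D\gamma(\pa\sigma) \sim u/(vs) \sim 1/((1+v)(1+s)^{1/2})$ plus higher-order nonlinear terms, so under the pointwise bound \eqref{assumppwdecayC} together with $r \sim v$ and $|u| \lesssim s^{1/2}$ inside $D^C$ we obtain $|\gamma| \lesssim \epsilon_C (1+v)^{-1}(1+s)^{-1/2}$, verifying the first bound in \eqref{pert1}. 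The second bound in \eqref{pert1} is automatic for $X_T$ and $X_C$ by \eqref{proofofperturbmB}, the shock-geometry hypotheses \eqref{betaLassump}--\eqref{betaRassump} are exactly the assumptions placed on $B^L, B^R$, and \eqref{largestart0} is taken as given.

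For each $|I| \leq N_C$, Proposition \ref{centralenest} applied with $X = X_T$, the split source $F_1 = F^1_{\mB, I}$, $F_2 = F_{C,I} + F_{\Sigma, I} + F^2_{\mB, I}$, and perturbative current $P = P_{I, C} + P_{I, \mathrm{null}}$ from \eqref{higherordereqcentral}, gives a bound for $E^C_{I, T}(t_1) + S^C_I(t_1) + B^C_I(t_1)$ by $E^C_{I, T}(t_0)$ plus a bulk integral, the right-shock term $\int_{t_0}^{t_1}\int_{\Gamma^R_t}(1+v)|\evmB \psi^I_C|^2\,dS\,dt$, and $R_{X_T, P}(t_1)$. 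The right-shock term is controlled by \eqref{rightshockerror}, the $R_{X_T, P}$ term by \eqref{Pbdscentral}, the non-$F_1$ bulk integrals by \eqref{XTKassumpC}, and the $F_1$ bulk integral by \eqref{F1assumpbdtop}. In the latter, $\delta$-terms couple back to $E_{J, X_T}$ and $S_J^C$ at the same order $|J| \leq |I|$, while $\delta^{-1}$-terms involve only strictly lower orders. Choosing $\delta$ small enough to absorb the same-order terms into the left, taking $\epsilon_0$ small enough that $c_0(\epsilon_0)\epsilon_C^2 \lesssim \epsilon_C^3$, using \eqref{initialEbdcentral} on the initial energy, and inducting on $|I|$ from $0$ upward to dispatch the $\delta^{-1}$-coupled lower-order terms, we arrive at $\mathcal{E}^C_{N_C, T}(t_1) \leq C\epsilon_C^3$.

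Next, for each $|I| \leq N_C - 1$ we apply Proposition \ref{centralenest} with $X = X_C$. The resulting identity \eqref{lowordercentralestimate} differs from the top-order identity by the presence of $S^C_I(t_1)$ on the right (already bounded by $C\epsilon_C^3$ from the previous step) and an extra right-shock angular-derivative error $\int_{t_0}^{t_1}\int_{\Gamma^R_t}(1+s)|\nas \psi^I_C|^2\,dS\,dt$, which is bounded via \eqref{rightshockerror} precisely in the range $|I| \leq N_C - 1$. For $|I| \leq N_C - 2$, the bulk errors are controlled by \eqref{XDKassumpC} and \eqref{F1assumpbddecay}; performing the same $\delta$-absorption as above and inductively controlling the lower-order terms yields $\mathcal{E}^C_{N_C - 2, D}(t_1) \leq C\epsilon_C^3$.

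The only non-routine case is $|I| = N_C - 1$, where \eqref{XDKassumpClog} replaces \eqref{XDKassumpC} and carries the additional term $C_0\epsilon_C(1+\log\log t_1)\sum_{|J|\leq N_C-2}\sup_{t_0\leq t\leq t_1}(E^C_{D, J}(t))^{1/2}$. Inserting the just-proved bound $E^C_{D, J} \leq C\epsilon_C^3$, this contribution is at most $C\epsilon_C^{5/2}(1+\log\log t_1)$, which exactly matches the claimed growth rate; all remaining terms are $O(\epsilon_C^3)$ and are absorbed routinely, giving $\mathcal{E}^C_{N_C - 1, D}(t_1) \leq C\epsilon_C^{5/2}(1+\log\log t_1)$. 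The main obstacle throughout is the bookkeeping of the $\delta / \delta^{-1}$ splittings: at the top order the $\delta^{-1}$-coupled terms in \eqref{F1assumpbdtop} involve $E_{J, X_T}$ at orders $|J| \leq |I| - 1$ of the \emph{same} $X_T$-hierarchy, so the induction within the $X_T$ estimate must be closed entirely from the bottom up before the $X_C$ estimates can be run; this is consistent because \eqref{XTKassumpC} contains no $\delta^{-1}$ coupling to top-order energies, and the $c_0(\epsilon_0)/\delta$ factor there can be made harmless by first fixing $\delta$ and then shrinking $\epsilon_0$.
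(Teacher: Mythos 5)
Your proposal is correct and follows essentially the same route as the paper: verify the perturbative hypotheses of Proposition \ref{centralenest} from \eqref{assumppwdecayC} and \eqref{largestart}, run the $X_T$ estimates with the assumed bounds \eqref{XTKassumpC}, \eqref{F1assumpbdtop}, \eqref{Pbdscentral}, \eqref{rightshockerror}, absorb the $\delta$-terms and induct upward in $|I|$, then run the $X_C$ estimates at $|I|\leq N_C-2$ and finally at $|I|=N_C-1$, where inserting $E^C_{D,J}\lesssim \epsilon_C^3$ into \eqref{XDKassumpClog} produces exactly the $\epsilon_C^{5/2}(1+\log\log t_1)$ growth. The only slight imprecision is attributing the $u/(vs)$ coefficient to $\gamma$ (it sits in $\gamma_a\subset\mBB$, while the pure-$\Sigma$ source term in $\gamma$ is what forces $\epsilon_0$ small), but this does not affect the argument.
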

\begin{proof}
	As in the previous result, we start by showing that the hypotheses
	here imply that the energy estimate from Proposition \ref{centralenest} holds.
	From \eqref{gammaboundscentral0}, we have the following bound for
	$\gamma$,
	\begin{equation}
	 |\gamma| \lesssim \frac{1}{1+v} |\pa \psi_C| + \frac{1}{(1+v)^2} |\psi_C|
	 + \frac{1}{(1+v)^2}.
	 \label{}
	\end{equation}
	Provided \eqref{largestart} holds, the last term here is bounded by
	$c_0(\epsilon_0) (1+v)^{-1}(1+s)^{-1/2}$ for a continuous function
	$c_0$ with $c_0(0) = 0$. Assuming
	\eqref{assumppwdecayC} to bound the first two terms here, we have
	\begin{equation}
	 |\gamma| \lesssim \frac{\epsilon_C + c_0(\epsilon_0)}{(1+v)(1+s)^{1/2}},
	 \label{}
	\end{equation}
	and so provided $\epsilon_C, \epsilon_0$ are taken sufficiently small,
	the hypotheses of Proposition \ref{centralenest} hold true. It then follows
	from the definitions of the energies, the assumptions
	\eqref{Pbdscentral}-\eqref{initialEbdcentral},
	and the bound for the first term in \eqref{rightshockerror}, that for some $C_0' > 0$,
	we have the energy estimate
	\begin{multline}
        E_{I, T}^C(t_1) + S_{I}^C(t_1) - {C_0' \int_{t_0}^{t_1} \int_{D^C_t} F_{I, \mB}^1 X_T\psi_C^I\, dt}
        \\
        \lesssim
	 \int_{t_0}^{t_1}
	 \int_{D^C_t} |\mK_{X_T, \gamma, P_I}[\psi_C^I]|
     + {|K_{X_T, \gamma_a}[\psi_C^I]}
     +(|F_{C,I}| + |F_{\Sigma, I}| + |{F_{\mB, I}^2}|)|X_T \psi_C^I|\,dt
	 % + \int_{t_0}^{t_1} \int_{\Gamma^R_t} v |\evmB \psi_C^I|^2 \, dS dt
	 + \epsilon_C^{3}, \qquad |I| \leq N_C
	 \label{enbootC0}
	\end{multline}
	and additionally using the bound for the second term in \eqref{rightshockerror},
    we have the energy estimate
	\begin{multline}
	 E_{I, D}^C(t_1) - {C_0' \int_{t_0}^{t_1} \int_{D^C_t} F_{I, \mB}^1 X_T\psi_C^I\, dt}
     \\
 \lesssim S_{I}^C(t_1)+
	 \int_{t_0}^{t_1}
	 \int_{D^C_t} |\mK_{X_C, \gamma, P_I}[\psi_C^I]| 
     + {|K_{X_C, \gamma_a}[\psi_C^I]}
     + (|F_{C,I}| + |F_{\Sigma, I}| + |{F_{\mB, I}^2}|) |X_C \psi_C^I|\,dt
	 + \epsilon_C^{3}, \qquad |I| \leq N_C-1.
	 \label{EDClocalbd}
	\end{multline}

    {We start with the first estimate here. By the assumption \eqref{F1assumpbdtop},
        taking $\delta$ and then $\epsilon_0$ sufficiently small, the bound
        \eqref{enbootC0} implies that for $|I| \leq N_C$
    	\begin{multline}
            \sum_{|I'| = |I|} E_{I', T}^C(t_1) + S_{I'}^C(t_1) 
        \lesssim
	 \int_{t_0}^{t_1}
	 \int_{D^C_t} |\mK_{X_T, \gamma, P_I}[\psi_C^I]|
     + {|K_{X_T, \gamma_a}[\psi_C^I]}
     +(|F_{C,I}| + |F_{\Sigma, I}| + |{F_{\mB, I}^2}|)|X_T \psi_C^I|\,dt
     \\
     + \sum_{|J| \leq |I|-1} E_{J, T}^C(t_1) + S_{J}^C(t_1)
	 % + \int_{t_0}^{t_1} \int_{\Gamma^R_t} v |\evmB \psi_C^I|^2 \, dS dt
	 + \epsilon_C^{3}, 
	 \label{enbootC0prime}
	\end{multline}
    and the assumption \eqref{XTKassumpC} then implies
\begin{equation}
            \sum_{|I'| = |I|} E_{I', T}^C(t_1) + S_{I'}^C(t_1) 
        \lesssim
	      \sum_{|J| \leq |I|-1} E_{J, T}^C(t_1) + S_{J}^C(t_1)
	 % + \int_{t_0}^{t_1} \int_{\Gamma^R_t} v |\evmB \psi_C^I|^2 \, dS dt
	 + \epsilon_C^{3}
	 \label{enbootC0primeprime}
	\end{equation}
    By induction, this gives the first bound in \eqref{EC52goal} for a constant $C = C(N_C)$.

    Similarly, for $|I| \leq N_C-2$, using the bound we just proved for $S_I^C$
    and the assumptions \eqref{XDKassumpC}-\eqref{F1assumpbddecay}, 
    the energy estimate \eqref{EDClocalbd}
    implies that, after possibly taking $\epsilon_0$ smaller,
    \begin{equation}
      \label{}
    \sum_{|I'| = |I|} E_{I', D}^C(t_1)
        \lesssim
	      \sum_{|J| \leq |I|-1} E_{J, D}^C(t_1) 
	 % + \int_{t_0}^{t_1} \int_{\Gamma^R_t} v |\evmB \psi_C^I|^2 \, dS dt
	 + \epsilon_C^{3},
    \end{equation}
    and by induction this gives the third bound in \eqref{EC52goal}.

    It remains only to get the second bound in \eqref{EC52goal}, and for this we use 
    \eqref{EDClocalbd}, the bounds we just proved, and the assumption \eqref{XDKassumpClog},
    we find that, after possibly taking $\epsilon_0$ smaller still,
    \begin{equation}
      \label{}
        \sum_{|I'| = N_C-1} E_{I', D}^C(t_1) \lesssim
        \epsilon_C^{5/2}(1 + \log \log t_1),
    \end{equation}
    as needed.

    }

\end{proof}

Finally, in the leftmost region we rely on the following result.
The fact that the below bounds on $\psi_L$ follow from our bootstrap
assumptions can be found in Lemma \ref{abstractscalarleft}.
The fact that the below
hypotheses on the shock $\Gamma^L$ follow from the bootstrap
assumptions is established in Proposition \ref{leftshockbootstrap}.
\begin{prop}[The energy estimate in $D^L$]
	\label{abstractenestDL}
	There are constants $\epsilon^\prime$ and $C$ depending only on
	$N_L$ so that the following statements hold true.

Let $\Gamma^L_t = \{ (t, x) : u = B^L(t,x)\}$
and let
$\psi_L(t)$ be a solution to the wave equation \eqref{waveext0} in the region
$D^L_t$ to the left of $\Gamma^L_t$ on a time interval $[t_0, T)$.
Suppose that $B^L$ satisfies the bounds
\begin{alignat}{2}
	\left| \frac{B^L(t,x)}{s^{1/2}} -  1\right|
	+ (1+s)^{1/2}\left|\pa_s B^L(t,x) - \frac{1}{2s} B^L(t,x)\right|
	&\leq\ve_1,&&\quad \text{ along } \cup_{t_0 \leq t' \leq t_1} \Gamma^L_{t'} \label{BLgeomL}
	\\
	|\Omega B^R(t, x)| &\leq \ve_2 (1+s)^{1/2},&&\quad \text{ along } \cup_{t_0 \leq t' \leq t_1} \Gamma^L_{t'}
 \label{BLgeomL2}
\end{alignat}
 for $\ve_1, \ve_2\leq \epsilon'$.
	Suppose that for some $C_0 > 0$, the following estimates hold true for
	some $\alpha > 1$,
	all $|I| \leq N_L$ and all $t \leq T$,
	\begin{align}
	 & |\pa \psi_L(t,x)| + \frac{1}{1+v} |\psi_L(t, x)| \leq C_0 \frac{\epsilon_L}{(1+s)^{1/2}}
	  \frac{(\log \log s)^{\alpha}}{(\log s)^{\alpha -1}},
	 \label{assumppwdecayL}\\
	 & |\pa \phi_L(t,x)| \leq C_0\frac{\epsilon_L}{1+v} \frac{1}{(1+s)^3}, \qquad
	 |u| \geq s^3,\label{assumppwdecayL0}
	 \\
	  &\int_{t_0}^{t} \int_{D^L_t} |\mK_{X_L, \gamma, P_I}[\psi^I_L]|
		+ |F_I| |X_L \psi^I_L|\, dt' \leq C_0 \epsilon_L^{3},\label{needforEL0}\\
		&\int_{t_0}^{t} \int_{D^L_t} |\mK_{X_M, \gamma, P_I}[\psi^I_L]|
		+ |F_I| |X_M \psi^I_L|\, dt' \leq C_0 \epsilon_L^{3},\label{needforEL0M}\\
			 &\int_{D^R_{t_0}} |X_L| |P_I|^2 + \int_{D^L_{t_1}} |X_L| |P_I|^2
	 + \int_{t_0}^{t}\int_{\Gamma^L_{t'}} \left(|X_L^v| + (1+v)(1+s)^{1/2} |X^u_L|\right) |P_I|^2\, dS dt'
	 \leq C_0 \epsilon_L^{3},
	 \label{PIassumpL}\\
		&\mathring{\mathcal{E}}^L_{N^L}\leq \epsilon_L^{3},
	  \label{needforEL}
	 \end{align}
	 where $P_I$ is as in Lemma \ref{higherorderexterioreqns}
	 and where $\psi^I_L = r Z^I\phi$.
	 Suppose moreover that we have the following bound at the left shock,
	 \begin{equation}
	  \sum_{|I| \leq N_L} \int_{t_0}^{t_1} \int_{\Gamma^L_t} v f(v) |\pa_v \psi^I_L|^2\, dS dt
		\leq C_0\epsilon_L^{3}.
	  \label{bdyerrorleftboot}
	 \end{equation}
	 Then
	 \begin{equation}
	  \mathcal{E}^L_{N^L}(t) \leq C \epsilon_L^{3}.
	  \label{ELgoal}
	 \end{equation}
\end{prop}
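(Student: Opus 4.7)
The plan is to follow the template of Propositions \ref{abstractenestDR} and \ref{abstractenestDC}: show that the hypotheses stated here imply the hypotheses of the two energy estimates available in $D^L$, namely Proposition \ref{leftengen} (the multiplier estimate for $X_L$) and Proposition \ref{morawetzlem} (the Morawetz estimate for $X_M$), and then read off the conclusion by matching the error terms term-by-term with the listed assumptions.

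First, I would verify the perturbative condition \eqref{pert1left} for $\gamma$. By Lemma \ref{higherorderexterioreqns} the reciprocal metric in $D^L$ satisfies $|\gamma| \lesssim \frac{1}{1+v}|\pa\psi_L| + \frac{1}{(1+v)^2}|\psi_L|$. Plugging in the pointwise assumption \eqref{assumppwdecayL} gives
\[
(1+v)(1+s)^{1/2}\frac{(\log s)^{\alpha-1}}{(\log\log s)^{\alpha}}|\gamma|
\lesssim (1+s)^{1/2}\frac{(\log s)^{\alpha-1}}{(\log\log s)^{\alpha}} \cdot \frac{\epsilon_L}{(1+s)^{1/2}}\frac{(\log\log s)^{\alpha}}{(\log s)^{\alpha-1}}
\lesssim \epsilon_L,
\]
which is \eqref{pert1left} provided $\epsilon_L < \epsilon^\prime$. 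The geometric hypothesis \eqref{betaLassump} follows directly from \eqref{BLgeomL}-\eqref{BLgeomL2} with $\epsilon_2 = \ve_2$. This makes Proposition \ref{leftengen} applicable to each $\psi_L^I$ with $|I| \leq N_L$, yielding
\[
E_{X_L}[\psi_L^I](t_1) + B_{X_L}[\psi_L^I](t_1) \lesssim E_{X_L}[\psi_L^I](t_0) + \int_{t_0}^{t_1}\!\!\int_{D^L_t}\!\!\bigl(|\mK_{X_L,\gamma,P_I}| + |F_I||X_L\psi_L^I|\bigr)\,dt + \int_{t_0}^{t_1}\!\!\int_{\Gamma^L_t}\!\! vf(v)|\pa_v\psi_L^I|^2\,dS\,dt + R_{P_I,X_L}(t_1).
\]
Now every term on the right-hand side is directly bounded by $C\epsilon_L^{3}$: the initial energy by \eqref{needforEL}, the spacetime integrand by \eqref{needforEL0}, the shock boundary term by \eqref{bdyerrorleftboot}, and $R_{P_I,X_L}(t_1)$ by \eqref{PIassumpL}. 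Summing over $|I| \leq N_L$ gives $\sum_I (E^L_I(t_1) + B^L_I(t_1)) \lesssim \epsilon_L^{3}$.

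Next I would apply the Morawetz estimate, Proposition \ref{morawetzlem}, to each $\psi_L^I$. This requires the additional hypothesis $\lim_{r\to 0}|\gamma^{rr}|\leq \tfrac14$, which is the step needing care: near $r = 0$ we have $u = v = t$ so $u \geq s^3 = (\log t)^3$ once $t_0$ is sufficiently large, and then \eqref{assumppwdecayL0} yields $|\gamma|\lesssim \epsilon_L/((1+v)(1+s)^3)\to 0$ as $r\to 0$, so the hypothesis is automatic for $\epsilon_L$ small. Proposition \ref{morawetzlem} then gives
\[
M_I(t_1) \lesssim E_{X_L}[\psi_L^I](t_0) + E_{X_L}[\psi_L^I](t_1) + B_{X_L}[\psi_L^I](t_1) + \int_{t_0}^{t_1}\!\!\int_{D^L_t}\!\!\bigl(|\mK_{X_M,\gamma,P_I}| + |F_I||X_M\psi_L^I|\bigr)\,dt + \int_{t_0}^{t_1}\!\!\int_{\Gamma^L_t}\!\! vf(v)(\pa_v\psi_L^I)^2\,dS\,dt + R_{P_I,X_L}(t_1).
\]
The $E_{X_L}$ and $B_{X_L}$ terms on the right were bounded in the previous step, the bulk integrand is controlled by \eqref{needforEL0M}, the boundary integral by \eqref{bdyerrorleftboot}, and $R_{P_I,X_L}$ by \eqref{PIassumpL}. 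Hence $\sum_{|I|\leq N_L} M_I(t_1) \lesssim \epsilon_L^{3}$.

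Combining the two bounds and recalling the definition \eqref{ELdef} of $\mathcal{E}^L_{N^L}$ gives the conclusion \eqref{ELgoal}. The argument is essentially mechanical; the only non-trivial point to check is the boundedness of $\gamma^{rr}$ at the origin, which is where the stronger interior decay hypothesis \eqref{assumppwdecayL0}---which is not needed for the $X_L$ estimate---earns its keep. The necessity of deriving \eqref{assumppwdecayL0} pointwise (rather than in an averaged sense) elsewhere in the paper is the only hidden cost of this scheme.
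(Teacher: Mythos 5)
Your overall architecture is exactly the paper's: verify \eqref{pert1left}, apply Proposition \ref{leftengen} for the $X_L$ energy and Proposition \ref{morawetzlem} for the Morawetz part, and match each error term with one of the listed hypotheses. However, there is one genuine flaw in the first step. The bound you quote, $|\gamma| \lesssim \frac{1}{1+v}|\pa\psi_L| + \frac{1}{(1+v)^2}|\psi_L|$, is not what Lemma \ref{higherorderexterioreqns} gives in $D^L$: the correct statement has $\frac{1}{r}$ and $\frac{1}{r^2}$ (together with the alternative bound $|\gamma|\lesssim|\pa\phi_L|$), and $r$ is \emph{not} comparable to $v$ throughout $D^L$ — the region contains $r=0$, where $r\ll v$. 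Consequently your one-line verification of \eqref{pert1left} only works in the wave zone (say $|u|\lesssim s^3$, where $r\sim v$); in the deep interior the factors $1/r$, $1/r^2$ blow up and \eqref{assumppwdecayL} alone cannot give \eqref{pert1left}. The repair is exactly the region split the paper performs: for $|u|\geq s^3$ use $|\gamma|\lesssim|\pa\phi_L|$ together with \eqref{assumppwdecayL0}, which yields $|\gamma|\lesssim \epsilon_L/((1+v)(1+s)^3)$ there, more than enough for \eqref{pert1left}.

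This also means your closing remark is wrong on one point: \eqref{assumppwdecayL0} is \emph{not} only needed for the Morawetz origin condition $\lim_{r\to0}|\gamma^{rr}|\leq\tfrac14$; it is already needed to verify \eqref{pert1left} for the $X_L$ estimate itself, since \eqref{pert1left} must hold everywhere in $D^L_t$, including where $|u|\geq s^3$. With that correction (and your treatment of the origin hypothesis for $X_M$, which is fine), the rest of the argument goes through as in the paper.
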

Since $\alpha > 1$, note that this requires a
stronger pointwise estimate for the potential than the previous two results.

\begin{proof}
	From Lemma \ref{higherorderexterioreqns}, we have the following bounds,
	\begin{equation}
	 |\gamma| \lesssim \frac{1}{r} |\pa \psi_L| + \frac{1}{r^2} |\psi_L|^2,\qquad
	 |\gamma| \lesssim |\pa \phi_L|,
	 \label{}
	\end{equation}
	where recall $\phi_L = r^{-1} \psi_L$. Using \eqref{assumppwdecayL} in
	the region $D^L_t \cap \{|u| \leq s^3\}$ and
	\eqref{assumppwdecayL0} when $|u| \geq s^3$, we therefore have the bound
	\begin{equation}
	 |\gamma| \lesssim \frac{\epsilon_L}{(1+v)(1+s)^{1/2}}
	  \frac{(\log \log s)^{\alpha}}{(\log s)^{\alpha -1}}
	 \label{pert1leftused}
	\end{equation}
	everywhere in $D^L_t$, and so provided $\epsilon_L$ is
	taken sufficiently small, the hypothesis \eqref{pert1left}
  of Proposition
	\ref{leftengen} holds true. It then follows from our assumptions and the
	definitions of the energies that
	\begin{equation}
	 E_{I}^L(t) \lesssim \int_{t_0}^{t_1} \int_{D^L_t} |\mK_{X_L, \gamma, P_I}[\psi_L^I]|
	 + |F_I| |X_L \psi_L^I|\, dt
	 + \epsilon_L^{3},
	 \label{enbootL0}
	\end{equation}
	using \eqref{bdyerrorleftboot} to handle the boundary term on the right-hand
	side of the identity \eqref{decayleftenergybd}, and the bound for
	$\sum_{|I| \leq N_L} E_I^L(t)$ follows from \eqref{needforEL0}. The bound
	for the (Morawetz) energy $\sum_{|I| \leq N_L} M_I(t)$ from
	\eqref{EDLdef}-\eqref{MIdef} follows in the same way after using Proposition
	\ref{morawetzlem} in place of Proposition \ref{leftengen} and the
	bound \eqref{needforEL0M} in place of \eqref{needforEL0}.
\end{proof}

\begin{proof}[The proof of Proposition \ref{bootstrapprop}]

	Proposition \ref{bootstrapprop} follows
	from Propositions \ref{abstractenestDR}-\ref{abstractenestDL}, provided
	we can show that the hypotheses of Proposition \ref{bootstrapprop}
	imply the hypotheses of these results. In this section we map out how
	this follows from the results in the upcoming sections
	\ref{consofbootsec}-\ref{bdsforbdfsec}.
		We start with Proposition \ref{abstractenestDR}, which controls
		the solution in the rightmost region.

		\begin{lemma}[The estimates in $D^R$]
		 Under the hypotheses of Proposition \ref{bootstrapprop},
		 the bounds \eqref{rightshocklogeomright}-\eqref{needforER}
		 hold true. In particular, under the hypotheses of Proposition \ref{bootstrapprop},
		 the bound \eqref{rightbootconclusion} holds.
		\end{lemma}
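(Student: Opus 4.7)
The plan is to verify each hypothesis of Proposition \ref{abstractenestDR} directly from the bootstrap assumptions \eqref{rightboot}, \eqref{rightgeom}, and \eqref{logeom}, then invoke the proposition to conclude \eqref{conclrightboot0}, which is exactly \eqref{rightbootconclusion} (after absorbing the constant $C$ since $\epsilon_R$ is taken small enough). The geometric hypotheses \eqref{rightshocklogeomright}--\eqref{rightshocklogeomright2} are immediate from \eqref{logeom} (i.e., $K^R(t_1)\le\epsilon_1$ and $\slashed K^R(t_1)\le\epsilon_2$) together with \eqref{KRdef}--\eqref{sKRdef}; we simply take $\epsilon_1^*,\epsilon_2^*$ smaller than the threshold $\epsilon'$ from Proposition \ref{abstractenestDR}. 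The initial data bound \eqref{needforER} is a restatement of \eqref{initialdatabds}.

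For the pointwise decay \eqref{assumppwdecayR}, I would invoke Lemma \ref{abstractscalarright} (cited in the statement), whose proof is an application of a Klainerman--Sobolev type inequality to the energy $\mathcal{E}^R_{N^R}(t)\le\epsilon_R^2$. The key point is that in $D^R$ we have $|u|\gtrsim s^{1/2}$ along with $r\sim v$, so the weight $(1+|u|)^\mu$ present in $E_I^R$ produces, after Sobolev-in-the-spheres and commuting with $Z\in\Z$ up to $|I|\le N_R-3$ derivatives, the bound
\begin{equation}
|\pa\psi_R|+\tfrac{1}{1+v}|\psi_R|\lesssim\frac{\epsilon_R}{(1+v)(1+|u|)^{(\mu-1)/2}}\lesssim\frac{\epsilon_R}{(1+s)^{1/2}},
\end{equation}
since $\mu\ge 2$ gives $(1+|u|)^{(\mu-1)/2}\gtrsim (1+s)^{1/2}$ in $D^R$; the required margin over the claimed rate $1/(1+s)^{1/2}$ is what makes all the upcoming nonlinear estimates close.

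For the scalar-current and boundary $P_I$ bounds \eqref{needforER0}--\eqref{PIassump}, the strategy is to plug the pointwise bound just established and the identifications $\gamma\sim (1+v)^{-1}\pa\psi_R$, $P_I\sim (1+v)^{-1}\sum_{|I_1|+|I_2|\le |I|-1}\pa Z^{I_1}\psi_R\cdot\pa Z^{I_2}\psi_R$ from Lemma \ref{higherorderexterioreqns} into the expressions \eqref{modifiedKbound0}--\eqref{trivialKbound0} of Proposition \ref{effectivemmmink}. Each term is then bounded pointwise by a factor of $\epsilon_R(1+s)^{-1/2}$ or $(1+|u|)^{-1}\epsilon_R$ times the integrand of $E_I^R+S_I^R$. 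Integrating in $t$ and using that $\int^\infty (1+t)^{-1}(1+\log t)^{-1-\delta}\,dt<\infty$, together with $\mu,\nu$ large as in \eqref{parameters}, produces a total bound of the form $C\epsilon_R\cdot\mathcal{E}^R_{N^R}(t)\le C\epsilon_R^3$, as required. The $P_I$ integrals along $\Gamma^R$ are handled analogously by combining the pointwise bound with the boundary energy $\int_{\Gamma^R}(1+|u|)^\mu(1+v)(1+s)^{1/2}X^n_m(\cdot)^2\,dSdt$ already controlled by $\mathcal{E}^R_{N^R}$.

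The main obstacle I anticipate is tracking the interaction between the weight $w(u)=(1+|u|)^\mu$ in $X_R$ and the $r(\log r)^\nu\pa_v$ piece when computing $|\pa X_R|$ in \eqref{modifiedKbound0}: the ``bad'' coefficient $\pa_u X_R^v\sim (\log r)^\nu$ appears multiplying $|\gamma||\pa\psi|^2$, so we must use the null structure of the modified current and the condition $\nu\le\mu/2+1/2$ from \eqref{parameters} (already invoked in \eqref{pert01right}) to absorb it into $X_R^n|\pa\psi|^2$, which is why the second condition in \eqref{pert1} is automatic in $D^R$. Once that bookkeeping is arranged, the proof is then a straightforward consequence of \eqref{conclrightboot0} combined with $\epsilon_R^3\le\epsilon_R^{5/2}$, which follows from our smallness assumption $\epsilon_R\le\epsilon^*$ in the bootstrap proposition.
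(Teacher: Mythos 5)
Your proposal follows essentially the same route as the paper: you verify the hypotheses of Proposition \ref{abstractenestDR} from the bootstrap assumptions (the shock geometry from \eqref{logeom} with \eqref{KRdef}--\eqref{sKRdef}, where the paper instead quotes the improved bounds of Proposition \ref{rightshockbootstrap}; the pointwise decay via the Klainerman--Sobolev bound \eqref{pwright}; the scalar-current and $P_I$ bounds, which you sketch, being exactly the content of Lemmas \ref{abstractscalarright} and \ref{timeintegrability-right}, including the use of $\nu\le\mu/2+1/2$ via \eqref{pert01right}), and then conclude by absorbing the constant with $C\epsilon_R\le\epsilon_R^{1/2}$, just as in the paper. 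Two harmless slips: the pointwise decay is Lemma \ref{basicpwdecay}, not Lemma \ref{abstractscalarright}, and your intermediate estimate $|\pa\psi_R|\lesssim\epsilon_R(1+v)^{-1}(1+|u|)^{-(\mu-1)/2}$ overstates the decay (no extra $(1+v)^{-1}$ is available for $\psi_R=rZ^I\phi_R$ with the measure $r^{-2}dxdt$, cf. \eqref{KSR}), though the weaker bound $\epsilon_R(1+s)^{-1/2}$ that \eqref{assumppwdecayR} actually requires does follow since $|u|\gtrsim s^{1/2}$ in $D^R$ and $\mu>1$.
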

\begin{proof}
	 First, by Proposition \ref{rightshockbootstrap}, under the hypotheses of
	 Proposition \ref{bootstrapprop}, the bounds
	 \eqref{rightshocklogeomright}-\eqref{rightshocklogeomright2}
	 for the right shock $\Gamma^R$ hold with $\ve_1 =
	 \ve_2 = \epsilon_C$.
	 Next, by the pointwise bound \eqref{pwright} for $\psi_R$
	  from Lemma \ref{basicpwdecay},
	since $\mu > 1$
	the pointwise bound \eqref{assumppwdecayR} holds, and by Lemma
	\ref{abstractscalarright}, the bounds \eqref{needforER0}
	for the scalar current and for the terms $F_I$ hold.
	Finally, by the bound \eqref{RPXboundR}, the bound \eqref{PIassump}
	for the lower-order currents $P_I$ along
	the time slices and the right shock holds. Therefore, the bound
	\eqref{conclrightboot0} holds, and after taking $\ve_R$ small enough
	that $C \ve_R \leq \ve_R^{1/2}$, we get the needed bound
	\eqref{rightbootconclusion}.
\end{proof}

	We now show how the hypotheses of Proposition \ref{abstractenestDC},
	which gives bounds for the solution in the central region,
	follow from the hypotheses of our bootstrap Proposition \ref{abstractenestDC}.
	\begin{lemma}[The estimates in $D^C$]
	 Under the hypotheses of Proposition \ref{bootstrapprop},
	 the bounds \eqref{BRgeomC0}-\eqref{rightshockerror} hold true.
	 In particular, under the hypotheses of Proposition \ref{bootstrapprop},
	 the bounds \eqref{centralbootconclusion1}-\eqref{centralbootconclusion3} hold.
	\end{lemma}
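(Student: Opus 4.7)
The strategy is to verify each of the hypotheses of Proposition \ref{abstractenestDC} by invoking the relevant results established earlier in the paper, exactly in parallel to the proof of the bound in $D^R$ that was just completed. The conclusion will then follow from Proposition \ref{abstractenestDC} by choosing $\epsilon_C$ small enough that the constant $C = C(N_C)$ in \eqref{EC52goal} satisfies $C\epsilon_C \leq \epsilon_C^{1/2}$, which yields \eqref{centralbootconclusion1}--\eqref{centralbootconclusion3}.

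First, I would obtain the geometric bounds \eqref{BRgeomC0}-\eqref{BLgeomL2} on the shocks. By the bootstrap assumptions \eqref{logeom}, we have $K^L + K^R \leq \epsilon_1$ and $\slashed{K}^L + \slashed{K}^R \leq \epsilon_2$, which are precisely bounds of the needed form. Thus the geometric hypotheses of Proposition \ref{abstractenestDC} hold with $\ve_1 = \epsilon_1$, $\ve_2 = \epsilon_2$, and these are taken sufficiently small by choosing the parameters $\epsilon_1^*, \epsilon_2^*$ in Proposition \ref{bootstrapprop} accordingly. The assumption \eqref{largestart} provides the smallness of $\epsilon_0$ needed.

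Next, the pointwise bound \eqref{assumppwdecayC} on $\psi_C$ follows from the basic pointwise decay Lemma \ref{basicpwdecay} applied in the central region, using the bootstrap energy bounds \eqref{centralboottop}--\eqref{centralbootlower} together with Klainerman--Sobolev type inequalities adapted to the commutator fields $\mZB$. The scalar current and nonlinearity bounds \eqref{XTKassumpC}, \eqref{F1assumpbdtop}, \eqref{XDKassumpClog}, \eqref{XDKassumpC}, and \eqref{F1assumpbddecay} are consequences of Lemma \ref{abstractcentralscalar} (the central-region analogue of the scalar-current control used for $D^R$), combined with the bootstrap bounds on the energies and the pointwise decay estimates; the logarithmic growth in \eqref{XDKassumpClog} precisely corresponds to the factor $1 + \log \log t_1$ appearing in \eqref{centralbootextra}. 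The perturbative error terms \eqref{Pbdscentral} involving the lower-order currents $P_{I,C} + P_{I,null}$ along the time slices and both shocks are controlled by the analogous bound \eqref{RPXboundR} (and its left-shock analogue), which in this region follow from the same bootstrap assumptions on the energies.

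The initial data bound \eqref{initialEbdcentral} is given directly by the hypothesis \eqref{initialdatabds} of Proposition \ref{bootstrapprop}. Finally, the bound \eqref{rightshockerror} along the timelike side of $\Gamma^R$ is exactly the kind of trace estimate handled by Proposition \ref{centertorightprop}, which uses the boundary condition \eqref{introbcR} to relate $\evmB \psi^I_C$ along $\Gamma^R$ to $\pa_v \psi^I_R$ on the spacelike side, where we have strong control from the already-established bound \eqref{rightbootconclusion}; the angular derivative term is controlled by the Hardy-type estimate of Lemma \ref{controlangularhardyright} as mentioned in Section \ref{angularderivsshockpbm}. The main obstacle, as foreshadowed in the discussion of issue \textbf{(c)}, is the control of $|\nas\psi^I_C|^2$ along the timelike side of $\Gamma^R$ at one order below top, which forces the two-tier structure of the top-order ($X_T$) and decay ($X_C$) multipliers and couples the bounds \eqref{centralbootconclusion1}--\eqref{centralbootconclusion2} in a nontrivial way; this coupling is already built into the statement of Proposition \ref{abstractenestDC}, so once all hypotheses are verified the conclusion follows directly.
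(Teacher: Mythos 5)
Your overall strategy is the same as the paper's: verify the hypotheses of Proposition \ref{abstractenestDC} one by one from the bootstrap assumptions and the supporting lemmas (pointwise decay from Lemma \ref{basicpwdecay}, scalar currents from Lemma \ref{abstractcentralscalar}, the boundary term from Proposition \ref{centertorightprop} plus the Hardy estimate \eqref{rightangularbound}), and then conclude by shrinking $\epsilon_C$. Your treatment of the geometry (using \eqref{logeom} directly rather than the improved bounds of Propositions \ref{rightshockbootstrap}--\ref{leftshockbootstrap}, as the paper does) and of the boundary term \eqref{rightshockerror} is fine.

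There are, however, two citation-level gaps that matter because this lemma is precisely a verification-by-citation. First, the hypotheses \eqref{F1assumpbdtop} and \eqref{F1assumpbddecay} concerning $F^1_{\mB,I}$ do \emph{not} follow from Lemma \ref{abstractcentralscalar}: that lemma only controls $\mK_{X,\gamma,P}$, $K_{X,\gamma_a}$ and the terms $F_{C,I}$, $F_{\Sigma,I}$, $F^2_{\mB,I}$, and $F^1_{\mB,I}$ is deliberately excluded because it is too large to be absorbed as a generic error. Its control is the content of the separate Lemma \ref{FImBibplemma}, whose mechanism is structurally different: one exploits the explicit form \eqref{nonlinearfmBI1} and integrates by parts twice so that the top-order contribution appears with a favorable sign (cf. \eqref{crucialIBP}); a direct absorption into the scalar-current estimates would fail at top order. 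Second, for \eqref{Pbdscentral} you cite \eqref{RPXboundR}, which is the right-region estimate for $P_{I,R}$ with the $X_R$ weights; the correct references are \eqref{RPXboundC} from Lemma \ref{timeintegrability-center} for the nonlinear current $P_{I,C}$ and, crucially, \eqref{extraPIslicebd} from Lemma \ref{timeintegrability-center-linear} for the linear commutation current $P_{I,null}$, which is not quadratically small and needs its own argument. With these two attributions corrected, your proof matches the paper's.
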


	\begin{proof}
	 	By Lemma \ref{rightshockbootstrap}, under the hypotheses of
 	 Proposition \ref{bootstrapprop} the bounds
 	 \eqref{rightshocklogeomright} for the right shock $\Gamma^R$ hold with $\ve_1 =
 	 \ve_2 = \epsilon_C$ and the bounds \eqref{rightshocklogeomright2}
	 for the left shock $\Gamma^L$ hold with
	 $\ve_1 = \ve_2 = \epsilon_L$.
	 Next, by the pointwise bound \eqref{pwcentral}
	 for $\psi_C$ from Lemma \ref{basicpwdecay},
     the assumption \eqref{assumppwdecayC} holds.
     By Lemmas {\ref{FImBibplemma}} and \ref{abstractcentralscalar}, if we take
     $\epsilon_0$ sufficiently small, the bounds in \eqref{XTKassumpC}-{\eqref{F1assumpbddecay}}
		for the scalar currents and the inhomogeneous terms $F_I$
	 	hold, and by \eqref{RPXboundC} and \eqref{extraPIslicebd}, the bounds \eqref{Pbdscentral}
		for the lower-order currents hold.

		It remains to handle the boundary terms along the right shock from
		 \eqref{rightshockerror}. By Proposition
	 	\ref{centertorightprop}, the first term there is bounded by the right-hand side of
	 	\eqref{rightshockerror} if \eqref{rightboot} holds and $\epsilon_0$
	 	is taken sufficiently small. Using the bound \eqref{rightangularbound}
		for angular derivatives of $\psi_C$ along the right shock
		and the bound we just proved for the derivatives $\evmB$ of $\psi_C$ along
		the right shock,
	 	the second term on the left-hand side of \eqref{rightshockerror} is
	 	also bounded by the right-hand side of \eqref{rightshockerror}.

		As a result, the bounds \eqref{EC52goal}
		for the energies in the central region hold under the hypotheses of
		the bootstrap proposition \ref{bootstrapprop}, and
		taking $\ve_C$ smaller if needed we therefore get
		\eqref{centralbootconclusion1}-\eqref{centralbootconclusion3}.
	\end{proof}

	Next, we show how the hypotheses of Proposition \ref{abstractenestDL}, which handles
	the bounds in the leftmost region, follow from the bootstrap proposition.
	\begin{lemma}[The estimates in $D^L$]
Under the hypotheses of Proposition \ref{bootstrapprop},
the bounds \eqref{BLgeomL}-\eqref{bdyerrorleftboot} hold true.
In particular, under the hypotheses of Proposition \ref{bootstrapprop},
the bound \eqref{leftbootconclusion} holds.
	\end{lemma}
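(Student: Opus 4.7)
The plan is to follow the same template used in the two preceding lemmas for $D^R$ and $D^C$, namely to verify the hypotheses of Proposition \ref{abstractenestDL} one by one, drawing each input from the results referenced in Section \ref{issuesection} and the upcoming Sections \ref{consofbootsec}--\ref{bdsforbdfsec}. First I would invoke the left-shock propagation result (Proposition \ref{leftshockbootstrap}) to deduce \eqref{BLgeomL}--\eqref{BLgeomL2} from \eqref{leftboot}, \eqref{leftgeom}, and the smallness assumptions \eqref{initialdatabds2} on $\mathring{K}^L, \mathring{\slashed{K}}^L$; this provides $\ve_1 = \ve_2 = \epsilon_L$ (up to a harmless constant), which is what is required to apply Proposition \ref{abstractenestDL}.

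Next I would verify the pointwise decay hypotheses \eqref{assumppwdecayL}--\eqref{assumppwdecayL0}. The bound \eqref{assumppwdecayL} in the near-shock regime $|u| \leq s^3$ follows from a Klainerman-Sobolev argument applied to the energy $E_I^L$, combined with the weight structure of $X_L$; this is the pointwise estimate \eqref{pwleft} established in Lemma \ref{basicpwdecay}. The stronger estimate \eqref{assumppwdecayL0} in the region $|u| \geq s^3$ exploits the fact that in that region one can upgrade the basic Klainerman-Sobolev bound using powers of $1+|u|$ coming from $uf(u)$ in $X_L$. With these two pointwise bounds in hand I would invoke Lemma \ref{abstractscalarleft} to deduce the scalar-current estimate \eqref{needforEL0} for the $X_L$-multiplier. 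The Morawetz version \eqref{needforEL0M} then follows by the same lemma applied with the multiplier $X_M$ in place of $X_L$, using crucially that the pointwise bound \eqref{assumppwdecayL} gives a $(\log s)^{-(\alpha - 1)}$ gain which makes the quadratic error $\int \frac{1}{1+t}\|\pa^2\psi\|_{L^\infty}\,dt$ time-integrable (this is precisely the mechanism indicated around \eqref{marginal} and \eqref{crit}).

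Third I would check the lower-order current bound \eqref{PIassumpL}. The schematic form $P_I \sim (1+v)^{-1} \pa Z^{I_1}\psi \cdot \pa Z^{I_2}\psi$ with $\max(|I_1|,|I_2|) \leq |I|-1$ from Lemma \ref{higherorderexterioreqns}, combined with the already-bounded energies $E_J^L$ at strictly lower order and the Klainerman-Sobolev bounds, gives \eqref{PIassumpL} via the analogue (in $D^L$) of the $R_{P,X}$ estimates \eqref{RPXboundR} and \eqref{RPXboundC}. The initial-data bound \eqref{needforEL} is exactly the assumption $\mathring{\mathcal{E}}^L_{N^L}\leq \epsilon_L^3$ from \eqref{initialdatabds}. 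Finally, for the boundary error \eqref{bdyerrorleftboot} along $\Gamma^L$, I would invoke Proposition \ref{lefttocenterprop}, which bounds $\int_{\Gamma^L_t} vf(v)|\pa_v\psi^I_L|^2\, dS\, dt$ by the analogous quantity for $\psi^I_C$ plus nonlinear errors; the needed control of the $\psi^I_C$ side follows from the central-region boundary term appearing in $E^C_{I,T}$ and $B_I^C$, both of which were bounded in the previous lemma by $\epsilon_C^{5/2} \ll \epsilon_L^3$ thanks to the hierarchy \eqref{epsparameters}.

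With all the hypotheses of Proposition \ref{abstractenestDL} verified, that proposition yields $\mathcal{E}^L_{N^L}(t) \leq C(N_L)\, \epsilon_L^3$, and choosing $\epsilon_L$ small enough that $C\epsilon_L \leq \epsilon_L^{1/2}$ produces \eqref{leftbootconclusion}. The main obstacle I expect is the boundary term \eqref{bdyerrorleftboot}: it is here that the coupling through the Rankine-Hugoniot condition \eqref{introbc2} between $\psi_L$ and $\psi_C$ must be exploited in such a way that only the \emph{controlled} component $\ev^m \psi_L \leftrightarrow \ev^{\mB} \psi_C + \text{nonlinear}$ enters, while angular derivatives, which are \emph{not} controlled by the boundary condition (this is the observation just after \eqref{introbc2} in the strategy section), do not appear. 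This in turn forces us to use the top-order multiplier $X_L$ from \eqref{Lvfsdef} rather than $\pa_t$, together with the logarithmic weight $f(z)=\log z(\log\log z)^\alpha$ with $\alpha > 1$ chosen precisely so that \eqref{pert1left} absorbs the slow decay of $\gamma$ on $D^L$ provided by \eqref{pert1leftused}, while $\alpha < 3/2$ keeps the asymptotic limit of $\beta^L/s^{1/2}$ finite.
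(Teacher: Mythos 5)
Your proposal is correct and follows essentially the same route as the paper: verify the hypotheses of Proposition \ref{abstractenestDL} by citing Proposition \ref{leftshockbootstrap} for \eqref{BLgeomL}--\eqref{BLgeomL2}, the pointwise bounds \eqref{pwleft}--\eqref{pwleft2} of Lemma \ref{basicpwdecay} for \eqref{assumppwdecayL}--\eqref{assumppwdecayL0}, Lemma \ref{abstractscalarleft} together with \eqref{RPXboundL} from Lemma \ref{timeintegrability-left} for \eqref{needforEL0}--\eqref{PIassumpL}, and Proposition \ref{lefttocenterprop} for the boundary term \eqref{bdyerrorleftboot} (your citation of \ref{lefttocenterprop} is in fact the appropriate one; the paper's text points to \ref{centertorightprop} for this step, evidently a slip). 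Only your explanatory asides are slightly off in mechanism --- the time-integrability \eqref{crit} in $D^L$ comes from the Morawetz bulk term $M_I$ in the bootstrap energy, not from the (still non-integrable) pointwise bound, and the $\psi_C$-side of \eqref{bdyerrorleftboot} is not controlled directly by $E_{I,T}^C$ and $B_I^C$ (the weight $vf(v)$ is too large) but via the integration-to-the-right-shock argument of Proposition \ref{UpsilonLplusbd} --- but since you invoke these results as black boxes, exactly as the paper does, this does not affect the validity of your argument.
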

\begin{proof}
 By Proposition \ref{leftshockbootstrap}, the bounds
 \eqref{BLgeomL}-\eqref{BLgeomL2} for the left shock $\Gamma^L$ hold with
 $\ve_1 = \ve_2 = \epsilon_L$ under the hypotheses of Proposition \ref{bootstrapprop}.
 Next, by the pointwise bound \eqref{pwleft} for $\psi_L$ from Lemma \ref{basicpwdecay},
 the bound \eqref{assumppwdecayL} holds. For the bound \eqref{assumppwdecayL0}
 for $\phi_L = \tfrac{1}{r} \psi_L$,
 we instead use the pointwise bound \eqref{pwleft2}.
 By Lemma \ref{abstractscalarleft},
 the bounds \eqref{needforEL0}-\eqref{needforEL0M}
 for the scalar currents $\mK_{X_L, \gamma, P_I}$
 and $\mK_{X_M, \gamma, P_I}$ and for the quantity $F_I$ hold under our hypotheses,
 and by the estimate \eqref{RPXboundL} from Lemma \ref{timeintegrability-left},
 the bounds for the lower-order currents $P_I$ along
 the time slices and the left shock hold.

 Finally, to get the bound \eqref{bdyerrorleftboot} for $\evm \psi_L$ along
 the shock, we use Proposition \ref{centertorightprop}. Combining the above,
	the bound \eqref{ELgoal} holds under the hypotheses of Proposition \ref{bootstrapprop},
	and taking $\epsilon_L$ smaller if needed we get \eqref{leftbootconclusion}.
\end{proof}

To conclude the proof of Proposition \ref{bootstrapprop}, we need to show
how the improved estimates \eqref{GLimprovement}-\eqref{logeomimprovement} 
follow from our assumptions. These bounds are all direct consequences of
Propositions \ref{rightshockbootstrap}- \ref{leftshockbootstrap}
after taking $\epsilon_0$ smaller, if needed.
\end{proof}

It remains to prove the above-mentioned results, which control the scalar
currents, boundary terms along the timelike sides of the shock,
and give pointwise decay estimates for the solution.
The goal of the next three sections is to prove these bounds,
under the hypotheses of Proposition \ref{bootstrapprop}.

\section{Basic consequences of the bootstrap assumptions}
\label{consofbootsec}

We collect here some simple consequences of the bootstrap proposition
\ref{bootstrapprop}.
In the next section, we will use the estimates from this section to bound the scalar
currents and inhomogeneous terms in each region,
as well as the error terms along the timelike sides of the shocks.
\subsection{Pointwise estimates}
\label{pwdecaysubsection}
We start by recording the pointwise decay estimates.
\begin{lemma}[Pointwise decay estimates]
	\label{basicpwdecay}
	Under the hypotheses of the bootstrap proposition \ref{bootstrapprop},
	provided
	the quantities $\epsilon_0, \epsilon_R, \epsilon_C, \epsilon_L$
	are taken sufficiently small, we have the following estimates.
	\begin{alignat}{2}
     &\sum_{|I| \leq N_R-3} (1+|u|)^{\mu/2} |\pa \psi^I_R| 
     + (1+|u|^\mu + r(\log r)^\nu)^{1/2}\left( |\pa_v\psi^I_R|
     + |\nas \psi^I_R|\right)
	 % +
	% (1 + v)^{1/2} \left(|\pa_v \psi^I_R| +|\nas  \psi^I_R|\right)
	% + (1+v)^{-1} (1+s)^{\mu/4} |\psi^I_R|
	\lesssim
	 \epsilon_R, &&\qquad \text{ in } D^R_t\label{pwright}\\
	 &\sum_{|I| \leq N_C-5} (1 + \log t)^{1/4} \left((1+s)^{1/2} |\pa \psi^I_C|
	 + (1+v)^{1/2} \left(|\evmB\psi^I_C| +  |\nas  \psi^I_C|\right)\right)
	 % + (1+s)^{-3/4} |\psi^I_C|
	 \lesssim
	 \epsilon_C,&&\qquad \text{ in } D^C_t\label{pwcentral}\\
	 % &\sum_{|I| \leq N_L-3} (1+ |u|)(\log |u|)^{1/2} (\log \log |u|)^{\alpha/2}
	 % |\pa \psi_L^I|
	 % \lesssim \epsilon_L, &&\qquad \text{ in } D^L_t
	 % \label{pwleft0}
	 % \\
	 &\sum_{|I| \leq N_L-3} (1+ |u|)(\log |u|)^{1/2} (\log \log |u|)^{\alpha/2}
	 |\pa \psi_L^I|\\
	 &\qquad\qquad+ \sum_{|I| \leq N_L-3}(1+v)^{1/2}(1+s)^{1/2}(\log s)^{\alpha/2} \left(|\pa_v \psi_L^I|
	 +|\nas \psi_R^I|\right)
	 \lesssim \epsilon_L, &&\qquad \text{ in } D^L_t
	  \label{pwleft}
\\
	 &\sum_{|I| \leq N_L-3}(1+ v) (1+s)^3 |\pa Z^I \phi_L|
	 \lesssim \epsilon_L, &&\qquad \text{ in } D^L_t \cap \{ |u| \geq s^3\}
	 \label{pwleft2}
 \end{alignat}
	where recall $\psi_L = r \phi_L$.

	We also have the bounds
	\begin{alignat}{2}
	 \sum_{|I| \leq N_R - 3} |\psi^I_R|
	 &\lesssim
	 % (1+t)^{1/2} \mathcal{E}_L(t_0)^{1/2}
	 %
	 \frac{1}{(1 + \log t)^{(\mu-1)/4}}
	 \epsilon_R,
	 &&\qquad \text{ in } D^R_t,
	 \label{pwRhardy}\\
	 \sum_{|I| \leq N_C - 5} |\psi^I_C| &\lesssim
	 % (1+\log t)^{3/4}\left(\mathcal{E}_C(t_0)^{1/2} +  \epsilon_C\right)
	 (1+\log t)^{1/2}\epsilon_C,&&\qquad \text{ in } D^C_t,
	 \label{pwChardy}\\
	 \sum_{|I| \leq N_L-3} |\psi^I_L|
	 &\lesssim
	 (1 + \log t)^2 \epsilon_L
	  % + (1+ \log t)^4
	 % (\log t)^2 \mathcal{E}_L(t_0)^{1/2} +  \epsilon_L (\log t)^4
	 % + \sum_{|I| \leq N_L-3} \left(\int_{t_0}^{t_1} \int_{\Gamma^L_t} v |\pa_v \psi^I_L|^2\, dS dt\right)^{1/2}\right),
	 % \qquad
	 &&\qquad \text{ in } D^R_t \cap\{|u| \leq s^3\}.
	 \label{pwLhardy}
 \end{alignat}
\end{lemma}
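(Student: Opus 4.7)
The plan is to deduce both sets of pointwise bounds from the bootstrap energies via Klainerman--Sobolev type inequalities adapted to the commutator fields in each region, supplemented by Hardy-type arguments for the bounds on $\psi^I$ itself. Since the bootstrap controls the energies up to order $N_A$, losing a fixed number of derivatives---enough for Sobolev embedding in three spatial dimensions, plus a small additional loss in $D^C$ for the interpolation described below---will yield the pointwise bounds at the claimed orders $N_R-3$, $N_C-5$, and $N_L-3$.

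I would handle the bounds on $\pa\psi^I$ first. In $D^R$ and $D^L$ the Minkowski commutators $\Z = \{\pa_\alpha, \Omega_{\mu\nu}, S\}$ generate the standard Klainerman--Sobolev inequality, which in the form $(1+|u|)^{1/2}(1+v)|\pa\psi| \lesssim \sum_{|J| \le 2}\|Z^J \pa\psi\|_{L^2}$ combines with the weights $(1+|u|)^\mu$, $uf(u)$, $vf(v)$ from $E^R_I$ and $E^L_I$ to give \eqref{pwright} and \eqref{pwleft}. The sharper bound \eqref{pwleft2} in the far sub-region $\{|u| \ge s^3\}$ of $D^L$ would use that the $L^2$ weight $uf(u) \sim s^3 \log(s^3)$ is very large there, so the pointwise estimate picks up the extra factor $(1+s)^{-3}$. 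In $D^C$ I would instead use a Klainerman--Sobolev inequality adapted to the Burgers commutators $\mZB = \{\Omega_{ij}, s\pa_u, v\pa_v\}$: the angular factor via $\Omega_{ij}$ is standard, while one-dimensional Sobolev in $u$ and $v$ using $s\pa_u$ and $v\pa_v$ produces the weights $(1+s)^{1/2}$ and $(1+v)^{1/2}$, the latter exploiting the scale-invariant measure $dv/v$. The factor $(1+\log t)^{1/4}$ in \eqref{pwcentral} would come from interpolating, at order $N_C - 5$, between the top-order energy $\mathcal{E}^C_{N_C,T}$ (weak weight $(1+s)^{-1/2}$ on $\pa_u\psi$ but no temporal growth) and the below-top-order decay energy $\mathcal{E}^C_{N_C-1,D}$ (stronger weight $(1+s)$ but $\log\log t$ growth).

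The pointwise bounds \eqref{pwRhardy}--\eqref{pwLhardy} on $\psi^I$ itself would then follow by integrating $\pa\psi^I$ along a curve from a reference location where $\psi^I$ is controlled. In $D^R$ that reference is spatial infinity, where $\psi_R$ decays by virtue of the initial data assumptions and finite speed of propagation: I would integrate $\pa_u\psi^I$ in $u$ from $-\infty$ using the pointwise bound just proved. In $D^C$ and $D^L$, the reference is the shock on the exterior side of the region; at the shock, the jump condition $[\Phi] = 0$, commuted with an appropriate number of fields, gives $\psi^I$ on the interior side in terms of the corresponding quantity on the exterior side, which has already been handled. The main technical difficulty will be the adapted Klainerman--Sobolev inequality in $D^C$: one must verify carefully that the commutators $s\pa_u$ and $v\pa_v$, whose coefficients depend on position, indeed yield the claimed weights when combined with integration by parts and the fundamental theorem of calculus, particularly near the shocks where the integration domain is curved and produces boundary contributions that must be absorbed into the shock flux terms in the bootstrap energies. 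A secondary issue will be ensuring that the interpolation producing the $(1+\log t)^{1/4}$ decay leaves enough Sobolev derivatives for the subsequent applications of Lemma \ref{basicpwdecay} in Sections \ref{scalarcontrol} and \ref{bcbootstrapsection}, which is ultimately what fixes the loss of $5$ derivatives at order $N_C$ rather than the $3$ one obtains in the two exterior regions.
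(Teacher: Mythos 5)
Your overall strategy (region-adapted Klainerman--Sobolev inequalities for the gradient bounds, plus Hardy/boundary-type arguments for the homogeneous quantities) is the paper's, and your treatment of \eqref{pwright}, \eqref{pwleft} and \eqref{pwleft2} is essentially correct. The genuine gap is your explanation of the factor $(1+\log t)^{1/4}$ in \eqref{pwcentral}. It does not come from interpolating between $\mathcal{E}^C_{N_C,T}$ and $\mathcal{E}^C_{N_C-1,D}$: at the orders $|I|\le N_C-5$ needed here one only ever invokes the decay energies $E^C_{I,D}$ with $|I|\le N_C-2$, which are uniformly bounded by $\epsilon_C^2$ under \eqref{centralbootlower}, so there is nothing to interpolate; and mixing the top-order weight $(1+s)^{-1/2}$ with the decay weight $(1+s)$ on $\pa_u\psi$ cannot manufacture a decay factor in $t$ in an $L^\infty$ bound. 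The factor is geometric: $D^C_t$ is a slab of radial width $\sim(\log t)^{1/2}$, and the scale-invariant Sobolev inequality \eqref{KSC} on this slab (proved by rescaling $D^C_t$ to a fixed annulus, or equivalently by your one-dimensional Sobolev in $u$ with $s\pa_u$ over an interval of length $s^{1/2}$, carried out to completion) gains exactly $(1+\log t)^{1/4}$; the standard Minkowskian inequality is useless here because $1+|u|$ is of order one inside $D^C_t$. The loss of five derivatives is then $3$ from Sobolev plus $2$ because the uniformly bounded decay energy only reaches $N_C-2$, not an interpolation cost. As written, your interpolation step would fail to produce \eqref{pwcentral}.

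For the homogeneous bounds \eqref{pwRhardy}--\eqref{pwLhardy}, your anchoring at the shocks via the commuted jump condition $[\Phi]=0$ is also not what the paper does and is more delicate than you indicate: $[\Phi]=0$ gives $[\Psi]=0$, but $\psi=\Psi-\Sigma$ and the profile $\Sigma$ jumps across each shock, so the boundary value of $\psi^I$ contains commuted $\Sigma$-terms and derivatives of $B^A$; moreover the commutators are transverse to the shocks, so commuting the jump condition requires the tangential decomposition of Section \ref{bcbootstrapsection} and the high-order bounds \eqref{leftgeom}--\eqref{rightgeom}. The paper bypasses all of this: Lemma \ref{homoglem} controls $\|\psi^I_C\|_{L^2(D^C_t)}$ and $\|\psi^I_L\|_{L^2}$ by integrating in time along $\Gamma^L$ from the initial slice (Lemma \ref{sobolevshock}), using the boundary-flux terms built into the energies --- this is precisely why $B^L_I$ is included in $\mathcal{E}^L$ --- and then applies the same Sobolev inequalities. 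Finally, in $D^R$ your radial integration from spatial infinity only yields $(1+\log t)^{-(\mu-2)/4}$, slightly weaker than the stated $(1+\log t)^{-(\mu-1)/4}$, which the paper obtains by combining the Hardy inequality \eqref{hardyRapp} with the factor $(1+|u|)^{1/2}$ from \eqref{KSR}. These last points are repairable, but the central-region decay mechanism and the shock-anchored argument are the places where your proof, as proposed, does not go through.
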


\begin{proof}
	The bounds \eqref{pwright}-\eqref{pwleft} are immediate consequences of the definitions
 of the energies, the definitions of the domains
 $D^L, D^C, D^R$, and the Klainerman-Sobolev inequalities
 from Section \ref{klainermansobolev}. For the bound in the central region,
 we additionally use the fact that $|Z^I q| \lesssim \sum_{|I'| \leq |I|}
 |Z_{\mB}^{I'} q|$.
 % Note the
 % bound for the first term in \eqref{pwleft} follows
 % from the bound \eqref{pwleft0} and the fact that $|u|\gtrsim s^{1/2}$ in
 % $D^L_t$.
 The bounds \eqref{pwRhardy}-\eqref{pwLhardy}
 follow after using the upcoming Lemma \ref{homoglem} to control
 the relevant $L^2$-based norms. To prove \eqref{pwleft2}, we use
 the standard Klainerman-Sobolev inequality to get
 \begin{align}
  (1+v)(1+|u|)^{1/2} |\pa Z^I \phi_L|
	&\lesssim \sum_{|J| \leq |I| + 3}
	\left(\int_{D^L_t \cap\{|u| \geq s^3\}}
	 |\pa Z^J \phi_L|^2\, r^2 dr dS(\omega)\right)^{1/2}
	 \\
	 &\lesssim
	 \sum_{|J| \leq |I| + 3}
	 \left(\int_{D^L_t\cap\{|u| \geq s^3\}}
	 {|\pa \psi^J_L|^2 + \frac{1}{r^2 }|\psi^J_L|^2}\, dr dS(\omega)\right)^{1/2}\\
	 &\lesssim
	 \sum_{|J| \leq |I| + 3}
	 \left(\int_{D^L_t\cap\{|u| \geq s^3\}}
	 {|\pa \psi^J_L|^2} \, dr dS(\omega)\right)^{1/2}\\
	 &\lesssim (1 + \log t)^{-{3/2}} \epsilon_L,
  \label{}
 \end{align}
 {for $|I| \leq N_C - 3$,
 where in the second-last step we used the Hardy inequality
 \eqref{rhardyL} and the fact that $\psi^J_L = r Z^J \phi_L$ vanishes
 at $r = 0$. In the last step we used
 that the energy in $D^L_t$ controls
 $\| |u|^{1/2} \pa Z^J \psi_L\|_{L^2(D^L_t)}$ and that
 we are just considering the region $|u| \gtrsim (\log t)^3$.}
\end{proof}

We record some $L^2$-based bounds for homogeneous quantities. In each region
% except $D^L_t \cap \{|u| \geq s^3\}$,
the idea is just to integrate
to one of the shocks and use bounds for the energies to control
the resulting boundary terms.
% In the region $D^L_t \cap \{|u| \geq s^3\}$
% the result follows from a Hardy-type inequality.
\begin{lemma}
	\label{homoglem}
	% \label{hardyR}
	Under the hypotheses of Proposition \ref{bootstrapprop}, we have the bounds
	\begin{align}
	 \sum_{|I| \leq N_R-1}\| \psi^I_R\|_{L^2(D^R_t)}
	 &\lesssim
	 {(1 + \log t)^{1/2} (1 + \log t)^{-\mu/4} \epsilon_R}
	 \label{hardyR}
\\
	 \sum_{|I|\leq N_C-2}
	 \|\psi^I_C\|_{L^2(D^C_t)}
	 &\lesssim
	 (1+\log t)^{1/4}(\mathring{\mathcal{E}}_{N_C}^C)^{1/2}
	  +  (1 + \log t)^{3/4}\epsilon_C
	 \label{hardyC}\\
	 \sum_{|I| \leq N_L-1}\|\psi^I_L\|_{L^2(D^L_t \cap \{|u|\leq s^3\}}
	 &\lesssim
	 (1+\log t)^2
	 \left( (\mathring{\mathcal{E}}_{N_L}^L)^{1/2} +  \epsilon_L\right).
		\label{hardyL}
 \end{align}
\end{lemma}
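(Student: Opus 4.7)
The plan is to prove the three estimates region by region via a fundamental-theorem-of-calculus argument: write $\psi^I_A$ as the integral of $\pa \psi^I_A$ starting from a surface on which we have good control, apply Cauchy--Schwarz against the weight appearing in the relevant energy, and integrate over the transverse directions. In each case the ``boundary term'' that appears is the restriction of $\psi^I_A$ to one of the shocks, which is either absorbed trivially (in $D^R$) or bounded by a separate trace argument (in $D^C$).

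For $D^L$ I will use that $\psi^I_L = r Z^I \phi_L$ vanishes at $r=0$ and write $\psi^I_L(t,r,\omega) = \int_0^r \pa_{r'} \psi^I_L(t,r',\omega)\, dr'$. Cauchy--Schwarz against the weight $uf(u)$ controlling $|\pa\psi^I_L|^2$ in $E_I^L$ gives a pointwise bound
\[
  |\psi^I_L(t,r,\omega)|^2 \;\lesssim\; \Bigl(\int_{B^L}^{t}\tfrac{du}{uf(u)}\Bigr)\int_0^{r^L(t,\omega)}uf(u)\bigl(|\pa_u\psi^I_L|^2 + |\pa_v\psi^I_L|^2\bigr)\, dr',
\]
whose first factor is uniformly bounded because $\alpha>1$ makes $\int du/(uf(u))$ convergent at infinity, while the second has $\omega$-integral $\lesssim \epsilon_L^2 + \mathring{\mathcal{E}}^L_{N^L}$ from the definition of $E_I^L$. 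Integrating the pointwise bound over the shell $r\in[t-s^3,r^L(t,\omega)]$ of width $\lesssim s^3 = (\log t)^3$ and over $\omega$ produces the required $(1+\log t)^2$ factor.

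For $D^C$ I will integrate from the right shock, writing $\psi^I_C(t,u,\omega) = \psi^I_C(t,B^R(t,\omega),\omega) - \int_u^{B^R}\pa_{u'}\psi^I_C(t,u',\omega)\, du'$. Since $|B^L - B^R|\lesssim s^{1/2}$, Cauchy--Schwarz in $u$ followed by integration in $(u,\omega)$ yields
\[
  \|\psi^I_C\|_{L^2(D^C_t)}^2 \;\lesssim\; s^{1/2}\,\bigl\|\psi^I_C|_{\Gamma^R_t}\bigr\|_{L^2(\Gamma^R_t)}^2 \;+\; s\!\int_{D^C_t}|\pa_u\psi^I_C|^2,
\]
and the second term is $\lesssim \epsilon_C^2$ from the decay energy $E^C_{I,D}$ (which controls $(1+s)|\pa_u\psi^I_C|^2$). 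The shock-trace term is handled by an auxiliary argument: I differentiate $\|\psi^I_C|_{\Gamma^R_t}\|_{L^2(\Gamma^R_t)}^2$ in $t$ (with Jacobian corrections for the moving boundary), apply Cauchy--Schwarz to $\int_{\Gamma^R_t}|\psi^I_C||\pa\psi^I_C|\,dS$, and then Cauchy--Schwarz in $t$ against the spacetime shock integrals in $E^C_{I,T}$ and $E^C_{I,D}$ to obtain $\|\psi^I_C|_{\Gamma^R_t}\|_{L^2}^2 \lesssim \mathring{\mathcal{E}}^C_{N^C} + (1+\log t)\epsilon_C^2$. Combining produces the $(1+\log t)^{1/4}$ and $(1+\log t)^{3/4}$ factors in the target.

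For $D^R$ I will use the decay $\psi^I_R\to 0$ as $u\to -\infty$ (which follows from finite speed of propagation together with the initial data specification, or equivalently from finiteness of the $(1+|u|)^\mu$-weighted energy with $\mu>2$) to write $\psi^I_R(t,u,\omega) = \int_{-\infty}^u \pa_{u'}\psi^I_R\, du'$; Cauchy--Schwarz against $(1+|u'|)^\mu$ combined with $|u|\gtrsim (\log t)^{1/2}$ throughout $D^R_t$ gives $|\psi^I_R(t,u,\omega)|^2 \lesssim (1+|u|)^{1-\mu}E_I^R(t)$, and integration in $u$ over $(-\infty,B^R(t,\omega)]$ using $|B^R|\gtrsim (\log t)^{1/2}$ produces the claimed factor $(1+\log t)^{1-\mu/2}$ upon taking square roots. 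The hardest step is the shock-trace estimate in $D^C$: bounding $\|\psi^I_C|_{\Gamma^R_t}\|_{L^2(\Gamma^R_t)}$ uniformly in $t$ requires balancing the very weak weights $(1+v)^{-1}(1+s)^{-1/2}$ appearing in the top-order boundary integrals (cf.\ \eqref{EtopC}) against the $t$-integration, and the logarithmic growth factor in the central-region estimate is produced precisely by this balance.
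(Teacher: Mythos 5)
Your proof is correct and rests on the same basic mechanism as the paper's — radial fundamental-theorem-of-calculus/Cauchy--Schwarz estimates combined with a trace-in-time estimate along a shock — but in two of the three regions you take a genuinely different route. In $D^C$ the paper integrates to the \emph{left} shock (Lemma \ref{hardyClemma}, i.e.\ \eqref{hardyCapp}), whose flux terms sit on the spacelike side and appear directly in $E_{I,T}^C$, $E_{I,D}^C$; you integrate to the right (timelike) shock instead, which is legitimate because under the bootstrap hypotheses the flux $(1+v)|\evmB\psi^I_C|^2$ along $\Gamma^R$ is precisely the quantity $B_I^C$ contained in $\mathcal{E}^C_{N_C,T}$, the $\pa_u$-flux with weight $(1+s)^{1/2}/(1+v)$ along $\Gamma^R$ is in $E_{I,D}^C$ for $|I|\le N_C-2$, and $\|\psi^I_C\|_{L^2(\Gamma^R_{t_0})}$ is part of $\mathring{\mathcal{E}}^C_{N_C}$; your differentiate-in-$t$ trace argument is the same device as the paper's Lemma \ref{sobolevshock}, and the bookkeeping $s^{1/2}\cdot(\mathring{\mathcal{E}}+(\log t)\epsilon_C^2)+s\cdot s^{-1}\epsilon_C^2$ reproduces \eqref{hardyC}. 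In $D^L$ the paper integrates outward to $\Gamma^L$ and pays with a shock-trace term — this is exactly where it needs the $B_I^L$ boundary term in \eqref{ELdef} — obtaining \eqref{rhardyL0}; you instead integrate from $r=0$, using $\psi^I_L=rZ^I\phi_L\to 0$ at the origin and the convergence of $\int du/(uf(u))$ for $\alpha>1$, which avoids the shock trace and the initial-data trace altogether and in fact yields the slightly better power $(\log t)^{3/2}$, since the shell width $(\log t)^3$ enters linearly in the squared norm. In $D^R$ your argument is essentially the paper's Hardy inequality \eqref{hardyRapp} recast as an FTC estimate, with the same implicit assumption of decay at spatial infinity (the paper also only states this as a hypothesis of Lemma \ref{hardyRapplem}). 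Two small blemishes, neither fatal: your closing sentence speaks of bounding the $\Gamma^R$ trace \emph{uniformly} in $t$, whereas what you actually prove (and what suffices) carries the $(1+\log t)\epsilon_C^2$ growth; and in $D^R$ the factor $(1+\log t)^{1-\mu/2}$ belongs to the squared norm, the square root giving the stated $(1+\log t)^{1/2-\mu/4}$.
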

\begin{remark}
 The precise powers of $\log t$ appearing in \eqref{hardyC}-\eqref{hardyL}
 are largely irrelevant for our estimates, since the quantities
 on the left-hand sides of \eqref{hardyC}-\eqref{hardyL} will always
 enter into our estimates with an additional power of $t^{-1}$ which
 can be used to absorb these slowly-growing factors.
\end{remark}
\begin{proof}
	By the Hardy-type inequality \eqref{hardyRapp}, we have
	\begin{equation}
	 (1 + \log t)^{\mu/4} \|  \psi_R^I \|_{L^2(D^R_t)} \lesssim
	 (1 + \log t)^{1/2} \| (1 + r-t)^{\mu/2} \pa \psi_R^I \|_{L^2(D^R_t)}
	 \lesssim
	 (1 + \log t)^{1/2} \epsilon_R,
	 \label{}
	\end{equation}
	which is \eqref{hardyR}.

	To get \eqref{hardyC}, we use \eqref{hardyCapp}
	with $q = \psi_C^I$,
	\begin{align}
	 \|\psi_C^I\|_{L^2(D^C_t)} &\lesssim (\log t)^{1/4} \|\psi_C^I\|_{L^2(\Gamma^L_{t_0})}
 	+ (\log t)^{3/4} \left(\int_{t_0}^{t} \int_{\Gamma^L_{t'}} v |\pa_v \psi_C^I|^2
 	+ \frac{1}{vs} |\pa_u \psi_C^I|^2\, dS dt'\right)^{1/2}\\
     &\qquad+ (\log t)^{1/2} \|\pa \psi_C^I\|_{L^2(D^C_t)},
	 \label{usehardyCpf}
	\end{align}
	and noting that for $|I| = N_C$, we only have a uniform bound for
	$(\log t)^{-1/2} \|\pa \psi_C^I\|_{L^2(D^C_t)}$ and that energy
	controls the boundary term here, the result follows.
	The bound \eqref{hardyL} follows in the same way, but
	using \eqref{rhardyL0}; note that it is here that we needed
	to assume the bound for the quantity $B_I$ in the definition
	of the energy \eqref{ELdef}.
\end{proof}

We now move onto the time-integrated estimates.

\subsection{Time-integrated estimates for the potentials}
\label{timeintsec}
\begin{lemma}[Time-integrated estimates in the rightmost region]
		Under the hypotheses of Proposition \ref{bootstrapprop}, we have
		\begin{multline}
	\sum_{|I| \leq N_R-5}
	\int_{t_0}^{t_1} \frac{1}{1+t}
	\left(\| \pa^2 \psi^I_R\|_{L^\infty(D^R_t)} +
    \left\| \frac{1}{1+|u|}\pa \psi^I_R\right\|_{L^\infty(D^R_t)}\right)\, dt\\
    + {\sum_{|I| \leq N_R-5}\int_{t_0}^{t_1} \frac{1}{1+t} 
        \left\| \left(1 + \frac{r^{1/2}(\log r)^{\nu/2}}{(1+|u|)^{\mu/2}}\right)\pa_v\pa
        \psi^I_R\right\|_{L^\infty(D^R_t)} + \left\| \left(1 + \frac{r^{1/2}(\log
        r)^{\nu/2}}{(1+|u|)^{\mu/2}}\right)\nas \pa
\psi^I_R\right\|_{L^\infty(D^R_t)}\, dt}\\
	+ \sum_{|I| \leq N_R-5}\int_{t_0}^{t_1}\frac{1}{(1+t)^2} \|\psi^I_R\|_{L^\infty(D^R_t)} \, dt \lesssim 
    {\epsilon_R}\label{rightint1}
\end{multline}
\end{lemma}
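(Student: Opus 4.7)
The plan is to derive all three families of time integrals directly from the pointwise decay already recorded in Lemma \ref{basicpwdecay}, exploiting crucially the fact that in $D^R$ we have $|u| \gtrsim (1+s)^{1/2} \sim (1+\log t)^{1/2}$, which is what converts the pointwise weights in $u$ into integrable logarithmic weights in $t$.

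First I would handle the unweighted term $\|\pa^2 \psi^I_R\|_{L^\infty}$ and the term $\|(1+|u|)^{-1}\pa \psi^I_R\|_{L^\infty}$ for $|I| \leq N_R-5$. Writing $\pa = \pa_\alpha$ as an element of $\mathcal{Z}$ and using that $[\pa, Z] \in \text{span}(\mathcal{Z})$, together with $\psi^I_R = rZ^I\phi_R$, any second derivative $\pa^2\psi^I_R$ is bounded by a linear combination of $|\pa \psi^J_R|$ with $|J|\leq |I|+2 \leq N_R-3$ (plus lower-order pieces that are handled identically). Invoking the pointwise estimate \eqref{pwright} then yields $|\pa^2 \psi^I_R| \lesssim \epsilon_R (1+|u|)^{-\mu/2}$, and since $(1+|u|) \gtrsim (1+\log t)^{1/2}$ in $D^R$, we obtain
\begin{equation}
\int_{t_0}^{t_1} \frac{1}{1+t}\| \pa^2 \psi^I_R\|_{L^\infty(D^R_t)}\, dt \lesssim \epsilon_R\int_{t_0}^{\infty} \frac{dt}{(1+t)(1+\log t)^{\mu/4}} \lesssim \epsilon_R,
\end{equation}
since $\mu/4 > 1$ by \eqref{parameters}. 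The term with the extra $(1+|u|)^{-1}$ factor is strictly better and handled the same way.

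Next, for the weighted $\pa_v \pa \psi^I_R$ and $\nas \pa \psi^I_R$ terms, the key observation is that the weight $1 + r^{1/2}(\log r)^{\nu/2}(1+|u|)^{-\mu/2}$ is exactly tuned to the pointwise estimate \eqref{pwright}: since $\pa_v = \tfrac{1}{2}(\pa_t + \omega^i\pa_i)$ is a bounded linear combination of Cartesian derivatives (and similarly $\nas$ after accounting for the $r^{-1}$ factor which is harmless in $D^R$), commuting through $Z^I$ as above reduces both quantities to $|\pa_v \psi^J_R| + |\nas \psi^J_R|$ for $|J| \leq N_R-3$, which by \eqref{pwright} are bounded by $\epsilon_R(1+|u|^\mu + r(\log r)^\nu)^{-1/2}$. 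Splitting into the two cases $(1+|u|)^{\mu/2} \gtrless r^{1/2}(\log r)^{\nu/2}$, one checks in each case that $(1+|u|^\mu + r(\log r)^\nu)^{-1/2} \cdot (1 + r^{1/2}(\log r)^{\nu/2}(1+|u|)^{-\mu/2}) \lesssim (1+|u|)^{-\mu/2}$, reducing the integral to the same $\int(1+t)^{-1}(1+\log t)^{-\mu/4}\, dt$ bound as before.

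Finally, the term $(1+t)^{-2}\|\psi^I_R\|_{L^\infty}$ is essentially trivial: by the Hardy-improved pointwise bound \eqref{pwRhardy} we have $\|\psi^I_R\|_{L^\infty(D^R_t)} \lesssim \epsilon_R (1+\log t)^{-(\mu-1)/4}$ for $|I| \leq N_R-3$, so the integrand is $O(\epsilon_R(1+t)^{-2})$ which is trivially integrable. There is no real obstacle in this lemma—everything is a careful bookkeeping exercise in converting the $(1+|u|)^{\mu/2}$ weight from \eqref{pwright} into a $(\log t)^{\mu/4}$ weight via the defining inequality $|u|\gtrsim s^{1/2}$ for $D^R$, and the integrability requirement $\mu > 4$ is comfortably satisfied by our choice \eqref{parameters}.
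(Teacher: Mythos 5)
Your proposal is correct and follows essentially the same route as the paper: everything is read off from the pointwise decay in Lemma \ref{basicpwdecay} together with $|u|\gtrsim (1+\log t)^{1/2}$ in $D^R$, which turns the $(1+|u|)^{\mu/2}$ weights into integrable powers of $\log t$, with the last term handled via \eqref{pwRhardy} exactly as in the paper. The only (harmless) variation is in the weighted $\pa_v\pa$ and $\nas\pa$ terms, where the paper trades the good derivative for a $1/(1+v)$ gain via $(1+v)(|\pa_v q|+|\nas q|)\lesssim\sum_{Z}|Zq|$ and then crudely bounds the leftover weight by $(1+t)^{-1/4}$, whereas you invoke the weighted part of \eqref{pwright} directly and verify the weight comparison by a case split; both give the same integrable bound.
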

\begin{proof}
	Bounding $|\pa^2 \psi|\lesssim \sum_{Z \in \mZ} |\pa Z \psi|$
and $|u| \gtrsim (1+\log t)^{1/2}$ in $D^R$,
the first bound in \eqref{pwright} gives
\begin{equation}
 |\pa^2 \psi^I_R| + (1+|u|) |\pa \psi^I_R| \lesssim
 (1+|u|)^{-\mu/2+1} \epsilon_R
 \lesssim
  (1+\log t)^{-(\mu-2)/4}\epsilon_R,
 \label{}
\end{equation}
and since we chose $\mu > 6$ in \eqref{parameters}, the first two bounds in \eqref{rightint1} follow immediately.
Note that a slightly better bound is possible for just $|\pa^2\psi_R^I|$
but this will not be needed. For the bounds on the second line of 
\eqref{rightint1}, we just use the bound $(1+v)(|\pa_v q | + |\nas q|)
\lesssim \sum_{Z \in \mathcal{Z}} |Z q|)$ and then estimate
\begin{equation}
  \label{}
  \frac{1}{1+t} \left(1 + 
  \frac{r^{1/2}(\log r)^{\nu/2}}{(1+|u|)^{\mu/2}}\right)|(\pa_v,\nas)\pa\psi_R^I|
  \lesssim
  \frac{1}{1+t} \left(\frac{1}{1+v} + \frac{(\log r)^{\nu/2}}{(1+|u|)^{\mu/2} (1+v)^{1/2}}\right)
  \sum_{|J| \leq 1} |\pa Z^J \psi_R^I|.
\end{equation}
Bounding $(\log r)^{\nu/2}(1+|u|)^{-\mu/2} (1+v)^{-1/2} \lesssim (1+t)^{-1/4}$, say,
gives the bound for the terms on the second line of \eqref{rightint1}, and
the remaining bound follows directly from \eqref{pwRhardy}.
\end{proof}

\begin{lemma}[Time-integrated estimates in the central region]
		Under the hypotheses of Proposition \ref{bootstrapprop}, 
\begin{equation}
	\sum_{|I|\leq N_C-6}	 \int_{t_0}^{t_1}
	\frac{1}{1+t} \left( \| \pa^2 \psi^I_C\|_{L^\infty(D^C_t)}
    + \left\| \frac{1}{1+s} \pa \psi^I_C\right\|_{L^\infty(D^C_t)}\right)
	\,dt \lesssim \epsilon_C,
			 \label{centralint1}
		 \end{equation}
		 \begin{multline}
		\sum_{|I|\leq N_C-6}	 \int_{t_0}^{t_1}
		\frac{1}{1+t}
		 \| (1+s)^{1/2} (1+v)^{1/2} \pa_v \pa \psi^I_C\|_{L^\infty(D^C_t)}\, dt
		 \\
		 + \sum_{|I| \leq N_C - 6} \int_{t_0}^{t_1}
		 \frac{1}{1+t}
		 \| (1+s)^{1/2} (1+v)^{1/2} \nas \pa\psi^I_C\|_{L^\infty(D^C_t)}
		\,dt \lesssim \epsilon_C
			 \label{centralint2}
		\end{multline}
\end{lemma}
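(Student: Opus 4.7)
The plan is to mirror the previous lemma's proof (the time-integrated bounds in $D^R$), converting every second-order derivative acting on $\psi^I_C$ into a single $\pa$ acting on $\psi^J_C$ with $|J|\leq |I|+1$ via the commutator fields $\mZB = \{\Omega_{ij}, \sBo = s\pa_u, \sBt = v\pa_v\}$, and then invoking the pointwise estimate \eqref{pwcentral}. Concretely, in $D^C$ we have the identities $\pa_u = \tfrac{1}{s}\sBo$, $\pa_v = \tfrac{1}{v}\sBt$, and $|\nas q|\lesssim \tfrac{1}{r}\sum_{Z\in\mZB}|Zq|$; moreover, $r\sim v$ and $s\sim \log t$ throughout $D^C$ since $|u|\lesssim s^{1/2}$.

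First, for the derivatives appearing in \eqref{centralint1}, I would bound
$$|\pa^2 \psi^I_C| \lesssim \tfrac{1}{1+s}\sum_{|J|\leq |I|+1}|\pa\psi^J_C|,$$
so that \eqref{pwcentral} applied to $|J|\leq N_C-5$ (hence $|I|\leq N_C-6$) yields
$$|\pa^2\psi^I_C| \lesssim \frac{\epsilon_C}{(1+s)^{3/2}(1+\log t)^{1/4}}.$$
Combined with the analogous trivial bound $\tfrac{1}{1+s}|\pa\psi^I_C|\lesssim \epsilon_C(1+s)^{-3/2}(1+\log t)^{-1/4}$ coming directly from \eqref{pwcentral}, integrating against $dt/(1+t)$ produces $\int^{t_1} t^{-1}(\log t)^{-7/4}\,dt$, which is convergent and bounded by a constant multiple of $\epsilon_C$.

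For the second estimate \eqref{centralint2}, I would use $\pa_v = \tfrac{1}{v}\sBt$ and (say) $\nas = O(1/r)\Omega$, both of which supply a gain of $(1+v)^{-1}$:
$$|(\pa_v,\nas)\pa \psi^I_C| \lesssim \frac{1}{1+v}\sum_{|J|\leq |I|+1}|\pa \psi^J_C| \lesssim \frac{\epsilon_C}{(1+v)(1+s)^{1/2}(1+\log t)^{1/4}}.$$
Multiplying by the prefactor $(1+s)^{1/2}(1+v)^{1/2}$ and dividing by $1+t\sim 1+v$ gives an integrand bounded by $\epsilon_C\,(1+t)^{-3/2}(\log t)^{-1/4}$, whose time integral is trivially finite.

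There is really no substantive obstacle in this lemma: it is a direct consequence of the pointwise decay \eqref{pwcentral} together with the fact that each $\pa$-derivative can be traded for a $\mZB$-commutator at the cost of an inverse weight that is supplied back by the norm's prefactor. The only care required is the bookkeeping on the number of derivatives lost (one, because we commute one $Z\in\mZB$ through $\pa$), which is why the sum in the statement runs only up to $|I|\leq N_C-6$ while \eqref{pwcentral} requires $|J|\leq N_C - 5$.
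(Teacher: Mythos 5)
Your argument is correct and is essentially the paper's own proof: both bounds follow by trading each extra derivative for a field in $\mZB$ via $(1+s)|\pa q|\lesssim\sum_{Z_{\mB}\in\mZB}|Z_{\mB}q|$ (resp. $(1+v)(|\pa_v q|+|\nas q|)\lesssim\sum|Z_{\mB}q|$), applying \eqref{pwcentral} for $|J|\leq N_C-5$, and integrating the resulting $t^{-1}(\log t)^{-\kappa}$ decay. Your bookkeeping of the derivative loss ($|I|\leq N_C-6$ versus $N_C-5$ in \eqref{pwcentral}) matches the paper, and retaining the extra $(1+\log t)^{-1/4}$ factor only sharpens the integrand slightly.
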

\begin{proof}

To prove \eqref{centralint1},
we use that $(1+s)|\pa q| \lesssim \sum_{Z_{\mB} \in \mZB} |Z_{\mB} q|$
and so
\begin{equation}
 |\pa^2 Z_{\mB}^I \psi_C| + (1+s)^{-1} |\pa Z_{\mB}^I \psi_C|
 \lesssim \sum_{|J| \leq |I|+1} (1+\log t)^{-1} |\pa Z_{\mB}^J\psi_C|
 \lesssim (1+\log t)^{-3/2} \epsilon_C,
 \label{}
\end{equation}
which gives \eqref{centralint1}.

To get \eqref{centralint2}, we bound
$(1+v)(|\pa_v q| + |\nas q|)\lesssim
\sum_{Z_{\mB} \in \mathcal{Z}_{\mB}} |Z_{\mB} q|$ which gives
\begin{equation}
	(1+s)^{1/2} (1+v)^{1/2} \left( |\pa_v \pa Z_{\mB}^I \psi_C| + |\nas \pa Z_{\mB}^I\psi_C|\right)
	\lesssim\frac{(1+s)^{1/2}}{(1+v)^{1/2}} \sum_{|J| \leq |I|+1}
	|\pa Z_{\mB}^J \psi_C|
	\lesssim \frac{\epsilon_C }{(1+t)^{1/2}},
 \label{}
\end{equation}
by \eqref{pwcentral}, and \eqref{centralint2} follows.
\end{proof}

\begin{lemma}[Time-integrated estimates in the left region]
	\label{timeintpsileft}
			Under the hypotheses of Proposition \ref{bootstrapprop},
		\begin{equation}
	\sum_{|I| \leq N_L-4}
	\int_{t_0}^{t_1} \frac{1}{1+t}
	\left(\| \pa^2 \psi^I_C\|_{L^\infty(D^L_t)} +
    \left\| \frac{1}{1+|u|}\pa \psi^I_L\right\|_{L^\infty(D^L_t)}\right)\, dt \lesssim \epsilon_L \label{leftint1}
\end{equation}
\begin{multline}
	 \sum_{|I| \leq N_L-4}
	\int_{t_0}^{t_1}
	\frac{1}{1+t}\left\|\frac{(1+v)^{1/2} f(v)^{1/2}}{(1+|u|)^{1/2} f(|u|)^{1/2}}\pa_v \pa \psi^I_L\right\|_{L^\infty(D^L_t)}\,dt\\
	+
	\sum_{|I| \leq N_L-4}
 \int_{t_0}^{t_1}
 \frac{1}{1+t}\left\|\frac{(1+v)^{1/2} f(v)^{1/2}}{(1+|u|)^{1/2} f(|u|)^{1/2}}\nas \pa\psi^I_L\right\|_{L^\infty(D^L_t)}
	\, dt
		\lesssim \epsilon_L,
		\label{leftint2}
\end{multline}
and
\begin{equation}
 \sum_{|I| \leq N_L-4}
 \int_{t_0}^{t_1}
 \|\pa^2 Z^I\phi_L  \|_{L^\infty(D^L_t \cap \{|u| \geq s^3\})}\, dt
 \lesssim \epsilon_L
 \label{leftint3}
\end{equation}
where recall $\psi^I_L = r Z^I\phi_L$.
\end{lemma}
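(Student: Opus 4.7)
The proof combines the pointwise decay estimates from Lemma~\ref{basicpwdecay} with the Morawetz estimate $M_I(t_1)\lesssim \epsilon_L^2$ (available for $|I|\le N_L$ via the bootstrap assumption \eqref{leftboot}) and weighted Klainerman--Sobolev inequalities. The central difficulty is that the pointwise bound \eqref{pwleft}, taken at the shock where $|u|\sim s^{1/2}$, just barely fails to produce a time-integrable contribution against $dt/(1+t)$: the pointwise bound for $\pa_u^2\psi_L^I$ yields $\lesssim \epsilon_L/((\log t)^{1/2}(\log\log t)^{1/2}(\log\log\log t)^{\alpha/2})$, whose $t$-integral diverges. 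The Morawetz bound, whose weight $g'(r)\sim \log\log(1+r)\,(\log\log\log(1+r))^{\alpha}/((1+r)(\log(1+r))^{1/2})$ is designed precisely to capture this loss, will supply the missing integrability.

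For \eqref{leftint1}, I write each second-order derivative as $Z\pa\psi_L^I$ with $Z\in\{\pa_u,\pa_v,\nas\}$ and $\pa$ any rectangular derivative. When $Z\in\{\pa_v,\nas\}$, applying the good-derivative pointwise bound in \eqref{pwleft} to $\pa\psi_L^I$ (valid since $|I|+1\le N_L-3$) gives $|Z\pa\psi_L^I|\lesssim \epsilon_L(1+v)^{-1/2}(1+s)^{-1/2}(\log s)^{-\alpha/2}$, and with $v\gtrsim t$ in $D^L$ the contribution $\int\epsilon_L\,dt/((1+t)^{3/2}(1+\log t)^{1/2}(\log\log t)^{\alpha/2})\lesssim\epsilon_L$ is easily integrable. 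The hard case is $\pa_u^2\psi_L^I$; here I will invoke Morawetz at one order higher, yielding $\int_{t_0}^{t_1}\int_{D^L_t} g'(r)|\pa^2\psi_L^I|^2\,dr\,d\sigma\,dt\lesssim\epsilon_L^2$, and combine it with a weighted Klainerman--Sobolev inequality on each spatial slice (angular Sobolev on the spheres $\{(t,r)\}\times\mathbb{S}^2$, followed by a one-dimensional weighted Sobolev in $r$ that trades the growth of $1/g'(r)$ against the radial geometry of $D^L_t$), and then apply Cauchy--Schwarz in $t$ with a judicious integrable weight to convert the resulting space-time integrated bound into the desired $L^1_t L^\infty_x$ estimate. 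The lower-order quantity $(1+|u|)^{-1}|\pa\psi_L^I|$ follows by the same decomposition, with the extra $(1+|u|)^{-1}$ factor together with $(1+|u|)\gtrsim s^{1/2}$ providing the needed integrability in the good-derivative case while Morawetz handles the bad one.

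The estimate \eqref{leftint2} follows by the same strategy: the prefactor $(1+v)^{1/2}f(v)^{1/2}/((1+|u|)^{1/2}f(|u|)^{1/2})$ exactly cancels the $v$-decay coming from the good-derivative pointwise bound, leaving a residual $\sim \epsilon_L/((1+|u|)^{1/2}f(|u|)^{1/2})$ whose supremum over $D^L_t$ (achieved at $|u|\sim s^{1/2}$) is still not time-integrable, so the Morawetz-plus-weighted-Sobolev argument is invoked again. The estimate \eqref{leftint3} is by contrast elementary: in the far region $\{|u|\ge s^3\}$, the improved pointwise bound \eqref{pwleft2} applied after one additional commutation (allowed since $|I|+1\le N_L-3$) gives $|\pa^2 Z^I\phi_L|\lesssim \epsilon_L(1+v)^{-1}(1+s)^{-3}$, and direct integration yields $\int_{t_0}^{t_1}\epsilon_L\,dt/((1+t)(1+\log t)^3)\lesssim\epsilon_L$. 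The principal obstacle will be making the Morawetz-plus-weighted-Klainerman--Sobolev combination rigorous: one must carefully balance the spatial weight $g'(r)$ against the growth of $D^L_t$ (where $r$ ranges up to $\sim t$) in the weighted Sobolev step, and then choose the auxiliary weight in the Cauchy--Schwarz in $t$ step so that the product of the two factors comes out to be precisely $O(\epsilon_L)$.
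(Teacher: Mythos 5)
Your treatment of \eqref{leftint1} and \eqref{leftint3} is essentially the paper's argument: \eqref{leftint3} is indeed a direct consequence of \eqref{pwleft2}, and for \eqref{leftint1} the paper also splits off the far region $|u|\ge t/8$ (handled by \eqref{pwleft} alone) and, in the wave zone, combines the Morawetz term $M_I$ from \eqref{MIdef}--\eqref{leftboot} with Klainerman--Sobolev and a Cauchy--Schwarz in $t$ against the integrable weight $\bigl((1+t)\log t\,(\log\log t)(\log\log\log t)^{\alpha}\bigr)^{-1}$, using $\alpha>1$. One simplification you should adopt: on $\{|u|\le t/8\}$ one has $r\sim t$, so $g'(r)\gtrsim (1+t)^{-1}(\log t)^{-1/2}(\log\log t)(\log\log\log t)^{\alpha}$ is comparable to a function of $t$ alone; hence no spatially weighted Sobolev inequality "trading $1/g'(r)$ against the radial geometry" is needed --- the plain inequality \eqref{KSL}, together with $(1+|u|)\pa\lesssim Z$ and $|u|\gtrsim(\log t)^{1/2}$, already yields $(\log t)^{3/4}\|\pa^2 Z^I\psi_L\|_{L^\infty(\{|u|\le t/8\})}\lesssim\sum_{|J|\le N_L-1}\|\pa Z^J\psi_L\|_{L^2}$, after which the weight $g'(r)$ is pulled out of the spatial integral as a function of $t$.

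For \eqref{leftint2}, however, your diagnosis is off, and as written your route would not close. The outer derivative in \eqref{leftint2} is a \emph{good} derivative ($\pa_v$ or $\nas$), so one should convert it via $(1+v)\bigl(|\pa_v q|+|\nas q|\bigr)\lesssim\sum_{Z}|Zq|$, gaining a full power $(1+v)^{-1}$; since the prefactor is bounded by $(1+v)^{1/2}\log v\lesssim(1+v)^{3/4}$, one gets
$\frac{(1+v)^{1/2}f(v)^{1/2}}{(1+|u|)^{1/2}f(|u|)^{1/2}}\bigl(|\pa_v\pa\psi^I_L|+|\nas\pa\psi^I_L|\bigr)\lesssim (1+t)^{-1/4}\sum_{|J|\le|I|+1}|\pa Z^J\psi_L|\lesssim \epsilon_L(1+t)^{-1/4}$ by \eqref{pwleft}, and \eqref{leftint2} follows by direct time integration --- no Morawetz input is needed. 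Your accounting instead converts only the inner derivative (gaining $(1+|u|)^{-1}$) and cancels the $(1+v)^{1/2}$ of the prefactor against the good-derivative pointwise decay, leaving a logarithmically non-integrable residual; you then propose to invoke the Morawetz argument again. That fallback is problematic: the Morawetz density in \eqref{MIdef} carries only the weight $g'(r)$ (respectively $(g(r)+1)/r$ for angular derivatives), uniformly over all derivative components, whereas your prefactor grows like $t^{1/2}$ in the wave zone; the Morawetz/Klainerman--Sobolev mechanism buys only logarithmic improvements and cannot absorb a polynomially large weight. In other words, the missing ingredient in your argument for \eqref{leftint2} is precisely the $(1+v)^{-1}$ gain from the good outer derivative (equivalently, the $vf(v)$ weight that the energy \eqref{EDLdef} already places on good derivatives), and once that is used the estimate is elementary.
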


\begin{proof}
We start with the bound \eqref{leftint1} which is where we will need the bound
for the quantity $M_I$ defined in \eqref{MIdef}.
The needed estimate in the region $|r-t| \geq t/8$, say, follows directly
from the bound \eqref{pwleft} since that implies
\begin{equation}
	\sum_{|I| \leq N_L-4}
\int_{t_0}^{t_1} \frac{1}{1+t}\sup_{D^L_t\cap \{|u|\geq t/8\}}
|\pa^2 \psi^I_L|\, dt
\lesssim
\epsilon_L\int_{t_0}^{t_1} \frac{1}{(1+t)^2}\, dt ,
 \label{}
\end{equation}
which is more than sufficient, with a similar
estimate for $(1+|u|)^{-1}|\pa \psi^I_L|$. We therefore focus only on the region $|r-t| \leq t/8$.

Now, the bootstrap assumption
\eqref{leftboot} and the definition of the energy
\eqref{ELdef}-\eqref{MIdef} give, in particular
\begin{equation}
	\sum_{|I| \leq N_L}	\int_{t_0}^{t_1} \int_{D^L_t} g'(r) |\pa Z^I \psi_L|^2\,dt
	\lesssim \epsilon_L^2,
 \label{localleftboot0}
\end{equation}
 where recall
\begin{equation}
 g(r) = (\log(1+r))^{1/2} (\log \log (1+r)) (\log \log \log (1+r) )^{\alpha}.
 \label{}
\end{equation}
In \eqref{localleftboot0} we used that $g(r)/r \gtrsim g'(r)$.
This function satisfies
\begin{equation}
 g'(r) \gtrsim \frac{1}{1+r} \frac{1}{\log (1+r))^{1/2}} (\log \log (1+r))
  (\log \log \log (1+r))^{\alpha}.
 \label{}
\end{equation}
We therefore have the following bound
\begin{equation}
 \frac{1}{1+t} \frac{1}{(1+ \log t)^{1/2}} W_\alpha(t)
 \lesssim g'(r), \qquad W_\alpha(t) = (\log \log t)(\log\log\log t)^{\alpha},
 \qquad |r-t| \leq t/8.
 \label{boundtheweights}
\end{equation}
and in particular, if we define
\begin{equation}
 m_I(t) = \frac{1}{1+t}\frac{1}{(\log t)^{1/2}} W_\alpha(t) \|\pa Z^I \psi_L\|_{L^2(D^L_t
 \cap \{|u| \leq t/8\})}^2,
 \label{morimp}
\end{equation}
we have
\begin{equation}
 \sum_{|I| \leq N_L} \int_{t_0}^{t_1} m_I(t)\, dt \lesssim \epsilon_L^2.
 \label{morimp1}
\end{equation}

We also note that the weight $W_\alpha$ satisfies the following property,
\begin{equation}
 \int_{t_0}^{t_1} \frac{1}{1+t} \frac{1}{\log t} \frac{1}{W_\alpha(t)} \lesssim 1,
 \label{l1weight}
\end{equation}
since $\alpha > 1$.

We now prove the bound.
By the Klainerman-Sobolev inequality, in the region
$|u| \leq t/8$ we have the bound
\begin{equation}
 \sum_{|I| \leq N_L - 4} (\log t)^{3/4} |\pa^2 Z^I \psi_L|
 \lesssim \sum_{|I|\leq N_L-4} (1+|u|)^{3/2} |\pa^2 Z^I\psi_L|
 \lesssim \sum_{|I| \leq N_L-1} \| \pa Z^I \psi_L\|_{L^2(D^L_t\cap\{|u| \leq t/8\})},
 \label{}
\end{equation}
since $|u| \gtrsim (\log t)^{1/2}$ in $D^L_t$.
In particular,
\begin{align}
 \sum_{|I| \leq N_L-4} \int_{t_0}^{t_1} \frac{1}{1+t} \|\pa^2 Z^I \psi_L(t)&\|_{L^\infty(D^C_t\cap\{|u| \leq t/8\})}
 \\
 &\lesssim
 \sum_{|I| \leq N_L-1} \int_{t_0}^{t_1} \frac{1}{1+t} \frac{1}{(\log t)^{3/4}}
  \| \pa Z^I \psi_L\|_{L^2(D^L_t\cap\{|u| \leq t/8\})}\\
	&= \sum_{|I| \leq N_L-1} \int_{t_0}^{t_1} 
    \frac{1}{(1+t)^{1/2}} \frac{1}{(\log t)^{1/2}}
	\frac{1}{W_\alpha(t)^{1/2}} m_I(t)^{1/2}\, dt
	\\
	&\lesssim \left( \int_{t_0}^{t_1} \frac{1}{1+t} \frac{1}{\log t}
	\frac{1}{W_\alpha(t)}\, dt\right)^{1/2}
	\left( \int_{t_0}^{t_1} m_I(t)\, dt\right)^{1/2}
	\\
	&\lesssim \epsilon_L,
 \label{}
\end{align}
by \eqref{morimp1}-\eqref{l1weight},
as needed.

To get \eqref{leftint2} we just bound
$\frac{(1+v)^{1/2} f(v)}{(1 +|u|)^{1/2} f(|u|)} \lesssim (1+v)^{1/2} \log v
\lesssim (1+v)^{3/4}$, say,
 and then bound $(1+v)(|\pa_v q| + |\nas q|)\lesssim \sum_{Z \in \mathcal{Z}_m}
 |Z q|$, which gives
 \begin{equation}
	 \frac{(1+v)^{1/2} f(v)}{(1 +|u|)^{1/2} f(|u|)} \left(
	 |\pa_v \pa Z^I \psi_L| + |\nas \pa Z^I \psi_L|\right)
	 \lesssim
	 \frac{1}{(1+t)^{1/4}} \sum_{|J| \leq |I| +1} |\pa Z^J \psi_L|,
  \label{}
 \end{equation}
 and the needed bound then follows from \eqref{pwleft}.
 Finally, \eqref{leftint3} follows directly from \eqref{pwleft2}.

\end{proof}

\subsection{Estimates for quantities along the shocks}
\label{boundsalongshocks}
We will also need to record some estimates for quantities that
we control at the boundary. Apart from \eqref{rightbdytrivialbds},
these are all immediate consequences of the definitions of the energies
and the bootstrap assumptions, but it is convenient to record
these explicitly.

\begin{lemma}
 Under the hypotheses of Proposition \ref{bootstrapprop}, we have
 the following bounds.
 \begin{multline}
  \sum_{|I| \leq N_R}
  \int_{t_0}^{t_1}\int_{\Gamma^R_t} 
  (1+v)^{-1}(1+s)^{(\mu-1)/2}
	|\pa \psi^I_R|^2
	 dS dt \\
  +\sum_{|I| \leq N_R}
  \int_{t_0}^{t_1}\int_{\Gamma^R_t} 
     (1+v)(1+ s^{{\nu}})|\pa_v \psi^I_R|^2
    + (1+s)^{\mu/2}|\nas \psi^I_R|^2\, dS dt
     \lesssim
	\epsilon_R^2.
  \label{rightrivialbdybds}
 \end{multline}
\end{lemma}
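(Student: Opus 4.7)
The plan is to read off the bound \eqref{rightrivialbdybds} directly from the definition of the energy $E_I^R(t_1)$ in \eqref{ERdef} together with the bootstrap bound $\mathcal{E}^R_{N_R}(t) \leq \epsilon_R^2$, after specializing the weights to the geometry of the right shock. Since $\mathcal{E}^R_{N_R}$ already incorporates the boundary flux along $\Gamma^R$ as part of its definition (via the second line of \eqref{ERdef}), no further energy identity is needed; the task reduces to a weight comparison along $\Gamma^R$.

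First I would use the shock-position assumption \eqref{rightshocklogeomright}--\eqref{rightshocklogeomright2} (which is available here since it has been established from the bootstrap through Proposition \ref{rightshockbootstrap}). It implies $|u| \sim (1+s)^{1/2}$ along $\Gamma^R$, and hence
\begin{equation}
    (1+|u|)^\mu \sim (1+s)^{\mu/2},\qquad r(\log r)^\nu \sim (1+v)(1+s)^\nu \text{ along } \Gamma^R,
\end{equation}
so that the boundary integrand in \eqref{ERdef} is comparable, along $\Gamma^R$, to
\begin{equation}
    \bigl(s^{\mu/2} + vs^\nu\bigr)|\pa_v\psi_R^I|^2 + s^{\mu/2}|\nas\psi_R^I|^2 + \frac{s^{(\mu-1)/2}}{1+v}|\pa_u\psi_R^I|^2.
\end{equation}

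Next, for the second sum in \eqref{rightrivialbdybds}, the angular piece $(1+s)^{\mu/2}|\nas\psi^I_R|^2$ is immediately controlled by the third boundary term above, and the radial-null piece $(1+v)(1+s^\nu)|\pa_v\psi^I_R|^2 \lesssim (s^{\mu/2} + vs^\nu)|\pa_v\psi^I_R|^2$ (using $\nu \geq N_C$ and the choice of $\mu$ in \eqref{parameters}, so $vs^\nu$ dominates) is controlled by the first boundary term. For the first sum in \eqref{rightrivialbdybds}, decompose $|\pa\psi^I_R|^2 \lesssim |\pa_u\psi^I_R|^2 + |\pa_v\psi^I_R|^2 + |\nas\psi^I_R|^2$; the $\pa_u$ contribution matches the third boundary term in the energy exactly, while the $\pa_v$ and $\nas$ contributions are much smaller than the corresponding boundary weights (one gains a factor $(1+v)^2$ or $(1+v)$ respectively). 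Summing over $|I| \leq N_R$ and invoking the bootstrap bound \eqref{rightboot} then gives \eqref{rightrivialbdybds}.

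There is no real obstacle here: the bound is a bookkeeping statement that confirms the boundary weights built into $E_I^R$ are strong enough to absorb the weaker weights appearing on the left-hand side. The only point to verify carefully is the comparison $s^{(\mu-1)/2}/(1+v) \lesssim (1+|u|)^\mu/((1+v)(1+s)^{1/2})$ along $\Gamma^R$, which is an equality (up to constants) by $|u| \sim s^{1/2}$, confirming that no room is wasted in the top-order weight.
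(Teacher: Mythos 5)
Your proposal is correct and is exactly the paper's (implicit) argument: the paper records this lemma without a written proof, treating it as an immediate consequence of the boundary flux built into the definition of $E_I^R$ in \eqref{ERdef}, the bootstrap bound \eqref{rightboot}, and the fact that $|u|\sim s^{1/2}$ and $r(\log r)^\nu\sim v s^\nu$ along $\Gamma^R$. Your weight comparisons (in particular the exact match $\frac{(1+|u|)^\mu}{(1+v)(1+s)^{1/2}}\sim (1+v)^{-1}(1+s)^{(\mu-1)/2}$ for the $\pa_u$ term) are the right bookkeeping.
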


\begin{lemma}
		\label{bdsforpsiCalongshock}
	Under the hypotheses of Proposition \ref{bootstrapprop}, we have
	the following bounds.
	\begin{align}
	 	&\sum_{|I| \leq N_C} \int_{t_0}^{t_1} \int_{\Gamma^L_t} v |\evmB \psi_C^I|^2
		+ (1+s)^{-1/2} |\nas \psi_C^I|^2 + (1+v)^{-1}(1+s)^{-1} |\pa \psi_C^I|^2\, dS dt\\
		&+ \sum_{|I| \leq N_C-1} \frac{1}{1+\log \log t_1}
		\int_{t_0}^{t_1} \int_{\Gamma^L_t} v |\evmB \psi_C^I|^2
		+ (1+s)|\nas \psi_C^I|^2 + (1+v)^{-1}(1+s)^{1/2} |\pa \psi_C^I|^2\, dS dt\\
		&+ \sum_{|I| \leq N_C-2}
		\int_{t_0}^{t_1} \int_{\Gamma^L_t} v |\evmB \psi_C^I|^2
		+ (1+s)|\nas \psi_C^I|^2 + (1+v)^{-1}(1+s)^{1/2} |\pa \psi_C^I|^2\, dS dt
		\lesssim \epsilon_C^2,
	 \label{leftbdytrivialbds}
	\end{align}
	and
	\begin{align}
	 &\sum_{|I| \leq N_C} \int_{t_0}^{t_1} \int_{\Gamma^R_t}
	  (1+s)^{-1/2} |\nas \psi_C^I|^2 + (1+v)^{-1}(1+s)^{-1} |\pa \psi_C^I|^2
		+ v |\evmB \psi_C^I|^2\, dS dt\\
	 &+ \sum_{|I| \leq N_C-1} \frac{1}{1+\log \log t_1}
	 \int_{t_0}^{t_1} \int_{\Gamma^R_t}
	(1+v)^{-1}(1+s)^{1/2} |\pa \psi_C^I|^2\, dS dt\\
	 &+ \sum_{|I| \leq N_C-2}
	 \int_{t_0}^{t_1} \int_{\Gamma^R_t}  (1+v)^{-1}(1+s)^{1/2} |\pa \psi_C^I|^2\, dS dt
	 \lesssim \epsilon_C^2,
	 \label{rightbdytrivialbds}
	\end{align}
	and finally, there is a continuous function $c_0$ with $c_0(0) = 0$
	so that
	\begin{align}
	 \sum_{|I| \leq N_C-1} \int_{t_0}^{t_1} \int_{\Gamma^R_t} (1+s) |\nas \psi_C^I|^2\, dS dt
	 \lesssim c_0(\epsilon_0) \epsilon_C^2 + \sum_{|I| \leq N_C}\int_{t_0}^{t_1} \int_{\Gamma^R_t} v |\evmB \psi_C^I|^2\, dS dt.
	 \label{rightangularbound}
	\end{align}
\end{lemma}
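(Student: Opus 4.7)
\emph{Proof plan.} The first two estimates \eqref{leftbdytrivialbds}--\eqref{rightbdytrivialbds} are essentially bookkeeping: every integrand on the left-hand sides is explicitly a summand of one of the boundary contributions to $E^C_{I,T}$, $B^C_I$, or $E^C_{I,D}$ in \eqref{EtopC}--\eqref{loworderenergydef}. Matching weights and invoking the bootstrap bounds \eqref{centralboottop}--\eqref{centralbootlower}, the pieces with $|I|\leq N_C$ are bounded by $\mathcal{E}^C_{N_C,T}(t_1)\leq\epsilon_C^2$, the pieces with $|I|=N_C-1$ by $\mathcal{E}^C_{N_C-1,D}(t_1)\leq\epsilon_C^2(1+\log\log t_1)$ (which produces the normalization $(1+\log\log t_1)^{-1}$), and the pieces with $|I|\leq N_C-2$ by $\mathcal{E}^C_{N_C-2,D}(t_1)\leq\epsilon_C^2$. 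The only bookkeeping subtlety is that on the timelike side of $\Gamma^R$ the term $v|\evmB\psi^I_C|^2$ in \eqref{rightbdytrivialbds} is captured by $B^C_I\subset\mathcal{E}^C_{N_C,T}$ rather than by $E^C_{I,T}$ itself.

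The actual content is \eqref{rightangularbound}: we must control $(1+s)|\nas\psi^I_C|^2$ along $\Gamma^R$ at order $|I|\leq N_C-1$, yet by Remark \ref{differentmultiplierscentral} the decay energy $\mathcal{E}^C_{N_C-1,D}$ carries no angular control on the timelike side of $\Gamma^R$. The plan is to trade an angular derivative for one $\evmB$-derivative via a weighted one-dimensional Hardy inequality along the shock. Since $\Omega_{ij}\in\mZB$, we have $|\nas\psi^I_C|\lesssim (1+v)^{-1}\sum_{|I'|\leq |I|+1}|\psi^{I'}_C|$ with $|I'|\leq N_C$; using \eqref{volumeformula} to write $dS\,dt\sim dv\,dS(\omega)$ along $\Gamma^R$, the problem reduces to bounding
\begin{equation*}
\int_{\S^2}\int_{v_0(\omega)}^{v_1(\omega)}\frac{1+s}{(1+v)^2}|\psi^{I'}_C|^2\,dv\,dS(\omega).
\end{equation*}

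Integration by parts in $v$ against a primitive $W(v)=-(2+s)/v$ of $-(1+s)/v^2$, together with Cauchy--Schwarz to absorb the quadratic cross-term, produces a boundary contribution at $v_0$ of size $((1+s_0)/v_0)|\psi^{I'}_C|^2$ plus a bulk integral $\int(1+s)|\pa_v^\Gamma\psi^{I'}_C|^2\,dv$, where $\pa_v^\Gamma$ denotes the tangential $v$-derivative along $\Gamma^R$. Decomposing $\pa_v^\Gamma=\evmB+(\pa_v B^R - u/(vs))\pa_u$ and invoking the shock-geometry bounds of Proposition \ref{rightshockbootstrap} to estimate $|\pa_v B^R-u/(vs)|\lesssim (v(1+s)^{1/2})^{-1}$ along $\Gamma^R$, the bulk term splits into $\int(1+s)|\evmB\psi^{I'}_C|^2\,dv\lesssim\int v|\evmB\psi^{I'}_C|^2\,dv$ (exactly the right-hand side of \eqref{rightangularbound}, since $s\leq v$) plus an error $\int v^{-2}|\pa_u\psi^{I'}_C|^2\,dv$. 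Writing $v^{-2}=((1+v)(1+s))^{-1}\cdot(1+s)/v$ and using $\sup_{v\geq v_0}(1+s)/v\lesssim(1+s_0)/v_0 =: c_0(\epsilon_0)\to 0$ from the largeness assumption \eqref{largestart}, this error is controlled by $c_0(\epsilon_0)$ times the top-order boundary contribution to $E^C_{I',T}$, hence by $c_0(\epsilon_0)\epsilon_C^2$; the $v_0$-boundary term from Hardy is treated identically and contributes $c_0(\epsilon_0)\mathring{\mathcal{E}}^C_{N_C}\lesssim c_0(\epsilon_0)\epsilon_C^2$ by \eqref{initialdatabds}. The one non-routine point is verifying that the mismatch $\pa_v^\Gamma-\evmB$ decays with exactly the $(v\sqrt{s})^{-1}$ rate needed to push the error into $c_0(\epsilon_0)\epsilon_C^2$; this rate is dictated precisely by the linearized Burgers' law \eqref{betaa} for $\beta^R$, making the argument self-consistent with the shock-position bootstrap.
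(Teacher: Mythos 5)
Your proposal is correct and follows essentially the same route as the paper: the first two estimates are read off from the definitions of the energies and the bootstrap assumptions, and \eqref{rightangularbound} is obtained by bounding $|\nas \psi_C^I|\lesssim (1+v)^{-1}|\Omega\psi_C^I|$ and running exactly the weighted Hardy inequality along $\Gamma^R$ that the paper packages as Lemma \ref{controlangularhardyright} (same primitive in $v$, same tangential integration by parts, same decomposition of the tangential derivative into $\evmB$ plus a $\pa_u$-error controlled by the shock geometry, with the $t_0$-boundary term absorbed into $c_0(\epsilon_0)\epsilon_C^2$ via the initial-data norm). One small fix: for the bound $|\pa_v B^R - \tfrac{u}{vs}|\lesssim (v(1+s)^{1/2})^{-1}$ you should cite the bootstrap hypotheses \eqref{logeom} (equivalently \eqref{betaRassump}) rather than Proposition \ref{rightshockbootstrap}, since the proof of that proposition itself uses the present lemma.
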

\begin{proof}
 The bounds in \eqref{leftbdytrivialbds} and \eqref{rightbdytrivialbds}
 follow direcly from the definition of the energies and the bootstrap
 assumptions \eqref{centralboottop}-\eqref{centralbootlower}. To get the
 bound \eqref{rightangularbound}, we bound $|\nas \psi_C^I|^2 \lesssim (1+v)^{-2} |\Omega \psi_C^I|$
 and then use Lemma \ref{controlangularhardyright} with
 $q = \Omega \psi_C^I$,
 \begin{equation}
  \int_{t_0}^{t_1} \int_{\Gamma^R_t} \frac{s}{v^2} |\Omega \psi_C^I|^2\, dS dt
	\lesssim \frac{1}{1+t_0} \int_{\Gamma^R_{t_0}} |\Omega \psi_C^I|^2\, dS
	+ c_0(\epsilon_0)\int_{t_0}^{t_1} \int_{\Gamma^R_t} v |\evmB \Omega \psi_C^I|^2 + \frac{1}{vs} |\pa_u \Omega \psi_C^I|^2\, dS dt,
 \end{equation}
 which gives the result after using the bound \eqref{rightbdytrivialbds} to control
 the last term here.
\end{proof}

We also record some bounds for derivatives along the timelike (left) side
of the left shock. These follow immediately from the definition of the energy
from \eqref{EDLdef} and the bootstrap assumption \eqref{leftboot}.
\begin{lemma}
 Under the hypotheses of Proposition \ref{bootstrapprop}, we have
 \begin{multline}
  \sum_{|I| \leq N_L}
	\int_{t_0}^{t_1} \int_{\Gamma^L_t}
	\frac{1}{1+v} \log (1+s) (\log \log (1+s))^\alpha |\pa_u\psi_L^I|^2\, dS dt\\
  + \sum_{|I| \leq N_L}
	\int_{t_0}^{t_1} \int_{\Gamma^L_t} (1+s)^{1/2}\log (1+s) (\log \log (1+s))^\alpha |\nas \psi_L^I|^2
	\\
	+ \sum_{|I| \leq N_L}
	\int_{t_0}^{t_1} \int_{\Gamma^L_t} (1+v)(1+s)(1+\log s)^\alpha |\pa_v \psi_L^I|^2\,dS dt
	\lesssim \epsilon_L^2
  \label{controlofpsiLL}
 \end{multline}
\end{lemma}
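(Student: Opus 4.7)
The proof is essentially an unpacking of definitions, so the plan is short. My approach is to match each of the three boundary integrals in the claim against a term already present in the definition of $\mathcal{E}_{N_L}^L(t_1)$ from \eqref{ELdef}, then invoke the bootstrap assumption \eqref{leftboot}.

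First I would expand $\mathcal{E}_{N_L}^L(t_1) = \sum_{|I| \leq N_L}(E_I^L(t_1) + M_I(t_1) + B_I^L(t_1))$. Ignoring the positive bulk (time-slice and Morawetz) contributions from $E_I^L$ and $M_I$, the boundary piece of $E_I^L$ from \eqref{EDLdef} is
\begin{equation*}
\int_{t_0}^{t_1}\!\!\int_{\Gamma^L_t}\frac{1}{1+v}\,f(s^{1/2})\,|\pa_u\psi_L^I|^2 + (1+s)^{1/2}\,f(s^{1/2})\,|\nas\psi_L^I|^2\, dSdt,
\end{equation*}
while \eqref{BILdef} gives $B_I^L(t_1) = \int_{t_0}^{t_1}\int_{\Gamma^L_t} v f(v)\,|\pa_v\psi_L^I|^2\, dSdt$. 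Each of these is manifestly nonnegative, so the bootstrap bound \eqref{leftboot} immediately yields
\begin{equation*}
\sum_{|I|\leq N_L}\Big(\text{boundary part of }E_I^L + B_I^L\Big) \leq \mathcal{E}_{N_L}^L(t_1)\leq \epsilon_L^2.
\end{equation*}

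It remains to compare the weights appearing in the above integrals with those in \eqref{controlofpsiLL}. Recalling $f(z) = \log z(\log\log z)^\alpha$ and $s = \log v$, along $\Gamma^L$ we have $v\geq t_0 \gg 1$ (by \eqref{largestart}), hence
\begin{equation*}
f(s^{1/2}) = \tfrac{1}{2}\log s \cdot (\log\log s - \log 2)^\alpha \sim \log(1+s)\,(\log\log(1+s))^\alpha,
\end{equation*}
and
\begin{equation*}
v f(v) = v\log v\,(\log\log v)^\alpha = v\cdot s\cdot (\log s)^\alpha \sim (1+v)(1+s)(1+\log s)^\alpha.
\end{equation*}
These equivalences match, term by term and modulo harmless universal constants, the three integrands in \eqref{controlofpsiLL}.

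Combining the two paragraphs gives the claim. There is no real obstacle here; the only mild bookkeeping point is to verify the asymptotic equivalences above in the range $v \geq t_0 \geq 1/\epsilon_0$ where $s = \log v$ is large enough that $\log \log s$ and $\log\log(1+s)$ agree up to a multiplicative constant, and similarly for $(\log\log s - \log 2)^\alpha$ versus $(\log\log(1+s))^\alpha$. This is the only place where the assumption \eqref{largestart} enters, and only to ensure those implicit constants are harmless.
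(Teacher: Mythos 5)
Your proof is correct and is essentially the paper's argument: the three integrals are exactly the boundary contributions in the definitions \eqref{EDLdef} and \eqref{BILdef} of $\mathcal{E}^L_{N_L}$, up to the weight equivalences $f(s^{1/2})\sim \log(1+s)(\log\log(1+s))^\alpha$ and $vf(v)\sim(1+v)(1+s)(1+\log s)^\alpha$ valid for $t\geq t_0$ large, so the bound follows directly from the bootstrap assumption \eqref{leftboot}.
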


\section{Estimates for the scalar currents}
The goal of this section is to prove that our bootstrap assumptions
imply the estimates for the scalar currents
$\widetilde{K}$ that we assumed in Propositions \ref{abstractenestDR}-\ref{abstractenestDL}.
As a first step,
we show how the bounds from the previous section give us control of the quantities
$\gamma_A, P_I$ and $F_{I, A}$ appearing in \eqref{higherordereqnext}-\eqref{higherordereqcentral}. These rely
on the estimates from Section \ref{higherorderequations}.
We point out at this point that by Lemma \ref{basicpwdecay}, for each $A = L, C,R$,
since each $N_A \geq 30$, we have $N_A -5 \geq N_A/2+1$, and so
\begin{equation}
	\sum_{|I| \leq N_A/2+1} |\pa Z^I_A \psi^A(t,x)| \lesssim \frac{\epsilon_A}{(1 + \log t)^{1/2}},
	\qquad \text{ in } D^A_t,
 \label{perturbativeprf}
\end{equation}
where $Z^I_L, Z^I_R$ denote products of the Minkowski fields and
$Z^I_C$ denote products of the fields from $\mZB$. For most of the upcoming
estimates the bound \eqref{perturbativeprf} will be all that is needed.

\subsection{Control of the metric perturbation $\gamma$, the currents $P$
and the inhomogeneous terms}
We now use the bounds for the previous sections to bound various quantities that
appear in the scalar currents that we will need to estimate in the next section
(see \eqref{modifiedKbound0boot}-\eqref{mBmodifiedKboundstatement2}).

We start with the estimates in the rightmost region. 
\begin{lemma}
	\label{timeintegrability-right}
	Let $X = X_R$ be defined as in Section
	\ref{fields} and write $X = X^n\pa_u + X^\ell \pa_v$.
	If the hypotheses of Proposition \ref{bootstrapprop} hold, then the quantities
 $\gamma, P_{I, R}, F_{I, R}$ appearing in \eqref{higherordereqnext}
 satisfy the following estimates. Writing $\gamma = \gamma[\psi_R]$,
 \begin{equation}
  |\gamma| \lesssim \frac{\epsilon_R}{(1+v)(1+s)^{1/2}},
	% \qquad
	% |X^\ell| |\gamma| \lesssim \epsilon_R |X^n|,
  \label{pert0right}
 \end{equation}
 as well as the following time-integrated bound,
 \begin{multline}
  \int_{t_0}^{t_1} 
      \|\nabla \gamma\|_{L^\infty(D^R_t)}
      +
      \left\|(1+|u|)^{-1}\gamma\right\|_{L^\infty(D_t^R)}\, dt\\
  + \int_{t_0}^{t_1}
      \left\|\frac{|X^\ell_m|^{1/2}}{|X^n_m|^{1/2}}\pa_v \gamma\right\|_{L^\infty(D^R_t)}
  +
      \left\|\frac{|X^\ell_m|^{1/2}}{|X^n_m|^{1/2}}\nas\gamma\right\|_{L^\infty(D^R_t)}
	\, dt
	\lesssim \epsilon_R,
  \label{intgammaR}
 \end{multline}
 and with $P_I = P_{I,R}[\psi_R^I]$ and $F_I = F_{I,R}[\psi_R^I]$,
 \begin{multline}
	 \int_{t_0}^{t_1}
     \| |X^n_m|^{1/2} \nabla P_I\|_{L^2(D^R_t)}
     +
     \| (1+|u|)^{-1}|X^n_m|^{1/2} P_I\|_{L^2(D^R_t)}\, dt\\
     +
     \int_{t_0}^{t_1}
     \left\| |X^\ell_m|^{1/2} \pa_v P_I\right\|_{L^2(D^R_t)} 
 + \left\| |X^\ell_m|^{1/2}\nas P_I\right\|_{L^2(D^R_t)}
	 +
	  \| |X|^{1/2} F_I\|_{L^2(D^R_t)}
	 \, dt\lesssim \epsilon_R^2.
  \label{intPFR}
 \end{multline}

 We also have
 \begin{equation}
  \sup_{t_0 \leq t \leq t_1}
	\int_{D^R_t} |X_R| |P_I|^2 
    + \int_{t_0}^{t_1} \int_{\Gamma^R_t} 
    \left(|X_R| + (1+s)^{1/2}(1+v)  |X^n_{R}|\right)|P_I|^2\, dS dt
	\lesssim \epsilon_R^{3}, \qquad |I| \leq N_R.
  \label{RPXboundR}
 \end{equation}
\end{lemma}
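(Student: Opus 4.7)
The proof proceeds in four steps, one for each claim in the lemma. Throughout, I will use the structural identities from Lemma \ref{higherorderexterioreqns}, which give schematically $|\gamma| \lesssim (1+v)^{-1}|\pa \psi_R| + (1+v)^{-2}|\psi_R|$, and $|P_I|, |F_I| \lesssim (1+v)^{-1}\sum_{|I_1|+|I_2|\leq |I|-1}|\pa Z^{I_1}\psi_R| \, |\pa Z^{I_2}\psi_R|$ (with $F_I$ gaining an additional factor of $(1+v)^{-1}$). For the multiplier $X_R = w(u)(\pa_u+\pa_v) + r(\log r)^\nu \pa_v$ with $w(u) = (1+|u|)^\mu$, the null-frame weights satisfy $|X^n_m|^{1/2}\sim (1+|u|)^{\mu/2}$ and $|X^\ell_m|^{1/2}\sim (1+|u|)^{\mu/2} + r^{1/2}(\log r)^{\nu/2}$, and the geometric lower bound $|u|\gtrsim s^{1/2}$ is used everywhere in $D^R$.

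For \textbf{Step 1} (the pointwise bound \eqref{pert0right}), apply \eqref{pwright} with $|I|=0$ to get $|\pa\psi_R|\lesssim \epsilon_R (1+|u|)^{-\mu/2}\lesssim \epsilon_R(1+s)^{-\mu/4}$; since $\mu\ge 2$ this already yields the claim, and the $(1+v)^{-2}|\psi_R|$ contribution is handled by \eqref{pwRhardy}. For \textbf{Step 2} (the integrated bound \eqref{intgammaR}), differentiating gives $|\nabla\gamma|\lesssim (1+v)^{-1}|\pa^2\psi_R|+(1+v)^{-2}|\pa\psi_R|$, whose $L^\infty$ norm times $(1+t)^{-1}$ is directly controlled by the first two terms of \eqref{rightint1}. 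The two weighted quantities in the second display of \eqref{intgammaR} involve exactly the factor $1 + r^{1/2}(\log r)^{\nu/2}(1+|u|)^{-\mu/2}$ that appears in the second line of \eqref{rightint1}, so they reduce at once to that estimate.

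For \textbf{Step 3} (the $L^2$-in-spacetime bound \eqref{intPFR}), the key device is a Cauchy--Schwarz split placing the lower-order factor in $L^\infty$ and the higher-order factor in the energy. Writing $\nabla P_I \sim (1+v)^{-1} \pa^2 Z^{I_1}\psi_R\cdot \pa Z^{I_2}\psi_R$ with $|I_1|\le N_R/2+1\le N_R-5$, one obtains
\begin{equation}
\||X^n_m|^{1/2}\nabla P_I\|_{L^2(D^R_t)} \lesssim \|\pa^2 Z^{I_1}\psi_R\|_{L^\infty(D^R_t)}\cdot \big\|(1+v)^{-1}|X^n_m|^{1/2}\pa Z^{I_2}\psi_R\big\|_{L^2(D^R_t)},
\end{equation}
whose second factor is bounded by $(1+t)^{-1}\epsilon_R$ using the energy, and whose time integral against the first factor is bounded by $\int_{t_0}^{t_1}(1+t)^{-1}\|\pa^2 Z^{I_1}\psi_R\|_{L^\infty}\,dt\cdot \epsilon_R \lesssim \epsilon_R^2$ via \eqref{rightint1}. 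The weighted versions involving $|X^\ell_m|^{1/2}\pa_v P_I$ and $|X^\ell_m|^{1/2}\nas P_I$ are handled identically, using the second line of \eqref{rightint1} to absorb the extra $r^{1/2}(\log r)^{\nu/2}$ weight. The bound on $\||X|^{1/2}F_I\|_{L^2}$ is strictly easier, since $F_I$ carries an additional factor of $(1+v)^{-1}$, yielding a gain of $(1+t)^{-1}$ in the pointwise estimate. Finally, \textbf{Step 4} (the boundary estimate \eqref{RPXboundR}) follows by placing two of the four derivative factors in $|P_I|^2$ pointwise in $L^\infty$ (gaining a factor $\epsilon_R^2(1+|u|)^{-\mu}$), then using the resulting factor of $(1+|u|)^{-\mu}$ to absorb both components of $|X_R|\sim (1+|u|)^\mu+r(\log r)^\nu$: the first component cancels exactly, while the second is controlled using $\nu\le \mu/2+1/2$ together with $|u|\gtrsim s^{1/2}$. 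The remaining quadratic integral in $\pa\psi$ is bounded on time slices by the energy $\mathcal{E}^R_{N_R}\lesssim \epsilon_R^2$ and on the shock by the trace bound \eqref{rightrivialbdybds}; in each case one loses only a factor $\epsilon_R^2$, yielding $\epsilon_R^4\lesssim \epsilon_R^3$ overall.

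The main technical obstacle is bookkeeping in Step 3: one must ensure that the $L^\infty$ factor, after combining the weights from \eqref{pwright} with the multiplier weights $|X^{n}_m|^{1/2}$ or $|X^\ell_m|^{1/2}$, still decays at least like $(1+t)^{-1}$ so that the time integral closes via \eqref{rightint1}. This is precisely where the parameter constraint $\mu \geq \max(2\nu,\,2N_C+3/2)$ from \eqref{parameters} is used, as it guarantees that the ratio $r^{1/2}(\log r)^{\nu/2}/(1+|u|)^{\mu/2}$ is tame enough in $D^R$ for the second line of \eqref{rightint1} to absorb it without loss.
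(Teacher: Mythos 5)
Your proof follows essentially the same route as the paper's: the pointwise bound for $\gamma$ via \eqref{pwright} and \eqref{pwRhardy}, the time-integrated bounds \eqref{rightint1} for $\nabla\gamma$ and the weighted $\pa_v,\nas$ derivatives, the Cauchy--Schwarz split (lower-order factor in $L^\infty$, time-integrable by \eqref{rightint1}; higher-order factor in the weighted energy) for the $P_I$, $F_I$ estimates, and the absorption of the boundary weight $|X_R|+(1+s)^{1/2}(1+v)|X^n_R|$ by pointwise decay along $\Gamma^R$ together with the energy trace terms. The only minor omission is that your schematic for $P_I$ drops the zeroth-order contributions $\sim (1+v)^{-3}|\psi_R^{I_1}||\psi_R^{I_2}|$ from \eqref{Pbounds}, which the paper handles along the shock with the Hardy-type inequality \eqref{controlangularderivs} and \eqref{pwRhardy} --- routine with the tools you already invoke.
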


\begin{proof}

	\textit{Part 1: Estimates for $\gamma$}
	To prove the first bound, we use Lemma \ref{higherorderexterioreqns}.
	Since
 $r \gtrsim v$ in $D^L_t$, writing $\phi = \frac{1}{r}\psi \sim \frac{1}{1+v} \psi$,
 this gives
 \begin{equation}
  |\gamma| \lesssim \frac{1}{1+v} |\pa \psi_R| + \frac{1}{(1+v)^2} |\psi_R|.
  \label{gammabdrightpfgammaint}
 \end{equation}
 By
	\eqref{pwright} and \eqref{perturbativeprf}, we therefore have
	\begin{equation}
		(1+v)(1+s)^{1/2} |\gamma|\lesssim
		(1+t)(1+\log t)^{1/2} |\gamma|
		\lesssim (1+\log t)^{1/2} |\pa \psi_R| + \frac{(1+\log t)^{1/2}}{1+t} |\psi_R|
		\lesssim \epsilon_R,
	 \label{}
	\end{equation}
which gives \eqref{pert0right}.

To prove the first two bounds in \eqref{intgammaR}, we use Lemma \ref{higherorderexterioreqns}
again and argue as above to get
\begin{equation}
 |\nabla \gamma| \lesssim \frac{1}{1+v} |\nabla \pa \psi_R| +
 \frac{1}{(1+v)^2} |\pa \psi_R| + \frac{1}{(1+v)^3} |\psi_R|.
 \label{nablaygamma}
\end{equation}

In particular,
 \begin{multline}
  \int_{t_0}^{T} \|\nabla \gamma\|_{L^\infty(D^R_t)}
  + \|(1 +|u|)^{-1} \gamma\|_{L^\infty(D^R_t)}\, dt\\
	\lesssim
	\int_{t_0}^{t_1} \frac{1}{1+t}
	\left(\|\pa^2 \psi_R\|_{L^\infty(D^R_t)}
    + \| (1+|u|)^{-1}\pa \psi_R\|_{L^\infty(D^R_t)}\right)+ \frac{1}{(1+t)^2} \|(1+|u|) \psi_R\|_{L^\infty(D^R_t)}\, dt
	\lesssim \epsilon_L,
  \label{nablaygamma2}
 \end{multline}
 using the time-integrated bounds \eqref{rightint1} for the first two terms and the bound
 \eqref{pwRhardy} for the last term (recall $\mu \geq 6$). Similarly,
 using the bounds
\begin{equation}
  \label{}
    |\nabla_v \gamma| + |\nas \gamma|
    \lesssim
    \frac{1}{1+v} (|\nabla_v\pa \psi_R|+|\nas\pa \psi_R|)
    + \frac{1}{(1+v)^2} |\pa \psi_R| + \frac{1}{(1+v)^3} |\psi_R|,
\end{equation}
and the time-integrated bounds \eqref{rightint1} again, we get the bound
for the terms on the second line of \eqref{intgammaR}.
 
	\textit{Part 2: Estimates for $P_I$ and $F_I$}

    By the pointwise bounds \eqref{pwright} and \eqref{pwRhardy} combined with the
    fact that $r \gtrsim t$ in $D^R_t$, we clearly have $| \pa Z^J (\psi_R/r)| \lesssim 1$
    in $D^R_t$ and so the bounds \eqref{Pbounds}-\eqref{Pbounds3} from
    Lemma \ref{higherorderexterioreqns} hold. Writing $Z^J \phi_R = r^{-1} \psi_R^J$
    we find from \eqref{Pbounds} that
    \begin{multline}
     |P_I| \lesssim \sum_{\substack{|I_1| + |I_2| \leq |I|,\\ |I_1|, |I_2| \leq |I|-1}}
     \left(
     \frac{1}{1+v} |\pa \psi_R^{I_1}| |\pa \psi_R^{I_2}| + \frac{1}{(1+v)^3}
     |\psi_R^{I_1} | |\psi_R^{I_2}|\right)\\
     \lesssim \sum_{|K| \leq |I|/2+1} \sum_{|J| \leq |I|-1}
     \left(
     \frac{1}{1+v} |\pa \psi_R^{K}| |\pa \psi_R^{J}| + \frac{1}{(1+v)^3}
     |\psi_R^{K} | |\psi_R^{J}|\right).
     \label{pfofbasicPIbdR}
    \end{multline}
    Similarly, it follows from \eqref{Pbounds2} and the bound 
    $|\nabla q|\lesssim (1+|u|)^{-1} \sum_{Z \in
    \mathcal{Z}}|Zq|$ that
    \begin{equation}
      \label{}
        |\nabla P_I|
        \lesssim \frac{1}{1+|u|}\sum_{|K| \leq |I|/2+1} \sum_{|J| \leq |I|}
     \left(
     \frac{1}{1+v} |\pa \psi_R^{K}| |\pa \psi_R^{J}| + \frac{1}{(1+v)^3}
     |\psi_R^{K} | |\psi_R^{J}|\right).
    \end{equation}
    As a result, since the bootstrap assumption
    on the energy in this region gives $\| |X^n_m|^{1/2} \pa \psi_R^J\|_{L^2(D^R_t)}
    \lesssim \epsilon_R$, we have
    \begin{multline}
      \label{}
      \int_{t_0}^{t_1} \| |X^n_m|^{1/2} \nabla P_I\|_{L^2(D^R_t)}
      + \||X^n_m|^{1/2}(1+|u|)^{-1}P_I\|_{L^2(D^R_t)}\, dt
      \\
      \lesssim \epsilon_R  \sum_{|K| \leq |I|/2+1}\int_{t_0}^{t_1}
      \frac{1}{1+t}\| (1+|u|)^{-1}\pa \psi_R^K\|_{L^\infty(D^R_t)}\, dt
      \lesssim \epsilon_R^2,
      \label{}
    \end{multline}
    by \eqref{rightint1}. The bounds for the terms on the second line
    of \eqref{intPFR} can be handled in a similar way and we skip them.

To get the bound for $F_I$, we write \eqref{Fbounds} in terms of $\psi_R^J$ and
use that $r \gtrsim t$ in
	$D^R_t$ again to find
	\begin{equation}
	 |F_I| \lesssim \sum_{|K| \leq |I|/2+1} \sum_{|J| \leq |I|}
	 \left( \frac{1}{(1+v)^2} |\pa \psi_R^{K}| |\pa \psi_R^J|
	 + \frac{1}{(1+v)^4} |\psi_R^{K}| |\psi_R^J|\right),
	 \label{}
	\end{equation}
	which is similar to \eqref{pfofbasicPIbdR} but with an additional factor of $(1+t)^{-1}$.
    The bound for $F_I$ then follows in the same way as the above bound for $P_I$.

Finally, we prove the bounds in \eqref{RPXboundR}. The bound
\eqref{pfofbasicPIbdR} gives
\begin{multline}
 \int_{D^R_t} |X_R| |P_I|^2
 \lesssim
 \frac{1}{(1+t)^2} \sum_{|K| \leq |I|/2+1} \sum_{|J| \leq |I|}
 \int_{D^R_t} |\pa \psi_R^K|^2 |\pa \psi_R^J|_{X_R, m}^2\\
 + \frac{1}{(1+t)^6}
 \sum_{|K| \leq |I|/2+1} \sum_{|J| \leq |I|} \int_{D^R_t}
 |X_R||\psi_R^K|^2 |\psi_R^J|^2
 \lesssim \epsilon_R^4,
 \label{}
\end{multline}
As for the term in \eqref{RPXboundR} along the shock,
    we first bound
    \begin{equation}
      \label{}
      |X_R| + (1+s)^{1/2}(1+v)|X_R^n|
      \lesssim 1 + r(\log r)^\nu + (1+s)^{1/2} (1+v) (1+|u|)^{\mu}
      \lesssim (1 + v)(1 + |u|)^{\mu},
    \end{equation}
    along the shock, where we used that by our choices of $\mu, \nu$ 
    in \eqref{parameters},
    $\mu \geq 2\nu$ and that $|u|\gtrsim s^{1/2}$ along the shock.
    By \eqref{pfofbasicPIbdR} we therefore have
\begin{align}
	\int_{t_0}^{t_1} &\int_{\Gamma^R_t}
    \left(|X_R| + (1+s)^{1/2}(1+v)  |X^n_{R}|\right)|P_I|^2\, dS dt\\
	&\lesssim
	\sum_{|K| \leq |I|/2+1}\sum_{|J|\leq |I|}
	\int_{t_0}^{t_1} \int_{\Gamma^R_t}
    \frac{(1+|u|)^{\mu}}{1+v} |\pa \psi^{K}_R|^2 |\pa \psi^{J}_R|^2\,
	dS dt\\
	&\qquad+ 	\sum_{|K| \leq |I|/2+1}\sum_{|J|\leq |I|}
		\int_{t_0}^{t_1} \int_{\Gamma^R_t}
        \frac{(1+|u|)^{\mu}}{(1+v)^3}| \psi^{K}_R|^2 |\psi^{J}_R|^2\, dS dt\,\\
	&\lesssim
	\epsilon_R^2 \sum_{|K| \leq |I|/2+1}\sum_{|J|\leq |I|}
	\int_{t_0}^{t_1} \int_{\Gamma^R_t}
	\frac{X_R^n}{(1+v)(1+s)^{1/2}} |\pa \psi^{J}_R|^2\,
	dS dt\\
	&\qquad+ 	\sum_{|K| \leq |I|/2+1}\sum_{|J|\leq |I|}
		\int_{t_0}^{t_1} \int_{\Gamma^R_t}
		\frac{(1+s)^{(\mu+1)/2}}{(1+v)^3} | \psi^{K}_R|^2 |\psi^{J}_R|^2\, dS dt\\
		&\lesssim
		\epsilon_R^4
 \label{}
\end{align}
where used the weak decay estimate \eqref{perturbativeprf} 
the control
we have over the boundary term in the energy \eqref{ERdef}, along with the Hardy inequality
\eqref{controlangularderivs} to control the terms on the last line.

\end{proof}

We now move onto the estimates in the central region. Recall from
\eqref{higherordereqcentral} that we need to
handle the current $\widetilde{P}_{I,C} = P_{I,C} + P_{I, null}$
where $P_{I, C}$ collects the error terms coming from commuting our vector
fields with the nonlinear terms, and some lower-order and rapidly-decaying 
terms coming from commuting with the linear part of the equation. The current
$P_{I, null}$ collects the most dangerous commutation errors generated by commuting
with the linear term statisfying the null condition.
In the next lemma we control some quantities involving
the quantities $\gamma$ and $\widetilde{P}_{I}$. 
Note that in the first line of
\eqref{centralPIbounds} below and in \eqref{RPXboundC}, we are only estimating $P_{I,C}$ and
not the full current $\widetilde{P}_{I, C}$. We postpone handling the relevant
bounds for the linear errors $P_{I, null}$ until
Lemma \ref{timeintegrability-center-linear}. {Also, it turns out that
    the term $F_{\mB, I}^1$ from \eqref{higherordereqcentral} is (slightly) too large to be treated
as an error term; we postpone handling this term until Lemma \ref{FImBibplemma}. }
\begin{lemma}
	\label{timeintegrability-center}
	Let $X = X_{C}$ or $X = X_{ T}$ with notation as
	in Section \ref{fields} and write
	$X = X^n \pa_u + X^\ell \evmB$.
	There is $\epsilon_0^*$ so that if the hypotheses of Theorem \ref{mainthm} hold
	with $\epsilon_0 < \epsilon_0^*$,
	we have the following bounds. First, for $|I| \leq N_C$, the quantity
 $\gamma$ appearing in \eqref{higherordereqcentral}
 satisfies the following estimates.
 \begin{equation}
  |\gamma| \lesssim \frac{\epsilon_C}{(1+v)(1+s)^{1/2}},
	\qquad
	|X^\ell| |\gamma| \lesssim \epsilon_C |X^n|,
  \label{pert0center}
 \end{equation}
 \begin{multline}
  \int_{t_0}^{t_1} \|\nabla \gamma\|_{L^\infty(D^C_t)}
	+ \left\|\frac{1}{1 + s} \gamma\right\|_{L^\infty(D_t^C)}\, dt
	\\
	+
	\int_{t_0}^{t_1} \left\|(1+v)^{1/2} (1+s)^{1/2} \nabla_{\evmB} \gamma\right\|_{L^\infty(D_t^C)}
	+ \left\|(1+v)^{1/2} (1+s)^{1/2} \nas \gamma\right\|_{L^\infty(D_t^C)}
	 \, dt
	\lesssim \epsilon_C.
  \label{intgammaC}
 \end{multline}
 { The currents $P_{I,C }, P_{I, null}$ from \eqref{higherordereqcentral}}
 satisfy the following estimates, 
 \begin{multline}
	 % \sum_{|I| \leq N_C}
	 % \int_{t_0}^{t_1}  \| (1+s)^{-1/4} \nabla P^u_{I,C}\|_{L^2(D_t^C)}
	 % + \| (1+s)^{-5/4}  P^u_{I,C}\|_{L^2(D_t^C)}
	 % \, dt
	 % \\
	 \sum_{|I| \leq N_C}\int_{t_0}^{t_1}\| |X^n_{\mB}|^{1/2} \nabla P_{I,C}\|_{L^2(D_t^C)}
	 + \| (1+s)^{-1}|X^n_{\mB}|^{1/2}  P_{I,C}\|_{L^2(D_t^C)}\, dt\\
     +\sum_{|I| \leq N_C}\int_{t_0}^{t_1} \| (1+v)^{-1/2} \nabla_{\evmB} (P_{I,C} + P_{I, null})\|_{L^2(D_t^C)}
     +  \| (1+v)^{-1/2} \nas (P_{I,C} + P_{I, null}) \|_{L^2(D_t^C)}
	 \,dt\\
	 \lesssim \epsilon_C^2 + c_0(\epsilon_0) \epsilon_C,
  \label{centralPIbounds}
 \end{multline}
as well as the bounds
  \begin{align}
      \sup_{t_0 \leq t \leq t_1} \int_{D^C_t} v |P_{I,C}|^2 +
      \int_{t_0}^{t_1} \int_{\Gamma^L_t} v|P_{I,C}|^2\, dS dt
      +\int_{t_0}^{t_1} \int_{\Gamma^R_t} v|P_{I,C}|^2\, dS dt
	&\lesssim
	\epsilon_C^{3}, \qquad |I| \leq N_C.
    \label{RPXboundC}
  \end{align}

 The quantities on the right-hand side of \eqref{higherordereqcentral}
 satisfies the following estimates. The remainders $F_{C, I}$ and $F_{\Sigma, I}$
 satisfy
	\begin{equation}
	 \int_{t_0}^{t_1} \| |X^\ell_{\mB}|^{1/2} F_{C, I}\|_{L^2(D^C_t)}
	 + \| |X^\ell_{\mB}|^{1/2} F_{\Sigma, I}\|_{L^2(D^C_t)}^2\, dt
	 \lesssim c_0(\epsilon_0) + (c_0(\epsilon_0) + \epsilon_C)\epsilon_C^2,
	 \label{FICl2bds}
	\end{equation}
    {while, for any $\delta > 0$, $F_{\mB, I}^2$ satisfies
    \begin{equation}
      \label{FICl3bds}
      \int_{t_0}^{t_1} \int_{D^C_t} |F_{\mB, I}^2| |X\psi_C^I|\, dt
      \lesssim \delta\sum_{|J| \leq |I|}  S_J^C(t_1) + \left(\frac{1}{\delta} +1\right)c_0(\epsilon_0)
      \epsilon_C^2.
    \end{equation}

    }
%
%
%    \cdg{old version:} 
%
%
%	and if $I$ is such that $Z_{\mB}^I = X^k \Omega^K$ where $X^k$ denotes a
%	$k$-fold product of the fields $X \in \{\sBo, \sBt\}$, then for any $\delta > 0$,
%	 $F_{\mB, I}$ satisfies
%	\begin{equation}
%	 \int_{t_0}^{t_1} \int_{D^C_t} |F_{\mB, I}| |X \psi^I_C|\, dt
%	 \lesssim \delta S_I(t_1) + \left(\frac{1}{\delta} + c_0(\epsilon_0)\right) \sum_{\substack{j \leq k-1,\\ |J| \leq |K|+1}}
%	 S_{j, J}(t_1),
%	 \label{FmBIbound}
%	\end{equation}
%    \cdg{update above with new $F$} 
%	with the convention that the last sum is not present if $k = 0$,
%	where $S_I$ is the spacetime integral from \eqref{EtopC0} and where we are writing
%	\begin{equation}
%	 S_{j, J}(t_1) = \int_{t_0}^{t_1} \int_{D^C_t} |\nas X^j \Omega^J \psi_C|^2\, dt,
%	 \label{SjJdef}
%	\end{equation}

\end{lemma}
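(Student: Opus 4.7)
The plan follows the same template as the proof of Lemma \ref{timeintegrability-right} for the rightmost region, but replacing the Minkowski structure by the Burgers' metric structure. First, from Lemma \ref{higherordereqncentral} the perturbation $\gamma$ has the schematic form
\begin{equation}
|\gamma| \lesssim \frac{1}{1+v}|\pa \psi_C| + \frac{1}{(1+v)^2} |\psi_C| + c_0(\epsilon_0) \frac{1}{(1+v)(1+s)^{1/2}},
\end{equation}
where the last term collects the contributions of $\Sigma$ which are small by \eqref{largestart}. The pointwise bound \eqref{pwcentral} together with the Hardy-type bound \eqref{pwChardy} then immediately yield the first estimate in \eqref{pert0center}. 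The second estimate is an algebraic consequence, since both multipliers satisfy $X^\ell_{\mB} = v$ and $X^n_{\mB} \gtrsim (1+s)^{-1/2}$, so $X^\ell |\gamma| \lesssim \epsilon_C (1+s)^{-1/2} \lesssim \epsilon_C X^n_{\mB}$.

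For the time-integrated bounds \eqref{intgammaC}, one differentiates the expression for $\gamma$ to get
\begin{equation}
|\nabla \gamma| \lesssim \frac{1}{1+v}|\pa^2 \psi_C| + \frac{1}{(1+v)^2} |\pa \psi_C| + \frac{1}{(1+v)^3} |\psi_C| + c_0(\epsilon_0)\text{-terms},
\end{equation}
with analogous (better) bounds for $\nabla_{\evmB} \gamma$ and $\nas \gamma$ because those directions hit only good derivatives of $\psi_C$. Now use the time-integrated bounds \eqref{centralint1}--\eqref{centralint2} (the second of which captures exactly the extra $(1+v)^{1/2}(1+s)^{1/2}$ gain for $\evmB$ and $\nas$) and the Hardy estimate \eqref{pwChardy} for the $\psi_C$ terms.

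For the $L^2$ bounds on the currents in \eqref{centralPIbounds}, write $P_{I,C}$ by Lemma \ref{higherordereqncentral} as a sum of terms of the schematic form $(1+v)^{-1} \pa Z_{\mB}^{I_1}\psi_C \cdot \pa Z_{\mB}^{I_2}\psi_C$ with $\max(|I_1|,|I_2|) \leq |I|-1$, plus $\psi_C$-type lower-order terms. Place the lower-index factor (which satisfies $|I_j| \leq N_C/2+1 \leq N_C-5$) in $L^\infty$ using \eqref{pwcentral}, and the higher-index factor in $L^2$ controlled by the bootstrap energies, together with \eqref{centralint1}--\eqref{centralint2} to perform the time integration. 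For $P_{I,null}$ the same scheme applies with the extra gain coming from the null structure \eqref{intronullcondn}: one of the two factors must be a good derivative $\opa$, which gains an additional $(1+v)^{-1}$ by the Klainerman-Sobolev inequality applied to $\pa_{\evmB}\psi_C^J, \nas \psi_C^J$, and this is what allows the bound to close with $\epsilon_C$ plus a $c_0(\epsilon_0)$ remainder from the background $\Sigma$ contributions to $\gamma_a$.

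The remaining bounds \eqref{RPXboundC}, \eqref{FICl2bds}, \eqref{FICl3bds} are proven by the same dichotomy. For $F_{C,I}$ and $F_{\Sigma,I}$, Lemma \ref{higherordereqncentral} shows these are quadratic terms with an extra $(1+v)^{-1}$ compared to $P_{I,C}$, so they are easier; $F_{\Sigma,I}$ moreover is purely inhomogeneous and its contribution is bounded by $c_0(\epsilon_0)$ from the largeness of $t_0$. The boundary bounds in \eqref{RPXboundC} use the trivial bounds \eqref{leftbdytrivialbds}--\eqref{rightbdytrivialbds} on $\psi_C$ along the shocks after pairing with the weak pointwise decay \eqref{perturbativeprf}. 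The main technical point, and the only step which is not essentially identical to the rightmost region, is the estimate \eqref{FICl3bds} for $F_{\mB, I}^2$: this term is borderline too large to treat as an error, and the integration-by-parts structure expressed by the $\delta\, S_J^C$ absorption on the right is what makes it manageable. Apart from this, the argument is a bookkeeping exercise combining the pointwise and time-integrated estimates of Section \ref{consofbootsec} with the structural formulas of Lemma \ref{higherordereqncentral}.
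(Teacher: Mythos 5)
Your overall scheme (structural formulas from Lemma \ref{higherordereqncentral}, lower-order factor in $L^\infty$ via \eqref{pwcentral}, top factor in $L^2$ via the bootstrap energies, time integration via the $1/((1+t)(1+\log t)^{3/2})$ gain) is the same as the paper's, and your treatment of \eqref{pert0center}, \eqref{intgammaC}, \eqref{RPXboundC}, and the $F_{C,I}$, $F_{\Sigma,I}$ bounds would go through. But two of your steps rest on a misreading of the objects being estimated. First, $P_{I,null}$ is \emph{not} a quadratic commutation error: it is the \emph{linear} current generated by commuting $Z_{\mB}^I$ with $\pa_\mu(\tfrac{u}{vs}a^{\mu\nu}\pa_\nu\psi)$, so your argument ``one of the two factors must be a good derivative $\opa$, gaining $(1+v)^{-1}$ by Klainerman--Sobolev'' does not apply — there are no two factors. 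What actually makes \eqref{centralPIbounds} close is that only the $\evmB$- and $\nas$-derivatives of $P_{I,null}$ appear there, and by the structural bound \eqref{borderlinebd2} these come with an extra factor $(1+v)^{-1}$ relative to a generic derivative of $\psi_C$; the genuinely dangerous $u$-component $P^u_{I,null}$ (responsible for the $\log\log$ growth) is deliberately excluded from this lemma and handled separately in Lemma \ref{timeintegrability-center-linear} by integrating to the shock. Noticing which components are and are not required is the whole point of how \eqref{centralPIbounds} is stated.

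Second, you conflate $F_{\mB,I}^2$ with $F_{\mB,I}^1$. The term requiring the integration-by-parts argument is $F_{\mB,I}^1$, and it is not part of this lemma (it is treated in Lemma \ref{FImBibplemma}). The bound \eqref{FICl3bds} for $F_{\mB,I}^2$ needs no integration by parts: from \eqref{nonlinearfmBI2} one has $|F_{\mB,I}^2|\lesssim \tfrac{1+s}{(1+v)^2}\sum_{|J|\le|I|}|\nas\psi_C^J| + \tfrac{1}{(1+v)^2}\sum_{|J|\le|I|-1}|\Omega\psi_C^J|$; the $\delta\, S_J^C$ on the right-hand side of \eqref{FICl3bds} arises from a plain Cauchy--Schwarz of the first term against $X\psi_C^I$ (splitting off $\delta\int|\nas\psi_C^J|^2$, which is exactly part of $S_J^C$), while the lower-order $\Omega\psi_C^J$ piece requires the Poincar\'e-type inequality \eqref{hardyCapp} to control $\|\Omega\psi_C^J\|_{L^2(D^C_t)}$ by $(\log t)^{3/4}\epsilon_C$ before the $(1+v)^{-2}$ weight renders it time-integrable. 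Your proposal omits this Poincar\'e step and attributes the absorption mechanism to a structure that is not present in this term, so as written the justification of \eqref{FICl3bds} and of the $P_{I,null}$ part of \eqref{centralPIbounds} is incomplete, even though the estimates themselves are easier than your description suggests.
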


\begin{proof}

	\textit{Part 1: Estimates for $\gamma$}

	We start by noting that the first bound in \eqref{pert0center} implies
	the second one, because if the first bound holds,
	\begin{equation}
	 |X^\ell| |\gamma| \lesssim \frac{\epsilon_C}{(1+s)^{1/2}} \lesssim \epsilon_C |X^n|,
	 \label{}
	\end{equation}
	for both $X = X_{ C}, X_{ T}$. To prove the first bound in \eqref{pert0center},
	we use \eqref{gammaboundscentral0}-\eqref{gammaboundscentral} from Lemma \ref{higherordereqncentral}, which give
	\begin{equation}
	 |\gamma|\lesssim \frac{1}{1+v} |\pa \psi_C| + \frac{1}{(1+v)^2},
	 \label{}
	\end{equation}
	so by \eqref{pwcentral}, we have
	\begin{equation}
	(1+v)(1+s)^{1/2} |\gamma| \lesssim (1+s)^{1/2} |\pa \psi_C| + \frac{(1+s)^{1/2}}{1+v}
	\lesssim \epsilon_C + c_0(\epsilon_0).
	 \label{}
	\end{equation}
	where $c_0(0) = 0$. Taking $\epsilon_0$ small
	enough that $c_0(\epsilon_0) \leq \epsilon_C$ gives the first
	bound in \eqref{pert0center}.

	We now prove the time-integrated bounds.
	For this we use
	the bound in \eqref{gammaboundscentral} which gives
	\begin{multline}
	 (1+s)^{1/2}(1+v) \left((1+s) |\pa_u \gamma| +|\gamma|  + (1+v) |\pa_v \gamma| +(1+v) |\nas \gamma|\right)
	 \\
	 \lesssim \sum_{|I|\leq 1} (1+s)^{1/2} |\pa Z_{\mB}^I \psi_C|
	 + \frac{(1+s)^{1/2}}{(1+v)^2}
	 \lesssim \epsilon_C + c_0(\epsilon_0)
	 \label{}
	\end{multline}
	where $c_0(0) = 0$
	and where we used the pointwise bound
	\eqref{pwcentral}. In particular this gives
	\begin{multline}
	 |\nabla \gamma| + \frac{1}{1+s} |\gamma|
	 +(1+v)^{1/2}(1+s)^{1/2} |\nabla_{\evmB} \gamma| + (1+v)^{1/2}(1+s)^{1/2} |\nas \gamma|
	 \\
	 \lesssim \frac{1}{1+t} \frac{1}{(1 + \log t)^{3/2}}
	 \left(\epsilon_C + c_0(\epsilon_0)\right),
	 \label{}
	\end{multline}
	which gives \eqref{intgammaC} after taking $\epsilon_0$ smaller if needed.
	Here we bounded $|\nabla_{\evmB}\gamma| \lesssim |\pa_v \gamma| + \frac{1}{vs^{1/2}} |\pa_u \gamma|
	+ \frac{1}{v} |\gamma|$.

    \textit{Part 2: Estimates for the currents $P_{I, C}, P_{I, null}$}
	 We start with the bound
	\begin{multline}
        \int_{t_0}^{t_1} \| |X^n_{\mB}|^{1/2} \nabla P_{I,C} \|_{L^2(D^C_t)}
        + \||X^n_{\mB}|^{1/2} (1+s)^{-1} P_{I,C}\|_{L^2(D^C_t)}\, dt
	 \\
     + \int_{t_0}^{t_1} \| |X^\ell_{\mB}|^{1/2} \nabla_{\evmB} P_{I,C} \|_{L^2(D^C_t)}
     + \||X^\ell_{\mB}|^{1/2} \nas P_{I,C}\|_{L^2(D^C_t)}\, dt\\
	 \lesssim
	 \sum_{|J| \leq 1}
	 \int_{t_0}^{t_1} \frac{1}{1+t} \frac{1}{(1+ \log t)^{3/2}}
	 \|(1+v)(1+s)^{1/2} |X^n_{\mB}|^{1/2} Z_{\mB} P_{I,C}\|_{L^2(D^C_t)}\, dt,
	 \label{pi2bdstart}
	\end{multline}	where we used $(1+s) |\pa q| +(1+v) |\pa_v q| + (1+v) |\nas q|
		\lesssim |Z_{\mB} q|$ and that $(1+v)^{-1/2} \lesssim (1+s)^{1/4}
		\lesssim |X^n_{\mB}|^{1/2}$
		for both our multipliers.

        Using the estimate \eqref{ffspik} to control $Z_{\mB} P_{I,C}$, we have
	\begin{multline}
        (1 + v) (1+s)^{1/2} |X^n_{\mB}|^{1/2} |Z_{\mB} P_{I,C}|\\
	 \lesssim
	 \sum_{\substack{|I_1| + |I_2| \leq |I| + 1,\\ |I_1|, |I_2| \leq |I|}}
	 (1+s)^{1/2} |\pa \psi^{I_1}_C| \left(|X^n_{\mB}|^{1/2} |\pa \psi^{I_2}_C|\right)
	+ \sum_{|J| \leq |I|}  \frac{1}{(1+s)^{1/2}}
	\left(|X^n_{\mB}|^{1/2} |\pa \psi^J_C|\right)\\
	\lesssim
	\left(\epsilon_C + c_0(\epsilon_0)\right)
	\sum_{|J| \leq |I|} |X^n_{\mB}|^{1/2} |\pa \psi^J_C|,
	 \label{pwPIbdDC}
	\end{multline}
	using the bound \eqref{perturbativeprf} and bounding
	$(1+s)^{-1/2} \leq c_0(\epsilon_0)$. Since
	$\||X^n_{\mB}|^{1/2} \pa\psi^J_C\|_{L^2(D^C_t)}
	\lesssim \epsilon_C (1+ \log\log t)$ if $|J| \leq N_C-1$ and $X = X_C$
	or $|J| \leq N_C$ and $X = X_T$ (the factor $\log \log t$ is only needed
	for the case $|J| = N_C-1, X = X_C$), inserting \eqref{pwPIbdDC} into
	\eqref{pi2bdstart} we find
	\begin{equation}
	 \int_{t_0}^{t_1} \frac{1}{1+t} \frac{1}{(1+\log t)^{3/2}}
     \|(1+v)(1+s)^{1/2} |X^n_{\mB}|^{1/2} Z_{\mB} P_{I,C}\|_{L^2(D^C_t)}\, dt
	 \lesssim (\epsilon_C + c_0(\epsilon_0))\epsilon_C,
	 \label{}
	\end{equation}
	which is bounded by the right-hand side of \eqref{centralPIbounds}.
    The bounds for the contribution from $P_{I, null}$, which
    only involve the derivatives $\evmB$ and $\nas$, into \eqref{centralPIbounds}
    follow easily from \eqref{borderlinebd2}.

	The bound \eqref{RPXboundC} follows in a straightforward way
	from the pointwise bound \eqref{PIkbdcentral}. We omit the proof.

	 \textit{Part 3: Bounds for $F_{C, I}$,  $F_{\Sigma, I}$, $F_{\mB, I}^2$}

	We now move onto controlling the remainder terms on the right-hand
	side of \eqref{higherordereqcentral}.
	 We recall from Lemma \ref{higherordereqncentral} that these quantities
	 satisfy the following bounds. First, $F_{C,I}$ collects various nonlinear error
	  terms and satisfies
	\begin{multline}
   |F_{C,I}| \lesssim
 	\frac{1}{(1+v)^2}
 	\sum_{|I_1| + |I_2| \leq |I|} |\pa \psi_C^{I_1} | |\pa \psi_C^{I_2}|
 	+ \frac{1}{(1+v)^4}
 	\sum_{|I_1| + |I_2| \leq |I|} |\psi_C^{I_1} | | \psi_C^{I_2}|\\
 	+ \frac{1}{(1+v)^2} \sum_{|J| \leq |I|} |\pa \psi_C^J|
 	+ \frac{1}{(1+v)^2(1+s)} \sum_{|J| \leq |I|} |\psi_C^J|.
   \label{FInonlins9}
  \end{multline}
	The remainder
   $F_{\Sigma, I}$ collects the error terms involving the model
  profile $\Sigma$ alone and satisfies
  \begin{equation}
   |F_{\Sigma, I}| \lesssim \frac{1}{(1+v)^2},
   \label{FSigmabds9}
  \end{equation}
	Finally, $F_{\mB, I}$ collects the error terms that we generated when we
	commuted the angular Laplacian with our fields.
   With $Z_{\mB}^I = X^k \Omega^K$, where $X^k$
  denotes an arbitrary $k$-fold product of the fields
  $X \in \{X_1, X_2\}$, it satisfies
  \begin{equation}
 	 |F_{\mB, I}| \lesssim \frac{1}{1+v} \sum_{j \leq k-1} \sum_{|J| \leq |K|+1}
 	 |\nas X^j \Omega^J \psi_C|
   \label{fsdeltabd}
  \end{equation}

We start by proving the bound for $F_{C,I}$ in \eqref{FICl2bds}. The contribution
from the terms on the first line of \eqref{FInonlins9} is straightforward to
handle so we skip it. For the contribution from the terms on the second line, we bound
\begin{multline}
	\sum_{|J| \leq |I|}
 \int_{t_0}^{t_1}
 \left(\int_{D^C_t} \frac{1}{(1+v)^4}|X^\ell| |\pa \psi_C^{J}|^2\right)^{1/2}
 + \sum_{|J| \leq |I|}\int_{t_0}^{t_1}
 \left(\int_{D^C_t} \frac{1}{(1+v)^4(1+s)^2}
 |X^\ell| | \psi_C^{J}|^2\right)^{1/2}\\
 \lesssim
 \sum_{|J| \leq |I|}
 \int_{t_0}^{t_1} \frac{1}{(1+t)^{3/2}}
 \left( \int_{D^C_t} |\pa \psi_C^J|^2 + \frac{1}{(1+s)^2}
 |\psi_C^J|^2\right)^{1/2}\, dt,
 \label{}
\end{multline}
and this is easily bounded by the right-hand side of
\eqref{FInonlins9} after using the bootstrap assumptions
 \eqref{centralboottop}-\eqref{centralbootlower} for the energies
 \eqref{EtopC}-\eqref{loworderenergydef}
and additionally using \eqref{hardyC} to control $\|\psi^J_C\|_{L^2(D^C_t)}$.

For the remainder $F_{I,\Sigma}$ we just use that $Vol(D^C_t) \lesssim
s^{1/2} \lesssim (1+v)^{1/5}$ (recall that we are using the measure $r^{-2}dx$),
 and use \eqref{FSigmabds9} to bound
\begin{equation}
 \int_{t_0}^{t_1} \left(\int_{D^C_t} |X^\ell| |F_{\Sigma, I}|^2\right)^{1/2}\, dt
 \lesssim
 \int_{t_0}^{t_1} \left(\int_{D^C_t} \frac{1}{(1+v)^3}\right)^{1/2}\, dt
 \lesssim
 \int_{t_0}^{t_1} \frac{1}{(1+t)^{5/4}}\, dt
 \lesssim c_0(\epsilon_0),
 \label{}
\end{equation}
which completes the proof of \eqref{FICl2bds}.

{We now control the contribution from $F_{I, \mB}^2$. By \eqref{nonlinearfmBI2},
\begin{equation}
  \label{fmBIbdbootstrappf}
  |F_{\mB, I}^2| \lesssim
  \frac{1+s}{(1+v)^2} \sum_{|J| \leq |I|} |\nas \psi^J_C| + \frac{1}{(1+v)^2} \sum_{|J| \leq |I|-1}
  |\Omega \psi^J_C|.
\end{equation}
Bounding $|X\psi^I_C| \lesssim (1+v)|\evmB \psi^I_C| + (1+s) |\pa \psi^I_C|$ for 
either of our multipliers $X = X_C, X_T$, for $|J| \leq |I|$ we have
\begin{multline}
  \label{}
  \int_{t_0}^{t_1} \int_{D^C_t} \frac{1+s}{(1+v)^2} |\nas \psi^J_C|  |X\psi_C^I|\, dt
  \\
  \lesssim
  \int_{t_0}^{t_1} \frac{1+\log t}{(1+t)^{3/2}} 
  \left(\int_{D^C_t} |\nas \psi_C^J| |v^{1/2}\evmB \psi_C^I|\right)\,dt
  + \int_{t_0}^{t_1} \frac{(1+\log t)^{3/2}}{(1+t)^2} 
  \left(\int_{D^C_t} |\nas\psi_C^J||X^n|^{1/2}|\pa \psi_C^I|\right)\, dt
  \\
  \lesssim \delta \int_{t_0}^{t_1} \int_{D^C_t} |\nas \psi_C^J|^2\, dt
   + \frac{1}{\delta} c_0(\epsilon_0) \sup_{t_0 \leq t \leq t_1} \int_{D^C_t} |\pa \psi_C^J|^2_{X,\mB}
      \lesssim \delta S_{J}^C(t_1) + \frac{c_0(\epsilon_0)}{\delta} \epsilon_C^2.
\end{multline}

As for the second term in \eqref{fmBIbdbootstrappf}, we use the Poincar\'{e}-type inequality 
\eqref{hardyCapp} combined with our bootstrap assumptions to bound
\begin{multline}
  \label{}
  \int_{D^C_t} |\Omega \psi_C^J|^2 \\
  \lesssim (\log t)^{1/2}\int_{D^C_{t_0}} |\Omega \psi_C^J|^2
  + (\log t)^{3/2}\int_{t_0}^{t_1} \int_{\Gamma^L_{t'}} v |\evmB \Omega \psi_C^J|^2 
  + \frac{1}{vs} |\pa_u \psi_C^J|^2\, dS dt
  + \log t \int_{D^C_t} |\pa \psi_C^J|^2
  \\
  \lesssim (\log t)^{3/2}\epsilon_C^2,
\end{multline}
for $|J| \leq |I|-1 \leq N_C$. It easily follows that
\begin{multline}
  \label{}
  \int_{t_0}^{t_1} \int_{D^C_t} \frac{1}{(1+v)^2} |\Omega \psi_C^J| |X\psi_C^I|\,dt
  \lesssim
  \int_{t_0}^{t_1} \int_{D^C_t} \frac{1}{(1+v)^{3/2}} |\Omega \psi_C^J|
  |\pa \psi_C^I|_{X, \mB}
  \, dt
  \lesssim
 \epsilon_C^2 \int_{t_0}^{t_1} \frac{(1+\log t)^{3/2}}{(1+t)^{3/2}} \, dt
 \\
 \lesssim
  c_0(\epsilon_0) \epsilon_C^2.
\end{multline}

}  
\end{proof}
We now handle the contribution from the component $P^u_{I, null}$, which is
responsible for the double-logarithmic growth in some of our estimates.
\begin{lemma}
	\label{timeintegrability-center-linear}
	Let $X = X_{C}$ or $X = X_{T}$ with notation as
	in Section \ref{fields} and write
	$X = X^n \pa_u + X^\ell \evmB$.
	There is $\epsilon_0^*$ so that if the hypotheses of Theorem \ref{mainthm} hold
	with $\epsilon_0 < \epsilon_0^*$, the $u$-component of $P_{I, null}$,
	defined in Lemma \ref{higherordereqncentral} satisfies
	the following bounds.
	\begin{itemize}
	 \item
	 For $|I| \leq N_C$ and any $\delta > 0$,
	\begin{equation}
	 \int_{t_0}^{t_1} \int_{D^C_t}
	 |X^n_{T, \mB}|^{1/2} \left( |\nabla P^u_{I,null}| + (1+s)^{-1} |P^u_{I,null}|\right)
	 |\pa \psi^I_C|_{X_T, \mB}\, dt
	 \lesssim \delta S_I^C(t_1) + \frac{1}{\delta} c_0(\epsilon_0) \epsilon_C^2,
	 \label{extraPIlinbd0}
	\end{equation}
where $S_I^C$ is the spacetime integral defined in \eqref{EtopC0}.

\item
For $|I| \leq N_C-1$ and any $\delta >0$,
\begin{multline}
 \int_{t_0}^{t_1} \int_{D^C_t} |X^n_{D, \mB}|^{1/2}
 \left( |\nabla P^u_{I,null}| + (1+s)^{-1}|P^u_{I,null}|\right)
 |\pa \psi^I_C|_{X_C, \mB}\, dt\\
 \lesssim
 \delta E_{I, D}^C(t_1) 
 + \frac{1}{\delta} \sum_{|K| \leq |I|-1} E_{K, D}^C(t_1)
+ \left(\delta + \frac{1}{\delta}\right) c_0(\epsilon_0) \epsilon_C^2
 +  \epsilon_C
 (1+\log\log t_1)
 \sum_{|J| \leq |I|-1} \sup_{t_0 \leq t \leq t_1}
 \left(E_{J, D}^C(t)\right)^{1/2},
 \label{extraPIlinbd}
\end{multline}
\item
for $|I| \leq N_C-2$, and any $\delta >0$,
\begin{multline}
 \int_{t_0}^{t_1} \int_{D^C_t}
 |X^n_{D, \mB}|^{1/2}
 \left( |\nabla P^u_{I, null}| + (1+s)^{-1}|P^u_{I, null}|\right)
 |\pa \psi^I_C|_{X_C, \mB}\, dt
 \\
 \lesssim 
 \delta E_{I, D}^C(t_1) 
 + \frac{1}{\delta} \sum_{|K| \leq |I|-1} E_{K, D}^C(t_1)
+ \left(\delta + \frac{1}{\delta}\right) c_0(\epsilon_0) \epsilon_C^2
 \label{extraPIlinbd1}
\end{multline}

	\end{itemize}

We also have
%	\begin{equation}
%	 \int_{t_0}^{t_1} \| (1+ v)^{1/2} \evmB P_{null}\|_{L^2(D^C_t)} +
%	 \|(1+v)^{1/2} \nas P_{null}\|_{L^2(D^C_t)}\, dt \lesssim
%	 c_0(\epsilon_0)\epsilon_C,
%	 \label{extraPInullbd}
%	\end{equation}
%	as well as
\begin{multline}
 \sup_{t_0 \leq t \leq t_1}\int_{D^C_t} v |P_{I, null}|^2
 + \int_{t_0}^{t_1} \int_{\Gamma^L_t} v|P_{I, null}|^2\, dS dt
 + \int_{t_0}^{t_1} \int_{\Gamma^R_t} v|P_{I, null}|^2\, dS dt\\
 \lesssim
 c_0(\epsilon_0) \epsilon_C^2 + \sum_{|J| \leq |I|-1}
 \sup_{t_0 \leq t \leq t_1} E_{X_C, J}^C(t).
 \label{extraPIslicebd}
\end{multline}

\end{lemma}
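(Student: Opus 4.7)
The starting point is the structural description of $P_{I, null}$ coming from Lemma \ref{higherordereqncentral}: because the linear perturbation $\gamma_a = (u/vs) a^{\mu\nu}$ satisfies the null condition $a^{uu}=0$, the $u$-component $P^u_{I, null}$ and its derivatives are controlled by the ``good'' null derivatives $\evmB$ and $\nas$ applied to $\psi_C^J$ for $|J|\leq |I|$, with pointwise weights of size $(1+v)^{-1}(1+s)^{-1}$, as encoded by the borderline bounds \eqref{borderlinebd0}--\eqref{borderlinebd2}. The plan is therefore to split each estimate into a top-order contribution ($|J| = |I|$), which must be absorbed into the energies and space-time integrals on the left-hand sides via Cauchy-Schwarz, and a sub-top-order contribution ($|J| \leq |I|-1$), which will be handled via the pointwise decay estimates of Section \ref{pwdecaysubsection} and the bootstrap control of lower-order energies.

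For the top-order estimate \eqref{extraPIlinbd0} with $X_T$, where $X_{T, \mB}^n \sim (1+s)^{-1/2}$, the idea is to bound
\[
|X^n_{T,\mB}|^{1/2}\bigl(|\nabla P^u_{I, null}| + (1+s)^{-1}|P^u_{I, null}|\bigr) \lesssim \frac{1}{(1+v)(1+s)^{3/4}}\sum_{|J|\leq |I|}\bigl(|\nabla_{\evmB}\psi_C^J| + |\nas\psi_C^J|\bigr) + (\text{lower order}),
\]
and then, after pairing with $|\pa \psi_C^I|_{X_T, \mB}$ and applying Cauchy-Schwarz with parameter $\delta$, the angular contributions land inside $S_I^C$ (whose integrand includes $|\nas \psi_C^I|^2$), while the $\evmB$ contributions produce $\delta \,v\,|\evmB \psi_C^I|^2$ which is part of $E_{I,T}^C$, and the large-$v$, large-$s$ weights then give a factor of $c_0(\epsilon_0)$ coming from time integration. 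The lower-order terms $|J|\leq |I|-1$ are estimated pointwise using the time-integrated bounds \eqref{centralint1}--\eqref{centralint2}, contributing $c_0(\epsilon_0)\epsilon_C^2/\delta$.

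For \eqref{extraPIlinbd} at order $|I|\leq N_C - 1$ with $X_C$, where $X_{C, \mB}^n \sim s$, the same Cauchy-Schwarz strategy goes through but the pairings are less forgiving: top-order contributions split into a piece absorbed by $\delta E_{I, D}^C(t_1)$ and a piece producing $\sum_{|K|\leq |I|-1} E^C_{K,D}(t_1)/\delta$, reflecting the commutator structure; the borderline term is that in which $P_{I, null}$ is at order $|I|-1$ and is integrated in time against the multiplier, in which case the sub-top-order Klainerman-Sobolev bound combined with the $\log\log t$ growth permitted by the bootstrap assumption \eqref{centralbootextra} produces the factor $\epsilon_C(1+\log\log t_1)\sup (E^C_{J,D})^{1/2}$. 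The bound \eqref{extraPIlinbd1} at order $|I|\leq N_C-2$ is the same computation, but at this level all lower-order energies are uniformly controlled by $\epsilon_C^2$ and the $\log\log t$ factor disappears.

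Finally, for \eqref{extraPIslicebd} one uses the borderline bounds pointwise along time slices and shocks: $v\,|P_{I, null}|^2 \lesssim \sum_{|J|\leq |I|}\bigl(|\evmB \psi_C^J|^2 + |\nas \psi_C^J|^2\bigr)/(1+s)^2 + (\text{lower-order})$. The contribution from $|J|\leq |I|-1$ is controlled in $L^2$ by lower-order energies $E_{X_C, J}^C$, while the top-order contribution $|J|=|I|$ is absorbed using \eqref{leftbdytrivialbds} and \eqref{rightbdytrivialbds} along the shocks, and using the bound for $E_{I, T}^C$ along slices, with time decay producing the $c_0(\epsilon_0)$ factor. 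The main obstacle is the weight accounting at the top order for $X_C$: one must track simultaneously the $\log\log t$ growth of the top-order decay energy $E^C_{N_C-1, D}$ and the non-time-integrable borderline bulk terms, and organize the Cauchy-Schwarz so that the right-hand side is proportional to either an $\epsilon_C^2(1+\log\log t_1)$ bound or is absorbed by $\delta$-times the left-hand side; this is what forces the distinction between the three ranges $|I|\leq N_C$, $|I|=N_C-1$, and $|I|\leq N_C-2$.
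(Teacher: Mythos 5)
Your overall framing (start from the borderline bounds \eqref{borderlinebd0}--\eqref{borderlinebd2}, separate top-order from lower-order contributions, and Cauchy--Schwarz against the energies) matches the paper's starting point, and your sketch of \eqref{extraPIslicebd} is essentially the paper's argument. But there is a genuine gap at the one step this lemma is really about: the borderline bulk term
\[
\sum_{|J|\le |I|-1}\int_{t_0}^{t_1}\int_{D^C_t}\frac{1}{(1+v)(1+s)}\,|X^n_{C,\mB}|^{1/2}\,|\pa\psi_C^J|\,|\pa\psi_C^I|_{X_C,\mB}\,dt,
\]
coming from the second sum in \eqref{ffsderivbd0}, in the case $X=X_C$. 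Here both factors can only be estimated in $L^2_x$: the pointwise and time-integrated decay bounds you invoke (\eqref{centralint1}--\eqref{centralint2}, Klainerman--Sobolev) are valid only up to roughly $N_C-6$ derivatives, while $|J|$ runs up to $N_C-2$ and $|I|$ up to $N_C-1$. Using the bootstrap $L^2$ bounds on both factors, the resulting time weight $\tfrac{1}{(1+t)(1+\log t)}$ is not integrable, so no reorganization of the Cauchy--Schwarz gives better than $\epsilon_C^2(1+\log\log t_1)^2$ when $|I|=N_C-1$ and $\epsilon_C^2(1+\log\log t_1)$ when $|I|\le N_C-2$; neither is bounded by the right-hand sides of \eqref{extraPIlinbd}, \eqref{extraPIlinbd1}. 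In particular your claim that at order $\le N_C-2$ the $\log\log$ factor disappears because the lower-order energies are uniform is false: that factor comes from the non-integrable time weight, not only from the energy growth.

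The missing ingredient is the integration-to-the-shock (Poincar\'e-type) inequality of Lemma \ref{hardyClemma}: since $D^C_t$ has width $\sim s^{1/2}$ and the energies control $s\pa_u\psi_C$, one has \eqref{ibpdC}--\eqref{ibpdC2}, i.e. $\int_{D^C_t}|\pa\psi_C^J|^2 \lesssim \int_{\Gamma^L_t}s^{1/2}|\pa\psi_C^J|^2\,dS + (1+\log t)^{-2}\sum_{|K|\le|J|+1}E^C_{K,D}(t)$. The crucial point is that the boundary term produced along the spacelike side of $\Gamma^L$ is exactly the \emph{time-integrated} flux already contained in $E^C_{J,D}(t_1)$, so after Cauchy--Schwarz in time one pays only a single factor $(\log\log t_1)^{1/2}$ from $\int\frac{dt}{(1+t)(1+\log t)}$, which combined with $(E^C_{I,D})^{1/2}\lesssim \epsilon_C(1+\log\log t_1)^{1/2}$ yields precisely the single-$\log\log$ term $\epsilon_C(1+\log\log t_1)\sum\sup(E^C_{J,D})^{1/2}$ in \eqref{extraPIlinbd}; at orders $\le N_C-2$ one applies the inequality in both factors and absorbs into $\delta E^C_{I,D}(t_1)+\frac{1}{\delta}\sum_{|K|\le|I|-1}E^C_{K,D}(t_1)$. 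Without this device (or an equivalent one) the stated bounds are out of reach. A minor further point: for \eqref{extraPIlinbd0} with $X_T$ you do not need to absorb anything into $E^C_{I,T}$ --- which in any case is not on the right-hand side of \eqref{extraPIlinbd0} --- since $|X_{T,\mB}^n|^{1/2}\lesssim (1+s)^{-1/4}$ makes all terms directly time-integrable and bounded by $c_0(\epsilon_0)\epsilon_C^2$.
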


\begin{proof}

	We recall that for our
	multipliers $X \in \{X_{C}, X_{T}\}$,
	by definition
	\begin{equation}
	 |\pa q|_{X, \mB}
	= |X^n_{\mB}|^{1/2} |\pa q|+ v^{1/2} (|\evmB q| + |\nas q|).
	 \label{}
	\end{equation}
	We also recall that from Lemma \ref{higherordereqncentral},
	the components $P^u_{I, null}$ enjoy the following estimates,
	\begin{align}
|P_{I, null}^u|
&\lesssim
\frac{1}{1+v}
\sum_{|J| \leq |I|-1}\left(
\frac{1}{(1+s)^{1/2}} |\pa \psi_C^J| + |\opa \psi_C^J|\right)
+  \frac{1}{1+v}\sum_{|J| \leq |I|-2} |\pa \psi_C^J|,
\label{borderlinebd0pf9}
\end{align}
\begin{multline}
(1+s)|\nabla P_{I, null}^u| + (1+v)|\pa_v P_{I, a}^u|
+ |\Omega P_{I, null}^u| \\
\lesssim
\frac{1}{1+v}\sum_{|J| \leq |I|}\left(
\frac{1}{(1+s)^{1/2}} |\pa \psi_C^J| + |\opa \psi_C^J|\right)
+ \frac{1}{1 + v}\sum_{|J| \leq |I|-1} |\pa \psi_C^J|,
\label{ffsderivbd0pf9}
\end{multline}
where $\opa = (\nas, \evmB)$. Note the above imply
\begin{align}
  \label{}
   |P_{I, null}^u|
&\lesssim
\frac{1}{(1+v)^{3/2}} \sum_{|J| \leq |I|-1} |\pa \psi_C^J|_{X, \mB}
+
\frac{1}{(1+v)(1+s)^{1/2}}
\sum_{|J| \leq |I|-1} |\pa \psi_C^J| 
+  \frac{1}{1+v}\sum_{|J| \leq |I|-2} |\pa \psi_C^J|,
\end{align}
and
\begin{align}
(1+s)|\nabla P_{I, null}^u| + (1+v)|\pa_v P_{I, a}^u|
+ |\Omega P_{I, null}^u|
&\lesssim \frac{1}{(1+v)^{3/2}} \sum_{|J| \leq |I|} |\pa \psi_C^J|_{X, \mB}\\
&\quad+
\frac{1}{(1+v)(1+s)^{1/2}}
\sum_{|J| \leq |I|-1} |\pa \psi_C^J| 
+  \frac{1}{1+v}\sum_{|J| \leq |I|-1} |\pa \psi_C^J|.
\end{align}

By these estimates, for either multiplier $X$ we have
\begin{align}
  \label{PInullintegrated}
  \int_{t_0}^{t_1}\int_{D^C_t} |X^n_{\mB}|^{1/2} &\left(|\nabla P^u_{I,null}| 
  + (1+s)^{-1} |P^u_{I, null}| \right) |\pa \psi_C^I|_{X, \mB}\, dt
  \\
    &\lesssim
        \sum_{|J| \leq |I|}
        \int_{t_0}^{t_1} \int_{D^C_t} 
        \frac{|X^n_{\mB}|^{1/2}}{1+s} \frac{1}{(1+v)^{3/2}}|\pa \psi_C^J|_{X, \mB}^2\, dt
        \\
    &+
        \sum_{|J| \leq |I|}
        \int_{t_0}^{t_1} \int_{D^C_t} \frac{1}{(1+v)(1+s)^{3/2}} 
        |X^n_{\mB}|^{1/2}|\pa \psi_C^J| |\pa \psi_C^I|_{X, \mB}\, dt
        \\
    &+
        \sum_{|J| \leq |I|-1} 
        \int_{t_0}^{t_1} \int_{D^C_t} \frac{1}{(1+v)(1+s)} |X^n_{\mB}|^{1/2}
        |\pa \psi_C^J| |\pa \psi_C^I|_{X, \mB}\, dt.
\end{align}
By definition $|X^n_{\mB}|^{1/2}|\pa \psi_C^J|\lesssim |\pa \psi_C^J|_{X, \mB}$,
and since both our multipliers satisfy $|X^n_{\mB}| \lesssim 1+s$, 
for the terms on the second and third lines we can just bound
\begin{multline}
  \label{}
  \sum_{|J| \leq |I|}
  \int_{t_0}^{t_1} \int_{D^C_t} 
        \frac{|X^n_{\mB}|^{1/2}}{1+s} \frac{1}{(1+v)^{3/2}}|\pa \psi_C^J|_{X, \mB}^2
        +\frac{1}{(1+v)(1+s)^{3/2}} 
        |X^n_{\mB}|^{1/2}|\pa \psi_C^J| |\pa \psi_C^I|_{X, \mB} \, dt
        \\
    \lesssim
    \left(\int_{t_0}^{t_1} \frac{1}{(1+t)(1 + \log t)^{9/8}}\,dt\right) \frac{1}{1+ \log \log t_1} \sup_{t_0\leq t \leq t_1} 
    \left(
 \sum_{|J| \leq |I|}
    \int_{D^C_t} |\pa \psi_C^J|^2_{X, \mB}\right)
    \lesssim c_0(\epsilon_0)\epsilon_C^2,
\end{multline}
where the double-logarithmic factor is only needed in the case $X = X_C$ and $|I| = N_C - 1$,
and where we bounded $(1+s)^{-3/2}|X^n_{\mB}|^{1/2}(1+\log \log t)
\lesssim (1+\log t)^{-3/2}(1+\log t)^{1/4}(1+\log \log t) \lesssim (1+\log t)^{-9/8}$.

It remains to control the terms on the last line of \eqref{PInullintegrated}. 
This is straightforward when $X = X_T$, since then we have $|X^n_{T, \mB}|^{1/2}
\lesssim (1+s)^{-1/4}$ and in that case
\begin{multline}
  \label{}
    \sum_{|J| \leq |I|-1} 
        \int_{t_0}^{t_1} \int_{D^C_t} \frac{1}{(1+v)(1+s)} |X^n_{T,\mB}|^{1/2}
        |\pa \psi_C^J| |\pa \psi_C^I|_{X_T, \mB}\, dt
        \\
        \lesssim
            \int_{t_0}^{t_1} \int_{D^C_t} \frac{1}{(1+v)(1+s)^{3/2}}
            \left(\sum_{|J| \leq |I|-1} |\pa \psi_C^J|_{X_C, \mB}\right)|\pa \psi_C^I|_{X_T, \mB}^2\, dt
            \lesssim c_0(\epsilon_0) \epsilon_C^2,
\end{multline}
after, similarly to the above, bounding $\int_{t_0}^{t_1} \frac{1 +\log\log t}{(1+t)(1+ \log t)^{3/2}} 
\lesssim c_0(\epsilon_0)$.

The argument is more complicated when $X = X_C$, because we cannot afford to directly
use the bootstrap assumptions to handle this term, as that would lead to a bound
of size $\epsilon_C^2 (1+ \log \log t)^2$, which is too large for our purposes.
We are going to instead prove the improved estimate: for any $\delta > 0$
and $|J| \leq |I| \leq N_C-1$,
\begin{multline}
 \sum_{|J| \leq |I|-1}
 \int_{t_0}^{t_1} \int_{D^C_t} \frac{1}{(1+v)(1+s)} 
 |X^n_{C, \mB}|^{1/2} |\pa \psi^J_C| |\pa \psi^I_C|_{X_C, \mB}\, dt\\
 \lesssim \delta E_{I, D}^C(t_1) 
 + \frac{1}{\delta} \sum_{|K| \leq |I|-1} E_{K, D}^C(t_1)
+ \left(\delta + \frac{1}{\delta}\right) c_0(\epsilon_0) \epsilon_C^2
 +
 \epsilon_C (1 + \log \log t_1)^{a_I} \sum_{|K| \leq |I|-1}
 \sup_{t_0 \leq t \leq t_1} (E_{K, D}^C(t))^{1/2},
 \label{extraPIlinbdleftover}
\end{multline}
with $a_I = 1$ when $|I| = N_C-1$ and $a_I = 0$ otherwise.

The idea is to exploit the fact that since we only need
to consider $|J| \leq |I|-1 \leq N_C -2$, we can afford to integrate to
the shock using Lemma \ref{hardyClemma}. Since the domain has width $\sim s^{1/2}$ and since we control
$s \pa_u$ applied to the solution, the interior term we generate
is easily handled. It turns out that the boundary term this generates
is exactly of the form controlled by our energy, which allows us to close the estimate.
When $|I| = N_C-1$ there is the added complication that we cannot afford
to integrate in both factors because the bounds we have for the
top-order energies $E^C_{T}$ are not strong enough to control
the resulting quantities (recall that $E^C_{T, I}
\gtrsim (1+\log t)^{-1/2}\|\pa \psi_C^I\|_{L^2(D^C_t)}^2$).

By the bound \eqref{hardyCapp0} from Lemma \ref{hardyClemma} and the fact that
$(1 + \log t)^{1/2}|\pa q|\lesssim
(1+ \log t)^{-1/2} \sum_{Z_{\mB} \in \mZB} |Z_{\mB} q|$,
we have the bound
\begin{equation}
  \label{}
  \int_{D^C_t} |q|^2 \lesssim \int_{\Gamma^L_t} (1 + \log t)^{1/2} |q|^2\, dS
  + \frac{1}{1+ \log t} \sum_{Z_{\mB} \in \mZB}  \int_{D^C_t} |Z_{\mB} q|^2,
\end{equation}
and in particular, since $E_{K, D}(t) \gtrsim (1+\log t)\int_{D^C_t} |\pa \psi_C^K|^2$,
	\begin{equation}
		\int_{D_t^C} |\pa \psi_C^J|^2
	  \lesssim \int_{\Gamma^L_t} s^{1/2} |\pa \psi_C^J|^2\, dS
		+ \frac{1}{(1 + \log t)^{2}}\sum_{|K| \leq |J|+1}  E^C_{K, D}(t),
	 \label{ibpdC}
	\end{equation}
    as well as the similar estimate
    \begin{equation}
      \label{ibpdC2}
      \int_{D^C_t} |\pa \psi_C^J|_{X, \mB}^2
      \lesssim
      \int_{\Gamma^L_t} s^{1/2} |\pa \psi_C^J|_{X,\mB}^2\ dS
      + \frac{1}{(1+\log t)^2}\sum_{|K| \leq |J|+1} E^C_{K, D}(t), 
    \end{equation}

We can now prove \eqref{extraPIlinbdleftover}.
 We will need to handle the two cases
$|I| = N_C - 1$ and $|I| \leq N_C-2$ separately,
with the first of these being slightly more involved and 
responsible for the (slow) growth of our energies.

\emph{The proof of \eqref{extraPIlinbdleftover} when $|I| = N_C-1$}

For $|J| \leq |I|-1$, since $|X^n_{C, \mB}|\lesssim 1+\log t$, we have 
\begin{align}
    \int_{t_0}^{t_1} \int_{D^C_t}&\frac{1}{(1+v)(1+s)}|X^n_{C,\mB}|^{1/2}
    |\pa \psi_C^J| |\pa \psi_C^I|_{X, \mB}\,
 dt\\
 &\lesssim
 \int_{t_0}^{t_1} \frac{1}{1+t} \frac{1}{(1 + \log t)^{1/2}}
 \left( \int_{D^C_t} |\pa \psi_C^J|^2\right)^{1/2} \left(E^C_{I, D}(t)\right)^{1/2}\, dt\\
 &\lesssim
 \int_{t_0}^{t_1} \frac{1}{1+t} \frac{1}{(1 + \log t)^{1/2}}
 \left( \int_{\Gamma^L_t} s^{1/2} |\pa \psi_C^J|^2\, dS
 + \frac{1}{(1+ \log t)^2} \sum_{|K| \leq |J|+1} E_{K, D}^C(t)\right)^{1/2}
  (E^C_{I,D}(t))^{1/2}\, dt,
 \label{gettinglogs}
\end{align}
by \eqref{ibpdC}.
For the second term here, we just note that by
\eqref{centralbootextra} we have
\begin{equation}
 \int_{t_0}^{t_1} \frac{1}{1+t} \frac{1}{(1 + \log t)^{3/2}}
 \sum_{|K| \leq |J| + 1} E_{K, D}^C(t)^{1/2} E_{I, D}^C(t)^{1/2}\,dt
 \lesssim
 \epsilon_C
 \int_{t_0}^{t_1} \frac{1}{1+t} \frac{1 + \log \log t}{(1 + \log t)^{3/2}}\, dt
 \lesssim c_0(\epsilon_0)\epsilon_C.
 \label{}
\end{equation}
For the first term in \eqref{gettinglogs}, we bound
\begin{multline}
 \int_{t_0}^{t_1} \frac{1}{1+t} \frac{1}{(1+ \log t)^{1/2}}
 \left(\int_{\Gamma^L_t} s^{1/2} |\pa \psi^J_C|^2\, dS\right)^{1/2}
 (E^C_{I, D}(t))^{1/2}\, dt\\
 \lesssim
 \epsilon_C(1 + \log \log t_1)^{1/2}
 \left( \int_{t_0}^{t_1} \frac{1}{1+t} \frac{1}{1+\log t}\right)^{1/2}
 \left(\int_{t_0}^{t_1}
  \int_{\Gamma^L_t} \frac{s^{1/2}}{v} |\pa \psi_C^J|^2\,
	dS\, dt\right)^{1/2}\\
	\lesssim
	\epsilon_C (\log\log t_1)
	 \sum_{|K| \leq |I| - 1} \sup_{t_0 \leq t \leq t_1}
	E_{K, D}^C(t)^{1/2},
 \label{}
\end{multline}
and combining this with the previous inequality we get
\eqref{extraPIlinbdleftover}.
We remark that it is to handle this term that we needed to allow
the norms $E_{I, D}^C(t)$ to grow slowly when $|I| = N_C - 1$.

\textit{The proof of \eqref{extraPIlinbdleftover} when $|I| \leq N_C - 2$}

The argument in this case is similar but a bit simpler, because we can afford
to integrate as in \eqref{ibpdC} in both factors. 
For $|I| \leq N_C-2$ and $|J| \leq |I|-1$, 
 we first bound
$|X^n_{C,\mB}|^{1/2} |\pa \psi_C^J| |\pa \psi_C^I|_{X, \mB}
\leq |\pa\psi_C^J| |\pa \psi_C^I|_{X, \mB}$,
so for any $\delta > 0$ we have
\begin{multline}
  \label{}
  \int_{t_0}^{t_1} \int_{D^C_t} \frac{1}{(1+v)(1+s)}  |X^n_{C,\mB}|^{1/2}
  |\pa \psi_C^J| |\pa\psi_C^I|_{X, \mB}\, dt
  \leq
  \int_{t_0}^{t_1} \int_{D^C_t} \frac{1}{(1+v)(1+s)}  |\pa \psi_C^J|_{X, \mB} |\pa\psi_C^I|_{X, \mB}
  \, dt
  \\
  \lesssim
  \delta\int_{t_0}^{t_1}
\int_{D^C_t}\frac{1}{(1+v)(1+s)} |\pa \psi_C^I|_{X, \mB}^2\, dt
+ \frac{1}{\delta}  \int_{t_0}^{t_1} \int_{D^C_t}\frac{1}{(1+v)(1+s)}  |\pa \psi_C^J|_{X, \mB}^2\, dt.
\end{multline}
Using \eqref{ibpdC2}, we find
\begin{multline}
  \label{}
  \int_{t_0}^{t_1} \int_{D^C_t} \frac{1}{(1+v)(1+s)}  |\pa \psi_C^K|_{X, \mB}^2\, dt
  \,dt
  \\
  \lesssim 
   \int_{t_0}^{t_1} \int_{\Gamma^L_t} \frac{1}{(1+v)(1+s)^{1/2}} 
  |\pa \psi_C^K|_{X, \mB}^2\, dS dt
  + \sum_{|K'| \leq |K|+1} \int_{t_0}^{t_1} \frac{1}{(1+t)(1+\log t)^3} E_{K', D}^C(t)\, dt
  \\
  \lesssim  E_{K, D}^C(t_1) + 
  c_0(\epsilon_0)\epsilon_C^2,
\end{multline}
for $|K| \leq N_C-2$. From the above bounds we find
\begin{multline}
  \label{}
    \int_{t_0}^{t_1} \int_{D^C_t} \frac{1}{(1+v)(1+s)}
    |X^n_{C,\mB}|^{1/2} |\pa \psi_C^J| |\pa\psi_C^I|_{X, \mB}\, dt
    \\
    \lesssim \delta E_{I, D}^C(t_1) + \frac{1}{\delta}\sum_{|J| \leq |I|-1} E_{J, D}(t_1)
    + \left(\delta + \frac{1}{\delta}  \right) c_0(\epsilon_0)\epsilon_C^2, 
\end{multline}
which concludes the proof of \eqref{extraPIlinbdleftover}.

It remains only to prove %\eqref{extraPInullbd}-
\eqref{extraPIslicebd}.
%The bound \eqref{extraPInullbd} is straightforward thanks to the
%second and third bounds in \eqref{ffsderivbd0pf9}
%and the fact that $\pa_v = \evmB + \frac{u}{vs}\pa_u$.
%The bound \eqref{extraPIslicebd}
This follows after using the simple estimate
\begin{equation}
 |P_{I, null}| \lesssim \frac{1}{1+v} \sum_{|J|\leq |I|-1} |\pa \psi^J_C|,
 \label{}
\end{equation}
which follows directly from the estimate
\eqref{borderlinebd2} from Lemma \ref{higherordereqncentral},
and then bounding
\begin{equation}
 \int_{D^C_t} (1+v)|P_{I, null}|^2
 \lesssim
 \frac{1}{1+t}
 \sum_{|J| \leq |I|}
 \int_{D^C_t} |\pa \psi^J_C|^2\, dt
 \lesssim c_0(\epsilon_0) \epsilon_C^2,
 \label{}
\end{equation}
and, for $A = L, R$,
\begin{multline}
 \int_{t_0}^{t_1} \int_{\Gamma^A_t}
 (1+v)|P_{I,null}|^2\,dS dt
 \lesssim
 \sum_{|J| \leq |I|-1}
 \int_{t_0}^{t_1} \int_{\Gamma^A_t}
 \frac{1}{1+v}|\pa \psi_C^J|^2\,dS dt\\
 \lesssim
 \frac{1}{(1 + \log t_0)^{1/2}}
 \int_{t_0}^{t_1} \int_{\Gamma^A_t}
 \frac{(1+s)^{1/2}}{1+v} |\pa \psi_C^J|^2\, dS dt
 \lesssim c_0(\epsilon_0) \epsilon_C^2.
 \label{}
\end{multline}
Here, we used \eqref{rightangularbound} to control the angular derivatives
along the shock.

\end{proof}

{We now control the term $F_{\mB, I}^1$.
\begin{lemma}
    \label{FImBibplemma}
    Under the hypotheses of Theorem \ref{mainthm}, for either $X = X_T$ or $X = X_C$ and $|I|\leq N_C$,
    for any $\delta > 0$, we have
    \begin{multline}
      \label{FImBibpargument}
      -\int_{t_0}^{t_1} \int_{D^C_t} F_{I, \mB}^1 \, dt
      \\
      \lesssim
      c_0(\epsilon_0) \epsilon_C^2 + \delta\sum_{|J|\leq |I|}
      \left(E_{J, X_T}(t_1)  + S_{I}^C(t_1)\right)
      +\frac{1}{\delta} \sum_{|J| \leq |I|-1}
      E_{J, X_T}(t_1) + \left(1+\frac{1}{\delta}  \right) \sum_{|J| \leq |I|-1} S_{J}^C(t_1) + \epsilon_C^3.
    \end{multline}
\end{lemma}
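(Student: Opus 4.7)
The plan is to exploit the explicit structure of $F_{I, \mB}^1$, which by the derivation in Lemma~\ref{higherordereqncentral} arises because the commutator fields $\sBo = s\pa_u$ and $\sBt = v\pa_v$ do not commute exactly with the radial principal part $\pa_u \evmB$, nor do they commute with $\sDelta$. The key observation (anticipated by the remark after \eqref{higherordereqcentral}) is that $F_{I, \mB}^1$ is, up to genuinely perturbative remainders, a total derivative in the direction where the integration by parts does not cost us a factor of $\log v$; schematically, one writes
\begin{equation}
  F_{I, \mB}^1 \;=\; \pa_\mu \mathcal{R}_I^\mu \;+\; \mathcal{E}_I,
\end{equation}
where $\mathcal{R}_I^\mu$ is a current built from $\nas \psi_C^J$ and $\evmB \psi_C^J$ with $|J| \leq |I|$ (and weights $\lesssim (1+v)^{-1}$), and $\mathcal{E}_I$ is a genuine error satisfying $|\mathcal{E}_I| \lesssim (1+v)^{-1}(1+s)^{-1}\sum_{|J| \leq |I|}|\pa \psi_C^J|$ which one handles by the same argument as the analogous term in \eqref{FICl3bds}.

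First, I would integrate by parts in the contribution of the total-derivative piece against $X\psi_C^I$ to obtain
\begin{equation}
  -\int_{t_0}^{t_1}\int_{D^C_t} (\pa_\mu \mathcal{R}_I^\mu) X\psi_C^I\, dt
  \;=\;
  \int_{t_0}^{t_1}\int_{D^C_t} \mathcal{R}_I \cdot \pa (X\psi_C^I)\, dt
  \;+\; (\text{boundary})
\end{equation}
where the boundary terms live on $D^C_{t_0}$, $D^C_{t_1}$, $\Gamma^L$ and $\Gamma^R$. The interior term is estimated by writing $\pa(X\psi_C^I) = X\pa\psi_C^I + [\pa, X]\psi_C^I$. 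For $X = X_T$ or $X_C$, the commutator $[\pa, X]$ is a bounded combination of the multiplier components, so up to terms that are manifestly lower order,
\begin{equation}
  \left|\int \mathcal{R}_I \cdot X\pa \psi_C^I\right|
  \;\lesssim\;
  \int \frac{1}{1+v}\Big(|\evmB \psi_C^J| + |\nas \psi_C^J|\Big) |X \pa \psi_C^I|,
  \qquad |J| \leq |I|.
\end{equation}
Writing $|X\pa \psi_C^I| \lesssim (1+v)|\evmB \pa\psi_C^I| + |X^n|\,|\pa_u \pa \psi_C^I|$ and splitting by Cauchy--Schwarz with parameter $\delta$, the $X\evmB \pa\psi_C^I$ portion is absorbed into $\delta E_{J, X_T}(t_1)$ after converting $X \pa$ into $\pa X$ plus a commutator, while the $\nas$ half of $\pa \psi_C^I$ produces a term absorbed into $\delta S_I^C(t_1)$; the corresponding lower-order factors $|\evmB \psi_C^J|$, $|\nas \psi_C^J|$ with $|J|\leq |I|-1$ are placed in $L^\infty$ via the pointwise bounds of Section~\ref{pwdecaysubsection}, which costs a $1/\delta$ and supplies a factor of $(1+t)^{-1}(1+\log t)^{-1/2}$, hence contributes the $\tfrac{1}{\delta}\sum_{|J| \leq |I|-1} E_{J, X_T}(t_1)$ piece after time-integration.

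Next, I would deal with the boundary terms. The time-slice contributions are of the form $\int_{D^C_t}\mathcal{R}_I \cdot X\psi_C^I$ at $t = t_0, t_1$; by the structure of $\mathcal{R}_I$ and the pointwise bounds for $\psi_C$, they are dominated by $c_0(\epsilon_0)\epsilon_C^2$ (at $t_0$ by the initial data assumption \eqref{initialEbdcentral} and the factor $(1+t_0)^{-1} \leq \epsilon_0$; at $t_1$ by Cauchy--Schwarz combined with $\delta$-absorption into the top-order energy). The shock contributions are controlled using Lemma~\ref{bdsforpsiCalongshock}: on each shock we have $\int (1+v)|\evmB\psi_C^J|^2 + (1+s)^{-1/2}|\nas \psi_C^J|^2\, dSdt \lesssim \epsilon_C^2$, so pairing with $X\psi_C^I$ (whose normal trace is controlled by the boundary part of $E_{I, X_T}$) and using the smallness of $(1+s_0)^{-1/2}$ produces the $c_0(\epsilon_0)\epsilon_C^2$ bound.

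The main obstacle is the first step: correctly identifying $\mathcal{R}_I^\mu$ so that after integration by parts, no term of size $|\pa^2 \psi_C^I| \cdot |\pa \psi_C^J|$ survives without the saving factor $(1+v)^{-1}$ needed to make the $\delta$-absorption closable. Concretely, one must verify that the commutator errors $[s\pa_u, \pa_u\evmB]$ and $[v\pa_v, \pa_u\evmB]$ generate only terms of the form $\pa_\mu$ acting on a first-order expression in $\pa \psi_C^J$ with $|J| \leq |I|$, together with an honestly perturbative remainder; the null condition \eqref{intronullcondn} on $a^{\mu\nu}$ plays a role analogous to its use in Proposition~\ref{effectivemmmB}. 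Once this algebraic structure is in place, the quantitative estimates above produce the stated bound \eqref{FImBibpargument}, with the $\epsilon_C^3$ term coming from the purely nonlinear remainders in $\mathcal{E}_I$ treated as in the proof of \eqref{FICl2bds}.
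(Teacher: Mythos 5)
There is a genuine gap, and it is exactly the mechanism the paper flags in Remark \ref{explanationofF1F2}. Your plan is to write $F^1_{\mB,I}$ as a divergence of a first-order current, integrate by parts once, and then absorb the resulting interior term by Cauchy--Schwarz with a small parameter $\delta$ into $\delta E_{J,X_T}$ and $\delta S_I^C$. But $F^1_{\mB,I}$ is not a generic perturbative divergence: it equals $-\sDelta Z^{J}_{\mB}\psi_C$ with $|J|=|I|-1$, present precisely when $Z^I_{\mB}=Z^{J_1}_{\mB}(v\pa_v)Z^{J_2}_{\mB}$, and when it is paired with the $v\pa_v$ part of $X\psi^I_C$ the weight $(1+v)^{-2}$ from $\sDelta$ cancels exactly against the $v$ from the multiplier, so there is no decay factor. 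After your single angular integration by parts the interior term is essentially $\int\!\!\int \nas Z^J_{\mB}\psi_C\cdot\nas\bigl(v\pa_v\psi^I_C\bigr)$, whose second factor carries $|I|+1$ commuted fields; at top order $|I|=N_C$ no quantity in the energy hierarchy controls it, so neither the ``absorb into $\delta E_{J,X_T}$ after converting $X\pa$ into $\pa X$'' step nor the ``absorb into $\delta S_I^C$'' step is available (rewriting does not lower the derivative count). Even below top order, the spacetime integral of $(1+v)|\evmB\psi_C^I|^2$ is not bounded by $E_{I,X_T}(t_1)$, which is a fixed-time quantity, unless a time-integrable weight is present --- and here the weights cancel. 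Finally, the null condition on $a^{\mu\nu}$, which you invoke as the structural input, plays no role: $F^1_{\mB,I}$ is generated purely by the failure of $v\pa_v$ to commute with $\sDelta$, not by $\gamma_a$.

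The paper's proof closes this by a sign argument, not by smallness. Using the combinatorial identity $v\pa_v\psi^I_C\approx(v\pa_v)^2Z^J_{\mB}\psi_C$ (modulo commutators $[Z_{\mB}^{J_1},v\pa_v]$, which are genuinely lower order), one integrates by parts \emph{twice} --- once in the angular direction and once in $v$. The boundary terms on $\Gamma^L,\Gamma^R$ and on the time slices are controlled using $|v\nas B^A|\lesssim s^{1/2}$, $|v\pa_v B^A|\lesssim s^{-1/2}$ and the flux parts of the energies, much as you suggest. The decisive point is the surviving highest-order bulk term: it is $+|\nas(v\pa_v Z^J_{\mB}\psi_C)|^2$, a top-order quantity with a \emph{favorable} sign, which together with the cross term $\nas Z^J_{\mB}\psi_C\cdot\nas(v\pa_v Z^J_{\mB}\psi_C)$ is bounded below by $-\tfrac12 S_{X_T}[Z^J_{\mB}\psi_C]$ with $|J|\leq|I|-1$ (the inequality \eqref{crucialIBP}); this is exactly the lower-order $\sum_{|J|\leq|I|-1}S_J^C$ term in the lemma, and no absorption of a top-order spacetime quantity is needed at all. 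The remaining pieces --- the $X^u\pa_u$ part of the multiplier and the commutator terms $[v\pa_v,\nas]$, $[Z_{\mB}^{J_1},v\pa_v]$ --- are then elementary $\delta$-Cauchy--Schwarz errors of the type you describe. Without identifying this positive-definite structure after the second integration by parts, the estimate cannot be closed at $|I|=N_C$.
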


\begin{proof}
    
    By the definition \eqref{nonlinearfmBI1} of $F_{\mB, I}^1$,
\begin{multline}
  \label{adhoc00}
  -\int_{t_0}^{t_1}\int_{D^C_t} F_{\mB, I}^1 X \psi^I \, dt
  = \sum_{|J_1| + |J_2| \leq |I|-1}
  \int_{t_0}^{t_1} \int_{D^C_t} a^I_{J_1J_2} \sDelta Z_{\mB}^J \psi_C
  (v\pa_v Z_{\mB}^{J_1} (v\pa_v)Z_{\mB}^{J_2}\psi_C)\, dt
  \\
  +
  \sum_{|J_1| + |J_2| \leq |I|-1}
  \int_{t_0}^{t_1} \int_{D^C_t} a^I_{J_1J_2} \sDelta Z_{\mB}^J \psi_C
  (X^u\pa_u Z_{\mB}^{J_1}(v\pa_v) Z_{\mB}^{J_2}\psi_C)\, dt,
\end{multline}
where $a^I_{J_1J_2} = 1$ if $Z_{\mB}^I = Z_{\mB}^{J_1} (v\pa_v) Z_{\mB}^{J_2}$ and $a^I_{J_1J_2} = 0$
otherwise, and where $Z_{\mB}^J = Z_{\mB}^{J_1}Z_{\mB}^{J_2}$.

We start by dealing with the first term in \eqref{adhoc00}. 
For this, write
\begin{multline}
  \label{adhoc0}
    \sDelta Z_{\mB}^J \psi_C
    (v\pa_v Z_{\mB}^{J_1} (v\pa_v)Z_{\mB}^{J_2}\psi_C) = 
   a^I_{J_1J_2} \sDelta Z_{\mB}^J \psi_C (v\pa_v)^2Z_{\mB}^J\psi_C
  \\
  +  a^I_{J_1J_2} \sDelta Z_{\mB}^J\psi_C 
  v\pa_v \left([Z_{\mB}^{J_1}, v\pa_v] Z_{\mB}^{J_2}\psi_C\right).
\end{multline}
The first term here needs to be handled carefully.
We write
\begin{align}
  \label{adhoc}
  \sDelta Z^J_{\mB} \psi_C (v\pa_v)^2 Z_{\mB}^J \psi_C
      &= \nas \cdot \left( \nas Z^J_{\mB} \psi_C (v\pa_v)^2 Z_{\mB}^J\psi_C\right)
      - \nas Z^J_{\mB}\psi_C \cdot v\pa_v \nas (v\pa_v Z_{\mB}^J \psi_C)\\
      &\qquad +\nas Z_{\mB}^J\psi_C \cdot [v\pa_v,\nas] v\pa_v Z_{\mB}^J \psi_C\\
      &= \nas \cdot \left(v \nas Z^J_{\mB} \psi_C \pa_v (v\pa_vZ_{\mB}^J \psi_C) \right) 
      - \pa_v \left(v \nas Z^J_{\mB} \psi_C \cdot \nas (v\pa_vZ_{\mB}^J \psi_C)  \right) \\
      &\qquad
      + \nas Z_{\mB}^J \psi_C\cdot \nas (v\pa_vZ_{\mB}^J\psi_C)
      + |\nas (v\pa_v Z_{\mB}^J \psi_C)|^2\\
      &\qquad+ \nas Z_{\mB}^J\psi_C \cdot [v\pa_v, \nas] v\pa_v Z_{\mB}^J \psi_C 
      + [v\pa_v, \nas]Z^J_{\mB} \psi_C \cdot \nas (v\pa_v Z_{\mB}^J \psi_C)
\end{align}

We start by handling the spacetime integrals of the terms on the first line.
By Stokes' theorem,
\begin{multline}
  \label{}
  \int_{t_0}^{t_1} \int_{D^C_t} \nas \cdot \left( \nas Z^J_{\mB} \psi_C (v\pa_v)^2Z_{\mB}^J \psi_C \right) \, dt
  \\
  = \int_{t_0}^{t_1} \int_{\Gamma^L_t} v \nas B^L \cdot \nas Z^J_{\mB} \psi_C \pa_v (v\pa_vZ_{\mB}^J\psi_C)\,
  dSdt
  - \int_{t_0}^{t_1} \int_{\Gamma^R_t} v \nas B^R \cdot \nas Z^J_{\mB} \psi_C \pa_v (v\pa_vZ_{\mB}^J\psi_C)\,
  dSdt.
\end{multline}
Since $|v\nas B^A| \lesssim s^{1/2}$, we have
\begin{multline}
  \label{}
  \int_{t_0}^{t_1} \int_{\Gamma^A_t} |v \nas B^A \cdot \nas Z^J_{\mB} \psi_C \pa_v (v\pa_vZ_{\mB}^J\psi_C)|\,dSdt
  \\
  \lesssim
  \int_{t_0}^{t_1} \int_{\Gamma^A_t} \frac{(1+s)^{1/2}}{(1+v)^{1/2}} 
  |\nas Z^J_{\mB} \psi_C| |v^{1/2} \pa_v (v\pa_vZ_{\mB}^J\psi_C)|\,dSdt
  \lesssim c_0(\epsilon_0)\epsilon_C^2,
\end{multline}
using the bounds \eqref{leftbdytrivialbds}-\eqref{rightbdytrivialbds} for the boundary terms
in the energies.

Also by Stokes' theorem,
\begin{align}
  \label{adhocstokes2}
  \int_{t_0}^{t_1} &\int_{D_t^C} \pa_v
  \big(v \nas Z^J_{\mB} \psi_C \cdot \nas (v\pa_v)Z_{\mB}^J \psi_C  \big)\, dt
  \\
  &= \int_{t_0}^{t_1} \int_{\Gamma^L_t} v\pa_v B^L \nas Z^J_{\mB} \psi_C \cdot \nas (v\pa_v)Z_{\mB}^J \psi_C
  \, dS dt
  - \int_{t_0}^{t_1} \int_{\Gamma^R_t} v\pa_v B^R \nas Z^J_{\mB} \psi_C \cdot \nas (v\pa_v)Z_{\mB}^J \psi_C
  \, dS dt
  \\
  &+ \int_{D^C_{t_1}} v\nas Z^J_{\mB} \psi_C \cdot \nas (v\pa_vZ_{\mB}^J \psi_C)
  - \int_{D^C_{t_0}} v\nas Z^J_{\mB} \psi_C \cdot \nas (v\pa_vZ_{\mB}^J \psi_C).
\end{align}
For the terms along the shocks, we use that $|v\pa_v B^A| \lesssim s^{-1/2}$ and write
$v\pa_v Z_{\mB}^J = Z_{\mB}^{J'}$, which gives
\begin{align}
  \label{}
  \left|\int_{t_0}^{t_1} \int_{\Gamma^A_t} v\pa_v B^A \nas Z^J_{\mB} \psi_C \cdot \nas (v\pa_vZ_{\mB}^J \psi_C)
  \, dS dt
 \right|
 &\lesssim \int_{t_0}^{t_1} \int_{\Gamma^A_t} \frac{1}{(1+s)^{1/2}}
 |\nas Z^J_{\mB}\psi_C| |\nas Z_{\mB}^{J'}\psi_C|\, dS dt\\
 &\lesssim (E_{J, T}^C(t))^{1/2} (E_{J', T}^C(t))^{1/2}
 \lesssim \frac{1}{\delta} E_{J, T}^C(t_1) + \delta E_{J', T}^C(t_1)
\end{align}
for arbitrary $\delta > 0$ (recall here $|J| \leq |I|-1$ and $|J'| = |I|$). 
For the terms in \eqref{adhocstokes2} along the time slices, we just bound
\begin{equation}
  \label{}
  \int_{D^C_t} v |\nas Z^J_{\mB} \psi_C||\nas (v\pa_v Z_{\mB}^J \psi_C)|
  \lesssim (E_{J, T}^C(t))^{1/2} (E_{J', T}^C(t))^{1/2}
  \lesssim \frac{1}{\delta} E_{J, T}^C(t) + \delta E_{J', T}^C(t).
\end{equation}

For the first two terms on the last line of \eqref{adhoc} we just note that, again writing
$v\pa_v Z_{\mB}^J = Z_{\mB}^{J'}$,
\begin{align}
  \label{crucialIBP}
  \int_{t_0}^{t_1} \int_{D^C_t} \nas Z_{\mB}^J \psi_C\cdot \nas &(v\pa_vZ_{\mB}^J\psi_C)
      + |\nas (v\pa_v Z_{\mB}^J \psi_C)|^2\, dt
      \\
      &\geq S_{X_T}[ Z^{J'}_{\mB}\psi_C] - S_{X_T}[ Z^{J'}_{\mB}\psi]^{1/2}S_{X_T}[Z^J_{\mB} \psi_C]^{1/2},
      \\
      &\geq \frac{1}{2} S_{X_T}[ Z^{J'}_{\mB}\psi_C] - \frac{1}{2}  S_{X_T}[Z^J_{\mB}\psi_C]
      \\
      &\geq -\frac{1}{2} S_{X_T}[Z^J_{\mB}\psi_C],
\end{align}
which is the crucial step. This last term is of the correct form since $|J| \leq |I|-1$.

To deal with the terms from \eqref{adhoc} involving
$[v\pa_v, \nas]$, we use \eqref{sdeltaformulaOmega} to write
\begin{equation}
  \label{}
  [v\pa_v, \nas_i] = [v\pa_v, \frac{\omega^j}{r}\Omega_{ij} ] = -\frac{1}{2} \frac{v}{r^2}\omega^j \Omega_{ij} 
  =-\frac{1}{2} \frac{v}{r} \nas_i,
\end{equation}
and since $|J| \leq |I|-1$, we have
\begin{multline}
  \label{}
  \int_{t_0}^{t_1} \int_{D^C_t} |\nas Z_{\mB}^J \psi_C \cdot [v\pa_v, \nas]v\pa_v Z_{\mB}^J\psi_C|\, dt
  \lesssim 
  \int_{t_0}^{t_1} \int_{D^C_t} |\nas Z_{\mB}^J \psi_C| |\nas (v\pa_v Z_{\mB}^J\psi_C)|\, dt
  \\
  \lesssim
  \frac{1}{\delta} \sum_{|K| \leq |I|-1} S_K^C(t_1) + \delta \sum_{|K| \leq |I|} S_{K}^C(t_1),
\end{multline}
and similarly
\begin{equation}
  \label{}
  \int_{t_0}^{t_1} \int_{D^C_t} |[v\pa_v, \nas] Z_{\mB}^J \psi_C \cdot \nas (v\pa_v Z_{\mB}^J\psi_C)|\, dt
  \lesssim 
  \frac{1}{\delta} \sum_{|K| \leq |I|-1} S_K^C(t_1) + \delta \sum_{|K| \leq |I|} S_{K}^C(t_1),
\end{equation}

It remains only to deal with the second term in \eqref{adhoc00} and
the second term in \eqref{adhoc0}. For the former, we just bound, for any $\delta > 0$,
\begin{align}
  \label{}
  \int_{t_0}^{t_1} \int_{D^C_t} |\nas^2 Z_{\mB}^J\psi_C|& |X^u| |\pa Z_{\mB}^{I} \psi_C|\, dt
    \\
    &\lesssim
   \delta \int_{t_0}^{t_1} \int_{D^C_t} |\nas \Omega Z_{\mB}^J \psi_C|^2\, dt
    + \frac{1}{\delta} \int_{t_0}^{t_1}\frac{1}{(1+t)^2}  
    \left(\int_{D^C_t} |X^u| |\pa Z_{\mB}^I \psi_C|^2\right)\, dt
    \\
    &\lesssim \delta S_{X_T}[Z_{\mB}^J\psi_C] + \frac{1}{\delta} c_0(\epsilon_0) \epsilon_C^2.
\end{align}

As for the latter term, 
using that $[v\pa_v, s\pa_u] = \pa_u$ and that otherwise $[v\pa_v, Z_{\mB}] = 0$,
we have $|v\pa_v [Z^{J_1}_{\mB}, v\pa_v]Z_{\mB}^{J_2} \psi_C| \lesssim  \sum_{|K|\leq |J_1| + |J_2|}
|v\pa_v\pa Z_{\mB}^K\psi_C| \lesssim
\sum_{|K|\leq |J_1| + |J_2|+1}
|\pa Z_{\mB}^K\psi_C|$, so that, since $|J_1| + |J_2| \leq |I|-1$,
\begin{align}
  \label{}
\int_{t_0}^{t_1} \int_{D^C_t}
|\nas^2 Z_{\mB}^J \psi_C| &|v\pa_v Z_{\mB}^{J_1} [Z_{\mB}^{J_2}, v\pa_v] \psi_C|\, dt
\\
&\lesssim
\sum_{|K| \leq |I|} \int_{t_0}^{t_1} \int_{D^C_t} 
\frac{1}{1+v} |\nas \Omega Z_{\mB}^J \psi_C| |\pa Z_{\mB}^K \psi_C|\, dt
\\
&\lesssim 
\delta \int_{t_0}^{t_1}\int_{D^C_t} |\nas \Omega Z_{\mB}^K \psi_C|^2\, dt
+ \frac{1}{\delta} \int_{t_0}^{t_1} \frac{1}{(1+t)^2} \left(\int_{D^C_t} |\pa Z_{\mB}^K \psi_C|\right)\, dt
\\
&\lesssim
\delta S_{X_T}[Z_{\mB}^J \psi_C] + \frac{1}{\delta} c_0(\epsilon_0) \epsilon_C^2,
\end{align}
as needed.

\end{proof}
}

We now prove the corresponding bounds in the leftmost region.
\begin{lemma}
	\label{timeintegrability-left}
	Define $X = X_L$ or $X = X_M$ as in Section \ref{fields} and write
	$X = X^n \pa_u + X^\ell \pa_v$.
 If the hypotheses of Proposition \ref{bootstrapprop} hold,
 the quantities
 $\gamma, P_I, F_{I}$ appearing in \eqref{higherorderexterioreqn}
 satisfy the following estimates.
 \begin{equation}
  |\gamma| \lesssim \frac{\epsilon_L}{(1+v)(1+s)^{1/2}} \frac{(\log \log s)^\alpha}{(\log s)^{\alpha - 1}},
	\qquad
	|X_m^\ell| |\gamma| \lesssim \epsilon_L |X^n_m|,
  \label{pert0left}
 \end{equation}
 \begin{multline}
  \int_{t_0}^{T} \|\nabla \gamma\|_{L^\infty(D^L_t\cap\{|u| \leq v/8\})}
	+ \left\|\frac{1}{1 + |u|}
	\gamma\right\|_{L^\infty(D_t^L\cap\{|u| \leq v/8\})}\,dt\\
	+
	\int_{t_0}^{t_1} \left\|\left(1 + \frac{|X_m^\ell|^{1/2}}{|X^n|^{1/2}}\right) \nabla_v \gamma\right\|_{L^\infty(D_t^L\cap\{|u| \leq v/8\})}
	+ \left\|\left(1 + \frac{|X_m^\ell|^{1/2}}{|X_m^n|^{1/2}}\right)\nas \gamma\right\|_{L^\infty(D_t^L\cap\{|u| \leq v/8\})}
	 \, dt
	\lesssim \epsilon_L,
  \label{intgammaL}
 \end{multline}
 and
 \begin{multline}
	 \int_{t_0}^{t_1} \| |X^n_m|^{1/2} \nabla P_I\|_{L^2(D^L_t\cap\{|u| \leq v/8\})}
	 + \| (1+|u|)^{-1} |X^n_m|^{1/2} P_I\|_{L^2(D^L_t\cap \{|u| \leq v/8\})}
	 \, dt\\
	 +\int_{t_0}^{t_1} \||X^\ell_m|^{1/2} \nabla_v P_I\|_{L^2(D^L_t\cap\{|u| \leq v/8\})}
	 + \||X^\ell_m|^{1/2} \nas P_I\|_{L^2(D^L_t\cap\{|u| \leq v/8\})} + \| |X^\ell_m|^{1/2} F_I\|_{L^2(D^L_t)}
	 \, dt\lesssim \epsilon_L^2,
	% + \|(1 + |u|^{\mu} + v)^{1/2}  \nabla_v P^I_R\|_{L^2(D^R_t)}
	% + \|(1 + |u|^{\mu} + v)^{1/2}  \nas P^I_R\|_{L^2(D^R_t)}
	% 	+ \|(1 + |u|^\mu)^{1/2} F_{I, R}\|_{L^2(D^R_t)}\, dt
	% 	\lesssim \epsilon_R^2
  \label{intPFL}
 \end{multline}
 as well as the following bounds in the region $|u| \geq v/8$,
 \begin{align}
  \int_{t_0}^{t_1} \|\mathcal{L}_X \gamma\|_{L^\infty(D^L_t\cap \{|u| \geq v/8\})}
	\, dt
	&\lesssim \epsilon_L,\label{intgammaclosetoorigin}\\
	\int_{t_0}^{t_1} \|\mathcal{L}_X P_I\|_{L^2(D^L_t\cap \{|u| \geq v/8\})}
	\, dt
	&\lesssim \epsilon_L^2.
  \label{intPclosetoorigin}
 \end{align}

 We also have
 \begin{equation}
  \sup_{t_0\leq t \leq t_1}
	\int_{D^L_t} |X^\ell_m| |P_I|^2 + \int_{t_0}^{t_1} \left(|X^\ell_m|  + (1+v)(1+s)^{1/2} |X^n_m|\right) |P_I|^2\, dS dt
	\lesssim \epsilon_L^{3}.
  \label{RPXboundL}
 \end{equation}
\end{lemma}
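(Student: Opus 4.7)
The argument follows the template of Lemmas \ref{timeintegrability-right} and \ref{timeintegrability-center}, with the pointwise and time-integrated inputs from Section \ref{consofbootsec} replacing their counterparts. First I would establish the pointwise bound \eqref{pert0left} on $\gamma$. Lemma \ref{higherorderexterioreqns} gives $|\gamma|\lesssim \frac{1}{r}|\pa\psi_L|+\frac{1}{r^2}|\psi_L|$ together with $|\gamma|\lesssim|\pa\phi_L|$. In the sub-region $\{|u|\le s^3\}$ I would plug in the Klainerman--Sobolev bound \eqref{pwleft} (which yields $(1+s)^{1/2}|\pa\psi_L|\le \epsilon_L (\log\log s)^\alpha/(\log s)^{\alpha-1}$) and the Hardy-type bound \eqref{pwLhardy} to control $r^{-1}|\psi_L|$ by the same quantity (using $r\gtrsim v$ and $|u|\ll v$ here). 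In the complementary region $\{|u|\ge s^3\}$ the stronger estimate \eqref{pwleft2} directly gives a much smaller bound on $|\pa\phi_L|$. The second inequality in \eqref{pert0left} is then immediate from the first exactly as in the computation \eqref{pert1leftused} in the proof of Proposition \ref{abstractenestDL}: one checks that $X_L^\ell/X_L^n\lesssim (1+v)(1+s)(\log(1+s))^{\alpha}/((1+|u|)\log(1+|u|)(\log\log(1+|u|))^\alpha)$ and multiplies through.

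Next, for \eqref{intgammaL} I would use the differentiated bounds
\[
 |\nabla\gamma|\lesssim \tfrac{1}{1+v}|\nabla\pa\psi_L|+\tfrac{1}{(1+v)^2}|\pa\psi_L|+\tfrac{1}{(1+v)^3}|\psi_L|,
\]
together with analogous estimates for $\nabla_v\gamma$ and $\nas\gamma$, all of which come from Lemma \ref{higherorderexterioreqns}. The time-integrated pointwise estimates \eqref{leftint1}--\eqref{leftint2} were designed to absorb exactly the weights $1$ and $|X_L^\ell|^{1/2}/|X_L^n|^{1/2}\sim (1+v)^{1/2}f(v)^{1/2}/((1+|u|)^{1/2}f(|u|)^{1/2})$ that appear on the left-hand side of \eqref{intgammaL}, together with the Hardy bound \eqref{pwLhardy} for the $\psi_L$-term. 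For the $P_I$ and $F_I$ bounds \eqref{intPFL} I would apply the product estimates \eqref{Pbounds}, \eqref{Pbounds2}, \eqref{Fbounds} to reduce everything to a product of a lower-order factor (estimated pointwise via \eqref{pwleft} as in \eqref{perturbativeprf}) times a top-order factor (estimated in $L^2$ with the multiplier weight by the bootstrap energy \eqref{leftboot}); the time integration then reduces to the $L^1_t$ integrability statement \eqref{leftint1}, which holds because $\alpha>1$.

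For the bounds \eqref{intgammaclosetoorigin}--\eqref{intPclosetoorigin} in the region $\{|u|\ge v/8\}$, which includes a neighborhood of $r=0$, the plain estimates from Lemma \ref{higherorderexterioreqns} produce spurious $1/r$ singularities because $X_L$ is not smooth at $r=0$. The remedy, as indicated in \eqref{scalarcurrentbd0Lie}, is to pass to the Lie derivative. In this region one has $|\pa \phi_L|\lesssim \epsilon_L/((1+v)(1+s)^3)$ from \eqref{pwleft2} and hence $|Z^I\pa\phi_L|$ inherits the same rapid decay; since $|X_L|\lesssim 1+v$ and the Lie derivative introduces at worst a factor of $1+v$, the time integrals of $\|\mathcal{L}_{X_L}\gamma\|_{L^\infty}$ and $\|\mathcal{L}_{X_L}P_I\|_{L^2}$ in this region are controlled by \eqref{leftint3}. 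Finally, \eqref{RPXboundL} is a Cauchy--Schwarz exercise: the pointwise estimate for $P_I$ from Lemma \ref{higherorderexterioreqns} reduces the integrand to $|\pa\psi_L^{K}|^2|\pa\psi_L^J|^2$ (plus a quartic $\psi^4$ tail), into which one inserts \eqref{perturbativeprf} at the lower order factor and uses the energy bound \eqref{leftboot} at the top-order factor, together with the boundary term in the energy \eqref{EDLdef} and the Hardy estimate \eqref{controlangularderivs} for the $\psi^4$ piece.

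The main obstacle is the marginal pointwise decay for $\pa^2\psi_L^I$: unlike in $D^R$ or $D^C$ where \eqref{rightint1}, \eqref{centralint1} give comfortable integrability, in $D^L$ the analogous bound \eqref{leftint1} requires the Morawetz energy $M_I(t)$, the weight $W_\alpha$, and the condition $\alpha>1$. Thus all time-integrations in \eqref{intgammaL} and \eqref{intPFL} that involve $\pa^2\psi_L$ must be traced back to \eqref{leftint1} rather than to a direct pointwise bound, and one must take care that no estimate loses an extra factor of $\log t$ or $\log\log t$ that could not be absorbed by the summability of $1/(t\log t\,(\log\log t)^{\alpha})$.
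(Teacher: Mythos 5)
Your plan reproduces the paper's own proof essentially verbatim: the same decomposition of $D^L_t$ at $|u|\sim s^3$ and $|u|\sim v/8$, the same inputs \eqref{pwleft}, \eqref{pwLhardy}, \eqref{pwleft2} and the time-integrated estimates of Lemma \ref{timeintpsileft} (which carry the Morawetz bound and the condition $\alpha>1$), the $\alpha<3/2$ computation giving the second bound in \eqref{pert0left}, the Lie-derivative device near $r=0$, and the energy/boundary/Hardy combination for \eqref{RPXboundL}. Two bookkeeping points to fix when writing it up: in the band $s^3\le|u|\le v/8$ you must switch to the $\phi$-based bounds $|\gamma|\lesssim|\pa\phi_L|$, $|\nabla\gamma|\lesssim|\nabla\pa\phi_L|$ together with \eqref{pwleft2}, since \eqref{pwLhardy} is only available for $|u|\le s^3$; and in the region $|u|\ge v/8$ the multiplier weight is $|X_L|\sim vf(v)$, not $1+v$ — one factor of $v$ (or $u$) is absorbed into a vector field $Z$ and the remaining $f(v)=s(\log s)^\alpha$ is beaten by the $(1+s)^{-3}$ decay in \eqref{pwleft2}, exactly as in the paper's estimate \eqref{fpaphibd}.
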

\begin{proof}
	We will need to argue slightly differently in the three regions
	\begin{equation}
	 D_{t,1}^L = D_t^L \cap\{ |u| \leq s^3\},
	 \qquad
	 D_{t,2}^L = D_t^L \cap \{s^3 \leq |u| \leq v/8\},
	 \qquad
	 D_{t, 3}^L = D_t^L \cap \{|u| \geq v/8\}.
	 \label{}
	\end{equation}

	We first consider the bounds in
	$D_{t,1}^L$. As in
	\eqref{gammabdrightpfgammaint}, we have the bounds
	\begin{align}
	 |\gamma| &\lesssim \frac{1}{1+v} |\pa \psi_L| + \frac{1}{(1+v)^2} |\psi_L|,
	 \label{gammapsiL0}
	 \\
	 |\nabla_Y \gamma| &\lesssim \frac{1}{1+v} |\nabla_Y \pa \psi_L|
	 + \frac{1}{(1+v)^2}|Y| |\pa \psi_L|  + \frac{1}{(1+v)^3} |Y| |\psi_L|,
	 \label{gammapsiL1}
	\end{align}
	where we used that $r \geq \tfrac{1}{4}v$, say, in this region.

		The bounds \eqref{pert0left} in the region $D_{t, 1}^L$
		then follow from \eqref{gammapsiL0} and \eqref{pwleft}, with the crucial
		observation being that
		\begin{multline}
		 \frac{|X^\ell_m|}{1+v} |\pa \psi_L|
		 \lesssim
		 f(v) |\pa \psi_L|
		 \lesssim
		 \epsilon_L\frac{f(v)}{|u| f(u)^{1/2}} \lesssim
		 \epsilon_L\frac{s (\log s)^\alpha}{s^{1/2} (\log s^{1/2} (\log \log s)^{\alpha/2}}
		 \lesssim \epsilon_Ls^{1/2} \frac{(\log s)^{\alpha-1/2}}{ (\log \log s)^{\alpha/2}}\\
		 \lesssim \epsilon_L s^{1/2} \log s (\log \log s)^\alpha
		 \lesssim \epsilon_L |u| f(u),
		 \label{crucialpowerchoice}
		\end{multline}
		where in the second-last step we used that $\alpha < 3/2$ and in the last
		step we used the lower bound for $|u|$ in $D^L_t$. To control
		the last term in \eqref{gammapsiL0}, we used
	the bound \eqref{pwLhardy}.
	% , which is why we need an estimate for the quantity
	% $B_I$ in the definition \eqref{EDLdef},
	% \begin{equation}
	%  \frac{1}{(1+v)^2} |\psi_L| \lesssim
	%  \frac{1}{(1+v)^{3/2}} \epsilon_L + \frac{1}{(1+v)^{3/2}}
	%  \sum_{|I| \leq N_L-3} B_I(t_1)^{1/2}
	%  \lesssim \epsilon_L |X^n_m|.
	%  \label{}
	% \end{equation}

	As in Lemmas \ref{nablaygamma}-\eqref{nablaygamma2}, the time-integrated bounds for $\gamma$
	and its derivatives in $D^L_{t, 1}$ follow from \eqref{gammapsiL0}-
	\eqref{gammapsiL1} and the time-integrated bounds
	in Lemma \ref{timeintpsileft}, but using
	\eqref{pwLhardy} and the fact that we have a bound for $B_{I}(t)$,
	in place of the bound \eqref{pwRhardy}
	that we used in the right region. The quantities
	$P_I$ and $F_I$ in this region can be handled using similar arguments
	and so we skip them.

	It remains only to prove the needed bounds in the region $D^L_{t,2}$
	and $D^L_{t,3}$. Here the estimates are less delicate because of the lower
	bound for $|u|$. In this region, we work in terms of $\phi$ and recall
	from
	\eqref{gammabdsexteriorapp},
	\begin{equation}
		|\gamma| \lesssim |\pa \phi_L|,
		\qquad
		|\nabla_X \gamma| \lesssim |\nabla_X \pa \phi_L|,
		\qquad
		|\mathcal{L}_X \gamma| \lesssim |\mathcal{L}_X \pa \phi_L|.
	\end{equation}
	By the pointwise bound \eqref{pwleft2}, the bounds
	in \eqref{pert0left} clearly hold in $D^L_{t, 2}$
	and $D^L_{t,3}$.
	To get the time-integrated bound for derivatives of $\gamma$ in
	$D^L_{t,2}$, we just bound
	\begin{equation}
	 |\nabla \pa \phi|\lesssim |\pa^2 \phi| + \frac{1}{r} |\pa \phi|
	 \lesssim |\pa^2 \phi| + \frac{1}{1+v} |\pa \phi|,
	 \qquad |u| \leq v/8
	 \label{}
	\end{equation}
	using that the Christoffel symbols in our coordinate system
	satisfy $|\Gamma| \lesssim \frac{1}{r}$. By \eqref{pwleft2}
	this gives
	\begin{equation}
	 |\nabla \pa \phi| \lesssim \frac{\epsilon_L}{(1+v)(1+s)^6},
	 \qquad |u| \geq s^3,
	 \label{}
	\end{equation}
	which is more than enough to get the first bound in
	\eqref{intgammaL}. The bound
	for $(1+|u|)^{-1}\gamma$ is identical, and the bound
	for the terms on the second line of \eqref{intgammaL}
	is easier since we can just bound $|X^\ell|^{1/2}
	(|\pa_v q| + |\nas q|)
	\lesssim (1+v)^{-1/4} \sum_{Z \in \mathcal{Z}} |Z q|$.

	To get the time-integrated
	bound for $\mathcal{L}_X\gamma$, which is needed
	in $D^L_{t,3}$, we bound
	\begin{equation}
	 |\mathcal{L}_X \pa \phi|
	 \lesssim
	 \sum_{\nu \in \{u,v,\theta^1,\theta^2\}}
	 |X^\mu \pa_\mu \pa_\nu \phi|
	 + |\pa_\nu X^\mu \pa_\mu \phi|.
	 \label{liederivativeofpaphi}
	\end{equation}
	For the first term, we bound
	\begin{equation}
	 |X^\mu\pa_\mu \pa_\nu \phi|
	 \lesssim |X^v| |\pa_v \pa \phi| + |X^u| |\pa_u \pa \phi|
	 \lesssim f(v)\sum_{Z \in \mathcal{Z}} |\pa Z \phi|,
	 \label{}
	\end{equation}
	where we used that for either multiplier we have $|X^v|/v + |X^u|/|u|
	\lesssim f(v)$. By the pointwise bound
	\eqref{pwleft2} and the definition of $f$ from
	\eqref{fgdefintro} in the region $D^L_{t,1}$ we have
	\begin{equation}
	 f(v)|\pa Z \phi|
	 \lesssim
	 \frac{s (\log s)^\alpha}{(1+v)(1+s)^3} \epsilon_L
	 \lesssim
	 \frac{1}{1+t} \frac{1}{(1 + \log t)^2}\epsilon_L,
	 \label{fpaphibd}
	\end{equation}
	which is time-integrable. For the second term in
	\eqref{liederivativeofpaphi}, we just use that $v|f'(v)|
	+f(u) + |u f'(u)| \lesssim f(v)$ and then
	\begin{equation}
	 \sum_{\nu \in \{u,v,\theta^1,\theta^2\}}
	 |\pa_\nu X^\mu \pa_\mu \phi|
	 \lesssim f(v) |\pa \phi|,
	 \label{}
	\end{equation}
	which can be bounded just as in \eqref{fpaphibd}.
	The bound for $|\nabla_X \gamma|$ in the region $D^L_{t,2}$ can be
	proven in a nearly identical way.

	The needed bounds for $P_I$ and $F_I$ can be proven
	using the same arguments we have now used many times,
	after using the bounds
	from Lemma \ref{higherorderexterioreqns}.
\end{proof}

\subsection{ Control of the scalar currents}
\label{scalarcontrol}
We now use the results of the previous section to control the scalar currents
$\mK_{X, \gamma, P}$ which appear
 in the energy estimates \eqref{enbootR0}, \eqref{enbootC0}, \eqref{EDClocalbd}
 and \eqref{enbootL0}.

%In the rightmost region we will use the following standard bound
%for $K_{X, \gamma, P}$ (see \eqref{scalarcurrentbd0})
%\begin{multline}
% |K_{X, \gamma, P}|\lesssim
% |X| \left( |\nabla \gamma|  +\frac{1}{1+v} |\gamma|\right) |\pa \psi|^2
% + |\pa X| |\gamma| |\pa \psi|^2\\
% +|X| \left(|\nabla P| + \frac{1}{1+v} |P|\right)|\pa \psi|
% + |\pa X| |P| |\pa \psi|.
% \label{trivialKbound0}
%\end{multline}
By Proposition \ref{effectivemmmink}, in the regions $D^L$ and $D^R$, when
 $|u| \leq v/8$ for our multipliers $X = X_L, X_M, X_R$, this quantity satisfies
\begin{multline}
	 |\mK_{X, \gamma, P}[\psi]|\lesssim
	 \left( |\nabla \gamma| + \frac{1}{1+|u|} |\gamma|
 	+ \frac{|X^\ell_m|^{1/2}}{|X^n_m|^{1/2}} |\nabla_{\evm} \gamma|
	+ \frac{|X^\ell_m|^{1/2}}{|X^n_m|^{1/2}} |\nas \gamma|
 	\right)  |\pa \psi|_{X, m}^2
 	+ |X^n_m| |F|  |\pa \psi|_{X, m}\\
	+ \left(|\nabla P| + \frac{1}{1+|u|}|P| +
	\frac{|X^\ell_m|^{1/2}}{|X^n_m|^{1/2}} |\nabla_{\evm} P|
	+|X^\ell_m| |\nas P| \right) |X^n_m|^{1/2} |\pa \psi|_{X, m}
	\\
  +
   |P| |\pa_u X^v| |\evm \psi|
	+
	|P||X|  \left( |F| + \frac{1}{1+v} |P|\right)
	 \label{modifiedKbound0boot}
	\end{multline}
		and in the region $|u| \geq v/8$,
		we instead have the bound
	% \begin{equation}
	%  |\mK_{X, \gamma, P}[\psi]|
	%  \lesssim |\nabla \gamma | |X| |\pa \psi|^2 + |\gamma| |\pa X| |\pa \psi|^2
	%  + |\nabla P| |X| |\pa \psi|
	%  +
	%   \left(\frac{1}{r} +\frac{1}{1+v}\right)|X| \left(|\gamma| |\pa \psi|^2+ |P| |\pa \psi|\right),
	%  \label{trivialKbound0boot}
	% \end{equation}
	% and
		\begin{equation}
		 |\mK_{X, \gamma, P}[\psi]|
		 \lesssim |\mathcal{L}_X \gamma |  |\pa \psi|^2 + |\gamma| |\pa X| |\pa \psi|^2
		 + |\mathcal{L}_X P||\pa \psi|
		 +
		 \frac{1}{1+v}|X| \left(|\gamma| |\pa \psi|^2+ |P| |\pa \psi|\right).
		 \label{trivialKbound0bootlie}
		\end{equation}

	In the central region $D^C_t$ by Propositition \ref{effectivemmmB}, we have
	\begin{align}
		\label{mBmodifiedKboundstatement2}
		|\mK_{X, \gamma, P}[\psi]| &\lesssim
		\left( |\nabla \gamma| + \frac{|\gamma|}{1+s}
		+ \frac{|X^\ell_{\mB}|^{1/2}}{|X^n_{\mB}|^{1/2}}
		\left(|\nabla_{\evmB} \gamma|+ |\nas \gamma|\right)
		\right)  |\pa \psi|_{X, \mB}^2
		+ \frac{1}{(1+v)^{1/4}} |F|  |\pa \psi|_{X, \mB}\\
		&+   \left( |\nabla P^u| + \frac{|P^u|}{1+s}
		+|X^\ell_{\mB}|^{1/2} \left(|\nabla_{\evmB} P| + |\nas P|
		 \right) \right)|X^n_{\mB}|^{1/2}|\pa \psi|_{X, \mB}\\
		&+ \epsilon_C\left( \frac{1}{(1+v)^{3/2}} |\pa \psi|^2 + \frac{1}{(1+v)^{1/2}} (|\evmB \psi|^2 + |\nas \psi|^2)
		\right)\\
&+
\frac{1}{(1+s)^{1/2}} \left( |\nabla P^u| + \frac{|P^u|}{1+v}\right) |\pa \psi|
+v|P| \left(|\nabla P| + \frac{|P|}{1+v} + |F|\right).
	\end{align}
    {    We also note that by \eqref{nulllemmascalarcurrentbd}, we have the following
    bound for the scalar current $K_{\gamma_a, X}$ generated by the linear term \eqref{gammaadef}
    satisfying the null condition,
    \begin{equation}
      \label{linscalarcurrentbd}
      |K_{\gamma_a, X}[\psi]|
      \lesssim
      \frac{1}{(1+v)^{3/2}} |\pa \psi|^2 + \frac{1}{(1+v)^{1/2}} |\pa_v \psi|^2 
      + \frac{1}{(1+v)^{1/2}} |\nas \psi|^2.
    \end{equation}
    
}

	%
	% Finally, for the estimates in the rightmost region, we will need a bound
	% for the usual scalar current, defined in \eqref{scalarcurrent}
	% and \eqref{KPdef}, and which by \eqref{scalarcurrentbd0},
	% if $X = X^u\pa_u + X^v\pa_V$, is bounded by
	% \begin{equation}
	%  |K_{X,\gamma, P}|\lesssim
	%  |\nabla_X \gamma| |\pa \psi|^2 + |\pa X| |\gamma | |\pa \psi|^2
	%  + |\nabla_X P||\pa \psi| + |\pa X| |P| |\pa \psi|
	%  + \frac{1}{r}|X|\left(|\gamma| |\pa \psi|^2 + |P| |\pa \psi|\right).
	%  \label{trivialKbd1}
	% \end{equation}

	We now record the needed $L^1_tL^1_x$ bounds for these quantities in
	each region. The main ingredients needed for these bounds are the
	time-integrated bounds in Section \ref{timeintsec}.

As a result of Lemma \ref{timeintegrability-right},
we have the following bound in the right-most region.
\begin{lemma}[Estimates for the scalar currents in the rightmost region]
	\label{abstractscalarright}
	Under the hypotheses of Proposition \ref{bootstrapprop}, with $X_R$ defined as in
	Section
	\eqref{fields}, we have the following bound,
	\begin{equation}
	 \sum_{|I| \leq N_R} \int_{t_0}^{t_1} \int_{D^R_t} |\mK_{X_R, \gamma, P_I}[Z^I\psi_R]|
	 + |F_I| |X_R \psi^I_R|\,dt
	 \lesssim \epsilon_R^{3}.
	 \label{needinrightforK}
	\end{equation}
\end{lemma}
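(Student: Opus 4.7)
The plan is to apply the pointwise estimate \eqref{modifiedKbound0boot} for $\mK_{X_R,\gamma,P_I}$ term by term and pair each factor with the correspondingly weighted time-integrated bound from Lemma~\ref{timeintegrability-right}. Under the shock hypothesis \eqref{rightshocklogeomright}, $|u|\lesssim(1+s)^{1/2}\ll v$ throughout $D^R_t$, so the whole region lies in $\{|u|\leq v/8\}$ and the alternative bound \eqref{trivialKbound0bootlie} is not needed.

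For the quadratic-in-derivative $\gamma$-contributions on the first line of \eqref{modifiedKbound0boot}, I would pull the $\gamma$-factor out in $L^\infty_x$, leaving
\begin{equation}
\int_{t_0}^{t_1}\int_{D^R_t}\Bigl(|\nabla\gamma|+\tfrac{|\gamma|}{1+|u|}+\tfrac{|X^\ell_m|^{1/2}}{|X^n_m|^{1/2}}(|\nabla_{\evm}\gamma|+|\nas\gamma|)\Bigr)|\pa\psi^I_R|^2_{X_R,m}\,dt\lesssim \mathcal{I}_\gamma\,\sup_{t\in[t_0,t_1]}E^R_I(t),
\end{equation}
where $\mathcal{I}_\gamma$ denotes the time-integrated $L^\infty$-norm controlled by \eqref{intgammaR}, so $\mathcal{I}_\gamma\lesssim\epsilon_R$ and the supremum of $E^R_I$ is $\lesssim\epsilon_R^2$ by the bootstrap \eqref{rightboot}, producing $\epsilon_R^3$.

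For the linear-in-derivative $P_I$-contributions on lines~2--3 of \eqref{modifiedKbound0boot}, Cauchy-Schwarz in the spatial variable pairs each weighted derivative of $P_I$ against $|\pa\psi^I_R|_{X_R,m}$; for example,
\begin{equation}
\int_{D^R_t}|X^n_m|^{1/2}|\nabla P_I|\,|\pa\psi^I_R|_{X_R,m}\,dx\leq\bigl\||X^n_m|^{1/2}\nabla P_I\bigr\|_{L^2(D^R_t)}\bigl(E^R_I(t)\bigr)^{1/2},
\end{equation}
so time-integration together with \eqref{intPFR} and the bootstrap gives $\lesssim\epsilon_R^3$. The remaining three components $(1+|u|)^{-1}|X^n_m|^{1/2}|P_I|$, $|X^\ell_m|^{1/2}|\pa_v P_I|$ and $|X^\ell_m|^{1/2}|\nas P_I|$ match the other three $L^2$-norms in \eqref{intPFR}. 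The inhomogeneous pieces $|X^n_m||F_I||\pa\psi^I_R|_{X_R,m}$ coming from inside $\mK$ and $|F_I||X_R\psi^I_R|$ from the lemma statement both reduce to a pairing against $\||X_R|^{1/2}F_I\|_{L^2}$, using the pointwise inequality $|X_R\psi|\lesssim|X_R|^{1/2}|\pa\psi|_{X_R,m}$, which follows from $X^n_R\leq X^\ell_R$. The lowest-order residuals $|P_I||\pa_u X_R^v||\evm\psi^I_R|$ and $|P_I||X_R|(|F_I|+(1+v)^{-1}|P_I|)$ carry an extra factor of $(1+v)^{-1}$ and are closed by combining \eqref{RPXboundR}, Cauchy-Schwarz, and the Hardy-type bound \eqref{hardyR}.

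The only real obstacle here is bookkeeping: every weighted combination in \eqref{modifiedKbound0boot} has to be matched to a correspondingly weighted norm in \eqref{intgammaR}--\eqref{intPFR}. This matching is not accidental; the bounds of Lemma~\ref{timeintegrability-right} were stated with precisely the weights needed to close this scalar-current estimate, so once the pairings are written down the proof reduces to routine H\"older inequalities and a single application of the bootstrap energy bound.
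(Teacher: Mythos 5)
Your overall strategy for the region $|u|\leq v/8$ is exactly the paper's: pair the pointwise bound \eqref{modifiedKbound0boot} for $\mK_{X_R,\gamma,P_I}$ with the weighted time-integrated $L^\infty$ and $L^2$ bounds of Lemma \ref{timeintegrability-right} and one factor of the bootstrap energy, and this part is fine. The genuine gap is your opening geometric claim. The hypothesis \eqref{rightshocklogeomright} constrains only the location of the shock $\Gamma^R$ (where indeed $|u|\sim s^{1/2}$); it does \emph{not} bound $|u|$ above throughout $D^R_t$. The region $D^R_t$ is everything to the right of the shock, so it extends to spatial infinity, where $|u|=r-t$ is comparable to $v=r+t$. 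Hence $D^R_t\cap\{|u|\geq v/8\}$ is nonempty, and on that set the estimate \eqref{modifiedKbound0boot} that your whole pairing scheme rests on is simply not available (it is stated only for $|u|\leq v/8$), so your assertion that ``the alternative bound \eqref{trivialKbound0bootlie} is not needed'' is wrong and the far region is left unaddressed. The fix is what the paper does: treat $|u|\geq v/8$ separately with the cruder bounds \eqref{trivialKbound0bootlie}/\eqref{trivialKbound0}, which close easily because the pointwise decay \eqref{pwright} carries the full weight $(1+|u|)^{\mu/2}$ there, making every term comfortably time-integrable.

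Two smaller points of bookkeeping. First, the term $|P_I||\pa_u X_R^v||\evm\psi_R^I|$ does not come with ``an extra factor of $(1+v)^{-1}$'': one has $|\pa_u X_R^v|\lesssim (1+|u|)^{\mu-1}+(\log r)^{\nu-1}$, and the paper closes this term by bounding it by $\tfrac{1+|u|^{\mu-1}}{r^{1/2}}|P_I||\pa\psi_R^I|_{X_R,m}$ and invoking Lemma \ref{timeintegrability-right}, not a Hardy inequality. Second, for the piece $|X^n_m||F_I||\pa\psi^I_R|_{X_R,m}$ your reduction ``to a pairing against $\||X_R|^{1/2}F_I\|_{L^2}$'' hides a weight mismatch, since $|X^n_m|=(1+|u|)^{\mu}$ is much larger than $|X_R|^{1/2}$ for large $|u|$; the step is still true, but only because the structure of $F_I$ (quadratic with a $(1+v)^{-2}$ prefactor) together with the $(1+|u|)^{\mu/2}$-weighted decay \eqref{pwright} absorbs the excess weight, and this should be said rather than attributed to H\"older alone.
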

\begin{proof}
    We just prove the bound in the region $|u| \leq v/8$, as the bound
    in the region $|u|\geq v/8$ is simpler in light of the strong decay estimates
    \eqref{pwright} for $\psi$ in that region. For the multiplier 
    $X = X_R$ we have the bound $\frac{|X^\ell_m|^{1/2}}{|X^n_m|^{1/2}}
    \lesssim 1 + \frac{r^{1/2}}{(1+|u|)^{\mu/2}}$, and so by
    Lemma \ref{timeintegrability-right}, we have
    \begin{align}
      \label{}
      \int_{t_0}^{t_1} \int_{D^R_t \cap \{|u| \leq v/8\}}&
      \left( |\nabla \gamma| + \frac{1}{1+|u|} |\gamma|
 	+ \frac{|X^\ell_m|^{1/2}}{|X^n_m|^{1/2}} |\nabla_{\evm} \gamma|
	+ \frac{|X^\ell_m|^{1/2}}{|X^n_m|^{1/2}} |\nas \gamma|
 	\right)  |\pa \psi|_{X, m}^2\, dt\\
     &\lesssim
    \int_{t_0}^{t_1} \left(\|\nabla \gamma\|_{L^\infty(D^R_t)}
    + \|(1+|u|)^{-1} \gamma\|_{L^\infty(D^R_t)}\right) E^R_I(t)\, dt\\
    &+\int_{t_0}^{t_1}
    \left(\left\|\left(1 + \frac{r^{1/2}}{(1+|u|)^{\mu/2}}\right)\pa_v \gamma\right\|_{L^\infty(D^R_t)}
  +
      \left\|\left(1 +
      \frac{r^{1/2}}{(1+|u|)^{\mu/2}}\right)\nas\gamma\right\|_{L^\infty(D^R_t)}\right)
      E^R_I(t)
\,dt \lesssim \epsilon_R^3,
    \end{align}
    using Lemma \ref{timeintegrability-right}. By the same result, we have
    \begin{multline}
      \label{}
      \int_{t_0}^{t_1}\int_{D^R_t \cap \{|u| \leq v/8\}} \left(|\nabla P_I| 
          + (1+|u|)^{-1}|P_I| +
	\frac{|X^\ell_m|^{1/2}}{|X^n_m|^{1/2}} |\nabla_{\evm} P|
	+|X^\ell_m| |\nas P| \right) |X^n_m|^{1/2} |\pa \psi|_{X, m}\,dt
    \\
    \lesssim
    \int_{t_0}^{t_1} \left(\||X^n_m|^{1/2} \nabla P_I\|_{L^2(D^R_t)}
    + \|(1+|u|)^{-1} |X^n_m|^{1/2} P_I\|_{L^2(D^R_t)}\right)E_I^R(t)\, dt
    \\
    +\int_{t_0}^{t_1}
 \left(   \| |X^\ell_m|^{1/2} \nabla_{\evm} P_I\|_{L^2(D^R_t)}
    + \| |X^\ell_m|^{1/2} \nas P_I\|_{L^2(D^R_t)}\right) E_I^R(t)\, dt
    \lesssim \epsilon_R^3.
    \end{multline}
    By our choice of $\mu, \nu$ in \eqref{parameters}, in $D^R_t$ we have
    the bound $|\pa_u X^v_m| \lesssim 1+ |u|^{\mu-1} + (\log r)^{\nu-1}
    \lesssim 1+ |u|^{\mu-1}$ in $D^R_t$ and so we also have
    \begin{equation}
        \int_{t_0}^{t_1} \int_{D^R_t} |P_I| |\pa_u X^v| |\evm \psi_R^I|\, dt
        \lesssim
        \int_{t_0}^{t_1} \int_{D^R_t} \frac{1 + |u|^{\mu-1}}{r^{1/2}} |P_I|
        |\pa \psi_R^I|_{X_R, m}\, dt
        \lesssim \epsilon_R^3,
    \end{equation}
    using Lemma \ref{timeintegrability-right} to bound the contribution
    from $P_I$.
    To complete the bounds in the region $|u|\leq v/8$,
    it remains only to bound the last term in \eqref{modifiedKbound0boot} and 
    \eqref{needinrightforK}, and the bounds for these quantities follow easily
    from the estimates \eqref{intPFR} \eqref{pfofbasicPIbdR}. 

\end{proof}

In the central region, the analogous result is the following. 
%Our
%estimates will involve the spacetime-integrated terms $S_{j, J}$ from \eqref{SjJdef}.

\begin{lemma}[Estimates for the scalar currents in the central region]
	\label{abstractcentralscalar}
%	Fix a multi-index $I$ and write $Z_{\mB}^I = X^k \Omega^K$ where $X^k$ denotes
%	a $k$-fold product of the fields $X \in \{\sBo, \sBt\}$.
	Under the hypotheses of Proposition \ref{bootstrapprop},
	there is a continuous function $c_0$ with $
	c_0(0) = 0$ so that for any $\delta > 0$ we have the following estimates.

	 If $|I| \leq N_C$,
	 \begin{multline}
         \int_{t_0}^{t_1} \int_{D^C_t} |\mK_{X_T, \gamma, P_I+ P_{I, null}}[Z_{\mB}^I\psi_C]|
         + | K_{X_T, \gamma_a}[Z_{\mB}^I\psi_C]|
         + \left(|F_{C,I}| + |F_{\Sigma, I}| + |{F_{\mB, I}^2}|\right) |X_T \psi^I_C|\,
         dt
         \\
         \lesssim
         \epsilon_C^{3} + c_0(\epsilon_0)\left(1 + \frac{1}{\delta}\right)\epsilon_C^2
         + c_0(\epsilon_0)
         + \delta \sum_{|J| \leq |I|}\left(\sup_{t_0 \leq t \leq t_1} E_{J, T}^C(t) +  S_J^C(t_1)\right)
         % + \frac{1}{\delta} \sum_{|J| \leq |I|-1}
         % E^C_{D, I}(t)
%         + \sum_{\substack{j \leq k-1,\\ |J| \leq |K|+1}}
%         S_{j, J}(t_1).
         \label{scCtop}
\end{multline}

If $|I| \leq N_C-1$,
\begin{multline}
 \int_{t_0}^{t_1} \int_{D^C_t} |\mK_{X_C, \gamma, P_I+ P_{I, null}}[Z_{\mB}^I\psi_C]|
         + | K_{X_C, \gamma_a}[Z_{\mB}^I\psi_C]|
         + \left(|F_{C,I}| + |F_{\Sigma, I}| + |{F_{\mB, I}^2}|\right) |X_C \psi^I_C|\,dt
 \\
 \lesssim \epsilon_C^{3} (1 + \log \log t_1) + c_0(\epsilon_0)\left(1 + \frac{1}{\delta}\right)\epsilon_C^2
 + c_0(\epsilon_0)
 + \delta S_I^C(t_1)\\
	+ \epsilon_C(1+\log\log t_1) \sum_{|J| \leq |I|-1} \sup_{t_0 \leq t \leq t_1}
    (E_{J, D}^C(t))^{1/2}, 
%    +
%	\sum_{\substack{j \leq k-1,\\ |J| \leq |K|+1}}
%  S_{j, J}(t_1),
	\label{scCsubtop}
\end{multline}
and if $|I| \leq N_C-2$,
\begin{multline}
 \int_{t_0}^{t_1} \int_{D^C_t} |\mK_{X_C, \gamma, P_I+ P_{I, null}}[Z_{\mB}^I\psi_C]|
         + | K_{X_C, \gamma_a}[Z_{\mB}^I\psi_C]|
         + \left(|F_{C,I}| + |F_{\Sigma, I}| + |{F_{\mB, I}^2}|\right) |X_C \psi^I_C|\,dt
 \\
 \lesssim
 \epsilon_C^{3} + c_0(\epsilon_0)\left(1 + \frac{1}{\delta}\right)\epsilon_C^2
 + c_0(\epsilon_0)
 + \delta\sum_{|J| \leq |I|} S_J^C(t_1).
% \\
% + \frac{1}{\delta} \sum_{|J| \leq |I|-1}
% E^C_{D, I}(t) + \sum_{\substack{j \leq k-1,\\ |J| \leq |K|+1}}
% S_{j, J}(t_1).
 \label{scCdecay}
\end{multline}

\end{lemma}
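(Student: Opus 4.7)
The plan is to split the full integrand in each of \eqref{scCtop}, \eqref{scCsubtop}, \eqref{scCdecay} according to the terms appearing on the right-hand side of the pointwise bound \eqref{mBmodifiedKboundstatement2} for $\mK_{X, \gamma, P_{I,C}+P_{I,null}}$, handle the linear current $K_{X,\gamma_a}$ using \eqref{linscalarcurrentbd}, and control the inhomogeneous pieces $F_{C,I}$, $F_{\Sigma,I}$, $F_{\mB,I}^2$ by direct application of the estimates \eqref{FICl2bds}, \eqref{FICl3bds}. The whole argument rests on the time-integrated bounds of Lemmas \ref{timeintegrability-center} and \ref{timeintegrability-center-linear}, together with the bootstrap assumptions \eqref{centralboottop}, \eqref{centralbootextra}, \eqref{centralbootlower} which control $\int_{D^C_t}|\pa\psi_C^I|_{X,\mB}^2 \lesssim \epsilon_C^2$ (with an extra $1+\log\log t$ when $X=X_C, |I|=N_C-1$).

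For the $\gamma$-contribution to $\mK$, namely the first line of \eqref{mBmodifiedKboundstatement2}, I would simply pull the pointwise $L^\infty$ norms of $\nabla\gamma$, $(1+s)^{-1}\gamma$ and the two weighted tangential derivatives out of the spacetime integral and combine \eqref{intgammaC} with a uniform bound for the energy in time; this gives a contribution $\lesssim \epsilon_C\cdot \epsilon_C^2$ (with the log-log factor only when $X=X_C, |I|=N_C-1$), which is absorbed into $\epsilon_C^3$ or $\epsilon_C^3(1+\log\log t_1)$. For $K_{X,\gamma_a}[\psi_C^I]$ as well as the last line of \eqref{mBmodifiedKboundstatement2}, the integrands are of the form $(1+v)^{-1/2}|\pa\psi_C^I|_{X,\mB}^2$ times a rapidly decaying coefficient, which after using $\int\tfrac{1}{1+t}\tfrac{1}{(1+\log t)^{1/2}}\cdot(1+\log\log t)\,dt \lesssim c_0(\epsilon_0)$ produces only terms of the form $c_0(\epsilon_0)\epsilon_C^2$. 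The inhomogeneous contributions from $F_{C,I}$, $F_{\Sigma,I}$ are handled by Cauchy--Schwarz together with \eqref{FICl2bds}, giving $\| |X^\ell_{\mB}|^{1/2}F\|_{L^2_tL^2_x} \cdot \sup_t \| (1+v)^{-1/4}|\pa\psi_C^I|_{X,\mB}\|_{L^2}$ which is $\lesssim (c_0(\epsilon_0)+\epsilon_C)\epsilon_C^2 + c_0(\epsilon_0)$; the $F_{\mB,I}^2$ piece is treated directly by \eqref{FICl3bds}, producing exactly the $\delta S_J^C$ and $\tfrac{c_0(\epsilon_0)}{\delta}\epsilon_C^2$ terms appearing on the right-hand sides.

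For the $P_{I,C}$-contribution to the second line of \eqref{mBmodifiedKboundstatement2}, I would apply Cauchy--Schwarz in time, pairing the spacetime $L^2$ norms bounded in \eqref{centralPIbounds} with $\sup_t \|\pa\psi_C^I\|_{L^2}$ from the energy, which immediately yields an $\epsilon_C^3$ contribution at top order and an $\epsilon_C^3(1+\log\log t_1)^{1/2}$ contribution at intermediate order (both absorbed). The remaining two terms on the last line of \eqref{mBmodifiedKboundstatement2} involving $P_{I,C}$ alone are lower-order and are handled analogously using the bounds \eqref{intPFR}-style arguments already carried out in the proof of Lemma \ref{timeintegrability-center}.

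The principal obstacle is the $P_{I,null}$-contribution, which is precisely what forces the logarithmic growth of the intermediate-order energies. Here the $u$-component $P^u_{I,null}$ is the only problematic piece and I would invoke the dedicated estimates \eqref{extraPIlinbd0}, \eqref{extraPIlinbd}, \eqref{extraPIlinbd1} of Lemma \ref{timeintegrability-center-linear}. In the top-order case $|I|\leq N_C$ with multiplier $X_T$, \eqref{extraPIlinbd0} directly yields a bound of the form $\delta S_I^C(t_1) + \tfrac{1}{\delta}c_0(\epsilon_0)\epsilon_C^2$, consistent with \eqref{scCtop}. At the intermediate order $|I|=N_C-1$ with multiplier $X_C$, estimate \eqref{extraPIlinbd} contributes the characteristic $\epsilon_C(1+\log\log t_1)\sum_{|J|\leq |I|-1}\sup_t (E_{J,D}^C(t))^{1/2}$ on the right of \eqref{scCsubtop}, while at the lowest orders $|I|\leq N_C-2$ estimate \eqref{extraPIlinbd1} produces only $\delta E_{I,D}^C(t_1) + \tfrac{1}{\delta}\sum_{|J|\leq|I|-1}E_{J,D}^C(t_1)+(\delta+\tfrac{1}{\delta})c_0(\epsilon_0)\epsilon_C^2$, which after the bootstrap assumption \eqref{centralbootlower} fits inside \eqref{scCdecay}. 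The remaining pieces of $P^u_{I,null}$ arising in the last line of \eqref{mBmodifiedKboundstatement2} (the terms with $(1+s)^{-1/2}$ weight) are milder and are handled by the same Cauchy--Schwarz-plus-bootstrap scheme used for $P_{I,C}$. Summing over all these contributions yields the claimed bounds.
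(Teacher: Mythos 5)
Your proposal is correct and follows essentially the same route as the paper: decompose the modified scalar current via the pointwise bound \eqref{mBmodifiedKboundstatement2}, absorb the $\gamma$-terms and the $P_{I,C}$-terms through the time-integrated bounds of Lemma \ref{timeintegrability-center} paired with the bootstrap energies, invoke Lemma \ref{timeintegrability-center-linear} for the $P^u_{I,null}$ component (the source of the $\log\log$ growth), and treat $K_{X,\gamma_a}$ and the inhomogeneities $F_{C,I}$, $F_{\Sigma,I}$, $F_{\mB,I}^2$ via \eqref{linscalarcurrentbd} and \eqref{FICl2bds}--\eqref{FICl3bds}. No substantive gap relative to the paper's argument.
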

\begin{proof}
	We use \eqref{mBmodifiedKboundstatement2}
	and Lemma \ref{timeintegrability-center}, \ref{timeintegrability-center-linear}.
	First, by Lemma \eqref{timeintegrability-center}, regardless of the
	multiplier $X$ we bound
	\begin{align}
	 \int_{t_0}^{t_1}
	 \int_{D^C_t}&
	 \left(|\nabla \gamma|  + \frac{1}{1+s} |\gamma| + (1+v)^{1/2}(1+s)^{1/2}
	|\nabla_{\evmB} \gamma| + (1+v)^{1/2}(1+s)^{1/2}|\nas \gamma|\right)|\pa Z_{\mB}^I \psi|_{X, \mB}^2\,dt
	\nonumber\\
	&\lesssim
	\int_{t_0}^{t_1} \|\nabla \gamma\|_{L^\infty(D^C_t)}
	+ \left\|\frac{1}{1 + s} \gamma\right\|_{L^\infty(D_t^C)}E_{X, I}(t)\, dt
	\\
	&+
	\int_{t_0}^{t_1} \left\|(1+v)^{1/2} (1+s)^{1/2} \nabla_{\evmB} \gamma\right\|_{L^\infty(D_t^C)}
	+ \left\|(1+v)^{1/2} (1+s)^{1/2} \nas \gamma\right\|_{L^\infty(D_t^C)}E_{X, I}(t)
	 \, dt,
	 \\
	 &\lesssim \epsilon_C \sup_{t_0 \leq t \leq t_1} E_{X, I}(t),
	 \label{}
	\end{align}
	with the notation $E_{X_{T}, I} = E_{T, I}^C$ and $E_{X_{ C}, I} = E_{D, I}^C$.
	By the bootstrap assumptions \eqref{centralboottop}-\eqref{centralbootextra} for the energies, this is bounded by
	\eqref{scCtop} when $X = X_{T}$ and $|I| \leq N_C$, by \eqref{scCsubtop} when
	$X = X_{ C}$ and $|I| \leq N_C-1$, and by \eqref{scCdecay} when
	$X = X_{ C}$ and $|I| \leq N_C-2$.

	We now control the terms on the second line of
	 \eqref{mBmodifiedKboundstatement2}.
	Regardless of which multiplier
	we use, by \eqref{centralPIbounds} we have
	\begin{align}
	 \int_{t_0}^{t_1} \int_{D^C_t} &|X^n_{\mB}|^{1/2} \left( |\nabla P_{I}| +
	 \frac{1}{1+s} |P_{I}|\right) |\pa Z_{\mB}^I \psi_C|_{X, \mB}\, dt\\
	 &+ \int_{t_0}^{t_1} \int_{D^C_t} |X^\ell_{\mB}|^{1/2}
	 \left( |\nabla_{\evmB} P_I| + |\nas P_I|\right)|\pa Z_{\mB}^I\psi_C|_{X, \mB}\,dt
	 \\
	 &\lesssim
	 \int_{t_0}^{t_1}
	 \| |X^n_{\mB}|^{1/2} \nabla P_{I}\|_{L^2(D^C_t)}
	 + \||X^n_{\mB}|^{1/2}(1+s)^{-1}  P_{I}\|_{L^2(D^C_t)} E_{X, I}(t)^{1/2}\,dt\\
	 &+
	 \int_{t_0}^{t_1}
	 \left( \| (1+v)^{1/2} \nabla_{\evmB} P_{I}\|_{L^2(D^C_t)} +
	 \| (1+v)^{1/2} \nas P_{I}\|_{L^2(D^C_t)}\right) E_{X, I}(t)^{1/2}\, dt\\
	 &\lesssim (\epsilon_C^2 + c_0(\epsilon_0)\epsilon_C) \sup_{t_0 \leq t \leq t_1} E_{X, I}(t)^{1/2},
	 \label{}
	\end{align}
	which is bounded by the right-hand side of \eqref{scCtop}-\eqref{scCdecay}.
	To control the $L^1_tL^1_x$ norm of the terms on the second line of
	\eqref{mBmodifiedKboundstatement2},
	it remains to control the contribution from $P_{I, null}^u$,
	and the needed bound follows directly from
	Lemma \ref{timeintegrability-center-linear}.

	For the terms on the third line of \eqref{mBmodifiedKboundstatement2} we just bound
	\begin{multline}
	 \int_{t_0}^{t_1} \int_{D^C_t}\left( \frac{1}{(1+v)^{3/2}} |\pa Z_{\mB}^I\psi_C|^2
	 + \frac{1}{(1+v)^{1/2}} (|\evmB Z_{\mB}^I \psi_C|^2 + |\nas Z_{\mB}^I\psi_C|^2)
	 \right)\, dt\\
	 \lesssim
	 \int_{t_0}^{t_1} \frac{1}{(1+t)^{5/4}}
	 \int_{D^C_t} |X^n_{\mB}||\pa Z_{\mB}^I\psi_C|^2  + |X^\ell_{\mB}| (|\evmB Z_{\mB}^I \psi_C|^2 + |\nas Z_{\mB}^I\psi_C|^2)\, dt
	 \lesssim c_0(\epsilon_0) \epsilon_C^2.
	 \label{}
	\end{multline}
	To finish the bounds for the scalar current $\mK$,
	it remains to control the terms on the last line of \eqref{mBmodifiedKboundstatement2}.
	These terms are easier to handle than the above
	after using the pointwise estimates from
	Lemma \ref{higherordereqncentral}, noting in particular
	that $(1+s)^{1/2} \leq |X^n_{\mB}|^{1/2}$ for both estimates, and we skip them.
    The bounds for the linear scalar currents $K_{X,\gamma_a}$ follow easily from
    the bound \eqref{linscalarcurrentbd} and the definitions of our energies,
    \begin{multline}
      \label{}
      \int_{t_0}^{t_1} \int_{D^C_t} \frac{1}{(1+v)^{3/2}} |\pa_u \psi_C^I|^2
      + \frac{1}{(1+v)^{1/2}}\left( |\pa_v \psi_C^I|^2 + |\nas \psi_C^I|^2\right)\,dt
          \\
          \lesssim
          \int_{t_0}^{t_1} \frac{1}{(1+t)^{5/4}} \int_{D^C_t} X^n_{\mB}
          |\pa_u \psi_C^I|^2
          + X^\ell_{\mB} \left(|\evmB \psi_C^I|^2 +|\nas \psi_C^I|^2  \right) \, dt
          \lesssim c_0(\epsilon_0)\sup_{t_0 \leq t \leq t_1} E_{X, I}(t),
      \end{multline}
      where we used that $|X^n_{\mB}|\gtrsim (1+s)^{-1/2}$ and $X^\ell_{\mB} = v$
      for both multipliers.

	The needed bounds for the remainder terms $F_{C, I},
    F_{\Sigma, I}, {F_{\mB, I}^2}$ follow directly
	from the bounds \eqref{FICl2bds}-\eqref{FICl3bds}.

\end{proof}

Finally, in the leftmost region we need the following result.
\begin{lemma}[Estimates for the scalar currents in the leftmost region]
	\label{abstractscalarleft}
	Under the hypotheses of Proposition \ref{bootstrapprop}, with $X_L$ and $X_M$
	defined as in Section \ref{fields}, we have
	\begin{equation}
	\sum_{|I| \leq N_L} \int_{t_0}^{t_1} \int_{D^L_t} |\mK_{X_L, \gamma, P_I}[\psi^I_L]|
	 +|\mK_{X_M, \gamma, P_I}[\psi^I_L]|\, dt
	 + \int_{t_0}^{t_1} \int_{D^L_t} |F_I| |X_L \psi^I_L| + |F_I| |X_M \psi^I_L|\, dt
	 \lesssim\epsilon_L^{3}.
	 \label{}
	\end{equation}

\end{lemma}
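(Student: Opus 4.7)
The plan is to mirror the structure of Lemmas \ref{abstractscalarright} and \ref{abstractcentralscalar}. I would split $D^L_t$ into $\{|u|\leq v/8\}$, where the pointwise formula \eqref{modifiedKbound0boot} for $\mK_{X,\gamma,P_I}$ applies, and $\{|u|\geq v/8\}$ near the center of symmetry, where the $1/r$ Christoffel terms in \eqref{modifiedKbound0boot} become singular and we must instead use the Lie-derivative form \eqref{trivialKbound0bootlie}. In both subregions the same strategy applies for each multiplier $X\in\{X_L,X_M\}$: pair the time-integrated pointwise and $L^2$-in-space bounds on $\gamma$, $P_I$, $F_I$ from Lemma \ref{timeintegrability-left} against the uniform energy bound $\sup_t E^L_I(t)\lesssim \epsilon_L^2$ following from the bootstrap assumption \eqref{leftboot}.

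In $\{|u|\leq v/8\}$ I would walk term-by-term through \eqref{modifiedKbound0boot}. The first line is handled by pulling the coefficient of $|\pa\psi^I_L|_{X,m}^2$ out in $L^\infty_x$ and invoking \eqref{intgammaL}; this is where the ratio $|X^\ell_m|^{1/2}/|X^n_m|^{1/2}$ (which equals $1$ for $X_M$ and $\sim(vf(v)/(|u|f(|u|)))^{1/2}$ for $X_L$) is exactly the weight accommodated by Lemma \ref{timeintegrability-left}. The second line is handled by Cauchy--Schwarz, putting $P_I$ in $L^2_x$ via \eqref{intPFL} and the energy in $L^\infty_x$. The term $|P_I||\pa_u X^v||\evm\psi^I_L|$ vanishes for $X_L$ because $X^v_L=vf(v)$ depends only on $v$, and for $X_M$ one has $\pa_u X^v_M=-\tfrac{1}{2}g'(r)$, which pairs against the Morawetz bulk in $M_I$ from \eqref{MIdef} via Cauchy--Schwarz. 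The final quartic terms on the last line of \eqref{modifiedKbound0boot} are handled by inserting the pointwise decay \eqref{pwleft} for $\psi^I_L$ and using \eqref{intPFL} for the remaining $L^2$ factor.

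In $\{|u|\geq v/8\}$ I would use \eqref{trivialKbound0bootlie} together with \eqref{intgammaclosetoorigin}--\eqref{intPclosetoorigin}, which give $L^1_tL^\infty$ control of $\mathcal{L}_X\gamma$ (paired with $|\pa\psi^I_L|^2$ in $L^\infty_tL^1_x$ via the energy) and $L^1_tL^2$ control of $\mathcal{L}_X P_I$ (paired with $|\pa\psi^I_L|$ via Cauchy--Schwarz). The lower-order error $(1+v)^{-1}|X|(|\gamma||\pa\psi|^2+|P_I||\pa\psi|)$ is absorbed using the strong decay \eqref{pwleft2}, valid here because $|u|\gtrsim(1+v)$ forces $|u|\geq s^3$ after shrinking to the deep interior. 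The inhomogeneous contribution $\int|F_I||X\psi^I_L|$ is handled uniformly across both subregions by $|X\psi^I_L|\lesssim |X|^{1/2}|\pa\psi^I_L|_{X,m}$ and Cauchy--Schwarz against the $|X^\ell_m|^{1/2}F_I$ bound in \eqref{intPFL}.

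The hard part is already packaged inside Lemma \ref{timeintegrability-left}: the delicate choice $1<\alpha<3/2$ simultaneously makes the Morawetz weight time-integrable at infinity (via $\alpha>1$ in \eqref{morimp1}--\eqref{l1weight}) and allows $|X^\ell_m||\gamma|\lesssim\epsilon_L|X^n_m|$ for $X_L$ (via $\alpha<3/2$ in \eqref{crucialpowerchoice}). Once those ingredients are in hand, the present lemma is essentially bookkeeping. The one mildly subtle point at this stage is the interplay between the spacelike Morawetz multiplier $X_M$ and the timelike $X_L$: one must check that the cross terms generated by $|\pa_u X^v_M|\sim g'(r)$ can be absorbed into the Morawetz bulk $g'(r)(\pa\psi^I_L)^2$ contributed to $M_I$ rather than into $E^L_I$, since $E^L_I$ does not control $g'(r)(\pa\psi^I_L)^2$ on its own.
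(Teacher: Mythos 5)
Your proposal follows essentially the same route as the paper: the paper's proof of Lemma \ref{abstractscalarleft} is precisely to repeat the argument of Lemmas \ref{abstractscalarright}--\ref{abstractcentralscalar}, splitting into $\{|u|\leq v/8\}$ and $\{|u|\geq v/8\}$, using the pointwise scalar-current bounds \eqref{modifiedKbound0boot} and \eqref{trivialKbound0bootlie} respectively, and pairing the time-integrated bounds on $\gamma$, $P_I$, $F_I$ from Lemma \ref{timeintegrability-left} against the bootstrap energy $\sup_t E^L_I(t)\lesssim\epsilon_L^2$. Your term-by-term accounting (including the vanishing of $\pa_u X_L^v$, the treatment of the $X_M$ cross term, and the use of \eqref{pwleft2} together with the Lie-derivative bounds \eqref{intgammaclosetoorigin}--\eqref{intPclosetoorigin} near the origin) simply makes explicit the bookkeeping the paper leaves implicit.
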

\begin{proof}
	The proof follows in the same way as the above results, but using
	the pointwise estimates \eqref{modifiedKbound0boot}-\eqref{trivialKbound0bootlie}
	for the scalar current and
	Lemma \ref{timeintegrability-left} for the needed time-integrated bounds.
\end{proof}

\section{The higher-order boundary conditions}
\label{bcbootstrapsection}
The goal of this section is to prove that under the hypotheses of
Proposition \ref{bootstrapprop}, the bounds in \eqref{rightshockerror}
and \eqref{bdyerrorleftboot} for the derivatives $\ev \psi$ along the timelike
sides of the shocks hold. Specifically we will be proving bounds for the quantities
\begin{equation}
  \label{BIdef2}
 B^L_I(t_1) = \int_{t_0}^{t_1} \int_{\Gamma^L_t} v f(v) |\evm \psi_L^I|^2\,dS dt,
  \qquad
  B^C_I(t_1) = \int_{t_0}^{t_1}\int_{\Gamma^R_t}(1+v) |\evmB \psi^I_C|^2\,dS dt.
\end{equation}
We remind the reader at this point that by the definitions of the energies
in \eqref{ECdef} and \eqref{ELdef},
our bootstrap assumptions \eqref{centralboottop}-\eqref{leftboot}
imply the bounds
\begin{equation}
  \label{BIbds}
  \sum_{|I| \leq N_L} B^L_I(t_1) \lesssim \epsilon_L^2,\qquad
  \sum_{|I| \leq N_C} B^C_I(t_1) \lesssim \epsilon_C^2.
\end{equation}

At the left shock, the result is the following.
\begin{prop}
 \label{lefttocenterprop}
 There is $\epsilon_0^* > 0$ with the following property.
 If the hypotheses of Proposition \ref{bootstrapprop}
 hold with $\epsilon_0 < \epsilon_0^*$, then
 \begin{equation}
     \sum_{|I|\leq N_L} B^L_I(t_1) \lesssim \epsilon_L^3.
  \label{}
 \end{equation}
\end{prop}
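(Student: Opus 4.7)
The only source of control on $\evm\psi_L = \pa_v \psi_L$ along the timelike (in $m$) shock $\Gamma^L$ is the Rankine--Hugoniot condition \eqref{introbcL}, which, after unpacking \eqref{YLdef1}--\eqref{YLdef2}, reads
\[
 \pa_v\psi_L = \evmB\psi_C \;-\;\tfrac{1}{v}Q_L(\pa\psi_L,\pa\psi_L) \;+\;\tfrac{1}{v}Q_C(\pa\psi_C,\pa\psi_C) \;+\; G.
\]
The strategy is, for each multi-index $|I|\leq N_L$, to commute this identity with $Z^I$, $Z\in \mathcal{Z}_m$, square, multiply by $vf(v)$, and integrate over $\cup_{t_0\leq t\leq t_1}\Gamma^L_t$. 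Since no $Z\in\mathcal{Z}_m$ is tangent to $\Gamma^L$, at each step we split
\[
 Z = Z_T + Z(B^L - u)\pa_u, \qquad Z_T\ \text{tangent to}\ \Gamma^L,
\]
and iterate. The resulting identity along $\Gamma^L$ contains three types of terms: (a) the \emph{main term} $Z_T^I(\evmB\psi_C)$; (b) \emph{transverse error terms} of the form $(\text{products of tangential }B^L\text{-derivatives})\cdot\pa_u^k\psi_L^J$ with $|J|<|I|$, $k\leq|I|-|J|$; (c) commuted quadratic nonlinearities coming from $Q_L,Q_C$ and the structured remainder $G$.

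\medskip
Type (b) terms are handled by trading each $\pa_u\pa_v$ derivative that appears (after iterated use of $\pa_u$ on a $\pa_v$ already present in the lowest-order identity) for the RHS of the interior wave equation \eqref{waveext0}, thereby reducing to $\sDelta\psi_L^J$ plus nonlinear and inhomogeneous pieces, i.e.\ to quantities involving only tangential and angular derivatives of $\psi_L^J$. The coefficients $Z_T^{J'}(B^L-u)$ produced in the process are estimated in $L^\infty$ or $L^2(\Gamma^L_t)$ by $G^L_{N_L}$ of \eqref{GLnorm}; the only case in which a naive count would cost an extra derivative of $B^L$ beyond $G^L_{N_L}$ is when $|I|+1$ angular derivatives of $B^L$ would appear, and this is precisely the case addressed by the auxiliary identity $\nas\beta = -\tfrac{s}{u}[\nas\psi] + N(\pa\psi)$ noted in Remark \ref{eliminaterem} and proved in Appendix \ref{RHsec}: it exchanges the top-order $B^L$-derivative for a top-order $\psi$-jump which is under energy control. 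This is the derivative-counting core of the argument and mirrors the handling of the analogous issue at the right shock.

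\medskip
The main term $\int_{t_0}^{t_1}\!\int_{\Gamma^L_t} v f(v)|Z_T^I\evmB\psi_C|^2\,dS\,dt$ is reduced, modulo lower-order commutators absorbed in type (b) errors, to $\int\!\int v f(v)|\evmB \psi_C^I|^2\,dS\,dt$. This integral is bounded by coupling the Hardy-type inequality of Lemma \ref{controlangularhardyright} (applied now in the $\evmB$-direction along the $\mB$-spacelike side of $\Gamma^L$, so that $\evmB\psi_C^I$ is the transverse-to-$\Gamma^L$ derivative and hence coercively controlled by the $\mB$ energy) with the decay-energy bounds \eqref{centralbootlower} available since $N_C\geq N_L+6$: the Hardy estimate trades the $vf(v)$ boundary weight on $\evmB\psi_C^I$ for an interior $D^C_t$ quantity that matches $E^C_{I,D}(t) + S_I^C(t)\lesssim \epsilon_C^2$, using that $f(v)/v$ is integrable in $v=e^s$ (equivalently, $f'(v)$ is integrable), which is exactly why the choice of $f(z) = \log z(\log\log z)^\alpha$ with $\alpha>1$ enters.

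\medskip
Finally, type (c) nonlinear errors are estimated by pulling one factor pointwise using \eqref{pwcentral}/\eqref{pwleft} and keeping the other in $L^2$: a term like $vf(v)\cdot v^{-2}|\pa\psi|^4$ is $\lesssim \epsilon^2 f(v)/v \cdot |\pa\psi|^2$, and since $f(v)/v$ is time-integrable along $\Gamma^L$, the resulting contribution is $O(\epsilon_L^3)$ after using the $L^2$ boundary control of $\psi_L$ and $\psi_C$ from \eqref{controlofpsiLL} and \eqref{leftbdytrivialbds}. The structured inhomogeneity $G$ contributes only null-structure quadratics (controlled by $|\opa\psi|\cdot|\pa\psi|$, which decays faster than a generic quadratic), genuinely cubic terms, and rapidly-decaying inhomogeneities coming from the profile $\Sigma$; each gives $O(\epsilon_L^3)+O(\epsilon_0^{N})$. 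The main obstacle is the third paragraph: obtaining the extra $f(v)$ weight on $\int\!\int|\evmB\psi_C^I|^2$ without derivative loss or catastrophic log growth in time, which is resolved only because of the combination of the Hardy inequality, the coercivity of $\evmB$ on the spacelike side of $\Gamma^L$ in the $\mB$ metric, the slow growth of $f$, and the derivative cushion $N_C\geq N_L+6$.
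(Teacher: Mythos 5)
Your first stage (commuting the jump condition with $Z^I$, passing to tangential fields, using the interior wave equation plus the norms $G^L_{N_L}$ to control the transverse/$B^L$-coefficient errors, invoking the identity $\nas B^L=-\tfrac su[\nas\psi]+\dots$ to avoid the top-order angular derivative of $B^L$, and treating the quadratic terms with one pointwise factor and the integrability of $f(v)/v$) is essentially the paper's Lemmas \ref{linearcommjumpL}, \ref{transferlemL} and \ref{generalproduct}, and that part is sound. The genuine gap is in your treatment of the main term $\int_{t_0}^{t_1}\!\int_{\Gamma^L_t} vf(v)\,|\evmB\psi_C^I|^2\,dS\,dt$. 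Lemma \ref{controlangularhardyright} cannot do what you ask of it: it is an estimate \emph{along} the shock that controls a zeroth-order quantity with the small weight $s/v^2$ by the first-order fluxes $v|\pa_v q|^2+\tfrac1{vs}|\pa_u q|^2$ along the same shock; it does not convert a boundary flux carrying the large weight $vf(v)$ into an interior quantity, and there is no "$\evmB$-direction" version of it that produces the extra factor $f(v)$. The central-region energies (top or decay) only give $\int\!\int_{\Gamma^L} v\,|\evmB\psi_C^I|^2\lesssim\epsilon_C^2$; since $f(v)\sim\log t(\log\log t)^\alpha$ grows without bound along $\Gamma^L$, quoting these energies directly (or with a crude $\sup f$) yields a bound that degenerates as $t_1\to\infty$ and cannot close the bootstrap, and the remark that "$f(v)/v$ is integrable" does not apply because the quantity being integrated is the flux itself, not a bounded density times $f(v)/v$.

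What is actually needed — and what the paper does in Proposition \ref{UpsilonLplusbd} — is to integrate $Z^JY_L^+\psi_C$ radially across $D^C_t$ from $\Gamma^L$ to $\Gamma^R$ (the estimate \eqref{ftc0}), trading the width $\sim s^{1/2}$ of the central region for one $\pa_r$ derivative. This is affordable only because the bulk term $\pa_u Z^J\evmB\psi_C$ is \emph{not} estimated by the energies directly but is rewritten via the wave equation \eqref{waveint0} as $\sDelta\psi_C$ plus nonlinear and null terms (Lemma \ref{UpsilonLplusbulk}), exploiting that the commutators $\mZB$ contain $s\pa_u$ so the trade in fact gains a factor $s^{1/2}$; and the boundary term this generates at $\Gamma^R$ must then be converted, through the right-shock jump condition $Y_R^-\psi_C=Y_R^+\psi_R+G$, into data for $\psi_R$, whose very strong weighted estimates (this is precisely where the $r(\log r)^\nu\pa_v$ piece of $X_R$ and the largeness of $\mu,\nu$ are used) absorb the weight (Lemma \ref{UpsilonLplusbdy}). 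None of these three ingredients — the radial integration to $\Gamma^R$, the use of the equation for $\pa_u\evmB\psi_C$ in the bulk, and the treatment of the resulting $\Gamma^R$ boundary term — appears in your sketch, so as written the key estimate $\Upsilon^+_{I,L}\lesssim\epsilon_C^2\lesssim\epsilon_L^4$ is not established.
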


The analogous result at the right shock is the following.
\begin{prop}
 \label{centertorightprop}
 There is $\epsilon_0^* > 0$ with the following property.
 If the hypotheses of Proposition \ref{bootstrapprop}
 hold with $\epsilon_0 < \epsilon_0^*$, then writing
 $X = X^{\ell} \evmB + X^nn$,
 with $X = X^\ell_{C}$ or $X = X^{\ell}_{T}$, we have
 \begin{equation}
     \sum_{|I|\leq N_C} B^C_I(t_1)
    \lesssim \epsilon_C^3.
  \label{centertorightpropbd}
 \end{equation}
\end{prop}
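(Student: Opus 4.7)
The plan is to use the Rankine--Hugoniot boundary condition at $\Gamma^R$ from \eqref{introbcR}, which at linear level reads $\evmB \psi_C \approx \pa_v \psi_R$ along $\Gamma^R$, to trade the quantity $|\evmB \psi^I_C|^2$ on the timelike side of the right shock for quantities controlled by the energy $\mathcal{E}^R_{N_R}$ in the rightmost region plus nonlinear errors. The key observation making this work is the size hierarchy $\epsilon_R \leq \epsilon_C^2$ from \eqref{epsparameters}: the linear transfer from $\psi_R$ to $\psi_C$ across the shock produces bounds of size $\epsilon_R^2 \leq \epsilon_C^4 \ll \epsilon_C^3$, so the entire $\psi_R$-contribution is already below the target $\epsilon_C^3$.

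The first step is to derive the higher-order version of the boundary condition: apply $\ZB^I$ to the relation $Y_R^-(\pa\psi_C)\psi_C = Y_R^+(\pa\psi_R,B^R)\psi_R + G$. Since the fields in $\mZB$ are not tangent to $\Gamma^R$, I would decompose each $\ZB$ along the shock into a tangential piece $\ZB_T$ and a transverse remainder $\ZB(B^R-u)\pa_u$, iterating this decomposition and commuting $[\ZB^I,\evmB]$ to express $\evmB \psi^I_C$ along $\Gamma^R$ as
\begin{equation}
\evmB \psi^I_C = \pa_v \psi^I_R + \tfrac{1}{v} N(\pa \psi^{I_1}_C,\pa\psi^{I_2}_C) + (\text{transverse terms in } B^R) + (\text{lower-order commutators}) + G_I,
\end{equation}
where the quadratic nonlinearity $N$ is inherited from $Q_C, Q_R$ and $G_I$ contains rapidly decaying remainders. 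Squaring, multiplying by $(1+v)$, and integrating over $\Gamma^R_{t_0,t_1}$ reduces the task to estimating each piece.

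For the main linear term, the energy bound for $\mathcal{E}^R_{N_R}$ (see \eqref{ERdef}) gives $\int_{\Gamma^R_{t_0,t_1}} (1+|u|^\mu + r(\log r)^\nu)|\pa_v \psi^I_R|^2 \lesssim \epsilon_R^2$, and since $r \sim v$ on $\Gamma^R$ with $\nu \geq N_C$, this trivially dominates $\int_{\Gamma^R} (1+v)|\pa_v \psi^I_R|^2 \lesssim \epsilon_R^2 \lesssim \epsilon_C^4$. For the nonlinear terms, I place the lower-order factor in $L^\infty$ using the pointwise decay \eqref{pwcentral} and absorb the higher-order factor with the already-available boundary energy bounds \eqref{rightbdytrivialbds}, gaining the needed extra power of $\epsilon_C$. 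The transverse terms produce factors of $\ZB^J(B^R-u)\cdot \pa_u \psi^{I'}_C$; using that $|u|\sim s^{1/2}$ on $\Gamma^R$, combined with the bounds \eqref{rightbdytrivialbds} on $\pa_u \psi_C$ along $\Gamma^R$ and the geometric control $G^R_{N_C} + K^R + \slashed{K}^R$, these contribute only $O(\epsilon_C^3)$.

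The main obstacle is the top-order case $|I|=N_C$, where, as discussed in the paper's Subsection on modulated profiles, iterating the tangential decomposition can generate $(N_C+1)$-th derivatives of $B^R$ which would require $(N_C+1)$ derivatives of $\psi_C$ through the evolution equation \eqref{betaa} --- a loss of derivatives. The resolution is to use the alternative elliptic relation $\nas\beta = -\tfrac{s}{u}[\nas\psi] + N(\pa\psi)$ (Appendix \ref{RHsec}, Remark \ref{eliminaterem}) for angular-derivative losses, and to trade $s\pa_u \beta$ growth against the factor $(1+s)^{-1/2}$ available in $|\pa \psi_C^I|$ along $\Gamma^R$; this keeps every term linear in the top-order variables and matches the control allotted by $G^R_{N_C}$ in our bootstrap \eqref{GRnorm}. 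Collecting all pieces gives $\sum_{|I|\leq N_C} B_I^C(t_1) \lesssim \epsilon_R^2 + \epsilon_C \cdot \epsilon_C^2 + c_0(\epsilon_0)\epsilon_C^2 \lesssim \epsilon_C^3$ after taking $\epsilon_0$ sufficiently small.
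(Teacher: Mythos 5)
Your overall scheme—commute $\ZB^I$ with the boundary relation at $\Gamma^R$, convert to tangential fields, use the Rankine--Hugoniot condition to pass to $\psi_R$, exploit $\epsilon_R\leq\epsilon_C^2$, and induct on $|I|$—is the paper's scheme. But there is a genuine gap in how you dispose of the transverse error terms produced by the tangential decomposition. Those errors are not of the form $\ZB^J(B^R-u)\,\pa_u\psi^{I'}_C$: since $Y_R^-\psi_C$ already contains $\evmB\psi_C$, they have the schematic form $(1+s)\,|\ZB^J\,\pa_u\evmB\psi_C|$ with $|J|\leq|I|-1$, i.e.\ a \emph{second} derivative $\pa_u\evmB$ of an almost-top-order quantity. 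Writing $\pa_u=s^{-1}(s\pa_u)$ just turns this back into $(1+v)|\evmB\ZB^{J'}\psi_C|^2$ with $|J'|\leq N_C$ on $\Gamma^R$—which is exactly $B^C_I$ itself, multiplied by the $O(1)$ constant $C(M)$ from the decomposition, so it cannot be absorbed; and the available top-order boundary control in \eqref{rightbdytrivialbds} only carries the weak weight $(1+v)^{-1}(1+s)^{-1}$, which is far too little for a direct estimate. The paper resolves this by using the interior wave equation to trade $\pa_u\evmB\psi_C$ for $\sDelta\psi_C$ plus the linear term $\pa_\mu(\tfrac{u}{vs}a^{\mu\nu}\pa_\nu\psi_C)$ plus quadratic terms (the step \eqref{useeqnbdy0R} in Lemma \ref{transferlemR}), and, crucially, at $|I|=N_C$ it must exploit the null condition \eqref{intronullcondn0} of $a^{\mu\nu}$ so that only good derivatives $\opa\psi_C$ appear at top order; without that structure one is left with $(1+v)^{-1}|\pa\ZB^{N_C}\psi_C|$, which the weak top-order flux cannot control. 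None of these ingredients appear in your proposal, and the "main obstacle" you address instead (higher derivatives of $B^R$ via $\nas\beta=-\tfrac{s}{u}[\nas\psi]+N$) is a separate issue that the paper has already neutralized in the very formulation of $Y_R^{\pm}$ and $G$ (Remark \ref{eliminaterem}), so resolving it does not close the actual difficulty.

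A secondary inaccuracy: in bounding the $\psi_R$ contribution you treat the main term as $\pa_v\psi_R^I$ with Minkowski fields and claim the weight $r(\log r)^{\nu}$ "trivially dominates" $(1+v)$. But the commutation on the $\psi_C$ side was done with $\mZB$, so along $\Gamma^R$ you must convert $\ZB_{,T}^I$ applied to $Y_R^+\psi_R$ into Minkowski fields, which costs a factor $(1+s)^{|I|/2}$ per conversion (cf.\ \eqref{ZmBtoZweight} and Lemma \ref{UpsilonRplusbd}); it is precisely this loss, together with the quadratic terms on the $\psi_R$ side, that forces $\nu\geq N_C$ and $\mu\geq 2N_C+3/2$. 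This part is fixable along the paper's lines, but as written the estimate is not justified.
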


The idea behind these estimates is the following.
Let $(\ev_-, \ev_+)
=(\evm, \evmB)$ at the left shock and $(\evmB, \evm)$ at the right shock.
For both estimates,
the basic ingredient needed is a bound for a quantity of the form
$\ev_- \psi^I_-$ where $\psi^I_-$ denotes
some collection of vector fields $Z^I$ applied to $\psi_-$, the potential
along the timelike side of the shock. When no vector
fields are present, the jump conditions at each shock take the form
\begin{equation}
 \ev_- \psi_- + \frac{1}{1+v} Q(\pa \psi_-, \pa \psi_-)
 = \ev_+\psi_+ + \frac{1}{1+v} Q(\pa \psi_+,\pa \psi_+),
 \label{schembc}
\end{equation}
where $Q$ is a quadratic nonlinearity and where we are omitting lower-order terms
(see \eqref{introbcL} and \eqref{introbcR}).
This expresses $\ev_- \psi_-$ in terms of the ``boundary data'' $\ev_+\psi_+$  and
nonlinear terms.
The weights $X^\ell$ on the timelike sides of the shock are such that the
contribution from the nonlinear terms can be handled, and using
our energy estimates on the spacelike side of the shock the contribution
from the boundary data $\ev_+ \psi_+$ can also be handled.

At higher order,
the calculation is more involved because we do not directly have an equation
for $\ev_- \psi^I_-$ in terms of the higher-order boundary data
$\ev_+\psi^I_+$ since the vector fields
we consider are transverse to the shock. We therefore need to replace
the vector fields $Z^I$ with a product of vector fields $Z_T^I$
which are tangent to the shock. For this we first
need to commute the fields $Z^I$ with the derivatives
$\ev_-$ which generates lower-order terms.
We then replace the fields $Z^I$ with the $Z_T^I$,
which generates error terms which involve
high-order derivatives of the boundary-definining functions $B^L, B^R$,
which is where we need the bounds \eqref{leftgeom}-\eqref{rightgeom}
for the geometry of the shocks. We can then bound $Z_T^I \ev_- \psi_-$
by applying tangential fields to \eqref{schembc} and replacing the fields
$Z_T^I$ with the usual fields $Z^I$ we can bound the quantities
on the right-hand side of \eqref{schembc} by our energies. This
is slightly cumbersome because we are using different
families of vector fields in each region, and this is ultimately why
we need to take the parameter $\mu$ from \eqref{schembc} large.

There is an additional difficulty at the left shock, which is that
the weight $X^\ell$ we use on the timelike side is large relative
to the weights we use on the spacelike side, and so to deal with the
contribution from the error term $X^\ell | \ev_+ \psi_+^I|^2$
we need to integrate to the right shock. This generates a bulk
term and a boundary term. The bulk term can be handled since the main term we generate in
this way is of the form $n \ev_+\psi_+$ and we have an equation for
this quantity. The boundary term can be handled by using the above
strategy to control
the resulting term on the timelike side of the right shock in
terms of the data on the spacelike side and nonlinear terms.

The estimates for replacing the $Z^I$ with the $Z^I_T$ are
the content of Section \ref{transitionsection}.
In sections \ref{pflefttocenterprop} and \ref{pfcentertorightprop}
we reduce the proofs of Propositions
\ref{lefttocenterprop}-\ref{centertorightprop} to a sequence of lemmas
which handle the nonlinear terms, the various error terms we generate
when commuting the fields $Z^I$ with the derivatives $\ev_-$, and which
give the needed bounds for higher-order derivatives
of the boundary data.

\subsection{Estimates for derivatives in terms of
tangential derivatives}
\label{transitionsection}
Let $\mathcal{Z}$ denote a collection of vector fields. In what follows
we will take either $\mathcal{Z} = \mathcal{Z}_m$ or $\mZB$, with
notation as in Section \ref{fields}.
It will be helpful to enlarge the collection $\mathcal{Z}$ and to write
$\widehat{Z} \in \widehat{\mathcal{Z}} = \mathcal{Z} \cup \{n\}$.
We will write $\widehat{Z}^K$ for a $|K|$-fold product of the fields in
$\widehat{Z}$. Specifically, if $\mathcal{Z} = \{Z_1\cdots Z_m\}$
we let $K = (K_1,..., K_r)$ where each $K_j \in \{e_1,\dots e_{m+1}\}$ where
$e_j$ denotes the standard basis of $\mathbb{R}^{m+1}$. If $K_j = e_p$
we then define $\widehat{Z}^{K_j} = \widehat{Z}_p$ with
$\widehat{Z}_p = Z_p$ when $p \leq m$ and $\widehat{Z}_{p} = n$ when $p  =m+1$.
Finally we set
$\widehat{Z}^K = \widehat{Z}^{K_1}\cdots \widehat{Z}^{K_r}$.

Fix a function $\xi:\mathbb{R}^4\to\mathbb{R}$ with $d\xi \not =0$
and which satisfies $n \xi = 1$. Given a vector field $Z$,
we define $Z_T  = Z - Z\xi n$, so that $Z_T \xi = d\xi(Z_T) = 0$. In particular
$Z_T$ is tangent to the set $\{\xi = 0\}$.
Let $Z_T^I$ denote a product of the fields $Z_T$
for $Z \in \mathcal{Z}$.
The following basic result then relates the
tangential fields $Z_T$ to the fields $Z$.
\begin{lemma}
	\label{decomplem}
	Let $\mathcal{Z}$ denote any collection of vector fields and
	define $Z^I, \widehat{Z}^I$ as in the above paragraph. Then we have
 \begin{equation}
  |Z^I q - Z_T^I q - (Z^I\xi) n q| \lesssim
	\sum_{r \geq 1} \sum_{\substack{|I_1| + \cdots |I_r| + |I_{r+1}| \leq |I|+1,
                                    \\ |I_{k}| \geq 1, |I_{r+1}|\ge 2}}
	|\widehat{Z}^{I_1}\xi| \cdots |\widehat{Z}^{I_r}\xi|
	 |\widehat{Z}^{I_{r+1}} q|,
  \label{higherorderdecomp}
 \end{equation}
 where there are $r$ factors of $n$ present in
 the collection $\widehat{Z}^{I_1},...,\widehat{Z}^{I_{r+1}}$ and at least
 one factor of $n$ in $\widehat{Z}^{I_{r+1}}$.
\end{lemma}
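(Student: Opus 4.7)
The plan is to prove the lemma by induction on $|I|$, peeling off one vector field at a time from the outside. The base case $|I|=0$ is trivial and $|I|=1$ is immediate from the definition $Z = Z_T + (Z\xi)\,n$ (the right-hand side is an empty sum since requiring $|I_k|\ge 1$ and $|I_{r+1}|\ge 2$ with $|I_1|+\cdots+|I_{r+1}|\le 2$ and $r\ge 1$ is vacuous). For the inductive step I would fix $Z^I = Z_1 Z^{I'}$ with $|I'|=|I|-1$, apply the inductive hypothesis to get
\begin{equation}
Z^{I'} q = Z_T^{I'} q + (Z^{I'}\xi)\, nq + E^{I'},
\end{equation}
where $E^{I'}$ is controlled by a sum of the required shape, and then apply $Z_1$ to both sides.

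The key computation in the inductive step uses the decomposition $Z_1 = (Z_1)_T + (Z_1\xi)\, n$ together with the Leibniz rule. This produces: (i) the principal term $(Z_1)_T Z_T^{I'} q$, which I identify with $Z_T^I q$ by the very definition of the tangential multi-product; (ii) the term $(Z_1 Z^{I'}\xi)\, nq = (Z^I\xi)\, nq$, the second distinguished term in the claim; (iii) cross terms of the form $(Z^{I'}\xi)(Z_1 nq)$, which already fit the target form with $r=1$; (iv) the term $(Z_1\xi)\, n Z_T^{I'} q$, which must be rewritten so that only fields from $\widehat{\mathcal{Z}}=\mathcal{Z}\cup\{n\}$ appear; and (v) the image $Z_1 E^{I'}$ of the inductive error, handled by Leibniz combined with the same decomposition of $Z_1$. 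For (iv) I would commute $n$ past $Z_T^{I'}$ using $n Z_T^{I'} q = Z_T^{I'}(nq) + [n, Z_T^{I'}]q$ and then either re-apply the inductive hypothesis to $Z_T^{I'}(nq)$ or iteratively expand each $(Z_i)_T = Z_i - (Z_i\xi)\,n$; each newly produced $n$-factor is paid for by a new $\xi$-factor, which is exactly the bookkeeping recorded by the parameter $r$.

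The combinatorial verification is then the heart of the argument: one must check that every resulting monomial $(\widehat{Z}^{I_1}\xi)\cdots(\widehat{Z}^{I_r}\xi)(\widehat{Z}^{I_{r+1}}q)$ satisfies $|I_k|\ge 1$ for $k\le r$, $|I_{r+1}|\ge 2$ with at least one $n$ among the derivatives in $\widehat{Z}^{I_{r+1}}$, and that the total number of $n$-factors across all of $\widehat{Z}^{I_1},\dots,\widehat{Z}^{I_{r+1}}$ equals $r$. This invariant is preserved because each time the decomposition $Z_i = (Z_i)_T + (Z_i\xi)\, n$ is invoked to remove a tangential field, it introduces exactly one $\xi$-factor and one $n$, while Leibniz and the commutator $[n,\cdot]$ only redistribute derivatives among existing factors without creating new pairs.

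The main obstacle will be the bookkeeping in step (iv), where expanding $nZ_T^{I'}q$ nests the induction; one has to be careful that, when a $Z_i$ is replaced by $(Z_i)_T + (Z_i\xi)\,n$ deep inside a string of tangential fields, the resulting $n$ that is created either ends up absorbed into the final $\widehat{Z}^{I_{r+1}}$ acting on $q$ (preserving $|I_{r+1}|\ge 2$ and the ``at least one $n$'' condition there) or is compensated by the corresponding $\xi$-factor it comes paired with. The degree count and the count of $n$-factors have to be tracked in tandem throughout, which is routine but combinatorially delicate; apart from this purely algebraic bookkeeping, no analytic ingredients are involved.
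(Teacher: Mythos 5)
Your overall strategy (induction on $|I|$, the decomposition $Z = Z_T + (Z\xi)\,n$, Leibniz) is the same in spirit as the paper's, but your implementation has two genuine gaps. First, the inductive statement is an \emph{inequality}, so you cannot literally ``apply $Z_1$ to both sides'': to run your outermost-peeling scheme you must strengthen the induction to an identity, $Z^{I'}q = Z_T^{I'}q + (Z^{I'}\xi)\,nq + E^{I'}$ with $E^{I'}$ an explicit finite linear combination, with universal constant coefficients, of monomials of the stated type, and only then differentiate. The paper sidesteps this entirely by peeling the \emph{innermost} field: it writes $Z^I q = Z_T^J(Zq) + (Z^J - Z_T^J)(Zq)$ with $Zq = Z_Tq + (Z\xi)\,nq$, and closes the induction by applying the inductive \emph{inequality} to the auxiliary functions $q' = Zq$, $q' = Z\xi$ and $q' = nq$, so an inequality is never differentiated.

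Second, and more seriously, your step (iv) does not work as proposed. Commuting $n$ past $Z_T^{I'}$ is not available at this level of generality: the lemma is stated for an arbitrary collection $\mathcal{Z}$ and an arbitrary $\xi$ with $n\xi = 1$, and the right-hand side of \eqref{higherorderdecomp} involves only $\widehat{\mathcal{Z}}$-derivatives of $\xi$ and $q$; the commutators $[n, Z_i]$, hence $[n, Z_T^{I'}]$, are in general not expressible through $\widehat{\mathcal{Z}}$, so the commutator terms cannot be absorbed into the claimed form. (Commutator structure enters only downstream, in Lemmas \ref{takehatoff}, \ref{minkdecomplemma} and \ref{mBtangentialdifference}, where the specific algebra of $\mathcal{Z}_m$ and $\mZB$ is available.) Your fallback of fully expanding each $(Z_i)_T = Z_i - (Z_i\xi)\,n$ inside the string does stay within the allowed algebra, but then you are no longer using the inductive hypothesis for this term at all: you are re-deriving the lemma by brute force, and the bookkeeping you defer as ``routine'' (checking that every resulting monomial respects the index constraints, including terms in which an $n$ lands on a $\xi$-factor, such as $(Z_1\xi)(nZ^{I'}\xi)(nq)$) is precisely the content of the lemma, so deferring it leaves the proof incomplete. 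A cleaner repair within your scheme is to invoke the identity-level inductive hypothesis once more to rewrite $Z_T^{I'}q = Z^{I'}q - (Z^{I'}\xi)\,nq - E^{I'}$ before applying $n$; note that the paper's choice of peeling the innermost field is designed exactly so that an $n$ never appears to the left of a tangential string and this configuration never arises.
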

\begin{remark}
 Note that on the right-hand side of \eqref{higherorderdecomp}, there are no
 more than $|I|-1$ of the $\widehat{Z}$ derivatives landing on $\xi$ by the last condition
 in the sum and no more than $|I|$ of the $\widehat{Z}$ derivatives of $q$ since
 each $|I_k| \geq 1$. We also note that the reason there are $r$ factors
 of $n$ present in $\widehat{Z}^{I_{r+1}}$ is that this bound
 follows from repeatedly applying the definition $Z_T = Z - (Z\xi) n$,
 and every time we use this formula on derivatives of $q$, the number of
 $n$ derivatives present and number of factors of $Z\xi$ both
 increase by one. This counting is important because in our applications we expect
 $Z\xi \sim |u|$ and so we need to gain a power of $|u|^{-1}$ for each
 factor of $Z\xi$ (and thus factor of $n$) we encounter. The vector fields we consider
 are such that schematically, $n \sim \frac{1}{|u|} Z$ or better (see e.g \eqref{basicmink}),
 and so we gain (at least) one power of $u$ for each factor.
\end{remark}

\begin{proof}
 When $|I| = 1$ this is just the definition $Z_T q = Zq - (Z\xi) n q$.
 If the result holds for all $I$ with $|I| \leq m$ for
 some $m \geq 1$, we fix
 a multi-index $I$ with $|I| = m +1$ and write $Z^I = Z^J Z$ and then,
 writing $Zq = Z_Tq + (Z\xi) nq$,
 we have
 \begin{align}
  Z^J Z q
	&=
	Z_T^J Zq + (Z^J - Z_T^J)Zq\\
	&=
	Z_T^I q + Z^J_T(Z \xi n q) + (Z^J - Z_T^J)Zq\\
	&=
	Z_T^Iq + (Z^I\xi) nq \\
	&\quad+ \sum_{\substack{|J_1| + |J_2| \leq |J|,\\ |J_1| \leq |J|-1}}
	c^J_{J_1 J_2} (Z_T^{J_1} Z\xi)(  Z^{J_2}_Tnq)+ ((Z_T^J-Z^J)Z \xi)nq + (Z^J - Z_T^J)Zq,
  \label{zminusztagain}
\end{align}
 for constants $c^J_{J_1J_2}$. It remains to show that the terms on the last
 line here are of the appropriate form.

 By the inductive assumption, whenever $|L| \leq |I|-1$
 for any $q'$ we have the bound
 \begin{equation}
  |(Z^L - Z_T^L)q'|
	\lesssim
	\sum_{s \geq 1}
	\sum_{\substack{|L_1| + \cdots + |L_{s+1}| \leq |L|+1,\\|L_i| \geq 1}}
	|\widehat{Z}^{L_1} \xi|
	\cdots
	|\widehat{Z}^{L_s} \xi|
	|\widehat{Z}^{L_{s+1}} q'|,
  \label{}
 \end{equation}
where there are $s$ factors of $n$ present in the collection $\widehat{Z}^{L_1},
 \cdots \widehat{Z}^{L_{s+1}}$ and at least one factor in $\widehat{Z}^{L_{s+1}}$.
Applying this with $q'$ replaced by $Zq$ we find that
\begin{equation}
 |(Z^J - Z_T^J)Zq|
 \lesssim
 \sum_{s \geq 1}
 \sum_{\substack{|L_1| + \cdots + |L_{s+1}| \leq |L|+1,\\|L_i| \geq 1}}
 |\widehat{Z}^{L_1} \xi|
 \cdots
 |\widehat{Z}^{L_s} \xi|
 |\widehat{Z}^{L_{s+1}}Z q|,
 \label{}
\end{equation}
which is the correct form. Applying this with $q'$ replaced by $Z\xi$, we also have
that
\begin{equation}
 |((Z_T^J-Z^J)Z \xi)nq|
 \lesssim
 \sum_{s \geq 1}
 \sum_{\substack{|L_1| + \cdots + |L_{s+1}| \leq |L|+1,\\|L_i| \geq 1}}
 |\widehat{Z}^{L_1} \xi|
 \cdots
 |\widehat{Z}^{L_s} \xi|
 |\widehat{Z}^{L_{s+1}}Z \xi| |nq|,
 \label{indas}
\end{equation}
where there are now $s+1$ factors of $\xi$ present in the sums and $s$ factors
of $n$ present in the $\widehat{Z}^{L_k}$ along with one additional one in the last factor,
so this is also of the correct form.
In the same way, to handle
the terms in the sum on the last line of \eqref{zminusztagain}
we apply \eqref{indas} to $q' = Z\xi$ and $q' = nq$ and note that there
is already one factor of $n$ present in each product there, and the result follows.
 
\end{proof}

From now on, we take $\xi = u - B^A$.
In the next section we collect some estimates for the quantities
appearing in \eqref{higherorderdecomp} when $Z^I$ denotes a product
of Minkowski fields and when $Z^I$ denotes a product
of the fields from $\mathcal{Z}_{\mB}$. At the left shock, the main
result we need is Lemma \ref{minkdecomplemma} and at the right shock, the
main result is Lemma \ref{mBtangentialdifference}.

\subsubsection{Estimates for $Z^I - Z_T^I$ in the Minkowskian case}

We start with the following simple result.
\begin{lemma}
	\label{takehatoff}
 Let $\mathcal{Z} = \mathcal{Z}_m$ denote the Minkowskian fields. Fix a multi-index
 $J$ and suppose that
 \begin{equation}
  \sum_{|L| \leq |J|/2+1} \frac{|Z_T^L B|}{1+|u|} \leq M.
  \label{}
 \end{equation}
 Let
 $\widehat{Z}^J$ be as in the paragraph
 before Lemma \ref{decomplem}. If there are $j$ factors of $n$ present in $\widehat{Z}^J$
 then with $\xi = u - B$,
 \begin{equation}
  (1+|u|)^j|\widehat{Z}^J \xi| \leq C(M) (1+|u|)
	\left(1 + \sum_{|K| \leq |J|} \frac{|Z_T^K B|}{1+|u|}\right),
	\qquad \text{ if } t/2 \leq r \leq 3t/2, t\geq 1.
  \label{xibound}
 \end{equation}
 \end{lemma}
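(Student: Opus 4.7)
The starting point is the identity $n\xi = \pa_u(u - B^A) = 1 - \pa_u B^A = 1$, which follows from the defining property $\pa_u B^A = 0$ recorded in \eqref{derivsofB}. In particular $n$ acting on $\xi$ produces a constant, after which any further derivative vanishes; and $nB = 0$ means that an $n$-factor acting directly on $B$ also yields zero. This is what forces the weight $(1+|u|)^j$ on the left-hand side: each $n$-factor should be worth a factor of $(1+|u|)^{-1}$ relative to a generic Minkowski-field factor, reflecting the fact that $nu = 1$ is much smaller than $|Zu| \lesssim 1+|u|$ for $Z \in \mathcal{Z}_m$ in the region $r \sim t$ (as one checks on each generator: $\pa_\alpha u$ is bounded, $\Omega_{ij} u = 0$, $\Omega_{0i} u \lesssim 1+|u|$, and $Su = u$).

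I would proceed by induction on $|J|$, with trivial base case $|J|=0$ (where the bound reduces to $|u-B| \leq (1+|u|) + |B|$). For the inductive step, I would split according to the innermost (rightmost) factor of $\widehat{Z}^J = \widehat{Z}^{J'} \widehat{Z}_k$. If $\widehat{Z}_k = n$, then $\widehat{Z}_k\xi = 1$, so either $|J|=1$ (and the claim follows at once) or $|J'|\geq 1$ and $\widehat{Z}^J \xi = \widehat{Z}^{J'}(1) = 0$. If $\widehat{Z}_k = Z \in \mathcal{Z}_m$, then $\widehat{Z}^J\xi = \widehat{Z}^{J'}(Zu) - \widehat{Z}^{J'}(ZB)$; the first term is bounded using that $Zu$ is an explicit function of size $1+|u|$ on which each surviving $n$-factor either produces the gain $1/(1+|u|)$ or annihilates. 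For the second term, I would commute the $n$-factors in $\widehat{Z}^{J'}$ rightward toward $B$, using $nB=0$ to kill leading contributions; in the region $t/2 \leq r \leq 3t/2$ the commutators $[n,Z]$ are bounded linear combinations of vector fields with coefficients $\lesssim 1/r \lesssim 1/v$, and since $1+|u| \leq 1 + t/2 \lesssim v$ there, this is at least as strong as the required gain $1/(1+|u|)$.

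After exhausting the commutations and rewriting the surviving derivatives of $B$ in tangential form via $Z = Z_T + (Z\xi) n$ (the $(Z\xi)n$ corrections produce lower-order terms controlled by the inductive hypothesis applied to $\widehat{Z}^{L'}\xi$ with $|L'| < |J|$), one is left with a polynomial expression in the $Z_T^L B$ for $|L| \leq |J|$. The main technical obstacle I anticipate is the combinatorial bookkeeping in this expansion: one must arrange, via a Moser-type distribution of derivatives, that every surviving product $\prod_i Z_T^{L_i} B$ with $\sum|L_i| \leq |J|$ has at most one factor of order $|L_i| > |J|/2 + 1$, so that the smallness assumption on low-order tangential derivatives absorbs all other factors into the constant $C(M)$, while the single high-order factor contributes to the sum $\sum_{|K|\leq |J|}|Z_T^K B|/(1+|u|)$. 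Tracking the powers of $(1+|u|)^{-1}$ across every commutator and tangential decomposition to confirm the sharp weight $(1+|u|)^j$ on the left-hand side is the subtlest part, but it works out because each commutation step produces precisely one $n$-factor consumed against one Minkowski field with the correct gain.
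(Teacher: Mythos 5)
Your plan is essentially sound and runs along the same two-step structure as the paper's proof: first gain a factor of $(1+|u|)^{-1}$ for every $n$-factor acting on $\xi=u-B$, then convert the surviving $Z$-derivatives of $B$ into tangential derivatives via $Z=Z_T+(Z\xi)n$ and close by induction, absorbing the low-order factors (at most one factor can exceed order $|J|/2+1$) into $C(M)$. The difference is in how the gain per $n$-factor is produced: the paper substitutes the pointwise identity $n=\frac{1}{1+|u|}\sum_Z a^Z Z$ from \eqref{basicmink} for all but one $n$ in each block, and uses $nB=0$ only once per block, writing $Z^{J'}nZ^{J''}B=Z^{J'}[n,Z^{J''}]B$ and estimating the commutator; you instead commute every $n$ rightward onto $B$ and harvest the gain from the commutators together with $nB=0$ and the explicit smallness of $n(Zu)$, $n\omega_i$. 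Both mechanisms rest on the same facts about $\mathcal{Z}_m$ in the region $r\sim t$, so your route is a legitimate variant rather than a different proof.

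One claim in your write-up does need repair: it is not true that $[n,Z]$ is a combination of vector fields with coefficients $\lesssim 1/r$. From the computations behind Lemma \ref{nullcommsm} (see \eqref{minkfact1} and \eqref{nlcom1}), one has $[n,S]=n$ and $[n,\Omega_{0i}]=-\omega_i n+\tfrac12\bigl(\tfrac1r+\tfrac{u}{r^2}\bigr)\omega^j\Omega_{ij}$, so the component of $[n,Z]$ along $n$ carries an $O(1)$ coefficient; only the components along the rotations come with the $1/(1+v)$ smallness. Thus the assertion that each commutation yields a gain at least as strong as $1/(1+|u|)$ is overstated as written. The argument still closes, because the $O(1)$ terms are again multiples of $n$: they preserve the count of $n$-factors and are disposed of at a later step, either by annihilating on $B$ (or on $\omega$), by producing a genuinely small term when they finally land in a commutator's angular part, or by hitting a coefficient such as $Zu$ where the gain $|n(Zu)|\lesssim 1$ versus $|Zu|\lesssim 1+|u|$ applies. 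If you carry out your rightward-commutation scheme with the correct structural statement $[n,Z]=c_Z\,n+\frac{1}{1+v}b_Z\cdot\Omega$ (coefficients satisfying the symbol bounds \eqref{minksymb}), and keep the bookkeeping that every surviving term has all $n$'s removed at the advertised cost, you recover exactly the intermediate estimate \eqref{jJxiclaim} of the paper, after which the tangential conversion via Lemma \ref{decomplem} and induction proceeds as you describe.
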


\begin{proof}
	By definition, $\widehat{Z}^J$ is a product of the form
	$n^{j_1} Z^{J_1}\cdots n^{j_k}Z^{J_k}$ where $\sum j_i + |J_i| = |J|$
	and where we recall that $n = \pa_u$. The idea in what follows is to
	first use basic properties of the Minkowski fields $\mathcal{Z}_m$ to re-write
	the vector fields $n$ in terms of powers of $(1+|u|)^{-1}$ and the Minkowski fields,
	and then to use \eqref{higherorderdecomp} and the fact that $nB = 0$ to
	re-write quantities of the form $Z^K B$ in terms of tangential derivatives.

	For this, it will be helpful to recall some simple and well-known properties of the vector fields
	$Z \in \mathcal{Z}_m$, which follow immediately
	from the formulas \eqref{inhomognull}-\eqref{homognull}. First, there are functions $a^Z, a^Z_u, a^Z_v, \slashed{a}^Z$
	satisfying the (Minkowskian) symbol condition
	\begin{equation}
	 |Z^J a|\leq C_J
	 \label{anothersymb}
	\end{equation}
	for constants $C_J$ so that we can write
	\begin{equation}
	 n = \frac{1}{1+|u|} \sum_{Z \in \mathcal{Z}_m} a^Z Z,
	 \qquad
	 Z = a^Z_u(1+|u|)\pa_u + a^Z_v(1+v)\pa_v + \slashed{a}^Z(1+r)\cdot \nas,
	 \quad \text{ if } t/2\leq r \leq 3t/2, t\geq 1.
	 \label{basicmink}
	\end{equation}
	We will also use that there are constants $c_{ZZ'}^{Z''}$ so that
	\begin{equation}
	 [Z, Z'] = \sum_{Z''\in \mathcal{Z}_m} c_{ZZ'}^{Z''} Z''.
	 \label{comm}
	\end{equation}

	We now prove the bound \eqref{xibound}. To start, we claim that
	if there are $j$ factors of $n$ present in $\widehat{Z}^J$, then
	\begin{equation}
	 (1+ |u|)^j |\widehat{Z}^J \xi|
	 \lesssim 1+ |u| + \sum_{|J'| \leq |J| - \widetilde{j}} |Z^{J'} B|,
	 \qquad
	 \widetilde{j} = \begin{cases}1,\quad j \geq 1.
 \\0,\quad j = 0\end{cases},
	 \label{jJxiclaim}
	\end{equation}
	Recalling $\xi = u - B$ and using \eqref{basicmink} it is enough
	to prove this bound with $\xi$ replaced by $B$. When $j = 0$ there
	is nothing to prove since then $\widehat{Z}^J = Z^J$. If $j \geq 1$, we
	write $\widehat{Z}^J = n^{j_1} Z^{J_1}\cdots n^{j_r} Z^{J_r}$ where without
	loss of generality $j_r \geq 1$. Using the first identity in
	\eqref{basicmink} to convert
	$n$ derivatives into $Z$ derivatives,
  $\widehat{Z}^J B$ can be written as a sum of terms of the
	form
	\begin{equation}
	 \frac{a}{(1+|u|)^{j_1+\cdots j_{r-1} + j_{r} - 1}} Z^{J'}n Z^{J''} B
 	= \frac{a}{(1+|u|)^{j_1+\cdots j_{r-1} + j_{r} - 1}} Z^{J'}[n, Z^{J''}] B
	 \label{ffs20}
	\end{equation}
	where $a$ satisfies the symbol condition \eqref{anothersymb} and
	where $|J'| + |J''| \leq |J| - 1$. Here we used that $nB \equiv 0$.
	To handle the commutator, we just use \eqref{basicmink} to express
	$n$ in terms of the fields $Z$ and then use the algebra property
	\eqref{comm}. This gives $|Z^{J'}[n, Z^{J''}] B| \lesssim
	(1+|u|)^{-1} \sum_{|J'''| \leq |J'| + |J''|} |Z^{J'''} B|$,
	and the claim \eqref{jJxiclaim} follows.

	Having proven \eqref{jJxiclaim}, to conclude the proof of
	\eqref{xibound} it remains to convert the
	$Z$ derivatives into $Z_T$ derivatives. For this, we use the bound
	\eqref{higherorderdecomp} and the fact that $nB = 0$ to get
	\begin{equation}
	 |Z^JB|\lesssim |Z_T^J B| + \sum_{r \geq 1}
	 \sum_{\substack{|J_1| +\cdots |J_{r+1}| = |J| +1,\\|J_k| \geq 1, |J_{r+1}|\geq 2}} |\widehat{Z}^{J_1} \xi|\cdots
	 |\widehat{Z}^{J_r} \xi||\widehat{Z}^{J_{r+1}} B|,
	 \label{ffs21}
	\end{equation}
	where there are $r$ factors of $n$ present in the collection
	$\widehat{Z}^{I_1},\dots \widehat{Z}^{I_{r+1}}$ and at least one factor
	of $n$ present in $\widehat{Z}^{J_{r+1}}$. Using the bound \eqref{jJxiclaim},
	we find that
	\begin{equation}
	  |Z^JB|\lesssim |Z_T^J B|
		+ \sum_{\substack{|J_1| +\cdots |J_{r+1}| = |J| +1,\\|J_k| \geq 1, |J|-1 \geq |J_{r+1}|\geq 2}}
		(1+|u|)^{-r}(1 +|u| + |{Z}^{J_1} B|)\cdots
 	 (1+ |u| + |{Z}^{J_r} B|) |{Z}^{J_{r+1}} B|,
	 \label{}
	\end{equation}
	where the fact that $|J_{r+1}| \leq |J| - 1$ follows from the fact that in
	\eqref{ffs21}, $|J_{r+1}| \leq |J|$ and that we are using \eqref{jJxiclaim}
	with $j = 1$. Since we also have $|J_k| \leq |J|-1$ for all $k = 1,...,r $
	in the sum, the bound \eqref{xibound} now follows from induction.
\end{proof}

As a result, we have the following bound.
\begin{lemma}
	\label{minkdecomplemma}
Under the hypotheses of Lemma \ref{takehatoff}, we have
 \begin{multline}
  |Z^I  q - Z_T^I  q|
	\leq
	C(M)(1+|u|) \sum_{|J| \leq |I|-1}  \left(|Z^Jnq| + (1+|u|)^{-1} |Z^J q|\right)\\
	+C(M)(1+|u|)|B|_{I, \mathcal{Z}_m} \sum_{|K| \leq |I|/2+1}
	\left(|Z^K n q| + (1+|u|)^{-1} |Z^K q|\right).
  \label{minkdecomplemmabd}
\end{multline}
where $|B|_{I, \mathcal{Z}_m}$ is defined as in \eqref{rescaledBnormdef},
and where the term $|Z^J q|$ is not present when $|I| = 1$.
\end{lemma}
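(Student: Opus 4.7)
\textbf{Plan for the proof of Lemma \ref{minkdecomplemma}.} The strategy is to combine the identity from Lemma \ref{decomplem} with the pointwise bound from Lemma \ref{takehatoff} and do a careful case analysis based on which factor in the resulting product is ``high order.'' I would start by peeling off the leading term. Applying Lemma \ref{takehatoff} with $j=0$ to $Z^I\xi$ gives
\[
|(Z^I\xi)nq|\leq C(M)(1+|u|)|nq|+C(M)\sum_{|K|\leq |I|}|Z_T^KB|\,|nq|.
\]
The first summand contributes to the first sum in the conclusion with $|J|=0\leq |I|-1$; the second, using that in the region where the hypothesis of Lemma \ref{takehatoff} is assumed we have $(1+s)^{1/2}\lesssim 1+|u|$, bounds a contribution of the form $C(M)(1+|u|)|B|_{I,\mathcal{Z}_m}|nq|$, which is absorbed into the second sum with $|K|=0$.

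For the remaining products $|\widehat Z^{I_1}\xi|\cdots|\widehat Z^{I_r}\xi|\,|\widehat Z^{I_{r+1}}q|$ appearing on the right hand side of \eqref{higherorderdecomp}, I would write $j_k$ for the number of $n$-factors in $\widehat Z^{I_k}$, so that $\sum_{k=1}^{r+1}j_k=r$ and $j_{r+1}\geq 1$. Lemma \ref{takehatoff} gives
\[
(1+|u|)^{j_k}|\widehat Z^{I_k}\xi|\leq C(M)(1+|u|)\Bigl(1+\sum_{|L|\leq |I_k|}\frac{|Z_T^LB|}{1+|u|}\Bigr),
\]
while repeated use of \eqref{basicmink} and \eqref{comm} (commuting one $n$ to the left and converting the remaining $j_{r+1}-1$ into tangential fields, picking up commutator errors) yields
\[
|\widehat Z^{I_{r+1}}q|\leq C(1+|u|)^{-j_{r+1}+1}\!\!\sum_{|L|\leq |I_{r+1}|-1}\!\!\bigl(|Z^Lnq|+(1+|u|)^{-1}|Z^Lq|\bigr).
\]
The total power of $1+|u|$ from the $\xi$-factors is $(1+|u|)^{r-\sum_{k\leq r}j_k}=(1+|u|)^{j_{r+1}}$, which cancels against $(1+|u|)^{-j_{r+1}+1}$ from $\widehat Z^{I_{r+1}}q$ up to the single positive power $(1+|u|)$ in front, matching the statement.

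Now the case analysis. Since $|I_1|+\cdots+|I_{r+1}|\leq |I|+1$, at most one of the $|I_k|$ can exceed $|I|/2+1$. In the ``generic'' case where either all $|I_k|\leq |I|/2+1$, or the unique large index is $|I_{r+1}|$, each $\xi$-factor is bounded by the hypothesis of Lemma \ref{takehatoff} (the definition of $M$) by $C(M)(1+|u|)^{1-j_k}$, and what remains lies in the first sum of the conclusion since $|L|\leq |I_{r+1}|-1\leq |I|-1$. In the remaining case, exactly one factor $\widehat Z^{I_{k_0}}\xi$ with $k_0\leq r$ satisfies $|I_{k_0}|\geq |I|/2+2$; then every other $\xi$-factor is low-order and controlled by $M$, while the high-order one contributes, after replacing $(1+s)^{1/2}$ by $1+|u|$,
\[
|\widehat Z^{I_{k_0}}\xi|\leq C(M)(1+|u|)^{1-j_{k_0}}|B|_{I,\mathcal{Z}_m},
\]
and the factor $|\widehat Z^{I_{r+1}}q|$ carries $|L|\leq |I_{r+1}|-1\leq |I|/2$, giving exactly the second sum.

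The main obstacle is purely bookkeeping: tracking the interplay between the number of $n$-derivatives in each $\widehat Z^{I_k}$, the powers of $1+|u|$ they consume via \eqref{basicmink}, and those they return via \eqref{xibound}. The delicate point is ensuring that in the second case exactly one factor of $|B|_{I,\mathcal{Z}_m}$ appears (never a product of two), which follows from the counting $|I_1|+\cdots+|I_{r+1}|\leq |I|+1$ combined with the fact that the low-order $\xi$-factors are absorbed into the constant $C(M)$. Once this bookkeeping is done, no further estimates are needed beyond those already established in Lemmas \ref{decomplem} and \ref{takehatoff}.
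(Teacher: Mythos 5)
Your proposal is correct and follows essentially the same route as the paper: apply the decomposition \eqref{higherorderdecomp}, bound $|\widehat{Z}^{I_{r+1}}q|$ by converting all but one factor of $n$ into $(1+|u|)^{-1}Z$ via \eqref{basicmink} and \eqref{comm} (this is exactly the paper's bound \eqref{widehatnl}; note the conversion is into the ordinary fields $Z\in\mathcal{Z}_m$, not tangential ones as you phrased it, though your displayed inequality is the right one), and then control the $\xi$-factors with Lemma \ref{takehatoff}, splitting according to whether the single possible high-order multi-index sits on a $\xi$-factor or on the $q$-factor. Your explicit power counting in $1+|u|$ and the observation that $(1+s)^{1/2}\lesssim 1+|u|$ in the region of application make explicit exactly what the paper leaves implicit in its final display.
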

\begin{remark}
 We will be applying this with $q$ replaced by $\evm q$ plus
 nonlinear terms and in that case
 we expect the quantity $Z^J n q$ to be well-behaved.
\end{remark}
\begin{remark}
 We also note that if we have the bound
 $|B|_{I,\mathcal{Z}_m} \leq M$, then \eqref{minkdecomplemmabd}
 implies that
 \begin{equation}
  |Z_T^I q| \lesssim C(M)\sum_{|J| \leq |I|} |Z^J q|, \qquad
	|Z^I q| \lesssim C(M) \sum_{|J| \leq |I|} |Z_T^J q|,
  \label{lowordertransfer}
 \end{equation}
 which follows after using that $(1+|u|) |n q'| \lesssim |Z q'|$ and standard
 properties of the fields $Z$. More generally, we have
 \begin{align}
  |Z_T^I q| &\lesssim C(M)\sum_{|J| \leq |I|} |Z^J q| +
	C(M) |B|_{I, \mathcal{Z}_{m}} \sum_{|K| \leq |I|/2+1} |Z^K q|\label{eztransfer0},
	\\
	|Z^I q| &\lesssim C(M)\sum_{|J| \leq |I|} |Z_T^J q| +
	C(M) |B|_{I, \mathcal{Z}_{m}} \sum_{|K| \leq |I|/2+1} |Z_T^K q|,
  \label{eztransfer}
 \end{align}
 if $\sum_{|I'| \leq |I|/2+1} |B|_{I',\mathcal{Z}_m} \leq M$.
%
%  Using induction, the above bounds also imply the bound
%  \begin{equation}
%   |(Z^I - Z_T^I)q| \lesssim C(M) (1+s)^{1/2} \sum_{|J| \leq |I|-1}
% 	|Z^J n q|
% 	+
% 	C(M) (1+s)^{1/2}|B|_{I, \mathcal{Z}_{m}} \sum_{|K| \leq |I|/2+1} |Z^K nq|,
%   \label{eztransfer2}
%  \end{equation}
\end{remark}

\begin{proof}
By \eqref{higherorderdecomp} we have
\begin{equation}
 |Z^I q - Z_T^I  q|
 \lesssim |Z^I\xi| |n q|
 + \sum_{r \geq 1} \sum_{\substack{|I_1| + \cdots + |I_{r+1}| \leq |I|+1, \\|I_{r+1}| \geq 2}}
 |\widehat{Z}^{I_1}\xi|\cdots |\widehat{Z}^{I_r} \xi| |\widehat{Z}^{I_{r+1}}  q|,
 \label{usedhigherorder}
\end{equation}
where there are $r$ factors of $n$ present in the collection
$\widehat{Z}^{I_1},...\widehat{Z}^{I_{r+1}}$ with at least one factor of $n$
in $\widehat{Z}^{I_{r+1}}$. We now
re-write the last factor in terms of the vector fields
$Z \in \mathcal{Z}_m$, the quantity $n q$, and lower-order terms.

 We claim that if there are $j\geq 1$ factors of $n$ present in $\widehat{Z}^J$
 then
 \begin{equation}
  (1+ |u|)^{j-1} |\widehat{Z}^J q| \lesssim
	\sum_{|J'| \leq |J|-1} |Z^{J'} n q| + (1+|u|)^{-1}|Z^{J'} q|.
  \label{widehatnl}
 \end{equation}
 This follows in a similar way to how we proved \eqref{jJxiclaim}.
 Since $\widehat{Z}^J = n^{j_1} Z^{J_1} \cdots n^{j_k} Z^{J_k}$ with
 $\sum_{s=1}^k j_s = j$, we just use \eqref{basicmink} to re-write $j-1$  factors
 of $n$ in terms of the fields $(1+|u|)^{-1} Z$ and then repeatedly use \eqref{comm} to bound
 \begin{equation}
  (1+|u|)^{j-1} |\widehat{Z}^J q|
	\lesssim \sum_{|K| \leq |J|} |\widehat{Z}^K q|,
  \label{}
 \end{equation}
 where the sum is over multi-indices $K$ satisfying the condition that there is exactly one factor of $n$ present in $\widehat{Z}^K$. Now we
 write
 \begin{equation}
  \widehat{Z}^K = Z^{K_1} n Z^{K_2} = Z^{K_1}Z^{K_2} + Z^{K_1} [n,Z^{K_2}] ,
	\qquad
	|K_1| + |K_2| = |K| - 1,
  \label{}
 \end{equation}
 and again use \eqref{basicmink}-\eqref{comm} to bound
 \begin{equation}
  |Z^{K_1} [n, Z^{K_2}]q|
	\lesssim (1+|u|)^{-1} \sum_{|K'| \leq |K|-2} |Z^{K'} q|.
  \label{}
 \end{equation}
 Combining the above, we get \eqref{widehatnl}.

 By \eqref{widehatnl}, have
\begin{align}
 \sum_{r \geq 1}& \sum_{\substack{|I_1| + \cdots |I_{r+1}| \leq |I|+1,\\ |I_{r+1}| \geq 2}}
 |\widehat{Z}^{I_1}\xi| \cdots |\widehat{Z}^{I_{r}}\xi|
 |\widehat{Z}^{I_{r+1}} q|\\
 &\lesssim
  \sum_{r \geq 1}\sum_{\substack{|I_1| + \cdots |I_{r+1}| \leq |I|+1,\\|I_{r+1}| \geq 2}}
	(1+ |u|)^{-r+1}
  |\widehat{Z}^{I_1}\xi| \cdots |\widehat{Z}^{I_{r}}\xi|
	\left(\sum_{|I'| \leq |I_{r+1}| -1}
  \left(|{Z}^{I'} nq| + (1+|u|)^{-1} |Z^{I'} q|\right)\right)\\
	& \leq
	C(M)(1+|u|) \sum_{|J| \leq |I|-1} |{Z}^{J} nq| + (1+|u|)^{-1} |Z^{J} q|\\
	&\qquad+ C(M)(1+|u|) |B|_{I, \mathcal{Z}} \sum_{|K| \leq |I|/2+1} |{Z}^{K} nq| + (1+|u|)^{-1} |Z^{K} q|
 \label{}
\end{align}
 where we used Lemma \ref{takehatoff} to handle the contributions
 from $\xi$.
\end{proof}

\subsubsection{Estimates for $Z^I - Z_T^I$ when $\mathcal{Z} = \mathcal{Z}_{\mB}$}
We now want a result analogous to Lemma \ref{minkdecomplemma}. This is somewhat
 simpler than the
result in the previous section because $n$ commutes with all the fields in
$\mathcal{Z}_{\mB}$.

The first step is the following. 
\begin{lemma}
	\label{loBmBbdlem}
	Fix a multi-index $J$ and suppose that
	\begin{equation}
   \sum_{|L| \leq |J|/2+1} \frac{|Z_{\mB, T}^L B|}{1+|u|} \leq M.
   \label{loBmBbd}
  \end{equation}
	With $\xi = u -B$, we have
 \begin{equation}
  |\widehat{Z}_{\mB}^J \xi| \leq C(M)\left(1 + s + \sum_{|J'| \leq |J|}
	|Z_{\mB, T}^{J'} B|\right)
  \label{mbdecomp0}
 \end{equation}
\end{lemma}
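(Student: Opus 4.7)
The proof will be substantially simpler than its Minkowskian analog Lemma \ref{takehatoff} because of a single observation: every field $Z_{\mB} \in \mathcal{Z}_{\mB} = \{\Omega_{ij}, \sBo = s\pa_u, \sBt = v\pa_v\}$ commutes with $n = \pa_u$. Indeed, in the $(u,v,\theta^1,\theta^2)$ coordinate system $\Omega_{ij}$ is purely angular, $\sBt$ involves only $v$, and the coefficient $s = \log v$ in $\sBo$ depends on $v$ only, so in each case $[n, Z_{\mB}] = 0$. Combined with $nB = 0$ from the definition \eqref{BAdef}, this trivializes most of the error structure encountered in the Minkowskian case.

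Given this, the plan is: first, using $[n, Z_{\mB}] = 0$ repeatedly, rewrite any $\widehat{Z}_{\mB}^J$ as $n^j Z_{\mB}^{J'}$, where $j$ is the number of $n$-factors appearing in $\widehat{Z}_{\mB}^J$ and $Z_{\mB}^{J'}$ is the product of the remaining $\mathcal{Z}_{\mB}$-factors in their original order, so $|J'| = |J|-j$. Then
\[
\widehat{Z}_{\mB}^J \xi = n^j Z_{\mB}^{J'}(u-B) = n^j Z_{\mB}^{J'} u - n^j Z_{\mB}^{J'} B.
\]

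Next, for the $B$-term: since $nB = 0$ and $n$ commutes with every $Z_{\mB}$, one has $n(Z_{\mB}^{J'} B) = Z_{\mB}^{J'}(nB) = 0$, so the contribution vanishes whenever $j \geq 1$, and for $j = 0$ the term is $Z_{\mB}^{J'} B$. To convert this to tangential derivatives I will apply Lemma \ref{decomplem} with $q = B$: the remainder in \eqref{higherorderdecomp} is a sum of products of the form $|\widehat{Z}_{\mB}^{L_1} \xi|\cdots|\widehat{Z}_{\mB}^{L_r}\xi| |\widehat{Z}_{\mB}^{L_{r+1}} B|$ with at least one $n$-factor inside $\widehat{Z}_{\mB}^{L_{r+1}}$, and by the same commutativity argument this last factor vanishes. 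Thus $Z_{\mB}^{J'} B = Z_{\mB,T}^{J'} B$ identically, not merely up to a lower-order error.

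For the $u$-term, a direct elementary calculation gives $\sBo u = s$, $\sBt u = 0$, $\Omega_{ij} u = 0$, $\sBt s = 1$, $\sBo s = 0$, $\Omega_{ij} s = 0$, and every field in $\widehat{\mathcal{Z}}_{\mB}$ annihilates constants; iterating shows $|Z_{\mB}^{J'} u| \lesssim 1 + s$ for all $J'$ and $|n^j Z_{\mB}^{J'} u| \lesssim 1$ for $j \geq 1$. Combining this with the previous paragraph yields
\[
|\widehat{Z}_{\mB}^J \xi| \leq C\Big(1 + s + \sum_{|J'| \leq |J|} |Z_{\mB,T}^{J'} B|\Big),
\]
which is the claimed bound (with constant in fact independent of $M$; the $C(M)$ is included for uniformity with Lemma \ref{takehatoff}). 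There is no real obstacle here: the only thing to be careful about is the bookkeeping in rewriting $\widehat{Z}_{\mB}^J = n^j Z_{\mB}^{J'}$, but this is just repeated application of $[n, Z_{\mB}]=0$.
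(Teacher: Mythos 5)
Your proof is correct and follows essentially the same route as the paper's: both arguments rest on $[n,Z_{\mB}]=0$ and $nB=0$ to reduce $\widehat{Z}_{\mB}^J$ to a pure $Z_{\mB}$-product plus harmless $n$-factors, bound $|Z_{\mB}^{J'}u|\lesssim 1+s$ by the explicit computations $\sBo u = s$, $\sBt s =1$, etc., and invoke Lemma \ref{decomplem} while observing that every remainder term carries an $n$ landing on $B$ and hence vanishes, so $Z_{\mB}^{J'}B = Z_{\mB,T}^{J'}B$ exactly. The only cosmetic difference is that you commute the $n$'s to the front and split $\xi = u-B$ before estimating, whereas the paper treats $\xi$ directly with the $n$'s at the back; your remark that the constant is actually independent of $M$ is also consistent with the paper's argument.
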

\begin{proof}
 First, since $[n, Z_{\mB}] = 0$ for all $Z_{\mB} \in \mathcal{Z}_{\mB}$, it
 enough to prove this bound when $Z_{\mB}^J = Z_{\mB}^K n^j$ for $|K| + j = |J|$.
 Since $n(u-B) = 1$, if $j \geq 1$ we clearly have $|Z_{\mB}^K n^j\xi|
 \lesssim 1$ (in fact if $j \geq 1$ this is only nonzero when $|K| = 0$)
 so we have the simple bound
 $|\widehat{Z}_{\mB}^J\xi|\lesssim 1 + \sum_{|J'|\leq|J|} |Z_{\mB}^J \xi|$, and
 so it is enough to bound $|Z_{\mB}^J\xi|$. We clearly have
 $|Z_{\mB}^J u|\lesssim (1+s)$ and so $|Z_{\mB}^J \xi| \lesssim
 1+s + |Z_{\mB}^JB|$.
 It remains to handle this last term. For this, we use
 \eqref{higherorderdecomp}, which,
 in light of what we have just proved, gives
 \begin{equation}
  |Z^J_{\mB} B - Z_{T, \mB}^J B|
	\lesssim
	1 + \sum_{r\geq 1}
	 \sum_{\substack{|J_1| + \cdots + |J_{r+1}| \leq |J|+1,\\
	|J_{r+1}| \geq 2}}
	(1+s + |\widehat{Z}^{J_1}_{\mB} B|)\cdots (1+s + |\widehat{Z}^{J_r}_{\mB} B|) |\widehat{Z}^{J_{r+1}}_{\mB} B|,
  \label{}
 \end{equation}
 where there are $r$ factors of $n$ present in the collection
 $\widehat{Z}^{J_1},\dots\widehat{Z}^{J_{r+1}}_{\mB}$
 and at least one factor of $n$ present in $\widehat{Z}^{J_{r+1}}$.
 Again using that $[n, Z_{\mB}] = 0$ and that $nB = 0$, it follows that
 the right-hand side is zero, and the result now follows.
\end{proof}
Recalling that $|B|_{I, \mB} = (1+s)^{-1/2} \sum_{|J| \leq |I|}
|Z_{\mB, T}^J B|$, we have the following analogue of
Lemma \ref{minkdecomplemma}.
\begin{lemma}
	\label{mBtangentialdifference}
	Under the hypotheses of Lemma \ref{loBmBbdlem}, we have
 \begin{multline}
  |Z^I_{\mB} q - Z_{\mB, T}^I q|
	\leq
	C(M) (1+s)\sum_{|J| \leq |I|-1} |Z_{\mB}^J nq|\\
	+C(M)(1+s) \left(1 + (1+s)^{-1/2}|B|_{I, \mZB} \right)
	\sum_{|L| \leq |I|/2+1} |Z_{\mB}^L nq|.
  \label{mBtangentialdifferencebd}
\end{multline}
\end{lemma}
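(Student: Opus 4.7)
The plan is to apply Lemma~\ref{decomplem} with $\mathcal{Z} = \mZB$ and $\xi = u - B$, and then reduce every resulting term to the form stated in the conclusion. Compared with the Minkowskian estimate \eqref{minkdecomplemmabd}, the argument simplifies thanks to two structural features of $\mZB$: every field $Z_{\mB}\in\mZB$ commutes with $n = \pa_u$, and $\sBo = sn$ (with $n(s) = 0$). Consequently any excess $n$-derivative falling on $q$ can be converted into a $\mZB$-derivative at the cost of a factor $s^{-1}$, which is the precise compensation needed to absorb the $(1+s)$-growth generated by $\mZB$-derivatives of $\xi$.

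Lemma~\ref{decomplem} yields
\[
|Z^I_{\mB} q - Z^I_{\mB,T} q| \lesssim |Z^I_{\mB}\xi|\,|nq|
+ \sum_{r\geq 1}\sum_{\substack{|I_1|+\cdots+|I_{r+1}|\leq |I|+1\\ |I_k|\geq 1,\,|I_{r+1}|\geq 2}}
|\widehat{Z}^{I_1}_{\mB}\xi|\cdots|\widehat{Z}^{I_r}_{\mB}\xi|\,|\widehat{Z}^{I_{r+1}}_{\mB}q|,
\]
with exactly $r$ factors of $n$ distributed across the hatted products and at least one lying in $\widehat{Z}^{I_{r+1}}_{\mB}$. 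I will combine three observations. First, because $n\xi = 1$ and $n(1) = 0$, each $\widehat{Z}^{I_k}_{\mB}$ ($k\leq r$) containing two or more $n$'s annihilates $\xi$, while the one-$n$ case gives $|\widehat{Z}^{I_k}_{\mB}\xi|\leq C$; for pure $\mZB$-strings Lemma~\ref{loBmBbdlem} yields $|Z^{I_k}_{\mB}\xi|\leq C(M)(1+s)$ when $|I_k|\leq |I|/2+1$ and $|Z^{I_k}_{\mB}\xi|\leq C(M)(1+s)\bigl(1+(1+s)^{-1/2}|B|_{I,\mZB}\bigr)$ otherwise. Second, writing $j\geq 1$ for the number of $n$'s in $\widehat{Z}^{I_{r+1}}_{\mB}$, the commutation $[n,Z_{\mB}]=0$ together with the identity $n^j = s^{-(j-1)}\sBo^{j-1}n$ (which follows from $\sBo^k = s^kn^k$) lets one commute all $n$'s past the leading $\mZB$-block and absorb the $\sBt$-derivatives that land on $s^{-(j-1)}$ into even more negative powers of $s$, yielding
\[
|\widehat{Z}^{I_{r+1}}_{\mB} q|\leq C\,s^{-(j-1)}\sum_{|K|\leq |I_{r+1}|-1}|Z^K_{\mB} nq|.
\]
Third, the counting in the first observation forces exactly $j$ of the first $r$ factors to be pure $\mZB$-strings and the remaining $r-j$ to carry a single $n$, producing a total $\xi$-weight of $C(M)(1+s)^j$ times at most one $\bigl(1+(1+s)^{-1/2}|B|_{I,\mZB}\bigr)$ factor.

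Multiplying these weights gives $C(M)(1+s)^j\cdot s^{-(j-1)}\lesssim C(M)(1+s)$, valid throughout $D^C_t$ since $s\geq 1$ there under \eqref{largestart0}. A case split according to whether the large multi-index (of size exceeding $|I|/2$) is $|I_{r+1}|$ or some $|I_k|$ with $k\leq r$ assigns the outcome to the two sums of the conclusion: in the former case, $|K|\leq |I_{r+1}|-1\leq |I|-1$ and no $|B|_{I,\mZB}$ weight appears, giving the first sum; in the latter, $|K|\leq |I|/2$ and the high-order $\xi$-factor contributes a single $|B|_{I,\mZB}$ weight, giving the second sum. The isolated leading term $(Z^I_{\mB}\xi)\,nq$ is absorbed into the second sum in the same way. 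The main technical obstacle is precisely this power-of-$s$ bookkeeping: the dichotomy $n\xi = 1$ vs.\ $n^2\xi = 0$ is what forbids any $(1+s)^k$ with $k>1$ from surviving, and what guarantees that at most one $|B|_{I,\mZB}$ weight, rather than a product of such weights across several high-order factors, appears in the final estimate.
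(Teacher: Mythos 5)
Your proposal is correct and follows essentially the same route as the paper: apply Lemma \ref{decomplem} with $\xi = u - B$, bound the $\xi$-factors via Lemma \ref{loBmBbdlem}, and convert the excess $n$-derivatives on $q$ using $[n, Z_{\mB}] = 0$ together with $n = \tfrac{1}{s}\sBo$ to gain the compensating powers of $s^{-1}$. If anything, your accounting of which hatted $\xi$-blocks vanish (two or more $n$'s, or one $n$ together with a $Z_{\mB}$) and of the split $j$ vs.\ $r$ is spelled out more carefully than in the paper's own write-up, which tacitly reduces to the case where all $n$'s sit in the last block.
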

\begin{remark}
 For some of our applications, it is better to write the above in the forms
  \begin{align}
   |Z^I_{\mB} q|
 	&\leq
	C(M) \sum_{|J| \leq |I|} |Z_{\mB, T}^J q|
	+ C(M) \left(1 + (1+s)^{-1/2}|B|_{I, \mZB} \right)
 	\sum_{|L| \leq |I|/2+2} |Z_{\mB, T}^L q|,
   \label{mBtangentialdifferencermk}\\
	 |Z^I_{\mB, T} q|
	 &\leq
	 	C(M) \sum_{|J| \leq |I|} |Z_{\mB}^J q|
	 	+ C(M) \left(1 + (1+s)^{-1/2}|B|_{I, \mZB} \right)
	  	\sum_{|L| \leq |I|/2+2} |Z_{\mB}^L q|
			\label{mBtangentialdifferencermk2}
 \end{align}
	which follow from \eqref{mBtangentialdifferencebd}
	and induction
	since $\mathcal{Z}_{\mB}$ includes
	$\sBo = s \pa_u = s n$ so $(1+s)|nq|\lesssim
	\sum_{Z_{\mB} \in \mathcal{Z}_{\mB}} |Z_{\mB} q|$.
\end{remark}
\begin{proof}
 By \eqref{higherorderdecomp} and \eqref{mbdecomp0}
 \begin{align}
  |Z^I_{\mB}q - &Z_{\mB,T}^I q|\\
	&\lesssim
	|Z_{\mB}^I \xi| |n q| +
	\sum_{r\geq 1}
	\sum_{\substack{|I_1| + \cdots |I_{r+1}| \leq |I|+1,\\ |I_{r+1}|\geq 2} }
	|\widehat{Z}_{\mB}^{I_1} \xi|\cdots |\widehat{Z}_{\mB}^{I_{r}} \xi| |\widehat{Z}_{\mB}^{I_{r+1}} q|\\
	&\leq
	C(M)(1+s) (1+s^{-\frac 12})|B|_{I, \mZB} |n q|
	\\
	&\qquad+ C(M) \sum_{r\geq 1}
	\sum_{\substack{|I_1| + \cdots |I_{r+1}| \leq |I|+1,\\ |I_{r+1}|\geq 2} }
	(1+s)^{r} (1+s^{-\frac 12}|B|_{I_1, \mZB})\cdots (1+s^{-\frac 12}|B|_{I_r, \mZB}|)|\widehat{Z}_{\mB}^{I_{r+1}} q|,
  \label{}
 \end{align}
 where there are $r$ factors of $n$ present in the collection $\widehat{Z}_{\mB}^{I_{1}},...\widehat{Z}_{\mB}^{I_{r+1}}$
 with at least one present in $\widehat{Z}_{\mB}^{I_{r+1}}$.
 Since $[n, Z_{\mB}] = 0$, we have
 $\widehat{Z}_{\mB}^{I_{r+1}} q = n^r Z_{\mB}^{K}$ where $|K| +r = |I_{r+1}|$
 and since $n = \frac{1}{s} \sBo$ we have
 $|\widehat{Z}_{\mB}^{I_{r+1}} q| \lesssim (1+s)^{-r+1} \sum_{|K|\leq |I_{r+1}|}
 |Z_{\mB}^K nq|$.
\end{proof}

\subsection{Proof of Proposition \ref{lefttocenterprop}}
\label{pflefttocenterprop}

The result is a consequence of the upcoming Lemmas \ref{linearcommjumpL},
\ref{transferlemL} and Proposition \ref{UpsilonLplusbd}, as follows.
Define the quantities
\begin{equation}
% \Upsilon_{I, L}^-(t_1) = \int_{t_0}^{t_1} \int_{\Gamma^L_t}
% X^\ell |\evm \psi_L^I|^2\, dS dt,
% \qquad
 \Upsilon_{I, {L}}^+(t_1) = \int_{t_0}^{t_1} \int_{\Gamma^L_t}
 X^\ell | Z_T^I Y_L^+ \psi_C|^2\, dS dt,
 \label{Upsilondef}
\end{equation}
where $Y_L^+$ is as in \eqref{YLdef2}. 

Combining Lemmas \ref{linearcommjumpL} and \ref{transferlemL}, for $|I| \leq N_L$, under our
hypotheses we have the bounds
\begin{equation}
 B^L_I(t_1) \lesssim 
% (c_0(\epsilon_0) + \epsilon_L^2) \Upsilon_{I, L}^-(t_1)
% +
 \Upsilon_{I, L}^+(t_1)
 + \sum_{|J| \leq |I|-1} B^L_J(t_1)
 + (c_0(\epsilon_0) + \epsilon_L^2)\epsilon_L^2 + c_0(\epsilon_0),
 \label{upsilonclaim}
\end{equation}
where $c_0$ is a continuous function with $c_0(0) = 0$.
Taking $\epsilon_0$ sufficiently small and using induction
we get
\begin{equation}
 B^L_I(t_1)\lesssim \Upsilon_{I, L}^+(t_1) + \epsilon_L^3,
 \label{}
\end{equation}
and the result now follows from the upcoming Proposition \ref{UpsilonLplusbd}
and the fact that $\epsilon_C^2 \lesssim \epsilon_L^4$
by \eqref{epsparameters}.
\hfill \qedsymbol

In the remainder of this section we prove Lemmas \ref{linearcommjumpL},
 \ref{transferlemL}, and Proposition \ref{UpsilonLplusbd}.
\subsubsection{Supporting lemmas for the proof of Proposition
\ref{lefttocenterprop}}

We start with a product estimate that we will use the handle the nonlinear
terms we encounter. For this result it is important that we take $\alpha < 3/2$
in the definitions of the vector fields $X_L, X_M$.
\begin{lemma}
	\label{generalproduct}
	Let $Q(\pa \psi, \pa \psi) = Q^{\alpha\beta}\pa_\alpha \psi \pa_\beta \psi$
	be a quadratic nonlinearity where the coefficients $Q^{\alpha\beta}$ are
	smooth functions satisfying the symbol-type condition \eqref{minksymb}.
	With $X = X_L$ or $X = X_M$, writing $X = X^\ell \pa_v + X^n \pa_u$,
	under the hypotheses of Proposition \ref{lefttocenterprop} we have
	\begin{equation}
	 \int_{t_0}^{t_1} \int_{\Gamma^L_t} |X^\ell| |Z^I\left( (1+v)^{-1} Q(\pa \psi_L, \pa \psi_L)\right)|^2\, dS dt
	 \lesssim 
%     \epsilon_L^2 \sum_{|J| \leq |I|}
%	 \Upsilon_{J, L}^- +
     \epsilon_L^4.
	 \label{nonlinframebound}
	\end{equation}
\end{lemma}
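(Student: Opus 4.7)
\medskip

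\noindent\textbf{Proof plan for Lemma \ref{generalproduct}.} The plan is to expand the derivatives on the quadratic nonlinearity, put the lower-order factor in $L^\infty$ along $\Gamma^L$ using the pointwise decay \eqref{pwleft}, and absorb the higher-order factor using the boundary control \eqref{controlofpsiLL} coming from the energy $E_I^L$ defined in \eqref{EDLdef}.

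First I would expand $Z^I$ by Leibniz. Since the coefficients $Q^{\alpha\beta}$ and $(1+v)^{-1}$ satisfy the symbol condition \eqref{minksymb}, derivatives falling on them are harmless and preserve the $(1+v)^{-1}$ weight. Together with the commutator relation $|Z^J\pa\psi_L|\lesssim \sum_{|K|\le|J|}|\pa Z^K\psi_L|$ and the standard conversion between $Z^K\psi_L = Z^K(r\phi_L)$ and $\psi^K_L = rZ^K\phi_L$ (which costs only lower-order terms with the same overall weight), this gives pointwise
\begin{equation}
\big|Z^I\!\bigl((1+v)^{-1}Q(\pa\psi_L,\pa\psi_L)\bigr)\big|^2
\lesssim \frac{1}{(1+v)^2}\sum_{\substack{|I_1|+|I_2|\le |I|\\ |I_1|\le |I|/2}}|\pa\psi_L^{I_1}|^2\,|\pa\psi_L^{I_2}|^2.
\end{equation}
Since $|I|\le N_L$ and $N_L\ge 30$, the lower-order index satisfies $|I_1|\le N_L/2\le N_L-3$, so \eqref{pwleft} applies.

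Next I would evaluate the $L^\infty$ factor on $\Gamma^L$, where $|u|\sim s^{1/2}$ by \eqref{betaLassump}. Substituting $|u|\sim s^{1/2}$ into \eqref{pwleft} gives
\begin{equation}
|\pa\psi_L^{I_1}|^2\bigg|_{\Gamma^L}\lesssim \frac{\epsilon_L^2}{s(\log s)(\log\log s)^{\alpha}}.
\end{equation}
Combined with $X^\ell = vf(v)=vs(\log s)^\alpha$ and the $(1+v)^{-2}$, the effective weight on the remaining factor $|\pa\psi_L^{I_2}|^2$ is
\begin{equation}
\frac{X^\ell}{(1+v)^2}\cdot \frac{\epsilon_L^2}{s(\log s)(\log\log s)^{\alpha}}
\;\lesssim\; \frac{(\log s)^{\alpha-1}}{(1+v)(\log\log s)^{\alpha}}\,\epsilon_L^2.
\end{equation}

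The final step is to compare this weight with the boundary control furnished by \eqref{controlofpsiLL}. The smallest of the three boundary weights there is the one on $|\pa_u\psi_L^{I_2}|^2$, namely $(1+v)^{-1}\log s(\log\log s)^{\alpha}$; the ratio of our weight to this one is $(\log s)^{\alpha-2}(\log\log s)^{-2\alpha}$, which is bounded (indeed $o(1)$) for $\alpha<2$ and $s\ge s_0$ large, so the hypothesis $\alpha<3/2$ is amply sufficient. The weights on $|\pa_v\psi_L^{I_2}|^2$ and $|\nas\psi_L^{I_2}|^2$ in \eqref{controlofpsiLL} are much larger, so the comparison is easier there. Applying \eqref{controlofpsiLL} to the $|I_2|\le N_L$ factor then yields $\epsilon_L^2\cdot \epsilon_L^2=\epsilon_L^4$, completing the proof.

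The one step requiring care is the bookkeeping of the commutator between $Z^I$ and multiplication by $r^{-1}$ (to pass between $Z^K\psi_L$ and $\psi^K_L$), but these commutators only produce lower-order terms with the same $(1+v)^{-1}$ gain, and are handled inductively in $|I|$. The only genuine quantitative input is the condition $\alpha<2$ (which is weaker than $\alpha<3/2$ used elsewhere), so no obstacle arises beyond verifying that $t_0$ is large enough to absorb the constant factors in the weight comparison.
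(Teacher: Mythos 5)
Your argument is correct and follows essentially the same route as the paper: expand by Leibniz, estimate the low-order factor in $L^\infty$ along $\Gamma^L$ via \eqref{pwleft} with $|u|\sim s^{1/2}$, and absorb the top-order factor into the bootstrap-controlled boundary integrals \eqref{controlofpsiLL}, \eqref{BIbds}. The only difference is bookkeeping: the paper splits the top-order derivative into its $n$, $\evm$, $\nas$ components and first proves the tailored bound \eqref{leftvfpwdecay} (this is where $\alpha<3/2$ enters there) before invoking the same boundary controls, whereas you retain the full pointwise decay and compare a single uniform weight against the weakest boundary weight (the $\pa_u$ one), which is an equally valid, in fact slightly sharper, way to organize the same estimate.
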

\begin{proof}

	We first claim that under our hypotheses and by our choice of the field $X_L$
    we have
	\begin{equation}
	 (1+v)^{-1} |X^\ell_L| |\pa Z^K \psi_L|\lesssim \epsilon_L |X^n_L|,
	 \qquad |K| \leq N_L/2+1.
	 \label{leftvfpwdecay}
	\end{equation}
   Indeed, by the pointwise estimates from Lemma \ref{basicpwdecay},
  we have
  \begin{align}
   \frac{|X^\ell|}{1+v} |\pa Z^K \psi_L|
	 &\lesssim \epsilon_L\frac{|X_L^\ell|}{1+v} \frac{1}{(1+|u|)^{1/2}} \frac{1}{|X^n|^{1/2}}\\
   &\lesssim
   \epsilon_L \frac{(1+s) (\log s)^\alpha}{(1+s)^{1/2} (\log s)^{1/2}
   (\log \log s)^{\alpha/2}}
   \\
   &\lesssim \epsilon_L\frac{(1+s)^{1/2} (\log s)^{\alpha-1/2}}{(\log \log s)^{\alpha/2}}\\
   &\lesssim
	 \epsilon_L
   (1+s)^{1/2} (\log s)(\log\log s)^{\alpha}, \qquad |K| \leq N_L/2+1,
  \end{align}
  since $\alpha < 3/2$, and this is bounded by the right-hand side of \eqref{leftvfpwdecay}.

In particular, \eqref{leftvfpwdecay} and the pointwise decay bound \eqref{pwleft} imply
  \begin{equation}
   \frac{|X^\ell_L|}{(1+v)^2} |\pa Z^K \psi_L|^2|n Z^J \psi_L|^2
 	\lesssim \epsilon_L^2 \frac{|X^n_L|}{(1+v)(1+s)^{1/2}}|n Z^J \psi_L|^2,
   \label{}
  \end{equation}
  and so bounding $|\pa q|\lesssim |\evm q| + |n q| + |\nas q|$
  and using the simpler estimates
  \begin{alignat}{2}
   \frac{|X^\ell|}{(1+v)^2} |\pa Z^K \psi_L|^2 |\pa_v Z^J\psi_L|^2
   &\lesssim \epsilon_L^2 |X^\ell| |\evm Z^J \psi|^2, &&\qquad |K| \leq N_L/2+1,
   \\
   \frac{|X_L^\ell|}{(1+v)^2} |\pa Z^K \psi_L|^2 |\nas Z^J\psi_L|^2
   &\lesssim \epsilon_L^2 \frac{|X_L^\ell|}{(1+v)(1+s)^{1/2}} |\nas Z^J \psi_L|^2,
   &&\qquad |K| \leq N_L/2+1
   \label{}
  \end{alignat}
  we therefore have
  \begin{multline}
  	\sum_{|K| \leq N_L/2+1} \sum_{|J| \leq N_L}
  	\int_{t_0}^{t_1}\int_{\Gamma^L_t}
   \frac{|X_L^\ell|}{(1+v)^2} |\pa Z^K \psi_L|^2 |\pa Z^J \psi_L|^2\\
   \lesssim \epsilon_L^2
   \sum_{|J| \leq N_L}\int_{t_0}^{t_1}\int_{\Gamma^L_t}\left(|X_L^\ell||\pa_v Z^J\psi_L|^2 +
       \frac{|X_L^n|}{(1+v)(1+s)^{1/2}}
   |n Z^J \psi_L|^2 + \frac{|X_L^\ell|}{(1+v)(1+s)^{1/2}} |\nas Z^J\psi_L|^2\right)\\
   \lesssim
   \epsilon_L^2
   \sum_{|J| \leq N_L}\int_{t_0}^{t_1}\int_{\Gamma^L_t} |X_L^\ell| |\pa_v Z^J\psi_L|^2\, dS dt
   + \epsilon_L^4.
   \label{nonlinframebound0}
  \end{multline}
  which gives \eqref{nonlinframebound} after bounding
	$(1+v)|Z^I (1+v)^{-1} Q(\pa \psi_L, \pa \psi_L)|
	\lesssim  \sum_{|J| \leq |I|}\sum_{|K| \leq |I|/2+1} |\pa Z^J\psi_L||\pa Z^K\psi_L|$
    and using the bootstrap assumption \eqref{BIbds}.
\end{proof}

The first step in the proof of Proposition \ref{lefttocenterprop} is to commute $\evm$ with $Z^I$
and write the result in terms of the nonlinear boundary operator $Y_L^-$
(recall the definition \eqref{YLdef1}).
\begin{lemma}
	\label{linearcommjumpL}
    Under the hypotheses of Proposition \ref{lefttocenterprop}, we have
\begin{multline}
    \int_{t_0}^{t_1} \int_{\Gamma^L_t} v f(v) |\pa_v \psi_L^I|^2\,dS
         dt
	\lesssim \int_{t_0}^{t_1} \int_{\Gamma^L_t} vf(v)
	|Z^I Y_L^-\psi_L|^2\, dS dt
    + (c_0(\epsilon_0) +\epsilon_L^2) \epsilon_L^2
%    \sum_{|J| \leq |I|}\Upsilon_{J, L}^- (t_1)
    \\ + \sum_{|K| \leq |I|-1} \int_{t_0}^{t_1}\int_{\Gamma^L_t} vf(v) |\pa_v \psi_L^J|^2\, dSdt.
%    +
%	(c_0(\epsilon_0) + \epsilon_L^2)\epsilon_L^2.
	\label{linearcommjumpLbd}
\end{multline}
\end{lemma}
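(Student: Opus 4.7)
The strategy is to use the linearized identity
$\pa_v \psi_L = Y_L^-\psi_L - v^{-1} Q_L(\pa \psi_L, \pa \psi_L)$ at the level of the commuted potential $\psi_L^I = r Z^I \phi_L$, and to identify the error made in interchanging $\pa_v$ with $Z^I$. First I will observe that
\begin{equation}
\pa_v \psi_L^I \;=\; Z^I \pa_v\psi_L \;+\; \mathcal{R}_I,
\qquad \mathcal{R}_I := r[\pa_v, Z^I]\phi_L - [Z^I, r]\pa_v\phi_L,
\end{equation}
as follows from expanding $\pa_v(rZ^I\phi_L)$ and $Z^I\pa_v(r\phi_L)$ separately using $\pa_v r = 1/2$ and comparing. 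Substituting the boundary relation, I then rewrite
\begin{equation}
\pa_v \psi_L^I \;=\; Z^I Y_L^- \psi_L \;-\; Z^I\!\left(\tfrac{1}{v} Q_L(\pa\psi_L,\pa\psi_L)\right) \;+\; \mathcal{R}_I,
\end{equation}
square, and integrate against the weight $v f(v)$ over the shock.

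The commutator remainder $\mathcal{R}_I$ is genuinely of order $|I|-1$: the Minkowski fields satisfy $[\pa_v, \pa_\alpha]=0$, $[\pa_v,\Omega_{ij}]=0$, and the only nontrivial commutators $[\pa_v, \Omega_{0i}]$ and $[\pa_v, S]$ produce fields in $\mathcal{Z}_m \cup\{\pa_v\}$, so $[\pa_v, Z^I]$ is a sum of products of at most $|I|$ Minkowski fields involving at most $|I|-1$ actual derivatives and always producing a $\pa_v$ at the innermost position. Similarly $[Z^I, r]$ involves $Z^K r$ with $|K|\leq |I|$ applied to $Z^J\phi_L$ with $|J|\leq |I|-1$, and the bound $|Z^K r|\lesssim r$ together with $r\phi_L^{(J)} = \psi_L^J$ reduces the $r\times Z^J\pa_v\phi_L$ and $(Z^K r)Z^J\phi_L$ terms to a sum of $|\pa_v \psi_L^J|$ with $|J|\leq |I|-1$, up to lower-order zeroth-order terms which are absorbed using Hardy inequalities (Lemma \ref{homoglem}) into $c_0(\epsilon_0)\epsilon_L^2$ after integration with the weight $vf(v)$ along $\Gamma^L_t$. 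This produces the inductive term $\sum_{|K|\leq|I|-1}\int v f(v) |\pa_v\psi_L^K|^2$.

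The nonlinear contribution is handled by Lemma \ref{generalproduct}: expanding $Z^I(v^{-1}Q_L(\pa\psi_L,\pa\psi_L))$ by the Leibniz rule, every term is of the form $v^{-1} \widetilde{Q}(\pa Z^{I_1}\psi_L, \pa Z^{I_2}\psi_L)$ with $|I_1|+|I_2|\leq |I|$ (the derivative of $v^{-1}$ produces decay that can only help), and \eqref{nonlinframebound} bounds the integral of the square against $vf(v)$ by $\epsilon_L^4$, which is absorbed into $(c_0(\epsilon_0)+\epsilon_L^2)\epsilon_L^2$. The one subtle point, and the main obstacle, is ensuring that the lower-order bulk and boundary terms generated by $\mathcal{R}_I$ and by the commutators $[Z^I,r]$ are controllable purely in terms of $\sum_{|K|\leq|I|-1} B^L_K(t_1)$ rather than a mix of $\pa_u$, $\pa_v$, and $\nas$ derivatives; for this one uses the full energy $\mathcal{E}^L_{N^L}(t)$ via the bootstrap assumption \eqref{leftboot} and the weaker tangential bounds \eqref{controlofpsiLL} to dominate the non-$\pa_v$ pieces by $c_0(\epsilon_0)\epsilon_L^2$, leaving only the clean $\pa_v$ contributions on the right-hand side. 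Assembling everything and applying Cauchy–Schwarz yields the claimed inequality.
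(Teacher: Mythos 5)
You follow the same route as the paper: commute the outgoing derivative with $Z^I$, substitute the boundary operator $Y_L^-$, control the quadratic term with Lemma \ref{generalproduct}, put the lower-order $\pa_v Z^J\psi_L$ contributions into the inductive sum, and absorb what remains into $(c_0(\epsilon_0)+\epsilon_L^2)\epsilon_L^2$. The gap is in the commutator step, which is where the lemma actually lives. Your structural claim that $[\pa_v,Z^I]$ ``always produces a $\pa_v$ at the innermost position'' is false: $[\pa_v,\Omega_{0i}]=\omega_i\pa_v-\tfrac{1}{2}\tfrac{u}{r^2}\omega^j\Omega_{ij}$, so the commutator genuinely produces angular (and, upon iteration, $\pa_u$) derivatives of $\psi_L$ along $\Gamma^L$, not only lower-order $\pa_v$'s. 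You do concede at the end that a mix of $\pa_u$, $\pa_v$, $\nas$ derivatives appears and assert it is dominated by $c_0(\epsilon_0)\epsilon_L^2$ via \eqref{leftboot} and \eqref{controlofpsiLL}, but that assertion is exactly the nontrivial point and is not justified as stated: \eqref{controlofpsiLL} controls $|\pa_u\psi_L^J|^2$ on $\Gamma^L$ only with weight $\tfrac{1}{1+v}\log(1+s)(\log\log(1+s))^\alpha$ and $|\nas\psi_L^J|^2$ only with weight $(1+s)^{1/2}\log(1+s)(\log\log(1+s))^\alpha$, both of which are smaller than the multiplier $vf(v)\sim v\log v(\log\log v)^\alpha$ by powers of $v$. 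If the commutator produced these derivatives with $O(1)$ coefficients, the claimed inequality would simply be false.

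What saves the estimate, and what your argument omits, is the weighted structure of $[\evm,Z^I]$ recorded in Lemma \ref{nullcommsm}: by \eqref{ellZcomm} the angular pieces carry a factor $\tfrac{1+|u|}{1+v}$ and the $\pa_u$ pieces a factor $\tfrac{(1+|u|)^2}{(1+v)^2}$. Along $\Gamma^L$, where $|u|\lesssim s^{1/2}$ and $v\gtrsim 1/\epsilon_0$, these are $\lesssim (1+v)^{-3/4}$ and $\lesssim (1+v)^{-3/2}$ respectively, and then the elementary weight comparisons $\tfrac{vf(v)}{(1+v)^{3}}\lesssim c_0(\epsilon_0)\tfrac{1}{1+v}\log(1+s)(\log\log(1+s))^\alpha$ and $\tfrac{vf(v)}{(1+v)^{3/2}}\lesssim c_0(\epsilon_0)(1+s)^{1/2}\log(1+s)(\log\log(1+s))^\alpha$ (the paper's \eqref{ntermbdez}--\eqref{angletermbdez}) are precisely what allow \eqref{controlofpsiLL} to absorb these terms with a small constant. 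Supplying this computation, and correcting the structural claim about the commutators, closes the gap; the rest of your outline, including the handling of the $rZ^I\phi_L$ versus $Z^I(r\phi_L)$ discrepancy and the use of Lemma \ref{generalproduct} for the quadratic term, matches the paper's argument.
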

\begin{proof}
 We just prove the result with $\psi_L^I = r Z^I \phi_L$ replaced with
 $Z^I (r\phi_L)$, the difference being straightforward to handle using
 arguments we have by now used many times.

Recalling the definition of $Y_L^-$ from
\eqref{YLdef1}, along $\Gamma^L$ we have
\begin{align}
 |\evm Z^I \psi_L| &\lesssim |Z^I\evm\psi_L| + |[Z^I, \evm] \psi_L|\\
&\lesssim
|Z^IY_L^-\psi_L| + |Z^I \left((1+v)^{-1} Q(\pa \psi_L, \pa \psi_L)\right)|+ |[Z^I, \evm] \psi_L| \\
&\lesssim
|Z^IY^L_-\psi_L|+ |Z^I \left((1+v)^{-1} Q(\pa \psi_L, \pa \psi_L)\right)|\\
&+ \sum_{|J| \leq |I|-1} |\evm Z^J \psi_L| + \frac{1}{(1+v)^{3/2}} |n Z^J \psi_L|
+ \frac{1}{(1+v)^{3/4}} |\nas Z^J \psi_L|
 \label{boundaryerrorgoalleftminusone}
 % \\
 % &\equiv
 % |Z^IP^L_-(\psi_L)| + R_1^I.
 % \label{boundaryerrorgoalleft0}
\end{align}
where we used \eqref{ellZcomm} and the fact that
$|u| \lesssim s^{1/2}$ along $\Gamma^L$ to bound $|[Z^I, \evm]\psi_L|$. Using Lemma
\ref{generalproduct} to control the
quadratic term here, it remains only to handle the contribution from
the last line of \eqref{boundaryerrorgoalleftminusone}.

The contribution from the first term there is bounded by the last term in 
\eqref{linearcommjumpLbd}.
%For the first term we just recall that by definition
%\begin{align}
% \int_{t_0}^{t_1} \int_{\Gamma^L_t} |X^\ell| |\evm Z^J \psi_L|^2\, dSdt
% &= \Upsilon_{J, L}^-(t_1),
%\end{align}
For the other two terms,
we note that along $\Gamma^L$, we have
\begin{align}
 \frac{1}{(1+v)^3} vf(v) &
 % \lesssim
 % c_0(\epsilon_0) \frac{1}{(1+v)(1+s)^{1/2}} |X^n|
 \lesssim c_0(\epsilon_0)\frac{1}{1+v} \log (1+s) (\log \log (1+s))^\alpha,
 \label{ntermbdez}\\
 \frac{1}{(1+v)^{3/2}}vf(v) &\lesssim c_0(\epsilon_0)(1+s)^{1/2}\log (1+s) (\log \log (1+s))^\alpha,
 \label{angletermbdez}
\end{align}
for a continuous function $c_0(\epsilon_0)$ with $c_0(0) = 0$,
where recall $v \gtrsim \frac{1}{\epsilon_0}$ along $\Gamma^L$.
As a result, using \eqref{controlofpsiLL} for $|J| \leq N_L$ we have the bounds
\begin{multline}
 \int_{t_0}^{t_1} \int_{\Gamma^L_t} vf(v)\frac{1}{(1+v)^3} |nZ^J \psi_L|^2\, dSdt\\
 \lesssim
 c_0(\epsilon_0) \int_{t_0}^{t_1} \int_{\Gamma^L_t} \frac{1}{1+v} \log (1+s) (\log \log (1+s))^\alpha |nZ^J \psi_L|^2\, dSdt
 \lesssim c_0(\epsilon_0) \epsilon_L^2,
\end{multline}
and
\begin{multline}
  \int_{t_0}^{t_1} \int_{\Gamma^L_t} vf(v) \frac{1}{(1+v)^{3/2}} |\nas Z^J \psi_L|^2\, dSdt
	\\
	\lesssim
	 c_0(\epsilon_0)\int_{t_0}^{t_1} \int_{\Gamma^L_t} (1+s)^{1/2}\log (1+s) (\log \log (1+s))^\alpha |\nas Z^J \psi_L|^2
	\lesssim  c_0(\epsilon_0) \epsilon_L^2,
 \label{ezbdcons}
\end{multline}
as needed.

\end{proof}

We now want to handle the nonlinear boundary operator $Y_L^-$ appearing on the right-hand side of
\eqref{linearcommjumpLbd}. For this we use Lemma \ref{minkdecomplemma} to replace
the fields $Z^I$ with the tangential fields $Z_T^I$. This generates
error terms involving the function $B^L$ which defines the boundary and which
are bounded using our assumptions on the geometry of the shocks.
\begin{lemma}
		\label{transferlemL}
		With $B^L_K$ defined as in \eqref{BIdef2}, under
		the hypotheses of Proposition \ref{lefttocenterprop}, for
		$|I| \leq N_L$ we have
		\begin{equation}
		 \int_{t_0}^{t_1} \int_{\Gamma^L_t} X^\ell
	 	|Z^I Y_L^-\psi_L|^2\, dS dt
		\lesssim
		\Upsilon_L^+(t_1)
		+
		(c_0(\epsilon_0) +\epsilon_L^2)\epsilon_L^2
%        \sum_{|J| \leq |I|}\Upsilon_{J, L}^- (t_1)
        + \sum_{|K| \leq |I|-1} B^L_{K}(t_1) + c_0(\epsilon_0),
		 \label{transferlemLbd}
		\end{equation}
		for a continuous function $c_0$ with $c_0(0) = 0$.
\end{lemma}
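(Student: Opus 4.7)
The plan is to use the boundary condition \eqref{introbcL}, which says that along $\Gamma^L$ one has $Y_L^-\psi_L = Y_L^+\psi_C + G$, to write
\begin{equation}
Z^I Y_L^-\psi_L = Z^I Y_L^+\psi_C + Z^I G,
\end{equation}
and then to replace the (transverse) field product $Z^I$ acting on $Y_L^+\psi_C$ by the tangential product $Z_T^I$, which is exactly what appears in $\Upsilon_{I,L}^+$. The contribution from $Z^I G$ should be bounded by an essentially cubic term $(c_0(\epsilon_0)+\epsilon_L^2)\epsilon_L^2+c_0(\epsilon_0)$ using that $G$ consists of quadratic terms verifying a null condition, cubic-or-higher nonlinearities, and rapidly-decaying inhomogeneous terms, combined with the boundary bounds \eqref{controlofpsiLL} and \eqref{leftbdytrivialbds}; this is a direct analogue of Lemma \ref{generalproduct}, exploiting that the weight $X^\ell\sim vf(v)$ is consistent with the cubic bound \eqref{leftvfpwdecay} and that null terms lose at least one power of $v$.

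Next I would apply Lemma \ref{minkdecomplemma} with $q=Y_L^+\psi_C$, using the geometric bound $G^L_{N_L}(t_1)\le M$ from \eqref{leftgeom} to estimate $|B^L|_{I,\mathcal{Z}_m}$ and justify the hypotheses. This produces the main term $Z_T^I Y_L^+\psi_C$, whose weighted $L^2$ norm over $\cup \Gamma^L_t$ is exactly $\Upsilon_{I,L}^+(t_1)$, together with two kinds of error terms: $(1+|u|)|Z^J n(Y_L^+\psi_C)|$ and $(1+|u|)^{-1}|Z^J(Y_L^+\psi_C)|$ for $|J|\leq |I|-1$ (plus similar lower-order contributions with the factor $|B^L|_{I,\mathcal{Z}_m}$, controlled by the pointwise part of $G^L_{N_L}$).

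The second family of errors is handled inductively: along $\Gamma^L$ the boundary condition gives $Z^J Y_L^+\psi_C = Z^J Y_L^-\psi_L - Z^J G$, so after commuting $\evm$ through $Z^J$ (as in the proof of Lemma \ref{linearcommjumpL}, generating the lower-order $B^L_K$ terms) the $(1+|u|)^{-1}$-weighted contribution reduces to $\sum_{|K|\le |I|-1} B^L_K(t_1)$ plus a cubic contribution from $Z^J G$. The first family is the main obstacle: since $|u|\sim s^{1/2}$ on $\Gamma^L$, the weight $X^\ell(1+|u|)^2\sim vf(v)\,s$ is comparable to the top-order weight in $D^C$, so one cannot afford to lose any derivatives. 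The resolution is to use the wave equation \eqref{waveint2} in the central region, which expresses $n\evmB\psi_C$ (the leading piece of $n(Y_L^+\psi_C)$) in terms of $\sDelta\psi_C$, lower-order linear terms, and quadratic nonlinearities; then $Z^J n(Y_L^+\psi_C)$ is bounded by angular derivatives of $Z_{\mB}^K\psi_C$ for $|K|\leq |I|$, for which the central energy $\mathcal{E}^C_{N_C,T}$ together with \eqref{rightangularbound} and the trace estimates in Lemma \ref{bdsforpsiCalongshock} provide a total bound $\lesssim \epsilon_C^2$; the parameter choice \eqref{epsparameters} ensures $\epsilon_C^2\lesssim \epsilon_L^4$, which is absorbed into the $(c_0(\epsilon_0)+\epsilon_L^2)\epsilon_L^2$ term.

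Putting these three contributions together and taking $\epsilon_0$ small enough to absorb the constants coming from Lemma \ref{minkdecomplemma} yields \eqref{transferlemLbd}. The delicate point throughout is derivative counting: the factor $(1+|u|)^{1/2}$ in the wave-equation conversion costs half a power of $s$, which is exactly compensated by the gain $(1+v)^{-1}$ in the angular-derivative bound on $\sDelta \psi_C$, leaving the central-region energy bound available; any sharper weight on $X^\ell$ would break this balance, which is another reason for the choice \eqref{introXLdef}.
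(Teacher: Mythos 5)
There is a genuine gap at the very first step. You begin by applying the full product of Minkowski fields $Z^I$ to the boundary identity $Y_L^-\psi_L = Y_L^+\psi_C + G$ to write $Z^I Y_L^-\psi_L = Z^I Y_L^+\psi_C + Z^I G$. This is not a valid consequence of \eqref{introbcL}: the jump condition holds only \emph{on} the hypersurface $\Gamma^L$, the fields $Z\in\mathcal{Z}_m$ are transverse to $\Gamma^L$, and $\psi_L$, $\psi_C$ are defined only on opposite sides of the shock, so the transverse derivatives of the two sides are simply unrelated. Only derivatives tangent to $\Gamma^L$ may be applied to the boundary condition, which is exactly why the paper's proof first invokes Lemma \ref{minkdecomplemma} on $Z^I Y_L^-\psi_L$ itself, producing the tangential main term $Z_T^I Y_L^-\psi_L$ plus error terms of the form $(1+|u|)|Z^J n Y_L^-\psi_L|$ and $(1+|u|)^{-1}|Z^J Y_L^-\psi_L|$ that involve \emph{only the left-region potential} $\psi_L$; only then is the boundary condition used, in the purely tangential form $Z_T^I Y_L^-\psi_L = Z_T^I Y_L^+\psi_C + Z_T^I G$, yielding $\Upsilon_{I,L}^+$ and the $G$-error \eqref{errorbdytermleft} (which in turn needs the specific bounds \eqref{GLL0}--\eqref{GLL1}). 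In the paper the decomposition errors are closed using the exterior wave equation \eqref{waveext2} for $\psi_L$ together with the left-shock trace bounds \eqref{controlofpsiLL} (and induction in $|I|$ giving the $\sum_{|K|\le|I|-1}B_K^L$ term); none of that appears in your outline because you have, invalidly, eliminated $\psi_L$ at the outset.

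Because of the inverted order of operations, all of your decomposition errors land on $\psi_C$ with the heavy weight $X^\ell(1+|u|)^2\sim v f(v)\,s$, and your claim that these are bounded by $\epsilon_C^2$ "from the central energy, \eqref{rightangularbound} and Lemma \ref{bdsforpsiCalongshock}" is not justified as stated: the trace estimates \eqref{leftbdytrivialbds} on $\Gamma^L$ do not carry the $vf(v)$ weight, and the whole reason the paper isolates $Z_T^I Y_L^+\psi_C$ into $\Upsilon_{I,L}^+$ and proves Proposition \ref{UpsilonLplusbd} by integrating across $D^C$ to the right shock is precisely that $\psi_C$-quantities with this weight cannot be read off directly from the boundary terms in the central energies. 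Even if the linear piece could be salvaged through the equation for $n\evmB\psi_C$ and the $(1+v)^{-2}$ gain from $\sDelta$, the argument as proposed does not establish \eqref{transferlemLbd}; the correct route is the paper's: tangential decomposition on $Y_L^-\psi_L$ first, boundary condition second.
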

\begin{proof}
 We first use Lemma \ref{minkdecomplemma} to convert
 the fields $Z$ into tangential fields $Z_T$,
 \begin{multline}
  |Z^I Y_L^-\psi_L|
	\lesssim
	|Z_T^I Y_L^-\psi_L|
	+ C(M)(1 +|u|)
	\sum_{|J| \leq |I|-1}
	\left(
	|Z^J n Y_L^-\psi_L| + (1+|u|)^{-1} |Z^J Y_L^- \psi_L|\right)\\
	+ C(M)|B|_{I, \mathcal{Z}_{m}}
	(1+|u|)\sum_{|K| \leq |I|/2+1}
	\left(|Z^K n Y_L^- \psi_L| + (1+|u|)^{-1}|Z^K Y_L^- \psi_L|\right).
 \end{multline}
 Since the fields $Z_T$ are tangent to the shock,
 by the boundary condition \eqref{introbcL} we have
 $$Z_T^I Y_L^-\psi_L = Z_T^I Y_L^+\psi_C + Z_T^IG,\qquad \text{ at } \Gamma^L$$ and so recalling
 the definition of $\Upsilon_L^+$ from \eqref{Upsilondef}, to
 conclude it is enough to prove that for $|I| \leq N_L$ we have
 the following estimates,
\begin{equation}
  \int_{t_0}^{t_1} \int_{\Gamma^L_t} X^\ell(1+|u|)^2|Z^J n Y_L^-\psi_L|^2\, dSdt
  \lesssim 
    \sum_{|K| \leq |I|-1} B^L_K(t_1)+
     (c_0(\epsilon_0) + \epsilon_L^2)\epsilon_L^2 \qquad |J| \leq |I|-1
  \label{bdytermleft1}
\end{equation}
\begin{equation}
 \int_{t_0}^{t_1} \int_{\Gamma^L_t} X^\ell|Z^J Y_L^-\psi_L|^2\, dSdt
 \lesssim
    \sum_{|K| \leq |I|-1} B^L_K(t_1)+
    (c_0(\epsilon_0) + \epsilon_L^2)\epsilon_L^2 \qquad |J| \leq |I|-1
 \label{bdytermleft2}
\end{equation}
\begin{multline}
 \int_{t_0}^{t_1} \int_{\Gamma^L_t} X^\ell
 |B^L|_{I, \mathcal{Z}_{m}}^2
 \left((1+|u|)^2|Z^K n Y_L^- \psi_L|^2 + |Z^K Y_L^- \psi_L|^2\right)
 \\ \lesssim 
    \sum_{|K| \leq |I|-1} B^L_K(t_1)+
     (c_0(\epsilon_0) + \epsilon_L^2)\epsilon_L^2
 \qquad |K| \leq |I|/2+1,
 \label{bdytermleft3}
\end{multline}
\begin{align}
 \int_{t_0}^{t_1} \int_{\Gamma^L_t} X^\ell |Z_T^I G|^2\, dS dt
 &\lesssim  
 \sum_{|K| \leq |I|-1} B^L_K(t_1)+
 (c_0(\epsilon_0) + \epsilon_L^2)\epsilon_L^2 + c_0(\epsilon_0),
 \label{errorbdytermleft}
\end{align}
where the implicit constants depend on $M$.

To prove \eqref{bdytermleft1}, we recall the definition of $Y_L^-$ from \eqref{YLdef1}
 and use that $(1+|u|) |\pa q| \lesssim \sum_{Z \in \mathcal{Z}_m}|Z q|$, which gives
	\begin{equation}
		(1+|u|)|Z^Jn Y_L^- \psi_L|
		\lesssim
		(1+|u|) |Z^J n \evm \psi_L| + \sum_{|J'| \leq |J| +1}
		 |Z^{J'} \left((1+v)^{-1} Q(\pa \psi_L, \pa \psi_L)\right)|.
	 \label{}
	\end{equation}
	By Lemma \ref{generalproduct}, for $|J| \leq N_L-1$,
	the contribution from the nonlinear term here
	is bounded by the right-hand side of \eqref{transferlemLbd}. For the first
	term we use the equation \eqref{waveext2} for $\psi_L$ and bound
	\begin{equation}
	 (1+|u|) |Z^J n \evm \psi_L|
	 \lesssim (1+|u|) |Z^J \sDelta \psi_L| + \sum_{|J'| \leq |J|+1}
	 |Z^{J'} \left((1+v)^{-1} Q(\pa \psi_L, \pa \psi_L)\right)|
	 + (1+|u|)|Z^J F'|,
	 \label{useeqnbdy0}
	\end{equation}
	where we again used that $(1+|u) |\pa q| \lesssim \sum_{Z \in \mathcal{Z}_m} |Zq|$.
	Just as above, the contribution from the quadratic term here
	is bounded by the right-hand side of \eqref{transferlemLbd}. The contribution
	from $F'$ is simpler to deal with so we skip it.
To handle the first term on the right-hand side of \eqref{useeqnbdy0},
we write $\sDelta = \frac{1}{r^2} \Omega^2$ and bound
\begin{multline}
(1+|u|)|Z^J \sDelta \psi_L|
\lesssim
\sum_{|J'| \leq |J|+2} \frac{1+|u|}{(1+v)^2}| Z^{J'}\psi_L|\\
\lesssim
\sum_{|J'| \leq |J|+1} \frac{1+|u|}{1+v}
\left(|\evm Z^{J'}\psi_L| + |\nas Z^{J'}\psi_L|\right)
+ \frac{(1+|u|)^2}{(1+v)^2}|n Z^{J'}\psi_L|.
\label{dealingwithsDelta0}
\end{multline}
 Now we bound
 $(1+|u|)(1+v)^{-1} \lesssim (1+v)^{-3/4}$ and $(1+|u|)^2(1+v)^{-2} \lesssim
(1+v)^{-3/2}$.
We now recall that $|J| \leq |I|-1$ and argue as in \eqref{ntermbdez}-\eqref{ezbdcons} to get
\eqref{bdytermleft1} after additionally bounding $(1+|u|)(1+v)^{-1} \lesssim c_0(\ve_0)$
to handle the first term here.

To prove \eqref{bdytermleft2}, we just note that
 $\sum_{|K| \leq |I|-1}B^L_K(t_1)$ appears on the
right-hand side and use the product estimate
from Lemma \ref{generalproduct}, the definition of $Y_L^-$,
along with the argument of Lemma \ref{linearcommjumpL} applied in reverse to deal with $[\ell^m,Z^k]$ (see \eqref{boundaryerrorgoalleftminusone}).  

To prove \eqref{bdytermleft3}, we bound
\begin{multline}
 \int_{t_0}^{t_1} \int_{\Gamma^L_t} X^\ell
 |B|_{I, \mathcal{Z}_{m}}^2
 \left((1+|u|)^2|Z^K n Y_L^- \psi_L|^2 + |Z^K Y_L^- \psi_L|^2\right)\, dS dt
 \\
 \lesssim
 \left(
 \int_{t_0}^{t_1} \sup_{\Gamma^L_t}|X^\ell|
 \left[(1+|u|)^2 |Z^K n Y_-^L\psi_L|^2 + |Z^KY^L_-\psi_L|^2\, dt\right]\right)
 \left( \sup_{t_0 \leq t\leq t_1} \int_{\Gamma^L_t} |B|_{I, \mathcal{Z}_m}^2\,dS\right)\\
 \lesssim C(M)\int_{t_0}^{t_1} \sup_{\Gamma^L_t}|X^\ell|
 \left[(1+|u|)^2 |Z^K n Y_-^L\psi_L|^2 + |Z^KY^L_-\psi_L|^2\, dt\right].
 \label{highnormbL1}
\end{multline}
To handle this last term, we just use Sobolev embedding on $\Gamma^L_t \sim \mathbb{S}^2$
and the bounds \eqref{bdytermleft1}-\eqref{bdytermleft2} that we just proved. Specifically, we use
that the fields $\Omega_T$ span the tangent space to $\Gamma^L_t$ at each
point and bound
$\sup_{\Gamma^L_t} |q| \lesssim \sum_{|R| \leq 2} \|\Omega_T^R q\|_{L^2(\Gamma^L_t)}$
and using \eqref{lowordertransfer} to bound this in terms of vector fields
applied to $q$ gives $\sup_{\Gamma^L_t} |q| \lesssim C(M) \sum_{|R| \leq 2} |Z^R q|$.
Applying this with $q = Z^K n Y_-^L\psi_L$ and $q = Z^K Y_-^L\psi_L$
where $|K| \leq N_L/2+1$ and applying
\eqref{bdytermleft1}-\eqref{bdytermleft2} gives the result.

It remains to prove the bound \eqref{errorbdytermleft} for the remainder
 term $G$, which is given explicitly in \eqref{Gexpression}. The terms in
 \eqref{alternateGexpression} and the last term in
 \eqref{Gexpression} are all straightforward to handle using
 similar arguments to the ones we have encountered many times by now. Note that
 the term $c_0(\epsilon_0)$ on the right-hand side of \eqref{errorbdytermleft}
is needed to control the quantities involving $\Sigma$ in \eqref{alternateGexpression}.
 We just
 show how to deal with the second term in \eqref{Gexpression}, and we will
 prove
 \begin{equation}
  \int_{t_0}^{t_1} \int_{\Gamma^L_t} X^\ell \left| Z_T^I \frac{s}{u} [\nas \psi]^2\right|\, dS dt
	\lesssim
%	 (c_0(\epsilon_0) +\epsilon_L^2) \sum_{|J| \leq |I|}\Upsilon_{J, L}^- (t_1)
%  + 
  \sum_{|K| \leq |I|-1} B^L_K(t_1)+
 (c_0(\epsilon_0) + \epsilon_L^2)\epsilon_L^2.
  \label{}
 \end{equation}
 Arguing as above to replace $Z_T$ with $Z$, bounding
 $\frac{s}{u} \lesssim s^{1/2}$ and performing straightforward
 estimates, the main ingredients needed for this are the bounds
 \begin{align}
  \sum_{|J| \leq |I|}
	\sum_{|K| \leq |I|/2+1}
	\int_{t_0}^{t_1} \int_{\Gamma^L_t} (1+s)X^\ell
	\left( |\nas \psi^J_L|^2 |\nas \psi^K_L|^2\right)\, dS dt
	&\lesssim \epsilon_L^2,\label{GLL0}\\
	\sum_{|J| \leq |I|}
	\sum_{|K| \leq |I|/2+1}
	\int_{t_0}^{t_1} \int_{\Gamma^L_t} (1+s)X^\ell
	\left( |\nas Z^J \psi_C|^2 |\nas Z^K \psi_C|^2\right)\, dS dt
	&\lesssim \epsilon_L^2,
  \label{GLL1}
 \end{align}
 which we now prove.
 For both estimates we handle the lower-order terms by bounding
 $|\nas q|\lesssim (1+v)^{-1} |\Omega q|$,
 \begin{equation}
  |\nas \psi^K_L|^2 \lesssim
	\frac{1}{(1+v)^2} |\Omega \psi^K_L|^2,
	\qquad
	|\nas Z^K \psi_C|^2 \lesssim
	\frac{1}{(1+v)^2} |\Omega Z^K \psi_C|^2.
  \label{psiCK}
 \end{equation}
 By the Poincare inequality
 \eqref{sobolevpoinL} from Lemma \ref{sobolevshock2} and Sobolev embedding,
	for $|K| \leq |I|/2+1$ we have
	\begin{equation}
	 |\Omega \psi^K_L|^2 \lesssim \epsilon_L^2,
	 \label{}
	\end{equation}
	at the shock,
and the left-hand side of \eqref{GLL0} is then bounded by
 \begin{align}
  \sum_{|J| \leq |I|}
	\sum_{|K| \leq |I|/2+1}
	\int_{t_0}^{t_1} \int_{\Gamma^L_t} (1+s)X^\ell&
	\left( |\nas \psi^J_L|^2 |\nas \psi^K_L|^2\right)\, dS dt\\
	&\lesssim\epsilon_L^2
	\sum_{|J| \leq |I|}
	\int_{t_0}^{t_1} \int_{\Gamma^L_t} \frac{(1+s)X^\ell}{(1+v)^{2}} |\nas \psi^J_L|^2 \, dS dt
	\\
	&\lesssim
	\epsilon_L^2
	\sum_{|J| \leq |I|}
	\int_{t_0}^{t_1} \int_{\Gamma^L_t} \frac{(1+s)^2 (\log s)^\alpha}{1+v} |\nas \psi^J_L|^2 \, dS dt
	\lesssim \epsilon_L^4,
  \label{}
 \end{align}
 by the bound \eqref{controlofpsiLL} for the energies
 on the timelike side of the left shock, since $|I| \leq N_L$.

 For \eqref{GLL1}, since we are below the top-order number of derivatives
 of $\psi_C$ we can in fact use \eqref{psiCK} in each factor of $\psi_C$ which
 gives
 \begin{multline}
  \sum_{|J| \leq |I|}
	\sum_{|K| \leq |I|/2+1}
	\int_{t_0}^{t_1} \int_{\Gamma^L_t} (1+s)X^\ell
	\left( |\nas Z^J \psi_C|^2 |\nas Z^K \psi_C|^2\right)\, dS dt
	\\
	\lesssim
	\sum_{|J| \leq |I|}
	\int_{t_0}^{t_1} \int_{\Gamma^L_t} \frac{(1+s)X^\ell}{(1+v)^4}
	|\Omega Z^J \psi_C|^4\, dS dt
	\lesssim \epsilon_C^4,
  \label{}
 \end{multline}
 where we used that $\frac{(1+s)X^\ell}{(1+v)^4} \lesssim \frac{s}{v^2}$,
the Hardy inequality \eqref{controlangularhardyright}, and the bound
\eqref{leftbdytrivialbds} for the energy at the right shock for $|I| \leq N_L\leq N_C-3$.
Since we have taken $\epsilon_C \leq \epsilon_L$ this gives \eqref{GLL1}.
\end{proof}

To complete the proof of Proposition \ref{lefttocenterprop},
we need to prove the bound for the quantity $\Upsilon_{I, L}^+$.

\subsubsection{Control of $\Upsilon_{I, L}^+$}
If it was not for the large
(relative to the weights we use in the central region) weight $|X^\ell|$,
it would be straightforward to control $\Upsilon_{I, L}^+$
by using the bounds for $\psi_C$
in Lemma \ref{bdsforpsiCalongshock}. Instead, we are going to handle
$\Upsilon_{I, L}^+$ by integrating to the right shock. Schematically,
this amounts to trading a factor of $|u| \sim (\log t)^{1/2}$
(the width of $D^C_t$) for a
derivative of (vector fields applied to) $Y_L^+\psi_C$, see \eqref{ftc0}.
We can afford this trade in the central region,
ultimately because
we control the vector field $ \sBo = s\pa_u\sim (\log t) \pa_u$ applied to $\psi_C$.
As a result, this trade in fact gains us a factor of
$(\log t)^{1/2}$, which is enough to close our estimates.
\begin{prop}
 \label{UpsilonLplusbd}
 Under the hypotheses of Proposition \ref{lefttocenterprop}, provided $\epsilon_0$
 is taken sufficiently small, we have
 \begin{equation}
     \sum_{|I| \leq N_L} \Upsilon_{I, L}^+(t_1) \lesssim \epsilon_C^2
  \label{UpsilonepsilonCbd}
 \end{equation}
\end{prop}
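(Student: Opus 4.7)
The strategy is to apply the fundamental theorem of calculus in the $u$-direction between the two shocks. After reducing $q := Z_T^I Y_L^+\psi_C$ to its principal linear part $\evmB \psi_C^I$ with $\psi_C^I = Z^I_{\mB}\psi_C$---using Lemma \ref{mBtangentialdifference} to replace the tangential Minkowski fields $Z_T^I$ by $\mZB$-fields modulo lower-order terms involving $B^L$ (controlled by the bootstrap bound on $G^L_{N_L}$), the commutation identities for $\sBo, \sBt$ with $\evmB$ from Section \ref{vfsection} to bring $\evmB$ outside, and product estimates in the spirit of Lemma \ref{generalproduct} to absorb the quadratic piece $\tfrac{1}{v}Q_C$ of $Y_L^+\psi_C$---the fundamental theorem in $u$ at fixed $(v, \omega)$ together with Cauchy--Schwarz on the $u$-integral (whose range has length $\sim s^{1/2}$) yields
\[
|q(B^L(v,\omega), v, \omega)|^2 \leq 2|q(B^R(v,\omega), v, \omega)|^2 + 2 s^{1/2}\!\!\int_{B^R}^{B^L}|\partial_u q|^2\,du.
\]
Integrating against $vf(v)$ over $\Gamma^L$ (with $dS \sim dv\,d\omega$ by \eqref{volumeformula}, and absorbing the discrepancy between the $(v,\omega)$-parametrizations of $\Gamma^L$ and $\Gamma^R$ at fixed $t \in [t_0, t_1]$ into initial-data errors and terms controlled by $E_{I,T}^C$) gives
\[
\Upsilon_{I,L}^+(t_1) \lesssim \int_{t_0}^{t_1}\!\!\int_{\Gamma^R_t} vf(v)|q|^2\,dS\,dt + \int_{t_0}^{t_1}\!\!\int_{D^C_t} vf(v)\,s^{1/2}\,|\partial_u q|^2\, d\hat{\mu}\, dt + \epsilon_C^2.
\]

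For the bulk integral I would invoke the wave equation \eqref{waveint2}, which gives $\partial_u \evmB \psi_C^I = \tfrac{1}{4} \sDelta \psi_C^I + (\text{commutator/nonlinear errors})$; since $|\sDelta \psi_C^I| \lesssim (1+v)^{-2}|\psi_C^{I''}|$ for some $|I''| \leq |I|+2 \leq N_C - 4$, the integrand reduces to $f(v)\,s^{1/2}\,(1+v)^{-3}|\psi_C^{I''}|^2$. Applying the Hardy-type $L^2$ bound \eqref{hardyC} for $\|\psi_C^{I''}\|_{L^2(D^C_t)}$ bounds the time integrand by $(\log t)^3 (\log\log t)^\alpha (1+t)^{-3}\epsilon_C^2$, whose $t$-integral is bounded by $\epsilon_C^2$ uniformly.

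For the $\Gamma^R$ boundary term I would adapt the arguments of Lemmas \ref{linearcommjumpL}--\ref{transferlemL} to the right shock: decompose $Z_T^I$ into a piece tangent to $\Gamma^R$ plus a transverse $\partial_u$-piece (the latter reabsorbed into the bulk argument above), and invoke the Rankine--Hugoniot relation \eqref{introbcR} to replace $Y_L^+\psi_C = Y_R^-\psi_C$ by $Y_R^+\psi_R + G$; the error term $G$ is absorbed by product estimates using the bootstrap and pointwise decay. This leaves $\int_{\Gamma^R} vf(v)\,|\partial_v \psi_R^I|^2\, dS\, dt$, which is dominated by the corresponding boundary term of $E_I^R$, since on $\Gamma^R$ its weight $r(\log r)^\nu \sim v(\log v)^\nu$ overwhelms $vf(v)$ by many powers of $\log v$ (as $\nu \geq N_C$ by \eqref{parameters} while $\alpha < 3/2$); the resulting contribution is $\lesssim \epsilon_R^2 \leq \epsilon_C^2$ by \eqref{epsparameters}. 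The main obstacle will be this last step, since $Z_T^I$ is tangent to $\Gamma^L$ but generically \emph{not} to $\Gamma^R$: its decomposition generates higher-derivative error terms of $B^R$ that must be tracked carefully, controlled by the bootstrap bound on $G^R_{N_C}$, parallel to the reduction carried out in Section \ref{pflefttocenterprop}.
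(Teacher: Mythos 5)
Your overall architecture is the paper's: you trade the $O(s^{1/2})$ width of $D^C$ for a derivative by integrating across the central region to $\Gamma^R$, use the interior wave equation to handle the transverse derivative of $\evmB\psi_C$, and close the $\Gamma^R$ boundary term via the Rankine--Hugoniot condition and the $r(\log r)^\nu$-weighted flux in $E^R_I$ (so that contribution is $\lesssim \epsilon_R^2 \leq \epsilon_C^4$). Your variant treatment of the $\sDelta$ term --- keeping $\Omega^2$ undifferentiated on $\psi_C$ and invoking \eqref{hardyC}, legitimate here since $|I|+2 \leq N_C-4$ --- works, as does the fixed-$v$ rather than fixed-$t$ slicing (the paper uses \eqref{ftc0} at fixed $t$; the difference is immaterial). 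A minor point: the conversion of the tangential fields at $\Gamma^L$ should go through Lemma \ref{minkdecomplemma} together with \eqref{ZtoZmBbd0}; Lemma \ref{mBtangentialdifference} compares $Z_{\mB,T}$ with $Z_{\mB}$, not the Minkowskian $Z_T$ with $Z_{\mB}$.

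Two steps are glossed precisely where the work sits. First, in the bulk your reduction of $|\pa_u q|$ to $(1+v)^{-2}|\psi_C^{I''}|$ keeps only the $\sDelta$ term of the equation and silently drops what happens when derivatives and commutators hit the coefficient $\tfrac{u}{vs}$ of the linear term and the quasilinear coefficients $\gamma$: these produce contributions of size $\tfrac{1}{(1+v)(1+s)}|\pa Z^{J}\psi_C|$ with $|J|\leq |I|+1$, which the paper identifies as the most dangerous terms (see \eqref{cmon000} and the estimate \eqref{maincmon000pt}); against your weight $vf(v)s^{1/2}$ they are time-integrable only because the decay energy controls $s\,|\pa Z_{\mB}^{J}\psi_C|^2$ over $D^C_t$, and verifying this is the heart of Lemma \ref{UpsilonLplusbulk}, not a routine commutator error. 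Second, at $\Gamma^R$ the transverse piece $(1+s)|Z^{J} n Y_L^+\psi_C|$ generated by the tangential decomposition cannot be ``reabsorbed into the bulk argument'': it remains a surface integral over $\Gamma^R$, and has to be handled by using the equation once more and the trace bounds \eqref{rightbdytrivialbds} for $\psi_C$ along $\Gamma^R$ (this is the bound \eqref{mainUpsilonRL} in Lemma \ref{UpsilonLplusbdy}); as written, that sentence is not a proof step. With these two points supplied, your argument coincides with the paper's Lemmas \ref{UpsilonLplusbulk}--\ref{UpsilonLplusbdy}.
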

The proof relies on the upcoming Lemmas \ref{UpsilonLplusbulk}
and \ref{UpsilonLplusbdy}.

\begin{proof}
	We start by recalling that
quantities in the definition of $\Upsilon_{I, L}^+$ involve
tangential vector fields $Z_T$. For our purposes it is simpler
if we replace these with the usual vector fields $Z$. We therefore
use Lemma \ref{minkdecomplemma} and the fact that $(1+|u|) |\pa q|
\lesssim \sum_{Z \in \mathcal{Z}_m}$ to bound
\begin{multline}
 \Upsilon_{I, L}^+
 \leq C(M)
 \sum_{|J| \leq |I|}
 \int_{t_0}^{t_1} \int_{\Gamma^L_t} |X^\ell| |Z^J Y_L^+\psi_C|^2\, dS dt
 \\
 +C(M)
 \sum_{|K| \leq |I|/2+1}
 \int_{t_0}^{t_1} \int_{\Gamma^L_t} |X^\ell| |B^L|_{I, m}|Z^K Y_L^+\psi_C|^2\, dS dt.
 \label{Upsilonbd0L}
\end{multline}

We claim that this implies the following bound,
\begin{multline}
 \Upsilon_{I, L}^+(t_1)
 \leq C(M) \sum_{|J| \leq |I|} \int_{t_0}^{t_1} \int_{D^C_t}(1+t)(1+\log t)^{3/2}(1+\log\log t)^{\alpha}
 |\pa_r Z^J Y_L^+\psi_C|^2\ dt\\
 + C(M)\sum_{|J| \leq |I|}\int_{t_0}^{t_1}
 \int_{\Gamma^R_t} (1+t)(1+\log t)(1+\log\log t)^{\alpha} |Z^J Y_L^+\psi_C|^2\, dS dt.
 \label{Upsilonclaim0}
\end{multline}
In the upcoming Lemmas \ref{UpsilonLplusbulk} and \ref{UpsilonLplusbdy},
we bound the right-hand side here by the right-hand side of \eqref{UpsilonepsilonCbd}.

The idea behind \eqref{Upsilonclaim0} is to control the quantities $q^J = Z^J Y_L^+\psi_C|_{\Gamma^L_t}$
by integrating along the ray $x/|x| = \omega$ at fixed time to the right shock,
using the bounds for $\psi_C$ in the central region to handle the interior
term this generates and the boundary condition at the right shock to handle
the boundary term this generates.

In particular, for $A = L, R$, we let $r_A(t', \omega)$ denote the value of $|x|$ of the point lying
at the intersection of the sets $\{t = t'\}$, $\{x/|x| = \omega\}$
and $\Gamma^A$. That is, $r_L$ is defined by the property that
$t - r_L(t,\omega) = B^L(t, r_L(t,\omega)\omega)$
and similarly for $r_R$.
For any function $q$ defined in $D^C$, fixing $t, \omega$ and integrating
from $r = r_L(t, \omega)$ to $r = r_R(t,\omega)$ we find
\begin{multline}
 |q(t, r_L(t,\omega)\omega)| \leq |q(t, r_R(t,\omega)\omega)|
	+ \int_{r_L(t,\omega)}^{r_R(t,\omega)} |(\pa_r q)(r')|\, dr'\\
	\leq |q(t, r_R(t,\omega)\omega)| +
	|r_L(t,\omega) - r_R(t,\omega)|^{1/2} \left( \int_{r_{L}(t,\omega)}^{r_R(t,\omega)}
	|\pa_r q|^2\, dr\right)^{1/2}\\
	\lesssim |q(t, r_R(t,\omega)\omega)|
	+ (\log t)^{1/4} \left( \int_{r_{L}(t,\omega)}^{r_R(t,\omega)}
	|\pa_r q|^2\, dr\right)^{1/2},
 \label{}
\end{multline}
and in particular we have the bound
\begin{equation}
 \int_{\Gamma^L_t} |q|^2\, dS
 \lesssim
 \int_{\mathbb{S}^2} |q(t, r_L(t,\omega)\omega)|^2\, dS(\omega)
 \lesssim
 \int_{\Gamma^R_t} |q|^2\, dS
 + (\log t)^{1/2} \int_{D^C_t} |\pa_r q|^2
 \label{ftc0}
\end{equation}
using that by \eqref{volumeformula} the surface measures $dS$ and
$dS(\omega)$ are equivalent.
Applying this to $q = q^J$ as in the above paragraph and integrating in
$t$, we find that the first term in \eqref{Upsilonbd0L} is bounded by
\begin{align}
 \int_{t_0}^{t_1} \int_{\Gamma^L_t}|X^\ell| |Z^J Y_L^+\psi_C|^2\, dS dt
 &\lesssim \int_{t_0}^{t_1}
 \int_{\Gamma^L_t} (1+t)(1+\log t)(1+\log\log t)^{\alpha} |Z^J Y_L^+\psi_C|^2\, dS dt
 \\
 &\lesssim
 \int_{t_0}^{t_1}
 \int_{\Gamma^R_t} (1+t)(1+\log t)(1+\log\log t)^{\alpha} |Z^J Y_L^+\psi_C|^2\, dS dt \label{parZJYL0}
\\
 &+ \int_{t_0}^{t_1} \int_{D^C_t}  (1+t)(1+\log t)^{3/2}(1+\log\log t)^{\alpha}
 |\pa_r Z^J Y_L^+\psi_C|^2\ dt.
 \label{parZJYL}
\end{align}

We now perform a similar manipulation to the second term in \eqref{Upsilonbd0L}.
We first bound
\begin{equation}
 \int_{t_0}^{t_1}\int_{\Gamma^L_t}
 |X^\ell| |B|_{I, m}^2 |Z^K Y_L^+ \psi_C|^2\, dS dt
 \lesssim
 \left(\sup_{t_0 \leq t \leq t_1} \int_{\Gamma^L_t} |B^L|_{I, m}^2\,dS\right)
 \left(\int_{t_0}^{t_1} \sup_{\Gamma^L_t} |X^\ell| |Z^K Y_L^+\psi_C|^2\, dt\right).
 \label{}
\end{equation}
Using Sobolev embedding on $\Gamma^L_t$ as in the proof of the last result,
 using the bound \eqref{leftgeom} for the high-order derivatives
 of $B^L$, and using \eqref{parZJYL}
 again, after returning to \eqref{Upsilonbd0L} we have the claim
 \eqref{Upsilonclaim0}.
%
% \begin{multline}
%  \Upsilon_{I, L}^+(t_1)
%  \leq C(M) \sum_{|J| \leq |I|} \int_{t_0}^{t_1} \int_{D^C_t}(1+t)(1+\log t)^{3/2}(1+\log\log t)^{\alpha}
%  |\pa_r Z^J Y_L^+\psi_C|^2\ dt\\
%  + C(M)\sum_{|J| \leq |I|}\int_{t_0}^{t_1}
%  \int_{\Gamma^R_t} (1+t)(1+\log t)(1+\log\log t)^{\alpha} |Z^J Y_L^+\psi_C|^2\, dS dt.
%  \label{}
% \end{multline}
%
% The needed bounds for $\Upsilon_{I, L}^+$ then follow directly from the upcoming
% Lemmas \ref{UpsilonLplusbulk} and \ref{UpsilonLplusbdy}.
\end{proof}

We now prove the needed lemmas. First, we get control over
certain time-integrated weighted norms of $\pa Z^J Y_L^+\psi_C$
in $D^C$. The idea
behind this estimate is that as usual
the most dangerous term is when $\pa = \pa_u$. Since
$Y_L^+ =\evmB$ up to nonlinear terms, and
since the equation \eqref{waveint0} in the central region
expresses $\pa_u \evmB \psi_C$
in terms of $\sDelta \psi_C$ and nonlinear terms, this term
can be handled.
\begin{lemma}
 \label{UpsilonLplusbulk}
 Under the hypotheses of Proposition \ref{lefttocenterprop},
 provided $\epsilon_0$ is taken sufficiently small, for
 $|J| \leq N_L\leq N_C - 4$, we have
	\begin{equation}
	 \int_{t_0}^{t_1} \int_{D^C_t}  (1+t)(1+\log t)^{3/2}(1+\log\log t)^{\alpha}
	 |\pa_r Z^J Y_L^+\psi_C|^2\ dt
	 \lesssim \epsilon_C^2.
	 \label{parZJYLlemstatement}
	\end{equation}
\end{lemma}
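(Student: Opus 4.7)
The strategy is to decompose $Y_L^+\psi_C = \evmB \psi_C + v^{-1} Q_C(\pa \psi_C,\pa\psi_C)$ and to handle the quadratic nonlinearity and the main linear term $\pa_r Z^J \evmB\psi_C$ via separate mechanisms, then to split $\pa_r = \pa_v-\pa_u$ and further treat the ``good'' tangential half and the ``bad'' transverse half by very different means. The nonlinear contribution is negligible: expanding $\pa_r Z^J[v^{-1}Q_C(\pa\psi_C,\pa\psi_C)]$ into schematic products $v^{-1}(\pa \psi_C^{J_1})(\pa^2 \psi_C^{J_2}) + v^{-2}(\pa \psi_C^{J_1})(\pa \psi_C^{J_2})$ with $|J_1|+|J_2|\leq |J|+1\leq N_C-5$, I would distribute the pointwise bounds of Lemma \ref{basicpwdecay} and the time-integrated pointwise bounds \eqref{centralint1}--\eqref{centralint2} against the $L^2$ energies, which readily yield a bound of $\epsilon_C^4$ for the corresponding weighted spacetime integral.

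For the main linear contribution, I would handle the $\pa_v$ half using the top-order energy. Writing $\pa_v = v^{-1}\sBt$ and using the commutation relation $[\sBt,\evmB]\psi = -\evmB\psi - (u/(vs^2))\pa_u\psi$ from the notation section, together with the expansion of Minkowski fields in terms of $\mathcal{Z}_{\mB}$-fields with bounded coefficients (valid in $D^C$ since $|u|\lesssim s^{1/2}$), one reduces $|\pa_v Z^J\evmB\psi_C|^2$ to $v^{-2}\sum_{|K|\leq |J|+1}|\evmB\psi_C^K|^2$ plus lower-order terms. Since the top-order energy $E^C_{I,T}$ controls $\int_{D^C_t} v|\evmB\psi_C^K|^2\lesssim \epsilon_C^2$ for $|K|\leq N_C$, this gives $\int_{D^C_t}|\pa_v Z^J\evmB\psi_C|^2\lesssim \epsilon_C^2 v^{-3}$, and integrating against the weight $v(\log v)^{3/2}(\log\log v)^\alpha\sim t(\log t)^{3/2}(\log\log t)^\alpha$ yields $\epsilon_C^2\int (\log t)^{3/2}(\log\log t)^\alpha\, t^{-2}\, dt\lesssim \epsilon_C^2$.

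The $\pa_u$ half is handled by invoking the wave equation \eqref{waveint2}, commuted with $Z^J$. Up to linear errors coming from $\gamma$ and $\gamma_a$, nonlinear errors coming from the currents $P^\mu$ in the commuted equation \eqref{higherordereqcentral}, and the inhomogeneities $F_{C,I}, F_{\Sigma,I}, F_{\mB,I}$, one has $\pa_u\evmB Z^J\psi_C = \tfrac14 \sDelta Z^J\psi_C + (\text{lower order})$. The main term satisfies $|\sDelta Z^J\psi_C|^2\lesssim v^{-2}|\nas \psi_C^{J+1}|^2$ (since $|\sDelta q|\lesssim v^{-1}|\nas \Omega q|$, and $|\nas\Omega Z^J\psi_C|\lesssim |\nas\psi_C^{J+1}|$ modulo harmless commutators), so
\[
\int_{t_0}^{t_1}\!\!\int_{D^C_t}\! v(\log v)^{3/2}(\log\log v)^\alpha |\sDelta Z^J\psi_C|^2\,dt
\lesssim \int_{t_0}^{t_1}\frac{(\log t)^{3/2}(\log\log t)^\alpha}{t}\int_{D^C_t}|\nas \psi_C^{J+1}|^2\,dt.
\]
Since $|J|+1\leq N_C$, the spacetime integral $S_{J+1}^C(t_1)\lesssim \epsilon_C^2$ is available from the bootstrap assumption, and the time factor $(\log t)^{3/2}(\log\log t)^\alpha/t$ is decreasing for $t\geq t_0$ with $\sup_{t\geq t_0}[(\log t)^{3/2}(\log\log t)^\alpha/t]\leq c_0(\epsilon_0)$. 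The contribution is therefore $\leq c_0(\epsilon_0)\epsilon_C^2$.

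The main obstacle is combinatorial bookkeeping rather than any novel analytic difficulty: one must carefully convert between the Minkowski fields $Z\in \mathcal{Z}_m$ (appearing from the boundary formulation via Lemma \ref{minkdecomplemma}) and the fields in $\mathcal{Z}_{\mB}$ adapted to $D^C$, track the many error terms produced when commuting $Z^J$ with both the linear and nonlinear parts of the wave operator (the quasilinear $\gamma^{\mu\nu}$-terms, the null-condition linear piece $\gamma_a^{\mu\nu}$, the currents $P$ and $P_{\mathrm{null}}$, and the inhomogeneities $F_{C,I},F_{\Sigma,I},F_{\mB,I}$), and verify that each such error contributes negligibly by combining the pointwise bounds of Lemma \ref{basicpwdecay} with the time-integrated estimates from Lemmas \ref{timeintegrability-center} and \ref{timeintegrability-center-linear}. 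None of these raises a conceptual issue—since each error is either absolutely integrable in time or absorbed by an $\epsilon_C$ power—but the detailed verification is tedious.
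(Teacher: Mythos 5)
Your proposal is correct and follows essentially the same route as the paper's proof of Lemma \ref{UpsilonLplusbulk}: split $Y_L^+\psi_C$ into $\evmB\psi_C$ plus the quadratic term, handle the $\pa_v$ part of $\pa_r$ with the top-order energy after converting Minkowski fields to $\mZB$-fields, and use the wave equation to trade $\pa_u Z^J\evmB\psi_C$ for $\sDelta Z^J\psi_C$ plus errors. The only real variation is your treatment of the angular term via the spacetime integral $S^C_{J+1}$, where the paper instead converts one rotation through \eqref{Zintonull} and uses time-slice energies with integrable time factors (see \eqref{maincmonpt0003}); both work, since your residual weight $(\log t)^{3/2}(\log\log t)^{\alpha}/t$ is uniformly small for $t\geq t_0$. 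One caution about the part you wave off as tedious bookkeeping: the linear error of size $\tfrac{1}{(1+v)(1+s)}|\pa Z^{J'}\psi_C|$, produced when the divergence hits the coefficient $\tfrac{u}{vs}$ of the null-form term (and by the quasilinear $\gamma$-terms), is precisely what the paper singles out as the most dangerous contribution: against the weight $(1+t)(1+\log t)^{3/2}(1+\log\log t)^{\alpha}$ it produces the factor $(\log\log t)^{\alpha}/\bigl(t(\log t)^{1/2}\bigr)$, which is \emph{not} time-integrable if one only uses the unweighted bound $\|\pa Z^{J'}\psi_C\|_{L^2(D^C_t)}^2\lesssim\epsilon_C^2$; one must multiply and divide by $s$ and invoke the $s$-weighted decay energy \eqref{loworderenergydef}, as in \eqref{maincmon000pt}, to arrive at an integrable factor $\sim (\log t)^{-5/4}/t$. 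So your blanket claim that every remaining error is ``absolutely integrable in time or absorbed by an $\epsilon_C$ power'' is true, but only after this weight exploitation, which your cited toolbox contains but your sketch does not actually perform.
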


\begin{proof}
We recall the definition of $Y_L^+$ from \eqref{YLdef2} and bound
	\begin{align}
	 |\pa_r Z^J Y_L^+\psi_C|
	 &\lesssim |\pa_r Z^J \evmB \psi_C| + |\pa Z^J \left( (1+v)^{-1} Q(\pa \psi_C, \pa \psi_C)\right)|.
	 % &\lesssim
	 % |\pa_u Z^J \evmB \psi_C|  + \frac{1}{1+v} |Z Z^J \evmB \psi_C| +
	 %  \frac{1}{1+s}|\sBo Z^J \left( (1+v)^{-1} Q(\pa \psi_C, \pa \psi_C)\right)|\\
	 % &\lesssim
		% |Z^J \pa_u \evmB \psi_C|  + \frac{1}{1+v} |Z Z^J \evmB \psi_C| +
 	 %  \frac{1}{1+s}|\sBo Z^J \left( (1+v)^{-1} Q(\pa \psi_C, \pa \psi_C)\right)|\\
		% &\qquad+ |[Z^J, \pa_u] \evmB \psi_C|.
	 \label{parZJYL2}
 \end{align}
 To handle the contribution from the nonlinear term here,
 we use Lemma \ref{ZmBtoZlem} and the fact that we have $(1+s)|\pa q|\lesssim
 \sum_{Z_{\mB}\in \mZB} |Z_{\mB} q|$ to get that for $|J| \leq N_C-3$,
 \begin{multline}
|\pa Z^J \left( (1+v)^{-1} Q(\pa \psi_C, \pa \psi_C)\right)|
\lesssim
 \frac{1}{1+v} \frac{1}{1+s}\sum_{|J'| \leq |J|+1} \sum_{|K| \leq |J|/2+1}
|\pa Z_{\mB}^{J'}\psi_C| |\pa Z_{\mB}^{K} \psi_C|\\
\lesssim\epsilon_C
\frac{1}{1+v} \frac{1}{(1+s)^{3/2}}  \sum_{|J'| \leq |J|+1}
|\pa Z_{\mB}^{J'}\psi_C|,
  \label{}
 \end{multline}
 by the pointwise bound \eqref{pwcentral}. Therefore, the contribution
 from this term into \eqref{parZJYLlemstatement} is bounded by
 \begin{multline}
  \int_{t_0}^{t_1} \int_{D^C_t}  (1+t)(1+\log t)^{3/2}(1+\log\log t)^{\alpha}
	|\pa Z^J \left( (1+v)^{-1} Q(\pa \psi_C, \pa \psi_C)\right)|^2\ dt\\
	\lesssim \epsilon_C^2\sum_{|J'| \leq |J|+1}
	  \int_{t_0}^{t_1} \int_{D^C_t} \frac{1}{1+t} \frac{1}{(1+\log t)^{3/2}}(1+\log\log t)^{\alpha}
		|\pa Z_{\mB}^{J'}\psi_C|^2\, dt
		\lesssim \epsilon_C^4,
  \label{}
 \end{multline}
 using the bound for the lower-order energy \eqref{loworderenergydef}
 and the fact that $1/((1+t) (1+\log (1+t))^{3/2})$ is time-integrable.

 We now handle the contribution from the first term in \eqref{parZJYL2}.
 Recalling that $\pa_r = \pa_v - \pa_u$, we first bound the contribution
 from $\pa_v$ as follows,
 \begin{multline}
  \int_{t_0}^{t_1} \int_{D^C_t}
	(1+t)(1+\log t)^{3/2}(1+\log\log t)^{\alpha} |\pa_v Z^J \evmB \psi_C|^2\, dt\\
	\lesssim
	\int_{t_0}^{t_1} \int_{D^C_t}
	\left(\frac{1}{(1+t)^2}(1+\log t)^{3/2}(1+\log\log t)^{\alpha} \right)(1+v)|Z Z^J \evmB \psi_C|^2\, dt
	\lesssim
	\epsilon_C^2,
  \label{cmon00}
 \end{multline}
 after using the bound for the energy \eqref{EtopC},
 \eqref{ZtoZmBbd0} to convert $Z$ derivatives into $Z_{\mB}$ derivatives, and
 the bound \eqref{evmbZcomm} to handle the commutators between
 the $Z_{\mB}$ and $\evmB$.

 It remains to prove the analogous bound for $\pa_u$, namely
 \begin{equation}
  \int_{t_0}^{t_1} \int_{D^C_t} (1+t)(1+\log t)^{3/2}(1+\log\log t)^{\alpha}|\pa_u Z^J \evmB \psi_C|^2\, dt
	\lesssim \epsilon_C^2.
  \label{paucontrib}
 \end{equation}
 By the bound \eqref{nZcomm} for the commutator $[\pa_u, Z^J]$, we have
 \begin{multline}
  |\pa_u Z^J \evmB \psi_C|
	\lesssim |Z^J \pa_u \evmB\psi_C|
	+ \sum_{|J'| \leq |J|-1}
	|\pa_u Z^{J'}\evmB \psi_C|
	+ |\nas Z^{J'}\evmB \psi_C|
	+ |\evm Z^{J'} \evmB \psi_C|\\
	\lesssim
	|Z^J \pa_u \evmB\psi_C|
	+ \sum_{|J'| \leq |J|-1}
	|\pa_u Z^{J'}\evmB \psi_C|
	+ \frac{1}{1+v} |Z Z^{J'}\evmB \psi_C|.
  \label{cmon0}
 \end{multline}
 To handle the first term here, we use the equation
 \eqref{waveint0} which gives
 \begin{align}
  |Z^J &\pa_u\evmB \psi_C|\\
	&\lesssim |Z^J \sDelta \psi_C|
	+| Z^J \pa_\mu(\tfrac{u}{vs} a^{\mu\nu}\pa_\nu \psi_C)|
	+|Z^J \pa_\mu(\gamma^{\mu\nu}\pa_\nu \psi_C)|
	+ |Z^J \pa_\mu P^\mu|
	+ |Z^J F| + |Z^J F_\Sigma|\\
	&\lesssim\sum_{|J'|\leq |J|}\left(
	\frac{1}{(1+v)^2}
	 |Z^{J'} \Omega^2 \psi_C|
	 + \frac{1}{(1+v)(1+s)^{1/2}}
	 |\pa^2 Z^{J'} \psi_C|
	 + \frac{1}{(1+v)(1+s)}
	 |\pa Z^{J'}\psi_C|\right)\label{cmon000}
	 \\
	 &+|Z^J \pa_\mu(\gamma^{\mu\nu}\pa_\nu \psi_C)|
 	+ |Z^J \pa_\mu P^\mu|
 	+ |Z^J F| + |Z^J F_\Sigma|.
  \label{cmon}
 \end{align}
 In getting the above bound we used that $|\pa \tfrac{u}{vs}|\lesssim \tfrac{1}{vs}$,
 that $Z \frac{u}{vs} = c_Z \frac{u}{vs}$ for smooth functions $c_Z$ satisfying
 the symbol condition \eqref{strongsymbol}
 and ignored the structure of the coefficients $a^{\mu\nu}$ which is not important
 for this part of the argument.
 We just show how to handle the terms on the first line of \eqref{cmon000}, the terms on the
 second line being very similar after noting that $\gamma$ behaves like
 $(1+v)^{-1}\pa \psi_C$ and that we have the pointwise bound $|\pa \psi_C|
 \lesssim (1+s)^{-1/2}$, and that the terms $\pa_\mu P^\mu, F, F_{\Sigma}$ are better-behaved
 (recall that these quantities are given explicitly in
 \eqref{centraleqnlowest}-\eqref{FSigmaformula}).

The contribution from the third term on the first line of \eqref{cmon000} into
\eqref{paucontrib} is actually the most dangerous one,
and it is bounded by
\begin{align}
	\int_{t_0}^{t_1} \int_{D^C_t} (1+t)(1+\log t)^{3/2}&(1 + \log \log t)^\alpha
	\left( \frac{1}{1+v} \frac{1}{1+s} |\pa Z^{J'}\psi_C|\right)^2\,dt\label{maincmon000pt}\\
 &\lesssim
 \int_{t_0}^{t_1} \int_{D^C_t}
 \frac{1}{1+t} \frac{1}{(1 + \log t)^{1/2}} (1 + \log \log t)^\alpha|\pa Z^{J'} \psi_C|^2\, dt\\
 &\lesssim
 \int_{t_0}^{t_1} \int_{D^C_t}
 \frac{1}{1+t} \frac{1}{(1 + \log t)^{3/2}}(1 + \log \log t)^\alpha \left( s|\pa Z^{J'} \psi_C|^2\right)\, dt\\
 &\lesssim
 \sum_{|J''| \leq |J'|}
 \int_{t_0}^{t_1} \int_{D^C_t}
 \frac{1}{1+t} \frac{1}{(1 + \log t)^{5/4}} \left(s|\pa Z_{\mB}^{J''} \psi_C|^2\right)\, dt\\
 &\lesssim \epsilon_C^2,
 \label{}
\end{align}
for $|J'| \leq N_C -3$. Here, we have used \eqref{ZtoZmBbd0} to convert
the fields $Z$ into the fields $Z_{\mB}$,
the definition of the energy in the central region, the fact that
$1/((1+t)(1+\log t)^{5/4})$ is time-integrable, and bounded
$(\log \log t)^\alpha \lesssim (\log t)^{1/4}$. As for the second term
on the first line of \eqref{cmon000}, its contribution is bounded by
\begin{align}
 \int_{t_0}^{t_1} \int_{D^C_t} \frac{1}{1+t} &(1 + \log t)^{1/2}(1+\log\log t)^\alpha
 |\pa^2 Z^{J'} \psi_C|^2\, dt\\
 &\lesssim \int_{t_0}^{t_1} \int_{D^C_t} \frac{1}{1+t} (1 + \log t)^{1/2}(1+\log\log t)^\alpha
 |\pa^2 Z^{J'} \psi_C|^2\, dt\\
 &\lesssim \int_{t_0}^{t_1} \int_{D^C_t} \frac{1}{1+t} \frac{1}{(1+\log t)^{5/4}}
 (|\pa \sBo  Z^{J'} \psi_C|^2 + |\pa Z^{J'} \psi_C|^2) \, dt\\
 &\lesssim \epsilon_C^2,
 \label{}
\end{align}
recalling the definition $\sBo = s\pa_u$ and then arguing as above to bound
this quantity by the energy in the central region.

Finally, to deal with the first term in \eqref{cmon000} we use
the identity \eqref{Zintonull}
to bound
\begin{equation}
 \frac{1}{(1+v)^2} |Z^{J'} \Omega^2 \psi_C|
 \lesssim \frac{1}{1+v} \sum_{|J''| \leq |J'|+1}
\left(
 |\pa_v Z^{J''} \psi_C| + |\nas Z^{J''}\psi_C| + \frac{(1+|u|)}{(1+v)}|\pa_u Z^{J''}\psi_C|\right),
 \label{maincmonpt0003}
\end{equation}
and the contribution from these terms into \eqref{paucontrib} are
easily handled.

Combining the above estimates we have 
\begin{multline}
 \int_{t_0}^{t_1} \int_{D^C_t} (1+t)(1+\log t)^{3/2}(1+\log\log t)^{\alpha}|\pa_u Z^J \evmB \psi_C|^2\, dt
 \\
 \lesssim
 \epsilon_C^2 +\sum_{|J'| \leq |J|-1}
 \int_{t_0}^{t_1} \int_{D^C_t} (1+t)(1+\log t)^{3/2}(1+\log\log t)^{\alpha}|\pa_u Z^{J'} \evmB \psi_C|^2\, dt
 \\
 +\sum_{|J'| \leq |J|}
 \int_{t_0}^{t_1} \int_{D^C_t} \frac{1}{1+t} (1+\log t)^{3/2}(1+\log\log t)^{\alpha}|Z^{J'} \evmB \psi_C|^2\, dt.
 \label{cmon1}
\end{multline}
For the last term here, we bound
\begin{multline}
 \int_{t_0}^{t_1} \int_{D^C_t} \frac{1}{1+t} (1+\log t)^{3/2}(1+\log\log t)^{\alpha}|Z^{J'} \evmB \psi_C|^2\, dt\\
 \lesssim
 \int_{t_0}^{t_1} \int_{D^C_t} \left(\frac{1}{(1+t)^2} (1+\log t)^{3/2}(1+\log\log t)^{\alpha}\right)
 (1+v) |Z^{J'}\evmB \psi_C|^2\, dt
 \lesssim \epsilon_C^2,
 \label{}
\end{multline}
as in \eqref{cmon00}.
The result now follows from this bound, \eqref{cmon1}, and induction.

\end{proof}

We now control the boundary term from \eqref{parZJYL0}.
\begin{lemma}
	\label{UpsilonLplusbdy}
	Under the hypotheses of Proposition \ref{lefttocenterprop},
	for $|J| \leq N_L \leq N_C - 4$, provided $\epsilon_0$ is taken
	sufficiently small, we have
	\begin{equation}
		\int_{t_0}^{t_1}
	  \int_{\Gamma^R_t} (1+t)(1+\log t)(1+\log\log t)^{\alpha} |Z^J Y_L^+\psi_C|^2\, dS dt
		\lesssim \epsilon_C^2
	 \label{UpsilonLplusbdybd}
	\end{equation}
\end{lemma}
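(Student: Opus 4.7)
The plan is to transfer the estimate from the central side of $\Gamma^R$ to the right side via the Rankine--Hugoniot condition, then absorb the result into the very strong weighted energies available for $\psi_R$ along $\Gamma^R$. Since $Y_L^+ = Y_R^-$ as operators acting on $\psi_C$, the boundary condition \eqref{introbcR} gives pointwise on $\Gamma^R$ the identity $Y_L^+\psi_C = Y_R^+\psi_R + G$. The Minkowski fields $Z^J$ are transverse to $\Gamma^R$, so we cannot differentiate this identity directly. We therefore apply Lemma \ref{minkdecomplemma} with $\xi = u - B^R$, writing $Z^J(Y_L^+\psi_C) = Z_T^J(Y_L^+\psi_C) + \mathcal{R}_J$, where $Z_T^J$ is built from fields tangent to $\Gamma^R$ and $\mathcal{R}_J$ collects terms involving at least one factor of $n$ landing on $Y_L^+\psi_C$, weighted by derivatives of $B^R$.

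For the tangential contribution, $Z_T^J(Y_L^+\psi_C)|_{\Gamma^R} = Z_T^J(Y_R^+\psi_R) + Z_T^J G$, and we convert $Z_T^J$ back to $Z^J$ via \eqref{eztransfer}, paying a lower-order cost weighted by $|B^R|_{I,m}$ that is handled by pulling a pointwise-in-time supremum out using Sobolev embedding on the spheres $\Gamma^R_t$ together with $G^R_{N_C}\lesssim 1$, exactly as in \eqref{highnormbL1}. The principal piece is $Z^{J'}\pa_v\psi_R$ with $|J'|\leq|J|\leq N_L$. The boundary integral in the definition of $\mathcal{E}^R$ from \eqref{ERdef} controls
\begin{equation}
\int_{t_0}^{t_1}\int_{\Gamma^R_t}(1+|u|^\mu + r(\log r)^\nu)\,|\pa_v\psi_R^{J'}|^2\,dS\,dt \lesssim \epsilon_R^2 \lesssim \epsilon_C^2,
\end{equation}
and along $\Gamma^R$ we have $r\sim t$ with $(1+t)(1+\log t)(1+\log\log t)^\alpha \ll r(\log r)^\nu$ by a wide margin since $\nu\geq N_C\geq N_L$ by \eqref{parameters}. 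The quadratic nonlinearity $v^{-1}Q_R(\pa\psi_R,\pa\psi_R)$ is subleading in view of the pointwise decay \eqref{pwright}. The remainder $G$ is handled exactly as in \eqref{GLL0}--\eqref{GLL1} in the proof of Lemma \ref{transferlemL}, using the bounds of Lemma \ref{bdsforpsiCalongshock} for $\psi_C$ along $\Gamma^R$ and the angular Hardy-type inequality from that lemma.

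The delicate step is the remainder $\mathcal{R}_J$. Its principal term is of the form $(1+|u|)\,|Z^{J'}n\,Y_L^+\psi_C|$ for $|J'|\leq|J|-1$. Writing $Y_L^+\psi_C = \evmB\psi_C$ modulo quadratic nonlinearities and invoking the central-region wave equation \eqref{waveint0}, we express $n\evmB\psi_C = \tfrac{1}{4}\sDelta\psi_C$ plus subleading terms involving $\gamma, \gamma_a, P$, and the inhomogeneities $F, F_\Sigma$, all of which admit integrated bounds from Lemmas \ref{timeintegrability-center}--\ref{timeintegrability-center-linear}. For $|J'|\leq N_L-1\leq N_C-5$, commuting $Z^{J'}$ past this identity and converting to $\mZB$-derivatives via Lemma \ref{ZmBtoZlem}, the $\sDelta = r^{-2}\Omega^2$ factor supplies an extra $(1+v)^{-2}$. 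On $\Gamma^R$, where $|u|\sim s^{1/2}$ and $v\sim t$, the combined weight $(1+|u|)^2(1+t)(\log t)(\log\log t)^\alpha/(1+v)^4$ multiplied against $|\pa Z_\mB^K\psi_C|^2$ yields a time-integrable factor that is absorbed by the boundary-trace bound \eqref{rightbdytrivialbds} for the central energy. The $|B^R|_{I,m}$-weighted piece of $\mathcal{R}_J$ is again absorbed via Sobolev embedding on $\Gamma^R_t$.

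The main obstacle is the $\mathcal{R}_J$ step: the $n$-derivative of $\evmB\psi_C$ is not an energy-level quantity on $\Gamma^R$, and extracting it via the wave equation produces $\sDelta\psi_C$ whose norm must be coupled tightly against the $(1+|u|)^2$ weight generated by the tangential decomposition, on top of the outer weight $(1+t)(\log t)(\log\log t)^\alpha$ already present in \eqref{UpsilonLplusbdybd}. The estimate closes because the geometric loss $(1+|u|)\sim(\log t)^{1/2}$ on $\Gamma^R$ is much smaller than the decay gain $(1+v)^{-2}\sim(1+t)^{-2}$ provided by the angular Laplacian, leaving a net weight of $(1+t)^{-1}$ times logarithmic factors, which is integrable.
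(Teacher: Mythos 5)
Your proposal is correct and follows essentially the same route as the paper's proof: decompose $Z^J$ into tangential fields plus $n$-derivative remainders via Lemma \ref{minkdecomplemma}, transfer the tangential piece to $\psi_R$ through the Rankine--Hugoniot condition and absorb it with the strong $r(\log r)^\nu$ boundary weight in $\mathcal{E}^R$, and control the $n\,Y_L^+\psi_C$ remainders by substituting the central wave equation so that $\sDelta\psi_C$ supplies the $(1+v)^{-2}$ gain against the $(1+|u|)^2\sim\log t$ loss, closing with the trace bounds of Lemma \ref{bdsforpsiCalongshock} and Sobolev embedding on $\Gamma^R_t$ for the $|B^R|$-weighted terms. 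The only item left implicit is the purely lower-order terms $Z^{J'}Y_L^+\psi_C$, $|J'|\leq|J|-1$, from the decomposition, which the paper disposes of by induction.
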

\begin{proof}
 We start by using Lemma \ref{minkdecomplemma} to convert the vector fields
 $Z$ into tangential vector fields $Z_T$ at the right shock, which gives
  \begin{multline}
  |Z^J Y_L^+\psi_C|
	\leq \sum_{|J'| \leq |J|}
	|Z_T^{J'} Y_L^+\psi_C|
	+ C(M)\sum_{|J'| \leq |J|-1} (1+|u|)|Z^{J'} n Y_L^+ \psi_C| + |Z^{J'} Y_L^+ \psi_C|
	\\
    + C(M)|B^{R}|_{J,\mB} \sum_{|K| \leq |J|/2+1}
	 (1+|u|)|Z^{K} n Y_L^+ \psi_C| + |Z^{K} Y_L^+ \psi_C|,
  \label{eq:Ya}
 \end{multline}
 where we used \eqref{ZtoZmBbd0} to bound
 the norm of $B^R$
 Since the vector fields $Z_T^J$ are tangent to $\Gamma^R$, at $\Gamma^R$
 by the boundary condition \eqref{introbcR} we have
 \begin{equation}
  |Z_T^J Y_L^+\psi_C|
	=|Z_T^J Y_R^-\psi_C|
	\lesssim |Z_T^J Y_R^+ \psi_R| + |Z_T^J G|.
  \label{}
 \end{equation}
 We now use Lemma \ref{minkdecomplemma} again to convert the tangential fields
 $Z_T^I$ into the usual fields $Z^I$ and bound the first term here by
 \begin{multline}
  |Z_T^J Y_R^+ \psi_R|
	\leq
	\sum_{|J'| \leq |J|}
	|Z^J Y_R^+ \psi_R|
	+ C(M) \sum_{|J'| \leq |J|-1}
	\left((1+|u|) |n Z^{J'} Y_R^+\psi_R| + |Z^{J'} Y_R^+\psi_R|\right)
	\\
	+
	C(M)|B^R|_{J, m}\sum_{|K| \leq |J|/2+1}	\left((1+|u|) |n Z^{J'} Y_R^+\psi_R| + |Z^{J'} Y_R^+\psi_R|\right)
  \label{anothertransfer}
 \end{multline}
 Thanks to the large weights appearing in the bounds \eqref{pwright} and \eqref{rightbdytrivialbds}
 for the potential in the rightmost region,
 it is straightforward to deal with these terms using arguments very similar
 but considerably simpler than ones we have encountered many times by now,
 and the result is that
 \begin{equation}
  \int_{t_0}^{t_1} \int_{\Gamma^R_t} |X^\ell| |Z_T^J Y_R^+ \psi_R|^2\,dS dt \lesssim \epsilon_R^2 \leq \epsilon_C^4,
  \label{}
 \end{equation}
 by \eqref{epsparameters}.
 See also Lemma \ref{UpsilonLplusbd} for a nearly identical but
 slightly more delicate estimate. 
The contribution from the nonlinear error terms $G^L$ can be handled
    as in the proof of \eqref{errorbdytermleft}.

 We now handle the terms involving $n Y_{L}^+ \psi_C$ \eqref{eq:Ya}.
 We claim that
 \begin{equation}
  \int_{t_0}^{t_1} \int_{\Gamma^R_t} (1+ t)(1 + \log t)(1+ \log \log t)^\alpha
	\left((1+|u|) |n Z^{J'} Y_{L}^+\psi_C|\right)^2\, dS dt \lesssim \epsilon_C^2.
  \label{mainUpsilonRL}
 \end{equation}
 In the same way that \eqref{paucontrib} implied the previous result, this bound
 implies \eqref{UpsilonLplusbdybd}.
 To prove this, we use
 \eqref{cmon000} again. As in the proof of the previous result, we will just
 consider the terms on the first line of \eqref{cmon000}, the remaining terms
 being simpler.

 For the third term on the first line of \eqref{cmon000}, using that $|u| \sim s^{1/2}$
 along $\Gamma^R$, we have
 \begin{align}
  \int_{t_0}^{t_1} \int_{\Gamma^R_t} (1+ t)(1 + \log t)(1+ \log \log t)^\alpha
	&\left( \frac{(1+|u|) }{(1+v)(1+s)} |\pa Z^{J'} \psi_C|\right)^2\, dS dt\\
	&\lesssim
	\int_{t_0}^{t_1} \int_{\Gamma^R_t} \frac{1}{1+t} 	(1+ \log \log t)^\alpha |\pa Z^{J'} \psi_C|^2\, dS dt\\
	&\lesssim\int_{t_0}^{t_1} \int_{\Gamma^R_t} \frac{(1+s)^{1/2}}{1+v}
	 |\pa Z^{J'} \psi_C|^2\, dS dt\\
	 &\lesssim \epsilon_C^2,
  \label{}
 \end{align}
 in light of \eqref{rightbdytrivialbds}
 and after using the bound \eqref{ZtoZmBbd0}
 to relate norms involving the $Z$ fields to those involving
the $Z_{\mB}$ fields.
 Similarly, for the second term in \eqref{cmon000} we have
\begin{align}
 \int_{t_0}^{t_1} \int_{\Gamma^R_t} (1+ t)&(1 + \log t)(1+ \log \log t)^\alpha
 \left( \frac{(1+|u|) }{(1+v)(1+s)^{1/2}} |\pa^2 Z^{J'} \psi_C|\right)^2\, dS dt\\
 &\lesssim
 \int_{t_0}^{t_1} \int_{\Gamma^R_t} \frac{1}{1+t} (1+ \log t)
 (1+ \log \log t)^\alpha |\pa^2 Z^{J'} \psi_C|^2\, dS dt\\
 &\lesssim \int_{t_0}^{t_1} \int_{\Gamma^R_t} \frac{1}{1+t} \frac{1}{1+\log t}
 (1+ \log \log t)^\alpha
 (|\pa \sBo  Z^{J'} \psi_C|^2 + |\pa Z^{J'} \psi_C|^2) \, dt\\
 &\lesssim \epsilon_C^2.
 \label{}
\end{align}
Finally, using \eqref{maincmonpt0003} again it is straightforward to handle
the contribution from the first term in \eqref{cmon000} into \eqref{mainUpsilonRL}.
It remains to handle the lower-order terms (the last term on the
    first line and those on the second line) from \eqref{eq:Ya}).
    The terms $Z^{J'}Y_L^+\psi_C$ for $|J'|\leq |J|$ can be handled using induction.
    As for the lower-order term involving $nY_L^+$, we just
    use the equation \eqref{cmon000} again to express
$nY_L^+ \psi_C$ in terms of nonlinear terms and linear terms
involving $\sDelta$ .

\end{proof}

\subsection{Proof of Proposition \ref{centertorightprop}}
\label{pfcentertorightprop}

We use a similar strategy as in the previous section. The bounds
in this section
are simpler because in this section we only need to consider
the weight
$X^\ell = 1+v$ which is smaller than the weight we needed to consider
in the previous section and there is more room
in all of our estimates. On the other hand, the estimates are somewhat more
cumbersome because we have worse control over the solution at top order
than we did in the central region (recall the definition of the energies
\eqref{ECdef}).

The bound \eqref{centertorightpropbd} is a consequence of the upcoming Lemmas \ref{linearcommjumpR},
\ref{transferlemR} and \ref{UpsilonRplusbd}, as follows.
Define the quantities
\begin{equation}
% \Upsilon_{I, R}^-(t_1) = \int_{t_0}^{t_1} \int_{\Gamma^R_t}
% (1+v) |\evmB \psi_C^I|^2\, dS dt,
% \qquad
 \Upsilon_{I, R}^+(t_1) = \int_{t_0}^{t_1} \int_{\Gamma^R_t}
 (1+v) | Z_{\mB, T}^I Y_L^+ \psi_R|^2\, dS dt,
 \label{UpsilondefR}
\end{equation}
where $Y_R^+$ is as in \eqref{YRdef2}.

By Lemmas \ref{linearcommjumpR} and \ref{transferlemR},
for $|I| \leq N_C$, we have the bound
\begin{equation}
    B^C_I(t_1) \lesssim
%    (c_0(\epsilon_0) + \epsilon_C^2) \Upsilon_{I, R}^-(t_1)
%  + 
  \Upsilon_{I, R}^+(t_1)
  + \sum_{|K| \leq |I|-1} B^C_K(t_1)
  + (c_0(\epsilon_0) + \epsilon_C^2)\epsilon_C^2 + c_0(\epsilon_0),
 \label{}
\end{equation}
where $c_0$ is a continuous function with $c_0(0) = 0$,
so taking $\epsilon_0$ sufficiently small and using induction we find
\begin{equation}
 B^C_I(t_1) \lesssim \Upsilon_{I, R}^+(t_1) + \epsilon_C^3,
 \label{}
\end{equation}
and the result follows after using Lemma \ref{UpsilonRplusbd} to control
the quantities $\Upsilon_{I, R}^+$. \hfill \qedsymbol

 In the rest of this section, we prove the cited Lemmas
 \ref{linearcommjumpR}-\ref{UpsilonRplusbd}.

\subsubsection{Supporting lemmas for the proof of Proposition \ref{centertorightprop}}
As in the last section, we start by recording a product estimate that we will
use to handle the nonlinear terms we encounter. Fortunately this bound
is less delicate than Lemma \ref{generalproduct}.

\begin{lemma}
	\label{generalquadraticright}
 Let $Q(\pa\psi, \pa \psi) = Q^{\alpha\beta}\pa_\alpha \psi\pa_\beta\psi$
 be a quadratic nonlinearity where the coefficients $Q^{\alpha\beta}(\omega)$
 are smooth functions satisfying the symbol condition \eqref{strongsymbol}.
 Under the hypotheses of Proposition \ref{centertorightprop}, we have
 \begin{equation}
  \int_{t_0}^{t_1} \int_{\Gamma^R_t} (1+v) |Z_{\mB}^I \left( (1+v)^{-1}Q(\pa \psi_C,
	\pa \psi_C)\right)|^2\, dS dt
	\lesssim
%	\epsilon_C^2 \sum_{|J| \leq |I|} \Upsilon_{J, R}^-(t_1)+
    \epsilon_C^4.
  \label{}
 \end{equation}
\end{lemma}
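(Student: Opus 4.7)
The plan is to follow the template of Lemma~\ref{generalproduct}, but the argument should be substantially simpler for two reasons: the weight $1+v$ used at $\Gamma^R$ is much smaller than the weight $X^\ell = vf(v)$ that appeared at $\Gamma^L$, and the pointwise decay bound \eqref{pwcentral} in the central region, $|\pa Z_{\mB}^J\psi_C| \lesssim \epsilon_C/((1+\log t)^{1/4}(1+s)^{1/2})$ for $|J|\leq N_C-5$, is already strong enough to beat the weight $1+v$ when paired with the boundary $L^2$ control from \eqref{rightbdytrivialbds}. There is no delicate tension between pointwise decay and weights to negotiate here, unlike in the proof of Lemma~\ref{generalproduct} where the choice $\alpha < 3/2$ was forced by a sharp cancellation.

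First I would distribute $Z_{\mB}^I$ across the product via Leibniz and commute past $\pa$. For $Z_{\mB} \in \{\Omega_{ij},\, s\pa_u,\, v\pa_v\}$ one has $Z_{\mB}((1+v)^{-1}) = O((1+v)^{-1})$, the coefficients $Q^{\alpha\beta}(\omega)$ are bounded under iterated $Z_{\mB}$ differentiation thanks to the symbol condition \eqref{strongsymbol}, and commutators like $[s\pa_u,\pa_v] = -v^{-1}\pa_u$, $[v\pa_v,\pa_u]=0$, $[v\pa_v,\pa_v]=-\pa_v$ feed back into the same schematic structure. This yields the pointwise bound
\begin{equation}
\bigl|Z_{\mB}^I\bigl((1+v)^{-1}Q(\pa\psi_C,\pa\psi_C)\bigr)\bigr| \lesssim \frac{1}{1+v}\sum_{|J_1|+|J_2|\leq |I|}|\pa Z_{\mB}^{J_1}\psi_C|\,|\pa Z_{\mB}^{J_2}\psi_C|.
\end{equation}
By symmetry, in each term at least one index, say $|J_1|$, satisfies $|J_1| \leq |I|/2 \leq N_C/2 \leq N_C-5$ (using $N_C \geq 30$), so \eqref{pwcentral} applies; placing that factor in $L^\infty$ gives, along $\Gamma^R_t$,
\begin{equation}
(1+v)\bigl|Z_{\mB}^I\bigl((1+v)^{-1}Q\bigr)\bigr|^2 \lesssim \sum_{|J_2|\leq N_C}\frac{\epsilon_C^2}{(1+v)(1+\log t)^{1/2}(1+s)}\,|\pa Z_{\mB}^{J_2}\psi_C|^2.
\end{equation}

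Finally I would integrate over $\Gamma^R_t$ and then in time. The $t$-uniform factor $(1+\log t)^{-1/2} \leq (1+\log t_0)^{-1/2} \leq c_0(\epsilon_0)$ can be pulled out using the largeness-of-initial-time assumption \eqref{largestart}, and the remaining integral is controlled for $|J_2|\leq N_C$ by the boundary energy bound \eqref{rightbdytrivialbds}, which yields $\lesssim \epsilon_C^2$. The total is then $\lesssim c_0(\epsilon_0)\epsilon_C^4 \lesssim \epsilon_C^4$, as required. The only step needing genuine care, and the (minor) obstacle I anticipate, is the bookkeeping in the Leibniz/commutator step: one must confirm that no term picks up an uncontrolled power of $s$ or $v$ from commuting $s\pa_u$ or $v\pa_v$ past $\pa$, but this is a mechanical verification on the finite list of fields in $\mathcal{Z}_{\mB}$.
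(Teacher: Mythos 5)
Your proof is correct, and it follows the same overall bilinear template as the paper's own argument (low-order factor estimated pointwise via \eqref{pwcentral}, top-order factor placed in the boundary $L^2$ fluxes controlled by the bootstrap), but the weight bookkeeping is genuinely different. The paper decomposes the top-order derivative into null components $n\psi_C^J$, $\evmB\psi_C^J$, $\nas\psi_C^J$ and pairs each with its own controlled flux along $\Gamma^R$ — namely $\tfrac{|X^n_{\mB}|}{(1+v)(1+s)^{1/2}}|n\psi_C^J|^2$, $(1+v)|\evmB\psi_C^J|^2$ and $(1+s)^{-1/2}|\nas\psi_C^J|^2$ — using only $|\pa\psi_C^K|\lesssim \epsilon_C(1+s)^{-1/2}\lesssim \epsilon_C|X^n_{\mB}|$ for $|K|\leq N_C/2+1$, with no logarithmic gain beyond $(1+s)^{-1/2}$. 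You skip the null decomposition, keep the full gradient $|\pa Z_{\mB}^{J_2}\psi_C|$, and test it against the weakest flux in \eqref{rightbdytrivialbds}, $(1+v)^{-1}(1+s)^{-1}|\pa\psi_C^I|^2$ for $|I|\leq N_C$, compensating with the extra $(1+\log t)^{1/4}$ factor of \eqref{pwcentral}. This closes; in fact the logarithmic gain is not even needed, since along $\Gamma^R$ your weight $\epsilon_C^2(1+v)^{-1}(1+s)^{-1}$ already matches the controlled flux weight exactly, so the extra $(1+\log t)^{-1/2}$ merely contributes a harmless $c_0(\epsilon_0)$. Your Leibniz/commutator step is also legitimate: it is precisely the content of Lemmas \ref{ZBdivcommvariantlem} and \ref{centralcommcurrent}, whose coefficients are strong symbols in the region $r\sim t$, so no uncontrolled powers of $s$ or $v$ appear; and the split $\min(|J_1|,|J_2|)\leq |I|/2\leq N_C-5$ is valid since $N_C\geq 30$. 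The trade-off is that the paper's version isolates which boundary fluxes are actually doing the work (and mirrors the structure used at the left shock in Lemma \ref{generalproduct}), while yours is more elementary and relies only on the single weakest-weighted flux plus the sharper pointwise decay.
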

\begin{proof}
We follow the same strategy as in the proof of Lemma \ref{generalproduct}. We
first note that by the decay estimates \eqref{pwcentral} we have
$|\pa \psi^K_C| \lesssim \epsilon_C (1+s)^{-1/2}$ for $|K| \leq N_C/2+1$,
and since
both of the multipliers $X_{C}$ and $X_{T}$ in the central region 
satisfy the bounds $|X_{\mB}^n| \gtrsim (1+s)^{-1/2}$ along the shocks we have in particular
\begin{equation}
 |\pa \psi^K_C| \lesssim \epsilon_C |X^n|, \qquad |K| \leq N_C/2+1.
 \label{}
\end{equation}
As a result, using $|\pa \psi^K_C|\lesssim \epsilon_C(1+s)^{-1/2}$ again
we find
\begin{equation}
 \frac{1}{1+v} |\pa \psi^K_C|^2 |n \psi^J_C|^2
 \lesssim \epsilon_C^2 \frac{1}{(1+v)(1+s)^{1/2}}|X^n| |n\psi^J_C|^2.
 \label{}
\end{equation}
Bounding $|\pa q| \lesssim |\evmB q| + |n q| + |\nas q|$ and using the simpler
estimates
\begin{align}
 \frac{1}{1+v} |\pa \psi_C^K|^2 |\evmB \psi_C^J|^2 \lesssim
 \epsilon_C^2 |\evmB \psi_C^J|^2,\qquad |K| \leq N_C/2+1\\
  \frac{1}{1+v} |\pa \psi_C^K|^2 |\nas \psi_C^J|^2 \lesssim
  \epsilon_C^2 \frac{1}{(1+s)^{1/2}}|\nas \psi_C^J|^2,\qquad |K| \leq N_C/2+1,
 \label{}
\end{align}
we therefore have the needed bound,
\begin{multline}
 \sum_{|K| \leq N_C/2+1} \sum_{|J| \leq N_C} \int_{t_0}^{t_1}\int_{\Gamma^R_t}
 \frac{1}{1+v} |\pa \psi_C^K|^2 |\pa \psi_C^I|^2\, dS dt
 \\
 \lesssim
 \epsilon_C^2 \sum_{|J| \leq N_C} \int_{t_0}^{t_1} \int_{\Gamma^R_t}\left(
 (1+v) |\evmB \psi_C^J|^2 + \frac{|X^n|}{(1+v)(1+s)^{1/2}} |n \psi_C^J|^2
 + \frac{1}{(1+s)^{1/2}} |\nas \psi_C^J|^2\right)\,dS dt
 \\
 \lesssim
 \epsilon_C^{2} \sum_{|J| \leq N_C} \int_{t_0}^{t_1} \int_{\Gamma^R_t}
 (1+v) |\evmB \psi_C^J|^2\,dS dt + \epsilon_C^4
 \lesssim \epsilon_C^4.
 \label{}
\end{multline}
\end{proof}

We now prove the analogue of Lemma \ref{linearcommjumpL}, where
we commute $\evmB$ with $Z_{\mB}^I$.
\begin{lemma}
	\label{linearcommjumpR}
	Under the hypotheses of Proposition \ref{centertorightprop}, we have
	\begin{equation}
        B^C_I(t_1)
	 \lesssim
	 \int_{t_0}^{t_1} \int_{\Gamma^R_t} (1+v) |Z^I_{\mB} Y_R^-\psi_C|^2\,dS dt
	 + \sum_{|K| \leq |I|-1} B^C_K(t_1)
    + (c_0(\epsilon_0)
     + \epsilon_C^2)\epsilon_C^2 + c_0(\epsilon_0)
	 \label{linearcommjumpRbd}
	\end{equation}
	for a continuous function $c_0$ with $c_0(0) = 0$.
\end{lemma}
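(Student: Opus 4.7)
The plan is to mirror the strategy used for Lemma \ref{linearcommjumpL}, but with one simplification and one extra bookkeeping step. First I would bound
\begin{equation}
|\evmB Z_{\mB}^I \psi_C| \leq |Z_{\mB}^I \evmB \psi_C| + |[Z_{\mB}^I, \evmB] \psi_C|,
\end{equation}
and for the first term use the defining identity \eqref{YRdef1} for $Y_R^-$ to write
\begin{equation}
Z_{\mB}^I \evmB \psi_C = Z_{\mB}^I Y_R^- \psi_C - Z_{\mB}^I\left( \tfrac{1}{v} Q_C(\pa\psi_C, \pa\psi_C)\right).
\end{equation}
The nonlinear piece is exactly of the form covered by Lemma \ref{generalquadraticright}, giving the $\epsilon_C^4$ contribution after multiplying by $(1+v)$ and integrating over $\Gamma^R_t$.

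The main task is therefore to control the commutator $[Z_{\mB}^I, \evmB]\psi_C$. Using the two identities recorded in Section \ref{vfsection}, namely $[\sBo, \evmB] = 0$, $[\sBt, \evmB]q = -\evmB q - \tfrac{u}{vs^2}\pa_u q$, together with the fact that rotations $\Omega_{ij}$ commute with the spherically-symmetric field $\evmB$, an inductive expansion produces only two kinds of terms: (i) terms of the form $\evmB Z_{\mB}^J \psi_C$ with $|J| \leq |I|-1$, whose $(1+v)$-weighted boundary integrals are exactly $B^C_{J}(t_1)$ (these give the inductive term on the right); and (ii) terms of the form $\frac{u}{vs^2}\pa_u Z_{\mB}^J\psi_C$ (and their iterates picking up factors of $Z_{\mB}$ acting on $u/(vs^2)$, which are harmless by the symbol condition on $u/(vs)$), with $|J| \leq |I|-1$.

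To close, I would observe that along $\Gamma^R$ we have $|u|\sim s^{1/2}$, so $|u/(vs^2)|^2(1+v) \lesssim (1+v)^{-1}(1+s)^{-3}$, and therefore
\begin{equation}
\int_{t_0}^{t_1}\int_{\Gamma^R_t} (1+v)\left|\tfrac{u}{vs^2}\pa_u Z_{\mB}^J\psi_C\right|^2 dS\, dt \lesssim \int_{t_0}^{t_1}\int_{\Gamma^R_t} \tfrac{1}{(1+v)(1+s)^3}|\pa_u Z_{\mB}^J \psi_C|^2\, dS\, dt,
\end{equation}
which is bounded by $c_0(\epsilon_0)\epsilon_C^2$ using \eqref{rightbdytrivialbds}. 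Any angular-derivative contributions generated when iterating commutators of mixed products can be absorbed using \eqref{rightangularbound} combined with \eqref{rightbdytrivialbds}, producing the $c_0(\epsilon_0)$ on the right-hand side. The only mildly delicate point is the bookkeeping at top order $|I|=N_C$, where we must avoid spending an angular derivative we cannot control: this is ensured by noting that $\Omega_{ij}$ commutes with $\evmB$ exactly, so the problematic commutation only occurs through $\sBt$, after which the remaining high-order factor is $\evmB Z_{\mB}^J\psi_C$ with one fewer vector field, matching the inductive hypothesis. Collecting everything and absorbing the nonlinear and $\epsilon_0$-small pieces yields \eqref{linearcommjumpRbd}.
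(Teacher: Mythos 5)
Your proposal is correct and follows essentially the same route as the paper: split off the commutator $[Z_{\mB}^I,\evmB]\psi_C$, replace $Z_{\mB}^I\evmB\psi_C$ by $Z_{\mB}^IY_R^-\psi_C$ plus the quadratic term handled by Lemma \ref{generalquadraticright}, and absorb the commutator remainders into $\sum_{|K|\le|I|-1}B^C_K(t_1)$ and the $(1+v)^{-1}(1+s)^{-1}$-weighted boundary bounds of Lemma \ref{bdsforpsiCalongshock} (the paper packages your explicit $\frac{u}{vs^2}\pa_u$ terms via the commutator estimate \eqref{evmbZcomm}). Your extra remark about invoking \eqref{rightangularbound} is unnecessary but harmless, since $\Omega$ commutes exactly with $\evmB$ and no angular losses occur.
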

\begin{proof}
 Recalling the definition of $Y_{R}^-$ from \eqref{YRdef1}, along $\Gamma^R$ we
 have
 \begin{align}
  |\evmB Z^I_{\mB} \psi_C|
	&\lesssim
	|Z_{\mB}^I \evmB \psi_C| + |[Z_{\mB}^I, \evmB] \psi_C|\\
	&\lesssim
	|Z_{\mB}^I Y_L^-\psi_C| + |Z^I_{\mB} \left((1+v)^{-1}Q(\pa \psi_C, \pa \psi_C)\right)|
	+ |[Z_{\mB}^I, \evmB] \psi_C|\\
	&\lesssim
	|Z_{\mB}^I Y_L^-\psi_C| + |Z^I_{\mB} \left((1+v)^{-1}Q(\pa \psi_C, \pa \psi_C)\right)|
	\\
	&\qquad+
	\sum_{|J| \leq |I|-1}
	|\evmB Z^J_{\mB} \psi_C|
	+ \frac{1}{(1+v)(1+s)} |\pa Z^J_{\mB} \psi_C|.
  \label{}
 \end{align}
 By Lemma \ref{generalquadraticright} and the definition of $B^C_K$,
 the contribution from the terms on the first line here are bounded by the
 right-hand side of \eqref{linearcommjumpRbd}, and the terms on the second
 line are easily handled after bounding
 \begin{multline}
  \int_{t_0}^{t_1} \int_{\Gamma^R_t} (1+v) \left( \frac{1}{(1+v)(1+s)} |\pa Z^J_{\mB} \psi_C|\right)^2\, dSdt
	\\
	\lesssim
  \int_{t_0}^{t_1} \int_{\Gamma^R_t} \frac{1}{1+v} \frac{1}{(1+s)^2}
	|\pa Z^J_{\mB} \psi_C|^2\, dSdt
	\lesssim c_0(\epsilon_0) \epsilon_C^2,
  \label{}
 \end{multline}
 recalling the bounds from Lemma \ref{bdsforpsiCalongshock} for $\psi_C$ along
 the right shock.
\end{proof}

We now prove the analogue of Lemma \ref{transferlemL}.
\begin{lemma}
	\label{transferlemR}
 Under the hypotheses of Proposition \ref{centertorightprop}, we have
 \begin{equation}
  \int_{t_0}^{t_1} \int_{\Gamma^R_t}
	(1+v) |Z_{\mB}^I Y_R^-\psi_C|^2\, dS dt
	\lesssim \Upsilon_R^+(t_1) +
    \sum_{|K| \leq |I|-1} B_K^C(t_1)
    + (c_0(\epsilon_0) + \epsilon_C^2)\epsilon_C^2 + c_0(\epsilon_0)
  \label{transferlemRbd}
 \end{equation}
 for a continuous function $c_0$ with $c_0(0) = 0$.
\end{lemma}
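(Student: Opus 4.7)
The strategy closely parallels the proof of Lemma \ref{transferlemL}, but in the central region with $\mB$-adapted fields instead of Minkowski ones. First I would apply Lemma \ref{mBtangentialdifference} with $\xi = u - B^R$ to convert $Z_{\mB}^I$ into tangential fields $Z_{\mB, T}^I$, obtaining
\begin{multline}
|Z_{\mB}^I Y_R^-\psi_C|
\lesssim \sum_{|J| \leq |I|} |Z_{\mB, T}^J Y_R^-\psi_C|
+ C(M)(1+s)\sum_{|J| \leq |I|-1}|Z_{\mB}^J n Y_R^-\psi_C|\\
+ C(M)\left(1+ (1+s)^{-1/2}|B^R|_{I, \mB}\right)(1+s)\sum_{|L|\leq |I|/2+2}|Z_{\mB}^L n Y_R^-\psi_C|.
\end{multline}
Since $Z_{\mB, T}^I$ is tangent to $\Gamma^R$, the boundary condition \eqref{introbcR} allows us to rewrite $Z_{\mB, T}^I Y_R^-\psi_C = Z_{\mB, T}^I Y_R^+\psi_R + Z_{\mB, T}^I G$, and the first term contributes $\Upsilon_{I, R}^+(t_1)$ as needed.

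It therefore suffices to prove, for $|J|\leq |I|-1$ or for $|L|\leq |I|/2+2$ together with the top-order control of $B^R$, the estimates
\begin{equation}
\int_{t_0}^{t_1}\int_{\Gamma^R_t} (1+v)(1+s)^2 |Z_{\mB}^J n Y_R^- \psi_C|^2\, dS dt
\lesssim \sum_{|K|\leq |I|-1} B_K^C(t_1) + (c_0(\epsilon_0) +\epsilon_C^2)\epsilon_C^2,
\end{equation}
together with the corresponding bound weighted by $|B^R|_{I,\mB}^2$, and the analogous estimate for $Z_{\mB, T}^I G$. For the first type of term, using the definition \eqref{YRdef1} of $Y_R^-$, we expand
\begin{equation}
|Z_{\mB}^J n Y_R^-\psi_C| \lesssim |Z_{\mB}^J n \evmB \psi_C| + |Z_{\mB}^J n \bigl((1+v)^{-1} Q(\pa \psi_C, \pa \psi_C)\bigr)|.
\end{equation}
The nonlinear piece is handled using Lemma \ref{generalquadraticright} together with the commutator $[n, Z_{\mB}]=0$. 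The crucial step is to absorb the factor $(1+s)|n|$ on the first term by using the wave equation \eqref{waveint0}: since $\evmB n \psi_C = n\evmB \psi_C$ modulo commutators, the equation expresses $\pa_u \evmB \psi_C$ in terms of $\sDelta \psi_C$, the null linear term, $\pa_\mu P^\mu$, and nonlinear errors, exactly as in \eqref{cmon000}. Each of these (with the weight $(1+v)(1+s)^2$ along $\Gamma^R$ where $|u|\sim s^{1/2}$) can be controlled by the right-shock boundary estimates \eqref{rightbdytrivialbds}, the angular estimate \eqref{rightangularbound}, and the bootstrap assumption on the energies, after using \eqref{ZmBtoZlem}-type transfers to convert between $Z$ and $Z_{\mB}$ derivatives when needed.

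The $|B^R|_{I,\mB}$-weighted term is handled as in \eqref{highnormbL1}: we pull out the sup of $|B^R|_{I,\mB}^2$ via the hypothesis \eqref{rightgeom} on $G^R_{N_C}$, and control the remaining sup-in-space norm along $\Gamma^R_t$ by Sobolev embedding, converting back to $Z_{\mB}$-derivative $L^2$-norms via \eqref{mBtangentialdifferencermk2}. Finally, the error term $Z_{\mB, T}^I G$ coming from \eqref{Gstructure}-\eqref{Gexpression} is handled as in the proof of \eqref{errorbdytermleft}: the terms coming from the profile $\Sigma$ contribute $c_0(\epsilon_0)$, and the quadratic terms involving $\psi_C$ and $\psi_R$ are bounded using the angular Hardy inequality \eqref{controlangularhardyright} together with the pointwise decay estimate \eqref{pwcentral} and the boundary bounds from Lemma \ref{bdsforpsiCalongshock}. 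The main technical obstacle is, as in Lemma \ref{transferlemL}, the interplay between the order of differentiation and the weights: we can afford to lose one or two derivatives into $B^R$ only because the hypothesis \eqref{rightgeom} gives top-order $L^2$-control of $Z_{\mB, T} B^R$ along $\Gamma^R_t$, and we can afford the weight $(1+s)^2$ on the $n$-derivative terms only because the equation lets us replace one factor of $(1+s) n$ by $s\sDelta$ plus nonlinear errors, both of which are controlled by the central-region energies at the needed order.
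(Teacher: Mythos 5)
Your overall scaffold matches the paper's proof: convert $Z_{\mB}^I$ to tangential fields via Lemma \ref{mBtangentialdifference}, use the boundary condition \eqref{introbcR} on the tangential part to produce $\Upsilon_{I,R}^+$, reduce to weighted bounds for $(1+s)|Z_{\mB}^J nY_R^-\psi_C|$ with $|J|\leq |I|-1$, handle the quadratic terms with Lemma \ref{generalquadraticright}, use the wave equation to trade $n\evmB\psi_C$ for $\sDelta\psi_C$ plus the remaining terms, treat the $|B^R|_{I,\mB}$-weighted piece by Sobolev embedding on $\Gamma^R_t$ together with \eqref{rightgeom}, and treat $G$ as in \eqref{errorbdytermleft}. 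However, there is one genuine gap, and it is precisely the point the paper singles out as the delicate step: the linear term $\pa_\mu\bigl(\tfrac{u}{vs}a^{\mu\nu}\pa_\nu\psi_C\bigr)$ cannot be handled ``exactly as in \eqref{cmon000}''. In \eqref{cmon000} the structure of $a$ is deliberately ignored, which is admissible there because only below-top-order derivatives appear and the resulting error is absorbed using $\epsilon_C\leq\epsilon_L^2$. Here the term appears \emph{linearly}, and when $|I|=N_C$ the quantity $Z_{\mB}^J\pa_\mu(\tfrac{u}{vs}a^{\mu\nu}\pa_\nu\psi_C)$ with $|J|\leq |I|-1$ produces, after rewriting, top-order factors $\pa Z_{\mB}^{J'}\psi_C$ with $|J'|=N_C$ along $\Gamma^R$. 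The generic (structure-blind) bound gives a contribution of size $(1+v)^{-1}|\pa Z_{\mB}^{J'}\psi_C|$, so after inserting the weight $(1+v)(1+s)^2$ you would need to control
\begin{equation}
\int_{t_0}^{t_1}\int_{\Gamma^R_t}\frac{(1+s)^2}{1+v}\,|\pa Z_{\mB}^{J'}\psi_C|^2\, dS\, dt ,
\end{equation}
whereas the only top-order control available on the timelike side of $\Gamma^R$ from \eqref{rightbdytrivialbds} is $\int (1+v)^{-1}(1+s)^{-1}|\pa Z_{\mB}^{J'}\psi_C|^2\lesssim\epsilon_C^2$ — a deficit of $(1+s)^3$ that no smallness factor $c_0(\epsilon_0)$ or $\epsilon_C$ can repair.

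The paper's proof closes this by exploiting the null condition \eqref{intronullcondn0}: writing $a^{\mu\nu}\pa_\mu\pa_\nu\psi_C=a^{\mu\nu}\opa_\mu\pa_\nu\psi_C$, and, for the term where the derivative falls on $u/(vs)$, splitting according to whether the good direction lands on the coefficient or on $\psi_C$, so that the top-order contributions come either with $\opa\in\{\pa_v,\nas\}$ derivatives (for which the strong bounds $v|\evmB\psi_C^{J'}|^2$ and $(1+s)^{-1/2}|\nas\psi_C^{J'}|^2$ along $\Gamma^R$ suffice against the weight $(1+s)^2/(1+v)$) or with an extra factor $(1+|u|)/(1+v)$ that reduces the weight to $(1+s)^2/(1+v)^3$. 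Without this step, your estimate for \eqref{bdytermright1} fails at $|I|=N_C$, and with it the lemma. The rest of your argument (including the $B^R$-weighted term and the treatment of $G$) is consistent with the paper, modulo the minor bookkeeping that the top-order norm of $B^R$ in \eqref{GRnorm} carries the weight $(1+s)^{-1}$, which is exactly how the paper pulls it out in \eqref{highnormbR1}.
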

\begin{proof}
 By Lemma \ref{mBtangentialdifference}, we have the bound
 \begin{multline}
|Z_{\mB}^I Y_R^-\psi_C|
\lesssim
|Z_{\mB, T}^I Y_R^- \psi_C|
+ C(M)(1+s)\sum_{|J| \leq |I|-1} |Z_{\mB}^J n Y_R^- \psi_C|\\
+ C(M)(1+s)(1 + (1+s)^{-1/2} |B^R|_{I, \mB})\sum_{|K| \leq |I|/2+1}
|Z^K_{\mB} nY_R^-\psi_C|.
  \label{}
 \end{multline}
 Since the fields $Z_{\mB, T}$ are tangent to the shock, by
 the boundary condition  \eqref{introbcR} we have
 $Z_{\mB, T}^I Y_{R}^- \psi_C = Z_{\mB, T}^I Y_{R}^+ \psi_R
 + G$ at the shock, so to conclude it is enough to prove that
 for $|I| \leq N_C$, we have the following estimates,
 \begin{equation}
  \int_{t_0}^{t_1} \int_{\Gamma^R_t} (1+v)(1+s)^2
	|Z^J_{\mB} n Y_R^-\psi_C|^2\, dS dt
	\lesssim 
    \sum_{|K| \leq |I|-1} B^C_K(t_1)
	(c_0(\epsilon_0) + \epsilon_C^2)\epsilon_C^2
	 \qquad |J| \leq |I|-1,
  \label{bdytermright1}
 \end{equation}
 \begin{multline}
  \int_{t_0}^{t_1} \int_{\Gamma^R_t}
	(1+v) |B^R|_{I, \mB}^2 (1+s) | Z^K_{\mB} n Y_R^-\psi_C|^2\, dS dt
	\\
	\lesssim 
    \sum_{|K| \leq |I|-1} B^C_K(t_1)+
    (c_0(\epsilon_0) + \epsilon_C^2)\epsilon_C^2 \qquad |K| \leq |I|/2+1,
  \label{bdytermright2}
 \end{multline}
 \begin{equation}
  \int_{t_0}^{t_1} \int_{\Gamma^R_t}
	(1+v) |Z_{\mB, T}^I G|\, dS dt
	\lesssim
    \sum_{|K| \leq |I|-1}  B^C_K(t_1)+ 
    (c_0(\epsilon_0) + \epsilon_C^2)\epsilon_C^2 + c_0(\epsilon_0) \qquad |K| \leq |I|/2+1.
  \label{}
 \end{equation}
 As in the proof of Lemma \ref{transferlemL} we just prove the first two
 bounds here, the bound for the remainder term $G$ being similar.

 To prove \eqref{bdytermright1}, we recall the definition of $Y_R^-$ from
 \eqref{YR0} and use that
 $(1+s)|\pa q|\lesssim \sum_{Z_{\mB} \in \mathbb{Z}_{\mB}} |Z_{\mB} q|$ to gain
 an extra power of $s$ and bound
 \begin{equation}
  (1+s) |Z^J_{\mB} n Y_R^- \psi_C|
	\lesssim
	(1+s) |Z^J_{\mB} n \evmB \psi_C|
	+  \sum_{|J'| \leq |J|+1}
	|Z_{\mB}^{J'} \left( (1+v)^{-1} Q(\pa \psi_C, \pa \psi_C)\right)|.
  \label{}
 \end{equation}
 By \eqref{generalquadraticright}, the term contributed from the second
 term here into \eqref{bdytermright1} satisfies the needed bound. For the
 first term here, we use the equation \eqref{waveint2} to bound
 \begin{multline}
  (1+s)|Z^J_{\mB} n \evmB \psi_C|
	\lesssim (1+s) |Z_{\mB}^J \sDelta \psi_C|
	+ (1+s)|Z_{\mB}^J \left( \pa_\mu (\tfrac{u}{vs} a^{\mu\nu}\pa_\nu \psi_C)\right)|
	\\
	+\sum_{|J'| \leq |J|+1}
	|Z_{\mB}^{J'} \left((1+v)^{-1} Q(\pa \psi_C, \pa \psi_C)\right)|
	+ (1+s)|Z^J F'|.
  \label{useeqnbdy0R}
 \end{multline}
 The quadratic term here can be handled by Lemma \ref{generalquadraticright},
 the first term here can be handled by writing $\sDelta = \frac{1}{r^2} \Omega^2$
 and making straightforward estimates, and the last term as usual is easier to handle
 then either of these terms. We will therefore just prove the bound for
 the term involving $a$.
 Unlike in \eqref{cmon000} where we did not need to worry about the
 structure of this term, here we will need to use the fact
 that that term verifies the null condition \eqref{intronullcondn0}.
 This is because this term appears linearly here and we have
 very weak control over the solution at top order along
 the shocks, whereas in \eqref{cmon000}
 we could afford to treat this term as an error term because we only needed
 to consider lower-order derivatives and because we took $\epsilon_C \leq
 \epsilon_L^2$.

 Noting that $a^{\mu\nu}\pa_\mu\pa_\nu \psi_C = a^{\mu\nu}\opa_\mu\pa_\nu \psi_C$,
 we bound
 \begin{equation}
  (1+s) |Z_{\mB}^I  \left( \tfrac{u}{vs} a^{\mu\nu}\pa_\mu \pa_\nu \psi_C)\right)|
	\lesssim
	\frac{1+s}{1+v}\sum_{|J| \leq |I|+1} |\opa Z_{\mB}^I \psi_C|
	 + \frac{1+s}{(1+v)^2}
	\sum_{|J| \leq |I|+1} |\pa Z_{\mB}^I \psi_C|,
  \label{}
 \end{equation}
 after using \eqref{comm1} to commute our fields with
 $\opa\in\{\pa_v, \nas\}$. 
 As for the term where the derivative falls on the factor $u/vs$,
 thanks to the null condition \eqref{intronullcondn0}, we can write
 \begin{align}
  a^{\mu\nu}\left(\pa_\mu \frac{u}{vs}\right) \pa_\nu \psi_C &=
	a_1^{\mu\nu} \left(\opa_\mu \frac{u}{vs}\right) \pa_\nu \psi_C
	+ a_2^{\mu\nu} \left(\pa_\mu \frac{u}{vs}\right) \opa_\nu \psi_C\\
	&= \frac{1+|u|}{(1+v)^2(1+s)}b_1^{\mu} \pa_\mu \psi_C
	+ \frac{1}{(1+v)(1+s)} b_2^{\mu} \opa_\mu \psi_C,
  \label{}
 \end{align}
 where the coefficients above satisfy the symbol condition
 \eqref{strongsymbol}. Since $Z_{\mB}^J |u| \lesssim (1+s)$
 for any $J$
 it follows from this observation and the fact that the $a^{\mu\nu}$
 satisfy \eqref{strongsymbol} that
 \begin{equation}
  (1+s) |Z_{\mB}^I  \left(   a^{\mu\nu}\pa_\mu(\tfrac{u}{vs})\pa_\nu \psi_C)\right)|
	\lesssim
	\frac{1+s}{(1+v)^2} \sum_{|I'| \leq |I|}
	|\pa Z_{\mB}^{I'}\psi_C|
	+ \frac{1}{1+v} \sum_{|I'| \leq |I|}
	|\opa Z_{\mB}^{I'} \psi_C|.
  \label{}
 \end{equation}
 We note at this point that if we had not made use of the structure of $a$,
 for the last term we would only have $(1+v)^{-1}|\pa Z_{\mB}^{I'}\psi_C|$,
 and the contribution from this term into \eqref{bdytermright1} would be
 too large for us to handle when $|I| = N_C$ since at top-order we
 can only hope to control
 $\int_{t_0}^{t_1} \int_{\Gamma^R_t} (1+v)^{-1}(1+s)^{-1} |\pa Z_{\mB}^{I'}\psi_C|^2$.
Combining the last two bounds and using that $a^{\mu\nu}$ satisfy the
 symbol condition \eqref{strongsymbol}, we find
 \begin{multline}
  \int_{t_0}^{t_1} \int_{\Gamma^R_t} (1+v)(1+s)^{2}
	 |Z_{\mB}^I  \pa_\mu\left(\tfrac{u}{vs}a^{\mu\nu}\pa_\nu \psi_C)\right)|^2\,dS
	 dt
	 \\
	 \lesssim
  \sum_{|J| \leq |I|+1}
	\int_{t_0}^{t_1} \int_{\Gamma^R_t}
	\left(\frac{(1+s)^2}{1+v} (|\pa_v Z_{\mB}^J \psi_C|^2
	+ |\nas Z_{\mB}^J\psi_C|^2)
	+ \frac{(1+s)^2}{(1+v)^3} |\pa Z_{\mB}^J\psi_C|^2\right)\, dS dt\\
	\lesssim c_0(\epsilon_0) \epsilon_C^2,
  \label{}
 \end{multline}
 which is of the correct form for \eqref{bdytermright1}
 for $|I| \leq N_C$.

 We now move on to proving \eqref{bdytermright2}. For this we bound
 \begin{multline}
  \int_{t_0}^{t_1} \int_{\Gamma^R_t}
	(1+v) |B^R|_{I, \mB}^2 (1+s) | Z^K_{\mB} n Y_R^-\psi_C|^2\, dS dt\\
	\lesssim
	\left(\sup_{t_0 \leq t \leq t_1} \int_{\Gamma^R_t} \frac{|B^R|_{I, \mB}^2}{1+s}\, dS\right)
	\left(\int_{t_0}^{t_1} \sup_{\Gamma^R_t} (1+v)(1+s)^{2}| Z^K_{\mB} n Y_R^-\psi_C|^2\right),
  \label{highnormbR1}
 \end{multline}
 and using Sobolev embedding on $\Gamma^R_t$ to handle the second factor as in
 the proof of Lemma \ref{transferlemL} and using the above bound
 \eqref{bdytermright1} again and the bound \eqref{rightgeom}
 for $G^R_{N_C}$ (defined in\eqref{GRnorm}), we get the result.
 Note that here we are able to close the estimate even though we only
 control a relatively weak norm of $B^R$ at top order in light of
the strong decay estimates we have for $n Y_R^-\psi_C$ at low order.
\end{proof}

We now prove the analogue of Proposition \ref{UpsilonLplusbd}. This is
fortunately much simpler than that result since we have extremely good
control over the solution on the spacelike side of the right shock.
\begin{lemma}
	\label{UpsilonRplusbd}
	With $\Upsilon_{I, R}^+$ defined as in \eqref{UpsilondefR}, under the
  hypotheses of Proposition \ref{centertorightprop}, we have
  \begin{equation}
      \sum_{|I|\leq N_C} \Upsilon_{I, R}^+(t_1) \lesssim \epsilon_R^2.
   \label{upsilonLplusbdbd}
  \end{equation}
\end{lemma}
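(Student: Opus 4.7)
The plan is to follow the same template as Proposition \ref{UpsilonLplusbd}, but the estimate will be substantially simpler because along $\Gamma^R$ the potential $\psi_R$ comes equipped with the very strong weights $(1+|u|)^\mu$ and $(1+s)^\nu$ with $\mu, \nu$ large (chosen in \eqref{parameters}), so there is enormous margin in all the estimates. I would first apply Lemma \ref{mBtangentialdifference} to replace the tangential fields $Z_{\mB,T}^I$ by ordinary fields $Z_{\mB}^I$, at the cost of terms of the form $(1+s)|Z_{\mB}^J n Y_R^+\psi_R|$ with $|J|\le |I|-1$, a top-order term multiplied by $|B^R|_{I,\mB}$, and a nonlinear/remainder contribution that is handled as in the proof of Lemma \ref{transferlemR}.

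Next, since $\psi_R$ is naturally controlled by the Minkowski commutators $Z\in\mathcal{Z}_m$, I would convert $Z_{\mB}^I$ into products of Minkowski fields. This is harmless along $\Gamma^R$ because $|u|\sim s^{1/2}$ there, so $|\sBo q| = s|\pa_u q| \lesssim s|u|^{-1}\sum_{Z\in\mathcal{Z}_m}|Zq|\lesssim s^{1/2}\sum|Zq|$, and $\sBt = v\pa_v$ is already a Minkowski field. After this conversion, the goal reduces to bounding
\begin{equation*}
\int_{t_0}^{t_1}\int_{\Gamma^R_t}(1+v)(1+s)^{N_C}|Z^I Y_R^+\psi_R|^2\, dS\, dt \lesssim \epsilon_R^2,
\end{equation*}
where I have absorbed all the $s$-losses from the two conversions into the weight. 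Since $\nu \geq N_C$ by \eqref{parameters}, this fits well within \eqref{rightrivialbdybds}.

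Recalling $Y_R^+\psi_R = \pa_v\psi_R + \frac{1}{v}Q_R(\pa\psi_R,\pa\psi_R)$, commute $\pa_v = \ell^m$ with $Z^I$ using \eqref{ellZcomm}; the resulting commutator terms involve $\pa_v Z^{J}\psi_R$ for $|J|\le |I|$ and lower-order derivatives, each of which is bounded by the boundary part of the rightmost energy \eqref{ERdef} with enormous weight to spare. For the quadratic remainder, a product estimate in the spirit of Lemma \ref{generalquadraticright} works easily: pointwise decay \eqref{pwright} gives $|\pa Z^K\psi_R|\lesssim \epsilon_R(1+|u|)^{-\mu/2}\lesssim \epsilon_R(1+s)^{-\mu/4}$ for $|K|\le N_R/2+1$, and with $\mu\ge 2N_C+3/2$ this supplies orders of magnitude more decay than needed, reducing the nonlinear contribution to $\epsilon_R^2$ times a higher-order energy.

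Finally, the top-order term weighted by $|B^R|_{I,\mB}^2$ is handled, as in Lemma \ref{transferlemR}, via Sobolev embedding on $\Gamma^R_t\cong \mathbb{S}^2$ combined with the bootstrap bound \eqref{rightgeom} for $G^R_{N_C}$ and the strong control of low-order derivatives of $\psi_R$ along $\Gamma^R$. There is no real obstacle in this proof; the only thing to be careful about is bookkeeping in the two vector-field transfers so that the weight $(1+s)$ losses remain harmlessly dominated by $(1+s)^\nu$ in \eqref{rightrivialbdybds}, which is guaranteed by our choice $\nu \ge N_C$.
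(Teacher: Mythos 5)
Your plan is essentially the paper's proof: convert the tangential fields $Z_{\mB,T}^I$ to $Z_{\mB}^I$ via Lemma \ref{mBtangentialdifference}, then to Minkowski fields along $\Gamma^R$ using $|u|\sim s^{1/2}$ (this is exactly \eqref{ZmBtoZweight}), bound the resulting main term by the $r(\log r)^\nu$ part of the boundary energy \eqref{rightrivialbdybds} using $|I|\le N_C\le\nu$, control the quadratic remainder with \eqref{pwright} and the $(1+|u|)^\mu$ weight, and treat the $|B^R|_{I,\mB}$-weighted piece by Sobolev embedding on $\Gamma^R_t$ together with \eqref{rightgeom}. The one point where you diverge from (or rather gloss over) the paper is the family of terms $(1+s)\,|Z_{\mB}^J n Y_R^+\psi_R|$, $|J|\le|I|-1$, produced by Lemma \ref{mBtangentialdifference}: these contain a genuine second derivative $\pa_u\pa_v\psi_R$, which is not directly in the boundary energy, and your claim that ``all $s$-losses'' fit under the weight $(1+s)^{N_C}$ is not automatic. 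The paper resolves this by invoking the wave equation \eqref{waveext2} to replace $n\evm\psi_R$ by $\sDelta\psi_R$ plus nonlinear terms, gaining a full factor $(1+v)^{-1}$ so that the $(1+|u|)^\mu$ part of the energy absorbs everything with room to spare. Your route can also be closed without the equation, but only because of a precise cancellation you should make explicit: converting the extra $\pa_u$ into $(1+|u|)^{-1}Z\sim(1+s)^{-1/2}Z$ along $\Gamma^R$ offsets half of the $(1+s)$ loss, and the restriction $|J|\le|I|-1$ in the sum supplies the remaining $(1+s)^{1/2}$, so the squared weight comes out to $(1+v)(1+s)^{|I|}\le(1+v)(1+s)^{N_C}$ and $\nu\ge N_C$ from \eqref{parameters} is just enough; if instead the extra derivative were kept and only the tangential loss counted, one would need $\nu\ge N_C+1$, which \eqref{parameters} does not guarantee. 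So the proposal is correct in outline, but this bookkeeping (or the paper's use of the equation) is the one nontrivial step and needs to be spelled out rather than asserted.
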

\begin{proof}
	We will need to replace the vector fields $Z_{\mB}$ with the Minkowskian
	fields $Z$. For this, we start with the observation that
	the fields $Z_{\mB}$
	and the fields $Z$ satisfy
	\begin{equation}
	 Z_{\mB} = \sum_{Z \in \mathcal{Z}_m} c_{Z_{\mB}}^Z \frac{1+s}{1+|u|} Z + d_{Z_{\mB}}^Z Z
	 \label{}
	\end{equation}
	where the coefficients satisfy the symbol condition \eqref{strongsymbol}.
  This follows easily from the well-known identity \eqref{nullintoZ}
	which expresses $\pa_u, \pa_v$ in terms of the Minkowskian fields. Repeatedly
	applying this formula and using basic properties of the fields $Z$
	gives the bound
	\begin{equation}
	 |Z_{\mB}^I q| \lesssim (1+s)^{|I|/2} \sum_{|J| \leq |I|} |Z^J q|,
     \qquad \text{ along } \Gamma^R.
	 \label{ZmBtoZweight}
	\end{equation}

    We now prove \eqref{upsilonLplusbdbd}.
  We start by using Lemma \ref{mBtangentialdifference} to convert
	the tangential fields to the fields $Z_{\mB}$ at
    the right shock, which gives 
	\begin{align}
	 |Z_{\mB, T}^I Y^+_R \psi_R|
	 &\leq |Z_{\mB}^I Y^+_R\psi_R| +
	 C(M)(1+s)
	 \sum_{|J| \leq |I|-1}|Z_{\mB}^J n Y^+_R \psi_R|
	 \\
&\qquad+ C(M)(1+s) \left(1 + (1+s)^{-1/2}|B^R|_{I, \mZB} \right)
\sum_{|L| \leq |I|/2+2} |Z_{\mB}^L n Y^+_R \psi_R|\\
&
\leq 
C (1+s)^{|I|/2}\sum_{|J| \leq |I|}  |Z^J Y^+_R\psi_R| +
C(M) (1+s)^{|I|/2+1} \sum_{|J| \leq |I|-1}|Z^J nY^+_R \psi_R|\\
&\qquad+ C(M) (1+s)^{|I|/2+1} \left(1 + (1+s)^{-1/2}|B^R|_{I, \mZB} \right)
\sum_{|L| \leq |I|/2+2} |Z^Ln Y^+_R \psi_R|,
	 \label{upsilonpw}
	\end{align}
	where we used \eqref{ZmBtoZweight} in the second step.
	We now handle these terms in the usual way. Recalling the definition
	of $Y^+_R$ from \eqref{YRdef2}, we first bound
    \begin{equation}
     |Z^J Y^+_R\psi_R|
         \lesssim |\evm Z^J\psi_R| +|[\evm, Z^J] \psi_-^R| +  |Z^J\left((1+v)^{-1} Q(\pa \psi_R, \pa \psi_R)\right)|
      \label{ZJYplusR}
    \end{equation}
    Inserting this into the right-hand side of \eqref{upsilonpw}, for $|I| \leq N_C$, the contribution from
    the first term here into
    $\Upsilon_{I, R}^+$ is bounded by
    \begin{equation}
      \label{}
      \sum_{|J| \leq |I|}\int_{t_0}^{t_1} \int_{\Gamma^R_t} (1+v)(1+s)^{|I|}|\evm Z^J\psi_R|^2\,dS dt
      \lesssim
      \sum_{|J| \leq |I|} \int_{t_0}^{t_1} \int_{\Gamma^R_t} r(\log r)^{\nu} |\evm Z^J\psi_R|^2\,dS dt
      \lesssim \epsilon_R^2,
    \end{equation}
    where we used that $r \sim v$ along $\Gamma^R$,
    the bound \eqref{rightboot} for the boundary term in the definition
    of the energy $\mathcal{E}_R$ in \eqref{ERdef}, and the fact that by our choice of parameters
    \eqref{parameters}, $|I| \leq N_C \leq \nu$. 

    To handle the contribution from the nonlinear term in \eqref{ZJYplusR} into 
    $\Upsilon_{I, R}^+$, we bound
    \begin{multline}
	 \int_{t_0}^{t_1} \int_{\Gamma^R_t} (1+v)(1+s)^{|I|+2}
	 |Z^J \left( (1+v)^{-1} Q(\pa \psi_R, \pa \psi_R)\right)|^2 \, dS dt
	 \\
	 \lesssim \sum_{|J'| \leq |J|}
	 \sum_{|K| \leq |J|/2+1}\int_{t_0}^{t_1} \int_{\Gamma^R_t}
	 \frac{1}{1+v} (1+s)^{N_C+2} |\pa Z^{J'} \psi_R|^2 |\pa Z^K\psi_R|^2\,dS dt
	 \\
	 \lesssim \sum_{|J'| \leq |J|}
	 \int_{t_0}^{t_1} \int_{\Gamma^R_t}\frac{1}{1+v} (1+s)^{N_C+2-\mu}
	 |\pa Z^{J'} \psi_R|^2 \,dS dt
	 \lesssim \epsilon_R^2,
	 \label{lastQbd}
	\end{multline}
    where we used that $N_C+2-\mu \leq \mu-1/2$ by our choice of $\mu$ 
    in \eqref{parameters}. 
    The contribution from $[Z^J, \evm]$ from \eqref{ZJYplusR} into our estimates
    is straightforward to handle using \eqref{ellZcomm} so we skip it.

    It remains only to handle the terms involving $nY_R+$ from \eqref{upsilonpw}.
    We just show how to handle the term on the first line of \eqref{upsilonpw}
    since the term on the second line can be handled using the same idea.
    For both of these terms, the idea is to write $nY^+_R \psi_R = n\evm\psi_R 
    +  (1+v)^{-1} n(Q(\pa \psi_R, \pa \psi_R))$, and to handle the first term
    by using the equation \eqref{waveext2} for $\psi_R$.
    This gives
    \begin{align}
	\int_{t_0}^{t_1} \int_{\Gamma^R_t}
	(1+v)(1+s)^{|I|+2}	& | Z^J n \ev \psi_R|^2\,dS dt
	\\
	&\quad\lesssim
	\int_{t_0}^{t_1} \int_{\Gamma^R_t}
	(1+v)(1+s)^{|I|+2} |Z^J \sDelta \psi_R|^2\, dS dt\\
	&\quad +\int_{t_0}^{t_1} \int_{\Gamma^R_t} (1+v)(1+s)^{|I|+2}
	|Z^J \left( (1+v)^{-1} Q(\pa \psi_R, \pa \psi_R)\right)|^2 \, dS dt\\
	 &\quad+\int_{t_0}^{t_1} \int_{\Gamma^R_t} (1+v)(1+s)^{|I|+2}
	|Z^J F|^2 \, dS dt.
	 \label{}
	\end{align}
	As usual we skip the bounds for the last term. The second term here is bounded
	exactly as in \eqref{lastQbd}. The first term can be handled after writing
	$\sDelta  = \frac{1}{r} \Omega \nas $ and using straightforward estimates
	along with the bounds \eqref{rightrivialbdybds} along the spacelike side of the right shock.
    The nonlinear term contributed by using the above formula for $nY^+_R$
    can be handled exactly as in \eqref{lastQbd}.

\end{proof}

\section{The transport equation for the boundary-defining function}
\label{bdsforbdfsec}

In the last three sections, we showed that provided the shocks $\Gamma^L, \Gamma^R$
were close to the model shocks (in the sense that \eqref{logeom} holds, with
$K^R, K^L$ as in \eqref{KRdef}-\eqref{KLdef}), and provided that we have bounds
for high-order derivatives of the boundary-defining functions $B^L, B^R$
(namely, the bounds \eqref{leftgeom}-\eqref{rightgeom}),
we can improve the bounds from our bootstrap assumptions
\eqref{rightboot}-\eqref{leftboot} for the potentials $\psi_L, \psi_C, \psi_R$.
The goal of this section is to show that we can improve the bounds \eqref{logeom} and
\eqref{leftgeom}-\eqref{rightgeom} describing the positions of the shocks.
This is done in the upcoming Propositions \ref{leftshockbootstrap} and \ref{rightshockbootstrap}.

\begin{prop}[Improved estimates for the geometry of the left shock]
  \label{leftshockbootstrap}
  Under the hypotheses of Proposition \ref{bootstrapprop},
  there is a continuous function $c_0$ with $c_0(0) = 0$
  so that the function
  $B^L$ which defines the left shock satisfies the pointwise estimates	
  \begin{align}
	 	\left| \frac{B^L(t,x)}{s^{1/2}} -  1\right|
 		+ (1+s)^{1/2}\left|\pa_s B^L(t,x) - \frac{1}{2s} B^L(t,x)\right|
        &\leq \mathring{K}^L + c_0(\epsilon_0) \epsilon_L,
        \label{lowestbootstrapofBL}\\
		 \left| \frac{\Omega B^L(t,x)}{s^{1/2}}\right|
        &\leq \slashed{\mathring{K}}^L+ c_0(\epsilon_0) \epsilon_L ,
		% &&\quad \text{ along } \cup_{t_0 \leq t' \leq t_1} \Gamma^R_{t'}
	 \label{lowestbootstrapofsBL}
	\end{align}
	along $\cup_{t_0 \leq t' \leq t_1} \Gamma^L_{t'}$,
    where $\mathring{K}^L, \slashed{\mathring{K}}^L$
    are the norms of the initial data defined in \eqref{KL0}-\eqref{sKL0},
    We also have the integrated estimates
    \begin{equation}
      \label{improvedbdleftshock}
         \sum_{|I| \leq N_L, |I|\geq 1} \sup_{t_0 \leq t \leq t_1}
         \int_{\Gamma^L_t} \frac{1}{1+s} |Z_T^I B^L|^2\, dS +
         \sum_{|I| \leq N_L/2+1} \sup_{t_0 \leq t \leq t_1}
         \sup_{\Gamma^L_t} \frac{1}{1+s} |Z_T^I B^L|^2
         \leq \mathring{G}^L_{N_L} +  c_0(\epsilon_0)\epsilon_L.
    \end{equation}
    In particular, if $\epsilon_0,\epsilon_1, \epsilon_2$ are taken sufficiently small, with
    $K^L$ defined as in \eqref{KLdef},
    $\slashed{K}^L$ defined as in \eqref{sKLdef},
    and $G^L$ defined as in
    \eqref{GLnorm}, we have the bounds
    \begin{equation}
      \label{}
      K^L(t_1) \leq \epsilon_1^{3/2},
      \qquad
      \slashed{K}^L(t_1) \leq \epsilon_2^{3/2}
      \qquad
      G^L(t_1) \leq M_0^L + \epsilon_L^2,
    \end{equation}
    with $M_0^L$ defined as in \eqref{initialdatabds}.
\end{prop}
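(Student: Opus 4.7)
The strategy rests on the evolution equation \eqref{betaa} for $\beta_s^L = B^L|_{\Gamma^L}$, which can be rewritten as
\begin{equation}
    \frac{d}{ds}\left(\frac{\beta_s^L(\omega)}{s^{1/2}}\right) = \frac{1}{s^{1/2}}\left[\frac{1}{2}(\pa_u\psi_C + \pa_u\psi_L) + N(\pa\psi_L, \pa\psi_C)\right]_{u=\beta_s^L(\omega)},
\end{equation}
so that the lowest-order estimate \eqref{lowestbootstrapofBL} reduces, after integration from $s_0$ to $s$, to controlling $\int_{s_0}^s \tau^{-1/2} |\pa_u\psi_A(\tau, \beta_\tau^L(\omega),\omega)|\, d\tau$ pointwise in $\omega$, and to controlling the right-hand side pointwise for the $\pa_s B^L - B^L/(2s)$ piece. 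The pointwise piece is immediate from Lemma \ref{basicpwdecay}, which gives $(1+s)^{1/2}|\pa \psi_L| \lesssim \epsilon_L (\log\log s)^\alpha(\log s)^{-(\alpha-1)}$; since $\alpha > 1$ this decays to $0$ as $s\to\infty$ and is hence bounded by $c_0(\epsilon_0)\epsilon_L$ when $s_0$ is taken sufficiently large (the analogous bound for $\pa\psi_C$ gives $\epsilon_C$-contributions which are absorbed since $\epsilon_C \ll \epsilon_L$ by \eqref{epsparameters}).

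For the integrated piece, I will follow the Cauchy--Schwarz argument sketched in the introduction: with the weight $f(s^{1/2}) \sim \log s (\log\log s)^\alpha$,
\begin{equation}
    \int_{s_0}^{s_1} \tau^{-1/2}|\pa_u\psi_L(\tau,\omega)|\,d\tau \leq \left(\int_{s_0}^{s_1} \frac{d\tau}{\tau \log\tau(\log\log\tau)^\alpha}\right)^{1/2}\left(\int_{s_0}^{s_1} f(\tau^{1/2})|\pa_u\psi_L|^2\,d\tau\right)^{1/2},
\end{equation}
where the first factor is $c_0(\epsilon_0)$ since $\alpha > 1$, and the second is $\lesssim \epsilon_L$ after passing from $L^2$-in-$s$ control to pointwise-in-$\omega$ via Sobolev embedding on $\mathbb{S}^2 \sim \Gamma^L_t$: commuting $\Omega^J$ with the transport equation (for $|J| \leq 2$) produces quantities controlled by the boundary contribution $B^L_I$ of the energy \eqref{EDLdef} via $ds = dv/v$, which is $\lesssim \epsilon_L^2$ by the bootstrap \eqref{leftboot}. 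The commutator terms from the chain rule $\Omega[\pa_u\psi|_{u=\beta}] = \Omega\pa_u\psi|_{u=\beta} + \Omega\beta \cdot \pa_u^2\psi|_{u=\beta}$ couple $\Omega\beta/s^{1/2}$ back to itself with a small, time-integrable coefficient, which is handled by a trivial Gr\"onwall argument.

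The angular estimate \eqref{lowestbootstrapofsBL} is proved identically, applying $\Omega$ to \eqref{betaa} and integrating, and the higher-order $L^2$ estimates \eqref{improvedbdleftshock} proceed by induction on $|I|$: applying a product $Z_T^I$ to \eqref{betaa} produces terms bilinear in derivatives of $\beta^L$ and $\psi$ of order up to $|I|$, with the lower-order factors bounded in $L^\infty$ by the inductive hypothesis (via Sobolev on $\Gamma^L_t$ for $|I|/2+1 \leq N_L/2+1$) and the top-order factors controlled in $L^2$ by either the bootstrap energy or, inductively, by the very estimate we are proving. For the $L^2$-in-$\omega$ bounds one does not need the pointwise extraction and Cauchy--Schwarz in $s$ alone, combined with the bootstrap bound on $B^L_I$, gives the required $c_0(\epsilon_0)\epsilon_L$ improvement. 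Once these estimates are established, the improved bounds on $K^L, \slashed{K}^L, G^L$ follow immediately by choosing $\epsilon_0,\epsilon_1,\epsilon_2$ small enough relative to $\mathring{K}^L, \slashed{\mathring{K}}^L, \mathring{G}_{N_L}^L$.

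The main obstacle is the top-order case $|I| = N_L$ of \eqref{improvedbdleftshock}, where, as emphasized in the discussion following \eqref{betaa} in the introduction, the modulation parameter $\beta^L$ and $\psi_L$ couple \emph{linearly at the same order of differentiability}, so naively differentiating \eqref{betaa} would cost a derivative we do not control. The remedy, as indicated in Remark \ref{eliminaterem}, is the alternative identity $\nas\beta = -(s/u)[\nas\psi] + N(\pa\psi)$: whenever the full $|I|$ derivatives would produce $\nas^{|I|+1}\beta$, we trade the outermost $\nas$ using this identity for a jump of a spatial derivative of $\psi$, which is controlled at the correct order by our boundary energies. Together with the integrability of $1/(\tau \log\tau(\log\log\tau)^\alpha)$ and the largeness assumption \eqref{largestart} on $t_0$, this yields the required $c_0(\epsilon_0)\epsilon_L$ factor and closes the estimate.
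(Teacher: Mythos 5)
Your overall scheme is the one the paper actually uses in Section \ref{bdsforbdfsec}: restrict the transport equation \eqref{localBAeqntr} to $\Gamma^L$, commute with tangential operators, integrate in $s$, apply Cauchy--Schwarz with the weight $1/((1+s)\log s(\log\log s)^\alpha)$ whose integrability (from $\alpha>1$, together with the largeness of $t_0$) supplies the $c_0(\epsilon_0)$ factor, bound the resulting weighted $L^2$-in-$s$ integrals by the boundary fluxes of the energies (\eqref{controlofpsiLL} for $\psi_L$, \eqref{leftbdytrivialbds} for $\psi_C$) together with the bootstrap norms on $B^L$, recover pointwise statements by commuting with a few rotations and Sobolev embedding on the sphere, and treat the $\pa_s B^L-\tfrac{1}{2s}B^L$ piece by the pointwise decay of Lemma \ref{basicpwdecay}; compare \eqref{betadifferenceestimate}--\eqref{integratedbetaeqn} and Lemma \ref{lem:bdrhsbetaeqn}.

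Two of your steps do not hold as written. First, the ``trivial Gr\"onwall argument'' for the chain-rule term $\Omega\beta\cdot\pa_u^2\psi|_{u=\beta}$ rests on the coefficient being ``small, time-integrable,'' but at $\Gamma^L$ the pointwise bound \eqref{pwleft} only gives $|\pa_u^2\psi_L|\lesssim \epsilon_L\,s^{-1}(\log s)^{-1/2}(\log\log s)^{-\alpha/2}$, which is \emph{not} integrable in $s$; pointwise decay alone does not close this. The paper uses no Gr\"onwall here: all couplings of derivatives of $B^L$ with derivatives of $\psi$ are fed into the same weighted Cauchy--Schwarz and estimated using the bootstrap hypotheses \eqref{leftgeom}, \eqref{logeom} (which are assumptions of Proposition \ref{bootstrapprop}) for the $B^L$-factors and the fluxes for the $\psi$-factors; if you insist on Gr\"onwall you must make the coefficient summable by the same weighted Cauchy--Schwarz-plus-flux device, which you do not state. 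Second, your final paragraph misdiagnoses the top-order case: commuting the transport equation with $|I|\leq N_L$ tangential derivatives never produces $\nas^{|I|+1}\beta$, and controlling $N_L$ tangential derivatives of $B^L$ only requires $Z_T^{\leq N_L}[\pa_u\psi]$ along $\Gamma^L$, i.e.\ (after converting tangential to full fields via \eqref{yaatb} and the bootstrap bound $G^L\leq M$) quantities $\pa_u Z^{\leq N_L}\psi_L$ and $\pa_u Z_{\mB}^{\leq N_L}\psi_C$, both controlled by the boundary fluxes since $N_L\leq N_C-6$. Hence there is no derivative loss at the left shock, and the identity $\nas\beta=-\tfrac su[\nas\psi]+N$ plays no role in this proposition; in the paper it is used only to remove $\nas B^L$ from the boundary conditions for $\psi$ (Remark \ref{eliminaterem}). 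The genuine top-order subtlety occurs at the \emph{right} shock, where the weak top-order flux of $\psi_C$ forces the cruder unweighted estimate \eqref{integratedbetaeqnhigh} in Proposition \ref{rightshockbootstrap}, not an appeal to the angular identity.
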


The analogous result at the right shock is the following.
\begin{prop}[Improved estimates for the geometry of the right shock]
  \label{rightshockbootstrap}
  Under the hypotheses of Proposition \ref{bootstrapprop},
  there is a continuous function $c_0$ with $c_0(0) = 0$
  so that the function
  $B^R$ which defines the right shock satisfies the pointwise estimates	
  \begin{align}
	 	\left| \frac{B^R(t,x)}{s^{1/2}} +  1\right|
 		+ (1+s)^{1/2}\left|\pa_s B^R(t,x) - \frac{1}{2s} B^R(t,x)\right|
        &\leq \mathring{K}^R + c_0(\epsilon_0) \epsilon_C,\\
		 \left| \frac{\Omega B^R(t,x)}{s^{1/2}}\right|
        &\leq \slashed{\mathring{K}}^R + c_0(\epsilon_0) \epsilon_C ,
		% &&\quad \text{ along } \cup_{t_0 \leq t' \leq t_1} \Gamma^R_{t'}
	 \label{lowestbootstrapofBR}
	\end{align}
	along $\cup_{t_0 \leq t' \leq t_1} \Gamma^R_{t'}$,
    where $\mathring{K}^R, \slashed{\mathring{K}}^R$
    are the norms of the initial data defined in \eqref{KR0}-\eqref{sKR0},
    We also have the integrated estimates
    \begin{equation}
      \label{improvedbdrightshock}
         \sum_{|I| \leq N_L-2, |I|\geq 1} \sup_{t_0 \leq t \leq t_1}
         \int_{\Gamma^R_t} \frac{1}{1+s} |Z_{\mB,T}^I B^R|^2\, dS +
         \sum_{|I| \leq N_R/2+1} \sup_{t_0 \leq t \leq t_1}
         \sup_{\Gamma^R_t} \frac{1}{1+s} |Z_{\mB, T}^I B^R|^2
         \leq \mathring{G}^R_{N_R} + c_0(\epsilon_0)\epsilon_C,
    \end{equation}
    as well as
    \begin{equation}
      \label{}
         \sum_{|I| \leq N_L, |I|\geq 1} \sup_{t_0 \leq t \leq t_1}
         \int_{\Gamma^R_t} \frac{1}{(1+s)^2} |Z_{\mB,T}^I B^R|^2\, dS
         \leq \mathring{G}^R_{N_R} + c_0(\epsilon_0)\epsilon_C.
    \end{equation}
    
    In particular, if $\epsilon_0,\epsilon_1, \epsilon_2$ are taken sufficiently small, with
    $K^R$ defined as in \eqref{KRdef},
    $\slashed{K}^R$ defined as in \eqref{sKRdef},
    and $G^R$ defined as in
    \eqref{GRnorm}, we have the bounds
    \begin{equation}
      \label{}
      K^R(t_1) \leq \epsilon_1^{3/2},
      \qquad
      \slashed{K}^R(t_1) \leq \epsilon_2^{3/2}
      \qquad
      G^R(t_1) \leq M_0^R + \epsilon_C^2,
    \end{equation}
    with $M_0^R$ defined as in \eqref{initialdatabds}.
\end{prop}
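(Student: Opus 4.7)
The strategy rests on exploiting both the transport equation \eqref{betaa} for $\beta_s^R$ and the auxiliary angular identity $\nas \beta^R = -\tfrac{s}{u}[\nas\psi] + N(\pa\psi)$ recorded in the introduction, viewed as a coupled pair of first-order equations on $\mathbb{S}^2 \times [s_0, s_1]$. For the pointwise bounds \eqref{lowestbootstrapofBR}, the plan is to fix $\omega \in \mathbb{S}^2$ and integrate
\begin{equation*}
    \pa_s\bigl(s^{-1/2}\beta_s^R\bigr) = \tfrac{1}{2} s^{-1/2}\bigl(\pa_u\psi_C + \pa_u\psi_R\bigr)\big|_{u=\beta_s^R} + s^{-1/2}N
\end{equation*}
along the characteristic. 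Direct pointwise estimation using $|\pa_u\psi_{C,R}| \lesssim \epsilon_C (1+s)^{-1/2}$ from Lemma \ref{basicpwdecay} is marginal at this rate, so I would instead apply Cauchy--Schwarz, converting $dt$ to $ds$ via $dt \sim \tfrac{v}{2}\,ds$ along the shock at fixed $\omega$: the boundary terms in the central energy $E^C_{I,D}$ and the right exterior energy $E^R_I$ on $\Gamma^R$ control $\int (1+s)^{1/2}|\pa_u\psi_{C,R}|^2\,ds$ by $\epsilon_C^2$, and pairing this with $(\int_{s_0}^\infty s'^{-3/2}\,ds')^{1/2} \lesssim s_0^{-1/4} \lesssim c_0(\epsilon_0)$ yields the first bound. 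The bound on $\pa_s B^R - \tfrac{B^R}{2s}$ will follow directly from the transport equation and the pointwise decay, while the angular bound on $\Omega B^R$ will come from the auxiliary equation and $|u| \sim s^{1/2}$ along $\Gamma^R$, giving $|\nas\beta^R| \sim s^{1/2}|[\nas\psi]|$, which is comfortably small.

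For the higher-order integrated bounds in \eqref{improvedbdrightshock}, I would commute the transport equation with tangential field products $Z_{\mB,T}^I$. At the top order, this produces $Z_{\mB, T}^I$ applied to the source of \eqref{betaa}, evaluated at $u=\beta_s^R$. Using Lemma \ref{mBtangentialdifference} to decompose the transverse contributions into tangential ones, the principal terms are (i) genuinely tangential $Z_{\mB,T}^I$ derivatives of $\pa_u\psi_{C,R}$, which are controlled in $L^2(\Gamma^R_t)$ by the boundary terms of the energies at orders $\leq N_C$, and (ii) lower-order commutators whose coefficients involve strictly smaller-index derivatives of $B^R$, handled by induction on $|I|$. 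The $L^2(\mathbb{S}^2)$ bound then comes from Cauchy--Schwarz in $s$ with the same $s'^{-3/2}$ weight as before, while the pointwise bound at $|I| \leq N_C/2+1$ follows by Sobolev embedding on $\mathbb{S}^2$ from the $L^2$ bound at two orders higher, which is well within the available regularity budget. Nonlinear terms are controlled by the pointwise decay of $\pa\psi_{C,R}$ together with already-improved low-order bounds on $B^R$, contributing $c_0(\epsilon_0)\epsilon_C$.

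The main obstacle will be the derivative-loss issue flagged in the discussion preceding \eqref{betaa}: naive commutation with $\nas^k$ on the transport equation generates $\nas^k \pa_u\psi_{C,R}$ evaluated at $u=\beta^R$, whose transversal decomposition introduces $\nas^{k+1}\beta^R$ at top order, yielding a linear coupling that loses one derivative per step and cannot close. The remedy will be to substitute the auxiliary angular equation $\nas \beta^R = -\tfrac{s}{u}[\nas \psi] + N$ for the highest-order angular derivative of $\beta^R$, exchanging $\nas^{k+1}\beta^R$ for $\nas^{k+1}\psi_{C,R}$ at the same order as the terms already on the transport side. This substitution must be performed \emph{before} applying the transversal decomposition, and is the single delicate algebraic step in the argument; everything else reduces to bookkeeping on commutators and Sobolev embeddings on $\mathbb{S}^2$. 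Once \eqref{lowestbootstrapofBR}--\eqref{improvedbdrightshock} are in hand, choosing $\epsilon_0$ and $\epsilon_C$ small relative to $\epsilon_1,\epsilon_2$ immediately yields $K^R(t_1) \leq \epsilon_1^{3/2}$, $\slashed{K}^R(t_1) \leq \epsilon_2^{3/2}$, and $G^R(t_1) \leq M_0^R + \epsilon_C^2$.
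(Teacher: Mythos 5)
Your overall scheme for the transport-equation bounds — integrate $\frac{d}{ds}\bbeta^R$ along the shock, estimate the increment by Cauchy--Schwarz against the energy fluxes on $\Gamma^R$, commute with tangential fields, absorb the transversal corrections via the decomposition lemma and the a priori bound $G^R\leq M$, and use Sobolev embedding on $\mathbb{S}^2$ — is essentially the paper's argument (Section \ref{bdsforbdfsec}, estimates \eqref{integratedbetaeqn}--\eqref{integratedbetaeqnhigh} and Lemma \ref{lem:bdrhsbetaeqn}). But there are genuine gaps. The most serious is your treatment of the angular pointwise bound in \eqref{lowestbootstrapofBR}: you propose to read it off from the constraint $\nas\beta^R=-\tfrac{s}{u}[\nas\psi]+N$ together with pointwise decay. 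This cannot work. That identity gives $|\Omega B^R|/s^{1/2}\sim |[\Omega\psi]|$, and the available pointwise control of $\Omega\psi_C$ on $\Gamma^R$ is only $O(\epsilon_C(1+\log t)^{1/2})$ (equivalently $|\nas\psi_C|\lesssim \epsilon_C(1+v)^{-1/2}$ up to logarithms), so at best you recover the a priori scale $\epsilon_2(1+s)^{1/2}$ of \eqref{betaRassump} — not the improvement $\slashed{\mathring{K}}^R+c_0(\epsilon_0)\epsilon_C$. Moreover, being a constraint at fixed time, this route produces no reference to the initial datum $\slashed{\mathring{K}}^R$ at all; the improved bound must come from \emph{time integration}. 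The paper instead commutes the transport equation with the tangential rotation $\Omega_R=\Omega+\Omega B^R\pa_u$ (which commutes with $d/ds$), integrates in $s$, and bounds the increment by the boundary fluxes, so that the conclusion is ``initial value plus small increment''; the auxiliary angular identity is used elsewhere (to eliminate $\nas B$ from the boundary conditions, Remark \ref{eliminaterem}), not here.

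Two further points. First, your Cauchy--Schwarz at \emph{fixed} $\omega$ against the boundary terms of $E^C_{I,D}$ and $E^R_I$ is not legitimate as stated: those fluxes are $L^2$ over the sphere, so the argument yields an $L^2(\mathbb{S}^2)$ bound on $\bbeta^R_{s_1}-\bbeta^R_{s_0}$, and the pointwise statement requires commuting a few angular derivatives and Sobolev embedding even at lowest order (or, alternatively, the genuinely pointwise decay of Lemma \ref{basicpwdecay}, which carries an extra $(1+\log t)^{1/4}$ and is in fact integrable against $s^{-1/2}\,ds$, contrary to your ``marginal'' assessment). Second, your higher-order argument treats all orders $|I|\leq N_C$ uniformly, claiming control ``by the boundary terms of the energies at orders $\leq N_C$''; but at the top two orders the only available boundary control of $\psi_C$ on $\Gamma^R$ is the much weaker $(1+v)^{-1}(1+s)^{-1}$-weighted flux from $E^C_{I,T}$, which is precisely why the proposition has a two-tier conclusion: the $(1+s)^{-1}$-weighted estimate only up to order $N_C-2$, and at the top orders only the cruder $(1+s)^{-2}$-weighted bound obtained by forgoing the weight $h(s)=\log s(\log\log s)^{\alpha}$ in the Cauchy--Schwarz step (estimate \eqref{integratedbetaeqnhigh}). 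Without this distinction your scheme does not close at top order, and your proposed fix via the angular identity does not address it.
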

The above results rely on the fact that
$B^A$ satisfy the following transport equation, derived in
Lemma \ref{derivofbetaeqn},
\begin{equation}
    \pa_s B^A - \frac{1}{2s} B^A = -\frac{1}{2}[\pa_u \psi] + s^{1/2}F_A,
    \qquad \text{ at } \Gamma^A
 \label{localBAeqntr}
\end{equation}
Here,
$[q]$ denotes the jump in $q$ across
$\Gamma^A$, and the quantities $F_A$, which consist
of nonlinear error terms, are given in Lemma
\ref{derivofbetaeqn} (see Remark \ref{evoluteqnrmk}).

For the upcoming calculations, it will be convenient to work in
terms of a rescaling of $B^A$ restricted to the shock. 
    Specifically, for $(t, x) \in \Gamma^A$, with $s = \log(t+|x|)$
    and $\omega = x/|x|$, we define
    $\bbeta^A_s(\omega) = B^A(t, x)s^{-1/2}$.
%    That is,
%    for $(t, x)\in \Gamma^A$, we define
%    $B^A(t, x) = \beta^A(\log (t+|x|), x/|x|)(\log (t+|x|))^{1/2}$.
    Writing $\frac{d}{ds} = \pa_s|_{u = B^A(t, x), \omega = const.}$, in terms
     of $\bbeta^A$,
     the transport equation \eqref{localBAeqntr} reads
    \begin{equation}
      \label{}
      \frac{d}{ds} \bbeta^A_s(\omega) = -\frac{1}{2s^{1/2}} [\pa_u\psi](s,\omega) + F_A(s,\omega),
    \end{equation}
    with the understanding that the quantities on the right-hand side
    are evaluated at the point $(t, x)$ on $\Gamma^A$ with $x/|x| = \omega$
    and $\log(t+|x|) = s$.
    To get higher-order estimates for the shock-defining functions
    $B^A$, we are going to differentiate this equation along the shock.
    For this it is convenient to work in terms of the operators
    \begin{equation}
      \label{}
      \tau_A = \pa_s\big|_{u = B^A, \omega = const.} = \pa_s + \pa_s B^A \pa_u,
      \qquad
      \Omega_A = \Omega + \Omega B^A \pa_u,
    \end{equation}
    which are tangent to the shock. If $m \geq 0$ is an integer
    and $J$ is a multi-index, then
    since $\tau_A$ and $\Omega_A$ commute with $\frac{d}{ds} = \tau^A$,
    writing $\bbeta_{s}^{A, m, J}(\omega) = \tau_A^m \Omega_A^J\bbeta_s(\omega)$,
    \begin{equation}
      \label{}
      \frac{d}{ds} \left(\bbeta_s^{A, m, J}(\omega)\right)
      = -\frac{1}{2}\tau_A^m \Omega_A^J \left( \frac{1}{s^{1/2}}[\pa_u\psi](s,\omega)\right) 
      + \tau_A^m\Omega_A^J F_A(s,\omega),
      \qquad \text{ at }\Gamma^A.
    \end{equation}
    For each fixed $\omega \in \mathbb{S}^2$, we integrate this expression between 
    any two values of $s_0, s_1$ of $s$ on the shock $\Gamma^A$ to get
    \begin{equation}
      \label{timeintegratedbeta}
      |\bbeta_{s_1}^{A, m, J}(\omega) - \bbeta_{s_0}^{A, m, J}(\omega)|
      \lesssim\int_{s_0}^{s_1}
      \left| \tau_A^m\Omega_A^J \left(s^{-1/2}[\pa_u\psi](s, \omega) \right) \right|
      +
      \left| \tau_A^m \Omega_A^J F_A(s, \omega)\right|\, ds,
    \end{equation}

    We now let $s^A(t', \omega)$ denote the value of $s = \log(t+|x|)$ lying at the
    intersection of the sets $\{t = t'\}$, $\{x/|x| = \omega\}$ and $\Gamma^A$.
    Taking $s_0 = s^A(t_0, \omega)$ and $s_1 = s^A(t_1, \omega)$
    in \eqref{timeintegratedbeta}, we have
    \begin{equation}
      \label{startoftimeintegrated}
      |\bbeta_{s^A(t_1, \omega)}^{A, m ,J}(\omega)
      - \bbeta_{s^A(t_0,\omega)}^{A, m, J}(\omega)|
      \lesssim\int_{s^A(t_0, \omega)}^{s^A(t_1, \omega)}
      \left| \tau_A^m\Omega_A^J \left(s^{-1/2}[\pa_u\psi](s, \omega) \right) \right|
      +
      \left| \tau_A^m \Omega_A^J F_A(s, \omega)\right|\, ds.
    \end{equation}

    Take $\alpha$ as in \eqref{parameters} so that in particular $\alpha > 1$
    and set $h(s) = \log s (\log \log s)^\alpha$. For any $\omega \in \mathbb{S}^2$,
    the above gives
    \begin{multline}
      \label{betadifferenceestimate}
      |\bbeta_{s^A(t_1, \omega)}^{A, m, J}(\omega)
      - \bbeta_{s^A(t_0,\omega)}^{A, m, J}(\omega)|^2
      \\
      \lesssim
     \left(\int_{s^A(t_0, \omega)}^{s^A(t_1, \omega)} \frac{1}{1+s} \frac{1}{h(s)} \,ds\right)
     \left(\int_{s^A(t_0,\omega)}^{s^A(t_1, \omega)} (1+s)h(s)  
         \left| \tau_A^m\Omega_A^J \left(s^{-1/2}[\pa_u\psi](s, \omega) \right) \right|^2
         \,ds
         \right)
         \\
         +
     \left(\int_{s^A(t_0, \omega)}^{s^A(t_1, \omega)} \frac{1}{1+s} \frac{1}{h(s)} \,ds\right)
     \left(\int_{s^A(t_0, \omega)}^{s^A(t_1, \omega)}  (1+s)h(s) |\tau_A^m\Omega_A^J F_A|^2\,ds\right)
     \\
     \lesssim c_0(\epsilon_0) 
     \int_{s^A(t_0, \omega)}^{s^A(t_1, \omega)}  
     (1+s)h(s)  \left| \tau_A^m\Omega_A^J \left(s^{-1/2}[\pa_u\psi](s, \omega) \right) \right|^2
    + h(s)|\tau_A^m\Omega_A^J F_A|^2 \, ds.
    \end{multline}
    If we integrate this expression over $\mathbb{S}^2$ and use that
    $\int_{\mathbb{S}^2} \int_{s^A(t_0, \omega)}^{s^A(t_1, \omega)}
    Q(s,\omega) dsdS(\omega) \sim \int_{t_0}^{t_1} \int_{\Gamma^A_t} \frac{1}{v} q(t,x)\, dS dt$
    where $Q(s, \omega) = q|_{u = \bbeta_s(\omega)}$, we further find
    \begin{multline}
      \label{integratedbetaeqn}
      \int_{\mathbb{S}^2}
      |\bbeta_{s^A(t_1, \omega)}^{A, m, J}(\omega)
      - \bbeta_{s^A(t_0,\omega)}^{A, m, J}(\omega)|^2\, dS(\omega)
      \\
      \lesssim
      c_0(\epsilon_0) 
      \int_{t_0}^{t_1} \int_{\Gamma^A_t}
      \frac{(1+s)h(s)}{1+v}  \left| \tau_A^m\Omega_A^J \left(s^{-1/2}[\pa_u\psi](s, \omega) \right) \right|^2
      + \frac{h(s)}{1+v}|\tau_A^m\Omega_A^J F_A|^2 \, dS dt.
    \end{multline}
    We will use the above bound at the left shock with $m + |J| \leq N_L$ and at
    the right shock with $m + |J| \leq N_C - 2$,
    and the function $h$ has been chosen so that 
    in these cases, the above is bounded by our a priori assumptions (see Lemma \ref{lem:bdrhsbetaeqn}). For $m + |J| \geq N_C - 1$, we cannot
    easily control the above quantity because
    we have weaker control over top-order derivatives
    of $\psi_C$ at the shocks. 
    To handle this case, we instead return to \eqref{startoftimeintegrated}
    and bound
    \begin{multline}
      \label{}
      \left(\int_{s^R(t_0, \omega)}^{s^R(t_1, \omega)}
      \left| \tau_R^m\Omega_R^J \left(s^{-1/2}[\pa_u\psi](s, \omega) \right) \right|
      +
      \left| \tau_R^m \Omega_R^J F_A(s, \omega)\right|\, ds\right)^2
      \\
      \lesssim
      (s^R(t_1, \omega) - s^R(t_0, \omega))
      \int_{s^R(t_0, \omega)}^{s^R(t_1, \omega)}
      \left| \tau_R^m\Omega_R^J \left(s^{-1/2}[\pa_u\psi](s, \omega) \right) \right|^2
      +
      \left| \tau_R^m \Omega_R^J F_A(s, \omega)\right|^2\, ds.
    \end{multline}
    If we use this bound in \eqref{startoftimeintegrated} and integrate over the sphere, we find
    \begin{multline}
      \label{integratedbetaeqnhigh}
      \int_{\mathbb{S}^2} \frac{1}{s^R(t_1, \omega) - s^R(t_0, \omega)}
      |\bbeta_{s^A(t_1, \omega)}^{A, m, J}(\omega)
      -  \bbeta_{s^A(t_0,\omega)}^{A, m, J}(\omega)|^2\, dS(\omega)
      \\
      \lesssim \int_{t_0}^{t_1} \int_{\Gamma^R_t} \frac{1}{1+v}
      \left| \tau_R^m\Omega_R^J \left(s^{-1/2}[\pa_u\psi] \right) \right|^2
      +
      \left| \tau_R^m \Omega_R^J F_A\right|^2\, dS dt.
    \end{multline}

    We now show how our bootstrap assumptions imply bounds
    for the quantities on the right-hand sides of \eqref{integratedbetaeqn}
    and \eqref{integratedbetaeqnhigh}.
    \begin{lemma}
        \label{lem:bdrhsbetaeqn}
      Under the hypotheses of Proposition \ref{bootstrapprop}, with $h(s) = \log s (\log \log s)^\alpha$,
      for $m + |J| \leq N_L$ we have the following bound at the left shock,
      \begin{equation}
        \label{leftshockbdrhs}
    \int_{t_0}^{t_1} \int_{\Gamma^L_t}
      \frac{(1+s)h(s)}{1+v}  \left| \tau_L^m\Omega_L^J \left(s^{-1/2}[\pa_u\psi](s, \omega) \right) \right|^2
      + \frac{h(s)}{1+v}|\tau_L^m\Omega_L^J F_L|^2 \, dS dt
      \lesssim \epsilon_L^2
      \end{equation}
      For $m + |J| \leq N_C-2$, we also have the following bound at the right shock
      \begin{equation}
        \label{rightshockbdrhs}
          \int_{t_0}^{t_1} \int_{\Gamma^R_t}
      \frac{(1+s)h(s)}{1+v}  \left| \tau_R^m\Omega_R^J \left(s^{-1/2}[\pa_u\psi](s, \omega) \right) \right|^2
      + \frac{h(s)}{1+v}|\tau_R^m\Omega_R^J F_R|^2 \, dS dt
      \lesssim \epsilon_C^2.
  \end{equation}
      Finally, for $N_C \geq m+|J| \leq N_C-1$, we have the following
      bound at the right shock,
      \begin{equation}
        \label{centralshockbdrhs}
        \int_{t_0}^{t_1} \int_{\Gamma^R_t} \frac{1}{1+v}
      \left| \tau_R^m\Omega_R^J \left(s^{-1/2}[\pa_u\psi] \right) \right|^2
      +
      \frac{1}{1+v}
      \left| \tau_R^m \Omega_R^J F_A\right|^2\, dS dt
      \lesssim
      \epsilon_C^2.
      \end{equation}
    \end{lemma}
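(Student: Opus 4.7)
The plan is to reduce each of the three estimates to the boundary energy control already established in Section \ref{boundsalongshocks}, by converting the tangential operators $\tau_A^m \Omega_A^J$ into products of our standard commutator fields. Since $\tau_L = (v\pa_v)_T$ and $\Omega_L = \Omega_T$ are the tangential parts of fields lying in $\mathcal{Z}_m$, the string $\tau_L^m \Omega_L^J$ is a product $Z_T^I$ of length $|I| = m + |J|$ in the sense of Section \ref{transitionsection}; similarly $\tau_R, \Omega_R$ are tangential parts of fields in $\mathcal{Z}_{\mB}$. Lemmas \ref{minkdecomplemma} and \ref{mBtangentialdifference} then let us replace $Z_T^I$ by the ordinary $Z^I$ (resp.\ $Z_{\mB}^I$) applied to the same function, at the cost of lower-order terms and multiplicative factors $|B^A|_{I, \cdot}$ that are controlled by the bootstrap assumptions \eqref{leftgeom}--\eqref{rightgeom} on $G^A$.

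For the linear contribution $s^{-1/2}[\pa_u\psi]$, I decompose the jump as $[\pa_u\psi] = \pa_u\psi_A - \pa_u\psi_{A'}$ where $A'$ labels the side opposite $A$, commute $\pa_u$ through $Z^I$ (or $Z_{\mB}^I$) using \eqref{nZcomm}, and absorb the factor $s^{-1}$ against the weight $(1+s)$. Proving \eqref{leftshockbdrhs} then reduces to bounding
\begin{equation*}
    \int_{t_0}^{t_1}\!\!\int_{\Gamma^L_t} \frac{h(s)}{1+v}\,\bigl(|\pa Z^I\psi_L|^2 + |\pa Z^I\psi_C|^2\bigr)\, dS\, dt, \qquad |I|\leq N_L.
\end{equation*}
Since $h(s) = \log s\,(\log\log s)^\alpha$ is precisely the weight in \eqref{controlofpsiLL}, the $\psi_L$ integral is bounded by $\epsilon_L^2$; for $\psi_C$, since $N_L \leq N_C - 6$, we use the stronger below-top-order bound \eqref{leftbdytrivialbds} together with $h(s) \lesssim (1+s)^{1/2}$, yielding $\epsilon_C^2 \leq \epsilon_L^2$ by \eqref{epsparameters}. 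The analogous argument at the right shock uses \eqref{rightbdytrivialbds} for $\psi_C$ and \eqref{rightrivialbdybds} for $\psi_R$ (together with $\epsilon_R \leq \epsilon_C^2$), giving \eqref{rightshockbdrhs} when $m + |J|\leq N_C - 2$. For the top-order range $m + |J| \in \{N_C - 1, N_C\}$, the weight in \eqref{centralshockbdrhs} is only $\frac{1}{1+v}$, which after absorbing $s^{-1}$ leaves exactly $\frac{1}{(1+v)(1+s)}$ on $|\pa Z_{\mB}^I\psi_C|^2$ --- precisely the top-order weight controlled by \eqref{rightbdytrivialbds}.

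For the nonlinear remainder $F_A$ from Lemma \ref{derivofbetaeqn}, which is quadratic-or-higher in first derivatives of $\psi$ together with mild inhomogeneous pieces, I apply Leibniz to $\tau_A^m \Omega_A^J F_A$ and, in each resulting product, place the factor carrying the most vector-field derivatives in $L^2$ against the boundary energies while placing the low-derivative factor in $L^\infty$ using the Klainerman--Sobolev bounds of Lemma \ref{basicpwdecay}. This produces an extra small factor ($\epsilon_L$ at the left shock, $\epsilon_C$ at the right) that more than compensates for the weight $h(s)$, closing the nonlinear piece of each estimate.

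The main obstacle is the bookkeeping surrounding the error terms generated by Lemmas \ref{minkdecomplemma}--\ref{mBtangentialdifference}: when a high-order tangential derivative of $B^A$ appears, it must be placed in $L^2$ on the shock against a pointwise estimate on the $\psi$-factor. The gaps $N_L \leq N_C - 6$ at the left shock and $(N_C -2 ) + 2 \leq N_C$ at the right shock provide just enough Sobolev margin --- via the Sobolev embedding on $\Gamma^A_t \cong \mathbb{S}^2$ used already in the proofs of Lemmas \ref{transferlemL} and \ref{transferlemR} --- to absorb these into $c_0(\epsilon_0)\epsilon_A^2$. Once this accounting is performed and summed over the allowed ranges of $m$ and $|J|$, the three estimates \eqref{leftshockbdrhs}, \eqref{rightshockbdrhs}, \eqref{centralshockbdrhs} follow.
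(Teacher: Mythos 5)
Your proposal is correct and follows essentially the same route as the paper: convert $\tau_A^m\Omega_A^J$ into tangential fields and then into the ordinary $Z^I$/$Z_{\mB}^I$ fields via Lemmas \ref{minkdecomplemma} and \ref{mBtangentialdifference} (with the $|B^A|_I$ error terms absorbed by the bootstrap bounds on $G^A$ paired with the pointwise decay of Lemma \ref{basicpwdecay}), observe that after absorbing $s^{-1}$ the weight $h(s)/(1+v)$ is dominated by every weight appearing in \eqref{rightrivialbdybds}, \eqref{rightbdytrivialbds} and \eqref{controlofpsiLL}, and use the weaker first line of \eqref{rightbdytrivialbds} in the top-order range. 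The treatment of $F_A$ by Leibniz with the low-order factor in $L^\infty$ likewise matches the paper's (omitted) argument.
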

    \begin{proof}
        We start by relating the operators $\tau_A, \Omega_A$ to the
        tangential fields we used in earlier sections.
    We abuse notation slightly and use the notation $Z_T, Z_{T, \mB}$ 
    to denote the fields $Z - Z(u-B^A) n$ and $Z_{\mB} - Z_{\mB} (u-B^A) n$ at
    either shock. With this notation, we have
    the following identities,
    \begin{equation}
      \label{}
      \tau_A = (\sBt)_T = \sum_{Z \in \mathcal{Z}_m} a_A^Z
      Z_T, \quad \Omega_A = \Omega_T,
    \end{equation}
    for coefficients $a_A^Z$ satisfying the symbol condition
    \eqref{minksymb},
    and in particular, 
    \begin{align}
      \label{yaatb}
      |\tau_A^m \Omega_A^J q| &\lesssim \sum_{|I| \leq m + |J|} |Z_{\mB, T}^I q|,
      \\
      |\tau_A^m\Omega_A^J q|
                          &\leq C(M)
  \sum_{|I| \leq m + |J|} |Z_T^I q|
  + C(M)\sum_{|I| \leq m+|J|} |B^A|_{I, \mathcal{Z}_m}
  \sum_{|K| \leq (m+|J|)/2+1} |Z^K_T q|.
    \end{align}
    In getting the second bound, we used that by 
    \eqref{eztransfer0} and the symbol condition
    \eqref{minksymb} we have
    \begin{equation}
      \label{}
      |Z^I_T a_Z| \leq C(M) \sum_{|I'| \leq |I|} |Z^{I'} a_Z|
      + C(M) |B^A|_{I, \mathcal{Z}_m} \sum_{|J| \leq |I|/2+1} |Z^J a_Z|
      \leq C'(M)(1 + |B^A|_{I, \mathcal{Z}_m})
    \end{equation}
    for a constant $C'(M)$.
   At either shock $\Gamma^A$, we therefore have
    \begin{multline}
      \label{}
         \left| \tau_A^m\Omega_A^J \left(s^{-1/2}[\pa_u\psi](s, \omega) \right) \right|
         \lesssim
         \frac{1}{(1+s)^{1/2}} \sum_{|I| \leq m + |J|}\left( |Z_{\mB, T}^I \pa_u \psi_C|
             + |Z_T^I \pa_u \psi_A| \right)\\
             + C(M)\sum_{|I| \leq m + |J|} |B^A|_{I, \mathcal{Z}_m} \sum_{|K|\leq (m+|J|)/2+1}
             |Z_T^K \pa_u \psi_A|
    \end{multline}
    Using \eqref{eztransfer0} and \eqref{mBtangentialdifferencermk2} 
    to convert the tangential fields
    $Z_{\mB, T}$ and $Z_T$ into $Z_{\mB}$ and $Z$ fields and commuting
    with $\pa_u$, this gives
    \begin{multline}
      \label{pwbdatshockforbeta}
         \left| \tau_A^m\Omega_A^J \left(s^{-1/2}[\pa_u\psi](s, \omega) \right) \right|
         \lesssim
         \frac{1}{(1+s)^{1/2}} C(M)
         \sum_{|I| \leq m + |J|}\left( |\pa_u Z_{\mB}^I \psi_C|
             + |\pa_u Z^I \pa_u \psi_A| \right)\\
             + C(M)\sum_{|I|\leq m+|J|}
             \left(|B^A|_{I, \mathcal{Z}_m} + |B^A|_{I, \mZB}\right)
             \left(\sum_{|K| \leq  (m+|J|)/2+1} \left( |\pa_u Z_{\mB}^K \psi_C|
             + |\pa_u Z^K \pa_u \psi_A| \right)\right) 
     \end{multline}
    where the terms on the last line are not present if $m+|J|\leq N_A/2$. 
    We therefore have the bound
    \begin{multline}
      \label{}
      \int_{t_0}^{t_1} \int_{\Gamma^A_t} \frac{(1+s)h(s)}{1+v}
         \left| \tau_A^m\Omega_A^J \left(s^{-1/2}[\pa_u\psi](s, \omega) \right) \right|^2
         \\
      \lesssim C(M) \sum_{|I| \leq m+|J|}
      \int_{t_0}^{t_1} \int_{\Gamma^A_t} \frac{h(s)}{1+v}
      \left(|\pa_u Z_{\mB}^I \psi_C|^2
      + |\pa_u Z^I\psi_A|^2\right)\, dS dt
      \\
      + C(M) 
      \left(\sum_{|I| \leq m+|J|}
      \sup_{t_0 \leq t \leq t_1}\int_{\Gamma^A_t}
      |B^A|_{I, \mB}^2+|B^A|_{I, \mZ}^2  \, dS\right)
      \sum_{|K| \leq (m+|J|)/2}
      \int_{t_0}^{t_1} \sup_{\Gamma_t^A} 
       \frac{h(s)}{1+v}\left(|\pa_u Z_{\mB}^K \psi_C|^2
      + |\pa_u Z^K\psi_A|^2\right) \, dt.
    \end{multline}
    If $A = L$ and $m+|J| \leq N_L$ or $A = R$ and $m+|J| \leq N_C-2$,
    to handle the terms on the first line, we use the $L^2$ bounds from \eqref{rightrivialbdybds},
    the last line of \eqref{rightbdytrivialbds}
    and \eqref{controlofpsiLL}
    for $\psi_R,\psi_C$ and $\psi_L$
    (noting that the weight $h(s)/(1+v)$ is
    dominated by all of the weights in those
    estimates)
    along the shock which gives
    \begin{equation}
      \label{}
       \sum_{|I| \leq m+|J|}
      \int_{t_0}^{t_1} \int_{\Gamma^A_t} \frac{h(s)}{1+v}
      \left(|\pa_u Z_{\mB}^I \psi_C|^2
      + |\pa_u Z^I\psi_A|^2\right)\, dS dt
      \lesssim \epsilon_A + \epsilon_C
    \end{equation}
    which is as needed since $\epsilon_R \leq \epsilon_C \leq \epsilon_L$. The terms
    on the second line are handled in the same way but using instead the pointwise
    bounds \eqref{pwright}, \eqref{pwcentral}, 
    \eqref{pwleft} for $\psi_R$,
    $\psi_C$, and $\psi_L$, and the $L^2$ bounds
    \eqref{leftgeom}-\eqref{rightgeom} 
    for $B^A$ along with the fact that $|B^A|_{I, \mZB}\sim |B^A|_{I, \mathcal{Z}_m}$.
    This gives the first bound in
    \eqref{leftshockbdrhs} and
    \eqref{rightshockbdrhs}. 
    To prove the first bound in
    \eqref{centralshockbdrhs},
    we argue in nearly the same way, with the only
    difference being that we use the weaker
    estimate on the first line of \eqref{rightbdytrivialbds} in place of the estimate on the last
    line there to handle the highest-order
    derivatives.

    The bounds for the remainder terms $F_A$ can be handled in the same way using
    the explicit formula \eqref{betaevolF2}, and we omit the proof. 
        \end{proof}

We now give the proof of Propositions
    \ref{leftshockbootstrap} and \ref{rightshockbootstrap}.
    \begin{proof}[Proof of Proposition \ref{leftshockbootstrap}]
        By \eqref{integratedbetaeqn} and 
        Lemma \ref{lem:bdrhsbetaeqn}, 
        for $m + |J| \leq N_L$, we have
        the bound
        \begin{equation}
          \label{intoverS2betabdL}
          \int_{\mathbb{S}^2}
            \left|\tau_L^m \Omega_L^J
            \bbeta^L_{s^L(t_1, \omega)}(\omega) -
            \tau_L^m
            \Omega_L^J \bbeta^L_{s^L(t_0, \omega)}
            (\omega)\right|^2\, dS(\omega)
            \leq  c_0(\epsilon_0) \epsilon_L^2.
        \end{equation}
        Applying this with $m = 0$ and summing over
        $|J| \leq 4$, by Sobolev embedding this
        implies
        \begin{equation}
          \label{}
          \sup_{\omega \in \mathbb{S}^2}
          \left|\bbeta_{s^L(t_1, \omega)}(\omega) - \bbeta_{s^L(t_0, \omega)}(\omega)\right|
          +
          \left|\Omega_L \bbeta_{s^L(t_1, \omega)}(\omega) - \Omega_L \bbeta_{s^L(t_0, \omega)}(\omega)\right|
          \leq c_0(\epsilon_0)\epsilon_L.
        \end{equation}
        Recalling the definition
        $\bbeta_s^L(\omega) = s^{-1/2}B^L(t, x)$ and that
        $\Omega_LB^L = \Omega_TB^L = \Omega B^L$ since $n B^L = 0$, this gives
        the bounds
        \begin{equation}
          \label{}
          \left|\frac{B^L(t,x)}{s^{1/2}} - \frac{B^L_0(x)}{s^L(t_0, \omega)^{1/2}} 
          \right|
          + \left| \frac{\Omega B^L(t,x)}{s^{1/2}} - \frac{\Omega B^L_0(x)}{s^L(t_0,\omega)^{1/2}}
          \right|
          \leq c_0(\epsilon_0) \epsilon_L^2, \qquad \text{ at } \Gamma^L,
        \end{equation}
        and in light of the definition
        of the norms $\mathring{K}^L, \slashed{K}^L$
        (see \eqref{KL0}-\eqref{sKL0}) of $B^L_0$, this gives the first bound in
        \eqref{lowestbootstrapofBL} and \eqref{lowestbootstrapofsBL}. To get
        the second bound in \eqref{lowestbootstrapofBL} we just use the transport
        equation
        \eqref{localBAeqntr} along with the pointwise decay estimates
        from Lemma \ref{basicpwdecay}. The higher-order estimate \eqref{improvedbdleftshock}
        follows in the same way, after additionally using \eqref{yaatb} to relate
        the operators $\tau_A, \Omega_A$ to the fields $Z_{T}$ in the definition
        of $|\cdot|_{I, \mathcal{Z}_m}$.

    \end{proof}

    \begin{proof}[Proof of Proposition \ref{rightshockbootstrap}]
        The bounds for $|I| \leq N_C-2$ are proven in exactly the same
        way was the bounds from the proof of Proposition \ref{leftshockbootstrap}.
        For $N_C-1 \leq |I| \leq N_C$, the only difference is that we use
        the fact that by \eqref{centralshockbdrhs} and \eqref{integratedbetaeqnhigh}
        we have the bound
        \begin{equation}
          \label{}
             \int_{\mathbb{S}^2}
             \frac{1}{s^R(t_1, \omega) - s^R(t_0, \omega)}
            \left|\tau_L^m \Omega_L^J
            \bbeta^L_{s^L(t_1, \omega)}(\omega) -
            \tau_L^m
            \Omega_L^J \bbeta^L_{s^L(t_0, \omega)}
            (\omega)\right|^2\, dS(\omega)
            \leq c_0(\epsilon_0) \epsilon_c^2
        \end{equation}
        in place of \eqref{intoverS2betabdL}.
    \end{proof}

\subsection{The asymptotic behavior of the shocks}
\label{asympsec}
\begin{proof}[Proof of Theorem \ref{asympthm}]

 Let $t_1, t_2,...$ be any sequence of times with $t \in \mathbb{R}_{> 0}$.
 We will show that $\log(t_j + r^A(t_j,\omega))^{-1/2}(t- r^A(t, \omega))$ form a Cauchy sequence
 in $H^{M_A}(\mathbb{S}^2)$.
 Let $s_j^A(\omega)$ denote the value of $s = \log(t+|x|)$ lying
 at the intersection of the sets $\{t = t_j\}$, $\{x/|x| = \omega\}$
 and $\Gamma^A$. By \eqref{timeintegratedbeta} with $m = 0$,
 abbreviating $\bbeta_{s}^{A, J} = \bbeta_s^{A, 0, J}$, we have the bound
 \begin{equation}
   \label{}
   |\bbeta_{s_j^A(\omega) }^{A, J}(\omega) - \bbeta_{s^A_\ell(\omega)}^{A, J}(\omega)|
   \lesssim
   \int_{s^A_\ell(\omega)}^{s^A_j(\omega)}
   \left| \tau_A^m\Omega_A^J \left(s^{-1/2}[\pa_u\psi](s, \omega) \right) \right|
      +
      \left| \tau_A^m \Omega_A^J F_A(s, \omega)\right|\, ds, \end{equation}
 Following exactly the same steps that lead to \eqref{integratedbetaeqn}
 and then using Lemma \ref{lem:bdrhsbetaeqn}, we find
 \begin{equation}
   \label{}
   \int_{\mathbb{S}^2} 
   |\bbeta_{s_j^A(\omega) }^{A, J}(\omega) - \bbeta_{s^A_\ell(\omega)}^{A, J}(\omega)|^2\, dS(\omega)
   \lesssim 
   c_0(\epsilon_\ell)(\epsilon_A + \epsilon_C),
 \end{equation}
 where $c_0$ is a continuous function with $c_0(0) = 0 $ and where 
 $\epsilon_\ell = 1/\left(\sup_{\omega \in \mathbb{S}^2}s^A_\ell(\omega)  \right)$.
 It follows that the functions $\{\bbeta^{A}_{s_j^A(\cdot)}(\cdot)\}_{j = 1}^\infty$
 form a Cauchy sequence in $H^{M_A}$. As
 a result, 
 $\widetilde{\beta}^A_{s^A(t,\omega)}(\omega) = 
 \log(t + r_A(t,\omega)) B^A(t, r^A(t,\omega)\omega)$ has a limit in
 $H^{M_A}(\mathbb{S}^2)$ as $t \to \infty$.
 The theorem now follows.
\end{proof}

\appendix

\section{Basic properties of the vector fields in $\Z$ and $\mZB$}
\subsection{Commutators with $\mathcal{Z}$}

The vector fields $Z$ from $\Z$ satisfy
\begin{equation}
 Z\Box q - \Box Z q = c_Z \Box q,
 \label{Zcommbox}
\end{equation}
where $\Box = -\pa_t^2 + \Delta$ is the Minkowskian wave operator,
and $c_S = -2$ and otherwise  $c_Z = 0.$ With
$\widetilde{Z} = Z - c_Z$,
\begin{equation}
 \widetilde{Z} \Box q = \Box Z q.
 \label{widehatbox}
\end{equation}

Moreover there are constants $c_{Z\alpha}^\beta$ so that each $Z$
in $\mathcal{Z}$ satisfies
\begin{equation}
 [\widetilde{Z}, \pa_\alpha] = c_{Z\alpha}^\beta \pa_\beta,
 \label{Zcommpa}
\end{equation}
where here $\pa_\alpha, \pa_\beta$ denote derivatives taken with respect
to the standard rectangular coordinate system.

We will need a higher-order version of this identity.
\begin{lemma}
  \label{divergencestructure}
 There are constants $c_{J\beta}^{I\alpha}$ so that
 for any vector field $\gamma = \gamma^\alpha\pa_\alpha$,
\begin{equation}
 \widetilde{Z}^I \pa_\alpha \gamma^\alpha
 = \pa_\alpha\gamma_I^\alpha,
 \label{gammaI0}
\end{equation}
where
\begin{equation}
 \gamma_I^\alpha = \widetilde{Z}^I \gamma^\alpha
 + \sum_{|J| \leq |I|-1} c_{J\beta}^{I\alpha} \widetilde{Z}^J\gamma^\beta.
 \label{gammaI}
\end{equation}

\end{lemma}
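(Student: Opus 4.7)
The proof is by induction on $|I|$, using the commutator relation \eqref{Zcommpa} as the only substantive ingredient. The key observation is that since the commutator coefficients $c_{Z\alpha}^\beta$ are constants, expressions of the form $c_{Z\alpha}^\beta \pa_\beta \gamma^\alpha$ can be rewritten as $\pa_\beta(c_{Z\alpha}^\beta \gamma^\alpha)$, i.e., as a divergence.

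First I would verify the base case $|I| = 1$. Using \eqref{Zcommpa},
\begin{equation}
\widetilde{Z}\pa_\alpha\gamma^\alpha = \pa_\alpha \widetilde{Z}\gamma^\alpha + [\widetilde{Z},\pa_\alpha]\gamma^\alpha = \pa_\alpha \widetilde{Z}\gamma^\alpha + c_{Z\alpha}^\beta \pa_\beta \gamma^\alpha = \pa_\beta\bigl( \widetilde{Z}\gamma^\beta + c_{Z\alpha}^\beta \gamma^\alpha\bigr),
\end{equation}
which is of the required form with $\gamma_I^\beta = \widetilde{Z}\gamma^\beta + c_{Z\alpha}^\beta\gamma^\alpha$ (the sum being over $|J|=0$).

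For the inductive step, I would assume the result holds for all multi-indices of length $\leq |I|-1$, write $\widetilde{Z}^I = \widetilde{Z}\,\widetilde{Z}^{I'}$ with $|I'|=|I|-1$, and apply the inductive hypothesis to obtain $\widetilde{Z}^{I'}\pa_\alpha\gamma^\alpha = \pa_\alpha \gamma_{I'}^\alpha$. Applying $\widetilde{Z}$ and invoking the base case with $\gamma$ replaced by $\gamma_{I'}$ then gives a divergence $\pa_\beta \tau^\beta$ where $\tau^\beta = \widetilde{Z}\gamma_{I'}^\beta + c_{Z\alpha}^\beta \gamma_{I'}^\alpha$. Expanding $\gamma_{I'}^\alpha$ according to \eqref{gammaI} and noting that $\widetilde{Z}\widetilde{Z}^J$ for $|J|\leq |I'|$ is itself a product $\widetilde{Z}^K$ with $|K|\leq |I|$, of which exactly one term produces the leading $\widetilde{Z}^I\gamma^\beta$ (when the outer $\widetilde{Z}$ meets the $\widetilde{Z}^{I'}\gamma^\alpha$ piece in the $\delta_\alpha^\beta$ slot), while all others have total length at most $|I|-1$, I would collect terms to exhibit $\tau^\beta$ in the form
\begin{equation}
\tau^\beta = \widetilde{Z}^I\gamma^\beta + \sum_{|J|\leq |I|-1} c_{J\alpha}^{I\beta}\widetilde{Z}^J\gamma^\alpha,
\end{equation}
for appropriate constants $c_{J\alpha}^{I\beta}$. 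Setting $\gamma_I^\beta = \tau^\beta$ completes the induction.

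There is no serious obstacle here: the only point requiring any care is the bookkeeping in the inductive step, to ensure that the coefficients produced remain constants (which follows from the fact that $c_{Z\alpha}^\beta$ are constants and that $\widetilde{Z}$ acting on a product $\widetilde{Z}^J$ just produces a new product of vector fields) and that the leading term $\widetilde{Z}^I\gamma^\beta$ appears with coefficient $1$.
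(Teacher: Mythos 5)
Your argument is correct and is essentially the paper's proof: both proceed by induction on $|I|$, with the base case being the single-field commutation $\widetilde{Z}\pa_\alpha\gamma^\alpha = \pa_\alpha(\widetilde{Z}\gamma^\alpha + c^\alpha_{\beta Z}\gamma^\beta)$ coming from \eqref{Zcommpa} and the constancy of the coefficients. The only (immaterial) difference is that you peel off the outermost field and apply the inductive hypothesis before the one-field identity, while the paper writes $\widetilde{Z}^I=\widetilde{Z}^J\widetilde{Z}$ and applies them in the opposite order.
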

\begin{proof}
  For $|I| = 1$ the identity \eqref{gammaI} is just the fact that
  \begin{equation}
   \widetilde{Z} \pa_\alpha \gamma^\alpha
   = \pa_\alpha \gamma_Z^{ \alpha},
   \end{equation}
   where
   \begin{equation}
   \gamma_Z^{ \alpha} =
   \widetilde{Z} \gamma^\alpha +c^\alpha_{\beta Z} \gamma^\beta
   \label{lowestgammacomm}
  \end{equation}
  with the constants $c^\alpha_{\beta Z}$ as in \eqref{Zcommpa}.
  For $|I| > 1$  the identity \eqref{gammaI} follows
  from induction after writing
  \begin{equation}
   \widetilde{Z}^J \widetilde{Z}\pa_\alpha \gamma^\alpha
   = \widetilde{Z}^J \pa_\alpha \gamma_Z^{\alpha}
   = \pa_\alpha \left(\widetilde{Z}^J\gamma_Z^{\alpha}
   +\sum_{|K| \leq |J|-1} c^{\alpha J}_{\beta K}\widetilde{Z}^K \gamma_Z^{\beta}\right),
   \label{}
  \end{equation}
 for constants $c^{\alpha J}_{\beta K}$.
\end{proof}

We also record the following result, which follows directly
from the previous result, the product rule, the identity \eqref{widehatbox},
and the fact that $(1+|u|)|\pa q| + (1+v)|\pa_v q|
+ (1+v)|\nas q| \lesssim \sum_{Z \in \mathcal{Z}} |Zq|$.
\begin{lemma}[The commutation currents in the exterior]
	\label{minkcommcurrentlem}
	Suppose that $\gamma = \gamma^{\alpha\beta\beta'}$ satisfy $(1+v)|Z^J \gamma| \leq C(|J|)$
	for any $|J|$, where all quantities are expressed in the usual
	rectangular coordinate system.
	 Then
	\begin{equation}
	  \widetilde{Z}^I
	  \left(\Box q + \pa_\alpha(\gamma^{\alpha\beta\beta'}\pa_\beta q
		\pa_{\beta'} q)\right)
	=
		\left(\Box Z^Iq +
 	 \pa_\alpha(\check{\gamma}^{\alpha\beta\beta'}\pa_{\beta}q \pa_{\beta'} Z^Iq ) \right)
	 + \pa_\alpha P_{I}^\alpha
	 \label{}
	\end{equation}
	with $\check{\gamma}^{\alpha\beta\beta'} = \gamma^{\alpha\beta\beta'} + \gamma^{\alpha\beta'\beta}$
	and
	where the commutation current $P_I$ satisfies the bound
	\begin{equation}
	 |P_I| + (1+|u|) |\pa_u P_I| + (1+v)|\pa_v P_I| +
	 (1+v) |\nas P_I| \lesssim
	 \frac{1}{1+v} \sum_{|I_1| + |I_2| \leq |I|}|\pa Z^{I_1} q|
	 |\pa Z^{I_2} q|
	 \label{}
	\end{equation}
\end{lemma}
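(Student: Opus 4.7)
The plan is to carry out three sequential manipulations: commute $\widetilde{Z}^I$ with $\Box$, commute $\widetilde{Z}^I$ past the outer divergence in the nonlinear term, and then apply the Leibniz rule to the product $\gamma^{\alpha\beta\beta'}\pa_\beta q\,\pa_{\beta'}q$, isolating the top-order contributions. The wave-operator piece is immediate: iterating \eqref{widehatbox} gives $\widetilde{Z}^I \Box q = \Box Z^I q$. For the divergence, I apply Lemma \ref{divergencestructure} with $\gamma^\alpha = \gamma^{\alpha\beta\beta'}\pa_\beta q \,\pa_{\beta'} q$ to rewrite $\widetilde{Z}^I\pa_\alpha(\gamma^{\alpha\beta\beta'}\pa_\beta q\,\pa_{\beta'}q)$ as $\pa_\alpha$ applied to $\widetilde{Z}^I(\gamma^{\alpha\beta\beta'}\pa_\beta q\,\pa_{\beta'}q)$ plus a sum of $\widetilde{Z}^J$-derivatives of $\gamma^{\delta\beta\beta'}\pa_\beta q\,\pa_{\beta'}q$ with $|J|\leq |I|-1$. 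The latter terms will be absorbed into $P_I^\alpha$ without further difficulty.

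To extract the $\check\gamma$-term, I expand $\widetilde{Z}^I(\gamma\,\pa q\,\pa q)$ by Leibniz over the three factors. Every contribution in which $\widetilde{Z}^{J_1}$ hits $\gamma$ with $|J_1|\ge 1$, or in which the derivatives split nontrivially between the two $\pa q$ factors, will be collected into $P_I^\alpha$; the only top-order terms are $\gamma^{\alpha\beta\beta'}(\widetilde{Z}^I\pa_\beta q)\pa_{\beta'}q$ and $\gamma^{\alpha\beta\beta'}\pa_\beta q\,(\widetilde{Z}^I\pa_{\beta'}q)$. Using \eqref{Zcommpa} inductively, each factor $\widetilde{Z}^I\pa_\beta q$ equals $\pa_\beta Z^I q$ plus a sum of $\pa_\delta Z^K q$ with $|K|\le |I|-1$ and constant coefficients; the remainders feed into $P_I^\alpha$, while the principal parts combine, after relabeling $\beta\leftrightarrow\beta'$ in the second term, into $(\gamma^{\alpha\beta\beta'}+\gamma^{\alpha\beta'\beta})\pa_\beta q\,\pa_{\beta'} Z^I q = \check\gamma^{\alpha\beta\beta'}\pa_\beta q\,\pa_{\beta'}Z^I q$, as required.

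For the pointwise bounds on $P_I^\alpha$, the hypothesis $(1+v)|Z^J\gamma|\le C(|J|)$ together with the equivalence $(1+|u|)|\pa_u q|+(1+v)(|\pa_v q|+|\nas q|)\lesssim \sum_{Z\in\mathcal Z}|Zq|$ yields $|\widetilde{Z}^{J_1}\gamma|\lesssim (1+v)^{-1}$ uniformly. By construction every term in $P_I^\alpha$ is of the schematic form $\widetilde{Z}^{J_1}\gamma\cdot \pa Z^{J_2}q\cdot \pa Z^{J_3}q$ with $|J_1|+|J_2|+|J_3|\le |I|$ and at least one of $|J_2|,|J_3|$ strictly less than $|I|$, which gives the announced $L^\infty$ bound. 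The weighted derivative bounds follow by applying $(1+|u|)\pa_u$, $(1+v)\pa_v$, $(1+v)\nas$ and again using the equivalence with $\mathcal{Z}$-derivatives: each such derivative either raises $|J_1|$ (preserving $|\widetilde{Z}^{J_1}\gamma|\lesssim(1+v)^{-1}$) or lands on a $\pa Z^{J_k}q$ factor (producing $\pa Z^{J_k+1}q$ after one more application of \eqref{Zcommpa}), so the same schematic bound holds with $|I|$ unchanged on the right-hand side.

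The main obstacle is purely bookkeeping: ensuring that the two top-order Leibniz contributions recombine into $\check\gamma$ with the correct symmetrization (rather than $2\gamma^{\alpha\beta\beta'}$, which would be incorrect unless $\gamma$ were already symmetric in $\beta,\beta'$), and verifying that every commutator remainder produced by \eqref{Zcommpa} and Lemma \ref{divergencestructure} preserves the product form $\pa Z^{J}q\cdot \pa Z^{K}q$ rather than generating an undifferentiated factor of $q$. Both are consequences of the structural fact that $[\widetilde Z,\pa_\alpha]$ is a linear combination of $\pa_\beta$'s with constant coefficients, so no new dependence on $q$ itself is introduced at any stage.
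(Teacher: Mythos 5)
Your argument is exactly the paper's (the paper gives no detailed proof, only noting that the lemma "follows directly" from Lemma \ref{divergencestructure}, the product rule, the identity \eqref{widehatbox}, and the bound $(1+|u|)|\pa q|+(1+v)|\pa_v q|+(1+v)|\nas q|\lesssim\sum_{Z\in\mathcal Z}|Zq|$), and your expansion of that one-line sketch — iterating \eqref{widehatbox}, applying Lemma \ref{divergencestructure} to the quadratic divergence, Leibniz plus \eqref{Zcommpa} to isolate the symmetrized $\check\gamma$ principal term, and the weighted-derivative-to-$\mathcal Z$ equivalence for the estimates — is faithful to it. Your bookkeeping is at the same schematic level of precision as the paper's own statement and its later applied version (Lemma \ref{higherorderexterioreqns}), so no correction is needed.
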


To handle the boundary terms we encounter along the left shock,
we will need bounds involving $[\evm, Z^I]$ and $[n, Z^I]$. To get bounds
for these quantities we will repeatedly make use of the following simple
identities
\begin{equation}
 \pa_t = \pa_v  + \pa_u,
 \qquad
 \pa_i = \omega_i (\pa_v - \pa_u) + \slashed{\pa}_i
 = \omega_i (\pa_v - \pa_u)
 +\frac{\omega^j}{r}\Omega_{ij},
 \label{inhomognull}
\end{equation}
\begin{equation}
 S = u\pa_u + v\pa_v,
 \qquad
 \Omega_{0i} = \omega_i (v\pa_v - u \pa_u) + \frac{t}{r} \omega^j\Omega_{ij}
 = \omega_i (v\pa_v - u \pa_u) + \left(1 + \frac{u}{r}\right) \omega^j\Omega_{ij}
 \label{homognull}
\end{equation}
We will also use the facts that
\begin{equation}
 \pa_u \omega_i = \pa_v \omega_i = 0,
 \qquad
 \pa_u \Omega_{ij} -  \Omega_{ij}\pa_u
 =
 \pa_v \Omega_{ij} - \Omega_{ij} \pa_v = 0,
 \qquad
 \Omega_{ij} r = \Omega_{ij}u = \Omega_{ij} v = 0,
 \label{anglecomm}
\end{equation}
and that the collection of angular momentum operators $\Omega_{ij}$ are closed
under commutation,
\begin{equation}
 \Omega_{ij} \Omega_{k\ell} - \Omega_{k\ell} \Omega_{ij}
 = c_{ij k\ell}^{mn}\Omega_{mn}
 \label{liealgebra}
\end{equation}
for constants $c$. From the above identities, we also have the
well-known fact that each $Z$ can be written in the form
\begin{equation}
 Z = (a_Z + a_Z' u) \pa_u + b_Z (1+v) \pa_v + c_{Z} \Omega,
 \label{Zintonull}
\end{equation}
where the coefficients $a_Z, a_Z', b_Z, c_Z$ satisfy the symbol-type condition
\begin{equation}
 |Z^J a| \lesssim 1
 \label{minksymb}
\end{equation}
if $t/2 \leq r \leq 3t/2$, $t \geq 1$, say.
We also record the well-known fact
that we can express
\begin{equation}
 \pa_u = \sum_{Z \in \mathcal{Z}}
 \frac{1}{1+|u|} a_{u}^Z Z,
 \qquad
 \pa_v = \sum_{Z \in \mathcal{Z}}
 \frac{1}{1+v} a_{v}^Z Z
 \label{nullintoZ}
\end{equation}
for coefficients satisfying \eqref{minksymb}. In fact,
\begin{equation}
 \pa_u = \frac{1}{2u} \left(S - \omega^i\Omega_{0i}\right),
 \qquad
 \pa_v = \frac{1}{2v} \left( S + \omega^i\Omega_{0i}\right).
 \label{pointofuv}
\end{equation}

At this point we also record the identities
\begin{equation}
 \nas_i = \frac{\omega^j}{r} \Omega_{ji},
 \qquad
 \sDelta = \frac{\omega^j \omega_\ell}{r^2} \Omega_{ji}\Omega^{\ell i},
 \label{sdeltaformulaOmega}
\end{equation}
raising indices with the Euclidean metric. The second identity here follows from
the first and 
\begin{equation}
 \sDelta = \textrm{tr}(\nas^2) = \frac{\omega^j \omega_\ell}{r^2} \Omega_{ji} \Omega^{\ell i}
 + \frac{\omega^j}{r^2} (\Omega_{ji}\omega_\ell)\Omega^{\ell i},
 \label{}
\end{equation}
where the last term vanishes by an explicit calculation.

\begin{lemma}
	\label{nullcommsm}
 For each multi-index $I$, we have
 \begin{align}
  |[\evm, Z^I] q|&\lesssim \sum_{|J| \leq |I|-1}
	|\evm Z^J q| + \frac{1+|u|}{1+v} |\nas Z^J q|
	+\sum_{|K| \leq |J| - 2} \frac{(1+|u|)^2}{(1+v)^2}|n Z^J q|,
  \label{ellZcomm}\\
  |[n, Z^I] q|
	&\lesssim
	\sum_{|J| \leq |I|-1}
	|nZ^J q| +  |\nas Z^Jq|
	+ \sum_{|K| \leq |I|-2} |\evm Z^K q|.
  \label{nZcomm}
\end{align}

\end{lemma}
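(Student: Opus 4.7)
\medskip

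The plan is to prove both estimates by induction on $|I|$, with the base case $|I|=1$ reduced to direct computation for each $Z \in \mathcal{Z} = \{\pa_\alpha, \Omega_{ij}, \Omega_{0i}, S\}$, and the inductive step relying on standard properties of Minkowski fields listed earlier in this appendix.

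First I would dispose of the base case. Since $[\pa_v, \Omega_{ij}] = [\pa_u, \Omega_{ij}] = 0$ by \eqref{anglecomm}, and $[\pa_v, \pa_t] = [\pa_u, \pa_t] = 0$ trivially, the nontrivial commutators are with $\pa_i$, $S$, and $\Omega_{0i}$. Using \eqref{inhomognull} in the form $\pa_v = \tfrac12(\pa_t + \omega^j\pa_j)$ combined with $[\pa_i, \omega^j\pa_j] = \tfrac{1}{r}\nas_i$, one computes $[\pa_v, \pa_i] = -\tfrac{1}{2r}\nas_i$ and $[\pa_u, \pa_i] = \tfrac{1}{2r}\nas_i$. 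From $S = u\pa_u + v\pa_v$ one reads off $[\pa_v, S] = \pa_v$, $[\pa_u, S] = \pa_u$. Using the decomposition $\Omega_{0i} = \omega_i(v\pa_v - u\pa_u) + \tfrac{t}{r}\omega^j\Omega_{ji}$ from \eqref{homognull}, and noting that $\pa_v\omega_i = \pa_u\omega_i = 0$, $\pa_v(\tfrac{v+u}{v-u}) = -\tfrac{u}{2r^2}$, $\pa_u(\tfrac{v+u}{v-u}) = \tfrac{v}{2r^2}$, together with $\nas_i = \tfrac{\omega^j}{r}\Omega_{ji}$ from \eqref{sdeltaformulaOmega}, one finds $[\pa_v,\Omega_{0i}] = \omega_i\pa_v - \tfrac{u}{2r}\nas_i$ and $[\pa_u,\Omega_{0i}] = -\omega_i\pa_u + \tfrac{v}{2r}\nas_i$. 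In all regions where $r \sim v$, these give exactly the $|I|=1$ case of both estimates (the last sum on the RHS of \eqref{ellZcomm} is vacuous for $|I|=1$).

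For the inductive step, writing $Z^I = Z\,Z^J$ with $|J|=|I|-1$ and $Z \in \mathcal{Z}$, I use
\[
[\evm, Z^I] = [\evm,Z]\,Z^J + Z\,[\evm, Z^J].
\]
The first summand is handled by the base case applied with $q$ replaced by $Z^Jq$: it produces $|\evm Z^Jq| + \tfrac{1+|u|}{1+v}|\nas Z^Jq|$, both of which sit inside the RHS of \eqref{ellZcomm}. For the second summand, I apply the inductive hypothesis to $[\evm, Z^J]q$, then commute $Z$ through each resulting term. Commuting $Z$ past $\evm$ produces $\evm Z$ plus a base-case commutator already bounded; commuting $Z$ past a coefficient like $\tfrac{1+|u|}{1+v}$ costs only a constant by the symbol property \eqref{minksymb} together with \eqref{Zintonull}; commuting $Z$ past $\nas$ is done by rewriting $\nas_i = \tfrac{\omega^j}{r}\Omega_{ji}$ and using the Lie-algebra closure \eqref{liealgebra}, which yields $\nas Z'$-terms plus a remainder of order $\tfrac{1}{1+v}$ times a rectangular derivative $\pa$. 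Converting that $\pa$ into the null frame $\pa = n + \evm + \nas$ and combining the resulting $n$-contribution with the preexisting weight $\tfrac{1+|u|}{1+v}$ yields precisely the weight $\tfrac{(1+|u|)^2}{(1+v)^2}|nZ^Kq|$ appearing in the second sum of \eqref{ellZcomm}, while the $\evm$- and $\nas$-contributions feed back into the first two sums. The argument for $[n, Z^I]$ is identical in structure but simpler because the analogous commutators at the base level carry weight $1$ rather than $\tfrac{1+|u|}{1+v}$; correspondingly the hierarchy in \eqref{nZcomm} terminates one order earlier.

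The main obstacle is purely combinatorial: tracking how the weight $\tfrac{1+|u|}{1+v}$ compounds to $\tfrac{(1+|u|)^2}{(1+v)^2}$ exactly once (and not more) as $|I|$ grows, so that the hierarchy in \eqref{ellZcomm} closes at three tiers rather than cascading indefinitely. This is secured by the observation that $[Z,\nas]$ generates a single new rectangular derivative with weight $\tfrac{1}{1+v}$, which by \eqref{pointofuv} can be rewritten using $(1+|u|)n \lesssim \sum_{Z}|Zq| + (1+v)|\evm q|$, so that any further commutation only reshuffles the three admissible types of derivatives among themselves. No analytical input is required beyond the symbol bound \eqref{minksymb} and the identities already catalogued in \eqref{inhomognull}--\eqref{pointofuv} and \eqref{sdeltaformulaOmega}.
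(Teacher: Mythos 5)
Your proposal is correct and follows essentially the same route as the paper: explicit computation of the first-order commutators $[\evm,Z]$, $[n,Z]$ in the null frame, induction via $[\evm, Z Z^J]=[\evm,Z]Z^J+Z[\evm,Z^J]$ using the symbol property \eqref{minksymb}, the algebra closure of $\mathcal{Z}$, and the conversion of stray full vector fields through \eqref{Zintonull} to produce the weights $\tfrac{1+|u|}{1+v}$ and $\tfrac{(1+|u|)^2}{(1+v)^2}$ at the correct orders. The only refinement needed is that the induction must formally be carried on the structural decomposition of the commutator (a sum of terms $a\,\evm Z^J$, $a\tfrac{1+|u|}{(1+v)^2}\Omega Z^J$, $a\tfrac{(1+|u|)^2}{(1+v)^2} n Z^K$ with symbol coefficients, as in the paper's \eqref{ellz}--\eqref{nz}) rather than on the pointwise inequality itself, since one cannot apply $Z$ to a bound—your weight bookkeeping already implicitly does exactly this.
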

\begin{proof}
	The result follows from the claim that
	 $[\evm, Z^I]$ is a sum of terms of the following forms,
	 \begin{equation}
	  a(t,x) \evm Z^J,
		\quad a(t,x)\frac{1 + |u|}{(1+v)^2} \Omega Z^J,
		\quad
		|J| \leq |I|-1,
		\qquad
		a(t,x)\frac{(1 + |u|)^2}{(1+v)^2} n Z^K,
		\quad
		 |K| \leq |I|-2
	  \label{ellz}
	 \end{equation}
	 and $[n, Z^I]$ is a sum of terms of the following forms
	 \begin{equation}
	  a(t,x) n Z^J,
		\quad
		a(t,x)\frac{1}{1+v} \Omega Z^J,
		\quad
			|J| \leq |I|-1,
			\qquad
		a(t,x) \evm Z^K,
		\quad |K| \leq |I|-2,
	  \label{nz}
	 \end{equation}
	 where the $a(t,x)$ are functions satisfying the symbol condition
	 \eqref{minksymb}.

 The claims follows from a direct calculation. Using the facts that
 \begin{equation}
  n \omega_i = \evm \omega_i = 0,
	\qquad
	\Omega_{ij} u = \Omega_{ij} v{=0} \qquad
	[n,\Omega_{ij}] = [\evm, \Omega_{ij}] = 0,
  \label{minkfact0}
 \end{equation}
 it is straightforward to verify
 \begin{equation}
  [\evm, S] = \ell^{ m}, \qquad
	[\evm, \Omega_{ij}] = 0, \qquad
	[\evm, \pa_t] = 0, \qquad
	[\evm, \pa_i] = \frac{1}{r^2} c_i^{\ell \Omega} \cdot\Omega,
  \label{}
 \end{equation}
 as well as
  \begin{equation}
   [n, S] = n, \qquad
 	[n, \Omega_{ij}] = 0, \qquad
 	[n, \pa_t] = 0, \qquad
 	[n, \pa_i] = \frac{1}{r^2}c_i^{n \Omega} \cdot \Omega.
   \label{minkfact1}
  \end{equation}
	where the coefficients above all satisfy \eqref{minksymb}.
	It remains to commute with the fields $\Omega_{0i} = t\pa_i + x_i \pa_t$
	and for this we use the identity
	\begin{equation}
	 \Omega_{0i} = \omega_i (v\pa_v - u\pa_u) + \left(1 + \frac{u}{r}\right) \omega^j
	 \Omega_{ij}
	 \label{}
	\end{equation}
	and use the above relations to see that
	\begin{equation}
	 [\evm, \Omega_{0i}] = \omega_i \ell^{ m} - \frac{1}{2} \frac{u}{r^2} \omega^j \Omega_{ij},
	 \qquad
	 [n,\Omega_{0i}] = -\omega_i n + \frac{1}{2} \left( \frac{1}{r} + \frac{u}{r^2}\right)
	 \omega^j \Omega_{ij},
	 \label{}
	\end{equation}
	and so
	\begin{equation}
	 [\evm, Z] = a_Z^{\ell \ell} \ell + \frac{1 +|u|}{(1+v)^2} a_Z^{\ell \Omega} \cdot \Omega,
	 \qquad
	 [n, Z] = a_Z^{nn} n + \frac{1}{1+v}a^{n \Omega}_Z\cdot \Omega,
	 \label{nlcom1}
	\end{equation}
	for symbols $a_Z^{\alpha \beta}$, satisfying \eqref{minksymb}.
	To get the higher-order commutators we also need to commute $\Omega$
	with each $Z \in \mathcal{Z}$. For our purposes all that is needed is that
	the commutators $[Z,\Omega] = \sum_{Z' \in \mathcal{Z}} c_Z^{Z'} Z'$
	for constants $c_Z^{Z'}$.
	 Writing
	$[\evm , Z Z^J] = [\evm, Z]Z^J + Z [\evm, Z^J]$ and using \eqref{nlcom1}
	we find that $[\evm, Z^I]$ is a sum of terms of the form
	\begin{equation}
	 c \evm Z^J, \quad c \frac{1 + |u|}{(1+v)^2}  \Omega Z^J,
	 \quad
	 c \frac{1 +|u|}{(1 + v)^2} Z^K,
	 \label{}
	\end{equation}
	and that $[n, Z^I]$ is a sum of terms of the form
	\begin{equation}
	a n Z^J, \quad a \frac{1}{1+v}  \Omega Z^J,
	\quad
	a \frac{1 }{1 + v} Z^K,
	\label{}
 \end{equation}
	where $|J| \leq |I|-1$ and $|K| \leq |I|-2$ and where
	the coefficients $c$ satisfy the symbol condition
	\eqref{minksymb}. These are of the form we want apart
	from the last term in each expression, and after using \eqref{Zintonull}
	to handle this term we get \eqref{ellz}-\eqref{nz}.
\end{proof}

\subsection{Commutators with $\mZB$}
\label{mZBcommutatorsec}

Recall that $\mZB = \{\sBo = s\pa_u, \sBt = v\pa_v,
\Omega_{ij} = x_i \pa_j - x_j \pa_i\}$. We start by recording
some basic identities inolving these fields. All of these fields commute
with $n = \pa_u$ and $\sBo, \Omega_{ij}$ additionally commute with
$\evmB=\pa_v+\frac{u}{vs}\pa_u$,
\begin{equation}
    [\sBo, \evmB] = [\Omega_{ij}, \evmB] = [Z_{\mB}, n] = 0.
 \label{basicB0}
\end{equation}
As for the commutator $[\sBt, \evmB]$,
\begin{equation}
 [\sBt, \evmB] = -\left(\evmB + \frac{u}{vs^2} \pa_u\right)
 \label{basicB1}
\end{equation}
which we will see does not introduce any serious difficulties.
The fields $\sBo, \sBt$ do not commute with the angular
Laplacian but the commutator is given by
\begin{equation}
 [\sBo, \sDelta] 
 = \frac{s}{r} \sDelta,
 \qquad
 [\sBt, \sDelta ]
 = -\frac{v}{r} \sDelta {= -2\sDelta - \frac{u}{r} \sDelta}
  \label{basicBangular}
\end{equation}
Finally, we note that the family $\mathcal{Z}_{\mB}$ does not form an algebra because
$\Sbo, \Sbt$ do not commute, but their commutator is
\begin{equation}
 [\Sbo, \Sbt] = -\pa_u = -\frac{1}{s} \Sbo,
 \label{nonalgebra}
\end{equation}
which is harmless in our applications.

{
We now record an analogue of the identity \eqref{Zcommbox}. For this it will
be helpful to introduce two classes of symbols that capture the behavior of
some of the coefficients we encounter. We say a smooth function $a$ is a
``strong'' symbol if for all $j$ it satisfies
\begin{equation}
 (1+v)^j |\pa^j a|\leq C_j,
 \label{strongsymbol}
\end{equation}
for constants $C_j$. Note that this is stronger than the condition \eqref{minksymb}
since for example it requires that $(1+v)|\pa_u a|\lesssim 1$ as opposed to
just $(1+|u|) |\pa_u a|\lesssim 1$. Note that if $a = a(x/|x|)$ is smooth
it satisfies \eqref{strongsymbol} in the region $r \sim t$, which is the
region we will be concerned with in this section.

We say a smooth function $b$ is a ``weak'' symbol if, {in the region $|u|\lesssim s^{1/2}$},  for all multi-indices
$I$ it satisfies
\begin{equation}
 |Z_{\mB}^I b|\leq C_I,
 \label{weaksymbol}
\end{equation}
for constants $C_I$.
Note that while a bounded smooth function
$f(u)$ is not a
strong symbol due to the growth of $(s\pa_u)^k f(u)$, the function $u/s$ is a weak symbol.
 However, {since in region $|u|\lesssim s^{1/2}$ the function $|u/s|\lesssim s^{-1/2}$}, this results in a loss of $s^{-1/2}$ in various estimates, including the ones below.

With these definitions, we can write \eqref{basicB1} and \eqref{basicBangular}
in the form
\begin{equation}
 [Z_{\mB}, \evmB] = c_{Z_{\mB}}\evmB +  \frac{1}{(1+v)(1+s)}b_{Z_{\mB}} \pa_u,
 \qquad
 [Z_{\mB}, \sDelta] = a_{Z_{\mB}} \sDelta,
 \label{abstractmBcomms}
\end{equation}
where the $c_{Z_{\mB}}$ are constants $(c_{Z_{\mB}} = -1$ if $Z_{\mB} = \sBt$ and
zero otherwise), the $b_{Z_{\mB}}$ are weak symbols \eqref{weaksymbol}
and the $a_{Z_{\mB}}$ are strong symbols \eqref{strongsymbol}. {In fact,
\begin{equation}
  \label{aformulas}
  a_{\sBo} =\frac{s}{r} ,\qquad a_{\sBt} =-\left( 2 + \frac{u}{r} \right),
  \qquad
  a_{\Omega_{ij}} = 0,
\end{equation}}
{so we can write the second identity in \eqref{abstractmBcomms} in the form
\begin{equation}
  \label{}
  [Z_{\mB}, \sDelta] = a_{Z_{\mB}} \sDelta = \left(\slashed{a}_{Z_{\mB}} + \frac{s}{r} d_{Z_{\mB}} \right) 
  \sDelta
\end{equation}
where $\slashed{a}_{v\pa_v} = -2$ and $\slashed{a}_{Z_{\mB}} = 0$ otherwise, and where $d_{Z_{\mB}}$ are
weak symbols \eqref{weaksymbol}. Here we used that $\frac{u}{s}$ is a weak symbol.

} 
If we introduce $\widetilde{Z_{\mB} } = Z_{\mB} - c_{Z_{\mB} }$, then the first identity
reads
\begin{equation}
  \label{wtzmBcomm}
  [\widetilde{Z}_{\mB} , \evmB] =  \frac{1}{(1+v)(1+s)}b_{Z_{\mB}} \pa_u.
\end{equation}

 Noting that
all our fields commute with $n = \pa_u$, the first identity in
\eqref{abstractmBcomms} above then implies
 \begin{equation}
     [\widetilde{Z_{\mB}}, n\evmB] q = \pa_u P_{Z_{\mB}}[q],
	\qquad P_{Z_{\mB}}[q] =  \frac{1}{(1+v)(1+s)}b_{Z_{\mB}} \pa_uq.
  \label{}
 \end{equation}
 If we write
$\Box_{\mB} = -4n\evmB + \sDelta$ then taking advantage of the second identity
in \eqref{abstractmBcomms} we find
\begin{equation}
    \widetilde{Z_{\mB}} \Box_{\mB} q
    =  \Box_{\mB} \widetilde{Z_{\mB}}q + \pa_u P_{Z_{\mB}}[q]
 +( a_{Z_{\mB}} - c_{Z_{\mB}})  \sDelta q,
 \qquad P_{Z_{\mB}}[q] = \frac{1}{(1+v)(1+s)}b_{Z_{\mB}} \pa_uq,
 \label{BoxmBcom0}
\end{equation}
where we have relabelled the weak symbols $b_{Z_{\mB}}$ to absorb the harmless 
multiplicative constant $-4$.

%In particular, if we introduce $\widetilde{Z}_{\mB} = Z_{\mB} + c_{Z_{\mB}}$, we have
%the identity
%\begin{equation}
% \widetilde{Z}_{\mB} \Box_{\mB} q = \Box_{\mB} \widetilde{Z}_{\mB} q + \pa_u P_{Z_{\mB}}[q] + F_{Z_{\mB}}[q]
% \qquad F_{Z_{\mB}}[q] =  (a_{Z_{\mB}} - c_{Z_{\mB}}) \sDelta q,
% \label{BoxmBcom1}
%\end{equation}
%with $P_{Z_{\mB}}$ as in \eqref{BoxmBcom0}, $a_{Z_{\mB}}$ satisfy
%the strong symbol condition \eqref{strongsymbol}. 
{For some of our applications
it will be enough to use that $a_{Z_{\mB}} - c_{Z_{\mB}}$ is a strong symbol, but
in other places
we will need to record a more explicit version of this formula when $Z_{\mB} = \sBt = v\pa_v$.
In that case, $a_{Z_{\mB}} = \slashed{a}_{Z_{\mB}} - c_{Z_{\mB}} + \frac{s}{r} d_{Z_{\mB}} 
= -1 + \frac{s}{r} d_{Z_{\mB}}$ since $\slashed{a}_{v\pa_v} = -2$ and $c_{v\pa_v} = -1$.
Then \eqref{BoxmBcom0} reads
\begin{equation}
  \label{}
  \widetilde{v\pa_v} \Box_{\mB} q  = \Box_{\mB}\widetilde{ v\pa_v} q - \sDelta q 
  + \pa_u P_{v\pa_v}[q] + F_{v\pa_v}[q],
  \qquad \text{ where } F_{v\pa_v}[q] = \frac{s}{r} d_{v\pa_v} \sDelta q.
\end{equation}
We will see that the term $-\sDelta q$ will contribute a positive-definite term to our energy
estimates.
More generally, using \eqref{aformulas}, along with the fact that $c_{s\pa_u} = c_{\Omega_{ij}} = 0$,
we can write \eqref{BoxmBcom0} in the form
\begin{equation}
  \label{BoxmBcom1}
  \widetilde{Z_{\mB}} \Box_{\mB} q = \Box_{\mB} \widetilde{Z_{\mB}} q
  - a'_{Z_{\mB}} \sDelta q + \pa_u P_{Z_{\mB}}[q] + F_{Z_{\mB}}[q],
\end{equation}
where $P_{Z_{\mB}}[q]$ are as in \eqref{BoxmBcom0} and where
\begin{equation}
  \label{}
  a'_{v\pa_v} = 1, \quad a_{s\pa_u}' = a_{\Omega_{ij}}' = 0,
  \qquad F_{Z_{\mB}}[q] = \frac{s}{r} d_{Z_{\mB}}\sDelta q,
\end{equation}
for weak symbols  $d_{Z_{\mB}}$.}

We now get a higher-order version of the identity \eqref{BoxmBcom1}.

\begin{lemma}\label{boxmBcommutator}
    Define $\widetilde{Z_{\mB}} = Z_{\mB} + c_{Z_{\mB}}$ 
    where $c_{\sBo} = c_{\Omega} = 0$ and $c_{\sBt} = -1$,
    as in the above. Let $X^k$ denote
 an arbitrary $k$-fold product of the radial vector fields $X \in \{\Sbo, \Sbt\}$.
 If $Z_{\mB}^I = X^k \Omega^K$ then with $\Box_{\mB} = -4n\evmB +\sDelta$,
 \begin{equation}
  \widetilde{Z_{\mB}^I} \Box_{\mB} q = \Box_{\mB} \widetilde{Z_{\mB}^I} q
	+ \pa_u P_{\mB, I}[q] + F_{\mB, I}[q],
  \label{comm1formula0}
 \end{equation}
 where the above quantities are
 \begin{equation}
  P_{\mB, I}[q]
	= \frac{1}{(1+v)(1+s)} \sum_{|J| \leq |I|-1} b_J^I \pa_u Z_{\mB}^Jq,
  \qquad
  F_{\mB, I}[q] = \sum_{j \leq k-1} \sum_{|J| \leq |K|} a_{j}^k\sDelta X^j \Omega^Jq,
  \label{fmBIformula}
 \end{equation}
 where the coefficients $b_J^I$ satisfy the weak symbol condition \eqref{weaksymbol}
 and the coefficients $a_j^k$ satisfy the strong symbol condition \eqref{strongsymbol}.
 The last sum is over all $j$-fold products of the fields $X$ with the convention
 that the sum vanishes if $k = 0$.

 {Moreover, we have the identity
 \begin{equation}
  \label{}
    \widetilde{Z_{\mB}^I} \Box_{\mB} q = \Box_{\mB} \widetilde{Z_{\mB}^I} q
    + \pa_u P_{\mB, I}[q] + F_{\mB, I}^1[q] + F_{\mB, I}^2[q]
\end{equation}
where $P_{\mB, I}[q]$ is as above and where the quantities $F_{\mB, I}^1[q], F_{\mB, I}^{2}[q]$
are as follows. First,
\begin{equation}
  \label{fmBIformula1}
  F_{\mB, I}^1[q] = \sum_{|J_1| + |J_2| = |I|-1}
  -a_I^{J_1 J_2} \sDelta Z_{\mB}^{J_1}Z_{\mB}^{J_2}q,
\end{equation}
where the coefficients $a_I^{J_1 J_2} = 1$ if $Z^I_{\mB} = Z^{J_1}_{\mB} (v\pa_v) Z^{J_2}_{\mB}$
and $a_I^{J_1 J_2} = 0$ otherwise (so that $a_I^{J_1J_2} \equiv 0$ if there are no 
factors of $v\pa_v$ present in $Z_{\mB}^I$). The term
$F_{\mB, I}^2[q]$ is given by
\begin{equation}
  \label{fmBIformula2}
  F_{\mB, I}^2[q] = \frac{s}{r}  \sum_{|K| \leq |I|-1} d_{IK} \sDelta Z_{\mB}^K q
  + \sum_{|K|\leq |I|-2} d_{IK}' \sDelta Z_{\mB}^K q
\end{equation}
for weak symbols $d_{IJ}, d_{IJ}' $.
In particular,
\begin{equation}
  \label{fmBI2bd}
  |F_{\mB, I}^2[q]| \lesssim \frac{1+s}{(1+v)^2} \sum_{|J| \leq |I|} |\nas Z_{\mB}^J q|
  + \frac{1}{(1+v)^2} \sum_{|J| \leq |I|-1} |\Omega Z_{\mB}^J q|.
\end{equation}

}
 \end{lemma}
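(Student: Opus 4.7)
The plan is to proceed by induction on $|I|$, with base case $|I| = 1$ given directly by \eqref{BoxmBcom1}. For the inductive step, I would write $Z_{\mB}^I = Z_{\mB} Z_{\mB}^J$ for a single $Z_{\mB} \in \mZB$ and $|J| = |I|-1$, and apply the commutator identity at the outside, using that $\widetilde{Z_{\mB}} \widetilde{Z_{\mB}^J} = \widetilde{Z_{\mB}^I}$ (since the $c_{Z_{\mB}}$ are constants). Explicitly,
\begin{equation}
\widetilde{Z_{\mB}^I}\Box_{\mB} q = \widetilde{Z_{\mB}}\,\widetilde{Z_{\mB}^J}\Box_{\mB} q = \widetilde{Z_{\mB}}\bigl(\Box_{\mB} \widetilde{Z_{\mB}^J} q + \pa_u P_{\mB, J}[q] + F_{\mB, J}[q]\bigr).
\end{equation}
Applying \eqref{BoxmBcom1} to the first term and using $[\widetilde{Z_{\mB}}, n] = 0$ to move $\widetilde{Z_{\mB}}$ inside $\pa_u$ in the second, gives
\begin{equation}
\widetilde{Z_{\mB}^I}\Box_{\mB} q = \Box_{\mB}\widetilde{Z_{\mB}^I} q - a'_{Z_{\mB}}\sDelta \widetilde{Z_{\mB}^J} q + \pa_u\bigl(P_{Z_{\mB}}[\widetilde{Z_{\mB}^J} q] + \widetilde{Z_{\mB}} P_{\mB, J}[q] + [\pa_u, \widetilde{Z_{\mB}}] \cdots \bigr) + F_{Z_{\mB}}[\widetilde{Z_{\mB}^J}q] + \widetilde{Z_{\mB}} F_{\mB, J}[q],
\end{equation}
and I would then reorganize. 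The terms of the form $\pa_u(\cdots)$ combine into $\pa_u P_{\mB, I}[q]$ after using $P_{Z_{\mB}}$ and $\widetilde{Z_{\mB}} P_{\mB, J}$ involve one $\pa_u Z_{\mB}^K q$ with $|K| \leq |I|-1$, together with coefficients that are products and $Z_{\mB}$-derivatives of the weak symbols $b$'s — and weak symbols are closed under $Z_{\mB}$-differentiation by definition of \eqref{weaksymbol}, plus division by $(1+v)(1+s)$ (which is preserved since $Z_{\mB}((1+v)(1+s))/((1+v)(1+s))$ is itself a weak symbol).

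The remaining terms are the $F$-type terms. For the coarse statement \eqref{fmBIformula}, I need to show each term is of the form $a_{j}^k\sDelta X^j \Omega^J q$ with $j \leq k-1$. The contribution $F_{Z_{\mB}}[\widetilde{Z_{\mB}^J}q]  = \frac{s}{r} d_{Z_{\mB}} \sDelta \widetilde{Z_{\mB}^J} q$ is already of this form (the factor $s/r$ combined with a weak symbol $d_{Z_{\mB}}$ produces a strong symbol after multiplication in the region $r\sim t$, since $s/r$ itself satisfies the strong symbol condition there). For the other contribution $\widetilde{Z_{\mB}}F_{\mB, J}[q]$, I commute $\widetilde{Z_{\mB}}$ past $\sDelta$ using the second relation in \eqref{abstractmBcomms}, producing $\sDelta$-terms of the right shape together with coefficients handled by the symbol closure argument. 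Also, the $-a'_{Z_{\mB}}\sDelta\widetilde{Z_{\mB}^J}q$ term is admissible: $a'_{Z_{\mB}}$ is nonzero only when $Z_{\mB} = v\pa_v$, in which case $k$ increased by $1$ going from $J$ to $I$, so indeed $|J| \leq |I|-1 \leq k - 1$ as needed.

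The refined decomposition $F_{\mB, I} = F_{\mB, I}^1 + F_{\mB, I}^2$ is the main obstacle. The point is that $F_{\mB, I}^1$ isolates the single borderline contribution coming from $a'_{v\pa_v} = 1$ when $Z_{\mB} = v\pa_v$ is the \emph{outermost} factor, producing $-\sDelta \widetilde{Z_{\mB}^J}q$ with $|J|=|I|-1$, which after expanding $\widetilde{Z_{\mB}^J}$ (dropping the constant $c_{Z_{\mB}^J}$ into $F_{\mB, I}^2$) becomes $-\sDelta Z_{\mB}^{J_1}Z_{\mB}^{J_2}q$ with $Z_{\mB}^I = Z_{\mB}^{J_1}(v\pa_v)Z_{\mB}^{J_2}$, thus matching the form of \eqref{fmBIformula1}. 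However, we must iterate: when we process $\widetilde{Z_{\mB}} F_{\mB, J}^1[q]$ from the inductive hypothesis, the commutator $[\widetilde{Z_{\mB}}, \sDelta]$ produces a factor of $s/r$ times weak symbols, which is \emph{not} of $F^1$-type but rather of $F^2$-type (first term of \eqref{fmBIformula2}), and similarly any commutation with the $Z_{\mB}^{J_1}Z_{\mB}^{J_2}$ factor reduces the total derivative count and yields a term at order $|I|-2$ with bounded coefficients, absorbed into the second term of \eqref{fmBIformula2}. The bookkeeping for this — ensuring the inductive step never produces spurious terms of $F^1$-type when the outer field is not $v\pa_v$ — is the most delicate part; I would do it by tracking the position (innermost-to-outermost) of each $v\pa_v$ and using \eqref{basicBangular} to convert $[\sBt,\sDelta] = -2\sDelta - \tfrac{u}{r}\sDelta$ into one copy of $F^1$-type plus one copy of $F^2$-type at each step. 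Finally, the pointwise bound \eqref{fmBI2bd} follows from $\sDelta = \frac{1}{r^2}\Omega^2$ combined with the estimate $|\Omega Z^J_{\mB} q| \lesssim \sum_{|K|\le |J|+1}|Z^K_{\mB} q|$ and the fact that $\nas Z_{\mB}^J q \sim \frac{1}{r}\Omega Z_{\mB}^J q$.
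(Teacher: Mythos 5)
Your proposal is correct and follows essentially the same route as the paper's proof: induction on $|I|$ with the new field added outermost, the single-field identities \eqref{BoxmBcom0}--\eqref{BoxmBcom1}, absorption of the $\pa_u$-terms into $P_{\mB, I}$ via closure of the weak symbol class, and, for the refined splitting, generation of the $F^1$-terms from $a'_{v\pa_v}$ together with pushing the inherited $F^1_{\mB, J}$ past the new field, with the $[Z_{\mB},\sDelta]$ and constant remainders absorbed into $F^2$. One small repair: the claim that $\tfrac{s}{r}$ times a general weak symbol is a strong symbol is false (weak symbols only gain $s^{-1}$ per $\pa_u$, not $v^{-1}$); it works here only because the specific factors $\tfrac{s}{r}d_{Z_{\mB}}$ are $\tfrac{s}{r}$ and $-\tfrac{u}{r}$, or, more cleanly, because for the coarse statement one can invoke \eqref{BoxmBcom0} directly, whose coefficient $a_{Z_{\mB}}-c_{Z_{\mB}}$ is strong by \eqref{aformulas}.
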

 {
 \begin{remark}
     \label{explanationofF1F2}
        The term $F_{\mB, I}$ is too large to treat as an error term in
        our estimates. However, after integrating by parts twice,
        it contributes a postive-definite
        term to our energy estimates and can then be safely ignored. This
        is a consequence of the following observations.
        For our applications we will
        need to handle the product $F_{\mB, I}^1 v\pa_v Z_{\mB}^I q$.
        The coefficients $a_I^{J_1J_2}$ in the definition of $F_{\mB, I}^1$ are such that
        \begin{equation}
            a_I^{J_1J_2} \left(\sDelta Z_{\mB}^{J_1} Z_{\mB}^{J_2}q\right) v\pa_v\left(
            Z_{\mB}^I q
            \right)
            =
            a_I^{J_1J_2} \left(\sDelta Z_{\mB}^{J_1} Z_{\mB}^{J_2}q\right)
           v\pa_v \left( Z_{\mB}^{J_1} (v\pa_v) Z_{\mB}^{J_2} q\right),
        \end{equation}
        for some $J_1, J_2$.
        Ignoring the commutator $[Z_{\mB}^{J_1}, v\pa_v]$ and writing $Z_{\mB}^{J_1}Z_{\mB}^{J_2}
        = Z_{\mB}^J$, this says
        \begin{equation}
          \label{}
             a_I^{J_1J_2} \left(\sDelta Z_{\mB}^{J} q\right) v\pa_v
                \left(Z_{\mB}^I q\right)
            = a_I^{J_1J_2}\left(\sDelta Z_{\mB}^J q\right) \left((v\pa_v)^2 Z_{\mB}^J q\right).
        \end{equation}
        This can be handled by integrating by parts in the angular direction and then in the $v$-direction;
        this generates lower-order boundary terms and bulk terms, as well as a highest-order bulk term,
        \begin{equation}
          \label{}
          a_I^{J_1J_2} |\nas (v\pa_v) Z_{\mB}^J q|^2,
        \end{equation}
        and the crucial point is that this enters our energy identities with a favorable sign. See
        Lemma \ref{FImBibpargument} and in particular \eqref{crucialIBP}.
    \end{remark}
}
 
\begin{proof}
	When $|I| = 1$, the result follows from \eqref{BoxmBcom0}, respectively \eqref{BoxmBcom1},
    after using that $a_{Z_{\mB}} - c_{Z_{\mB}}$ is a strong symbol to get \eqref{comm1formula0}.

	To get the identity \eqref{comm1formula0} for larger $|I|$, we use induction and write
    $[\widetilde{Z_{\mB}} \widetilde{Z_{\mB}^I},\Box_{\mB}] =
    \widetilde{Z_{\mB}}[\widetilde{Z_{\mB}^I},\Box_{\mB}]
	+ [\widetilde{Z_{\mB}}, \Box_{\mB}] \widetilde{Z_{\mB}^I}$ and then, by
    \eqref{BoxmBcom0},
	\begin{multline}
        \label{splitcomm}
	 \widetilde{Z_{\mB}}[\widetilde{Z_{\mB}^I},\Box_{\mB}]
 	+ [\widetilde{Z_{\mB}}, \Box_{\mB}] \widetilde{Z_{\mB}}^I\\
	=
	\pa_u \left( Z_{\mB} P_{\mB, I}[q] + P_{Z_{\mB}}[Z_{\mB}^I q]\right) + Z_{\mB} F_{\mB, I}[q]
	+
	 F_{ Z_{\mB}}[Z_{\mB}^I q],
	\end{multline}
	where we used that all our fields commute with $\pa_u$. The first two
	terms are of the correct form
    for \eqref{comm1formula0}. 
    As for the last two terms, we write
	\begin{align}
	 Z_{\mB} F_{\mB, I}[q]
	 &= \sum_{j \leq k-1} \sum_{|J| \leq |K|} a_j^k \sDelta Z_{\mB} X^j \Omega^J q
	 + \sum_{j \leq k-1}\sum_{|J| \leq |K|} (Z_{\mB}a_j^k + a_j^k a_{Z_{\mB}})\sDelta X^j \Omega^J q,\\
	 F_{ Z_{\mB}}[Z_{\mB}^I q]&=
	 a_{Z_{\mB}} \sDelta  Z_{\mB}^I q,
	 \label{}
	\end{align}
	which are both of the form appearing in \eqref{fmBIformula}.

    {
    To get the formula \eqref{fmBIformula2}, we argue in the same way except
that we use \eqref{BoxmBcom1} in place of \eqref{BoxmBcom0}. The difference is just that we
have the following terms contributed into \eqref{splitcomm}
by the term $F_{Z_{\mB}}[\widetilde{Z}_{\mB}^I]$
and $\widetilde{Z}_{\mB} F^1_{\mB, I}$,
\begin{equation}
  \label{higherorderF2}
  \frac{s}{r} d_{Z_{\mB}}\sDelta \widetilde{Z_{\mB}}^I q +\widetilde{Z_{\mB}}
  \left(\frac{s}{r} \sum_{|K| \leq |I|-1}d_{IK} \sDelta Z_{\mB}^K q 
  + \sum_{|K| \leq |I|-2} d_{IK}'\sDelta Z_{\mB}^K q\right),
\end{equation}
and the contribution from the terms $F_{\mB, I}^2$ and the term $-a_{Z_{\mB}}\sDelta \widetilde{Z_{\mB}}^I$
from \eqref{BoxmBcom1}, which are
\begin{multline}
  \label{higherorderF1}
   \sum_{|J_1| + |J_2| = |I|-1} -a_I^{J_1 J_2} \widetilde{Z_{\mB}}\sDelta Z_{\mB}^{J_1} Z_{\mB}^{J_2}q
  - a_{Z_{\mB}}'\sDelta \widetilde{Z_{\mB}}^Iq
  \\
  = 
  \sum_{|J_1| + |J_2| = |I|-1} -a_I^{J_1 J_2} \sDelta Z_{\mB} Z_{\mB}^{J_1} Z_{\mB}^{J_2}q
  -a_{Z_{\mB} }' \sDelta \widetilde{Z_{\mB} }^I q
  \\
  + 
  \sum_{|J_1| + |J_2| = |I|-1} -(c_{Z_{\mB}} + a_{Z_{\mB}}) a_I^{J_1 J_2} \sDelta Z_{\mB}^{J_1} Z_{\mB}^{J_2}q,
\end{multline}
where we used the definition $\widetilde{Z_{\mB}} = Z_{\mB} + c_{Z\mB}$ and the second identity
in \eqref{abstractmBcomms} to commute $Z_{\mB}$ with $\sDelta$ in the second step.

To handle \eqref{higherorderF2}, we use the fact that
$Z_{\mB} \frac{s}{r} = d'_{Z_{\mB}} \frac{s}{r}$ for a weak symbol $d'_{Z_{\mB}}$,
along with the second identity in \eqref{abstractmBcomms} to commute with $\sDelta$ in the second
and third
terms, which shows that all the terms in \eqref{higherorderF2} are of the form appearing in \eqref{fmBIformula2}.

We now consider \eqref{higherorderF1}.
The terms on the second line is of the form appearing in the second term in \eqref{fmBIformula2}, 
since they involve lower-order terms. The first and second terms are of the form appearing in \eqref{fmBIformula1},
since if $Z_{\mB} = \sBo$ or $\Omega_{ij}$, the first term accounts for all decompositions
of $Z_{\mB} Z_{\mB}^I$ into products of the form $Z_{\mB}^{J_1}(v\pa_v)Z_{\mB}^{J_2}$. 
If $Z_{\mB} = \sBt = v\pa_v$, the second term accounts for the additional decomposition
$Z_{\mB} Z_{\mB}^I = (v\pa_v) Z_{\mB}^{J_1} Z_{\mB}^{J_2}$ (recall that $a_{v\pa_v}' = 1$).
The bound \eqref{fmBI2bd}
follows immediately from \eqref{fmBIformula2} after expressing $\sDelta$ in terms of the
rotation fields using \eqref{sdeltaformulaOmega}.
    }
\end{proof}

To commute with the nonlinear terms in our equation, we will need to commute our operators
$Z_{\mB}$ with differentiation in rectangular coordinates. For this it is helpful
to record the following elementary identities,
\begin{equation}
	\pa_t =  \pa_u +  \pa_v,
	\qquad
	\pa_i = \omega_i \pa_v - \omega_i \pa_u + \frac{\omega^j}{r} \Omega_{ji},
	\qquad \omega = x/|x|,
 % \pa_\alpha = \omega_\alpha \pa_r - \frac{\omega^{\beta}}{r} \Omega_{\alpha\beta}
 % = -\frac{\omega_\alpha}{2} \pa_u + \frac{\omega_\alpha}{2} \pa_v + c_{\alpha}^{\beta ij} \frac{\omega_\beta}{r} \Omega_{ij},
 \label{alphaintonullandrotation}
\end{equation}
so in particular for $\alpha \in \{0,1,2,3\}$ we can write
\begin{equation}
 \pa_\alpha = c_\alpha^u(\omega) \pa_u + c_\alpha^v(\omega) \pa_v +
 \frac{1}{r}c^{ij}_{\alpha}(\omega) \Omega_{ij},
 \label{alphaintonullandrotation2}
\end{equation}
where the $c_\alpha$ are smooth functions on the sphere.
We emphasize the fact that apart from the factor of $1/r$ in front of
$\Omega$ in \eqref{alphaintonullandrotation2}, none of the above coefficients depend on $u$,
which will simplify many formulas going forward.
We will also use the facts that
\begin{equation}
 [Z_{\mB}, \pa_u] = 0,
 \qquad
 [\Sbo, \Omega] = [\sBt, \Omega] = 0,
 \qquad
 [\sBo, \pa_v] = -\frac{1}{v}\pa_u,
 \qquad
 [\sBt, \pa_v] = -\pa_v,
 \label{comm1}
\end{equation}
and we will also rely on the algebra property \eqref{liealgebra} of the rotation
fields.
Finally, we will use the following simple facts,
\begin{equation}
 \pa_u \omega= \pa_v \omega = \sBo \omega = \sBt \omega = 0,
 \qquad
 \Omega_{ij}r = 0,
 \qquad
 \Omega_{ij} \omega_k = c_{ijk}^\ell \omega_\ell,
 \label{framecoeffs}
\end{equation}
for constants $c_{ijk}^\ell$.

Using \eqref{alphaintonullandrotation2}
and the above facts, we find that
\begin{align}
 [\sBo,\pa_\alpha ]q &= [\sBo,  c^u_\alpha(\omega) \pa_u]q
 + [\sBo,  c^v_\alpha(\omega) \pa_v]q
 + [\sBo,  \frac{1}{r}c^{ij}_\alpha(\omega) \Omega_{ij}]q
 = -\frac{1}{v} c^v_\alpha(\omega) \pa_u q
 + \frac{s}{2r} \frac{1}{r} c_{\alpha}^{ ij}(\omega) \Omega_{ij}q,\label{sbopaalpha}\\
 [\sBt,\pa_\alpha ]q &= [\sBt,  c^u_\alpha(\omega) \pa_u]q
 + [\sBt,  c^v_\alpha(\omega) \pa_v]q
 + [\Sbt, \frac{1}{r}c^{ij}_\alpha(\omega)\Omega_{ij}]q
 = - c^v_\alpha(\omega)\pa_v q
 - \frac{v}{2r} \frac{1}{r} c_{\alpha}^{ ij}(\omega) \Omega_{ij}q,
 \\
 [\Omega, \pa_\alpha] q &= d^u_{\alpha ij}(\omega) \pa_u + d^v_{\alpha ij}(\omega)\pa_v
 + \frac{1}{r}d_{\alpha ij}^{i'j'}(\omega) \Omega_{i'j'},
 \label{omegapaalpha}
\end{align}
where the $c_\alpha, d_\alpha$ are smooth functions on the sphere.
Writing $\pa_u = \tfrac{1}{2}\pa_t - \tfrac{1}{2} \omega^i \pa_i$ and
$\pa_v = \tfrac{1}{2} \pa_t + \tfrac{1}{2}\omega^i \pa_i$, the above can all be
written in the form
\begin{equation}
 [Z_{\mB}, \pa_\alpha] q = c_{Z_{\mB} \alpha}^\beta \pa_\beta q
 + \frac{1}{1+v} b_{Z_{\mB} \alpha}^{ij}\Omega_{ij} q,
 \label{paalphazmB}
\end{equation}
where the coefficients $c, b$ satisfy the strong symbol condition
 \eqref{strongsymbol} in the region $|u| \lesssim s^{1/2}$.
 We note that this can be written in the alternate form
 \begin{equation}
  [Z_{\mB}, \pa_\alpha] q = \pa_\beta \left( c_{Z_{\mB} \alpha}^\beta q\right)
	+ \frac{1}{1+v} c'_{Z_{\mB} \alpha} q + \frac{1}{1+v}  b_{Z_{\mB} \alpha}^{ij}\Omega_{ij} q,
  \label{paalphazmB2}
 \end{equation}
 where again the coefficients satisfy the strong symbol condition
  \eqref{strongsymbol} in the region $|u| \lesssim s^{1/2}$. Here we have
	used that for any strong symbol $c$ we have $\pa c  = (1+v)^{-1} c'$
	for another strong symbol $c'$.

	We will need a higher-order version of this formula.
\begin{lemma}
	\label{ZBdivcommvariantlem}
We have
	\begin{equation}
		[Z_{\mB}^I, \pa_\alpha] q =
		\sum_{|J| \leq |I|-1} c_{\alpha J}^{\beta I} \pa_\beta Z_{\mB}^J q +
		\frac{b_{\alpha J}^{Iij}}{1+v} \Omega Z_{\mB}^J q	 \label{ZBdivcommvariant}
	\end{equation}
	where the coefficients are smooth in the region $r \sim t$
	and satisfy the strong symbol condition \eqref{strongsymbol}.

	In particular, we can write
	\begin{equation}
	 		[Z_{\mB}^I, \pa_\alpha] q =
			\pa_\beta P_{\pa_\alpha, I}^\beta[q] + F_{\pa_\alpha, I}[q],
	 \label{ZBdivcommvariant2}
	\end{equation}
	where
	\begin{equation}
	 P_{\pa \beta, I}^\alpha[q] =
	 \sum_{|J| \leq |I|-1} c_{\alpha J}^{\beta I} Z_{\mB}^J q,
	 \qquad
	 F_{\pa_\alpha, I}[q]
	 = \frac{1}{1+v} \sum_{|J| \leq |I|} b_{\alpha J}^{ I} Z_{\mB}^J q,
	 \label{ZBdivcommvariant2struct}
	\end{equation}
	for strong symbols $c, b$.
\end{lemma}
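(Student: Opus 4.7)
The proof will proceed by induction on $|I|$, with the base case $|I|=1$ given directly by the identity \eqref{paalphazmB}. For the inductive step I would write $Z_{\mB}^I = Z_{\mB} Z_{\mB}^{I'}$ with $|I'| = |I|-1$ and use the Leibniz-type identity
\begin{equation}
[Z_{\mB} Z_{\mB}^{I'}, \pa_\alpha]q = Z_{\mB}[Z_{\mB}^{I'}, \pa_\alpha]q + [Z_{\mB}, \pa_\alpha] Z_{\mB}^{I'}q.
\end{equation}
The second term falls directly into the desired form by the base case \eqref{paalphazmB} applied to $Z_{\mB}^{I'}q$, so the main work is to show that $Z_{\mB}$ applied to each summand on the right-hand side of \eqref{ZBdivcommvariant} (with $I$ replaced by $I'$) can be rewritten as a sum of the same shape, possibly with $|I|$ in place of $|I'|$.

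For a generic term of the form $c \, \pa_\beta Z_{\mB}^J q$ with $|J| \leq |I'|-1$ and $c$ a strong symbol, we expand
\begin{equation}
Z_{\mB}(c \, \pa_\beta Z_{\mB}^J q) = (Z_{\mB} c)\, \pa_\beta Z_{\mB}^J q + c\, [Z_{\mB},\pa_\beta] Z_{\mB}^J q + c\, \pa_\beta Z_{\mB} Z_{\mB}^J q,
\end{equation}
and note that $Z_{\mB} c$ is still a strong symbol (in the region $r \sim t$, $|u|\lesssim s^{1/2}$), and that $[Z_{\mB}, \pa_\beta]$ falls into the form \eqref{paalphazmB}. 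The middle term thereby contributes a sum of $c'\, \pa_\gamma Z_{\mB}^J q$ and $(1+v)^{-1} b'\, \Omega Z_{\mB}^J q$ with strong symbol coefficients, which is of the right shape. For the angular term $(1+v)^{-1} b\, \Omega Z_{\mB}^J q$, I would apply $Z_{\mB}$ and use $Z_{\mB}((1+v)^{-1}) = (1+v)^{-1}\tilde c$ for a strong symbol $\tilde c$, that $[Z_{\mB}, \Omega]$ is again either zero (for $Z_{\mB} \in \{\sBo, \sBt\}$) or a linear combination of rotation fields with constant coefficients (for $Z_{\mB} = \Omega_{ij}$, by \eqref{liealgebra}), and then absorb $\Omega Z_{\mB}^J$ into $\Omega Z_{\mB}^{J'}$ with $|J'| \leq |I|-1$. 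Since $\Omega \in \mathcal{Z}_{\mB}$, this has the claimed form.

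For the second statement \eqref{ZBdivcommvariant2}, I would integrate by parts in the first sum of \eqref{ZBdivcommvariant}: write
\begin{equation}
c_{\alpha J}^{\beta I} \pa_\beta Z_{\mB}^J q = \pa_\beta\bigl(c_{\alpha J}^{\beta I} Z_{\mB}^J q\bigr) - (\pa_\beta c_{\alpha J}^{\beta I}) Z_{\mB}^J q,
\end{equation}
and use the fact that $\pa_\beta$ applied to a strong symbol produces $(1+v)^{-1}$ times another strong symbol. This identifies $P_{\pa_\alpha, I}^\beta$ with the stated expression, and the new term $(\pa_\beta c)Z_{\mB}^J q$ together with the pre-existing angular contribution $(1+v)^{-1}b\, \Omega Z_{\mB}^J q$ (after bounding $\Omega Z_{\mB}^J = Z_{\mB}^{J'}$ with $|J'|\leq |I|$ since $\Omega \in \mathcal{Z}_{\mB}$) combine to form $F_{\pa_\alpha, I}$.

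The proof is essentially a bookkeeping exercise, and the only real obstacle is ensuring that all coefficients produced during the induction remain strong symbols in the sense of \eqref{strongsymbol}. This relies on the fact that $Z_{\mB}$ preserves the class of strong symbols in the region $r\sim t$, $|u|\lesssim s^{1/2}$ (which follows from the chain rule together with the explicit formulas for $\sBo, \sBt, \Omega_{ij}$ in rectangular coordinates), and that each individual commutator $[Z_{\mB}, \pa_\alpha]$ already has the structure \eqref{paalphazmB} with the correct decay weight $(1+v)^{-1}$ on the angular term.
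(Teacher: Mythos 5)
Your proposal is correct and follows essentially the same route as the paper: induction on $|I|$ via the Leibniz identity with base case \eqref{paalphazmB}, using that $Z_{\mB}$ preserves strong symbols, the commutator identities \eqref{comm1} and \eqref{liealgebra} for the angular fields, and \eqref{paalphazmB} again to commute with $\pa_\beta$. Your derivation of \eqref{ZBdivcommvariant2} by moving $\pa_\beta$ outside and using that $\pa$ of a strong symbol gains a factor $(1+v)^{-1}$ is exactly the content of \eqref{paalphazmB2} that the paper invokes.
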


\begin{proof}
When $|I| = 1$ this is just \eqref{paalphazmB}. If \eqref{ZBdivcommvariant}
holds for some $|I| \geq 1$ we write
$[Z_{\mB} Z_{\mB}^I, \pa_\alpha]q
= Z_{\mB} [Z_{\mB}^I,\pa_\alpha]q + [Z_{\mB}, \pa_\alpha] Z_{\mB}^I q$ and then
\begin{multline}
 Z_{\mB} [Z_{\mB}^I,\pa_\alpha]q + [Z_{\mB}, \pa_\alpha] Z_{\mB}^I q\\
 = Z_{\mB} \left( \sum_{|J| \leq |I|-1} c_{\alpha J}^{\beta I} \pa_\beta Z_{\mB}^J q +
 \frac{b_{\alpha J}^{Iij}}{1+v} \Omega Z_{\mB}^J q	\right)
 + c_{Z_{\mB} \alpha}^\beta \pa_\beta Z_{\mB}^I q
 + \frac{1}{1+v}  b_{Z_{\mB} \alpha}^{ij}\Omega_{ij}Z_{\mB}^I q.
 \label{}
\end{multline}
The second and third terms are of the correct form. To handle the terms in the
sum, we just use the facts that if $c$ is a strong symbol then so is $Z_{\mB} c$,
that $Z_{\mB}v = c_{Z_{\mB}} v$ for a strong symbol
$c_{Z_{\mB}}$, the commutator identities \eqref{comm1} and
the fact that the rotation fields form an algebra to handle the commutators
with $\Omega$, and the identity \eqref{paalphazmB} once more
to commute with $\pa_\beta$.

The identity \eqref{ZBdivcommvariant2} follows immediately from
\eqref{ZBdivcommvariant} and \eqref{paalphazmB2}.
\end{proof}

{
We now record a version of the above that we use to commute with
the quadratic nonlinearity in our equation.
\begin{lemma}
	\label{centralcommcurrent}
	Suppose that $\gamma = \gamma^{\alpha\beta\delta}$ satisfy $(1+v)|Z_{\mB}^J \gamma| \leq C_J$
	for any $J$, where all quantities are expressed relative
	to the usual rectangular coordinate system.
	With $\check{\gamma}^{\alpha\beta\delta}
	= \gamma^{\alpha\beta\delta} + \gamma^{\alpha\delta\beta}$, we have
 	\begin{align}
 	  \widetilde{Z}_{\mB}^I\pa_\alpha(\gamma^{\alpha\beta\delta}\pa_\beta q
 		\pa_{\delta} q)
 	=
  	 \pa_\alpha(\check{\gamma}^{\alpha\beta\delta}\pa_{\beta}q \pa_{\delta}\widetilde{Z}_{\mB}^Iq)
		 + \pa_\alpha P_I^\alpha + F_{I},
		 \label{centralcommcurrentident0}
 	\end{align}
	where $\widetilde{Z}_{\mB}$ is defined in \eqref{BoxmBcom1}.
	The components of the current $P_I^\alpha$ expressed in rectangular
	coordinates and the remainder are given by
	\begin{equation}
	 P_I^\alpha = \frac{1}{1+v}
	 \sum_{{{|I_1| + |I_2| \leq |I| \atop |I_1|,|I_2|\le |I|-1}}}
	a^{\alpha\beta\delta} \pa_\beta Z_{\mB}^{I_1}q \pa_\delta Z_{\mB}^{I_2} q,
	\qquad
	F_I = \frac{1}{(1+v)^2} \sum_{{|I_1| + |I_2| \leq |I|}}
 a^{\alpha\beta\delta} \pa_\beta Z_{\mB}^{I_1}q \pa_\delta Z_{\mB}^{I_2} q
	 \label{centralcommcurrentident0formula}
	\end{equation}
	where the coefficients in the above are smooth functions
	satisfying the weak symbol condition \eqref{weaksymbol}.
\end{lemma}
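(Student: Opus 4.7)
I would argue by induction on $|I|$. For the inductive step, assume the claim holds at level $|I|$ and apply $\widetilde{Z}_{\mB}$ to both sides. On the right-hand side, Lemma~\ref{ZBdivcommvariantlem} lets me commute $\widetilde{Z}_{\mB}$ past the outer $\pa_\alpha$ in $\pa_\alpha(\check{\gamma}^{\alpha\beta\delta}\pa_\beta q\pa_\delta \widetilde{Z}_{\mB}^Iq)$, producing a term of the form $\pa_\alpha\widetilde{Z}_{\mB}(\check{\gamma}\pa_\beta q\pa_\delta \widetilde{Z}_{\mB}^Iq)$ plus the commutator $[\widetilde{Z}_{\mB},\pa_\alpha](\cdots)$. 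Using the divergence-form expression \eqref{paalphazmB2} for this commutator, I can rewrite it as $\pa_\sigma(c\,(\cdots)) + (1+v)^{-1}(\cdots)$; since $|\check{\gamma}|\lesssim(1+v)^{-1}$ by hypothesis, both contributions fit into $\pa_\alpha P_{I+1}^\alpha + F_{I+1}$ with the correct weights.

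For the inner derivative $\widetilde{Z}_{\mB}$ acting on $\check{\gamma}\pa_\beta q\pa_\delta \widetilde{Z}_{\mB}^Iq$, I apply the Leibniz rule and handle three contributions. First, when $\widetilde{Z}_{\mB}$ falls on $\check\gamma$, the hypothesis $(1+v)|Z_{\mB}^J\check\gamma|\lesssim 1$ gives $|Z_{\mB}\check\gamma|\lesssim(1+v)^{-1}$, which combined with $\pa_\beta q\pa_\delta \widetilde{Z}_{\mB}^Iq$ fits directly into the current $P_{I+1}^\alpha$. Second, when $\widetilde{Z}_{\mB}$ hits $\pa_\beta q$, writing $\widetilde{Z}_{\mB}\pa_\beta q = \pa_\beta\widetilde{Z}_{\mB}q + [\widetilde{Z}_{\mB},\pa_\beta]q + (\text{constant shift})\cdot\pa_\beta q$ and using the symmetrization trick of relabeling $\beta\leftrightarrow\delta$ to combine with the third contribution (where $\widetilde{Z}_{\mB}$ falls on $\pa_\delta \widetilde{Z}_{\mB}^Iq$), I recover the desired top-order term $\pa_\alpha(\check{\gamma}^{\alpha\beta\delta}\pa_\beta q\pa_\delta \widetilde{Z}_{\mB}^{I+1}q)$. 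The commutator pieces $[\widetilde{Z}_{\mB},\pa_\beta]$ and $[\widetilde{Z}_{\mB},\pa_\delta]$ produce, via \eqref{paalphazmB}, either a clean $\pa$-derivative of some $Z_{\mB}^Jq$ (absorbed into $P_{I+1}^\alpha$ after multiplying by $\check\gamma\sim(1+v)^{-1}$) or a $(1+v)^{-1}\Omega$-term which, after bounding $\Omega\sim(1+v)\nas\sim(1+v)\pa$, still has the correct structure for $P_{I+1}^\alpha$ or $F_{I+1}$.

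For the remaining pieces $\widetilde{Z}_{\mB}\pa_\alpha P_I^\alpha$ and $\widetilde{Z}_{\mB}F_I$, I again invoke Lemma~\ref{ZBdivcommvariantlem} to commute $\widetilde{Z}_{\mB}$ with the outer $\pa_\alpha$ in the first, and then distribute $\widetilde{Z}_{\mB}$ across the products inside $P_I^\alpha$ and $F_I$. Each $\widetilde{Z}_{\mB}$ either raises the order of a $Z_{\mB}^{I_j}q$ factor (preserving $|I_1|+|I_2|\le|I|+1$ with $|I_1|,|I_2|\le|I|$), or falls on a symbol coefficient (producing a new weak symbol by the algebra property of \eqref{weaksymbol} under $Z_{\mB}$), or generates a commutator with $\pa$ which by \eqref{paalphazmB} carries an extra $(1+v)^{-1}$ factor that moves a term from the current $P$ down into the remainder $F$.

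The main obstacle will be the bookkeeping needed to verify that all coefficients generated during the induction indeed satisfy the weak symbol condition \eqref{weaksymbol}, especially after iterated applications of $\widetilde{Z}_{\mB}$ to the commutator coefficients $c_{Z_{\mB}\alpha}^\beta, b_{Z_{\mB}\alpha}^{ij}$ from \eqref{paalphazmB2} (which are only strong symbols in $r\sim t$) and to $Z_{\mB}^J\gamma$ (assumed only at the weak symbol level). In particular, factors involving $u/s$ or $u/r$ that appear when $Z_{\mB}$ hits the null-frame coefficients must be tracked to confirm they remain weak symbols in the region $|u|\lesssim s^{1/2}$; this is routine but requires combining the facts $Z_{\mB}(u/s) = $ weak symbol and $(s\pa_u)^k f(u)\lesssim s^{k/2}$ in the relevant range. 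Apart from this accounting, the structural identification of the top-order term via the $\beta\leftrightarrow\delta$ symmetrization, which is what produces $\check\gamma$ in the first place, is the only conceptually substantive step.
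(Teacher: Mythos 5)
Your route is genuinely different from the paper's: you induct on $|I|$, whereas the paper never inducts — it applies the all-orders commutation identity \eqref{ZBdivcommvariant2} of Lemma \ref{ZBdivcommvariantlem} once to pull $Z_{\mB}^I$ inside the outer divergence (modulo a divergence of an admissible current and a $(1+v)^{-1}$-weighted remainder), expands $Z_{\mB}^K(\gamma^{\alpha\beta\delta}\pa_\beta q\pa_\delta q)$ by Leibniz, combines the two terms in which \emph{all} fields land on a single factor via the $\beta\leftrightarrow\delta$ relabeling to produce $\check\gamma^{\alpha\beta\delta}\pa_\beta q\, Z^K_{\mB}\pa_\delta q$, and then commutes $Z^K_{\mB}$ back past $\pa_\delta$ with the same lemma. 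Your induction re-derives this one field at a time; the paper's version is shorter because Lemma \ref{ZBdivcommvariantlem} already encodes the inductive bookkeeping you worry about at the end (its commutator coefficients are strong symbols in $r\sim t$, and weak symbols are closed under $Z_{\mB}$, so the coefficient tracking you flag as the main obstacle is indeed routine).

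There is, however, one step you describe incorrectly. The $\beta\leftrightarrow\delta$ symmetrization only does what you want when the two derivative factors are identical, i.e.\ at the base case (or, in the paper's argument, in the terms of \eqref{ffs10} where all fields hit one of the two copies of $\pa q$); that is where $\check\gamma$ is born. At your inductive step the factors are $\pa_\beta q$ and $\pa_\delta\widetilde{Z}_{\mB}^Iq$, which are not symmetric, so relabeling the contribution in which the new field hits $\pa_\beta q$ gives $\check\gamma^{\alpha\beta\delta}(\pa_\beta\widetilde{Z}_{\mB}^Iq)(\pa_\delta\widetilde{Z}_{\mB}q)$ — it does not combine with the third contribution to reconstitute $\check\gamma^{\alpha\beta\delta}\pa_\beta q\,\pa_\delta\widetilde{Z}_{\mB}^{I+1}q$. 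The fix is simpler than what you propose: the top-order term at level $|I|+1$ comes \emph{solely} from the new field falling on $\pa_\delta\widetilde{Z}_{\mB}^Iq$ (after commuting it past $\pa_\delta$), while the term where it falls on $\pa_\beta q$ is already of the admissible form $\tfrac{1}{1+v}a^{\alpha\beta\delta}\pa_\beta Z_{\mB}^{I_1}q\,\pa_\delta Z_{\mB}^{I_2}q$ with $|I_1|=1$, $|I_2|=|I|$, hence $|I_1|+|I_2|\leq |I|+1$ and $|I_1|,|I_2|\leq |I|$ (using $|I|\geq 1$ in the inductive step), so it belongs in $P_{I+1}^\alpha$ as in \eqref{centralcommcurrentident0formula}. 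With that correction, and with the commutators $[\widetilde{Z}_{\mB},\pa]$ handled through \eqref{paalphazmB2} exactly as you indicate (the $(1+v)^{-1}\Omega$ pieces being rewritten as weak-symbol multiples of $\pa$), your induction closes and yields the stated structure for $P_I$ and $F_I$.
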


\begin{proof}
By the identity \eqref{ZBdivcommvariant2} from Lemma \ref{ZBdivcommvariantlem}, we have
\begin{equation}
 Z_{\mB}^I \pa_\alpha (\gamma^{\alpha\beta \delta} \pa_\beta q \pa_\delta q)
 = \pa_\alpha \left( Z_{\mB}^I \left( \gamma^{\alpha\beta \delta} \pa_\beta q \pa_\delta q\right)\right)
 + \pa_\beta P_{\pa_\alpha, I}^\beta [ \gamma^{\alpha\beta \delta} \pa_\beta q \pa_\delta q]
 + F_{\pa_\alpha, I}[\gamma^{\alpha\beta \delta} \pa_\beta q \pa_\delta q],
\end{equation}
where the last two terms are as in \eqref{ZBdivcommvariant2struct}.
The quantity $Z_{\mB}^I \left( \gamma^{\alpha\beta \delta} \pa_\beta q \pa_\delta q\right)$
and the quantity $P_{\pa_\alpha, I}^\beta [ \gamma^{\alpha\beta \delta} \pa_\beta q \pa_\delta q]$
are sums of terms of the form
\begin{multline}
 Z_{\mB}^K (\gamma^{\alpha \beta \delta} \pa_\beta q \pa_\delta q)
 = (\gamma^{\alpha \beta \delta} + \gamma^{\alpha \delta \beta}) \pa_\beta q Z_{\mB}^K \pa_\delta q\\
 + \sum_{\substack{|K_1| + |K_2| + |K_3| \leq |K|, \\ |K_2|, |K_3| \leq |K|-1}}
 \left(Z_{\mB}^{K_1} \gamma^{\alpha \beta \delta}\right)
 \left(Z_{\mB}^{K_2}\pa_\beta q\right)
 \left(Z_{\mB}^{K_3} \pa_\delta q\right).
 \label{ffs10}
\end{multline}
To conclude we need to commute our vector fields with $\pa$ once
more.
Using \eqref{ZBdivcommvariant2struct} again, we have
\begin{equation}
 \left(Z_{\mB}^{K_2}\pa_\beta q\right)
 \left(Z_{\mB}^{K_3} \pa_\delta q\right)
 = \left(\pa_\beta Z_{\mB}^{K_2} q + \pa_{\beta'}P_{\pa_\beta, I}^{\beta'}[q] + F_{\pa_{\beta}, I}[q] \right)
 \left(\pa_\delta Z_{\mB}^{K_2} q + \pa_{\delta'}P_{\pa_\delta, I}^{\delta'}[q] + F_{\pa_{\delta}, I}[q]\right).
 \label{}
\end{equation}
Inserting this formula into \eqref{ffs10} then gives the result.
\end{proof}}

To handle some of the boundary terms along the timelike sides of
the shocks, it will be important to relate the vector fields from
$\mZB$ to those in $\mathcal{Z}_m$.
\begin{lemma}\label{ZmBtoZlem}
 The vector fields $\mZB$ and $\mathcal{Z}_m$ satisfy the following
 properties. First, there are smooth functions $c_{Z Z_{\mB}},c_{Z Z_{\mB}}'$ satisfying
 the symbol condition \eqref{weaksymbol} so that
 \begin{equation}
	 Z = \sum_{Z_{\mB} \in \mZB } \left(c_{ZZ_{\mB}}  + c_{ZZ_{\mB}}'\frac{u}{s} \right) Z_{\mB}.
  \label{ZtoZmB}
 \end{equation}
 As a consequence, we have the following bounds in the region $|u| \lesssim s^{1/2}$,
\begin{equation}
 |Z^I q| \lesssim \sum_{|J|\leq |I|} |Z_{\mB}^J q|,
 \label{ZtoZmBbd0}
\end{equation}
and for any $Z_{\mB}$ in $\mZB$,
 \begin{equation}
  |Z_{\mB} Z^J q| \lesssim \sum_{|K| \leq |J|+1} |Z_{\mB}^K q|  \label{ZtoZmBbd1}
 \end{equation}
\end{lemma}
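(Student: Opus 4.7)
The plan is to prove \eqref{ZtoZmB} directly case-by-case for each $Z \in \mathcal{Z}_m$ by combining the Minkowskian null decomposition \eqref{inhomognull}--\eqref{homognull} with the trivial identities $\pa_u = \tfrac{1}{s}\sBo$ and $\pa_v = \tfrac{1}{v}\sBt$; the pointwise bounds \eqref{ZtoZmBbd0} and \eqref{ZtoZmBbd1} then follow by induction once we know the coefficients and the factor $u/s$ behave well under the operators in $\mZB$.

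First I would handle the easy cases. The rotations $\Omega_{ij}$ already lie in $\mZB$. For $\pa_t = \pa_u + \pa_v$, we immediately get $\pa_t = \tfrac{1}{s}\sBo + \tfrac{1}{v}\sBt$, and for $\pa_i = \omega_i(\pa_v - \pa_u) + \tfrac{\omega^j}{r}\Omega_{ji}$ we get $\pa_i = \tfrac{\omega_i}{v}\sBt - \tfrac{\omega_i}{s}\sBo + \tfrac{\omega^j}{r}\Omega_{ji}$. For the scaling field, $S = u\pa_u + v\pa_v = \tfrac{u}{s}\sBo + \sBt$, which is precisely of the advertised form with $c'_{S,\sBo}=1$ and $c_{S,\sBt}=1$. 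The main computation is the boost: using \eqref{homognull},
\begin{equation}
 \Omega_{0i} = \omega_i(v\pa_v - u\pa_u) + \left(1 + \tfrac{u}{r}\right)\omega^j\Omega_{ji}
 = \omega_i\sBt - \omega_i\tfrac{u}{s}\sBo + \left(\omega^j + \tfrac{s}{r}\omega^j\cdot\tfrac{u}{s}\right)\Omega_{ji},
\end{equation}
which fits \eqref{ZtoZmB} provided $s/r$ is a weak symbol in the region $|u|\lesssim s^{1/2}$.

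The next step is to verify that all of the coefficients appearing above---namely $1/s$, $1/v$, $\omega_i$, $u/s$, and $s/r = 2s/(v-u)$---satisfy \eqref{weaksymbol} in $|u|\lesssim s^{1/2}$. Boundedness is immediate (for $u/s$ we use $|u|\lesssim s^{1/2}$, and for $s/r$ we use $v-u\sim v\geq s$). The computations $\sBo(1/s)=0$, $\sBt(1/s) = -1/s^2$, $\sBo(u/s)=1$, $\sBt(u/s) = -u/s^2$, $\sBo(s/r) = 2s^2/(v-u)^2$, $\sBt(s/r) = -2sv/(v-u)^2$, together with $\Omega(\text{all these}) = 0$ or a bounded rotation of $\omega_i$'s, show all first $\mZB$-derivatives remain bounded in the given region; induction gives the bound for arbitrary multi-indices. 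Thus each of these functions is a weak symbol, completing the verification of \eqref{ZtoZmB}.

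For \eqref{ZtoZmBbd0} I would induct on $|I|$: writing $Z^I = Z\,Z^{I'}$ and expanding $Z = \sum(c + c'u/s)Z_{\mB}$, one gets $Z^I q$ as a sum of products of weak symbols (possibly carrying factors of $u/s$, which is $O(1)$ in the region) times $Z_{\mB}^K q$ with $|K|\leq |I|$, using that $Z_{\mB}$ applied to a weak symbol or to $u/s$ remains bounded as just verified. For \eqref{ZtoZmBbd1} one applies $Z_{\mB}$ to the resulting expansion of $Z^J q$; the product rule produces either (weak symbol)$\cdot Z_{\mB}^{K+1}q$ or $Z_{\mB}(\text{weak symbol})\cdot Z_{\mB}^K q$, both bounded by $\sum_{|K|\leq|J|+1} |Z_{\mB}^K q|$. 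The only mild obstacle is the boost decomposition, where one must recognize the non-obvious factorization $u/r = (u/s)(s/r)$ and verify the weak symbol property of $s/r$ in $|u|\lesssim s^{1/2}$; everything else is a direct computation.
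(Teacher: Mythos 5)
Your proof is correct and takes essentially the same route as the paper: the paper simply packages your case-by-case null-frame expansions into the single identity \eqref{Zintonull}, reabsorbs $\pa_u,\pa_v$ into $s\pa_u$ and $v\pa_v$, and then iterates using that the coefficients (weak symbols together with the bounded factor $u/s$) remain controlled under $\mZB$, exactly as in your induction. One harmless slip: $v\pa_v(s/r)=\tfrac{2}{v-u}-\tfrac{2sv}{(v-u)^2}$ (you dropped the term coming from $\pa_v s=1/v$), but both pieces are bounded in $|u|\lesssim s^{1/2}$, so the conclusion is unaffected.
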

\begin{proof}
The identity \eqref{ZtoZmB} follows after using the identity
\eqref{Zintonull} and then the fact that $s\pa_u$ and $v \pa_v$ are
in $\mZB$. The bounds \eqref{ZtoZmBbd0} and \eqref{ZtoZmBbd1} follow after repeatedly using
the identity \eqref{ZtoZmB} and the fact that $Z_{\mB} \frac{u}{s}
= a_{Z_{\mB}}\left(1 +  \frac{u}{s}\right)$ for symbols $a_{Z_{\mB}}$.
\end{proof}
Finally, we record an identity for the commutators $[\evmB, Z_{\mB}^I]$
which we will also use along the shocks.
\begin{lemma}
	Let $X^k$ denote an arbitrary $k$-fold product of the fields $X \in\{\sBo, \sBt\}$.
	Then
 	\begin{equation}
 		[X^k \Omega^K,\evmB]q
 		= \sum_{j \leq k -1} c^{k}_{j} \evmB X^j \Omega^K q + \frac{1}{(1+v)(1+s)}
 		a^{k}_{j} \pa_u X^j\Omega^K q,
 			 \label{}
 	\end{equation}
 	where the sum is taken over all $j$-fold products of the fields $\Sbo, \Sbt$
 	with $j \leq k-1$. In the above, the $c^k_j$ are constants and the $a^k_j$
	are weak symbols \eqref{weaksymbol}. In particular,
	\begin{equation}
	 |[X^k \Omega^K,\evmB]q|
	  \lesssim \sum_{j \leq k-1} |\evmB X^j \Omega^K q| + \frac{1}{(1+v)(1+s)}|\pa_u X^j\Omega^K q|
	 \label{evmbZcomm}
	\end{equation}
\end{lemma}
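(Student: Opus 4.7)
The proof will proceed by induction on $k$, the number of radial factors in $X^k$. The base case $k=0$ is immediate: since $[\Omega,\evmB]=0$ (both $\Omega$ and $\evmB$ preserve $\{u=\text{const}\}$ and $\Omega$ preserves $v,s$), we have $[\Omega^K,\evmB]q = 0$, and the right-hand side is an empty sum. The only two fundamental commutator identities I will need are the ones already recorded in \eqref{basicB0} and \eqref{basicB1}:
\[
[\sBo,\evmB] = 0,\qquad [\Omega,\evmB]=0,\qquad [\sBt,\evmB] = -\evmB - \tfrac{u}{vs^2}\pa_u,
\]
together with the trivial facts $[\sBo,\pa_u]=[\sBt,\pa_u]=0$ (since $s,v$ are $u$-independent).

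For the inductive step, I would write $X^k\Omega^K = X\cdot X^{k-1}\Omega^K$ and expand using $[AB,C] = A[B,C] + [A,C]B$, obtaining
\[
[X^k\Omega^K,\evmB]q = X\,[X^{k-1}\Omega^K,\evmB]q + [X,\evmB]\,X^{k-1}\Omega^K q.
\]
The inductive hypothesis rewrites the first summand as $X$ applied to a sum of terms of the form $c\,\evmB X^j\Omega^K q$ and $\tfrac{a}{(1+v)(1+s)}\pa_u X^j\Omega^K q$ with $j\le k-2$. When $X=\sBo$ or $X=\Omega$ (the latter does not occur in $X^k$ but commutes through trivially if we rearrange), $X$ commutes with $\evmB$ and with $\pa_u$, and it also annihilates $\tfrac{1}{(1+v)(1+s)}$; so these terms remain in the claimed form with $j\le k-1$. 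When $X=\sBt$, commuting $X$ past $\evmB$ generates extra terms $-\evmB - \tfrac{u}{vs^2}\pa_u$, while $\sBt$ acting on the prefactor $\tfrac{1}{(1+v)(1+s)}$ produces terms of the form (weak symbol) $\cdot\tfrac{1}{(1+v)(1+s)}$ since $v\pa_v\tfrac{1}{1+v}$ and $v\pa_v\tfrac{1}{1+s}$ are each a weak symbol divided by $(1+v)(1+s)$. All of these land back in the template of the lemma.

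The second summand $[X,\evmB]X^{k-1}\Omega^K q$ vanishes when $X=\sBo$ and equals
\[
-\evmB\,X^{k-1}\Omega^K q \;-\; \tfrac{u}{vs^2}\pa_u\,X^{k-1}\Omega^K q
\]
when $X=\sBt$. The first piece is already of the required form with $j=k-1$. For the second, I would write $\tfrac{u}{vs^2} = \tfrac{1}{(1+v)(1+s)}\cdot\tfrac{u(1+v)(1+s)}{vs^2}$ and observe that in the region $|u|\lesssim s^{1/2}$ where we work, the second factor is a weak symbol (recall $u/s$ is a weak symbol, as noted in Section~\ref{mZBcommutatorsec}, and the remaining $(1+v)/v$ and $(1+s)/s^2\cdot s$-type ratios obey the symbol condition \eqref{weaksymbol}). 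This puts the term in the required form, completing the induction. The pointwise bound \eqref{evmbZcomm} then follows by absorbing the constants and using $|a^k_j|\lesssim 1$.

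The proof is essentially bookkeeping; the only step that requires slight care is verifying that the algebra of weak symbols is preserved under $\sBt = v\pa_v$, which I would handle by recording once and for all that $v\pa_v$ maps weak symbols to weak symbols (since $v\pa_v$ is among the commuting fields in $\mZB$ and weak symbols are characterized by boundedness of all $Z_{\mB}^I$ derivatives). There is no conceptual obstacle; I expect the entire argument to be a short induction.
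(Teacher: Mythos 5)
Your proposal is correct and follows essentially the same route as the paper, which proves the lemma by "repeated application" of the packaged commutator identity \eqref{abstractmBcomms} (i.e. exactly your induction), using that $\evmB$ and the radial fields commute with the rotations; your only deviation is that you re-derive on the fly the fact that $\tfrac{u}{vs^2}=\tfrac{1}{(1+v)(1+s)}\,b$ with $b$ a weak symbol in the region $|u|\lesssim s^{1/2}$, which the paper has already absorbed into \eqref{abstractmBcomms}/\eqref{wtzmBcomm}. The bookkeeping points you flag (weak symbols closed under $Z_{\mB}$ derivatives, $\sBo$ and $\sBt$ acting on the prefactor $\tfrac{1}{(1+v)(1+s)}$) are handled correctly, so no gap remains.
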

\begin{proof}
 This follows from repeated application of the first identity in \eqref{abstractmBcomms}
 along with the fact that $\evmB$ and the fields $X$ commute with the rotations
 $\Omega$.
\end{proof}

\section{Derivation of the wave equation for the potential}
\label{derivation1}

We assume that $\rho$
is given in terms of the density by a given equation of state $p = P(\rho)$.
We will assume that the equation of state satisfies $P', P'' > 0$ and $p \in C^{\infty}(\R\setminus \{0\}).$
The enthalpy $w = w(\rho)$ is 
\begin{equation}
 w(\rho) = \int_1^{\rho} \frac{P'(\lambda)}{\lambda} d\lambda.
 \label{enthdef}
\end{equation}
From Bernoulli's equation \eqref{introbern},
$w$ is related to $\pa \Phi$ by
\begin{equation}
 w(\rho) = -\pa_t \Phi - \frac{1}{2} |\nabla_x \Phi|^2,
 \label{bernapp}
\end{equation}
Since $P' >0$ it follows that $\rho \mapsto w(\rho)$ is an invertible function,
which we denote $\rho = \rho(w)$. We then think of \eqref{bernapp}
as determining the density $\rho$ from $\pa \Phi$, and we define
$\varrho$ by
$\varrho = \varrho(\pa\Phi) = \rho(w(\pa\Phi))$. {Note that $\varrho(0) = \rho(0) = 1$
since $w|_{\rho = 1}  =0 $ by \eqref{enthdef}.} We record that for the ``polytropic''
 equation of
state $P(\rho) = \rho^\gamma$ with $\gamma > 1$, we have
\begin{equation}
 w(\rho) = \int_1^\rho \gamma \lambda^{\gamma-2}\, d\lambda
 = \frac{\gamma}{\gamma -1} \left(\rho^{\gamma-1} - 1\right),
 \label{}
\end{equation}
so
\begin{equation}
 \rho(w) = \left( \frac{\gamma-1}{\gamma} w + 1\right)^{1/(\gamma-1)}.
 \label{}
\end{equation}

With the above notation, define
\begin{equation}
 H^0(\pa\Phi) = \varrho(\pa \Phi),
 \qquad
 H^i(\pa\Phi) = \varrho(\pa\Phi) \nabla^i\Phi,
 \label{Hdef}
\end{equation}
so the continuity equation takes the form
\begin{equation}
 \pa_\alpha H^\alpha(\pa \Phi) = 0,
 \label{contderiv}
\end{equation}
with $\pa_\alpha = \pa_{x^\alpha}$ where $x^\alpha$ denote
rectangular coordinates on $\R^4$.

The jump conditions are
\begin{equation}
 [H^\alpha(\pa\Phi)\zeta_\alpha] = 0, \qquad
 [\Phi] = 0,
 \label{jumpapp}
\end{equation}
where $\zeta$ is any non-vanishing one-form whose nullspace at each point
$(t,x)$ is the tangent space to $\Gamma$ at $(t,x)$.

\subsection{The continuity equation}
The purpose of this section is to write the equation \eqref{contderiv}
as a wave equation under mild assumptions on the equation of state.
In this section, we use $\xi$ to denote points in the cotangent space
$T^*_{(t,x)} \mathbb{R}^4$. Let $H^\alpha(\xi)$ denote the components of $H$
expressed in rectangular coordinates. We write
\begin{equation}
 H^{\alpha\beta}(\xi) = \pa_{\xi^\beta} H^{\alpha}(\xi),
 \qquad
 H^{\alpha\beta\delta}(\xi) = \pa_{\xi^\delta}\pa_{\xi^\beta} H^{\alpha}(\xi),
 \qquad
 H^{\alpha\beta\delta\rho}(\xi) = \pa_{\xi^\rho}\pa_{\xi^\delta}\pa_{\xi^\beta} H^{\alpha}(\xi).
 \label{xinotation}
\end{equation}
We note that quantities such as $H^{\alpha\beta}(0)\xi_\alpha\xi_\beta$ are invariant
under coordinate changes of $\mathbb{R}^4$, a fact which will be used repeatedly
in what follows. That is, the quantities $H^{\alpha_1\cdots \alpha_k}$
are well-defined tensor fields.

We compute
\begin{equation}
 H^{\alpha\beta}(0)\xi_\alpha \xi_\beta =
 {-\rho'(0)} \xi_0^2 + \varrho(0) \delta^{ij}\xi_i\xi_j
 {= -\frac{1}{p'(1)} \xi_0^2 + \delta^{ij}\xi_i\xi_j,}
 \label{}
\end{equation}
{where we used that $\rho(0) = 1$ and that $\rho'(0) = \rho(0)/p'(1)$.}
and so after an appropriate rescaling of $(t, x)$, we can take
\begin{equation}
 H^{\alpha\beta}(0) = m^{\alpha\beta},
 \label{}
\end{equation}
with $m^{\alpha\beta}$ the components of the inverse of the usual Minkowski metric,
\begin{equation}
 m^{00} = -1, \qquad
 m^{11} = m^{22} = m^{33} = 1.
 \label{}
\end{equation}
{We note at this point that with this choice of units the sound
speed is one at $\rho = 1$,
\begin{equation}
	p'(1) = 1.
 \label{metric}
\end{equation}
}

For each $\alpha = 0,1,2,3,$ we have
\begin{equation}
 H^\alpha(\xi) - H^\alpha(0) =
H^{\alpha\beta}(0)\xi_\beta
+ G^{\alpha\beta\delta}(\xi)\xi_\beta \xi_\delta,
 \label{Hexpansion}
\end{equation}
where
\begin{equation}
 G^{\alpha\beta\delta}(\xi) = \int_0^1 (1-t) H^{\alpha\beta\delta}(t\xi)\, dt.
 \label{Gdefapp}
\end{equation}
Later on it will be convenient to also use the notation
\begin{equation}
 j^\alpha(\xi) = G^{\alpha\beta\delta}(\xi)\xi_\beta \xi_\delta,
 \qquad
 j^{\alpha\beta}(\xi) = \pa_{\xi^\beta} j^\alpha(\xi).
 \label{gammadef}
\end{equation}
Then for each $\alpha$, $\xi\mapsto j^\alpha(\xi)$ is a smooth function
and $j^\alpha(0) = 0$.

With this notation, the continuity equation \eqref{contderiv} takes the form
\begin{equation}
 \Box \Phi + \pa_\alpha j^\alpha(\pa\Phi) = 0.
 \label{wave000}
\end{equation}

In what follows it will be helpful to keep track of the nonlinearity more
carefully.
Returning to \eqref{Hexpansion} we write
\begin{equation}
 A^{\alpha\beta\delta} = G^{\alpha\beta\delta}(0)
 =\pa_{\xi^\beta}\pa_{\xi^\delta} H^{\alpha}|_{\xi = 0}
 \qquad
 B^{\alpha\beta\delta}(\xi) = G^{\alpha\beta\delta}(\xi)-G^{\alpha\beta\delta}(0)
 = \int_0^1 \pa_{\xi^\kappa} G^{\alpha\beta\delta}(t\xi) \xi_\kappa dt,
 \label{}
\end{equation}
and then the continuity equation becomes
\begin{equation}
 \Box \Phi + \pa_\alpha (A^{\alpha \beta\delta} \pa_\beta \Phi\pa_\delta \Phi)
 + \pa_\alpha B^\alpha(\pa \Phi) = 0,
 \label{modela}
\end{equation}
with $B^\alpha(\pa \Phi) = B^{\alpha\beta\delta}(\pa\Phi)\pa_\beta\Phi\pa_\delta \Phi$
is a cubic nonlinearity
and where the $A^{\alpha\beta\delta}$ are constants.

We introduce the notation
\begin{equation}
  A^{u u u}(\omega) = A^{\alpha\beta\delta} \pa_\alpha u \pa_\beta u \pa_\delta u,
 \label{}
\end{equation}
as well as
\begin{equation}
 \widetilde{A}^{\alpha\beta\delta}(\omega) = A^{\alpha\beta\delta}
 - \delta^{\alpha u}\delta^{\beta u}\delta^{\delta u} A^{uuu}(\omega),
 \label{}
\end{equation}
{where we are abusing notation slightly and writing}
\begin{equation}
 \delta^{\alpha u} = \delta^{\alpha 0} - \delta^{\alpha i}\omega_i
 = \delta^{\alpha \alpha'}\pa_\alpha' u.
 \label{}
\end{equation}
Then $A^{uuu}(\omega)$ corresponds to the $(u,u,u)$ component of $A$
expressed relative to the null coordinate system defined in
\eqref{minknull0}.
{Noting that
\begin{equation}
 \pa_\alpha \delta^{\alpha u} = \pa_\alpha \delta^{\alpha 0} - \pa_\alpha (\delta^{\alpha i} \omega_i)
 = - \frac{2}{r},
 \label{}
\end{equation}}
The equation \eqref{wave000} takes the form
\begin{equation}
 \Box \Phi + \pa_u(A^{uuu}(\omega) (\pa_u\Phi)^2)
 +{\pa_\alpha (\widetilde{A}^{\alpha\beta\delta} \pa_\beta \Phi\pa_\delta\Phi)}
 + \pa_\alpha B^\alpha(\pa\Phi) = {\frac{2}{r} A^{uuu}(\omega) (\pa_u\Phi)^2},
 \label{almostunits}
\end{equation}
where $\opa$ denotes projection of $\pa$ away from the $u$-direction,
\begin{equation}
 \opa_0 =  \frac{1}{2} (\pa_t + \pa_r),
 \qquad
 \opa_i = \pas_i.
 \label{opadef}
\end{equation}

 The coefficients
$\widetilde{A}^{\alpha\beta\delta}$ are not constants but they satisfy
\begin{equation}
 r^\ell |\pa^\ell A|\lesssim 1,
 \label{}
\end{equation}
{and the nonlinearity $\pa_\alpha(\widetilde{A}^{\alpha\beta\delta}\pa_\beta\Phi\pa_\delta\Phi)$
verifies the classical null condition.}
We claim that the coefficient $A^{uuu}$ is actually a constant,
which is nonzero under a mild assumption on the equation of state.
To see this, we start by writing
\begin{equation*}
 H^{u}(\xi) = H^0(\xi) - \omega_iH^i(\xi)
 = \rho(w(\xi))(1- \xi_v + \xi_u), \quad
 \text{ where }
 w(\xi) %= - \xi_0 - \frac{1}{2} |\xi_r|^2 - \frac{1}{2}|\slashed{\xi}|^2
 = - (\xi_v + \xi_u) - \frac{1}{2} (\xi_v- \xi_u)^2- \frac{1}{2}|\slashed{\xi}|^2.
\end{equation*}
From these formulas we compute 
\begin{equation}
 \pa_{\xi_u} w(\xi) =
 {-1-\xi_u+\xi_v ,
 \qquad \pa_{\xi_u}^2 w(\xi) = -1},
 % -(1 -\xi_u+\xi_v),
 \label{}
\end{equation}
so
\begin{equation}
  \pa_{{\xi_u}} H^{u}(\xi)
	{= \pa_{\xi_u} w(\xi) \rho'(w(\xi))(1-\xi_v+\xi_u) + \rho(w(\xi))}
  % = -\frac{1}{2}\rho'(w(\xi)) (1  -\xi_u+\xi_v)^2
  % -\frac{1}{2}\rho(w(\xi)),
 \label{}
\end{equation}
{
and
\begin{equation}
 \pa^2_{\xi_u} H^{u}(\xi)
 =  \pa_{\xi_u}^2 w(\xi) \rho'(w(\xi))(1-\xi_v+\xi_u)
 +  (\pa_{\xi_u} w(\xi))^2 \rho''(w(\xi))(1-\xi_v+\xi_u)
 +2 \pa_{\xi_u} w(\xi) \rho'(w(\xi)).
 \label{}
\end{equation}
It follows that
\begin{equation}
 2A^{uuu} = 2G^{uuu}(0) =  (\pa_{\xi_u}^2 H^u)(0)
 =  \rho''(0) - \rho(0),
 \label{}
\end{equation}
which is a constant. To determine when it is nonzero, we
use the fact that
$$
\rho'(w)=\frac  1{w'(\rho)}=\frac \rho{p'(\rho)}
$$
to express at $\rho=1$ (which is the same as $w = 0$, recall \eqref{enthdef})
$$
\rho''(0) - \rho'(0)=\frac{p'(1)-p{''}(1)-p'^2(1)}{p'^3(1)}
$$
With our choice of units (see \eqref{metric}), $p'(1) = 1$ and so $\rho''(0) - \rho'(0)$
is nonvanishing as long as the equation of state is convex at $\rho = 1$.

}
If we replace $\Phi$ with $-\frac{1}{A^{uuu}} \Phi$ and multiply the equation
by $-A^{uuu}$, this has the effect of replacing $A^{uuu}$ in the
expression \eqref{almostunits} by $-1$.
After performing this rescaling, \eqref{almostunits} becomes
\begin{equation}
 H^\alpha(\pa\Phi) -H^\alpha(0) = m^{\alpha\beta} \pa_\beta \Phi
 -\delta^{\alpha u} (\pa_u\Phi)^2
 + \widetilde{A}^{\alpha\beta\delta}\pa_\beta\Phi \pa_\delta \Phi
 + B^\alpha(\pa\Phi)
 = -\delta^{\alpha u} (\pa_u\Phi)^2
 + \widetilde{j}^\alpha(\pa\Phi),
 \label{}
\end{equation}
where
\begin{equation}
 \widetilde{j}^{\alpha}(\pa\Phi)
 =\widetilde{A}^{\alpha\beta\delta}\pa_\beta\Phi \pa_\delta \Phi
 + B^\alpha(\pa\Phi)
 \label{gammatilde}
\end{equation}
is such that $\pa_\alpha \widetilde{j}^\alpha$ verifies the classical
null condition.
In summary, we have the following formula for $H^\alpha$.
\begin{lemma}
 \label{basicstructure}
 {Suppose that the equation of state $p = p(\rho)$ satisfies
 $p''(1)\not=0$.}
 With $H$ defined as in \eqref{Hdef} and
 $\gamma$ as in \eqref{gammadef}, after an appropriate rescaling of the
 dependent and independent variables,
we have
\begin{equation}
 H^\alpha(\pa\Phi) - H^\alpha(0) = m^{\alpha\beta}\pa_\beta \Phi
 + j^\alpha(\pa\Phi),
 \label{basicHfact1}
\end{equation}
where
\begin{equation}
 j^\alpha(\pa \Phi) = A^{\alpha\beta\delta}\pa_\beta\Phi \pa_\delta \Phi
 + B^\alpha(\pa\Phi),
 \label{basicHfact2}
\end{equation}
where
$\xi \mapsto B^\alpha(\xi)$ vanishes to third order at $\xi = 0$
and where the $A^{\alpha\beta\delta}$ are constants.
The above terms have the following structure:
with $\delta^{u \alpha} = \tfrac{1}{2} \delta^{0\alpha} - \tfrac{1}{2}\omega_i \delta^{i\alpha}$,
we have
 \begin{align}
   j^\alpha(\pa\Phi) &=
   -\delta^{u \alpha} (\pa_u\Phi)^2 + \widetilde{j}^\alpha(\pa\Phi)\\
   % &=
   % -\delta^{u\alpha} (\pa\Phi)^2 + A^{\alpha\beta\delta}(\omega) \pa_\beta\Phi\pa_\delta \Phi
   % + B^\alpha(\pa\Phi)
   % \label{basicstructureformula0}
   % \\
   &= -\delta^{u\alpha} (\pa_u \Phi)^2
   + \widetilde{A}^{\alpha \beta \delta}(\omega) \pa_\beta \Phi \pa_\delta \Phi
   + B^\alpha(\pa\Phi).
   \label{basicstructureformula}
 \end{align}
 where the $\widetilde{A}$ are smooth functions on $\mathbb{S}^2$,
 $B$ consists of terms which are cubic or higher-order, and
 where
 the nonlinearity $\pa_\alpha \left(\widetilde{A}^{\alpha \beta \delta}\pa_\beta \Phi \pa_\delta\Phi\right)$
 verifies the classical null condition $\widetilde{A}^{\alpha\beta\delta}\pa_\alpha u \pa_\beta u \pa_\delta u = 0$;
 in particular, for arbitrary smooth $q_1, q_2$,
 we have
 \begin{equation}
	\pa_\alpha \left(\widetilde{A}^{\alpha \beta \delta}\pa_\beta q_1 \pa_\delta q_2\right) =
	\opa_\alpha \left( A^{\alpha \beta \delta}_1 \pa_\beta q_1 \pa_\delta q_2\right)
	+ \pa_\alpha \left( A^{\alpha\beta\delta}_2 \opa_\beta q_1 \pa_\delta q_2\right)
	+ \pa_\alpha \left( A^{\alpha\beta\delta}_3 \pa_\beta q_1 \opa_\delta q_2\right),
	\label{basicstructureformulanull}
 \end{equation}
 where the $A^{\alpha \beta \delta}_i$ are smooth functions on $\mathbb{S}^2$,
 and where
	\begin{equation}
	 \opa_0 = 2\pa_v = \pa_t + \pa_r, \qquad
	 \opa_i = \nas_i = \pa_i - \omega_i\omega^j \pa_j,\qquad \omega = x/|x|
	 \label{}
	\end{equation}

  In particular the continuity equation \eqref{contderiv} can be written in
	either of the forms
	  \begin{align}
			\Box \Phi + \pa_\alpha j^\alpha(\pa \Phi) &=0,\label{wave00}\\
    \Box \Phi -\pa_u (\pa_u\Phi)^2 + \pa_\alpha \widetilde{j}^\alpha(\pa\Phi) &= {-\frac{2}{r}(\pa_u\Phi)^2},
		% ,\\
   %     \Box \Phi -\pa_u (\pa_u\Phi)^2 + \pa_\alpha\left( \widetilde{A}^{\alpha \beta \delta}(\omega)
   % \pa_\beta \Phi \opa_\delta \Phi\right)
   % +\pa_\alpha B^\alpha(\pa\Phi) &= 0.
   \label{wave0}
  \end{align}
	where $\pa_\alpha \widetilde{j}^\alpha$ verifies the classical null condition.

	If $\mathcal{Z}$ is an arbitrary family of vector fields and $Z^I$ denotes
 an $|I|$-fold product of the fields in $\mathcal{Z}$ then
\begin{equation}
	Z^I j^\alpha(\pa\Phi) = \gamma_{{0}}^{\alpha\beta}(\pa \Phi) Z^I \pa_\beta \Phi
	+ P^\alpha_{I,0}(\pa\Phi), \qquad \gamma^{\alpha\beta}(\xi)
	= \pa_{\xi^\beta} j^\alpha(\xi),
 \label{vectorfieldsonconteqn}
\end{equation}
 where {$\gamma_0$ is symmetric and 
 $\gamma_{0}$ and $P_{I,0}$} satisfy the following estimates,
\begin{equation}
 |\gamma_0| \lesssim |\pa \Phi|, \qquad |P_{I,0}| \lesssim \sum_{|I_1| + \cdots |I_r| \leq |I|-1, r \geq 2}
 |Z^{I_1} \pa \Phi|
 \cdots
 |Z^{I_r} \pa \Phi|,
\end{equation}
\begin{equation}
 |\nabla_X \gamma_0| \lesssim |\nabla_X \pa \Phi|,
 \qquad
 |\nabla_X P_{I,0}| \lesssim
 \sum_{|I_1| + \cdots |I_r| \leq |I|-1, r \geq 2}
 |\nabla_X Z^{I_1} \pa \Phi|
 \cdots
 |Z^{I_r} \pa \Phi|
 \label{}
\end{equation}
\end{lemma}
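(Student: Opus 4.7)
The strategy is to unpack the enthalpy/density relations and Taylor expand the flux $H^\alpha(\xi)$ around $\xi=0$, tracking which coefficients can be normalized by coordinate and field rescaling. The plan is to first show that the \emph{linearization} $H^{\alpha\beta}(0)=\pa_{\xi^\beta}H^\alpha|_{\xi=0}$ is, up to an overall rescaling of $(t,x)$ coming from $p'(1)$, exactly the reciprocal Minkowski metric $m^{\alpha\beta}$. Concretely, using $H^0=\varrho$ and $H^i=\varrho\nabla^i\Phi$ together with Bernoulli's equation \eqref{bernapp}, one computes $H^{00}(0)=\rho'(0)\cdot(\pa_{\xi_0}w)(0)=-\rho'(0)=-1/p'(1)$ and $H^{ij}(0)=\varrho(0)\delta^{ij}=\delta^{ij}$; rescaling $(t,x)\mapsto(p'(1)^{1/2}t,x)$ normalizes $p'(1)=1$ and produces the Minkowski symbol. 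Taylor's theorem with integral remainder then gives \eqref{Hexpansion} with $G^{\alpha\beta\delta}$ as in \eqref{Gdefapp}, which is exactly the splitting \eqref{basicHfact1}--\eqref{basicHfact2} once one sets $A^{\alpha\beta\delta}=G^{\alpha\beta\delta}(0)$ and $B^{\alpha\beta\delta}(\xi)=G^{\alpha\beta\delta}(\xi)-G^{\alpha\beta\delta}(0)$.

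Second, I would isolate the $(u,u,u)$-component of $A$. Writing $H^u=H^0-\omega_i H^i=\varrho(1-\xi_v+\xi_u)$ and using $\pa_{\xi_u}w=-1-\xi_u+\xi_v$, $\pa^2_{\xi_u}w=-1$, a direct computation yields $2A^{uuu}=\pa^2_{\xi_u}H^u|_0=\rho''(0)-\rho'(0)$. Converting derivatives of $\rho$ with respect to $w$ into derivatives of the equation of state via $\rho'(w)=\rho/p'(\rho)$ gives $\rho''(0)-\rho'(0)=(p'(1)-p''(1)-p'(1)^2)/p'(1)^3=-p''(1)$ (after normalizing $p'(1)=1$), which is nonzero precisely under the convexity hypothesis $p''(1)\ne 0$. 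The rescaling $\Phi\mapsto -A^{uuu}{}^{-1}\Phi$ followed by multiplying the equation by $-A^{uuu}$ then normalizes $A^{uuu}=-1$, producing the $-\delta^{u\alpha}(\pa_u\Phi)^2$ term and leaving a remainder $\widetilde A^{\alpha\beta\delta}=A^{\alpha\beta\delta}+\delta^{u\alpha}\delta^{u\beta}\delta^{u\delta}$ with $\widetilde A^{uuu}=0$. This identity $\widetilde A^{uuu}=0$ is exactly the classical null condition for $\widetilde A$.

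Third, to obtain the divergence-form null decomposition \eqref{basicstructureformulanull}, I would argue that since $\widetilde A^{uuu}=0$, at least one of the three indices of any nonzero component $\widetilde A^{\alpha\beta\delta}$ must lie in the directions orthogonal to $du$, i.e.\ must land on $\opa$. Splitting the triple sum according to whether the $\alpha$-, $\beta$-, or $\delta$-index is the transverse one and using that $\{\opa_0,\opa_1,\opa_2,\opa_3\}$ with $\opa_0=2\pa_v$, $\opa_i=\nas_i$ spans the complement of $\pa_u$, one arrives at the three-term decomposition with coefficients $A_1,A_2,A_3$. The wave equations \eqref{wave00} and \eqref{wave0} then follow immediately from \eqref{contderiv} and the above; the $2r^{-1}(\pa_u\Phi)^2$ inhomogeneity in \eqref{wave0} comes from $\pa_\alpha\delta^{u\alpha}=\pa_\alpha\delta^{\alpha 0}-\pa_\alpha(\delta^{\alpha i}\omega_i)=-2/r$ after moving the derivative across $\delta^{u\alpha}$.

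Finally, the commutator identity \eqref{vectorfieldsonconteqn} is a bookkeeping exercise. Writing $\gamma^{\alpha\beta}(\xi)=\pa_{\xi^\beta}j^\alpha(\xi)$ and applying a single vector field gives $Zj^\alpha(\pa\Phi)=\gamma^{\alpha\beta}(\pa\Phi)Z\pa_\beta\Phi$. Iterating this with the chain and product rules, the only top-order contribution is $\gamma^{\alpha\beta}(\pa\Phi)Z^I\pa_\beta\Phi$, while every other term factors as a product of at least two strictly lower-order factors $Z^{I_k}\pa\Phi$, $|I_k|\le|I|-1$; collecting these into $P_{I,0}^\alpha$ gives the desired bound, and the derivative estimates for $\gamma_0,P_{I,0}$ follow by one additional application of the chain rule. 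The only mild subtlety, and the step I expect to need the most care, is verifying cleanly that the rescaling which normalizes $A^{uuu}=-1$ is consistent with the earlier rescaling that normalized $H^{\alpha\beta}(0)=m^{\alpha\beta}$ and with the sign convention $p'(1)>0$; apart from this, every step is a direct computation from the definitions already recorded in the section.
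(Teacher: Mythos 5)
Your proposal is correct and follows essentially the same route as the paper: the Taylor expansion \eqref{Hexpansion}--\eqref{Gdefapp} with the rescalings normalizing $H^{\alpha\beta}(0)=m^{\alpha\beta}$ and $A^{uuu}=-1$ (nonzero precisely because $p''(1)\neq 0$), the null structure of $\widetilde A$ read off from $\widetilde A^{uuu}=0$, the identity $\pa_\alpha\delta^{u\alpha}=-2/r$ for \eqref{wave0}, and the chain-rule bookkeeping for \eqref{vectorfieldsonconteqn}. No substantive differences from the paper's argument.
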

\begin{proof}
 It only remains to prove \eqref{vectorfieldsonconteqn}, and that follows
 directly from the chain rule and the fact that for each
 $\alpha$, $\xi\mapsto j^\alpha(\xi)$ are smooth
 functions and $j^\alpha(0) = 0$.
\end{proof}

\section{The equation for $r\Phi$ and the higher-order continuity equation}
\label{higherorderequations}

We now want to commute the equation \eqref{wave00} with a family of vector fields
and then expand the solution $\Phi$ around the model shock profile,
$\Phi = \sigma + \phi$ where
\begin{equation}
 \sigma =\begin{cases}
\frac{u^2}{2rs},\qquad \text{ in } D^C,\\
0,\qquad \text{ otherwise } \end{cases}
 \label{}
\end{equation}
In the regions $D^L, D^R$ we will commute with the full family of Minkowski
vector fields $\mathcal{Z}$
and in the region $D^C$ we commute with the family
\begin{equation}
 \mZB = \{\Omega_{ij}, s\pa_u, v\pa_v\}.
 \label{mZBdef}
\end{equation}

We start with the computation in the exterior regions, where the model shock profile
vanishes and where the linearized operator is the Minkowskian wave operator.
There, we will not need to keep track of the structure of the nonlinear
terms and it will suffice to start from
\eqref{wave00}.

\subsection{The higher-order equation in the exterior regions}
\label{derivationexterior}

Let $Z^I$ denote a product of Minkowskian vector fields, $Z \in \mathcal{Z}$.
In this section we want to find an equation for
$Z^I (r \Phi)$ and express it in null coordinates
\eqref{mainminknull}. Starting from \eqref{wave00} and using
\eqref{widehatbox} and
\eqref{vectorfieldsonconteqn} we find that
$\Phi^I = \widetilde{Z}^I\Phi$ satisfies
\begin{equation}
 \Box \Phi^I + \pa_\alpha\left( \gamma^{\alpha\beta} \widetilde{Z}^I \pa_\beta \Phi\right)
 + \pa_\alpha P_{I, 0}^\alpha = 0,
 \label{}
\end{equation}
and using the fact that $[Z, \pa_\alpha] = c_{\alpha Z}^\beta \pa_\beta$
for constants $c_{\alpha Z}^\beta$, this takes the form
\begin{equation}
 \Box \Phi^I + \pa_\alpha\left( \gamma^{\alpha\beta}\pa_\beta  \Phi^I\right)
 + \pa_\alpha P_{I,1}^\alpha = 0,
 \label{comm0}
\end{equation}
where $\gamma_0, P_{I,1}$ satisfy the estimates
\begin{equation}
 |\gamma| \lesssim |\pa \Phi|, \qquad
  |P_{I,1}| \lesssim \sum_{|I_1| + \cdots + |I_r| \leq |I|, r \geq 2,{|I_i| \leq |I|-1}} | \pa Z^{I_1} \Phi|
 \cdots
 |\pa Z^{I_r}  \Phi|,\label{basicgammabound0}
\end{equation}
\begin{equation}
 |\nabla_X \gamma| \lesssim |\nabla_X \pa \Phi|,
 \qquad
 |\nabla_X P_{I,1}| \lesssim
 \sum_{|I_1| + \cdots +|I_r| \leq |I|, r \geq 2,{|I_i| \leq |I|-1}}
 |\nabla_X \pa  Z^{I_1} \Phi|
 \cdots
 |\pa Z^{I_r}  \Phi|.
 \label{basicPbound0}
\end{equation}
For the estimates near $r = 0$ we will want bounds in terms of Lie derivatives,
\begin{equation}
 |\mathcal{L}_X \gamma| \lesssim |\mathcal{L}_X \pa \Phi|
 \qquad
 |\mathcal{L}_X P_{I,1}| \lesssim
 \sum_{|I_1| + \cdots +|I_r| \leq |I|, r \geq 2,{|I_i| \leq |I|-1}}
 |\pa  Z^{I_1} \Phi|
 \cdots
 |\mathcal{L}_X \pa  Z^{I_k} \Phi|\cdots
 |\pa Z^{I_r}\Phi|.
 \label{}
\end{equation}

We now want to express the nonlinearity in \eqref{comm0} in the null coordinate
system \eqref{mainminknull}. For this we use the fact that if
$q = q^\alpha\pa_{x^\alpha}$ is a vector field and $q^\alpha$ denote
the components of $q$ expressed relative to rectangular coordinates and
$q^\mu$ the components expressed relative to the coordinate system
\eqref{mainminknull} then $\pa_\alpha q^\alpha = \div q = r^{-2}\pa_\mu(r^2 q^\mu)$.
As a result,
\begin{equation}
 \pa_\alpha (\gamma^{\alpha\beta} \pa_\beta \Phi^I) +
 \pa_\alpha P^\alpha_{I,1}
 = \frac{1}{r^2}\pa_\mu (r^2\gamma^{\mu\nu}\pa_\nu \Phi^I)
 + \frac{1}{r^2}\pa_\mu(r^2 P^\mu_{I,0})
 =\frac{1}{r^2} \pa_\mu (r \gamma^{\mu\nu} \pa_\nu \Psi^I)
  + \frac{1}{r^2} \pa_\mu (r^2 P^\mu_{I, 1} - \gamma^{\mu r} \Psi^I),
 \label{}
\end{equation}
where all quantities on the right-hand side are expressed relative to
the coordinates \eqref{mainminknull} and where we have introduced
$\Psi^I = r \Phi^I$. Since
$r \Box q = (-4\pa_u\pa_v + \sDelta)(rq)$, we have the equation
\begin{equation}
 -4\pa_u\pa_v \Psi^I + \sDelta \Psi^I + \frac{1}{r} \pa_\mu(r\gamma^{\mu\nu}\pa_\nu \Psi^I)
 + \frac{1}{r} \pa_\mu (r^2 P_{I,1}^\mu - \gamma^{\mu r} \Psi^I) = 0
 \label{}
\end{equation}
Introducing
\begin{equation}
 P_I^\mu = r P_{I, 1}^\mu - \frac{1}{r} \gamma^{\mu r} \Psi^I,
 \qquad
 F_I = -\frac{1}{r} \gamma^{r \nu}\pa_\nu \Psi^I - 2P_{I, 1}^r + \frac{1}{r^2}\gamma^{rr} \Psi^I,
 \label{PIbasic}
\end{equation}
we have the following result.

\begin{lemma}
	\label{higherorderexterioreqns}
 With $\psi^I = r\widetilde{Z}^I \phi$, $\psi^I$ satisfies
 \begin{equation}
  \left(-4\pa_u\pa_v + \sDelta \right) \psi^I + \pa_\mu (\gamma^{\mu\nu}
	\pa_\nu \psi^I) + \pa_\mu P^\mu_I = F_I,
  \label{higherorderexterioreqn}
 \end{equation}
If $|\pa Z^J\phi| \leq C$ for $|J| \leq |I|/2+1$,
the above quantities satisfy the following bounds,
\begin{equation}
   |\gamma| \lesssim |\pa \phi|,
   \qquad
   |\nabla_X \gamma|  \lesssim |\nabla_X \pa \phi|,
 	\qquad
 	|\mathcal{L}_X\gamma|\lesssim |\mathcal{L}_X \pa \phi|,
   \label{gammabdsexteriorapp}
\end{equation}
	\begin{align}
	 |P_I| &\lesssim \sum_{\substack{|I_1| + |I_2| \leq |I|, \\ |I_1|, |I_2| \leq |I|-1}}
	  r |\pa Z^{I_1} \phi| |\pa Z^{I_2} \phi|
	 + |\pa Z^{I_1} \phi| |Z^{I_2} \phi|,
	 \label{Pbounds}
	 \\
 	|F_I| &\lesssim \sum_{|I_1| + |I_2| \leq |I|}
 	 |\pa Z^{I_1} \phi| |\pa Z^{I_2} \phi|
 	+ r^{-1}|\pa Z^{I_1} \phi| |Z^{I_2} \phi|,
	\label{Fbounds}\\
	 |\nabla_X P_I| &\lesssim
	 \sum_{\substack{|I_1| + |I_2| \leq |I|, \\ |I_1|, |I_2| \leq |I|-1}}
	 r | \nabla_X \pa Z^{I_1} \phi| |\pa Z^{I_2} \phi| +
	 | \nabla_X \pa Z^{I_1} \phi||Z^{I_2}\phi|
	 + |\pa Z^{I_2} \phi| |\nabla_X Z^{I_2}\phi| \\
	 &\qquad
	 \sum_{\substack{|I_1| + |I_2| \leq |I|, \\ |I_1|, |I_2| \leq |I|-1}}
	 |X| \left(|\pa Z^{I_1} \phi| |\pa Z^{I_2} \phi|
	+ r^{-1}|\pa Z^{I_1} \phi| |Z^{I_2} \phi|\right), \label{Pbounds2}\\
	 |\mathcal{L}_{X} P_I| &\lesssim
	 \sum_{\substack{|I_1| + |I_2| \leq |I|, \\ |I_1|, |I_2| \leq |I|-1}}
	 r \left(| \mathcal{L}_X \pa Z^{I_1} \phi| |\pa Z^{I_2} \phi| +
	 | \mathcal{L}_X \pa Z^{I_1} \phi||Z^{I_2}\phi|
	 + |\pa Z^{I_2} \phi| |\mathcal{L}_X Z^{I_2}\phi| \right)\\
	 &\qquad
	 \sum_{\substack{|I_1| + |I_2| \leq |I|, \\ |I_1|, |I_2| \leq |I|-1}}
	 |X| \left(|\pa Z^{I_1} \phi| |\pa Z^{I_2} \phi|
	+ |\pa Z^{I_1} \phi| |Z^{I_2} \phi|\right),
	\label{Pbounds3}
	\end{align}
	if $X = X^u\pa_u + X^v\pa_v$.
\end{lemma}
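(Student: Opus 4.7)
The starting point is the continuity equation in the form \eqref{wave00}, which in the exterior regions (where $\Phi = \phi$ because $\sigma \equiv 0$ there) reads $\Box \phi + \pa_\alpha j^\alpha(\pa \phi) = 0$. I would first commute with $\widetilde{Z}^I$: the identity \eqref{widehatbox} shows that $\widetilde{Z}^I \Box \phi = \Box \widetilde{Z}^I \phi$, while \eqref{vectorfieldsonconteqn} applied iteratively converts $\widetilde{Z}^I j^\alpha(\pa\phi)$ into a linear term $\gamma_0^{\alpha\beta}(\pa\phi)\widetilde{Z}^I \pa_\beta \phi$ plus a lower-order current $P^\alpha_{I,0}(\pa\phi)$. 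Using the constant-coefficient commutator $[Z,\pa_\alpha] = c_{\alpha Z}^\beta \pa_\beta$ repeatedly to exchange $\widetilde{Z}^I\pa_\beta\phi$ with $\pa_\beta \widetilde{Z}^I\phi$ up to a redefinition of $P_{I,0}$ then yields the equation \eqref{comm0} in rectangular coordinates for $\Phi^I = \widetilde{Z}^I\phi$.

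The second step is to convert the divergence term into one expressed in the null coordinates $(u,v,\theta^1,\theta^2)$. Since for any vector field $q^\alpha\pa_{x^\alpha}$ one has $\pa_\alpha q^\alpha = r^{-2}\pa_\mu(r^2 q^\mu)$, both $\pa_\alpha(\gamma^{\alpha\beta}\pa_\beta \Phi^I)$ and $\pa_\alpha P^\alpha_{I,1}$ can be rewritten with $r^2$ weights, after which multiplying through by $r$ and introducing $\Psi^I = r\Phi^I = \psi^I$ allows the use of the identity $r\Box q = (-4\pa_u\pa_v + \sDelta)(rq)$. The term $r\pa_\mu(r\gamma^{\mu\nu}\pa_\nu(\Psi^I/r))$ produces the stated $\pa_\mu(\gamma^{\mu\nu}\pa_\nu \psi^I)$ together with extra pieces involving $\gamma^{\mu r}$ and $\gamma^{rr}$, which get absorbed into $P_I^\mu$ and $F_I$ as in the definitions \eqref{PIbasic}. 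A parallel rewriting of the lower-order current $P^\mu_{I,1}$ produces the remaining contributions; the relations $\nabla r = \tfrac{1}{2}(\pa_v - \pa_u)$ and $\pa_\alpha r$ being bounded keep the rearrangement clean.

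The pointwise bounds are then a matter of bookkeeping. The estimate $|\gamma|\lesssim|\pa\phi|$ is immediate from the definition $\gamma^{\alpha\beta}(\xi) = \pa_{\xi^\beta}j^\alpha(\xi)$ together with $j^\alpha(\xi) = A^{\alpha\beta\delta}\xi_\beta\xi_\delta + B^\alpha(\xi)$ where $B$ is cubic. The bounds on $\nabla_X\gamma$ and $\mathcal{L}_X\gamma$ follow from the chain rule since $\gamma$ depends smoothly on $\pa\phi$. For $P_I$ and $F_I$, one substitutes $\Phi^I = r^{-1}\psi^I$ and $\pa Z^I\phi = r^{-1}\pa Z^I\psi^I + O(r^{-2}\psi^I)$, so the products $|\pa Z^{I_1}\Phi||\pa Z^{I_2}\Phi|$ in $P^\alpha_{I,1}$ get weighted by a factor $r$ and acquire the lower-order $|\pa Z^{I_1}\phi||Z^{I_2}\phi|$ terms; the additional $r^{-1}$ factor in \eqref{Fbounds} for $F_I$ comes from the extra $r^{-1}$ in the definition \eqref{PIbasic} of $F_I$. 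The $\nabla_X$ and $\mathcal{L}_X$ derivative bounds on $P_I$ are proved by differentiating the multilinear expressions and using that $\nabla_X$ and $\mathcal{L}_X$ each cost one factor of $|X|$ when falling on a coefficient.

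The main obstacle is purely clerical: carefully tracking how the rescaling $\Phi^I \mapsto \Psi^I/r$ redistributes derivatives between the principal term, the current $P_I$, and the right-hand side $F_I$, while ensuring that every product arising from the Leibniz rule is classified correctly as either a pure $|\pa Z^{I_j}\phi||\pa Z^{I_k}\phi|$ term (going into $P_I$ or $F_I$) or an admissible lower-order term $|\pa Z^{I_j}\phi||Z^{I_k}\phi|$. In particular, one must verify that the restrictions $|I_1|,|I_2|\leq |I|-1$ in \eqref{Pbounds}, \eqref{Pbounds2}, \eqref{Pbounds3} hold after the rescaling, which follows because the linear-in-$\widetilde{Z}^I\pa\phi$ piece of the commutator expansion is the one producing the principal term $\pa_\mu(\gamma^{\mu\nu}\pa_\nu\psi^I)$ rather than contributing to $P_I$.
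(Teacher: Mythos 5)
Your proposal is correct and follows essentially the same route as the paper: commuting \eqref{wave00} with $\widetilde{Z}^I$ via \eqref{widehatbox} and \eqref{vectorfieldsonconteqn}, passing to null coordinates with $\pa_\alpha q^\alpha = r^{-2}\pa_\mu(r^2 q^\mu)$, rescaling by $r$ and using $r\Box q = (-4\pa_u\pa_v+\sDelta)(rq)$, and then absorbing the $\gamma^{\mu r}$, $\gamma^{rr}$ and lower-order pieces into $P_I$ and $F_I$ exactly as in \eqref{PIbasic}, with the pointwise bounds obtained by the same substitution $\phi = \psi/r$ and bookkeeping.
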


\subsection{The  higher-order equations in the central region}
\label{derivationC}

{In this section we will need to track the nonlinear terms a bit more carefully
because we want to use the fact that $\Sigma$ is an approximate solution
and so the argument here is organized a bit differently
than that in the previous section.}

We start by deriving the wave equation satisfied by the perturbation $\psi$.
\begin{lemma}
	\label{centraleqnlowestprop}
 We have
 \begin{equation}
  -4\pa_u \left( \pa_v + \frac{u}{vs} \pa_u\right) \psi
	+\sDelta \psi + \pa_\mu\left( \frac{u}{vs} a^{\mu\nu}\pa_\nu \psi\right) +
	 \pa_\mu(\gamma^{\mu\nu} \pa_\nu \psi)
	 = F + F_\Sigma
  \label{centraleqnlowest}
 \end{equation}
 where the above quantities satisfy the following properties. First,
 $a^{\mu\nu} = a^{\nu\mu}$
 are smooth functions satisfying the strong symbol condition \eqref{strongsymbol} as well as the null condition
 \eqref{intronullcondn}.
 % \begin{equation}
 %  \pa_\mu\left(\frac{u}{vs} a^{\mu\nu}\pa_\nu q\right)
	% = \opa_\mu \left( \frac{u}{vs} a_1^{\mu\nu} \pa_\nu q\right)
	% + \pa_\mu \left( \frac{u}{vs} a_2^{\mu\nu} \opa_\nu q\right),
 %  \label{intronullcondn}
 % \end{equation}
 % where $a_i^{\mu\nu}$ are smooth functions satisying the symbol condition
 % \eqref{appsymb} and $\opa \in \{\pa_v, \nas\}$.
 The quantities $\gamma^{\mu\nu}$ take the form
 \begin{equation}
  \gamma^{\mu\nu} = \frac{1}{1+v} \gamma^{\mu\nu\nu'} \pa_{\nu'} \psi
	+ \frac{1+s}{(1+v)^2}A_1^{\mu\nu}\psi
	+ \frac{1+s}{(1+v)^2} A_2^{\mu\nu}
	+ r B^\mu
  \label{gammaCformula}
 \end{equation}
 where the $\gamma^{\mu\nu\nu'}$ and  ${A^{\mu\nu} }$ are smooth
 functions satisfying the weak symbol condition \eqref{weaksymbol} and $B^\mu$ consists
 of terms which are cubic or higher-order, in the sense that
 $ \xi \mapsto B(\xi) $ vanishes to third order at $\xi = 0 $,
  and its components can be written in the form
  \begin{align}
   B = \frac{1}{(1+v)^2}
 	B_0\left(( \pa(\psi/ r) + \pa( as/r\right))
 	\bigg( Q_1(\pa \psi + a, \pa \psi + a) +\frac{1}{1+v} Q_2(\pa \psi + a, \psi + bs)
 	+ \frac{1}{(1+v)^2}Q_3(\psi + bs, \psi + bs)\bigg),
   \label{Bformula}
  \end{align}
	where $B_0$ is a smooth function
  with $B_0(0) = 0$, the $Q_i$ are quadratic nonlinearities
  with smooth coefficients satisfying the weak symbol condition,
  and the $a, b$ are weak symbols.

 The quantity $F$ is of the form
 \begin{align}
	F &= \frac{C^{\mu\nu} \pa_\mu \psi \pa_\nu \psi}{(1+v)^2}
	+ \frac{D^{\mu} \pa_\mu \psi\psi}{(1+v)^3}
	+ \frac{D \psi^2}{(1+v)^4}
	+ \frac{E \psi}{(1+v)^2(1+s)}
	+ B^r,
  \label{FCformula}
 \end{align}
 where the coefficients above satisfy the weak symbol condition \eqref{weaksymbol},
 $B$ is a cubic nonlinearity depending on $\pa \psi$,
 and the function $F_\Sigma$ is smooth and takes the form
 \begin{equation}
  F_\Sigma = \frac{C}{(1+v)^2},
  \label{FSigmaformula}
 \end{equation}
 where $C$ satisfies the weak symbol condition \eqref{weaksymbol}.

\end{lemma}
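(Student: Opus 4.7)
The plan is to start from the form \eqref{wave0} of the continuity equation, namely
\begin{equation}
\Box\Phi - \pa_u(\pa_u\Phi)^2 + \pa_\alpha \widetilde{j}^\alpha(\pa\Phi) = -\tfrac{2}{r}(\pa_u\Phi)^2,
\end{equation}
substitute the decomposition $\Phi = \sigma + \phi$ with $\sigma = u^2/(2rs)$ in $D^C$, then rescale by setting $\psi = r\phi$ and convert the resulting identity to the null coordinate system using the standard fact that $r\Box q = (-4\pa_u\pa_v + \sDelta)(rq)$ and that rectangular divergences transform as $\pa_\alpha X^\alpha = r^{-2}\pa_\mu(r^2 X^\mu)$. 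After this substitution, each nonlinear term splits into a pure-profile piece (to go into $F_\Sigma$), a piece linear in $\phi$ (which will either enter the principal part on the left-hand side or the small linear correction $\gamma_a^{\mu\nu}$), and genuinely quadratic-or-higher pieces in $\phi$ (which will form $\gamma^{\mu\nu}\pa_\nu\psi$ or $F$).

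The core computation is the treatment of the non-null quadratic $-\pa_u(\pa_u\Phi)^2$. Expanding gives the cross term $-2\pa_u(\pa_u\sigma\,\pa_u\phi)$; since $r\sigma = u^2/(2s)$ we have $\pa_u\sigma = u/(rs)$ plus an admissible lower-order piece, and therefore after passing from $\phi$ to $\psi$ and from $\pa_u$-divergences in rectangular coordinates to divergences in null coordinates (which contribute factors involving $r = (v-u)/2 \sim v/2$), this term precisely generates the Burgers piece $-4\pa_u(\tfrac{u}{vs}\pa_u\psi)$ on the left-hand side, while the discrepancies $\pa_u(1/r)$ and $\pa_u(\pa_u\sigma - u/(rs))$ produce only weak-symbol linear corrections that may be absorbed elsewhere. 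The remaining self-interaction $-\pa_u(\pa_u\phi)^2$ contributes a quadratic piece of the form $\pa_\mu((1+v)^{-1}\gamma^{\mu\nu\nu'}\pa_{\nu'}\psi\,\pa_\nu\psi)$ which is placed into the leading contribution to $\gamma^{\mu\nu}$ in \eqref{gammaCformula}.

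The linear-in-$\phi$ contribution from $\pa_\alpha\widetilde{j}^\alpha(\pa\Phi)$ is $\pa_\alpha(\widetilde{\gamma}^{\alpha\beta}(\pa\sigma)\pa_\beta\phi)$, where $\widetilde{\gamma}^{\alpha\beta}$ is the linearization of $\widetilde{j}^\alpha$. Because $\widetilde{j}$ inherits the classical null condition from Lemma \ref{basicstructure} (in particular \eqref{basicstructureformulanull}), the coefficient $\widetilde{\gamma}^{\alpha\beta}(\pa\sigma)$ satisfies $\widetilde{\gamma}^{\alpha\beta}\pa_\alpha u\,\pa_\beta u = 0$ identically. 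Since $\pa\sigma$ is essentially a smooth function of $u/(vs)$ and $1/v$ in $D^C$ (with the leading scale set by the factor $u/(rs) \sim u/(vs)$), after converting to null coordinates this term takes exactly the form $\pa_\mu(\tfrac{u}{vs}a^{\mu\nu}\pa_\nu\psi)$ with $a^{\mu\nu}$ a weak symbol inheriting the null relation $a^{\mu\nu}\pa_\mu u\,\pa_\nu u = 0$. The quadratic-in-$\phi$ piece of the same term, together with the cubic remainder $\pa_\alpha B^\alpha(\pa\Phi)$, also contributes to $\gamma^{\mu\nu}\pa_\nu\psi$, where the substitution $\pa_\alpha\phi = \pa_\alpha(\psi/r)$ produces the structure $(1+s)/(1+v)^2\cdot(A_1\psi + A_2)$ from the commutators with $1/r$, and the cubic terms yield the $rB^\mu$ contribution whose factorized form \eqref{Bformula} reflects that $B^\alpha$ vanishes to third order at the origin.

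Finally, the inhomogeneity $F_\Sigma$ is computed directly from the pure-profile pieces: one evaluates $\Box\sigma$, $-\pa_u(\pa_u\sigma)^2$, $\pa_\alpha\widetilde{j}^\alpha(\pa\sigma)$, and $-(2/r)(\pa_u\sigma)^2$, and verifies that each is of size $1/(1+v)^2$ times a weak symbol by virtue of the dimensional scaling of $\sigma$ and the fact that $|u|\lesssim s^{1/2}$ in $D^C$; the remaining scattered terms (for instance those produced by commutators like $\pa_u(1/r)$, and the $r^{-1}$ boundary contributions from converting divergences) all fit into the template \eqref{FCformula} for $F$ after grouping. The main obstacle is purely bookkeeping: one must carefully sort every term generated by the substitution $\Phi = \sigma+\phi$ and the rescaling $\phi=\psi/r$ into exactly one of the four categories (principal Burgers operator, null-type linear correction $\gamma_a$, perturbative $\gamma$, or remainder $F/F_\Sigma$), and in particular one must verify the null condition $a^{\mu\nu}\pa_\mu u\,\pa_\nu u = 0$ precisely — not approximately — so that the associated scalar current $K_{X,\gamma_a}$ is controlled by \eqref{nulllemmascalarcurrentbd} rather than treated as a generic perturbation, which would be fatal given that $u/(vs)$ decays only like $1/(v s^{1/2})$.
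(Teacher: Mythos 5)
Your overall strategy is the same as the paper's (expand around the profile, rescale by $r$, pass to null coordinates, and sort terms into the Burgers principal part, the null-type linear correction $\tfrac{u}{vs}a^{\mu\nu}$, the perturbative $\gamma$, and remainders), but there is a genuine gap in your treatment of $F_\Sigma$. You propose to "evaluate $\Box\sigma$, $-\pa_u(\pa_u\sigma)^2$, $\pa_\alpha\widetilde{j}^\alpha(\pa\sigma)$, and $-(2/r)(\pa_u\sigma)^2$ and verify that each is of size $1/(1+v)^2$ times a weak symbol by dimensional scaling." This term-by-term estimate fails. The lemma is an equation for $\psi=r\phi$, so the pure-profile inhomogeneity carries the factor $r$: it contains $-4\pa_u\pa_v\Sigma = \tfrac{4u}{vs^2}$ (from $r\Box\sigma$, with $\Sigma=u^2/(2s)$) and $-\tfrac{2}{v}\pa_u\big((\pa_u\Sigma)^2\big) = -\tfrac{4u}{vs^2}$ (the profile part of $-r\pa_u(\pa_u\Phi)^2$ after writing $\tfrac1r=\tfrac2v+\dots$). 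Each of these is of size $u/(vs^2)\sim 1/(v(\log v)^{3/2})$; to write such a term as $C/(1+v)^2$ you would need $C\sim uv/s^2$, which grows like $v/(\log v)^{3/2}$ and is not a weak symbol in the sense of \eqref{weaksymbol}. (If instead you meant the unrescaled quantities, each is indeed $O(v^{-2})$, but then you have forgotten the factor $r$ and the conclusion \eqref{FSigmaformula} does not follow.) The essential input, which your sketch never invokes, is that these two contributions cancel \emph{exactly}: $\Sigma=u^2/(2s)$ satisfies $\pa_u\pa_v\Sigma+\tfrac1{2v}\pa_u\big((\pa_u\Sigma)^2\big)=0$ and $\sDelta\Sigma=0$ (this uses the normalization $A^{uuu}=-1$ from \eqref{nullg}), so after this cancellation only genuinely $O((1+v)^{-2})$ profile terms remain in $F_\Sigma$. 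This is not a bookkeeping refinement: the whole point of choosing the Burgers $N$-wave as the profile is this cancellation, and without it the bound \eqref{FSigmaformula} is false and the time-integrated estimate \eqref{FICl2bds} for $F_{\Sigma,I}$ diverges (the contribution would be $\sim\int t^{-1/2}(\log t)^{-5/4}\,dt$).

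A secondary imprecision: the linearized null-form coefficient $\widetilde{\gamma}^{\alpha\beta}(\pa\sigma)$ does \emph{not} satisfy $\widetilde{\gamma}^{\alpha\beta}\pa_\alpha u\,\pa_\beta u=0$ identically. Only the piece coming from the $du$-component of $\pa\Sigma$ does, because $\tfrac1r\pa_\nu\Sigma = c_\nu\tfrac{u}{vs}+\tfrac1{(1+v)^2}d_\nu$ with $c_\nu$ proportional to $\pa_\nu u$, and then $a^{uu}$ is proportional to $\widetilde{A}^{uuu}=0$; the remaining piece, which carries no null structure, must be absorbed into $\gamma$ using its $(1+v)^{-2}$ size (this is consistent with your closing remark that the null condition for $a$ must hold exactly, but your "identically" claim should be replaced by this splitting). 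With the profile-equation cancellation for $F_\Sigma$ supplied and this splitting made explicit, the rest of your sorting argument matches the paper's proof.
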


The structure of the above terms (in particular, those which are linear in
$\psi$) is explained after \eqref{acceptablemetric}; see in particular
Lemma \ref{ffsclaim}.

\begin{proof}
 We first
handle the ``bad'' term in the nonlinearity and exploit the fact
that $\pa_u \Sigma$ satisfies Burgers' equation and find the effective linearized
equation for the peturbation $\psi = r\varphi$. We then need to manipulate the remaining
terms in the nonlinearity, and we want to express the result in null coordinates
in terms of the variable $\psi$ and remainder terms which decay much more quickly
than the ``bad'' term.

\emph{Step 1: Extracting the effective equation}

We start by manipulating the first two terms in \eqref{wave0},
\begin{equation}
 r\left(\Box \Phi - \pa_u ( (\pa_u \Phi)^2 )\right)
 = \left(-4\pa_v \pa_u + \sDelta \right) \Psi
 - \pa_u ( r (\pa_u\Phi)^2 ) - \frac{1}{2} (\pa_u \Phi)^2,
 \label{stillwave0}
\end{equation}
using $\pa_u r= -\frac{1}{2}$.
With $\Psi = r\Phi$, the quadratic term here can be written in the form
\begin{equation}
 \pa_u (r (\pa_u \Phi)^2) = \pa_u \left(\frac{1}{r}  (\pa_u \Psi)^2 \right)
 + \pa_u \left( \frac{1}{r^2} \Psi \pa_u \Psi - \frac{1}{4r^3} \Psi^2 \right),
 \label{}
\end{equation}
and, writing $\frac{1}{r} = \frac{2}{v-u} = \frac{2}{v} + \frac{u}{v} \frac{1}{r}$,
we further write the first term here in the form
\begin{equation}
 \pa_u \left(\frac{1}{r}  (\pa_u \Psi)^2 \right)
 = \frac{2}{v} \pa_u \left( (\pa_u \Psi)^2\right)
 + \pa_u\left( \frac{u}{v} \frac{1}{r} (\pa_u \Psi)^2\right).
 \label{}
\end{equation}
Returning to \eqref{stillwave0}, we have the identity
\begin{equation}
 r\left(\Box \Phi - \pa_u ( (\pa_u \Phi)^2 )\right)
 = \left(-4\pa_v \pa_u + \sDelta \right) \Psi
 - \frac{2}{v} \pa_u \left( (\pa_u \Psi)^2\right)
 + \pa_u \gamma_0(\Psi, \pa \Psi)
 + F_0,
 \label{ffs0}
\end{equation}
\begin{equation}
	\gamma_0(\Psi, \pa \Psi) = -\frac{u}{v} \frac{1}{r} (\pa_u \Psi)^2
 -\frac{1}{r^2} \Psi \pa_u \Psi,\qquad
 F_0=\pa_u \left( \frac{1}{4r^3} \Psi^2 \right)- \frac{1}{2} (\pa_u \Phi)^2.
 \label{ffs}
\end{equation}
Now we expand $\Psi = \Sigma + \psi$ with $\Sigma = \frac{u^2}{2s}$. Noting
that $\Sigma$ satisfies
\begin{equation}
 \pa_v \pa_u \Sigma + \frac{1}{2v}\pa_u (\pa_u\Sigma)^2 = 0,
 \qquad
 \sDelta \Sigma  = 0,
 \label{}
\end{equation}
we find
\begin{multline}
 \left(-4\pa_u \pa_v + \sDelta \right) \Psi
 - \frac{2}{v} \pa_u \left( (\pa_u \Psi)^2\right)
 = (-4\pa_u \pa_v +\sDelta)\psi
 - \frac{4}{v} \pa_u \left( \pa_u \Sigma \pa_u \Psi\right)
 - \frac{2}{v} \pa_u \left( (\pa_u \psi)^2\right)\\
 = -4 \left( \pa_u \left(\pa_v + \frac{u}{vs} \pa_u\right) - \frac{1}{4} \sDelta\right)
 \psi
 - \frac{2}{v} \pa_u \left( (\pa_u \psi)^2\right).
 \label{}
\end{multline}
The equation \eqref{ffs0} can then be written in the form
\begin{equation}
	r\left(\Box \Phi - \pa_u ( (\pa_u \Phi)^2 )\right)
	=
 -4 \left( \pa_u \left(\pa_v + \frac{u}{vs} \pa_u\right) - \frac{1}{4} \sDelta\right)
 \psi
 - \frac{2}{v} \pa_u \left( (\pa_u \psi)^2\right)+ \pa_u \gamma_0(\Psi, \pa \Psi)
 + F_0,
 \label{}
\end{equation}

By \eqref{wave0} and the previous equation, we have arrived at
the equation
\begin{equation}
	-4 \left( \pa_u \left(\pa_v + \frac{u}{vs} \pa_u\right) - \frac{1}{4} \sDelta\right)\psi
  -\frac{2}{v} \pa_u \left( (\pa_u \psi)^2\right)
	+ r \pa_\alpha \left(\widetilde{A}^{\alpha\beta\delta}\pa_\beta \Psi \pa_\delta \Psi\right)
	 +r \pa_\alpha B^\alpha + \pa_u \gamma_0 =
	F_1,
 \label{stillgoing00}
\end{equation}
where
\begin{align}
	\gamma_0 &=
	-\frac{u}{v} \frac{1}{r} (\pa_u \Psi)^2
 -\frac{1}{r^2} \Psi \pa_u \Psi
	\\
F_1 &=
\pa_u \left( \frac{1}{4r^3} \Psi^2 \right) - \frac{3}{2} (\pa_u \Phi)^2,
 \label{F1defgoing}
\end{align}
and where recall that $\widetilde{A} = \widetilde{A}(\omega)$ are smooth functions on
$\mathbb{S}^2$ verifying the null condition. It remains to handle the remainder
terms $r \pa_\alpha (\widetilde{A}^{\alpha\beta\delta}\pa_\beta\Phi \pa_\delta \Phi)$
, $r \pa_\alpha B^\alpha$,
$\pa_u \gamma_0$ and $F_1$.

\emph{Step 2: Dealing with the remaining terms in \eqref{stillgoing00}}

We now want to show that the above remainder terms are as in the statement
of the lemma. To handle these terms, it will be convenient to
make the following definitions. We say that a two-tensor $\gamma^{\mu\nu}$ is
an ``acceptable metric correction''  if it is a sum of terms of the following types,
\begin{equation}
	\frac{A^{\nu}\pa_{\nu}\psi}{1+v} , \quad
	\quad
	\frac{(1+s) B \psi}{(1+v)^2},
	\qquad
	\frac{(1+s)C}{(1+v)^2},
	% \qquad
	% \frac{u}{vs} a^{\mu\nu}\pa_\nu \psi,
 \label{acceptablemetric}
\end{equation}
for smooth coefficients $A^\nu, B, C,$ satisfying the
weak symbol condition \eqref{weaksymbol}.
Terms of the last two types here will be generated
by the quadratic nonlinearities when we expand $\Psi = \Sigma + \psi$. Such terms are consistent
with \eqref{gammaCformula}. Similarly,
we say that a function $F$ is an ``acceptable remainder'' if it
is a sum of terms of the following types,
\begin{equation}
	\frac{A^{\mu\nu}\pa_\mu\psi\pa_\nu\psi}{(1+v)^2} ,
	\quad
	\frac{B^{\mu}\pa_\mu\psi\psi}{(1+v)^3} ,
	\quad
	\frac{ C\psi^2}{(1+v)^4},
	\quad
	\frac{A^\mu \pa_\mu \psi}{(1+v)^2},
	\quad
	\frac{B \psi}{(1+v)^2(1+s)}  ,
	\quad
	\frac{C}{(1+v)^2}
 \label{acceptableremainder}
\end{equation}
where the coefficients above are weak symbols.
Terms of the last three types account for
error terms generated by expanding around the model shock profile
$\Sigma$ and the fact that $\Sigma = s A$ for a weak symbol $A$.
These terms are consistent with \eqref{FCformula}, and so
Lemma \ref{centraleqnlowestprop} follows from the upcoming Lemma \ref{ffsclaim}.

\end{proof}

\begin{lemma}\label{ffsclaim}
	With notation as in \eqref{stillgoing00}-\eqref{F1defgoing},
	each of the quantities $r\pa_\alpha \left(\widetilde{A}^{\alpha\beta\delta}\pa_\beta\Psi\pa_\delta\Psi \right), \pa_u\gamma^0$ and
 $F_0$ can be written in the form $\pa_\mu(\tfrac{u}{vs}a^{\mu\nu}\pa_\nu \psi) + \pa_\mu(\gamma^{\mu\nu}\pa_\nu \psi) + F$,
 where $a^{\mu\nu}$ is a strong symbol \eqref{strongsymbol}
 and verifies the null condition \eqref{intronullcondn0},
 where $\gamma$ is an acceptable metric correction $\gamma$ and where $F$
 is an acceptable remainder. The cubic nonlinearity $r\pa_\alpha B^\alpha$
 can be written as in \eqref{Bformula}.
\end{lemma}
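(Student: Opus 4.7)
The plan is to expand $\Psi = \Sigma + \psi$ with $\Sigma = u^2/(2s)$ in each of the three remainder terms and systematically classify the resulting pieces according to the templates \eqref{acceptablemetric} and \eqref{acceptableremainder}. Throughout I will exploit two elementary facts that are stable across the estimates: first, in $D^C$ we have $|u|\lesssim s^{1/2}\ll v$, so
\[
  \tfrac{1}{r} = \tfrac{2}{v-u} = \tfrac{2}{v}\bigl(1 + \tfrac{u}{v} + \tfrac{u^2}{v^2} + \cdots\bigr),
\]
and the corrections $(u/v)^k$ are weak symbols in the sense of \eqref{weaksymbol}; second, $\Sigma$ depends only on $u$ and $s=\log v$, so $\pa_u\Sigma = u/s$ is itself a weak symbol while $\pa_v\Sigma = -u^2/(2vs^2)$, $\opa\Sigma$, and $\nas\Sigma$ all carry an extra factor of $(vs)^{-1}$ or vanish, making their contributions acceptable remainders.

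The main step is (I): the quadratic term $r\pa_\alpha(\widetilde{A}^{\alpha\beta\delta}\pa_\beta\Psi\pa_\delta\Psi)$. Writing the nonlinearity in its null form \eqref{basicstructureformulanull}, every summand contains at least one $\opa$-derivative. Substituting $\Psi = \Sigma+\psi$ produces four classes of terms: (a) $\Sigma\Sigma$ pieces which, since $\opa\Sigma = O(u^2/(vs^2))$, feed into $F_\Sigma$ and match the last template in \eqref{acceptableremainder}; (b) linear-in-$\psi$ pieces with $\opa$ falling on $\Sigma$, which produce terms of the form $\pa_\alpha\bigl(\frac{u^2}{v^2 s^2}c^{\alpha\nu}\pa_\nu\psi\bigr)$ whose coefficients are acceptable metric corrections of type $\frac{(1+s)A_2^{\mu\nu}}{(1+v)^2}$; (c) the structurally important linear-in-$\psi$ pieces with $\opa$ falling on $\psi$, which after using $\pa_u\Sigma = u/s$ and $r\cdot v^{-1}=\tfrac12 + O(u/v)$ take the form $\pa_\mu\bigl(\tfrac{u}{vs} a^{\mu\nu}\pa_\nu\psi\bigr)$ up to $\pa_\mu\bigl(\gamma^{\mu\nu}\pa_\nu\psi\bigr)$ corrections — and the null condition $\widetilde{A}^{\alpha\beta\delta}\pa_\alpha u\pa_\beta u\pa_\delta u = 0$ on $\widetilde{A}$ translates directly into $a^{\mu\nu}\pa_\mu u\pa_\nu u = 0$, which is \eqref{intronullcondn0}; (d) genuinely quadratic-in-$\psi$ pieces, which fit the first template of \eqref{acceptablemetric} via $\gamma^{\mu\nu} = (1+v)^{-1}\gamma^{\mu\nu\nu'}\pa_{\nu'}\psi$ together with acceptable remainders of the form $(1+v)^{-2}A^{\mu\nu}\pa_\mu\psi\pa_\nu\psi$.

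For (II), expand $(\pa_u\Psi)^2 = (u/s)^2 + 2(u/s)\pa_u\psi + (\pa_u\psi)^2$ inside $\gamma_0 = -\tfrac{u}{vr}(\pa_u\Psi)^2 - \tfrac{1}{r^2}\Psi\pa_u\Psi$, multiply out, apply $\pa_u$, and sort each piece. The pure-$\Sigma$ contribution yields a term of order $u^4/(v^2s^2)$ fitting the last template of \eqref{acceptableremainder}; the linear-in-$\psi$ contribution is of the form $\pa_u\bigl(\tfrac{u}{vs}A^{\mu\nu}\pa_\nu\psi\bigr)$ or has $\psi$ replacing $\pa\psi$ (absorbed by the second template of \eqref{acceptablemetric}); the quadratic and cubic pieces lie in \eqref{acceptableremainder} because of the prefactor $1/v$ that they inherit from $\gamma_0$. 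Case (III) for $F_1 = \pa_u\bigl(\tfrac{1}{4r^3}\Psi^2\bigr) - \tfrac{3}{2}(\pa_u\Phi)^2$ is handled in the same way after using $\pa_u\Phi = r^{-1}\pa_u\Psi + \Psi/(2r^2)$ to rewrite $(\pa_u\Phi)^2$ as $r^{-2}(\pa_u\Psi)^2 + r^{-3}\Psi\pa_u\Psi + \tfrac{1}{4}r^{-4}\Psi^2$; again the $\Sigma\Sigma$ pieces become $F_\Sigma$, linear-in-$\psi$ pieces with $\pa_u\Sigma$ factor contribute $\tfrac{u}{v^2 s}\pa_u\psi$-type terms that are directly of the acceptable-remainder form $\frac{A^\mu\pa_\mu\psi}{(1+v)^2}$ (with weak symbol $A^u\sim u/s$), and the purely quadratic-in-$\psi$ pieces are of the first or second template of \eqref{acceptableremainder}.

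Finally, for (IV), $r\pa_\alpha B^\alpha$, expand $\Psi = \Sigma+\psi$ in \eqref{Hexpansion}. Each power of $\Psi$ contributes $\psi + O(s)$, where the $s$-factor encodes $\Sigma$. Writing out three factors of $\pa\Psi + \Sigma$-type quantities and peeling off a common factor of $B_0\bigl(\pa(\psi/r) + \pa(as/r)\bigr)$ (justified because $B$ vanishes to third order at the origin and $a^{\alpha\beta\delta}$ are smooth), we obtain exactly the structure \eqref{Bformula}. The only delicate bookkeeping is that factors of $1/r$ arising from the geometry must be grouped with the $(1+v)^{-2}$ weight at the front of \eqref{Bformula}, which is done using $1/r\sim 2/v$ modulo weak symbols as above. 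The principal obstacle is purely bookkeeping: in case (I)(c) one must verify that after all the rearrangements the resulting $a^{\mu\nu}$ inherits the symmetry and null property cleanly rather than merely up to error, and that the $r$-correction $1/r - 2/v$ is absorbed into the metric-correction template \eqref{acceptablemetric} without generating spurious divergences. Once this is confirmed, the remaining cases reduce to routine algebra and the claim follows.
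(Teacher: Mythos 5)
Your proposal follows essentially the same route as the paper's proof: expand $\Psi = \Sigma + \psi$, use that $\pa_u\Sigma = u/s$ is a weak symbol while $\opa\Sigma$ gains an extra factor of $1/v$, extract the $\pa_\mu\bigl(\tfrac{u}{vs}a^{\mu\nu}\pa_\nu\psi\bigr)$ piece from the $\pa\Sigma\,\pa\psi$ cross term with $a^{uu}=0$ inherited from $\widetilde{A}^{uuu}=0$, sort the remaining pieces into the templates \eqref{acceptablemetric}--\eqref{acceptableremainder}, and factor out $B_0$ for the cubic term, exactly as the paper does. The one point to keep straight (a labeling slip, not a gap) is that the linear-in-$\psi$ terms coming from $\pa_u\gamma_0$, whose coefficients are of size $u^2/(v^2s)$, must be filed under the metric-correction template $\tfrac{(1+s)C}{(1+v)^2}$ rather than the $\tfrac{u}{vs}a^{\mu\nu}$ slot, since those coefficients need not verify the null condition \eqref{intronullcondn0}; the extra decay $u/v$ makes this absorption work, which is how the paper classifies them.
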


\emph{Proof.}
To expand around $\Sigma$, it will be helpful to note that $\Sigma$ satisfies
\begin{equation}
 \pa_\mu \Sigma = c_\mu \frac{u}{s},
 \qquad
 \opa_\mu \Sigma
 = d_\mu \frac{1}{v} \frac{u^2}{s^2},
 \label{derivsofsigma}
\end{equation}
for strong symbols $c_\mu, d_\mu$ satisfying \eqref{strongsymbol}.
In particular, we can write
\begin{equation}
 \pa_\mu \Sigma= a_\mu ,\qquad \opa_\mu \Sigma = \frac{1}{1+v} b_\mu,
 \label{sigmaweaksymbols}
\end{equation}
for weak symbols $a_\mu, b_\mu$ satisfying \eqref{weaksymbol}.

To deal with various powers of $u$ we will enounter in the following,
we will use the fact that
if $f$ is smooth, then $f(\frac{u}{s})$ satisfies the weak symbol condition \eqref{weaksymbol}
in the region $|u| \lesssim s^{1/2}$.

\begin{proof}[Acceptability of $\pa_u\gamma_0$]
Using \eqref{sigmaweaksymbols}, we can write the first
term in the definition of $\gamma_0$ in the form
\begin{equation}
 \frac{u}{v}\frac{1}{r} (\pa_u\Psi)^2
 = \frac{s}{(1+v)^2}a (\pa_u\Psi)^2
 = \left(\frac{s}{(1+v)^2}a \pa_u\psi + \frac{2s}{(1+v)^2}\frac{u}{s}a \right)\pa_u\psi
 +\frac{s}{(1+v)^2} \frac{u^2}{s^2} a,
 \label{}
\end{equation}
for a weak symbol $a$.
The quantity in the brackets
is an acceptable metric correction
because it is a sum of
the first and third types appearing in \eqref{acceptablemetric}, after using
that $u/s$ is a weak symbol. The $u$ derivative
of the last term here can be written in the form $1/(1+v)^2 a'$ for a weak symbol
$a'$ and is thus an acceptable remainder.

For the second term in the definition of $\gamma_0$ we write
\begin{equation}
 \frac{1}{r^2} \Psi \pa_u\Psi
 = \frac{1}{(1+v)^2} a \Psi \pa_u \Psi
 = \left(\frac{1}{(1+v)^2} a \psi
 + \frac{s}{(1+v)^2} \frac{u^2}{s^2} a \right)\pa_u\psi
 + \frac{1}{(1+v)^2}\frac{u}{s} a \psi  + \frac{s}{(1+v)^2} \frac{u^2}{s^2},
 \label{acceptablePsipaPsi0}
\end{equation}
for a weak symbol $a$. The quantity in the brackets is an acceptable
metric correction because it is a sum of
the second and third types appearing in \eqref{acceptablemetric}.
The $u$ derivative of the last two terms here can be written in the form
$a\psi/((1+v)^2(1+s)) + b/(1+v)^2$ for weak symbols $a,b$, and it is
therefore an acceptable remainder.

\end{proof}
\begin{proof}[Acceptability of $F_1$]
	Writing $\Phi = \Psi/r$,
	the remainder $F_1$ can be written in the form
	\begin{equation}
	 F_1 = \frac{1}{r^2}c_1(\pa_u\Psi)^2
	 + \frac{1}{r^3} c_2 \Psi \pa_u \Psi + \frac{1}{r^4} c_3\Psi^2,
	 \label{F1expressionabstract}
	\end{equation}
	for constants $c_1,c_2, c_3$. If we expand $\Psi = \Sigma + \psi$ and use
		\eqref{sigmaweaksymbols} to express derivatives of $\Sigma$ along with the
		fact that $\Sigma = (1+s)a$ for a weak symbol $a$, we find the following expressions
	for the above nonlinearities,
	\begin{equation}
	 (\pa_u\Psi)^2 = a_1 (\pa_u\psi)^2 + a_2 \pa_u\psi + a_3,
	 \qquad
	 \Psi \pa_u\Psi = b_1\psi \pa_u\psi + b_2(1+s) \pa_u\psi
	 + b_3 \psi + b_4(1+s),
	 \label{}
	\end{equation}
	\begin{equation}
	 \Psi^2 = d_1\psi^2 + d_2(1+s) \psi + d_3(1+s)^2,
	 \label{}
	\end{equation}
	where the above coefficients are weak symbols. Inserting these into
	\eqref{F1expressionabstract} shows that $F_1$ is an acceptable remainder.

\end{proof}

\begin{proof}[Acceptability of $r \pa_\alpha (\widetilde{A}^{\alpha\beta\delta}\pa_\beta \Psi \pa_\delta \Psi)$]
	This is more complicated to establish because it requires exploiting the null
	condition.
	We first re-write this quantity in terms of $\Psi$,
\begin{equation}
  \pa_\alpha \left( \widetilde{A}^{\alpha \beta \delta} \pa_\beta \Phi \pa_\delta \Phi\right)
 = \pa_\alpha \left( \frac{1}{r^2} \widetilde{A}^{\alpha \beta \delta} \pa_\beta \Psi \pa_\delta \Psi\right)
 + \pa_\alpha \left( \frac{1}{r^3} (\widetilde{A}^{\alpha \beta r} + \widetilde{A}^{\alpha r \beta}) \Psi \pa_\beta\Psi
 + \frac{1}{r^4} \widetilde{A}^{\alpha rr} \Psi^2\right)
 \label{stillgoing}
\end{equation}
where the $\widetilde{A}^{\alpha\beta\delta}$ are smooth and satisfy
the null condition \eqref{basicstructureformulanull}. We now want to pass to
null coordinates \eqref{mainminknull}.
	With the convention that indices $\alpha, \beta, \delta$ refer
	to quantities expressed in rectangular coordinates and $\mu, \nu,\nu'$
	refer to quantities expressed in the coordinate system \eqref{mainminknull}, using the identity $\pa_\alpha X^\alpha = \div X = \frac{1}{r^2} \pa_\mu(r^2 X^\mu)$,
	we have $r\pa_\alpha X^\alpha = r^{-1}\pa_\mu(r^2X^\mu) =\pa_\mu(rX^\mu) + X^r$, and as a result,
	\begin{equation}
	 r \pa_\alpha\left( \frac{1}{r^2} \widetilde{A}^{\alpha \beta \delta} \pa_\beta \Psi \pa_\delta \Psi\right)
	 = \pa_\mu\left( \frac{1}{r} \widetilde{A}^{\mu \nu \nu'} \pa_\nu \Psi \pa_{\nu'}\Psi\right)
	 + \frac{1}{r^2} \widetilde{A}^{r \nu\nu'}\pa_{\nu}\Psi \pa_{\nu'}\Psi.
	 \label{}
	\end{equation}
	Using the same formula for the remaining
	terms on the second line of \eqref{stillgoing} we have the formula
	\begin{equation}
	 r \pa_\alpha \left(\widetilde{A}^{\alpha\beta\delta}\pa_\beta\Phi\pa_\delta\Phi\right)
	 = \pa_\mu\left( \frac{1}{r} \widetilde{A}^{\mu \nu \nu'} \pa_\nu \Psi \pa_{\nu'}\Psi
	 {+ \frac{1}{r^2}(\widetilde{A}^{\mu \nu r} + \widetilde{A}^{\mu r \nu}) \Psi \pa_\nu\Psi }\right)
	  + F_{{2}}(\Psi, \pa\Psi),
	 \label{stillgoing2}
	\end{equation}
where
\begin{align}
 F_{{2}}(\Psi, \pa\Psi) &= \pa_{\mu}\left(  \frac{1}{r^3} \widetilde{A}^{\mu rr} \Psi^2
\right) + \frac{1}{r^2} \widetilde{A}^{r \nu\nu'}\pa_{\nu}\Psi \pa_{\nu'}\Psi -
 {\frac{1}{r^3}(\widetilde{A}^{\mu \nu r} + \widetilde{A}^{\mu r \nu}) \Psi \pa_\nu\Psi
 -\frac{1}{r^4} \widetilde{A}^{\mu rr}\Psi^2}.
 \label{sgf2}
\end{align}
{
	After expanding around $\Sigma$ it is clear that $F_{2}$ is an acceptable remainder
	since it has the same structure as the remainder $F_1$ which we previously handled.
	Similarly, the second term
	% $\pa_\mu \left(\frac{1}{r^2}(\widetilde{A}^{\mu \nu r} + \widetilde{A}^{\mu r \nu}) \Psi \pa_\nu\Psi\right)$
	can be written in terms of an acceptable metric correction and acceptable remainder
	as in \eqref{acceptablePsipaPsi0}. 
We now expand
	$\Psi = \Sigma + \psi$ in the first term on the right-hand side
	of \eqref{stillgoing2}, we get
	\begin{equation}
	 \pa_\mu\left( \frac{1}{r} \widetilde{A}^{\mu \nu \nu'} \pa_\nu \Psi \pa_{\nu'}\Psi\right)
	 =\pa_\mu\left( \frac{1}{r}
	 \widetilde{A}^{\mu \nu \nu'}\pa_\nu \psi \pa_{\nu'}\psi\right)+
	 \pa_\mu\left( \frac{1}{r}
	 (\widetilde{A}^{\mu \nu \nu'} +\widetilde{A}^{\mu \nu' \nu})\pa_\nu \Sigma \pa_{\nu'}\psi\right)
	 + F_{\Sigma, 1},
	 \label{stillgoing3}
	\end{equation}
	and doing the same with the second term in \eqref{stillgoing2} we find
	\begin{multline}
	 \pa_\mu \left( \frac{1}{r^2}
	 (\widetilde{A}^{\mu\nu r} + \widetilde{A}^{\mu r \nu})\Psi \pa_\nu \Psi\right)
	 =
	 \pa_\mu \left( \frac{1}{r^2}
	 (\widetilde{A}^{\mu\nu r} + \widetilde{A}^{\mu r \nu})\psi \pa_\nu \psi\right)
	 + \pa_\mu \left(\frac{1}{r^2}
	  (\widetilde{A}^{\mu\nu r} + \widetilde{A}^{\mu r \nu})\Sigma \pa_\nu \psi \right)
		\\
		+ \pa_\mu \left( \frac{1}{r^2}
		(\widetilde{A}^{\mu\nu r} + \widetilde{A}^{\mu r \nu})\pa_\nu\Sigma\psi\right)
		+ F_{\Sigma, 2},
	 \label{stillgoing4}
	\end{multline}
	where $F_{\Sigma, 1}, F_{\Sigma, 2}$ collect the terms involving $\Sigma$
	alone
	\begin{align}
	 F_{\Sigma, 1} &= \pa_\mu\left( \frac{1}{r}
	 (\widetilde{A}^{\mu \nu \nu'} +\widetilde{A}^{\mu \nu' \nu})\pa_\nu \Sigma \pa_{\nu'}\Sigma\right),
	 \label{fSigma1}\\
	 F_{\Sigma, 2} &=  \pa_\mu \left( \frac{1}{r^2}
 	 (\widetilde{A}^{\mu\nu r} +
	 \widetilde{A}^{\mu r \nu})\Sigma \pa_\nu \Sigma\right).
	 \label{fSigma2}
	\end{align}

	We now consider each of the above quantities.

	\emph{Acceptability of the terms in \eqref{stillgoing3}}
	The quantity $\frac{1}{r} \widetilde{A}^{\mu\nu\nu'}\pa_\nu \psi$ from
	the right-hand side of \eqref{stillgoing3}
	is an acceptable metric
	correction, because it is of the first type in
	\eqref{acceptablemetric}.
	To see that the second term on the right-hand side of \eqref{stillgoing3}
	involves an acceptable metric correction we will use that the coefficients
	$\widetilde{A}$ verify the null condition.
	For this we use \eqref{sigmaweaksymbols}
	to write $\frac{1}{r}\pa_\nu \Sigma = c_\nu \frac{u}{vs}
	+ \frac{1}{(1+v)^2}d_\nu$
	where $c_\nu$ is a strong symbol and $d_\nu$ is a weak
	symbol. Since  $\widetilde{A}$ verifies
	the null condition $\widetilde{A}^{uuu} = 0$, we can write
	\begin{equation}
	 \pa_\mu\left( \frac{1}{r}
	(\widetilde{A}^{\mu \nu \nu'} +\widetilde{A}^{\mu \nu' \nu})\pa_\nu \Sigma \pa_{\nu'}\psi\right)
	= \pa_\mu\left( \frac{u}{vs} a^{\mu\nu} \pa_\nu \psi\right)
	+ \pa_\mu\left( \frac{1}{(1+v)^2} b^{\mu\nu}\pa_\nu \psi\right)
	 \label{}
	\end{equation}
	where $a$ verifies the null condition $a^{uu} = 0$
	and where the $b$ are weak symbols.
	Each of these is of the correct form. 

	As for the quantity $F_{\Sigma, 1}$, again using the
	null condition, it can be written in the form
	\begin{equation}
	 F_{\Sigma, 1}
	 = \frac{1}{(1+v)^2} a^{\mu\nu}\pa_\mu\Sigma \pa_\nu \Sigma
	 + \frac{1}{1+v} b^{\mu\nu\nu'}\opa_\mu \pa_{\nu'}\Sigma \pa_\nu \Sigma
	 + \frac{1}{1+v} c^{\mu\nu\nu'}\pa_\mu \pa_{\nu'}\Sigma \opa_\nu \Sigma
	 \label{}
	\end{equation}
	for weak symbols $a,b,c$, and in light of \eqref{sigmaweaksymbols},
	this is an acceptable remainder.

 \emph{Acceptability of the terms in \eqref{stillgoing4}}
 The quantity $\frac{1}{r^2} (\widetilde{A}^{\mu\nu r} + \widetilde{A}^{\mu r\nu})$
 is an acceptable metric term since it is of the second type in
  \eqref{acceptablemetric}. Writing $\Sigma = s a$ for a weak symbol $a$,
	the second term in \eqref{stillgoing4} also involves an acceptable
	metric correction. For the first term on the last line of \eqref{stillgoing4},
	we expand the derivative. Since $\pa \Sigma$ is a weak
	symbol and since if $a$ is a weak symbol, $\pa a = \frac{1}{1+s} a'$
	for another weak symbol $a'$, we have
		\begin{equation}
	 \pa_\mu \left( \frac{1}{r^2}
	 (\widetilde{A}^{\mu\nu r} + \widetilde{A}^{\mu r \nu})\pa_\nu\Sigma\psi\right)
	 = \frac{1}{(1+v)^2(1+s)} a \psi + \frac{1}{(1+v)^2} b^\mu \pa_\mu \psi,
	 \label{}
	\end{equation}
	for weak symbols $a, b^\mu$. The same observation shows that
	$F_{\Sigma, 2}$ is an acceptable remainder.
}

\emph{Dealing with the cubic term $r\pa_\alpha B^\alpha$}
It remains only to deal with the cubic nonlinearity $B$. Once again we
pass to null coordinates and write
\begin{equation}
 r \pa_\alpha B^\alpha = \pa_\mu(r B^\mu) + B^r,
 \label{}
\end{equation}
with the same notation as above. Now we note that the components of $B$
can all be written in the form
\begin{equation}
 B = B_0( \pa(\psi/r) + \pa(\Sigma/r) )
 Q( \pa(\psi/r) + \pa(\Sigma/r),\pa(\psi/r) + \pa(\Sigma/r))
 \label{}
\end{equation}
for a smooth function $B_0$ with $B_0(0) = 0$ and a quadratic nonlinearity $Q$ with coefficients
satisfying the weak symbol condition.
To conclude we just note that $\Sigma = a s$
and $\pa \Sigma = b$ for weak symbols $a, b$, and so
the above is of the form appearing in \eqref{Bformula}.
% \begin{equation}
%  B = \frac{1}{(1+v)^2}B_0
%  % ( \pa(\psi/r) + \pa(as/(1+v))
%  Q_1( \pa \psi + b, \pa \psi + b)
%  + \frac{1}{(1+v)^3} B_0
%  % ( \pa(\psi/r) + \pa(as/(1+v))
%  Q_2( \psi + as, \pa \psi + b)
%  + \frac{1}{(1+v)^4} B_0
%  % ( \pa(\psi/r) + \pa(as/(1+v))
%  Q_3(\psi + as, \psi + as),
%  \label{}
% \end{equation}
% for further quadratic nonlinearities $Q_i$ with coefficients satisfying
% the weak symbol condition, and where we are abbreviating
% $B_0 = B_0( \pa(\psi/r) + \pa(as/(r)$.

\end{proof}

We now want to commute the equation \eqref{centraleqnlowest} with the fields
\begin{equation}
 \mZB = \{\sBo = s\pa_u, \sBt = v\pa_v, \Omega_{ij}\}.
 \label{}
\end{equation}
Recall the notation $\widetilde{Z}_{\mB}$ from section \eqref{mZBcommutatorsec}.

\begin{lemma}
	\label{higherordereqncentral}
	With $\psi^I = \widetilde{Z}_{\mB}^I r\phi$, $\psi^I$ satisfies
	\begin{multline}
	 -4\pa_u\left( \pa_v + \frac{u}{vs} \pa_u\right) \psi^I
	 + \sDelta \psi^I + \pa_\mu\left( \frac{u}{vs} a^{\mu\nu} \pa_\nu \psi^I\right)
	 + \pa_\mu \left(\gamma^{\mu\nu} \pa_\nu \psi^I\right)
	 + \pa_\mu P_I^\mu + \pa_\mu P_{I, null}^\mu 
     {+F_{I, \mB}^1}\\
     = F_{I} + F_{\Sigma, I} {+ F_{\mB, I}^2},
	 \label{higherordercentraleqn}
	\end{multline}
	where the above quantities satisfy the following bounds when
	$|u| \lesssim s^{1/2}$.

	First, $a^{\mu\nu}, \gamma^{\mu\nu}$ are as in the previous lemma and,
		if $|\pa Z^J_{\mB}\psi|\leq C$ for $|J| \leq |I|/2+ 1$, $\gamma$
	satisfies the bounds
	\begin{align}
	 |\gamma| &\lesssim \frac{1}{1+v} |\pa \psi| + \frac{1{+s}}{(1+v)^2} |\psi|
	 + \frac{1{+s}}{(1+v)^2},
	 \label{gammaboundscentral0}
	 \\
	 (1+s)|\pa_u \gamma|  + (1+v)|\pa_v\gamma| + |{\Omega} \gamma|
	 &\lesssim
	 \sum_{|I|\leq 1}\left(\frac{1}{1+v}
	 | \pa Z_{\mB}^I \psi| + \frac{1{+s}}{(1+v)^2} |Z_{\mB}^I \psi|\right)+ \frac{1{+s}}{(1+v)^2},
	 \label{gammaboundscentral}
	\end{align}
	while $a$ satisfies
	\begin{equation}
	 |a^{\mu\nu} \pa_\mu q \pa_\nu q|
	 %+ |(\nabla a^{\mu\nu})\pa_\mu q \pa_\nu q|
	 \lesssim |\opa q| |\pa q|.
	 \label{}
	\end{equation}

 The current $P_I$ satisfies the bounds
	\begin{align}
		|P_I|
		&\lesssim
 \sum_{\substack{|I_1| + |I_2| \leq |I|,\\ |I_1|, |I_2| \leq |I|-1}}	\frac{1}{1+v}
		|\pa \psi^{I_1}| |\pa \psi^{I_2}|
		\sum_{|J| \leq |I|-1} \frac{1}{(1+v)(1+s)} |\pa \psi^J|
		\label{PIkbdcentral}
    \end{align}
    \begin{align}
        (1+s)|\pa_u P_I| + (1+v) |\pa_v P_I| &+ |\Omega P_I|
		 \\ 
         \lesssim
         &\sum_{\substack{|I_1| + |I_2| \leq |I|+1,\\ |I_1|, |I_2| \leq |I|}}
		\frac{1}{1+v}
		|\pa \psi^{I_1}| |\pa \psi^{I_2}| +
		\sum_{|J| \leq |I|} \frac{1}{(1+v)(1+s)} |\pa \psi^J|.
		\label{ffspik}
	\end{align}

	The current $P_{I, null}$ accounts for lower-order commutations
	with the linear term verifying the null condition and satisfies the
	following estimates. The $u$-component $P^u_{I,null} = P^\alpha_{I,null}\pa_\alpha u$
	satisfies
	\begin{align}
	 |P_{I, null}^u|
	 &\lesssim
	 \frac{1}{1+v}
	 \sum_{|J| \leq |I|-1}\left(
	 \frac{1}{(1+s)^{1/2}} |\pa \psi^J| + |\opa \psi^J|\right)
	 +  \frac{1}{1+v}\sum_{|J| \leq |I|-2} |\pa \psi^J|,
	 \label{borderlinebd0}
 \end{align}
 \begin{multline}
	 (1+s)|\pa P_{I, null}^u| + (1+v)|\pa_v P_{I, null}^u|
	 + |\Omega P_{I, null}^u|
	 \\ \lesssim
	 \frac{1}{1+v}\sum_{|J| \leq |I|}\left(
	 \frac{1}{(1+s)^{1/2}} |\pa \psi^J| + |\opa \psi^J|\right)
	 + \frac{1}{1 + v}\sum_{|J| \leq |I|-1} |\pa \psi^J|,
	 \label{ffsderivbd0}
	\end{multline}
    where we are writing $\opa = (\nas, \evmB)$,
	while the remaining components satisfy
	\begin{align}
	 |P_{I, null}| &\lesssim \frac{1}{1+v} \sum_{|J| \leq |I|-1} |\pa \psi^J|,\\
	 (1+s)|\pa P_{I, null}| + (1+v)|\pa_v P_{I, null}|
	 + |\Omega P_{I, null}|
	 &\lesssim \frac{1}{1+v} \sum_{|J| \leq |I|} |\pa \psi^J|.
	 \label{borderlinebd2}
 \end{align}

 The remainder $F_I$ collects various error terms involving $\psi$
 and satisfies
 \begin{multline}
  |F_I| \lesssim
	\frac{1}{(1+v)^2}
	\sum_{|I_1| + |I_2| \leq |I|} |\pa \psi^{I_1} | |\pa \psi^{I_2}|
	+ \frac{1}{(1+v)^4}
	\sum_{|I_1| + |I_2| \leq |I|} |\psi^{I_1} | | \psi^{I_2}|\\
	+ \frac{1}{(1+v)^2} \sum_{|J| \leq |I|} |\pa \psi^J|
	+ \frac{1}{(1+v)^2(1+s)} \sum_{|J| \leq |I|} |\psi^J|,
  \label{FInonlin}
 \end{multline}
  $F_{\Sigma, I}$ collects the error terms involving the model
 profile $\Sigma$,
 \begin{equation}
  |F_{\Sigma, I}| \lesssim \frac{1}{(1+v)^2},
  \label{FSigmabd}
 \end{equation}
 {and $F_{\mB, I}^1, F_{\mB, I}^2$ collects error terms generated by commuting
 the linear terms $-4\pa_v (\pa_v + \tfrac{u}{vs}\pa_u) + \sDelta$
 with our fields. The error term $F_{\mB, I}^1$ is
 \begin{equation}
  \label{nonlinearfmBI1}
  F_{\mB, I}^1 = \sum_{|J_1| + |J_2| = |I|-1}
  -a_I^{J_1 J_2} \sDelta Z_{\mB}^{J_1}Z_{\mB}^{J_2}\psi,
\end{equation}
where the coefficients $a_I^{J_1 J_2} = 1$ if $Z^I_{\mB} = Z^{J_1}_{\mB} (v\pa_v) Z^{J_2}_{\mB}$
and $a_I^{J_1 J_2} = 0$ otherwise (so that $a_I^{J_1J_2} \equiv 0$ if there are no 
factors of $v\pa_v$ present in $Z_{\mB}^I$). The error term $F_{\mB, I}^2$ satisfies the bound
\begin{equation}
  \label{nonlinearfmBI2}
  |F_{\mB, I}^2|\lesssim 
     \frac{1+s}{(1+v)^2} \sum_{|J| \leq |I|} |\nas  \psi^J|
  + \frac{1}{(1+v)^2} \sum_{|J| \leq |I|-1} |\Omega  \psi^J|
\end{equation}

}
%
%
%\cdg{old version:} 
%
%
% With $Z_{\mB}^I = X^k \Omega^K$, where $X^k$
% denotes an arbitrary $k$-fold product of the fields
% $X \in \{X_1, X_2\}$, it satisfies
% \cdg{this needs to be updated in accordance with appendix A.} 
% \begin{equation}
%	 |F_{\mB, I}| \lesssim \frac{1}{1+v} \sum_{j \leq k-1} \sum_{|J| \leq |K|+1}
%	 |\nas X^j \Omega^J \psi|
%  \label{fmBIbd}
% \end{equation}
%
%

\end{lemma}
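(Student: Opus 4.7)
The plan is to prove the lemma by directly applying $\widetilde{Z}_{\mB}^I$ to the zeroth-order equation \eqref{centraleqnlowest} from Lemma \ref{centraleqnlowestprop} and identifying each of the claimed terms by the appendix machinery. First I would decompose the left-hand side of \eqref{centraleqnlowest} as $\Box_{\mB}\psi + \pa_\mu(\tfrac{u}{vs} a^{\mu\nu}\pa_\nu \psi) + \pa_\mu(\gamma^{\mu\nu}\pa_\nu \psi)$, where $\Box_{\mB} = -4n\evmB+\sDelta$, and handle each piece in turn. For the linear wave operator I would invoke Lemma \ref{boxmBcommutator}, which directly produces $\pa_u P_{\mB,I}$ together with $F_{\mB,I}^1$ and $F_{\mB,I}^2$ of exactly the form claimed in \eqref{nonlinearfmBI1}--\eqref{nonlinearfmBI2}; the $\pa_u P_{\mB,I}$ contribution is absorbed into $\pa_\mu P_I^\mu$. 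For the quasilinear term $\pa_\mu(\gamma^{\mu\nu}\pa_\nu\psi)$, Lemma \ref{centralcommcurrent} (applied with $q = \psi$ and with the tensor $\gamma$ read off from \eqref{gammaCformula}) produces the principal term $\pa_\mu(\check\gamma^{\mu\nu}\pa_\nu \psi^I)$, plus a divergence current and a remainder of the shapes in \eqref{centralcommcurrentident0formula} that feed into $P_I$ and $F_I$ respectively.

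The subtle step is the treatment of the null-structured linear term $\pa_\mu(\tfrac{u}{vs} a^{\mu\nu}\pa_\nu \psi)$. Here I would use Lemma \ref{ZBdivcommvariantlem} to commute $Z_{\mB}^I$ past $\pa_\mu$ and then distribute $Z_{\mB}^I$ using Leibniz across the factors $\tfrac{u}{vs}$, $a^{\mu\nu}$, and $\pa_\nu \psi$. The crucial observation is that $a^{uu}=0$ and that $Z_{\mB}(\tfrac{u}{s}) = \tfrac{u}{s}\cdot(\text{weak symbol})$, so the top-order contribution with all derivatives on $\pa_\nu \psi$ gives back $\pa_\mu(\tfrac{u}{vs}a^{\mu\nu}\pa_\nu \psi^I)$ up to commutator errors which can be written as $\pa_\mu P_{I,\mathrm{null}}^\mu$. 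The key point is that the null structure $a^{uu}=0$ survives in every term of $P_{I,\mathrm{null}}^u$, giving the improved estimate \eqref{borderlinebd0}: each term either contains an $\opa$ derivative of $\psi^J$ (coming from $a^{uu}=0$ after pairing with the $\pa_\mu u$ factor) or loses a power of $(1+s)^{1/2}$ (since $u/s$ is bounded by $s^{-1/2}$ in the central region $|u|\lesssim s^{1/2}$). The other components of $P_{I,\mathrm{null}}$ do not require this structure and satisfy the weaker \eqref{borderlinebd2}.

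Next, I would commute with the inhomogeneous terms $F$ and $F_\Sigma$ on the right-hand side. For $F$, whose pieces are listed in \eqref{FCformula}, applying $\widetilde{Z}_{\mB}^I$ and distributing derivatives — using that $Z_{\mB}^I$ acting on a weak-symbol coefficient or on $(1+v)^{-k}$ produces another weak symbol or $(1+v)^{-k}$ times a weak symbol — yields a sum of terms of exactly the type appearing on the right-hand side of \eqref{FInonlin}. For $F_\Sigma = C/(1+v)^2$, the same observation gives $|\widetilde{Z}_{\mB}^I F_\Sigma|\lesssim (1+v)^{-2}$, which is \eqref{FSigmabd}. The bounds \eqref{gammaboundscentral0}--\eqref{gammaboundscentral} for $\gamma$ are read off directly from the structural formula \eqref{gammaCformula} by bounding each of its four summands: the first by $(1+v)^{-1}|\pa\psi|$, the second by $(1+s)(1+v)^{-2}|\psi|$, the third by $(1+s)(1+v)^{-2}$, and the cubic term \eqref{Bformula} by the same, using the pointwise bootstrap assumption $|\pa Z_{\mB}^J \psi|\leq C$ for $|J|\leq |I|/2+1$ to control $B_0$.

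The main obstacle I expect is the bookkeeping of $P_{I,\mathrm{null}}^u$: tracking which combinations of derivatives landing on $u/(vs)$, on $a^{\mu\nu}$, and on $\pa_\nu\psi$ preserve the null structure, so that at the end either an $\opa$-derivative of a lower-order $\psi^J$ appears, or one gains a factor $(1+s)^{-1/2}$ relative to a generic commutator error. Since the argument in Section \ref{scalarcontrol} relies precisely on this improved estimate \eqref{borderlinebd0} for $P_{I,\mathrm{null}}^u$ (to beat the borderline term in the Klainerman--Sobolev-based time integrability bound), any slack here would break the global estimates, so the weak-symbol calculus from Section \ref{mZBcommutatorsec} must be applied carefully to every factor that arises from the Leibniz expansion.
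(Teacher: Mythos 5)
Your overall scheme is the same as the paper's: apply $\widetilde{Z}_{\mB}^I$ to \eqref{centraleqnlowest}, use Lemma \ref{boxmBcommutator} for the linear operator (which indeed produces $\pa_u P_{\mB,I}$, $F_{\mB,I}^1$, $F_{\mB,I}^2$ in the stated form), Lemma \ref{centralcommcurrent} for the quasilinear term, Leibniz plus Lemma \ref{ZBdivcommvariantlem} for the null term, and the weak/strong symbol calculus for $\gamma$, $F$, $F_\Sigma$. (The paper first converts back to rectangular coordinates via $\pa_\mu J^\mu = \pa_\alpha J^\alpha - \tfrac{2}{r}J^r$ before commuting, since the rotations are awkward in null coordinates; this is bookkeeping, not a substantive difference.)

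The genuine gap is in your key structural claim for $P^u_{I,\mathrm{null}}$: that ``the null structure $a^{uu}=0$ survives in every term,'' so that each term either carries an $\opa$ derivative of some $\psi^J$ or gains a factor $(1+s)^{-1/2}$. This dichotomy is false, and the bookkeeping you propose would break down at exactly the term the paper isolates: when at least one field $s\pa_u$ lands on the coefficient $\tfrac{u}{vs}$ \emph{and} at least one rotation lands on $a$, one obtains a term of the schematic form $\tfrac{1}{1+v}\,(\Omega a)\,\pa Z_{\mB}^{K}\psi$, in which the smallness of $\tfrac{u}{vs}$ has been destroyed (only $v^{-1}$ survives) and $\Omega a$ no longer satisfies the null condition, so neither of your two gains is available. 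Such terms are admissible only because at least two derivatives were spent on coefficients, so $|K|\leq |I|-2$; this is precisely why \eqref{borderlinebd0} contains the third summand $\tfrac{1}{1+v}\sum_{|J|\leq|I|-2}|\pa\psi^J|$, with neither $\opa$ nor a $(1+s)^{-1/2}$ gain, and why this borderline contribution must later be handled by a separate argument (integration to the shock, Lemma \ref{timeintegrability-center-linear}) rather than by the structure you posit. The correct sorting is the paper's trichotomy of ``borderline currents'' \eqref{borderlinecurrents0}--\eqref{borderlinecurrents}: (i) no derivative on $\tfrac{u}{vs}$ — ignore the structure of $a$ but keep the small coefficient; (ii) derivatives on $\tfrac{u}{vs}$ only — keep the null structure of $a$, which yields the $\opa\psi^J$ terms; (iii) derivatives on both — no structure at all, but order at most $|I|-2$. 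With this correction (and the same trichotomy applied to the derivative bound \eqref{ffsderivbd0}), the remainder of your plan goes through as in the paper.
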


\begin{remark}
 For our applications, most of the currents and remainders
 appearing in the above are harmless.
 The terms $P_{I, null}$ and $F_{\mB, I}$, however, are generated
 by commuting our fields $Z_{\mB}$ with some of the linear
 operators in our equation and therefore need to be treated carefully.
 In particular,
 the fact that $P_{I,a}$ only has a factor $(1+v)^{-1}$ in front
 of the second term in \eqref{ffsderivbd0} at first glance is too large for us to
 handle (we expect a ``generic'' error term to behave like $\frac{1}{v}\frac{1}{s^{1/2}}
 \pa \psi_C$ and so we are off by a factor of $s^{1/2}$).
 This term is generated because $Z_{\mB} \tfrac{u}{vs} \sim \frac{1}{v}$
 and the null condition \eqref{intronullcondn} does not commute well with
 the rotation fields; in particular this term is generated when we consider
 quantities of the form $Z_{\mB}^I \left( \tfrac{u}{vs} a^{\alpha\beta}\pa_\alpha \psi\right)$,
 where all quantities are expressed relative to rectangular coordinates.
 If $Z_{\mB}^I$ contains at least one field $s\pa_u$ and one rotation field
 then after applying the product rule, we encounter a term like
 $\left(s\pa_u \tfrac{u}{vs}\right)\Omega a^{\mu\nu} \sim \frac{1}{v} \Omega a^{\alpha\beta}$.
 If it was not for the presence of the rotation field $\Omega$, this term would satisfy
 the null condition and could be handled, but in general $\Omega a^{\alpha\beta}$
 does not satisfy the null condition. Thankfully, this quantity only
 appears multiplied by lower-order derivatives of $\psi_C$ (since this only
 happens when at least two of our vector fields fall on the coefficients),
 and so terms of this form can be handled by integrating to the right shock
 and using that we have bounds for the field $s\pa_u$ applied to the solution.
 This is dealt with in Lemma \ref{timeintegrability-center-linear};
 See in particular the calculation starting with \eqref{extraPIlinbdleftover}.
\end{remark}

\begin{proof}

	With $I$ fixed, we will use a slight modification of the terminology from the proof
	of the previous result and will say that $F$ is an acceptable remainder
	if it can be written as a sum of terms of the form
	\begin{equation}
	 \frac{A^{\mu\nu}\pa_\mu Z_{\mB}^{I_1}\psi\pa_\nu Z_{\mB}^{I_2}\psi}{(1+v)^2} ,
 	\quad
 	\frac{B^{\mu}\pa_\mu Z_{\mB}^{I_1}\psi Z_{\mB}^{I_2}\psi}{(1+v)^3} ,
 	\quad
 	\frac{ CZ_{\mB}^{I_1}\psi Z_{\mB}^{I_2}\psi }{(1+v)^4},
 	\quad
 	\frac{A^\mu \pa_\mu Z_{\mB}^{J}\psi}{(1+v)^2},
 	\quad
 	\frac{B Z_{\mB}^{J}\psi}{(1+v)^2(1+s)}  ,
 	\quad
 	\frac{C}{(1+v)^2}
	 \label{higherorderacceptableremainder}
	\end{equation}
	where $|I_1| + |I_2| \leq |I|$
	 and $|J| \leq |I|$.
	 This is just
	 a higher-order version of \eqref{acceptableremainder}.
	It will also be convenient to say that a vector field $P$ is an ``acceptable
	current'' if it is a sum of terms of the form
	\begin{equation}
\frac{1}{1+v} a^{\mu\nu} \pa_\mu Z_{\mB}^{I_1}\psi \pa_\nu Z_{\mB}^{I_2} \psi,
\qquad \frac{1}{(1+v)(1+s)} b^\mu\pa_\mu Z^L\psi
	 \label{higherorderacceptablecurrent}
	\end{equation}
	for $|I_1|\! + \!|I_2| \leq \!\!|I|$ with $\max(|I_1|, |I_2|) \!\leq\! |I|-1$,
	and $|L| \leq |I|-1$ where the above coefficients are
	weak symbols.

	Terms of the second type in \eqref{higherorderacceptablecurrent}
	re generated by commuting our vector fields with
	the linear part of the equation, see \eqref{fmBIformula}. We remark
	that despite being linear, these terms are harmless for our estimates
	 since we expect
	bounds $|\pa Z_{\mB}^{K}| \psi\lesssim (1+s)^{-1/2}$ for small $|K|$,
	and so the second type of term in \eqref{higherorderacceptablecurrent} decays
	more quickly than the first type of term in \eqref{higherorderacceptablecurrent}.

	Then acceptable remainders satisfy the bounds \eqref{FSigmabd}
	and \eqref{FInonlin} and acceptable currents satisfy the bounds
	\eqref{PIkbdcentral}-\eqref{ffspik}.
	%
	% To cut down on notation in what follows, we will ignore the cubic and higher-order
	% terms appearing in the equation \eqref{centraleqnlowest}. It is clear that after
	% expanding these around $\Sigma$ the contribution of these terms are bounded
	% as in \eqref{PIkbdcentral}-\eqref{ffspik} and \eqref{FInonlin}, with the contributions
	% involving only $\Sigma$ satisfying \eqref{FSigmabd}.

In Lemma \ref{centraleqnlowestprop}, we worked in null coordinates because that made it easier
to see what happened when we expanded around the model shock profile.
To commute with our fields (in particular the rotation fields $\Omega$)
it is somewhat awkward to work in null coordinates and so it winds up
being easier to derive the higher-order equations if we
 go back to expressing all quantities in rectangular coordinates.
 For this we use the formula $\pa_\mu J^\mu = \pa_\alpha J^\alpha - \frac{2}{r}J^r$
 with $J^r = \omega_i J^i$ and where the quantities on the right-hand side
 are expressed in rectangular coordinates. Then the equation
 \eqref{centraleqnlowest} reads
 \begin{equation}
	 \Box_{\mB}\psi
	+ \pa_\alpha \left(\frac{u}{vs}a^{\alpha\beta}\pa_\beta \psi\right)
	+ \pa_\alpha \left(\gamma^{\alpha\beta}\pa_\beta \psi\right)
	% + \pa_\alpha P^\alpha
	 = F^\prime + F_\Sigma,
  \label{effectiveinrect}
 \end{equation}
 where we remind the reader of the notation $\Box_{\mB} = -4\pa_u\left(\pa_v+ \frac{u}{vs} \pa_u\right) + \sDelta$
 and where we have introduced
 \begin{equation}
	F^\prime = F +
	\frac{2}{r} \frac{u}{vs}a^{r\beta}\pa_\beta \psi+ \frac{2}{r} \gamma^{r\beta}\pa_\beta \psi.
  \label{}
 \end{equation}
 Recalling that the $a$ are weak symbols, the formula
 \eqref{gammaCformula} for $\gamma$ and the formula
 \eqref{FCformula} for $F$, it follows that
 $Z_{\mB}^I F^\prime$ is an acceptable remainder in the sense of
 \eqref{higherorderacceptableremainder}. Also, from the formula
 \eqref{FSigmaformula} it is clear that $Z_{\mB}^I F_{\Sigma}$ is
 an acceptable remainder.

 We now commute the equation \eqref{effectiveinrect} with our fields.

 \emph{Step 1: Commutation with $\Box_{\mB}$}

 By Lemma \ref{boxmBcommutator}, we have the identity
 \begin{equation}
  \widetilde{Z}_{\mB}^I \Box_{\mB}  \psi = \Box_{\mB} \widetilde{Z}_{\mB}^I \psi
  + \pa_u P_{\mB, I}[\psi] + {F_{\mB, I}^1[\psi] + F_{\mB, I}^2[\psi],}
  \label{}
 \end{equation}
 where the current $P_{\mB, I}$ and remainders {$F_{\mB, I}^1, F_{\mB,I}^2$} are given in
 \eqref{fmBIformula}, 
 \eqref{fmBIformula1}-\eqref{fmBIformula2}.
 { By \eqref{fmBIformula}, $P_{\mB, I}[\psi]$ is an acceptable current since it is a sum
 of the terms of the second type in \eqref{higherorderacceptablecurrent}. The quantities
 $F_{\mB, I}^1, F_{\mB, I}^2 $ are not acceptable remainders but $F_{\mB, I}^1$
 is recorded in \eqref{nonlinearfmBI1},
 and by \eqref{fmBI2bd}, $F_{\mB, I}^2$ satisfies the bound \eqref{nonlinearfmBI2}.

}

 \emph{Step 2: Commutation with the nonlinear terms}

 We now commute with the nonlinearity $\pa_\alpha (\gamma^{\alpha\beta}\pa_\beta \psi)$.
 The metric
 perturbation $\gamma^{\alpha\beta}$ is given by
   \begin{equation}
    \gamma^{\alpha\beta} = \gamma^{\alpha\beta\delta}
  	\pa_\delta \psi +
  \frac{1+s}{(1+v)^2} A_1^{\alpha\beta} \psi
  + \frac{1+s}{(1+v)^2} A_2^{\alpha\beta}
	+ r B^\alpha
 	 % \widetilde{\gamma}^{\alpha\beta}\pa_\beta \psi,
    \label{splitgammaintogoodandbad}
   \end{equation}
	 where the above coefficients are weak symbols and $B^\alpha$ is a cubic
	 nonlinearity.
	 This follows from the explicit formula \eqref{gammaCformula} after
	 expressing all quantities in rectangular coordinates.
	 To handle the term
	 $\widetilde{Z}_{\mB}^I\pa_\alpha (\gamma^{\alpha\beta\delta}\pa_\beta \psi \pa_\delta \psi)$,
	  we use Lemma \ref{centralcommcurrent},
	 which gives
	 \begin{equation}
		 \widetilde{Z}_{\mB}^I \pa_\alpha(\gamma^{\alpha\beta\delta}\pa_\beta \psi\pa_\delta \psi)
		= \pa_\alpha
		\left(\check{\gamma}^{\alpha\beta\delta}\pa_\beta \psi\pa_\delta
		\widetilde{Z}_{\mB}^I \psi\right)
	   + \pa_\alpha P_I^\alpha + F_{I},
	  \label{startofboxmbcomm}
	 \end{equation}
	 where $\check{\gamma}^{\alpha\beta \delta} = \gamma^{\alpha \beta\delta} +\gamma^{\alpha \delta \beta}$.
	 The quantities $P_I$ and $F_I$ are
	 \begin{equation}
	  P_I^\alpha = \frac{1}{1+v}
 	 \sum_{{{|I_1| + |I_2| \leq |I| \atop |I_1|,|I_2|\le |I|-1}}}
 	a^{\alpha\beta\delta} \pa_\beta Z_{\mB}^{I_1}\psi \pa_\delta Z_{\mB}^{I_2} \psi,
 	\qquad
 	F_I = \frac{1}{(1+v)^2} \sum_{|I_1| + |I_2| \leq |I|}
  a^{\alpha\beta\delta} \pa_\beta Z_{\mB}^{I_1}\psi \pa_\delta Z_{\mB}^{I_2} \psi
	  \label{}
	 \end{equation}
	 where the coefficients are strong symbols. Using the formula
	 $\div X = \pa_\mu X^\mu + 2r^{-1} X^r$ to express
	 \eqref{startofboxmbcomm} in null coordinates, we find
	 \begin{multline}
	  \pa_\alpha
		\left(\check{\gamma}^{\alpha\beta\delta}\pa_\beta \psi\pa_\delta
		\widetilde{Z}_{\mB}^I \psi\right)
	   + \pa_\alpha P_I^\alpha + F_{I}\\
		 = \pa_\mu
		 \left(\check{\gamma}^{\mu\nu\nu'}\pa_\nu \psi\pa_{\nu'} \widetilde{Z}_{\mB}^I \psi\right)
 	   + \pa_\mu P_I^\mu
		 + \frac{2}{r}\check{\gamma}^{r\nu\nu'}\pa_\nu \psi\pa_{\nu'} \widetilde{Z}_{\mB}^I \psi
		 + \frac{2}{r} P_I^r
		 + F_{I},
	  \label{}
	 \end{multline}
	 where the last three terms are an acceptable remainder, the second term involves
	 an acceptable current, and, recalling that $|\gamma^{\mu\nu\nu'}|
	 \lesssim (1+v)^{-1}$, the first term involves an acceptable metric correction.

	 To handle the remaining terms
	 from \eqref{splitgammaintogoodandbad} we use \eqref{ZBdivcommvariant2} which gives
	 \begin{multline}
	  \widetilde{Z}_{\mB}^I \pa_\alpha
		 \left( \frac{1+s}{1+v} A_1^{\alpha\beta} \psi \pa_\beta\psi\right)
		 = \pa_\alpha Z_{\mB}^I \left( \frac{1+s}{1+v} A_1^{\alpha\beta} \psi \pa_\beta\psi\right)
		 \\
		 + \pa_\beta J_{\pa_\alpha, I}^\beta\left[\frac{1+s}{1+v} A_1^{\alpha\beta} \psi \pa_\beta\psi\right]
		 + F_{\pa_\alpha, I}\left[\frac{1+s}{1+v} A_1^{\alpha\beta} \psi \pa_\beta\psi\right],
	  \label{}
	 \end{multline}
	 where the last two terms are defined as in \eqref{ZBdivcommvariant2struct}.
	 Repeatedly using \eqref{ZBdivcommvariant2} and re-writing in null
	 coordinates shows that this can be written in
	 terms of an acceptable metric correction, the divergence of an acceptable
	 current, and an acceptable remainder. The contribution from the third term
	 in \eqref{splitgammaintogoodandbad} can be handled in the same way.

	 It remains to handle the contribution from the cubic and higher-order terms.
	 These are of course simpler to deal with but we include here a brief sketch
	 of how to handle such terms.
	 We just just consider the term $\pa_\alpha (Z_{\mB}^I (rB^\alpha))$,
	 since the commutator $[\pa_\alpha, Z_{\mB}^I](r B^\alpha)$ just generates
	 similar terms.
	 For this we use the formula \eqref{Bformula}. Since we can write
	 $\pa(\psi/r) + \pa (\Sigma/r) = \frac{1}{1+v} a\pa \psi + \frac{1}{(1+v)^2}b \psi
	 + \frac{1}{1+v} c$ for weak symbols $a,b,c$, it follows
	 from \eqref{Bformula} that $Z_{\mB}^I (rB^\alpha)$ can be written as a sum
	 of terms of the form
	 \begin{equation}
	  \frac{1}{1+v} B^\prime
		 \left[\prod_{i = 1}^j
		 \left(\frac{1}{1+v}  a\pa Z_{\mB}^{I_i} \psi + \frac{1}{(1+v)^2}bZ_{\mB}^{I_i}\psi + \frac{1}{1+v} c\right)\right]
		 % \left( Q_1(\pa Z_{\mB}^{I_{j+1}} \psi, \pa Z_{\mB}^{I_{j+2}} \psi)
		 Q_{I_{j+1}, I_{j+2}},
		 \label{modelofcubic}
	 \end{equation}
	 where in the above, $B^\prime$ is a smooth function
	 depending on $\pa (\psi/r) + \pa (\Sigma/r)$,
	 the indices satisfy $|I_1| + \cdots +|I_{j+2}| \leq |I|$ and $j\geq 1$,
	 and the quantities $Q_{K, L}$ are sums of the following types of terms,
	 \begin{align}
	  &Q(\pa Z_{\mB}^{K} \psi, \pa Z_{\mB}^{L}\psi),
		\quad
			  \frac{1}{1+v}Q( Z_{\mB}^{K} \psi, \pa Z_{\mB}^{L}\psi),
				\quad
					  \frac{1}{(1+v)^2}Q( Z_{\mB}^{K} \psi, Z_{\mB}^{L}\psi),\\
				&A \cdot \pa \psi,
				\quad \frac{1+s}{1+v} A \cdot \pa Z_{\mB}^{K}\psi
						\quad
						\frac{1}{1+v} A Z_{\mB}^{K}\psi,
						\frac{1+s}{(1+v)^2} A Z_{\mB}^{K}\psi
						\\
						& A, \quad \frac{1+s}{1+v} A,
						\quad
						\frac{(1+s)^2}{(1+v)^2} A,
						% \frac{(1+s)^2}{(1+v)^2} A \psi.
	  \label{}
	 \end{align}
	 where the $Q$ are quadratic nonlinearities with smooth coefficients verifying
	 the weak symbol condition and the quantities $A$ are also weak symbols.
	 Quantities of the form in \eqref{modelofcubic} are consistent with our bounds,
	 and to prove our result it remains only to commute our vector fields with the linear term
	 $\pa_\alpha( \tfrac{u}{vs} a^{\alpha\beta}\pa_\beta \psi)$ verifying the null
	 condition.

\emph{Step 3: Commutation with the term verifying the null condition}
 	This is more delicate than the above computations because we need to keep
	better track of the coefficients so we can exploit the null condition.
	By Lemma \ref{ZBdivcommvariantlem}, we have
	\begin{equation}
		Z_{\mB}^I \pa_\alpha \left(\frac{u}{vs} a^{\alpha\beta}\pa_\beta \psi\right)
		= \pa_\alpha\left( Z_{\mB}^I \left(\frac{u}{vs} a^{\alpha\beta}\pa_{\beta} \psi\right)\right)
		+ \pa_\beta P_{\pa_\alpha, I}^\beta[\tfrac{u}{vs} a^{\alpha\beta'}\pa_{\beta'} \psi]
		+ F_{\pa_\alpha, I}[\tfrac{u}{vs} a^{\alpha\beta'}\pa_{\beta'} \psi],
		\label{ffsx}
	\end{equation}
	where
	\begin{align}
	 P_{\pa_\alpha, I}^\beta[\tfrac{u}{vs} a^{\alpha\beta'}\pa_{\beta'} \psi]
	 &=\sum_{|J| \leq |I|-1} c_{\alpha J}^{\beta I}
	 Z_{\mB}^J \left(\frac{u}{vs}a^{\alpha\beta'}\pa_{\beta'} \psi\right)
	 \label{Pnull}\\
	 F_{\pa_{\alpha}, I}[\tfrac{u}{vs} a^{\alpha\beta'}\pa_{\beta'} \psi]
	 &=\frac{1}{1+v} \sum_{|J| \leq |I|} b_{\alpha J}^{ I} Z_{\mB}^J
	 \left(\frac{u}{vs}a^{\alpha\beta'}\pa_{\beta'} \psi\right),
	 \label{Fnull}
	\end{align}
	where the coefficients $c, b$ are weak symbols.
	The quantity in \eqref{Fnull} is an acceptable remainder, but the first
	term on the right-hand side of \eqref{ffsx} and the current in \eqref{Pnull}
	are more problematic, because even though $a^{\alpha\beta}$
	verifies the null condition, the quantities $Z_{\mB}^K a^{\alpha\beta}$ do not.
	Also, since $\sBo \tfrac{u}{s} = 1$, we lose a power of $s$ when the vector
	fields land on $\tfrac{u}{vs}$.

	For the upcoming calculation, with $I$ fixed
	we will say that a vector field $P$ is a ``borderline current''
	if its components $P^\alpha$ expressed in rectangular coordinates
	 can be written as a sum of terms of the following types of terms,
	\begin{alignat}{2}
	 &\frac{1}{1+v} a^{\alpha\beta}\pa_\beta Z_{\mB}^J \psi,
	 \quad
	 \frac{u}{vs} b^{\alpha\beta}\pa_\beta Z_{\mB}^J \psi,
	 &&\quad \text{ for }|J| \leq |I|-1,\label{borderlinecurrents0}
	 \\
	 &\frac{1}{1+v} c^{\alpha\beta} \pa_\beta Z_{\mB}^L \psi, &&\quad \text{ for }|L| \leq |I|-2,
	 \label{borderlinecurrents}
 \end{alignat}
 $a, b, c$ are weak symbols
	and $a$ additionally satisfies the null condition
	$a^{\alpha\beta}\pa_\alpha u\pa_\beta u = 0$.
The next lemma reduces the proof of Lemma \ref{higherordereqncentral}
to showing that the quantities appearing
in \eqref{ffsx} and \eqref{Pnull} involve borderline and acceptable currents.
	\begin{lemma}
		\label{borderlinelemma}
	 If $P = P^\alpha \pa_{\alpha}$ is a borderline current, then with
	 $P^u = P^\alpha\pa_\alpha u$, $P$ satisfies the estimates
	 \eqref{borderlinebd0}-\eqref{borderlinebd2}.
	\end{lemma}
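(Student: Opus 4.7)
The plan is to verify all three bounds by a direct case analysis, cycling through the three types of terms permitted in the definition of a borderline current \eqref{borderlinecurrents0}-\eqref{borderlinecurrents}. The key distinction between \eqref{borderlinebd0}-\eqref{ffsderivbd0} for $P^u = P^\alpha \pa_\alpha u$ and the estimates in \eqref{borderlinebd2} for $P$ itself is that the null condition $a^{\alpha\beta}\pa_\alpha u\pa_\beta u = 0$ from the first type of term is only usable in the former, where the contraction with $\pa_\alpha u$ brings it into play.

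For \eqref{borderlinebd0}: the first type contributes $P^u = \frac{1}{1+v} a^{\alpha\beta}\pa_\alpha u \pa_\beta Z_{\mB}^J \psi$ with $|J|\le|I|-1$; decomposing $\pa_\beta Z_{\mB}^J \psi = \opa_\beta Z_{\mB}^J \psi + \pa_\beta u \cdot n Z_{\mB}^J \psi$ and using the null condition to kill the second summand yields $|P^u|\lesssim \frac{1}{1+v}|\opa Z_{\mB}^J\psi|$, which is the $|\opa \psi^J|$ piece of \eqref{borderlinebd0}. The second type is bounded by $\frac{|u|}{vs}|\pa Z_{\mB}^J\psi|\lesssim \frac{1}{(1+v)(1+s)^{1/2}}|\pa Z_{\mB}^J\psi|$ using $|u|/(vs)\lesssim (1+v)^{-1}(1+s)^{-1/2}$ in $D^C$, matching the $\frac{1}{(1+s)^{1/2}}|\pa \psi^J|$ piece. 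The third type with $|L| \leq |I|-2$ falls directly into the second sum. For \eqref{borderlinebd2}, without the contraction the null structure is unavailable, but all three types are already dominated by $(1+v)^{-1}|\pa Z_{\mB}^J\psi|$ for some $|J| \leq |I|-1$: the first using boundedness of $a^{\alpha\beta}$, the second using $|u/(vs)|\lesssim (1+v)^{-1}$, and the third trivially.

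The derivative bounds in \eqref{ffsderivbd0} and the corresponding estimate in \eqref{borderlinebd2} follow by the same case analysis after differentiating. The key point is that $(1+s)\pa_u = \sBo$, $(1+v)\pa_v = \sBt$, and $\Omega_{ij}$ all belong to $\mZB$, so they map $\pa_\beta Z_{\mB}^J \psi$ to $\pa_\beta Z_{\mB}^{J'} \psi$ with $|J'| \leq |J|+1 \leq |I|$, modulo commutators governed by \eqref{paalphazmB} which contribute strictly lower-order pieces absorbed into the second sum. Each of these operators also carries weak symbols to weak symbols, so the scalar coefficients remain controlled. The weights match: for example, $(1+s)|\pa_u(\frac{u}{vs})|\lesssim \frac{1}{(1+v)(1+s)^{1/2}}$, preserving the form of the second type.

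The main technical obstacle is verifying that the null contraction in the first type survives differentiation by $\Omega_{ij}$ well enough to contribute only to the $|\opa \psi^J|$ piece at order $|J| \leq |I|$. This is subtle because $\Omega_{ij}\pa_\alpha u = \pa_j u\,\delta_{i\alpha} - \pa_i u\,\delta_{j\alpha} \not\equiv 0$, so $\Omega_{ij}a^{\alpha\beta}$ need not satisfy the null condition. However, expanding $\Omega_{ij}(a^{\alpha\beta}\pa_\alpha u \opa_\beta Z_{\mB}^J\psi)$ via the Leibniz rule, the leading term $a^{\alpha\beta}\pa_\alpha u \opa_\beta \Omega_{ij} Z_{\mB}^J\psi$ retains the full null contraction, while the two error terms $(\Omega_{ij} a^{\alpha\beta})\pa_\alpha u \opa_\beta Z_{\mB}^J\psi$ and $a^{\alpha\beta}(\Omega_{ij}\pa_\alpha u)\opa_\beta Z_{\mB}^J\psi$ each still carry an outer factor of $\opa Z_{\mB}^J\psi$ contracted against a bounded coefficient. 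Combined with the commutator $[\Omega_{ij}, \opa_\beta]$, which is a constant-coefficient combination of $\opa$ and $(1+v)^{-1}$-suppressed lower-order pieces by \eqref{paalphazmB}, one concludes that every term in $\Omega P^u$ is bounded by $\frac{1}{1+v}|\opa Z_{\mB}^{J'}\psi|$ for some $|J'|\le |I|$, plus remainder terms that fit into the second sum of \eqref{ffsderivbd0}.
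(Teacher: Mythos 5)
Your argument is essentially the paper's own proof: for $P^u$ you decompose $\pa_\beta = \pa_\beta u\,\pa_u + \opa_\beta$ and use $a^{u\beta}\pa_\beta u = 0$ to discard the bad derivative, use $|u|/(vs)\lesssim (1+v)^{-1}(1+s)^{-1/2}$ (resp.\ $\lesssim (1+v)^{-1}$) for the second type, treat the third type trivially, and for the derivative bounds apply Leibniz, commute $Z_{\mB}$ past $\pa_\beta$, and send commutator/coefficient terms to the lower-order sum — all steps the paper takes, with your extra care about $\Omega$ hitting $\pa_\alpha u$ being a harmless refinement. One intermediate claim, however, is false: $(1+s)\,\pa_u\bigl(\tfrac{u}{vs}\bigr) = \tfrac{1+s}{vs}\sim \tfrac1v$, which is \emph{larger} by a factor $(1+s)^{1/2}$ than the asserted bound $\tfrac{1}{(1+v)(1+s)^{1/2}}$, so the form of the second type is \emph{not} preserved when the derivative falls on the coefficient $\tfrac{u}{vs}$. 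This does not sink the proof, because such terms carry at most $|I|-1$ derivatives of $\psi$ and are exactly what the last, $(1+v)^{-1}$-weighted sum in \eqref{ffsderivbd0} is there to absorb — a routing you already invoke for commutator errors — but the claim as written contradicts the structural point (emphasized in the remark following the lemma and in \eqref{ffsderivbd0} itself) that $Z_{\mB}\tfrac{u}{vs}\sim\tfrac1v$ is precisely the loss that makes this current ``borderline'' and forces the weaker weight on the lower-order terms.
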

	\begin{proof}
		The bound \eqref{borderlinebd2} follows immediately from
		the definitions, since $|u| \lesssim s^{1/2}$ in $D^C$.
		To get
		\eqref{borderlinebd0}, we just write $\pa_\beta = \pa_\beta u \pa_u + \pa_\beta v \pa_v
		+ \slashed{\pa}_\beta$ with $\slashed{\pa}_\beta$ denoting angular differentiation
		and since $a^{u\beta}\pa_\beta u = 0$ by the null condition.
		The bound \eqref{ffsderivbd0} is clear if $P$ is of the last
		type appearing in \eqref{borderlinecurrents} since then
		$Z_{\mB} P$ is bounded by the last term in \eqref{ffsderivbd0}.
		To
		get the bound \eqref{ffsderivbd0} for the first two
		types of terms in \eqref{borderlinecurrents0}, we just note
		that if the derivative falls on either the coefficients
		$a$ or $\frac{u}{s}$, there are no more
		than $|I|-1$ derivatives falling on $\psi$ and so such
		terms are bounded by the last term in \eqref{ffsderivbd0}. If
		the derivative instead falls on $\pa_\beta Z_{\mB}^J \psi$,
		then we write $Z_{\mB} \pa_\beta Z_{\mB}^J \psi = \pa_\beta Z_{\mB} Z_{\mB}^J\psi + [Z_{\mB}, \pa_\beta] Z_{\mB}^J \psi$.
		The contribution from the first term here is bounded
		by the first two terms in \eqref{ffsderivbd0},
		and by
		Lemma \ref{ZBdivcommvariantlem}, the commutator $[\pa_\beta, Z_{\mB}]Z_{\mB}^J\psi$ generates
		another quantity bounded by the last term in \eqref{ffsderivbd0}.

	\end{proof}

 We now claim that for each $K$ with $|K| \leq |I|$, we can write
 \begin{equation}
  Z_{\mB}^K \bigg( \frac{u}{vs} a^{\alpha\beta}\pa_\beta \psi\bigg)
	= \frac{u}{vs} a^{\alpha\beta} \pa_\beta Z_{\mB}^K \psi
	+ P_{borderline, K}^{\alpha} + P_{acceptable, K}^{\alpha}, \qquad |K| \leq |I|
	% + \frac{u}{vs} \sum_{|L| \leq |K|-1} b^{\alpha\beta}_L \pa_\beta Z_{\mB}^L \psi
	% + \frac{1}{1+v} \sum_{|L| \leq |K|-1} b_L a^{\alpha\beta} \pa_\beta Z_{\mB}^L \psi\\
	% + \frac{1}{1+v} \sum_{|L| \leq |K|-2} d_L^{\alpha\beta} \pa_\beta Z_{\mB}^L \psi
	% + F^{\alpha,K}
  \label{nullformulacommute}
 \end{equation}
 for a borderline current $P_{borderline, K}^{\alpha}$
 and an acceptable (in the sense of \eqref{higherorderacceptablecurrent})
 current  $P_{acceptable, K}^{\alpha}$.
  Assuming this claim, the first term in \eqref{ffsx} takes the form
	\begin{equation}
	 \pa_\alpha \left( Z_{\mB}^I \left( \frac{u}{vs}a^{\alpha\beta}\pa_\beta \psi\right)\right)
	 = \pa_\alpha \left( \frac{u}{vs} a^{\alpha\beta}\pa_\beta Z_{\mB}^I \psi\right)
	 + \pa_\alpha (P_{borderline, I}^\alpha + P^\alpha_{acceptable, I}),
	 \label{toporderborderline}
	\end{equation}
	which is of the correct form. Similarly, the currents $P^\beta_{\pa_\alpha, I}$
	from \eqref{Pnull} can be written in the form
	\begin{align}
	 P_{\pa_\alpha, I}^\beta[\tfrac{u}{vs} a^{\alpha\beta'}\pa_{\beta'} \psi]
	 &= \sum_{|J| \leq |I|-1}
	 c^{\beta I}_{\alpha J}
	 \left(\frac{u}{vs} a^{\alpha \beta'} \pa_{\beta'}
	 Z^J_{\mB}\psi + P^{\alpha}_{borderline, J} + P^\alpha_{acceptable, J}\right).
	 \\
	 &=\sum_{|J| \leq |I|-1}c^{\beta I}_{\alpha J}
	 \left(\widetilde{P}^{\alpha}_{borderline, J} + P^\alpha_{acceptable, J}\right),
	 \label{lowerorderborderline}
	\end{align}
	where $\widetilde{P}^\alpha_{borderline, J} =
	\frac{u}{vs} a^{\alpha \beta'} \pa_{\beta'}
	Z^J_{\mB}\psi + P^\alpha_{borderline, J}$ is a borderline current for $|J| \leq
	|I|-1$. By Lemma \ref{borderlinelemma}, such terms satisfy the needed
	estimates, and it remains only to prove the claim \eqref{nullformulacommute}.
 \end{proof}

\begin{proof}[Proof of the claim \eqref{nullformulacommute}]

		We start by writing
		\begin{align}
		 Z_{\mB}^K \bigg( \frac{u}{vs} &a^{\alpha\beta}\pa_\beta \psi\bigg)
		 - \frac{u}{vs} a^{\alpha\beta} \pa_\beta Z_{\mB}^K \psi\\
		 &=
		 \sum_{\substack{|K_1| + |K_2| + |K_3| \leq |K|,\\ |K_3| \leq |K|-1}}c^K_{K_1K_2K_3}
		 \left(Z_{\mB}^{K_1} \frac{u}{vs}\right)
		 \left(Z_{\mB}^{K_2} a^{\alpha\beta}\right)
		 (\pa_\beta Z_{\mB}^{K_3}\psi)\\
		 &+ \sum_{\substack{|K_1| + |K_2| + |K_3| \leq |K|,\\ |K_3| \leq |K|-1}}c^K_{K_1K_2K_3}
		 \left(Z_{\mB}^{K_1} \frac{u}{vs}\right)
		 \left(Z_{\mB}^{K_2} a^{\alpha\beta}\right)
		 ([\pa_\beta, Z_{\mB}^{K_3}] \psi)
		 + \frac{u}{vs}a^{\alpha\beta} [\pa_\beta,Z_{\mB}^K] \psi .
		 \label{ZKformula}
		\end{align}
		for constants $c$. We start with the terms on the second line.
		Let $T_{K_1 K_2 K_3}^{\alpha} = \left(Z_{\mB}^{K_1} \frac{u}{vs}\right)
		\left(Z_{\mB}^{K_2} a^{\alpha\beta}\right)
		(\pa_\beta Z_{\mB}^{K_3}\psi)$.
		When $|K_1| = 0$, we ignore the structure of the $a$ terms and write
		$Z_{\mB}^{K_2} a^{\alpha\beta} = b^{\alpha\beta}_{K_2}$ for weak symbols
		$b$ and the result is that
		\begin{equation}
			T_{0 K_2 K_3}^\alpha =
		 \frac{u}{vs} b^{\alpha\beta}_{K_2}
		 (\pa_\beta Z_{\mB}^{K_3}\psi),
		 \label{}
		\end{equation}
		where the coefficients are weak symbols.

		When $|K_1| \geq 1$, we write $Z_{\mB}^{K_1} \frac{u}{vs} = \frac{1}{1+v} b_{K_1}$
		for a strong symbol $b$. If $|K_2| = 0$, we then have
		\begin{equation}
		 T_{K_1 0 K_3}^{\alpha}
		 = \frac{b_{K_1} }{1+v} a^{\alpha\beta}
		 (\pa_\beta Z_{\mB}^{K_3} \psi),
		 \label{}
		\end{equation}
		and if $|K_2| \geq 1$ we ignore the structure of the coefficients
		$a$ and write $Z_{\mB}^{K_2} a^{\alpha\beta} = b^{\alpha\beta}_{K_2}$ to write
		\begin{equation}
		 T_{K_1 K_2 K_3}^{\alpha} = \frac{b^{\alpha\beta}_{K_2}}{1+v} \pa_\beta Z_{\mB}^{K_3} \psi,
		 \qquad |K_1| + |K_2| \geq 2.
		 \label{}
		\end{equation}
		We therefore have
		\begin{align}
		 \sum_{\substack{|K_1| + |K_2| + |K_3| \leq |K|,\\ |K_3| \leq |K|-1}}& c^K_{K_1 K_2 K_3 } T_{K_1 K_2 K_3}^\alpha\\
		 &= \sum_{\substack{|K_1| + |K_2| + |K_3| \leq |K|,\\ |K_3| \leq |K|-1}}c^K_{0 K_2 K_3 } T_{0 K_2 K_3}^\alpha
		 + \sum_{\substack{|K_1| + |K_2| + |K_3| \leq |K|,\\ |K_1| \geq 1}}c^K_{K_1 0 K_3 } T_{K_1 0 K_3}^\alpha\\
		 &\qquad+ \sum_{\substack{|K_1| + |K_2| +  |K_3| \leq |K|, \\ |K_1| + |K_2| \geq 2}} c^K_{K_1 K_2 K_3 } T_{K_1 K_2 K_3}^\alpha\\
		 &= \frac{u}{vs} \sum_{|L| \leq |K|-1} b^{\alpha\beta}_L \pa_\beta Z_{\mB}^L \psi
		 + \frac{1}{1+v} \sum_{|L| \leq |K|-1} b_L a^{\alpha\beta} \pa_\beta Z_{\mB}^L \psi\\
		 &\qquad+ \frac{1}{1+v} \sum_{|L| \leq |K|-2} d_L^{\alpha\beta} \pa_\beta Z_{\mB}^L \psi,
		 \label{}
	 \end{align}
	 where the coefficients are weak symbols \eqref{weaksymbol}. This shows that the quantity
	 on the left-hand side is a borderline current in $D^C_t$.

	 It remains to prove that the terms on the last line of \eqref{ZKformula}
	 are of the form \eqref{nullformulacommute}. This is a bit
	 easier since they are lower-order and we just need
	 to establish that they are as in \eqref{borderlinecurrents}. By
	 \eqref{ZBdivcommvariant}, we can write the commutators in the form
	 \begin{equation}
	  [\pa_\beta, Z_{\mB}^{K'}] \psi = \sum_{|K''| \leq |K'|-1}
		c_{\beta K''}^{\alpha K'} \pa_\alpha Z_{\mB}^{K''} \psi,
	  \label{}
	 \end{equation}
	 where the coefficients are weak symbols, where we write
	 $\Omega_{ij} = r \omega_i \pa_j - r \omega_j \pa_i$ to express
	 the last term in \eqref{ZBdivcommvariant} in rectangular coordinates.
	 As a result, the last term on the last line of \eqref{ZKformula}
	 is a linear combination of quantities of the form
	 \begin{equation}
	  \frac{u}{vs} a^{\alpha\beta'} c_{\beta'}^{\alpha'}\pa_{\alpha'}
		 Z_{\mB}^{K'} \psi, \qquad |K'| \leq |K|-1,
	  \label{highestcomm}
	 \end{equation}
	 for a weak symbol $c_{\beta'}^{\alpha'}$, while the remaining terms
	 in \eqref{ZKformula} are instead of the form
	 \begin{equation}
	   c\cdot
		 \left(Z_{\mB}^{K_1} \frac{u}{vs} \right)
		 \left(Z_{\mB}^{K_2} a^{\alpha\beta}\right)
		\pa_{\alpha'} Z_{\mB}^{K''} \psi,
		\qquad |K_1| + |K_2| + |K''| \leq |K|, \quad |K''| \leq |K|-2.
	  \label{lowercomm}
	 \end{equation}
	 Since $|K| \leq |I|$, the terms in \eqref{lowercomm} are of
	 the last type appearing in \eqref{borderlinecurrents},
	 while the terms in \eqref{highestcomm} are of the second
	 type in \eqref{borderlinecurrents0}.

%
% ====-=
%
% 	 For this we use that by Lemma \ref{ZBdivcommvariantlem},
% 	 we can write
% 	 \begin{multline}
% 	  \left(Z_{\mB}^{K_1} \frac{u}{vs}\right) \left(Z_{\mB}^{K_2} a^{\alpha\beta}\right) [\pa_\beta, Z_{\mB}^{K_3}] \psi
% 		\\
% 		=
% 		\sum_{|K'| \leq |K_3| - 1}
% 	  \left(Z_{\mB}^{K_1} \frac{u}{vs}\right) \left(Z_{\mB}^{K_2} a^{\alpha\beta}\right)
% 		\left(c^{K_3 u}_{\beta K'} \pa_u Z_{\mB}^{K'} \psi + c^{K_3 v}_{\beta K'} \pa_v Z_{\mB}^{K'} \psi + \slashed{c}^{K_3}_{\beta K'}\cdot \nas Z_{\mB}^{K'}\psi
% 		\right),
% 	  \label{usevariant}
% 	 \end{multline}
% 	 where the coefficients $c$ satisfy the strong symbol condition \eqref{strongsymbol}.
%
% 	 Now, when $K_3 = K = 1$
% 	 (which only appears in the last term in \eqref{ZKformula}) then $K_1 = K_2 =0$
% 	 and the resulting terms in \eqref{usevariant} are of the second type in \eqref{borderlinecurrents}.
% 	 If instead $|K_3| \leq |K|-1 \leq |I|-1$, then since $|K'| \leq |I|-2$,
% 	 this term of the third type in \eqref{borderlinecurrents} after ignoring
% 	 the null structure of the coefficients $a$ and writing $Z_{\mB}^{K_1} (u/vs) =
% 	 b/(1+v)$ for a weak symbol $b$.
 \end{proof}

\section{The Rankine-Hugoniot conditions}
\label{RHsec}

The goal of this section is to prove some consequences of the Rankine-Hugoniot
conditions. We will use these conditions to give boundary conditions along
the timelike sides of each shock, to get an evolution equation for the positions
of the shocks, and finally to get control over angular derivatives of
the functions $B^A$ which define the shocks.

\begin{lemma}[The equations for the positions of the shocks]
	\label{derivofbetaeqn}
	At the shock $\Gamma^A$, with $s = \log v$, we have
 \begin{equation}
     \pa_s B^A - \frac{1}{2s} B^A = \frac{1}{2}[\pa_u \psi] - \frac{s}{u}[\pa_s\psi] + 
     s^{1/2} F_A^\prime,
  \label{betaevol}
 \end{equation}
 and
 \begin{equation}
  \nas B^A = -\frac{s}{u}[\nas \psi] + \slashed{F}_A,
  \label{betaangle}
 \end{equation}
 where $F_A^\prime$ consists of terms which are at least quadratic in derivatives of
 $\psi$,
\begin{equation}
  \label{betaevolF}
    F_A^\prime = 
    \frac{1}{2} \frac{s^{1/2}}{u}\frac{[\pa_u\psi]^2}{1+\frac{s}{u}[\pa_u\psi]}
    - \frac{1}{2}\frac{s^{3/2}}{u^2}[\pa_s\psi][\pa_u\psi] 
    - \frac{s^{5/2}}{u^3}\frac{[\pa_s\psi][\pa_u\psi]^2}{1+\frac{s}{u}[\pa_u\psi]},
\end{equation}
and similarly,
\begin{equation}
  \label{}
  \slashed{F}_A = -\frac{s^2}{u^2} \frac{[\nas \psi][\pa_u\psi]}{1 + \frac{s}{u}[\pa_u\psi]}
\end{equation}

% \begin{equation}
%  F_A = \frac{1}{2v} \frac{[\pa_u \psi]}{1 + \tfrac{s}{u}[\pa_u\psi]}
%	+ \frac{s}{u}[\pa_v\psi]
%	- \frac{s^2}{u^2}\frac{ [\pa_u\psi]}{1 + \tfrac{s}{u} [\pa_u \psi]}[\pa_v\psi]  \qquad
%	\slashed{F}_A = - \frac{s}{u}\frac{\tfrac{s}{u}[\pa_u \psi]}{1 + \tfrac{s}{u}[\pa_u\psi]}[\nas \psi].
%  \label{betaevolF}
% \end{equation}
\end{lemma}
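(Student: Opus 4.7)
The plan is to derive both equations directly from the continuity condition $[\Phi] = 0$ across $\Gamma^A$, which alone (without invoking the full Rankine--Hugoniot identity $[H^\alpha]\zeta_\alpha = 0$) determines the evolution of $B^A$. Since the shock is the graph $\{u = B^A(v,\omega)\}$ with $\pa_u B^A = 0$, its points are parametrized by $(s,\omega)$ with $s = \log v$, and $\tilde\Phi(s,\omega) := \Phi(B^A,e^s,\omega)$ is single-valued. Applying $\pa_s = v\pa_v$ and $\nas$ to $[\tilde\Phi]=0$, and using $\pa_s B^A = v\pa_v B^A$ (valid because $B^A$ is independent of $u$), yields the tangential-continuity identities
\begin{equation*}
    (\pa_s B^A)[\pa_u\Phi] + [\pa_s\Phi] = 0, \qquad (\nas B^A)[\pa_u\Phi] + [\nas\Phi] = 0,
\end{equation*}
equivalently $\pa_s B^A = -[\pa_s\Phi]/[\pa_u\Phi]$ and $\nas B^A = -[\nas\Phi]/[\pa_u\Phi]$.

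The next step is to convert jumps of $\Phi$ into jumps of $\psi$, using $\Phi = \sigma + \psi/r$ with $\sigma = u^2/(2rs)$ in $D^C$ and $\sigma = 0$ outside. The identity $[\Phi]=0$ gives $[\psi] = -r[\sigma] = \pm u^2/(2s)$ at the shock. Computing $\pa_u\sigma,\pa_s\sigma$ directly on $D^C$ and using $\pa_u(\psi/r) = \pa_u\psi/r + \psi/(2r^2)$, $\pa_s(\psi/r) = \pa_s\psi/r - v\psi/(2r^2)$, one observes a crucial cancellation: the subleading $u^2/(4r^2s)$-term in $[\pa_u\sigma]$ is exactly cancelled by the $[\psi]/(2r^2)$-contribution in $[\pa_u(\psi/r)]$, and the $u^2 v/(4r^2s)$-term in $[\pa_s\sigma]$ is cancelled by $-v[\psi]/(2r^2)$. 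The result is
\begin{equation*}
    [\pa_u\Phi] = \tfrac{1}{r}\bigl(\pm u/s + [\pa_u\psi]\bigr), \quad [\pa_s\Phi] = \tfrac{1}{r}\bigl(\mp u^2/(2s^2) + [\pa_s\psi]\bigr), \quad [\nas\Phi] = \tfrac{1}{r}[\nas\psi],
\end{equation*}
the overall signs being fixed by the orientation of $[\cdot]$.

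Substituting into the identities of Step 1, the $1/r$ factors cancel, and using $u = B^A$ on the shock to write $u/(2s) = B^A/(2s)$ and multiplying numerator and denominator by $s/u$ produces the closed rational expressions
\begin{equation*}
    \pa_s B^A - \frac{B^A}{2s} = \pm\frac{\tfrac{1}{2}[\pa_u\psi] + (s/u)[\pa_s\psi]}{1 \mp (s/u)[\pa_u\psi]}, \qquad \nas B^A = \mp\frac{(s/u)[\nas\psi]}{1\mp(s/u)[\pa_u\psi]}.
\end{equation*}
Expanding the denominator via $1/(1\mp z) = 1 \pm z/(1\mp z)$ once isolates the linear-in-jump leading terms $\tfrac{1}{2}[\pa_u\psi]$, $\mp(s/u)[\pa_s\psi]$ and $\mp(s/u)[\nas\psi]$; a second application of $z/(1\mp z) = z \pm z^2/(1\mp z)$ to the $[\pa_s\psi][\pa_u\psi]$-cross-term splits off the polynomial piece $\mp(1/2)(s^2/u^2)[\pa_s\psi][\pa_u\psi]$, producing exactly the three pieces of $F_A'$ in \eqref{betaevolF} and the single piece of $\slashed F_A$.

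The only substantive point is the cancellation in Step 2: the $1/r^2$-corrections produced by differentiating $\psi/r$ must exactly eliminate the subleading pieces of the jumps of $\sigma$, so that the evolution closes cleanly in terms of $\psi$ rather than of $\phi = \psi/r$, and so that the leading self-similar behaviour $\pa_s B^A \sim B^A/(2s)$ is absorbed into the left-hand side of \eqref{betaevol}. The remaining manipulations are routine algebraic bookkeeping, with the signs determined by the orientation convention for $[\cdot]$ and by which of the two shocks is under consideration; the full Rankine--Hugoniot identity enters only as a compatibility constraint on the wave equations on either side, and is not needed for the derivation.
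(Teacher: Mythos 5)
Your proposal is correct and follows essentially the same route as the paper: differentiating the continuity condition $[\Phi]=0$ tangentially along the shock (in $s$ and in the angular directions, using $\pa_u B^A=0$) gives $\pa_s B^A = -[\pa_s\,\cdot\,]/[\pa_u\,\cdot\,]$ and its angular analogue, after which one substitutes the profile decomposition and expands the resulting quotient in the jumps of $\psi$, exactly as in the paper. The only difference is one of bookkeeping: the paper works directly with $\Psi = r\Phi = \Sigma + \psi$, for which the profile jumps are exact ($[\pa_u\Sigma]=\pm u/s$, $[\pa_s\Sigma]=\mp u^2/(2s^2)$, $\nas\Sigma=0$) and your Step-2 cancellation of the $1/r^2$ corrections coming from $\sigma=\Sigma/r$ and $\phi=\psi/r$ is built in automatically, so that computation—though correct—is rendered unnecessary by the choice of variable.
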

\begin{remark}
We remind the reader that along the shocks, $|u| \sim s^{1/2}$. We also expect to have bounds
$|\pa_u\psi|\lesssim \epsilon s^{1/2}$ for a small parameter $\epsilon$ and so
$1 + \tfrac{s}{u} [\pa_u\psi] \sim 1 + \epsilon$. As a result, we have the bounds
\begin{multline}
  \label{}
  |F_A^\prime| \lesssim (1+s)^{1/2} \left(|\pa\psi_+ |^2 + |\pa \psi_-|^2 \right) 
  + (1+s) \left(|\pa\psi_+| |\pa_s\psi_+| + |\pa \psi_-||\pa_s\psi_+| \right) 
  \\
  + (1+s)^{3/2} \left(|\pa\psi_+|^2 |\pa_s \psi_+| + |\pa\psi_-|^2 |\pa_s\psi_-| \right),
\end{multline}
 where $(\pa\psi)_{\pm}$ denotes the restriction of derivatives of the potentials
 $\psi_{\pm}$ defined on either side of the shocks to the shocks.

\end{remark}

\begin{remark}
\label{evoluteqnrmk}
If we fix notation so that $[q] = q_R - q_C$ at the right shock and $[q] = q_L - q_C$
at the left shock, then we can write $[\pa_v \psi] = [ \evg \psi] + \frac{u}{vs}[\pa_u\psi]$,
and then \eqref{betaevol} reads
\begin{equation}
  \label{}
  \pa_s B^A - \frac{1}{2s}B^A = -\frac{1}{2}[\pa_u\psi] +s^{1/2} F_A,
\end{equation}
where
\begin{equation}
    \label{betaevolF2}
  F_A = \frac{1}{2} \frac{s^{1/2}}{u}\frac{[\pa_u\psi]^2}{1+\frac{s}{u}[\pa_u\psi]}
    - \frac{1}{2}\frac{s^{3/2}}{u^2}[\pa_s\psi][\pa_u\psi] 
    - \frac{s^{5/2}}{u^3}\frac{[\pa_s\psi][\pa_u\psi]^2}{1+\frac{s}{u}[\pa_u\psi]} 
    - \frac{u}{s}[\evg \psi].
\end{equation}
By the upcoming Lemma \ref{tlbc}, the quantity $[\evg \psi]$ can be treated as
a nonlinear error term. 
\end{remark}

\begin{lemma}[The boundary conditions]
	\label{tlbc}
 Let $\psi = r\Phi - \Sigma$ where $\Sigma = \frac{u^2}{2s}$ between the shocks
 and $\Sigma = 0$ otherwise. If the jump conditions \eqref{introRH1} for $\Phi$
 are satisfied then at the left shock
 \begin{multline}
  \pa_v \psi_L + \frac{1}{v} Q_L(\pa \psi_L, \pa \psi_L)
	- \nas^i\psi_L \nas_i B^L
	= \left(\pa_v + \frac{1}{vs} \pa_u \right)\psi_C
	- \nas^i \psi_C \nas_i B^L
	+ \frac{1}{v} Q_C(\pa \psi_C, \pa \psi_C)\\
	+ [G'(\psi_L, \psi_C, B^L)],
  \label{lefttlbc}
 \end{multline}
 and at the right shock,
  \begin{multline}
		\left(\pa_v + \frac{1}{vs} \pa_u \right)\psi_C
 	- \nas^i \psi_C \nas_i B^R
 	+ \frac{1}{v} Q_C(\pa \psi_C, \pa \psi_C)
	=
   \pa_v \psi_R + \frac{1}{v} Q_R(\pa \psi_R, \pa \psi_R)
 	- \nas^i\psi_R \nas_i B^R
 	 \\
 	+ [G'(\psi)],
   \label{righttlbc}
  \end{multline}
 where $G'$ has the following structure,
 \begin{equation}
  G'(\psi) = \frac{1}{v}\widetilde{Q}_1(\pa \Sigma, \pa \psi) + \frac{1}{v}\widetilde{Q}_2(\pa \Sigma,
	\pa \Sigma)
	+ \frac{1}{v}Q^\alpha(\pa \Psi, \pa \Psi) \pa_\alpha B^A
	+ R(\pa \Psi, \pa \Psi) + R^\alpha(\pa \Psi, \pa \Psi)\pa_\alpha B^A.
  \label{Gstructure}
 \end{equation}
 In the above, the $Q, \widetilde{Q}$ are quadratic forms
 \begin{equation}
  Q(\pa q, \pa q) = Q^{\alpha\beta}(\omega)\pa_\alpha q \pa_\beta q,
  \label{}
 \end{equation}
 for smooth functions $Q^{\alpha\beta}$ satisfying
 the strong symbol condition \eqref{strongsymbol}, and where the $\widetilde{Q}$
 additionally verify the null conditition $\widetilde{Q}(\pa u, \pa u) = 0$.
 Finally, the quantities $R, R^\alpha$ are of the form
 \begin{equation}
  R = \frac{u}{v^2} Q(\pa \psi, \pa \psi) + r B (\pa \Phi) + \frac{1}{r^2}
	\psi L\psi + \frac{1}{r^3} a \psi^2,
  \label{}
 \end{equation}
 where $L = L^\alpha(u,v, \omega)\pa_\alpha$ and $a = a(u,v,\omega)$
 for symbols $L^\alpha$ and $a$, and where $B(\xi)$ vanishes to third order
 at $\xi = 0$.
\end{lemma}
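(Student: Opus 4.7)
The plan is to derive \eqref{lefttlbc} and \eqref{righttlbc} by direct algebraic manipulation of the scalar Rankine--Hugoniot condition \eqref{basicRH}, together with the second jump condition $[\Phi] = 0$. I would decompose $\Phi = \sigma + \psi/r$ where $\sigma$ is the model shock profile, multiply \eqref{basicRH} through by $r$ (which is continuous across each shock), and use $[\Phi] = 0$ — equivalently $[\psi] = -r[\sigma]$ — to eliminate most of the $r^{-1}[\psi]$ terms that appear when differentiating $\psi/r$. After this substitution the linear-in-$\psi$ part of the multiplied identity reads
\begin{equation*}
  [\pa_v\psi] - [\pa_u\psi]\pa_v B + \tfrac{1}{2}[\nas_i\psi]\nas^i B + (\text{terms from } \sigma) + r[j^\alpha(\pa\Phi)\zeta_\alpha] = 0,
\end{equation*}
and the proof reduces to rearranging this into \eqref{lefttlbc}--\eqref{righttlbc} while identifying the remainder with the structure claimed for $G'$.

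First I would isolate the contribution from the profile. In $D^L$ and $D^R$ the profile $\sigma$ vanishes, so no additional linear contribution appears. In $D^C$, a direct computation with $\sigma = u^2/(2rs)$ and the identities $\pa_u r = -1/2,\,\pa_v r = 1/2,\,\pa_v s = v^{-1}$ yields $r\pa_u\sigma = u/s + u^2/(4rs)$ and $r\pa_v\sigma = -u^2/(4rs) - u^2/(2vs^2)$. Substituting these together with $\pa_v B = v^{-1}\pa_s B$ and the leading-order information from \eqref{betaevol}, namely $\pa_s B = B/(2s) + O([\pa_u\psi])$, the $\sigma$-contributions combine on the C-side with $-(u/s)\pa_v B$ to produce exactly the Burgers correction $(u/(vs))\pa_u\psi_C$. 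The cross terms $\pa_u\psi_A \cdot \pa_v B$ are small at the shock because $\pa_v B = O(v^{-1}s^{-1/2})$ there, and the pure $\sigma$-only contributions are of size $u^2/(vs^2) = O(v^{-1})$; both are absorbed into $G'$ via the $R$-piece of \eqref{Gstructure}.

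Next I would expand the nonlinear piece $[j^\alpha(\pa\Phi)\zeta_\alpha]$ using the decomposition $j^\alpha = -\delta^{u\alpha}(\pa_u\Phi)^2 + \widetilde{A}^{\alpha\beta\delta}\pa_\beta\Phi\pa_\delta\Phi + B^\alpha(\pa\Phi)$ from Lemma~\ref{basicstructure}, where $\widetilde{A}$ verifies the classical null condition. The dominant term $-(\pa_u\Phi)^2\delta^{u\alpha}\zeta_\alpha$ reduces, after contraction with $\zeta$ from \eqref{zetadef}, to $-\tfrac{1}{2}(\pa_u\Phi)^2(1 + \pa_v B)$, and inserting $\pa_u\Phi = \pa_u\sigma + r^{-1}\pa_u\psi + \tfrac{1}{2}r^{-2}\psi$ and multiplying by $r$, the leading $r^{-1}(\pa_u\psi)^2$ piece becomes $(1/v)Q_A(\pa\psi_A, \pa\psi_A)$ on each side, producing the quadratic terms of the statement. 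Cross terms $\pa_u\sigma \cdot \pa_u\psi$ either feed the Burgers correction already extracted or enter $\widetilde{Q}_1(\pa\sigma,\pa\psi)/v$ in $G'$, while terms purely in $\sigma$ produce $\widetilde{Q}_2(\pa\sigma,\pa\sigma)/v$; both inherit null structure from $\widetilde{A}$. The cubic nonlinearity $B^\alpha(\pa\Phi)$ yields the $rB(\pa\Phi)$ piece of $R$, and the $r^{-1}\psi$ and $r^{-2}\psi^2$ terms that arise from differentiating $\psi/r$ and from expanding $(\pa_u\Phi)^2$ combine with the $\zeta$-contraction $\slashed{\zeta}^i = (1/2)\nas^i B$ to give the $r^{-2}\psi L\psi$ and $r^{-3}a\psi^2$ pieces of $R$, as well as the $Q^\alpha(\pa\Psi,\pa\Psi)\pa_\alpha B^A/v$ and $R^\alpha(\pa\Psi,\pa\Psi)\pa_\alpha B^A$ pieces coming from the $[\pa_u\Phi]\pa_v B$ and $[\nas_i\Phi]\nas^i B$ couplings.

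The main obstacle will be the careful bookkeeping needed to verify that every leftover term genuinely fits into the structure of $G'$ given in \eqref{Gstructure}. Two points require most care: extracting the precise coefficient $u/(vs)$ in front of $\pa_u\psi_C$, which depends on the cancellation between the $-(u/s)\pa_v B$ contribution and the $\sigma$-only inhomogeneity $u^2/(2vs^2)$ via the evolution equation for $B^A$; and verifying that $\widetilde{Q}_1(\pa\sigma,\pa\psi)$ and $\widetilde{Q}_2(\pa\sigma,\pa\sigma)$ really do inherit the null condition from $\widetilde{A}^{\alpha\beta\delta}$, which follows from the fact that $\widetilde{A}^{u\beta\delta}\pa_\beta q_1\pa_\delta q_2$ vanishes whenever both $\pa_\beta q_1$ and $\pa_\delta q_2$ are replaced by directional derivatives along $du$. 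Once these two structural identifications are checked, the remainder of the argument is a lengthy but routine algebraic manipulation.
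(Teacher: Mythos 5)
Your starting point (contract the Rankine--Hugoniot condition, multiply by $r$, expand $\Psi=\Sigma+\psi$, and use the null structure of $\widetilde A$ from Lemma \ref{basicstructure} to sort the remainder into $G'$) is the same as the paper's, but the mechanism you propose for extracting the principal operator $\pa_v+\frac{u}{vs}\pa_u$ on $\psi_C$ is wrong, and it is precisely the step that matters. In the correct derivation the Burgers correction comes entirely from the quadratic flux: writing $[rH^u]=-2[\pa_v\Psi+\frac1v(\pa_u\Psi)^2]+\text{null}+\dots$ and expanding $\Psi=\Sigma+\psi$, the pure-$\Sigma$ terms cancel \emph{exactly} because $\Sigma$ satisfies $2\pa_v\Sigma+\frac1v(\pa_u\Sigma)^2=0$ on either side of the shock, and the cross term $\frac1v\pa_u\Sigma\,\pa_u\psi=\frac{u}{vs}\pa_u\psi_C$ produces the modified derivative on the central side (and nothing on the exterior side, where $\Sigma=0$). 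No information about $B^A$ enters here. Your proposal instead attributes the coefficient $u/(vs)$ to a cancellation between the $\sigma$-inhomogeneity and $-(u/s)\pa_vB$ via the ``leading-order'' evolution law $\pa_sB\approx B/(2s)$. That substitution is not a leading-order step: \eqref{betaevol} contains $-\frac su[\pa_s\psi]=-\frac{sv}{u}[\pa_v\psi]$, so feeding it into $\frac us\pa_vB=\frac{u}{vs}\pa_sB$ regenerates a full-size linear term $-[\pa_v\psi]$ together with $\frac{u}{2vs}[\pa_u\psi]$, which involves \emph{both} $\pa_u\psi_C$ and $\pa_u\psi_A$ and is a pure $uu$-component (hence not of the null form $\widetilde Q_1(\pa\Sigma,\pa\psi)$ allowed in \eqref{Gstructure}). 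So the claimed ``exact'' extraction of $u/(vs)$ by this route does not close, and it also double-counts, since the nonlinearity already supplies the whole correction.

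The second gap is your treatment of the linear couplings to $\pa_vB$. You propose to absorb $\pa_u\psi_A\cdot\pa_vB$ (and the pure term $\frac us\pa_vB$) into $G'$, but \eqref{Gstructure} only permits derivatives of $B^A$ multiplied by quantities \emph{quadratic} in $\pa\Psi$ (the terms $\frac1vQ^\alpha(\pa\Psi,\pa\Psi)\pa_\alpha B^A$ and $R^\alpha(\pa\Psi,\pa\Psi)\pa_\alpha B^A$); a term linear in $\pa\psi$ times $\pa_vB$ is not of that form, and retaining it would reintroduce exactly the derivative-loss coupling between $\psi$ and $B$ that the paper avoids (cf.\ Remark \ref{eliminaterem}). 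The paper disposes of this coupling not through $G'$ but through the tangency identity: since $T^A=\pa_v+\pa_vB^A\pa_u$ is tangent to the shock and $[\Psi]=0$, one may subtract $2T^A\Psi$ inside the jump, so the linear $\pa_vB$-coupling is paired with the linear part of $rH^v$ and only $[rH^v+2\pa_u\Psi]\pa_vB$, which is purely quadratic, survives into $G'$. This use of $[T^A\Psi]=0$ (and its angular analogue for the $\nas_iB$ terms) is absent from your outline; without it, the ``routine bookkeeping'' you defer to cannot land the leftover terms inside the structure claimed for $G'$.
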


\begin{remark}[Eliminating derivatives of $B^A$ from the boundary conditions]
    \label{eliminaterem}
By Lemma \ref{derivofbetaeqn}, we can re-write the terms in $G'$ involving
 derivatives of $B^A$ in terms of $B^A$ and derivatives of $\psi$,
 \begin{multline}
     G'{(\psi)} = \frac{1}{v}\widetilde{Q}_1(\pa \Sigma, \pa \psi) + \frac{1}{v}\widetilde{Q}_2(\pa \Sigma,
	\pa \Sigma)
	+  \left(\frac{1}{v}Q^v(\pa \Psi, \pa \Psi)+ R^v(\pa \Psi, \pa \Psi)\right)
	\left(\frac{1}{2vs} B^A - \frac{1}{2v} [\pa_u\psi] + F\right)\\
	+\left(\frac{1}{v}\slashed{Q}^i(\pa \Psi, \pa \Psi)+ \slashed{R}^i(\pa \Psi, \pa \Psi)\right)
	\left( -\frac{s}{u}[\nas \psi] + \slashed{F}_i\right)
	+ R(\pa \Psi, \pa \Psi).
  \label{alternateGexpression}
 \end{multline}
Moreover, if we add the term $\nas \psi_L\cdot \nas B^L$ to both sides of
\eqref{lefttlbc} and use \eqref{betaangle} to express $\nas B^L$ in
terms of $[\nas \psi]$, we can further re-write
\eqref{lefttlbc} in the form
\begin{equation}
 Y_L^- \psi_L = Y_L^+ \psi_C + G,
 \label{YL0}
\end{equation}
where
\begin{equation}
    G = [G'{(\psi)}] - \frac{s}{u}[\nas\psi]^2 - [\nas \psi]\cdot \slashed{F},
 \label{Gexpression}
\end{equation}
 with $G'$ as in \eqref{alternateGexpression} and where
\begin{equation}
 Y_L^-\psi_L = \evm \psi_L + \frac{1}{v} Q_L(\pa \psi_L, \pa \psi_L),
 \qquad
 Y_L^+\psi_C = \evmB \psi_C + \frac{1}{v} Q_C(\pa \psi_C, \pa \psi_C).
 \label{}
\end{equation}
The point of the identity \eqref{YL0} is that
it does not involves any derivatives of $B^L$.
If it were not for this observation, there would be an
apparent loss of derivatives: to control $\ev \psi_L$
along the boundary would require a bound for $\nas B^L$,
but from the transport equation \eqref{betaevol}, a bound
for this quantity would appear to require a bound for
$\nas \pa_u \psi$ (on both sides of the shock), which is one
more derivative than we can afford at this level.

In the same way, we can write
\eqref{righttlbc} in the form
\begin{equation}
 Y_R^- \psi_C = Y_R^+ \psi_R + G,
 \label{}
\end{equation}
where
\begin{equation}
 Y_R^-\psi_C = \evmB \psi_L + \frac{1}{v} Q_C(\pa \psi_C, \pa \psi_C),
 \qquad
 Y_R^+\psi_R = \evm \psi_R + \frac{1}{v} Q_R(\pa \psi_R, \pa \psi_R).
 \label{YR0}
\end{equation}
\end{remark}

\begin{proof}[Proof of Lemma \ref{derivofbetaeqn}]
 Since the field $T^A = \pa_v + \pa_v B^A \pa_u$ is tangent
 to the shock $\Gamma^A$ at $\Gamma^A$, it follows that
 $[v T^A\Psi] = 0$. Rearranging this identity, we find
 \begin{equation}
  \pa_s B^A + \frac{[\pa_s \Psi]}{[\pa_u \Psi]} = 0.
  \label{startofBAeq}
 \end{equation}
 Writing $1/(1+\tfrac{s}{u}[\pa_u\psi]) = 1 + \tfrac{s}{u}[\pa_u\psi]/(1+\tfrac{s}{u}[\pa_u\psi])$,
 we find
 \begin{equation}
  \frac{1}{[\pa_u \Psi]} = \frac{1}{[\pa_u \Sigma + \pa_u \psi]}
	= \frac{1}{\tfrac{u}{s} + [\pa_u\psi]}
    = \frac{s}{u} + \frac{s^2}{u^2} \frac{[\pa_u\psi]}{1 + \frac{s}{u}[\pa_u\psi]}
    = \frac{s}{u} + \frac{s^2}{u^2} [\pa_u \psi] + 
    \frac{s^3}{u^3}\frac{[\pa_u\psi]^2}{1 + \frac{s}{u}[\pa_u\psi]}.
%	=\frac{s}{u}\left(1 - \frac{\tfrac{s}{u}[\pa_u\psi]}{1 + \tfrac{s}{u} [\pa_u \psi]}\right)
    \label{onebypauPsi}
 \end{equation}
 We also have
 \begin{equation}
  [\pa_s \Psi] = [\pa_s \Sigma] + [\pa_s \psi] = -\frac{u^2}{2s^2}
	+ [\pa_s \psi],
  \label{}
 \end{equation}
 and so from \eqref{startofBAeq} we find
 \begin{equation}
  \pa_s B^A + 
  \frac{s}{u}	\left(-\frac{u^2}{2s^2} + [\pa_s \psi]\right)
  \left(1 + \frac{s}{u} [\pa_u \psi] + 
    \frac{s^2}{u^2}\frac{[\pa_u\psi]^2}{1 + \frac{s}{u}[\pa_u\psi]} \right) 
	 = 0.
  \label{pavBaeqn}
 \end{equation}
 Now we write
 \begin{equation}
     \frac{s}{u}\left(-\frac{u^2}{2s^2} + [\pa_s \psi]\right)
  \left(1 + \frac{s}{u} [\pa_u \psi] + 
    \frac{s^2}{u^2}\frac{[\pa_u\psi]^2}{1 + \frac{s}{u}[\pa_u\psi]} \right) 
    = -\frac{u}{2s} - \frac{1}{2}[\pa_u\psi] + \frac{s}{u}[\pa_s\psi]
    -F_A^\prime,
%    \\
%    -\frac{1}{2} \frac{s}{u}\frac{[\pa_u\psi]^2}{1+\frac{s}{u}[\pa_u\psi]}
%    + \frac{1}{2}\frac{s^2}{u^2}[\pa_s\psi][\pa_u\psi] 
%    + \frac{s^3}{u^3}\frac{[\pa_v\psi][\pa_u\psi]^2}{1+\frac{s}{u}[\pa_u\psi]}
 \end{equation}
 which gives \eqref{betaevolF}.
%
% 
%
%
%
% ===
%
% Now we write
% \begin{equation}
%  \frac{s}{u}\left(1 - \frac{\tfrac{s}{u}[\pa_u\psi]}{1 + \tfrac{s}{u} [\pa_u \psi]}\right)
%	\left(-\frac{u^2}{2vs^2} + [\pa_v \psi]\right)
%	= - \frac{u}{2vs}
%	+ \frac{1}{2v} \frac{[\pa_u \psi]}{1 + \tfrac{s}{u}[\pa_u\psi]}
%	+ \frac{s}{u}[\pa_v\psi]
%	- \frac{s^2}{u^2}\frac{[\pa_v\psi]  [\pa_u\psi]}{1 + \tfrac{s}{u} [\pa_u \psi]}.
%  \label{}
% \end{equation}
% and recalling that $u = B^A$ at the shock, the identity \eqref{pavBaeqn} reads
% \begin{equation}
%  \pa_v B^A - \frac{B^A}{2 vs} = F,
%	\qquad
%	F = \frac{1}{2v} \frac{[\pa_u \psi]}{1 + \tfrac{s}{u}[\pa_u\psi]}
%	+ \frac{s}{u}[\pa_v\psi]
%	- \frac{s^2}{u^2}\frac{[\pa_v\psi]  [\pa_u\psi]}{1 + \tfrac{s}{u} [\pa_u \psi]}
%  \label{}
% \end{equation}
%
%
 To get the equation for $\nas B^A$, we use that
 $\nas_T = \nas + \nas B^A \pa_u$ is tangent to the shock, so $[\nas_T \Psi] = 0 $,
 and since $\nas \Sigma = 0$, using the fourth identity in
 \eqref{onebypauPsi} we find
 \begin{equation}
  \nas B^A + \frac{[\nas \psi]}{[\pa_u \Psi]}
  = \nas B^A + \frac{s}{u} [\nas \psi] 
  + \frac{s^2}{u^2} \frac{[\nas \psi][\pa_u\psi]}{1+ \frac{s}{u}[\pa_u\psi]},
%	= \nas B^A + \frac{s}{u} \left( 1 + \frac{[\pa_u \psi]}{[\pa_u\Psi]}\right)
%	[\nas \psi] = 0,
  \label{}
 \end{equation}
 which gives \eqref{betaangle}.
\end{proof}

\begin{proof}[Proof of Lemma \ref{tlbc}]

By \eqref{basicHfact1}-\eqref{basicHfact2}, we have
\begin{equation}
 [H^\alpha(\pa \Phi)] = m^{\alpha\beta}[\pa_\beta \Phi] + [j^\alpha(\pa \Phi)]
 = m^{\alpha\beta} [\pa_\beta \Phi] + A^{\alpha\beta\delta}[\pa_\beta \Phi \pa_\delta\Phi]
 +[B^\alpha(\pa \Phi)]
 \label{}
\end{equation}
Since $[r] = [\Phi] = 0$, with $\Psi = r\Phi$,
\begin{equation}
 [rH^\alpha(\pa\Phi)]
 =
 m^{\alpha\beta}[\pa_\beta \Psi]
 +
 \frac{1}{r} A^{\alpha\beta\delta}[\pa_\beta \Psi \pa_\delta\Psi]
 +[F_1^\alpha(\pa \Phi)],
 \label{rHalpha}
\end{equation}
with
\begin{equation}
 [F_1^\alpha(\pa\Phi)] =
 [rB^\alpha(\pa \Phi)]
 -\frac{1}{r^2} (A^{\alpha\beta r}+A^{\alpha r \beta})
 [\pa_\beta \Psi \Psi]
 + \frac{1}{r^3} A^{\alpha rr} [\Psi^2].
 \label{}
\end{equation}
Writing $\frac{1}{r} = \frac{2}{v} + \frac{2u}{v}\frac{1}{v-u}$,
we further write \eqref{rHalpha} as
\begin{equation}
 [rH^\alpha(\pa\Phi)]
 =
 m^{\alpha\beta}[\pa_\beta \Psi]
 +
 \frac{2}{v} A^{\alpha\beta\delta}[\pa_\beta \Psi \pa_\delta\Psi]
 +[F^\alpha(\pa \Phi)],
 \label{}
\end{equation}
with
\begin{equation}
 [F^\alpha(\pa\Phi)] = [F_1^\alpha(\pa \Phi)] + \frac{2u}{v} \frac{1}{v-u}
 A^{\alpha\beta\delta}[\pa_\beta \Psi \pa_\delta\Psi].
 \label{}
\end{equation}
Writing $H^u = H^\alpha \pa_\alpha u$,
$H^v = H^\alpha\pa_\alpha v$ and
$\slashed{H}^i = H^i - H^jx_jx^i/|x|^2$,  by \eqref{basicstructureformula}
and the fact that $m^{uv} = m^{vu} = -2$ with our conventions,
we further have
\begin{align}
 [rH^u(\pa \Phi)] &=-2 \left[\pa_v\Psi + \frac{1}{v}(\pa_u\Psi)^2\right]
 + \frac{2}{v}\widetilde{A}^{u\beta\delta}(\omega)[ \pa_\beta \Psi \pa_\delta \Psi]
 + [F^u(\pa \Phi)],\\
 [r H^v(\pa\Phi)] &=-2 [\pa_u \Psi] + \frac{2}{v}
 \widetilde{A}^{v\beta\delta}(\omega)[ \pa_\beta \Psi \pa_\delta \Psi]
 + [F^v(\pa \Phi)],\label{Hvformula}\\
 [r \slashed{H}^i(\pa\Phi)]
 &= [\nas^i \Psi] + \frac{2}{v} \slashed{\widetilde{A}}^{i \beta \delta}(\omega)
 [ \pa_\beta \Psi \pa_\delta \Psi]
 +[\slashed{F}^i(\pa \Phi)]
 \label{}
\end{align}
In the above, the coefficients $\widetilde{A}^{\alpha\beta\delta}$ satisfy
the null condition, $\widetilde{A}^{\alpha\beta\delta}\pa_\alpha u \pa_\beta u\pa_\delta u = 0$.

Now we expand $\Psi = \psi + \Sigma$, where $\Sigma = 0$ in the exterior
regions $D^L, D^R$ and $\Sigma = \tfrac{u^2}{2s}$ in the central region
to find
\begin{align}
 [rH^u(\pa \Phi)]
 &= -2\left[\pa_v \Sigma + \frac{1}{v} (\pa_u\Sigma)^2\right]
 -2 \left[ \pa_v \psi + \frac{2}{v} \pa_u\Sigma \pa_u\psi\right]
 + \frac{2}{v} A^{u \beta \delta}(\omega) [\pa_\beta \psi \pa_\delta \psi]\\
 &\qquad+ \frac{4}{v} \widetilde{A}^{u\beta \delta}(\omega)[\pa_\beta \Sigma \pa_\delta
 \psi]
 + \frac{2}{v} \widetilde{A}^{u\beta \delta}(\omega)[\pa_\beta \Sigma \pa_\delta \Sigma]
 + [F^u(\pa\Phi)].
\end{align}
Noting that $\Sigma$ satisfies the equation
\begin{equation}
 2\pa_v \Sigma + \frac{1}{v} (\pa_u\Sigma)^2 = 0
 \label{}
\end{equation}
on either side of each shock, we have the identity
\begin{align}
 [r H^u(\pa \Phi) - 2 \pa_v \Psi]
 &= [r H^u(\pa\Phi) - 2\pa_v \Sigma - 2\pa_v \psi]\\
 &= -2\left[2\pa_v \Sigma + \tfrac{1}{v}(\pa_u\Sigma)^2\right]
 -4 \left[ \pa_v \psi + \frac{1}{v} \pa_u\Sigma \pa_u\psi\right]
 + \frac{2}{v} A^{u \beta \delta}(\omega) [\pa_\beta \psi \pa_\delta \psi]\\
 &\qquad+ \frac{4}{v} \widetilde{A}^{u\beta \delta}(\omega)[\pa_\beta \Sigma \pa_\delta
 \psi]
 + \frac{2}{v} \widetilde{A}^{u\beta \delta}(\omega)[\pa_\beta \Sigma \pa_\delta \Sigma]
 + [F^u(\pa\Phi)]\\
 &=-4 \left[ \pa_v \psi + \frac{1}{v} \pa_u\Sigma \pa_u\psi\right]
 + \frac{2}{v} A^{u \beta \delta}(\omega) [\pa_\beta \psi \pa_\delta \psi]\\
 &\qquad+ \frac{4}{v} \widetilde{A}^{u\beta \delta}(\omega)[\pa_\beta \Sigma \pa_\delta
 \psi]
 + \frac{2}{v} \widetilde{A}^{u\beta \delta}(\omega)[\pa_\beta \Sigma \pa_\delta \Sigma]
 + [F^u(\pa\Phi)].
\end{align}

In particular we can write the above as
\begin{align}
 [r H^u(\pa\Phi) - 2\pa_v \Psi]
 &= -4 \left[ \pa_v \psi + \frac{1}{v} \pa_u\Sigma \pa_u\psi\right]
 + \frac{1}{v} [Q^u(\pa \psi, \pa \psi)]
 \\
 &\qquad\qquad+ \frac{1}{v} [\widetilde{Q}_1(\pa \Sigma, \pa \psi)]
 + \frac{1}{v} [\widetilde{Q}_2(\pa \Sigma, \pa \Sigma)]
 + [F^u(\pa\Phi)],
 \label{}
\end{align}
for quadratic forms $Q^u, \widetilde{Q}_1, \widetilde{Q}_2$ where
the $\widetilde{Q}_i$ satisfy the null condition.

Similarly, starting with \eqref{Hvformula} we have
\begin{equation}
 [r H^v(\pa\Phi) + 2\pa_u \Psi]
 = \frac{1}{v} [Q^v(\pa \Psi, \pa \Psi)] + [F^v (\pa \Psi)].
 \label{}
\end{equation}
Since $T^A = \pa_v + \pa_v B^A \pa_u$ is tangent to the shock at the shock, we
have $[T^A \Psi] = 0$ and so with $\zeta^A = d(u -B^A)$, we find
\begin{align}
 0 &=
 [r H^\alpha \zeta_\alpha^A - 2 T^A \Psi]\\
 &= [r H^u(\pa\Phi)-2\pa_v \Psi
  - (r H^v(\pa \Phi)\pa_v B^A +2 \pa_u \Psi)\pa_v B^A
	- r \slashed{H}^i(\pa \Phi)
 \nas_i B^A]\\
 &= -4 \left[ \pa_v \psi + \frac{2}{v} \pa_u\Sigma \pa_u\psi\right]
 + \frac{1}{v} [Q^u(\pa \psi, \pa \psi)] - [\nas^i \psi]\nas_i B^A
 \\
 &\qquad\qquad+ \frac{1}{v} [\widetilde{Q}_1(\pa \Sigma, \pa \psi)]
 + \frac{1}{v} [\widetilde{Q}_2(\pa \Sigma, \pa \Sigma)]
 - \frac{1}{v} [Q^v(\pa \Psi, \pa \Psi)]\pa_v B^A\\
 &\qquad\qquad+ [F^u(\pa\Phi)] - [F^v(\pa \Phi)]\pa_v B^A
 + [\slashed{F}^i(\pa \Phi)]\nas_i B^A.
 \label{tlbcexpression}
\end{align}

With the convention that $[q] = (q_A - q_C)|_{\Gamma^A}$
at either $\Gamma^L$ or $\Gamma^R$, we note that
\begin{equation}
 [\pa_v \psi + \frac{1}{v} \pa_u \Sigma \pa_u \psi]
 = \pa_v \psi_A - \left( \pa_v + \frac{u}{vs}\pa_u\right)\psi_C,
 \label{}
\end{equation}
and the result now follows from \eqref{tlbcexpression}.

\end{proof}

\section{Stokes' theorem}
\label{pfofstoke}

We record here the version of Stokes' theorem we will use. This is standard
apart from the fact that we are using the measure $\frac{1}{r^2} dx$ in place
of $dx$.
\begin{lemma}
  \label{stokesapp}
  Fix a metric $h$ and let $J = J^\mu\pa_\mu$ be a vector field.
  If $D = \cup_{t_0 \leq t_1} D_t$ is a domain,
 bounded by a possibly disconnected hypersurface $\Lambda$,
 neither of which contain $\{r = 0\}$, then
 \begin{equation}
   \int_{D} \pa_\mu J^\mu \, \sin^2 \theta dr d\theta d\phi dt
   = \int_{D_{t_0}} h(J,N_h^{D_{t_0}}) \sin^2 \theta dr d\theta d\phi-
   \int_{D_{t_1}} h(J, N_h^{D_{t_1}})\sin^2 \theta dr d\theta d\phi
   + \int_{\Lambda} h(J, N_h^{\Lambda})dS,
  \label{usualstokes}
 \end{equation}
 where $N_h^{\Lambda}$ denotes the outward-directed normal vector field defined relative
 to the metric $h$ and where $dS$ denotes the surface measure on $\Lambda$
 induced by the measure $\frac{1}{r^2} dx$.
\end{lemma}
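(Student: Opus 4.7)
The plan is to reduce the statement to the standard divergence theorem by choosing coordinates adapted to the weighted measure. Passing to the null coordinate system $(u, v, \theta^1, \theta^2)$ from \eqref{mainminknull}, a direct Jacobian computation gives $dx\,dt = \tfrac{1}{8}(v-u)^2 du\,dv\,dS(\omega)$ and hence
\begin{equation}
\tfrac{1}{r^2}dx\,dt = \tfrac{1}{2}\,du\,dv\,dS(\omega),
\end{equation}
i.e.\ the weighted spacetime measure becomes (up to a universal constant) the product of flat measures in $u, v$ and the standard measure on $\mathbb{S}^2$. In these coordinates $\pa_\mu J^\mu$ is the coordinate divergence of $J$, and the standard Stokes theorem applied on the bounded domain $D$ produces an identity of the form
\begin{equation}
\int_D \pa_\mu J^\mu \,\tfrac{1}{2}du\,dv\,dS(\omega)
 \;=\; -\int_{D_{t_1}} J^\mu\zeta^{D_{t_1}}_\mu\,d\sigma + \int_{D_{t_0}} J^\mu\zeta^{D_{t_0}}_\mu\,d\sigma + \int_{\Lambda} J^\mu\zeta^{\Lambda}_\mu\,d\sigma,
\end{equation}
where $\zeta^{\bullet}$ is an outward-directed conormal one-form to the indicated piece of $\partial D$ and $d\sigma$ is the surface measure induced on that piece by the volume measure $\tfrac{1}{r^2}dx\,dt$.

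The remaining task is to rewrite each boundary integrand in the invariant form $h(J, N_h^{\bullet})\,dS$ used in the statement. By definition of the $h$-normal, $N_h^{\bullet} = c\, h^{\mu\nu}\zeta^{\bullet}_\nu\pa_\mu$ for a positive constant $c$ determined by a choice of normalization of $\zeta^{\bullet}$; then $h(J, N_h^{\bullet}) = c\,J^\mu\zeta^{\bullet}_\mu$, and the rescaling $c$ is exactly absorbed into the associated surface measure $dS$ so that the product $h(J, N_h^{\bullet})\,dS = J^\mu\zeta^{\bullet}_\mu\,d\sigma$ is independent of how $\zeta^{\bullet}$ is normalized. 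To fix the sign of the three boundary pieces one simply checks a model case: on a constant-$t$ slice one has $\zeta = dt$, $N_h^{D_t}$ is future-directed by the convention in the statement, and the identity reduces to the textbook fundamental theorem of calculus in $t$; the sign for the lateral boundary $\Lambda$ is then fixed by the outward-directed convention for $N_h^\Lambda$.

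Because the hypotheses rule out $r = 0$ everywhere in $\overline{D}\cup\Lambda$ and $D$ is bounded, there are no singular terms to control, $J$ and its coordinate divergence are integrable, and the application of standard Stokes is rigorous without further regularization. The main (and essentially only) obstacle is bookkeeping: one must keep the normalization of $\zeta^{\bullet}$, the definition of $dS$, and the orientation of $N_h^{\bullet}$ consistent between the time slices and $\Lambda$ so that the three boundary contributions appear with the signs displayed in \eqref{usualstokes}. This is resolved once and for all by fixing $dS$ to be the measure that makes $h(J, N_h^{\bullet})\,dS$ coincide with the coordinate flux $J^\mu\zeta^{\bullet}_\mu\,d\sigma$ determined by $\tfrac{1}{r^2}dx\,dt$, after which every step above is a routine verification.
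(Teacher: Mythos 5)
Your proposal is correct and takes essentially the same route as the paper: there the weight is absorbed by rescaling the field (setting $J_1 = r^{-2}J$ and noting $r^{-2}\pa_\mu J^\mu = \div J_1$) before applying the usual divergence theorem, and the boundary terms are identified via $\zeta(J) = h(J, N_h)$ with $N_h$ obtained by raising the index of $\zeta$ with $h$ — exactly the identification you make after your change of variables. The one point you handle by "fixing $dS$ to make the fluxes coincide" is resolved in the paper without any circularity: take $\zeta$ to be the Euclidean-unit outward conormal, let $N_h = h^{-1}\zeta$, and observe that the leftover factor $r^{-2}$ is precisely what turns the Euclidean surface measure into the measure induced by $r^{-2}dx$, which is the $dS$ prescribed in the statement.
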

\begin{proof}
  Set $J_1 = \frac{1}{r^2} J$. By the usual version of Stokes' theorem,
  \begin{equation}
    \int_{D} \pa_\mu J^\mu\, \frac{1}{r^2}dx dt
    = \int_{D} \div J_1 dx dt
   = \int_{D_{t_1}} J_1^\mu \zeta^{D_{t_1}}_\mu dx
   +
    \int_{D_{t_0}} J_1^\mu \zeta^{D_{t_0}}_\mu dx
    + \int_{\Lambda} J_1^\mu \zeta^{\Lambda}_\mu dS,
   \label{}
  \end{equation}
  where $\zeta^\Sigma = \zeta^\Sigma_\mu dx^\mu$ denotes the outward-pointing conormal
  to the surface $\Sigma$
  normalized by $\delta^{\mu\nu} \zeta_\mu \zeta_\nu = 1$. The result now follows
  since $\zeta_\mu^{\Sigma} J^\mu = h(J, N_h^{\Lambda})$ where
  $N_h^{\Sigma}$ is obtained by raising the index of $\zeta$ with $h$.
\end{proof}
When $D$ contains the origin $\{r = 0\}$, we instead have the following.
\begin{lemma}
  \label{Jvanishorigin}
  With notation as in the previous lemma,
 \begin{multline}
  \int_{D} \pa_\mu J^\mu
  =
  -\lim_{\ve \to 0 }\int_{D \cap\{r = \ve\}} J^r\\
  + \int_{D_{t_0}} h(J,  N_h^{D_{t_0}}) \sin^2 \theta dr d\theta d\phi-
  \int_{D_{t_1}} h(J,  N_h^{D_{t_1}})\sin^2 \theta dr d\theta d\phi
  + \int_{\Lambda} h(J, N_h^{\Lambda})dS.
  \label{Jvanishoriginstatement}
\end{multline}
\end{lemma}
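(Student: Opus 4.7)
The plan is to regularize by cutting out a small tube around the axis $\{r=0\}$, apply the previous Lemma \ref{stokesapp} on the regularized region where that result is available, and then pass to the limit. The only nontrivial contribution that arises in the limit is the one over the tube, since the measure $\frac{1}{r^2}dx\,dt = \sin\theta\, dr\,d\theta\,d\phi\,dt$ puts no weight near $r=0$ beyond the Euclidean angular measure.

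Concretely, for $\ve > 0$ small, set $D^{\ve} = D \cap \{r \geq \ve\}$ and decompose $\pa D^{\ve}$ as the union of $D^{\ve}_{t_0}$, $D^{\ve}_{t_1}$, the lateral boundary $\Lambda \cap \{r \geq \ve\}$, and the cylindrical piece $C_{\ve} = \{(t,x): t_0 \leq t \leq t_1,\ r=\ve\}$. Since $\{r=0\} \not\subset D^{\ve}$, Lemma \ref{stokesapp} applies and yields
\begin{equation}
 \int_{D^{\ve}} \pa_\mu J^\mu
 = \int_{D^{\ve}_{t_0}} h(J, N_h^{D_{t_0}}) - \int_{D^{\ve}_{t_1}} h(J, N_h^{D_{t_1}})
 + \int_{\Lambda \cap \{r \geq \ve\}} h(J, N_h^{\Lambda})\, dS
 + \int_{C_\ve} h(J, N_h^{C_\ve})\, dS_\ve,
\end{equation}
with all measures as in Lemma \ref{stokesapp}. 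On $C_\ve$, the outward-pointing conormal (out of $D^{\ve}$) is $\zeta = -dr$, since pointing away from $D^{\ve}$ on $\{r=\ve\}$ means pointing toward the origin. Translated into the $h$-normal and using $\zeta_\mu J^\mu = h(J,N_h)$, the last integral equals $-\int_{C_\ve} J^r\, dS_\ve$.

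It remains to identify the surface measure $dS_\ve$. Since $\frac{1}{r^2}dx\,dt = \sin\theta\, dr\,d\theta\,d\phi\,dt$ is a product of the one-form $dr$ with an $\ve$-independent measure $\sin\theta\, d\theta\,d\phi\,dt$ on the $\ve$-sphere cross $[t_0,t_1]$, the induced surface measure on $C_\ve$ is exactly this $\ve$-independent measure; this is the point of the rescaling. Therefore $\int_{C_\ve} J^r\, dS_\ve = \int_{D \cap \{r = \ve\}} J^r$ in the sense of the statement of the lemma (integrated against $\sin\theta\, d\theta\, d\phi\, dt$).

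Finally, we take $\ve \to 0$. The hypothesis that the integrand $\pa_\mu J^\mu$ is integrable against $\frac{1}{r^2}dx\,dt$ on $D$, together with monotone/dominated convergence applied to $\mathbf{1}_{\{r\geq \ve\}}$, shows that $\int_{D^{\ve}}\pa_\mu J^\mu \to \int_D \pa_\mu J^\mu$ and likewise for the three outer boundary integrals. Rearranging the identity and moving the $C_\ve$ term to the right-hand side yields \eqref{Jvanishoriginstatement}. The main (and really only) delicate point is the measure-theoretic identification in the previous paragraph: writing things in terms of $\frac{1}{r^2}dx$ makes the would-be singular integrand over $\{r=\ve\}$ into an $\ve$-independent quantity, so the limit exists whenever $J^r$ has a reasonable trace at $r=0$, and otherwise the ``defect'' is exactly the extra term appearing in the statement.
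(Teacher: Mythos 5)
Your proposal is correct and is essentially the paper's own argument: the paper proves this lemma in one line by applying Lemma \ref{stokesapp} to $D_\ve = D \cap \{|x| \geq \ve\}$ and letting $\ve \to 0$, which is exactly your regularization scheme. Your extra details (the outward conormal $-dr$ on the cylinder producing the sign, and the observation that the measure $r^{-2}dx\,dt$ makes the induced surface measure on $\{r=\ve\}$ independent of $\ve$) are the correct justification of that limit.
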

\begin{proof}
  This follows after applying the Lemma \ref{stokesapp} to the region
  $D_{\ve} = D \cap\{|x| \geq \ve\}$ and taking $\ve \to 0$.
\end{proof}

Fix a metric $h$, vector fields $P$,  $X$ and a function $\psi$.
We define the energy current $J_{X,h,P}$ by
\begin{equation}
 J^\mu_{X,h,P} =h^{\mu\nu} \pa_\nu \psi X\psi
 - \frac{1}{2} X^\mu h^{-1}(\pa \psi, \pa \psi)
 + P^\mu X\psi - X^\mu P\psi.
 \label{}
\end{equation}
and we define the energy-momentum tensor $Q_P^h$ by
 $Q_P^h(X, Y) = h(J_X, Y) $. Explicitly,
\begin{equation}
  Q(X, Y) = X\psi Y\psi - \frac{1}{2} h(X, Y) h^{-1}(\pa \psi, \pa \psi)
  + h(P, Y) X\psi - h(X, Y) P\psi
 \label{Qhdef}
\end{equation}

Suppose that $\Lambda = \Gamma^+ \cup \Gamma^-$ for two (possibly
empty) hypersurfaces $\Gamma^{\pm}$ where $\Gamma^+$ is a spacelike surface
lying to the future of $D$ and where $\Gamma^-$ is a timelike surface $D$.
Then
the outward-pointing normal vector to $\Gamma^+$ is \emph{past-directed}.
If we let $N_h^{\Sigma}$ denote the \emph{future-directed} normal vector field
to a spacelike surface $\Sigma$ and $N_h^\Sigma$ denote the outward-facing normal
to a timelike surface $\Sigma$, then by \eqref{usualstokes}, if the origin
is not contained in $D$ we have
\begin{multline}
  -\int_{D} \pa_\mu J_{X,h,P}^\mu \, dx'dt\\
  = \int_{D_{t_0}} Q_P^h(X, N_h^{D_{t_0}})dx' - \int_{D_{t_1}} Q^h_P(X, N_h^{D_{t_1}})dx'
  + \int_{\Lambda^+} Q_P^h(X, N_h^{\Lambda^+})\, dS'
   - \int_{\Lambda^-} Q_P^h(X, N_h^{\Lambda^-} )\, dS'.
 \label{emstokes}
\end{multline}

We will need to use a version of this in the leftmost region which
contains the set $\{r = 0\}$, and the above result does not directly cover this
case. Instead we have
\begin{lemma}
  \label{divthm2}
  With $\psi = r\varphi$, $Q$ as in \eqref{Qhdef} and
  $K_{X} = K_{X, h, P}$ as in \eqref{KPdef},
  if $\pa_\mu(h^{\mu\nu}\pa_\nu \psi) + \pa_\mu P^\mu = F$,
\begin{multline}
 \int_{D^L_{t_1}} Q(X, N) + \int_{t_0}^{t_1}\int_{D^L_t}-K_{X}
 +  \int_{t_0}^{t_1}\lim_{r\to 0} \left( X^r h^{rr} \varphi^2\right)\, dt
 + \int_{t_0}^{t_1} \int_{\Gamma^L_t}
 Q(X, N)
 \\
 = \int_{D^L_{t_0}} Q(X, N)
 +\int_{t_0}^{t_1} \int_{D^L_t} F X\psi.
 \label{}
\end{multline}
\end{lemma}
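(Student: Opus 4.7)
The strategy is to combine the basic multiplier identity \eqref{mainlinident} with the version of Stokes' theorem that handles the origin, Lemma \ref{Jvanishorigin}. Starting from the hypothesis $\pa_\mu(h^{\mu\nu}\pa_\nu\psi+P^\mu)=F$, multiplying by $X\psi$ and using \eqref{mainlinident} gives the pointwise identity $\pa_\mu J^\mu_{X,h,P} = FX\psi - K_{X,h,P}$. Integrating over $D^L$ and applying \eqref{Jvanishoriginstatement} with lateral boundary $\Gamma^L_{t_0,t_1}$ yields
\begin{equation*}
\int_{D^L}\!\!\bigl(FX\psi - K_{X,h,P}\bigr)
= -\lim_{\ve\to 0}\int_{D^L\cap\{r=\ve\}}\!\! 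J^r\, dSdt
+ \int_{D^L_{t_0}}\!\!\! Q_P^h(X,N) - \int_{D^L_{t_1}}\!\!\! Q_P^h(X,N) + \int_{\Gamma^L_{t_0,t_1}}\!\!\! Q_P^h(X,N),
\end{equation*}
where we have used the identification $Q_P^h(X,N_h^\Sigma) = h(J_{X,h,P},N_h^\Sigma)$ on each piece of the boundary, exactly as in the proof of Lemma \ref{integralident}. Rearranging produces everything in \eqref{Jvanishoriginstatement} except the origin term, so the entire proof reduces to identifying the limit at $r=0$.

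To evaluate that limit, I would exploit the assumption $\psi = r\varphi$ with $\varphi$ bounded at the origin. Working in spherical coordinates $(t,r,\theta,\phi)$, one has $\pa_r\psi = \varphi + r\pa_r\varphi \to \varphi|_{r=0}$ while $\pa_t\psi, \pa_\theta\psi, \pa_\phi\psi$ are each $O(r)$ and vanish as $r\to 0$. Thus $\pa_\mu\psi \to (\pa_\mu r)\varphi$, from which
\begin{equation*}
h^{r\nu}\pa_\nu\psi \to h^{rr}\varphi,\qquad X\psi\to X^r\varphi,\qquad h^{-1}(\pa\psi,\pa\psi)\to h^{rr}\varphi^2,\qquad P\psi\to P^r\varphi.
\end{equation*}
Substituting into the definition of $J^r_{X,h,P}$, the $P$-dependent contributions cancel exactly:
\begin{equation*}
J^r\big|_{r=0} = h^{rr}\varphi\cdot X^r\varphi - \tfrac{1}{2}X^rh^{rr}\varphi^2 + P^rX^r\varphi - X^rP^r\varphi = \tfrac{1}{2}X^rh^{rr}\varphi^2.
\end{equation*}
Integrating this pointwise limit over the small coordinate sphere $\{r=\ve\}$ against the surface measure induced by the rescaled volume form $r^{-2}dx$ described in Section \ref{volumesec}, and accounting for the orientation of that sphere relative to $D^L\cap\{r\geq\ve\}$, collapses the sphere to the point $r=0$ and identifies $-\lim_{\ve\to 0}\int_{D^L\cap\{r=\ve\}}J^r = -\int_{t_0}^{t_1}\lim_{r\to 0}(X^rh^{rr}\varphi^2)\,dt$ (the normalization of the measure absorbs the factor $\tfrac{1}{2}$ and the sphere area). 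Moving this term to the left-hand side gives the stated identity.

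The step that requires care is the origin evaluation in the second paragraph: one must verify that $\varphi$ really does extend continuously to $r=0$ (guaranteed by the hypothesis $\lim_{r\to 0}|\psi/r|<\infty$ together with the a priori regularity of $\psi$ away from the shock), confirm the cancellation between the $P^rX\psi$ and $X^rP\psi$ pieces, and reconcile the $\tfrac{1}{2}$ in the energy-momentum tensor with the conventions for the surface measure induced on $\{r=\ve\}$ by $r^{-2}dx$ and the outward orientation convention of Lemma \ref{Jvanishorigin}. Every other step — the commutation of multiplication by $X\psi$ past the divergence in \eqref{mainlinident}, the rewriting of $h(J_X,N)$ as $Q_P^h(X,N)$, and the application of Lemma \ref{Jvanishorigin} itself — is immediate once the appropriate identity from the preceding section is invoked.
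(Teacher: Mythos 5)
Your proposal is correct and follows essentially the same route as the paper: the paper likewise applies the multiplier identity together with Lemma \ref{Jvanishorigin} and then evaluates the boundary term at $\{r=\ve\}$ by substituting $\psi = r\varphi$, finding that the $P$-contributions cancel and the limit is $\tfrac{1}{2}X^r h^{rr}\varphi^2$ (your expansion via $\pa_\mu \psi \to (\pa_\mu r)\varphi$ is just a pointwise-limit phrasing of the paper's explicit expansion in powers of $\ve$). The only loose point you flag — reconciling the factor $\tfrac{1}{2}$ and the spherical integration with the stated term $\lim_{r\to 0}(X^r h^{rr}\varphi^2)$ — is treated with exactly the same looseness in the paper, and is harmless for the way the lemma is used.
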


\begin{proof}
  By \eqref{Jvanishorigin}, we have
	% \textcolor{red}{$\pa_r$ is not the normal, relative to the metric $h$, to the surface $|x|=\epsilon$.}
 \begin{multline}
   \int_{D^L_{t_1, \ve}} Q_P^h(X, N)    -
   \int_{D^L_{t_0, \ve}} Q_P^h(X, N)
 -\int_{t_1}^{t_2} \int_{\Gamma^L_t}  Q_P^h(X, N) \,  dt
 +
  \lim_{\ve \to 0 }\int_{t_1}^{t_2} \int_{|x| = \ve} Q_P^h(X, N)\, dt\\
 =
 -\int_{t_1}^{t_2} \int_{D^L_t}F X\psi\, dt.
  \label{}
 \end{multline}
To handle the integral over $|x| = \ve$, we expand $\psi = r\varphi$
and compute
\begin{multline}
 Q_P^h(X, N) =  h^{r\mu}\pa_\mu \psi X\psi - \frac{1}{2} X^r
 h^{-1}(\pa\psi,\pa\psi)
 +
 P^rX\psi - X^r P \psi\\
 = \ve^2 \left(h^{r\mu}\pa_\mu \varphi X\varphi -\frac{1}{2} X^r h^{-1}(\pa\varphi,
 \pa\varphi)\right)
 +\ve
 \bigg( h^{rr}\varphi X\varphi + h^{r\mu}\pa_\mu \varphi X^r \varphi
 - X^r h^{r\mu}\pa_\mu \varphi \varphi\bigg)\\
 + h^{rr}\varphi^2 X^r - \frac{1}{2} X^rh^{rr} \varphi^2
 + \ve P^r X\varphi - \ve X^r P\varphi
 \\
 =\ve^2 \bigg(h^{r\mu}\pa_\mu \varphi X\varphi -\frac{1}{2} X^r h^{-1}(\pa\varphi,
 \pa_\varphi)\bigg)
 + \ve \left(h^{rr}\varphi X\varphi + P^r X\varphi -  X^r P\varphi\right)
 + \frac{1}{2} X^r h^{rr} \varphi^2 ,
 \label{}
\end{multline}
and taking $\ve \to 0$ we arrive at the result.
\end{proof}
We also need a modification of the above result where we replace the usual
energy-momentum tensor $Q^h_P$ with the energy-momentum tensor
$\mQ^h_P$ defined in \eqref{mQdef}.

\begin{lemma}
  \label{divthm3}
  Let $\psi = r\varphi$, $\mQ$ as in \eqref{mQdef} and define
  $\mK_X$ and $\wK_{X}$ as in Proposition \ref{effectivemmmink} or
	\ref{effectivemmmB}.
  For a metric $g$ set $\gamma = h^{-1}-g^{-1}$.
  If $\pa_\mu(h^{\mu\nu}\pa_\nu \psi) + \pa_\mu P^\mu = F$ then
\begin{multline}
 \int_{D_{t_1}} \mQ_P^h(X, N_h^{D_{t_1}}) - \int_{t_0}^{t_1}\int_{D_t}\mK_{X}
 +\wK_X
 +  \int_{t_0}^{t_1}\lim_{r\to 0} \left( X^r (g^{rr}+\pg^{rr}) \varphi^2\right)\, dt
 + \int_{\Lambda}
 Q(X, N_h^\Lambda)
 \\
 = \int_{D_{t_0}} \mQ_P^h(X, N_h^{D_{t_0}})
 +\int_{t_0}^{t_1} \int_{D_t} F X\psi.
 \label{}
\end{multline}
\end{lemma}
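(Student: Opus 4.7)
The plan is to mimic the proof of Lemma \ref{divthm2} but with the \emph{modified} multiplier identity in place of the ordinary one. Specifically, by Proposition \ref{effectivemmmink} (when $g = m$) or Proposition \ref{effectivemmmB} (when $g = \mB$), we have the pointwise identity
\begin{equation}
F X\psi = \big(\pa_\mu(h^{\mu\nu}\pa_\nu\psi) + \pa_\mu P^\mu\big)X\psi
= \pa_\mu J_{X,g,P}^\mu + K_{X,g,P} + \pa_\mu \mJ_{X,\gamma,P}^\mu + \mK_{X,\gamma,P},
\end{equation}
which isolates a pair of divergence terms whose boundary traces will recombine into the $\mQ^h_P$ boundary integrals appearing in the conclusion, while the scalar currents $K_{X,g,P}$ and $\mK_{X,\gamma,P}$ correspond to $\mK_X$ and $\wK_X$ in the statement.

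Integrate this identity against $\sin^2\theta\,dr\,d\theta\,d\phi\,dt$ over $D$ and apply the divergence theorem to the combined vector field $V^\mu := J_{X,g,P}^\mu + \mJ_{X,\gamma,P}^\mu$, using Lemma \ref{stokesapp} if $\{r=0\} \cap D = \emptyset$ and Lemma \ref{Jvanishorigin} otherwise. The result is a sum of boundary contributions over $D_{t_0}$, $D_{t_1}$, $\Lambda = \Gamma_S \cup \Gamma_T$, and (possibly) $\{r=0\}$, with integrand the conormal contraction $\zeta_\mu V^\mu = \zeta_\mu J_{X,g,P}^\mu + \zeta_\mu \mJ_{X,\gamma,P}^\mu$ on each piece. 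The definition \eqref{mQdef} of $\mQ^h_P$ is tailored precisely so that this combined contraction reproduces $\mQ^h_P(X, N_h^{D_{t_i}})$ on the time slices; on $\Lambda$ the same combined contraction is what appears under the symbol $Q(X, N_h^\Lambda)$ in the statement. Moving the scalar currents and the source $FX\psi$ to the appropriate sides yields the claimed identity.

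The one delicate point is the contribution from $\{r=0\}$ when $D \subset D^L$ contains it. Following the proof of Lemma \ref{divthm2} verbatim, we introduce $\varphi = \psi/r$ (bounded by hypothesis) and expand $V^\mu$ on the sphere $\{r=\varepsilon\}$ as $\varepsilon \to 0$. The terms of order $O(\varepsilon)$ and $O(\varepsilon^2)$ drop out, and the leading surviving $O(1)$ piece is
\begin{equation}
\tfrac{1}{2} X^r \big(g^{rr} + \pg^{rr}\big)\varphi^2 = \tfrac{1}{2} X^r h^{rr}\varphi^2,
\end{equation}
which is the sum of the analogous $X^r g^{rr}\varphi^2$ contribution from $J_{X,g,P}$ (exactly as in Lemma \ref{divthm2}) and the $X^r \pg^{rr}\varphi^2$ contribution that one reads off from the explicit formula for $\mJ_{X,\gamma,P}^r$ given in Proposition \ref{effectivemmmink}/\ref{effectivemmmB}. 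This is precisely the limit term in the stated identity.

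The main obstacle is purely bookkeeping: verifying that the two conormal traces, one coming from a current defined relative to $g$ and the other from a current defined relative to the perturbation $\gamma = h^{-1} - g^{-1}$, combine on the time slices into the single quantity $\mQ^h_P(X, N_h^{D_{t_i}})$ in the form defined by \eqref{mQdef}, and that all signs and orientations (future-directed vs.\ inward-pointing normals on spacelike vs.\ timelike pieces of $\Lambda$) are consistent. Once this accounting is carried out using the explicit structural formulas for $\mJ$ in the Minkowski and Burgers cases, the lemma follows immediately from the modified multiplier identity and the generalized Stokes theorem.
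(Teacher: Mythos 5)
Your argument is correct and follows essentially the same route as the paper: integrate the modified multiplier identity of Propositions \ref{effectivemmmink}/\ref{effectivemmmB} using the Stokes lemmas exactly as in Lemma \ref{divthm2}, with the only new point being the $r\to 0$ contribution. Your "read off from the explicit formula" step is precisely the paper's observation that the extra terms in $\mJ_{X,\gamma,P}$ carry the cutoff supported near $u=0$ and hence vanish near $r=0$, so that $\lim_{r\to 0}\bigl(\mJ^r_{X}-J^r_{X,g}-J^r_{X,\pg}\bigr)=0$ and the divthm2 computation applies with $h^{rr}=g^{rr}+\pg^{rr}$.
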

\begin{proof}
 This follows as in the previous lemma, after noting that
 \begin{equation}
  \lim_{r\to 0} \mJ_{X}^r - J_{X, g}^r - J_{X,\pg}^r = 0,
  \label{}
 \end{equation}
 which follows since the remaining terms in the definition of $\mJ$ from
 \eqref{mQdef} vanish away from $\{u = 0\}$.
\end{proof}

\section{Hardy and Poincar\'{e}-type inequalities}

To close our estimates, we will need some bounds for homogeneous
quantities $\psi^I_A$ as opposed
to $\pa \psi^I_A$. We start with the following bounds at the shocks.
\begin{lemma}
	\label{sobolevshock}
	Suppose that \eqref{betaLassump} (resp. \eqref{betaRassump}) holds.
 Let $\Gamma = \Gamma^R$ (resp. $\Gamma^L$) and let $q$ be a function
 defined in a neighborhood of (one side of) $\Gamma$. For any $t_0$, we have
 \begin{equation}
  \|q\|_{L^2(\Gamma_t)}
	\lesssim \|q\|_{L^2(\Gamma_{t_0})} + (\log t)^{1/2}
	\left(\int_{t_0}^{t} \int_{\Gamma_{t'}} v |\pa_v q|^2 + \frac{1}{vs} |\pa_u q|^2\, dS dt\right)^{1/2}
  \label{}
 \end{equation}
\end{lemma}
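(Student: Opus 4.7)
\medskip

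\noindent\textbf{Proof proposal.} The plan is a one-dimensional trace argument: for each fixed angular direction $\omega\in\mathbb S^2$ we integrate $|q|^2$ along the curve that the shock $\Gamma$ traces out in the $(t,r)$-plane at that angle, and then integrate the resulting pointwise bound over the sphere. Concretely, I would parametrize $\Gamma$ by $(s,\omega)$ where $s=\log v$ and the radial position is fixed by $u=\beta^A_s(\omega)$. For each fixed $\omega$ the function $s\mapsto q(s,\omega):=q|_\Gamma$ is differentiable, and its derivative along the shock (at constant $\omega$) is
\[
    \frac{d}{ds}q = v\,\partial_v q + \dot\beta^A_s(\omega)\,\partial_u q,
\]
since $\partial_s=v\partial_v$ at fixed $u,\omega$ and the factor $\dot\beta^A_s$ accounts for the motion of $u$ along $\Gamma$.

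The core estimate is then obtained by the fundamental theorem of calculus and Cauchy--Schwarz. Writing
\[
    q(s_1,\omega) = q(s_0,\omega) + \int_{s_0}^{s_1}\!\!\frac{d}{ds}q(s,\omega)\,ds,
\]
squaring, and using $(a+b)^2\le 2a^2+2b^2$ together with Cauchy--Schwarz in the $ds$ integral yields
\[
    |q(s_1,\omega)|^2
    \lesssim |q(s_0,\omega)|^2
    + (s_1-s_0)\int_{s_0}^{s_1}\!\!\bigl(v^2|\partial_v q|^2 + |\dot\beta^A_s|^2|\partial_u q|^2\bigr)\,ds.
\]
From the bootstrap assumptions \eqref{betaLassump}--\eqref{betaRassump} one has $\beta^A_s\sim s^{1/2}$ and $|\dot\beta^A_s|\lesssim s^{-1/2}$, so $|\dot\beta^A_s|^2\lesssim 1/s$. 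Integrating this pointwise bound over $\omega\in\mathbb S^2$ and using $dS\sim dS(\omega)$ from \eqref{volumeformula} gives the desired inequality in terms of a surface integral on the shock if we convert $ds$ to $dt$.

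The measure conversion is straightforward but is the one step that needs care: along $\Gamma$ at fixed $\omega$ we have $s=\log(t+r^A(t,\omega))$ with $r^A(t,\omega)\sim t$, and the bootstrap control on $\beta^A$ forces $dt/ds \sim v$, hence $ds\sim v^{-1}dt$. Under this change of variables the weight $v^2|\partial_v q|^2\,ds\,dS(\omega)$ becomes $v|\partial_v q|^2\,dt\,dS$, and $s^{-1}|\partial_u q|^2\,ds\,dS(\omega)$ becomes $(vs)^{-1}|\partial_u q|^2\,dt\,dS$, exactly matching the flux term on the right-hand side of the claimed inequality. Finally we bound $s_1-s_0\le s_1 \lesssim \log t$ and take square roots. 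The only mild obstacle is verifying that the implicit constants coming from $|\dot\beta^A_s|\lesssim s^{-1/2}$ and from $dt/ds\sim v$ (both of which depend on the precise form of the bootstrap assumptions, rather than on any delicate cancellation) can be made uniform; this is automatic under \eqref{betaLassump}--\eqref{betaRassump} once $\epsilon_1,\epsilon_2$ are small enough.
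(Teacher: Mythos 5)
Your argument is correct and is essentially the paper's proof: a fundamental-theorem-of-calculus trace estimate along the shock at fixed angle, combined with Cauchy--Schwarz and the bootstrap control \eqref{betaLassump}--\eqref{betaRassump} on the shock's motion, then integration over $\mathbb{S}^2$ using $dS\sim dS(\omega)$. The only difference is cosmetic: the paper parametrizes the shock curve by $t$ (via $r(t,\omega)$, using $|\pa_t r-1|\lesssim v^{-1}s^{-1/2}$ and $\int_{t_0}^t dt'/t'\lesssim \log t$) rather than by $s=\log v$, which yields the same weights and the same $(\log t)^{1/2}$ factor.
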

\begin{proof}
	Let $r(t, \omega)$ denote the value of $|x|$ at the intersection of
	$\Gamma_t$ and the ray $\{x/|x| = \omega\}$. Then 
	\begin{multline}
		\| q\|_{L^2(\Gamma_t)}^2 \lesssim
		\int_{\mathbb{S}^2} |q(t, r(t,\omega)\omega)|^2\, dS(\omega)\\
		\lesssim \int_{\mathbb{S}^2} |q(t_0, r(t_0, \omega) \omega)|^2\, dS(\omega)
	 + \left(\int_{t_0}^{t} \int_{\mathbb{S}^2}(\pa_t q)(t, r(t', \omega) \omega) +
	  (\pa_t r(t',\omega))(\pa_r q)(t', r(t', \omega)\omega)\,  dS(\omega) dt'\right)^2.
	 \label{}
	\end{multline}
	Since $|t - r(t, \omega)| = B(t, r(t,\omega)\omega)$ where
	$B$ satisfies the estimates in \eqref{betaLassump}-\eqref{betaRassump}, it
	follows that $|\pa_t r(t, \omega) - 1|\lesssim v^{-1}s^{-1/2}$, where here
	we are writing $v = t + r(t,\omega)$ and $s = \log (t+ r(t,\omega))$, so we
	have the bound
	\begin{multline}
	\left| \int_{t_0}^{t}\int_{\mathbb{S}^2} (\pa_t q)(t, r(t', \omega) \omega) +
	  (\pa_t r(t',\omega))(\pa_r q)(t', r(t', \omega)\omega)\, dS(\omega) dt'\right|
		\\
		\lesssim
        \int_{t_0}^t\int_{\Gamma_{t'}} |\pa_v q| + \frac{1}{vs^{1/2}} |\pa_u q|\, dS dt'
		\lesssim
	\left(	\int_{t_0}^t \frac{dt'}{t'} \right)^{1/2}
    \left( \int_{t_0}^{t_1} \int_{\Gamma_{t'}} v |\pa_v q|^2 + \frac{1}{vs} |\pa_u q|^2\, dS dt'\right)^{1/2}.
	 \label{hardysintroduction}
	\end{multline}
    Therefore
	\begin{equation}
	 \|q\|_{L^2(\Gamma_t)}^2 \lesssim
	 \| q\|_{L^2(\Gamma_{t_0})}^2
	 + \log t 
	 \int_{t_0}^{t} \int_{\Gamma_{t'}} v |\pa_v q|^2 + \frac{1}{vs} |\pa_u q|^2\, dS dt',
	 \label{}
	\end{equation}
	as needed.
\end{proof}
We will also need the following simple variant of the above, which just
relies on the fact that
the functions $(t \log t (\log \log t)^\alpha))^{-1}$ and
$(t \log t \log \log t (\log \log t)^\alpha))^{-1}$
are time-integrable
when $\alpha > 1$. 

\begin{lemma}
	\label{sobolevshock2}
	Suppose that \eqref{betaLassump} (resp. \eqref{betaRassump}) holds.
 Let $q$ be a function
 defined in a neighborhood of one side of $\Gamma^L$. For any $t_0$, we have
 \begin{equation}
  \|q\|_{L^2(\Gamma_t)}
	\lesssim \|q\|_{L^2(\Gamma_{t_0})}
    +
	\left(\int_{t_0}^{t} \int_{\Gamma_{t'}} v\log v(\log \log v)^\alpha |\pa_v q|^2 +
	 \frac{1}{v} \log s (\log \log s)^\alpha |\pa_u q|^2\, dS dt'\right)^{1/2}.
  \label{sobolevpoinL}
 \end{equation}
\end{lemma}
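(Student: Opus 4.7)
The plan is to mimic the proof of Lemma \ref{sobolevshock} nearly verbatim, with the only change being the choice of weights in the Cauchy-Schwarz step. Writing $\Gamma_t$ as a graph $\{(t, r(t,\omega)\omega) : \omega \in \mathbb{S}^2\}$ and using the equivalence of measures $dS \sim dS(\omega)$ from \eqref{volumeformula}, I will apply the fundamental theorem of calculus along the curves $t \mapsto (t, r(t,\omega)\omega)$ exactly as in the previous proof, reducing matters to controlling
\begin{equation}
    \int_{t_0}^{t}\int_{\mathbb{S}^2} \bigl|(\pa_t q)(t',r(t',\omega)\omega) + (\pa_t r(t',\omega))(\pa_r q)(t', r(t',\omega)\omega)\bigr|\, dS(\omega)\, dt'.
\end{equation}
Using $\pa_t = \pa_v + \pa_u$, $\pa_r = \pa_v - \pa_u$, and the bound $|\pa_t r - 1| \lesssim v^{-1}s^{-1/2}$ that comes from the shock assumptions \eqref{betaLassump}--\eqref{betaRassump}, the integrand is controlled by $|\pa_v q| + (vs^{1/2})^{-1}|\pa_u q|$ evaluated along the shock.

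The key observation is that for $\alpha > 1$, with $s = \log v$, the following two integrals are \emph{bounded independently of $t$} (with bounds depending only on $t_0$):
\begin{equation}
    \int_{t_0}^{\infty} \frac{dt'}{t' \log t' (\log\log t')^{\alpha}} < \infty, \qquad
    \int_{t_0}^{\infty} \frac{dt'}{t' \log t' \log\log t' (\log\log t')^{\alpha}} < \infty,
\end{equation}
the first by the substitution $u = \log \log t'$ reducing it to $\int u^{-\alpha} du$, the second being even better behaved. I will apply Cauchy-Schwarz to each of the two terms separately: for the $|\pa_v q|$ term I pair with the weight $v \log v (\log \log v)^\alpha$, so that the reciprocal weight $(v \log v (\log\log v)^\alpha)^{-1} \sim (t' \log t'(\log\log t')^\alpha)^{-1}$ is integrable in $t'$. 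For the $(vs^{1/2})^{-1}|\pa_u q|$ term I pair with the weight $v \log v \log \log v \,(\log\log v)^\alpha$ so that, after squaring the prefactor $(vs^{1/2})^{-1}$, the $L^2$ weight on $|\pa_u q|^2$ becomes $\frac{1}{v^2 s} \cdot v s \log s (\log\log s)^\alpha = \frac{1}{v}\log s (\log\log s)^\alpha$, which matches the statement; the reciprocal weight is $(v \log v \log \log v (\log\log v)^\alpha)^{-1}$, whose time integral is again bounded by the second estimate above.

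Putting these two Cauchy-Schwarz estimates together yields
\begin{equation}
   \|q(t,\cdot)\|_{L^2(\Gamma_t)}^2 - \|q(t_0,\cdot)\|_{L^2(\Gamma_{t_0})}^2 \lesssim
   \int_{t_0}^{t}\int_{\Gamma_{t'}} \Bigl(v\log v(\log\log v)^\alpha |\pa_v q|^2 + \tfrac{1}{v}\log s(\log\log s)^\alpha |\pa_u q|^2\Bigr)dS\, dt',
\end{equation}
which is \eqref{sobolevpoinL} after taking square roots. There is no genuine obstacle here; the only point requiring care is the bookkeeping of weights to ensure the $|\pa_u q|^2$ coefficient comes out to exactly $v^{-1}\log s(\log\log s)^\alpha$ rather than something slightly different, which is why I phrased the weight choice above explicitly. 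The hypothesis $\alpha > 1$ from \eqref{parameters} is used precisely once, to guarantee integrability of the logarithmically-weighted reciprocal.
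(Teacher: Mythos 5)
Your argument is essentially the paper's own proof: the paper likewise reruns the proof of Lemma \ref{sobolevshock}, replacing its single Cauchy--Schwarz step by two, with reciprocal weights $\tfrac{1}{t\log t(\log\log t)^{\alpha}}$ for the $\pa_v$ term and $\tfrac{1}{t\log t\,\log\log t\,(\log\log\log t)^{\alpha}}$ for the $\pa_u$ term. The one blemish is a bookkeeping slip in your $\pa_u$ pairing: the weight you name, $v\log v\,\log\log v\,(\log\log v)^{\alpha}=vs(\log s)^{1+\alpha}$, is not the weight $vs\log s(\log\log s)^{\alpha}=v\log v\,\log\log v\,(\log\log\log v)^{\alpha}$ you actually use in the displayed computation, so the model integral you should quote is $\int dt/\bigl(t\log t\,\log\log t\,(\log\log\log t)^{\alpha}\bigr)$, which converges precisely because $\alpha>1$ --- hence that hypothesis is used for both terms, not just once.
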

\begin{proof}
	The proof is the same as the proof of
	Lemma \ref{sobolevshock} above, except that instead of
	\eqref{hardysintroduction} we bound
	\begin{equation}
        \int_{t_0}^t\int_{\Gamma^L_{t'}} |\pa_v q| \, dS dt'
	 \lesssim
	 \left(\int_{t_0}^{t} \frac{1}{t'} \frac{1}{\log t'} \frac{1}{(\log \log t')^\alpha}\,dt'\right)
     \left(\int_{t_0}^{t} \int_{\Gamma^L_{t'}}
	 v s (\log s)^\alpha|\pa_v q| \, dS dt'\right)
     \lesssim\int_{t_0}^{t} \int_{\Gamma^L_{t'}}
	 v s (\log s)^\alpha|\pa_v q| \, dS dt'
	 \label{}
	\end{equation}
	and
	\begin{multline}
        \int_{t_0}^t\int_{\Gamma^L_{t'}} \frac{1}{vs^{1/2}}|\pa_u q| \, dS dt'\\
		\lesssim
			 \left(\int_{t_0}^{t} \frac{1}{t'} \frac{1}{\log t'} \frac{1}{\log \log t'}
			 \frac{1}{(\log\log \log t')^\alpha}\,dt'\right)
             \left(\int_{t_0}^{t} \int_{\Gamma^L_{t'}} \frac{1}{v} \log s (\log \log s)^\alpha
			 |\pa_u q|^2\, dS dt'\right)\\
             \lesssim\int_{t_0}^{t'} \int_{\Gamma^L_{t'}} \frac{1}{v} \log s (\log \log s)^\alpha
			 |\pa_u q|^2\, dS dt'.
	 \label{}
	\end{multline}
\end{proof}
%Finally, we need yet another variant of the above which will be used
%to get bounds for $B^R$ along the right shock (see Lemma \ref{high-orderBbdsL}).
%The proof is nearly identical to the above and just uses that the below
%weight is integrable.
%\begin{lemma}
%	\label{sobolevshock3}
%	Suppose that \eqref{betaLassump} (resp. \eqref{betaRassump}) holds.
% Let $q$ be a function
% defined in a neighborhood of (one side of) $\Gamma^R$.
% If $\mu > 1$, for any $t_0$, we have
% \begin{equation}
%  \|q\|_{L^2(\Gamma_t)}
%	\lesssim \|q\|_{L^2(\Gamma_{t_0})} +
%	\left(\int_{t_0}^{t} \int_{\Gamma_{t'}} v (1+s)^{\mu/2} |\pa_v q|^2 +
%	 \frac{1}{v} (1+s)^{(\mu-1)/2} |\pa_u q|^2\, dS dt\right)^{1/2}.
%  \label{}
% \end{equation}
%\end{lemma}

We now record some bounds which rely on Lemma \ref{sobolevshock}. In the rightmost region,
we will use the following simple estimate, which is based on the Hardy-type
inequalities from \cite{LindbladRodnianski2010}.
\begin{lemma}
	\label{hardyRapplem}
 If \eqref{betaRassump} holds, for $t_0 \leq t$ and $\mu > 1$,
 if $q$ satisfies the condition  $\lim_{r \to \infty} (1+ r-t)^{\mu-1} |q(t,r\omega)|^2 = 0$
 for each $t \geq 0, \omega \in \mathbb{S}^2$, then
\begin{equation}
(1 + \log t)^{\mu/4-1/2} \| q \|_{L^2(D^R_t)} \lesssim  \|(1 +r-t)^{\mu/2} \pa q\|_{L^2(D^R_t)}.
 \label{hardyRapp}
\end{equation}

 % \begin{align}
 %  \left\|\frac{q}{1+r+t}\right\|_{L^2(D^R_t)}
	% &\lesssim  \frac{1}{(1+t)^{1/2}} \|q\|_{L^2\dg{(\Gamma^R_{t})}}
	% + \|\pa q\|_{L^2(D^R_t)}\\
	% &\qquad\qquad+ \frac{(1 + \log t)^{1/2}}{(1+t)^{1/2}}
	% \left(\int_{t_0}^{t} \int_{\Gamma^R_t} v |\pa_v q|^2 + \frac{1}{vs} |\pa_u q|^2\, dS dt' \right)^{1/2},
 %  \label{hardyRapp00}
 % \end{align}

\end{lemma}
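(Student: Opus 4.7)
The plan is to reduce the claim to a one-dimensional weighted Hardy inequality along the outgoing radial rays $\{x/|x| = \omega\}$ at fixed $t$, and then exploit the fact that by \eqref{betaRassump} the right shock lies at $u_R(t,\omega) = \beta^R_s(\omega) \sim -(\log t)^{1/2}$, so every point of $D^R_t$ has $r - t \gtrsim (\log t)^{1/2}$. Using the measure $\frac{1}{r^2}dx = dr\, dS(\omega)$, one has
\[
    \|q\|_{L^2(D^R_t)}^2 = \int_{\mathbb{S}^2}\!\int_{r_R(t,\omega)}^\infty |q(t,r\omega)|^2\, dr\, dS(\omega),
\]
where the lower limit satisfies $r_R(t,\omega) - t \sim (\log t)^{1/2}$. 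Setting $\tilde u = r - t$ (so that at fixed $t$ we have $\pa_{\tilde u}=\pa_r$) and $a = a(t,\omega) = r_R(t,\omega) - t$, it suffices to show that, for $\mu > 1$ and any $q$ with $(1+\tilde u)^{\mu-1}|q|^2 \to 0$ as $\tilde u\to\infty$,
\begin{equation}\label{1dhardyplan}
    (1+a)^{\mu-2}\!\int_a^\infty |q|^2\, d\tilde u \lesssim \int_a^\infty (1+\tilde u)^\mu\, |\pa_{\tilde u} q|^2\, d\tilde u.
\end{equation}
Once \eqref{1dhardyplan} is in place, integrating in $\omega$, using $(1+a)^{\mu-2} \gtrsim (1+\log t)^{(\mu-2)/2} = (1+\log t)^{\mu/2-1}$, taking square roots, and bounding $|\pa_{\tilde u} q| \lesssim |\pa q|$ yields the lemma.

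The inequality \eqref{1dhardyplan} will be established by a standard integration by parts. One writes $(1+\tilde u)^{\mu-2} = \frac{1}{\mu-1}\pa_{\tilde u}\left[(1+\tilde u)^{\mu-1}\right]$, integrates by parts, and discards the boundary term at $\tilde u = a$ (which appears with a favorable sign for $\mu > 1$) while using the hypothesis to kill the boundary term at infinity. This yields
\[
    \int_a^\infty (1+\tilde u)^{\mu-2}\, q^2\, d\tilde u \;\le\; \frac{2}{\mu-1}\int_a^\infty (1+\tilde u)^{\mu-1}|q||\pa_{\tilde u}q|\, d\tilde u,
\]
and a Cauchy--Schwarz bound followed by absorption gives the clean form
\[
    \int_a^\infty (1+\tilde u)^{\mu-2}\, q^2\, d\tilde u \lesssim_\mu \int_a^\infty (1+\tilde u)^{\mu}\,(\pa_{\tilde u}q)^2\, d\tilde u.
\]
For $\mu \geq 2$ the factor $(1+\tilde u)^{\mu-2}$ is nondecreasing on $[a,\infty)$, hence $\geq (1+a)^{\mu-2}$, which immediately gives \eqref{1dhardyplan}. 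For the borderline range $1 < \mu < 2$ one applies the inequality above instead with the exponent $\mu$ replaced by some $\nu \in (1,2]$ and then uses $(1+\tilde u)^\nu \leq (1+\tilde u)^\mu$ on the right-hand side to reinstate the weight $(1+\tilde u)^\mu$; the resulting power of $(1+a)$ is still sufficient because $(1+\log t)^{\mu/4 - 1/2} \leq 1$ for $\mu \leq 2$.

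There is essentially no serious obstacle: the argument is a textbook weighted Hardy inequality, tailored to the geometry of $D^R_t$. The only point to verify with a little care is that the IBP boundary term at the shock has the correct sign and can be discarded, and that the decay hypothesis $\lim_{r\to\infty}(1+r-t)^{\mu-1}|q|^2 = 0$ precisely matches what is needed to kill the term at $\tilde u = \infty$; both are ensured by the stated assumption $\mu > 1$.
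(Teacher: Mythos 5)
Your main argument is the same as the paper's: the paper integrates the identity $\pa_r\big(w(r-t)q^2\big)=w'(r-t)q^2+2w(r-t)\,q\,\pa_r q$ with $w(r-t)=(1+r-t)^{\mu-1}$ along each ray at fixed $t,\omega$ from the shock out to $r=\infty$, applies Cauchy--Schwarz and absorbs, which is exactly your one-dimensional weighted Hardy inequality, and then bounds the weight $(1+r-t)^{\mu-2}$ from below using $r-t\gtrsim(\log t)^{1/2}$ in $D^R_t$. For $\mu\geq 2$ your proof is correct and identical in substance to the paper's.

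The one place you go beyond the paper --- the patch for $1<\mu<2$ --- does not work. If you run the Hardy step with an exponent $\nu\in(1,2]$ with $\nu>\mu$, the comparison $(1+\tilde u)^{\nu}\leq(1+\tilde u)^{\mu}$ goes the wrong way; and if instead $\nu\leq\mu<2$, the left-hand weight $(1+\tilde u)^{\nu-2}$ is \emph{decreasing} on $[a,\infty)$, so it cannot be bounded below by $(1+a)^{\nu-2}$ and you never recover the unweighted norm $\|q\|_{L^2}$. In fact no argument can close this case: testing \eqref{hardyRapp} at a fixed time with a bump supported where $r-t\in[R,2R]$ (and constant in $t$ near that slice) gives a left-hand side of size $(1+\log t)^{\mu/4-1/2}R^{1/2}$ and a right-hand side of size $R^{(\mu-1)/2}$, so the inequality fails as $R\to\infty$ whenever $\mu<2$. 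This is really an imprecision in the lemma's hypothesis rather than in your main argument: ``$\mu>1$'' should be read as $\mu\geq2$ (the paper's own proof tacitly uses $\mu-2\geq0$ in its final step), and since the application takes $\mu\geq 6$ by \eqref{parameters}, nothing downstream is affected. The honest fix is to restrict to $\mu\geq2$ and drop the borderline case rather than assert a repair that is false.
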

\begin{proof}
	Take $\gamma > 0$ and set $w(r-t) = (1 + r-t)^{\gamma}$. Then we have
	\begin{equation}
	 \pa_r ( w(r-t) q^2) = w'(r-t) q^2 + 2w(r-t) q\pa_r q.
	 \label{}
	\end{equation}
	For fixed $t' \geq 0$ and $\omega \in \mathbb{S}^2$, let $r_R(t', \omega)$
	denote the value of $r = |x|$ at the intersection of the sets $\{x/|x| = \omega\}$,
	$\{t = t'\}$ and $\Gamma^R_t$. That is, $r_R$ is
	defined by the property that $t - r_R(t, \omega) = \beta_{\log(t + r_R(t,\omega)}(\omega)$.
	Integrating the above identity at fixed $t$ and $\omega = x/|x|$
	from $r = r_R(t, \omega)$ to $r = \infty$ and using the decay of $q$ at infinity,
	we find
	\begin{multline}
		\int_{r = r_R(t,\omega)}^\infty w'(r-t) q^2\, dr \leq
		 2\int_{r = r_R(t,\omega)}^\infty w(r-t) |q| |\pa_r q| \, dr\\
		 \leq 2 \left( \int_{r = r_R(t,\omega)}^{\infty} w'(r-t) |q|^2\, dr\right)^{1/2}\,
		 \left( \int_{r = r_R(t,\omega)}^\infty \frac{w(r-t)^2}{w'(r-t)} |\pa_r q|^2\, dr\right)^{1/2},
	\end{multline}
	where we used $\gamma > 0$ to divide by $w'$.
	This gives
	\begin{equation}
\int_{r = r_R(t,\omega)}^\infty w'(r-t) q^2 \, dr\leq
4 \int_{r = r_R(t,\omega)}^\infty \frac{w(r-t)^2}{w'(r-t)} |\pa_r q|^2\, dr.
	\end{equation}
	Integrating over $\omega \in \mathbb{S}^2$ and taking $\gamma =\mu - 1$ gives
	the bound
	\begin{equation}
	 \int_{D^R_t} (1 + r-t)^{\mu-2} |q|^2
	 \leq 4 \int_{D^R_t} (1 + r-t)^{\mu} |\pa q|^2,
	 \label{}
	\end{equation}
	and using that $r-t \gtrsim (1 + \log t)^{1/2}$ in $D^R$ gives the result.

 %
	% =====
 %
 %
	% Integrating by parts, we find
 % \begin{align}
 %  \int_{D^R_t} \frac{q^2}{(1+ r+ t)^2}
	% &= \int_{\mathbb{S}^2} \int_{r_R(t,\omega)}^\infty \frac{q^2}{(1+r+t)^2} \, dr dS(\omega)\\
	% &\leq
	% \frac{q(t, r_R(t, \omega)\omega)^2}{1+r+t}
	% + 2\int_{\mathbb{S}^2} \int_{r_R(t,\omega)}^\infty \frac{1}{1+r+t} q\pa_r q \, dr dS(\omega)
	% \\
	% &\leq
	% \frac{q(t, r_R(t, \omega)\omega)^2}{1+r+t}
	% + 2\left(\int_{D^R_t}
	% \frac{q^2}{(1+r+t)^2} \right)^{1/2}
	% \left( \int_{D^R_t}
	% |\pa_r q|^2\right),
 %  \label{}
 % \end{align}
 % and after absorbing this gives the bound
 % \begin{equation}
 %  \left\| \frac{q}{1+t+r}\right\|_{L^2(D^R_t)}
	% \lesssim \frac{1}{(1+t)^{1/2}} \| q\|_{L^2\dg{(\Gamma^R_{t})}}
	% + \|\pa q\|_{L^2(D^R_t)}.
 %  \label{}
 % \end{equation}
 % The result now follows from Lemma \ref{sobolevshock}.
\end{proof}

We will also need the following weighted estimates on
the timelike side of the right shock and the spacelike side
of the left shock. The first bound is needed to close the energy
estimates in the central region and the second is needed to control
a term that arises when using the boundary conditions on the timelike
side of the left shock. We will also use the first bound on the spacelike
side of the left
shock to handle some of the boundary terms coming from the boundary
condition along the timelike side of the left shock.
\begin{lemma}
	\label{controlangularhardyright}
	If \eqref{betaRassump} holds, there is a continuous function
	$c_0(\epsilon_0)$ with $c_0(0) = 0$
	so that if $q$ is a function defined in a neighborhood of one side
	 of $\Gamma^R$,
 \begin{equation}
  \int_{t_0}^{t_1} \int_{\Gamma^R_t} \frac{s}{v^2} |q|^2\, dS dt
	\lesssim \frac{1}{1+t_0} \int_{\Gamma^R_{t_0}} |q|^2\, dS
	+ c_0(\epsilon_0)\int_{t_0}^{t_1} \int_{\Gamma^R_t} v |\pa_v q|^2 + \frac{1}{vs} |\pa_u q|^2\, dS dt.
  \label{controlangularderivs}
 \end{equation}
 If \eqref{betaLassump} holds, the same bound holds with
 $\Gamma^R$ replaced with $\Gamma^L$.
 % If $q$ is a function defined in a neighborhood of the
 % right side of $\Gamma^L$,
 % \begin{equation}
 %  \int_{t_0}^{t_1} \int_{\Gamma^L_t} v s^2 |\pa_v q|^2\, dSdt
	% \lesssim \sum_{Z_B \in \mZB} \int_{t_0}^{t_1} v |\pa_v q|^2....
 %  \label{controlleftshock}
 % \end{equation}
\end{lemma}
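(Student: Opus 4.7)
My plan is to prove this by reducing to a one-dimensional weighted Hardy inequality on each radial ray along the shock and then integrating over the sphere.

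Concretely, I would first parametrise $\Gamma^R$ by $(s,\omega)\in[s_0(\omega),s_1(\omega)]\times \mathbb{S}^2$, using that the shock is the graph $u=\beta^R(s,\omega)$. Since along $\Gamma^R$ one has $\frac{dv}{dt}=1+\pa_t r_R\sim 2$, the surface measure $dS\sim dS(\omega)$, and $ds=\frac{dv}{v}$, the volume form $dS\,dt$ pulls back to a constant multiple of $\frac{v}{2}\,ds\,dS(\omega)$. Writing $Q(s,\omega)=q(t,r_R(t,\omega)\omega)$, the target inequality becomes
\begin{equation*}
\int_{\mathbb{S}^2}\!\!\int_{s_0(\omega)}^{s_1(\omega)} \tfrac{s}{v}|Q|^2\,ds\,dS(\omega)\lesssim \tfrac{1}{1+t_0}\int_{\Gamma^R_{t_0}}|q|^2\,dS + c_0(\epsilon_0)\!\int\!\!\int \Big(v^2|\pa_v q|^2+\tfrac{1}{s}|\pa_u q|^2\Big)\,ds\,dS(\omega),
\end{equation*}
which is a family of 1D Hardy-type estimates (one for each $\omega$) coupled through a harmless sphere integration.

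Next, I would compute the tangential derivative of $Q$ along a fixed ray. Solving the implicit relation $t-r_R=B^R(t,r_R\omega)$ gives $\pa_t r_R=\frac{1-\pa_v B^R}{1+\pa_v B^R}$, so $|\pa_t r_R-1|\lesssim\frac{1}{vs^{1/2}}$ by \eqref{betaRassump} and \eqref{derivsofB}. A short computation using $\pa_t=\pa_u+\pa_v$, $\pa_r=\pa_v-\pa_u$ yields
$\frac{dQ}{dv}=(\pa_v B^R)\pa_u q+\pa_v q,$
and hence $|\frac{dQ}{ds}|=v|\frac{dQ}{dv}|\lesssim v|\pa_v q|+s^{-1/2}|\pa_u q|.$
This is the quantitative input from the assumption on the shock position.

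The main step is the one-dimensional integration by parts. Using the antiderivative identity $\frac{d}{ds}\!\left(-\frac{s+1}{v}\right)=\frac{s}{v}$ (since $v=e^s$), I obtain
\begin{equation*}
\int_{s_0}^{s_1}\tfrac{s}{v}|Q|^2\,ds = -\Big[\tfrac{s+1}{v}|Q|^2\Big]_{s_0}^{s_1} + 2\int_{s_0}^{s_1}\tfrac{s+1}{v}Q\,\tfrac{dQ}{ds}\,ds.
\end{equation*}
Dropping the nonpositive endpoint at $s_1$, bounding $|\frac{dQ}{ds}|$ by the previous step, and splitting the cross term by AM--GM with parameter $\alpha=\delta s/v$ absorbs a fixed fraction $\delta\int \frac{s}{v}|Q|^2\,ds$ back into the left side; the remaining contributions are $\delta^{-1}\int s|\pa_v q|^2\,ds$ and $\delta^{-1}\int \frac{1}{v^2}|\pa_u q|^2\,ds$. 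Each is then bounded by $\frac{s}{v}$ times the corresponding RHS weight ($v^2|\pa_v q|^2$ resp.\ $\frac{1}{s}|\pa_u q|^2$), and since $\frac{s}{v}=\frac{\log v}{v}\leq c_0(\epsilon_0)$ throughout $\Gamma^R$ under \eqref{largestart0}, this is exactly the prefactor we want.

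The main (and essentially only) obstacle is an apparent logarithmic loss at the boundary: the FTC produces the endpoint contribution $\frac{s_0+1}{v_0}|Q(s_0)|^2$ rather than the $\frac{1}{v_0}|Q(s_0)|^2$ that appears in the statement. Indeed, one can check that no pointwise choice of weight $w$ satisfies simultaneously $w'\geq \frac{s}{v^2}$ and $\int_{v_0}^\infty w'\,dv=\frac{1}{v_0}$, so the extra $s_0\sim \log(1+t_0)$ in the boundary term is intrinsic to this argument. In the applications to \eqref{rightangularbound} the relevant initial data is uniformly bounded, so the spurious $\log(1+t_0)$ is harmless and is absorbed into the implicit constant (or, equivalently, into the $c_0(\epsilon_0)$ prefactor, since $\frac{\log(1+t_0)}{1+t_0}\to 0$ as $t_0\to\infty$). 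Finally, integrating the one-dimensional bound in $\omega\in\mathbb{S}^2$ and converting $ds\,dS(\omega)$ back to $dS\,dt$ via $ds\,dS(\omega)\sim \frac{2}{v}\,dS\,dt$ gives \eqref{controlangularderivs}.
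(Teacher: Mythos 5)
Your proposal is correct and is essentially the paper's own argument: the paper integrates by parts along the shock using $\tfrac{s}{v^2}=-T^R\tfrac{1+s}{v}$ with $T^R=\pa_v+\pa_v B\,\pa_u$ and then absorbs the cross term via Cauchy--Schwarz using $\tfrac{s}{v}\le c_0(\epsilon_0)$ on $\Gamma^R$, which is exactly your $(s,\omega)$-parametrized one-dimensional integration by parts with the antiderivative $\tfrac{s+1}{v}$. The logarithmic factor you flag in the endpoint term is not a defect of your argument alone --- the paper's proof produces the same boundary term $\int_{\Gamma^R_{t_0}}\tfrac{1+s}{v}|q|^2\,dS$ and, as you note, the extra $\log(1+t_0)$ is harmless in the applications (your two small slips, the missing factor of $v$ in ``$\delta^{-1}\int s|\pa_v q|^2$'' and ``$\tfrac{1}{v^2}$'' in place of $\tfrac1v$, are corrected by your own subsequent bound ``$\tfrac{s}{v}$ times the RHS weight'' and do not affect the argument).
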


\begin{proof}
	Since
 \begin{equation}
  \frac{\log v}{v^2} = -\frac{d}{dv} \frac{1 + \log v}{v} = -T^R \frac{1 + \log v}{v}
  \label{}
 \end{equation}
 where $T^R = \pa_v + \pa_v B \pa_u$ is a generator of $\Gamma^R$,
 we have
 \begin{align}
  \int_{t_0}^{t_1} \int_{\Gamma^R_t} \frac{s}{v^2} |q|^2\, dS dt
	&\leq \int_{\Gamma^R_{t_0} } \frac{1 + s}{v} |q|^2\, dS\\
	&\qquad+ 2 \left( \int_{t_0}^{t_1} \int_{\Gamma^R_t} \frac{(1+s)^2}{v^3} |q|^2\, dS dt\right)^{1/2}
	\left( \int_{t_0}^{t_1} \int_{\Gamma^R_t} v |\pa_v q|^2 + \frac{1}{vs} |\pa_u q|^2\, dS dt\right)^{1/2}\\
	&\lesssim \int_{\Gamma^R_{t_0} } \frac{1 + s}{v} |q|^2\, dS\\
	&\qquad+ c_0(\epsilon_0)\left( \int_{t_0}^{t_1} \int_{\Gamma^R_t} \frac{(1+s)^2}{v^2} |q|^2\, dS dt\right)^{1/2}
	 \left( \int_{t_0}^{t_1} \int_{\Gamma^R_t} v |\pa_v q|^2 + \frac{1}{vs} |\pa_u q|^2\, dS dt\right)^{1/2},
  \label{}
 \end{align}
 which gives the result after absorbing.

\end{proof}

We will also use the following estimates in the central region.
\begin{lemma}
    \label{hardyClemma}
 If \eqref{betaLassump}-\eqref{betaRassump} hold, for any
 $t \geq t_0$,
 we have
 \begin{equation}
   \label{hardyCapp0}
   \| q\|_{L^2(D^C_t)} \lesssim (1 + \log t)^{1/4}\|q\|_{L^2(\Gamma^L_t)} 
   + (1+ \log t)^{1/2}\|\pa q\|_{L^2(D^C_t)},
 \end{equation}
 and
 \begin{equation}
  \|q\|_{L^2(D^C_t)} \lesssim (\log t)^{1/4} \|q\|_{L^2(\Gamma^L_{t_0})}
	+( \log t)^{3/4} \left(\int_{t_0}^{t} \int_{\Gamma^L_{t'}} v |\pa_v q|^2
	+ \frac{1}{vs} |\pa_u q|^2\, dS dt'\right)^{1/2}
	+ (\log t)^{1/2} \|\pa q\|_{L^2(D^C_t)}.
  \label{hardyCapp}
 \end{equation}
\end{lemma}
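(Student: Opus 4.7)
The plan is to prove the two estimates in order, with the first serving as the main ingredient for the second. Both rely on the basic geometric fact that in the central region $D^C_t$ the width in the radial direction is $|r^R(t,\omega)-r^L(t,\omega)| \sim (\log t)^{1/2} = s^{1/2}$, which follows from the assumptions \eqref{betaLassump}-\eqref{betaRassump} on the positions of the shocks.

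For the first estimate \eqref{hardyCapp0}, I would fix $t$ and $\omega \in \mathbb{S}^2$ and integrate along the radial ray from the left shock. Letting $r^A(t,\omega)$ denote the intersection of $\Gamma^A_t$ with $\{x/|x|=\omega\}$, the fundamental theorem of calculus gives
\begin{equation}
|q(t, r\omega)|^2 \lesssim |q(t, r^L(t,\omega)\omega)|^2 + (r^R-r^L) \int_{r^L}^{r^R} |\partial_r q(t, r'\omega)|^2\, dr'
\end{equation}
for every $r \in [r^L(t,\omega), r^R(t,\omega)]$. Integrating in $r$ over this interval (which has length $\lesssim (\log t)^{1/2}$) and then over $\mathbb{S}^2$, and using \eqref{volumeformula} to identify $dS \sim dS(\omega)$, one obtains
\begin{equation}
\|q\|_{L^2(D^C_t)}^2 \lesssim (\log t)^{1/2} \|q\|_{L^2(\Gamma^L_t)}^2 + (1+\log t) \|\partial q\|_{L^2(D^C_t)}^2,
\end{equation}
which is \eqref{hardyCapp0} after taking square roots (using $|\partial_r q| \leq |\partial q|$).

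For the second estimate \eqref{hardyCapp}, the plan is to substitute Lemma \ref{sobolevshock} applied at the left shock into the right-hand side of \eqref{hardyCapp0}. That lemma yields
\begin{equation}
\|q\|_{L^2(\Gamma^L_t)} \lesssim \|q\|_{L^2(\Gamma^L_{t_0})} + (\log t)^{1/2}\left(\int_{t_0}^t\int_{\Gamma^L_{t'}} v|\partial_v q|^2 + \frac{1}{vs}|\partial_u q|^2\, dS\, dt'\right)^{1/2}.
\end{equation}
Multiplying by the factor $(1+\log t)^{1/4}$ from \eqref{hardyCapp0} produces the two boundary contributions $(\log t)^{1/4}\|q\|_{L^2(\Gamma^L_{t_0})}$ and $(\log t)^{3/4}(\cdots)^{1/2}$ claimed in \eqref{hardyCapp}, while the bulk term $(\log t)^{1/2}\|\partial q\|_{L^2(D^C_t)}$ is already in the correct form.

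There is no real obstacle here — the argument is essentially a one-dimensional Poincaré estimate in the $u$-direction combined with the trace-type inequality already established in Lemma \ref{sobolevshock}. The only minor point of care is checking that the Jacobian factors in converting between $dS$ and $dS(\omega)$, and between $dr$ and the radial component of the measure $r^{-2}dx = dr\, dS(\omega)$ used throughout the paper, contribute only harmless constants under assumptions \eqref{betaLassump}-\eqref{betaRassump} — which is precisely the content of \eqref{volumeformula}.
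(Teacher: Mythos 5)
Your proposal is correct and follows essentially the same route as the paper: a one-dimensional integration along radial rays from the left shock using the width $\sim(\log t)^{1/2}$ of $D^C_t$ to get \eqref{hardyCapp0}, and then the trace estimate of Lemma \ref{sobolevshock} at $\Gamma^L$ to convert the boundary term and obtain \eqref{hardyCapp}. The bookkeeping of the measures via \eqref{volumeformula} is handled as in the paper, so no gaps remain.
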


\begin{proof}
 For $t^\prime\in [t_0, t_1]$ and $\omega^\prime \in \mathbb{S}^2$, let
 $r_R(t^\prime,\omega)$ denote the value of $r = |x|$ at the intersection of
 the sets $x/|x| = \omega^\prime$, the right shock, and the surface
 $\{t = t^\prime\}$, and similarly with $r_L(t, \omega)$.
 Then $|r_L(t,\omega) - r_R(t, \omega)| \lesssim (\log t)^{1/2}$
 under our assumptions. Bounding
 \begin{multline}
  |q(t, r, \omega)|^2 \lesssim |q(r, r_L(t, \omega)\omega)|^2
	+  |r_R(t,\omega) - r_L(t, \omega)| \int_{r_L(t,\omega)}^{r_R(t, \omega)}
	|\pa_r q(t, r',\omega)|^2\, dr\\
	\lesssim|q(r, r_L(t, \omega)\omega)|^2
	+ (\log t)^{1/2} \int_{r_L(t,\omega)}^{r_R(t, \omega)}
	|\pa_r q(t, r',\omega)|^2\, dr.
  \label{holderqC}
 \end{multline}
 integrating over $D^C_t$ and using that $r^{-2}\textrm{Vol}(D^C_t) \lesssim (\log t)^{1/2}$
	(recall that our integrals are taken with respect to $|x|^{-2} dx$)
	we find
 \begin{equation}
  \int_{D^C_t} |q|^2
	\lesssim (\log t)^{1/2}\int_{\Gamma^L_t} |q|^2
	+ \log t \int_{D^C_t} |\pa q|^2,
  \label{basicpoinC}
 \end{equation}
 which is \eqref{hardyCapp0}, and \eqref{hardyCapp}
 then follows from \eqref{sobolevshock}.
\end{proof}
%
% We will also need the following variant of the above to control
% some terms we encounter.
% \begin{lemma}
% 	\label{extraint1t}
%  If \eqref{betaLassump} holds, we have
%  \begin{equation}
%   \int_{t_0}^{t_1} \frac{1}{1+t} \int_{D^C_t} |q|^2\, dt
% 	\lesssim \int_{t_0}^{t_1} \int_{\Gamma^L_t} \frac{s^{1/2}}{1+v} |q|^2\, dS dt
% 	+ \sum_{Z_B \in \mZB} \int_{t_0}^{t_1} \frac{1}{1+t}\frac{1}{\log t}\int_{D^C_t}
% 	|Z_B q|^2\, dt.
%   \label{}
%  \end{equation}
% \end{lemma}
% \begin{proof}
% 	Starting from \eqref{basicpoinC} we have
% 	\begin{multline}
% 	 \int_{t_0}^{t_1} \frac{1}{1+t} \int_{D^C_t} |q|^2\, dt
% 	 \lesssim
% 	 \int_{t_0}^{t_1} \int_{\Gamma^L_t} \frac{s^{1/2}}{1+v} |q|^2\, dS dt
% 	 + \int_{t_0}^{t_1}\frac{\log t}{1+t} \int_{D^C_t} |\pa q |^2\, dS dt\\
% 	 \lesssim
% 	 \int_{t_0}^{t_1} \int_{\Gamma^L_t} \frac{s^{1/2}}{1+v} |q|^2\, dS dt
% 	 + \int_{t_0}^{t_1}\frac{1}{1+t}\frac{1}{\log t} \int_{D^C_t} \sum_{Z_B \in \mZB}| Z_B q |^2\, dS dt,
% 	 \label{}
% 	\end{multline}
% 	where we used that $\sBo = s\pa_u \in \mZB$. This gives the needed estimate.
% \end{proof}

Finally, to handle some of the homogeneous terms we encounter
in $D^L_t$, we need the following bound.
\begin{lemma}
 If \eqref{betaLassump} holds, then for $t \geq t_0$ we have
 \begin{equation}
  \| q\|_{L^2(D^L_t\cap \{|u| \leq s^3\})} \lesssim
	(\log t)^{3/2} \|q\|_{L^2(\Gamma^L_{t_0})}
	+ (\log t)^{2} \left(\int_{t_0}^{t_1} \int_{\Gamma^L_t} v |\pa_v q|^2
	+ \frac{1}{vs} |\pa_u q|^2\, dS dt\right)^{1/2}
	+ (\log t)^3 \|\pa q\|_{L^2(D^L_t)}.
  \label{rhardyL0}
 \end{equation}
 % \begin{equation}
 %  \int_{D^L_t \cap \{|u| \leq s^2\} } |q|^2
	% \lesssim (\log t)^2 \int_{\Gamma^L_{t_0}} |q|^2
	% + (\log t)^3 \int_{t_0}^{t} \int_{\Gamma^L_{t'}}
	% v |\pa_v q|^2 + \frac{1}{vs} |\pa_u q|^2\, dS dt
	% + (\log t)^4 \int_{D^L_t} |\pa q|^2.
 %  \label{rhardyL0}
 % \end{equation}
 If $q|_{r = 0} = 0$ and $q$ is smooth,
 \begin{equation}
  \| r^{-1} q\|_{L^2(D^L_t \cap\{|u| \geq s^3\})}\lesssim
	\|\pa q\|_{L^2(D^L_t\cap\{|u| \geq s^3\})}.
  \label{rhardyL}
 \end{equation}
 % \begin{equation}
 %  \int_{D^L_t} \frac{|q|^2}{r^2} \lesssim
	% \int_{D^L_t} |\pa q|^2.
 %  \label{rhardyL}
 % \end{equation}
 In particular, if $q|_{r = 0} = 0$,
 \begin{equation}
  \int_{D^L_t} |\pa (r^{-1} q)|^2 r^2\, dr dS(\omega)
	\lesssim \int_{D^L_t} |\pa q|^2\, dr dS(\omega).
  \label{pointofhardyL}
 \end{equation}
\end{lemma}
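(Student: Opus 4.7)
The three estimates are variants of classical one-dimensional Hardy/Poincar\'e inequalities, localized in the radial direction at fixed $\omega \in \mathbb{S}^2$ and then integrated over the sphere. The plan is to handle each separately, with the first being the most delicate because of the width of the slab $D^L_t \cap \{|u| \leq s^3\}$ and the need to propagate the shock trace back to initial time.

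For the first inequality \eqref{rhardyL0}, I would start from the fundamental theorem of calculus in the radial direction. Fix $t$ and $\omega \in \mathbb{S}^2$ and let $r_L(t,\omega)$ denote the radial position of $\Gamma^L_t$; the region $D^L_t \cap \{|u| \leq s^3\}$ corresponds to $\max(0, t - s^3) \leq r \leq r_L(t,\omega)$, a slab of width at most $s^3$. Writing $q(t,r,\omega) = q(t, r_L(t,\omega)\omega) - \int_r^{r_L(t,\omega)} \pa_{r'} q(t,r',\omega)\, dr'$, Cauchy--Schwarz yields $|q(t,r,\omega)|^2 \lesssim |q(t, r_L(t,\omega)\omega)|^2 + s^3 \int_0^{r_L} |\pa_r q|^2\, dr'$. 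Integrating over $r$ (picking up a factor $s^3$) and over $\omega$, and using $dS \sim dS(\omega)$ by \eqref{volumeformula}, gives
\begin{equation}
\|q\|_{L^2(D^L_t \cap \{|u| \leq s^3\})} \lesssim s^{3/2} \|q\|_{L^2(\Gamma^L_t)} + s^3 \|\pa q\|_{L^2(D^L_t)}.
\end{equation}
Finally, Lemma \ref{sobolevshock} transfers the shock trace at time $t$ back to initial time at the cost of an additional factor $(\log t)^{1/2}$ multiplying the time-integrated boundary term; combining with the factor $s^{3/2} = (\log t)^{3/2}$ produces the claimed $(\log t)^{3/2}$ and $(\log t)^2$ weights.

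For the second inequality \eqref{rhardyL}, I would appeal directly to the one-dimensional Hardy inequality $\int_0^R r^{-2}|f(r)|^2\, dr \leq 4 \int_0^R |f'(r)|^2\, dr$ valid for $f$ with $f(0) = 0$. Apply this at each $\omega$ with $f(r) = q(t,r,\omega)$ and $R = \sup\{r : (t, r\omega) \in D^L_t \cap \{|u| \geq s^3\}\}$, and integrate over $\omega$. Since $|\pa_r q| \leq |\pa q|$ pointwise, the right side is bounded by $\|\pa q\|_{L^2(D^L_t \cap \{|u| \geq s^3\})}^2$, giving \eqref{rhardyL}.

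For the third inequality \eqref{pointofhardyL}, expand $\pa_\mu(r^{-1}q) = -r^{-2}(\pa_\mu r) q + r^{-1} \pa_\mu q$ and use $|\pa r| \leq 1$ to bound $|\pa(r^{-1}q)|^2 \lesssim r^{-4}|q|^2 + r^{-2}|\pa q|^2$. Multiplying by $r^2$ and integrating over $D^L_t$ with respect to $dr\, dS(\omega)$ reduces matters to showing $\int_{D^L_t} r^{-2}|q|^2\, dr\, dS(\omega) \lesssim \int_{D^L_t} |\pa q|^2\, dr\, dS(\omega)$, which is again the one-dimensional Hardy inequality applied at each $\omega$ using $q(t, 0, \omega) = 0$; note that unlike in the second inequality, this must be applied on the full radial range of $D^L_t$ (not just $|u| \geq s^3$), but Hardy is insensitive to the endpoint $R$. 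The only mild subtlety is ensuring that the integrals are finite near $r = 0$, which follows from the smoothness assumption on $q$ and the vanishing condition at the origin, so that $q(t,r,\omega) = O(r)$ as $r\to 0^+$ uniformly in $\omega$ on compact sets and the $r^{-2}|q|^2$ integrand is locally integrable.
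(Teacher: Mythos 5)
Your proof is correct and takes essentially the same route as the paper: the radial fundamental-theorem-of-calculus plus Cauchy--Schwarz argument with slab width $\lesssim (\log t)^3$ followed by Lemma \ref{sobolevshock} to transfer the shock trace to $t_0$ for \eqref{rhardyL0}, and the one-dimensional Hardy inequality with $q|_{r=0}=0$ for \eqref{rhardyL} and \eqref{pointofhardyL}. The only cosmetic difference is that you establish \eqref{pointofhardyL} by applying Hardy on the full radial range after the product-rule expansion, whereas the paper simply remarks that it follows from \eqref{rhardyL}; your version is, if anything, slightly more explicit about that point.
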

We remind the reader that all integrals are taken with respect
to $|x|^{-2} dx$ and not $dx$.
\begin{proof}
	For each $r, \omega$ we have the bound
	\begin{multline}
	 |q(t, r\omega)|
	 \leq |q(t, r_L(t, \omega)\omega)|
	 + \int_{r}^{r_L(t,\omega)} |\pa_r q(t, r'\omega)|\, dr'\\
	 \lesssim
	 |q(t, r_L(t, \omega)\omega)|
	 + |r_L(t,\omega) - r|^{1/2} \left( \int_{r}^{r_L(t, \omega)}
	 |\pa_r q(t, r'\omega)|^2\, dr'\right)^{1/2},
	 \label{}
	\end{multline}
	where $r_L(t,\omega)$ denotes the value of $|x|$ at the intersection
	of the left shock and the ray $x/|x| = \omega$ at time $t$.
	Squaring and integrating this expression over $D^L_t \cap\{|u| \leq s^3\}$ and using that
	$|r - r_L(t, \omega)|\lesssim (\log t)^3$ in that region, we find
	\begin{equation}
		\int_{D^L_t \cap \{|u|\leq s^3\}} |q|^2
	 \lesssim
	(\log t)^3 \int_{\Gamma^L_t} |q|^2
	+ (\log t)^6 \int_{D^L_t} |\pa q|^2,
	 \label{}
	\end{equation}
	and using \eqref{sobolevshock} at the left shock gives the first result.

	The second bound \eqref{rhardyL} is the usual Hardy inequality.
	Writing $\frac{1}{r^2} = -\frac{d}{dr} \frac{1}{r}$,
	integrating by parts and using that $\lim_{r \to 0} \frac{|q|^2}{r} = 0$
	since $q|_{r = 0} = 0$ and $q$ is smooth, we find that for arbitrary $R > 0$
	\begin{equation}
	 \int_0^{R} \frac{|q(t, r\omega)|^2}{r^2} \, dr
	 \leq 2\int_0^{R} \frac{1}{r} |q(t, r\omega)| |\pa_r q(t,r\omega)|\, dr,
	 \label{}
	\end{equation}
	which gives the result after absorbing and integrating
	over $\omega \in \mathbb{S}^2$. The bound
	\eqref{pointofhardyL} follows immediately from
	\eqref{rhardyL}.
\end{proof}

\section{Global Sobolev inequalities}
\label{klainermansobolev}
We record here the Klainerman-Sobolev type inequalities we use to control
pointwise norms of the solution in terms of $L^2$ norms involving vector
fields. We remind the reader that all integrals below are
taken with respect to the measure $dx/r^2$ as opposed to the usual
three-dimensional measure $dx$.

Integrating from $r = |x|$ to $r = \infty$, using Sobolev embedding on 
$\S^2$ and $|\pa q|\lesssim \frac{1}{1+|u|}|Zq|$ gives
\begin{lemma}
  If $q \in C_0^\infty(D_t^R)$ and $w$ satisfies $(1+|u|) w'(u)\lesssim
  w(u)$, then
 \begin{equation}
  w(u)^{1/2} (1+ |u|)^{1/2} |q(t,x)|\lesssim
  \sum_{|I| \leq 3}
	\| w^{1/2} Z^I q(t,\cdot)\|_{L^2(D^R_t)}
  % \left( \int_{D_t^R} w(u)|Z^I q(t,x)|^2\, dr dS(\omega) \right)^{1/2}
  \label{KSR}
 \end{equation}
\end{lemma}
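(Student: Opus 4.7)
The strategy is the standard Klainerman--Sobolev argument adapted to the measure $dx/r^2$ and to the weight $w$. Since we integrate with respect to $dx/r^2 = dr\, dS(\omega)$, the volume form is essentially Cartesian in $(r,\omega)$, so the loss of an $r^2$ factor that usually appears in three-dimensional Sobolev embeddings is absent here. This is why only three vector fields are needed.

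First I would apply the standard Sobolev embedding $H^2(\mathbb{S}^2)\hookrightarrow L^\infty(\mathbb{S}^2)$ on each radial sphere, using the rotation fields $\Omega_{ij}\in\mathcal{Z}$ (which span $T\mathbb{S}^2$ after rescaling by $1/r$ but the resulting $r$-factors cancel against the angular area element in the measure $dS(\omega)$ versus $r^{-2}dS_{\text{Eucl}}$):
\begin{equation}
|q(t,r\omega)|^2 \lesssim \sum_{|J|\leq 2}\int_{\mathbb{S}^2}|\Omega^J q(t,r\omega')|^2\,dS(\omega').
\end{equation}
Next, since $q\in C_0^\infty(D_t^R)$ vanishes at $r=\infty$, for each fixed $\omega$ I would write, with $\lambda(r)=w(u)(1+|u|)$ where $u=t-r$,
\begin{equation}
\lambda(r)\,\|\Omega^J q(t,r\cdot)\|_{L^2(\mathbb{S}^2)}^2 = -\int_r^\infty \pa_{r'}\bigl[\lambda(r')\,\|\Omega^J q(t,r'\cdot)\|_{L^2(\mathbb{S}^2)}^2\bigr]\,dr',
\end{equation}
expand the $\pa_{r'}$ derivative by the product rule, and bound $|\lambda'(r)|\lesssim w(u)$ using the assumed weight condition $(1+|u|)w'(u)\lesssim w(u)$ together with $|\pa_r(1+|u|)|=1\leq w(u)/w(u)$.

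The cross term $2\lambda\,\langle\Omega^Jq,\pa_{r'}\Omega^J q\rangle_{L^2(\mathbb{S}^2)}$ I would estimate by Cauchy--Schwarz, splitting the weight as $\lambda = w(u)^{1/2}\cdot w(u)^{1/2}(1+|u|)$, which gives
\begin{equation}
\lambda(r)\,\|\Omega^J q\|_{L^2(\mathbb{S}^2)}^2 \lesssim \int_r^\infty w(u)\,\|\Omega^J q\|_{L^2(\mathbb{S}^2)}^2\,dr' + \int_r^\infty w(u)(1+|u|)^2\,\|\pa_{r'}\Omega^J q\|_{L^2(\mathbb{S}^2)}^2\,dr'.
\end{equation}
Finally I would convert $\pa_{r'}$ to a vector-field derivative via the well-known inequality $(1+|u|)|\pa q|\lesssim \sum_{Z\in\mathcal{Z}}|Zq|$ (which costs at most one additional factor from $\mathcal{Z}$), so the second integrand is bounded by $w(u)\sum_{|K|\leq 1}|Z^K\Omega^J q|^2$; since $\Omega\in\mathcal{Z}$, summing over $|J|\leq 2$ produces the target $\sum_{|I|\leq 3}\int_{D^R_t} w(u)|Z^I q|^2\,dr\,dS(\omega)$, and taking square roots completes the proof.

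There is no real obstacle here—everything is routine. The only mildly delicate point is making sure the weight derivative $\pa_r\lambda$ is dominated by $w(u)$, which is precisely what the hypothesis $(1+|u|)w'(u)\lesssim w(u)$ was designed for, and ensuring that the $(1+|u|)^2$ factor generated by Cauchy--Schwarz is absorbed exactly by the $(1+|u|)^{-2}$ available from $(1+|u|)|\pa q|\lesssim |Zq|$.
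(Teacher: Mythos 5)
Your proposal is correct and follows essentially the same route as the paper's (very brief) proof: Sobolev embedding on the spheres via the rotation fields, integration in $r$ from $|x|$ to $\infty$ against the weight $w(u)(1+|u|)$, and conversion of the radial derivative using $(1+|u|)|\pa q|\lesssim\sum_{Z\in\mathcal{Z}}|Zq|$ together with the hypothesis $(1+|u|)w'(u)\lesssim w(u)$ to absorb the weight derivative. No gaps worth noting.
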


In the central region we have the following pointwise bound
which follows from a {scale-invariant Sobolev inequality.}
\begin{lemma}
	Under the hypotheses of Proposition \ref{bootstrapprop},
 if $q \in C^\infty(D_t^C)$, the following inequality holds,
 \begin{equation}
  {(1 + \log t)^{1/4}} |q(t,x)|\lesssim
  \sum_{|I| \leq 3}
	\| Z_{\mB}^I q(t,\cdot)\|_{L^2(D^C_t)}
  \label{KSC}
 \end{equation}
\end{lemma}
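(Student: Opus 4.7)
The plan is to combine a Sobolev embedding on the sphere with a one-dimensional mean-value estimate in the radial direction, taking advantage of the thin width $|r-t|\lesssim(\log t)^{1/2}$ of $D^C_t$. Fix $(t,x) \in D^C_t$, write $r=|x|$, $\omega = x/|x|$, and let $r_-(t,\omega), r_+(t,\omega)$ denote the radial positions of the left and right shocks along the ray $x/|x|=\omega$ at time $t$. By the bootstrap assumptions \eqref{betaLassump}-\eqref{betaRassump} on the geometry of the shocks we have $r_+(t,\omega) - r_-(t,\omega)\sim (1+\log t)^{1/2}$ uniformly in $\omega$. The standard Sobolev embedding on $\mathbb{S}^2$ gives
$$
|q(t, r\omega)|^2 \lesssim \sum_{|I|\leq 2} \int_{\mathbb{S}^2} |\Omega^I q(t, r\omega')|^2\, dS(\omega').
$$

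Next I would use the 1D interpolation inequality: for any smooth $F$ on $[r_-, r_+]$,
$$
|F(r)|^2 \lesssim \frac{1}{r_+ - r_-} \int_{r_-}^{r_+} |F(r')|^2\, dr' + (r_+ - r_-) \int_{r_-}^{r_+} |F'(r')|^2\, dr'.
$$
Applying this to $F(r) = \Omega^I q(t, r\omega)$ with $r_+-r_- \sim (1+\log t)^{1/2}$, integrating over $\omega \in \mathbb{S}^2$, and recalling that the measure we use is $r^{-2}dx = dr\, dS(\omega)$ so that $\int_{\mathbb{S}^2}\int_{r_-}^{r_+}\cdot\, dr dS(\omega) = \|\cdot\|_{L^2(D^C_t)}^2$, I obtain
$$
|q(t,x)|^2 \lesssim \frac{1}{(1+\log t)^{1/2}}\sum_{|I|\leq 2}\|\Omega^I q\|_{L^2(D^C_t)}^2 + (1+\log t)^{1/2}\sum_{|I|\leq 2}\|\pa_r \Omega^I q\|_{L^2(D^C_t)}^2.
$$

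It remains to convert $\pa_r = \pa_v - \pa_u$ into the fields $Z_{\mB}$. Since $s\pa_u, v\pa_v \in \mZB$, we have $|\pa_u q|\leq s^{-1}|(s\pa_u)q|$ and $|\pa_v q|\leq v^{-1}|(v\pa_v)q|$, so in $D^C_t$ where $s = \log v$ and $v \gtrsim t$,
$$
|\pa_r q|^2 \lesssim \frac{1}{(1+s)^2}\sum_{|I|\leq 1}|Z^I_{\mB}q|^2.
$$
Inserting this above, the second term is bounded by $(1+\log t)^{-3/2}\sum_{|I|\leq 3}\|Z^I_{\mB}q\|_{L^2(D^C_t)}^2$, which is dominated by the first term. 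Taking a square root produces \eqref{KSC}.

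The main step that actually drives the inequality is the 1D mean-value estimate on the thin radial interval: the prefactor $(1+\log t)^{-1/2}$ coming from the width $r_+-r_-\sim(1+\log t)^{1/2}$ is precisely what yields the gain of $(1+\log t)^{1/4}$ on the left-hand side; the sphere Sobolev embedding and the conversion of $\pa_r$ into normalised fields are routine. The only subtlety is ensuring that the geometric assumptions \eqref{betaLassump}-\eqref{betaRassump} give both an upper \emph{and} a lower bound on $r_+ - r_-$ of order $(1+\log t)^{1/2}$, which is immediate from the closeness of $\beta^L_s, \beta^R_s$ to $\pm s^{1/2}$ together with the positivity of the constants $\ls, \rs$.
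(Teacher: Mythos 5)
Your strategy is in spirit the same as the paper's: exploit the thin radial width $\sim(1+\log t)^{1/2}$ of $D^C_t$ to gain the factor $(1+\log t)^{1/4}$, and pay one $Z_{\mB}$ field (via $s\pa_u$, $v\pa_v$) when converting radial derivatives. However, as written there is a genuine gap: both of your main steps implicitly treat $D^C_t$ as a spherically symmetric annulus, i.e.\ a product of an $r$-interval with $\mathbb{S}^2$, and it is not. The shocks are graphs $u=\beta^A_s(\omega)$ whose angular variation is permitted by \eqref{betaLassump}--\eqref{betaRassump} to be of size $\epsilon_1(1+s)^{1/2}$ (with $|\nabla_\omega\beta^A|\lesssim\epsilon_2(1+s)^{1/2}$), i.e.\ comparable to the full width of the region. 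Consequently, for a point $x$ lying within a fixed fraction of the width from either shock, the sphere $\{|y|=|x|\}$ exits $D^C_t$, so $q(t,|x|\omega')$ is simply not defined for some $\omega'$ and the fixed-radius Sobolev embedding on $\mathbb{S}^2$ cannot be invoked; likewise, in the next step the one-dimensional interpolation inequality applied to $F(r)=\Omega^Iq(t,r\omega')$ on $[r_-(t,\omega'),r_+(t,\omega')]$ only controls $F$ at radii inside that interval, and your fixed $r=|x|$ need not belong to it. So the "routine" product decomposition (sphere Sobolev at fixed $r$, then radial interpolation at fixed $\omega$) is exactly what is unavailable here; the subtlety is not only the two-sided bound on the width.

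The paper's proof is designed around this point: it straightens $D^C_t$ onto the fixed annulus $\{1\le|y|\le2\}$ via $x=R(t,y)\,y/|y|$ with $R$ interpolating between $r_L(t,\omega)$ and $r_R(t,\omega)$, applies a scale-invariant Sobolev inequality there, and then the Jacobian factor $|r_R-r_L|^{-1}\sim(1+\log t)^{-1/2}$ produces the gain, while the chain-rule terms $\nabla_y^{k}R$ are controlled using bounds on several angular derivatives of the shock positions. You could repair your argument by running it in the straightened radial variable $\rho=(r-r_L(t,\omega))/(r_R(t,\omega)-r_L(t,\omega))$ instead of $r$, but then angular differentiation at fixed $\rho$ generates terms involving $\nabla_\omega r_L,\nabla_\omega r_R$ (and, after the two angular derivatives needed for the sphere Sobolev step, their higher derivatives), and you are led back to essentially the paper's computation; these are precisely the terms where the hypotheses on the shock geometry are consumed, and they cannot be dismissed as routine. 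The parts of your proposal that are fine: the two-sided width bound, the conversion $|\pa_r q|\lesssim(1+s)^{-1}\sum_{|I|\le1}|Z_{\mB}^Iq|$, and the final count $|I|\le3$ matching \eqref{KSC}.
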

\begin{proof}
	{
	At each time $t$, $D_t^C$ can be written as the region
	between two graphs over the unit sphere $\S^2$,
	\begin{equation}
	 D_t^C = \{x \in \mathbb{R}^3 : r_L(t, x/|x|) \leq |x| \leq r_R(t,x/|x|)\},
	 \label{}
	\end{equation}
	where $r_A(t',\omega)$ denotes the value of $|x|$ lying at the intersection
	of the sets $\Gamma^A$, $\{t = t'\}$, and $\{x/|x| = \omega\}$.

    We now rescale and{
	introduce {$R(t,y) = (1-|y|)(r_L(t, y/|y|) - r_R(t, y/|y|)) + r_L(t, y/|y|)$, so that 
	$x=R(t,y)y/|y|$ 
    maps the annulus $A = \{1 \leq |y| \leq 2\}$ to the region $D^C_t$.
    Writing
    $Q(t, y) = q(t, R(t,y)y/|y|)$} and using the Sobolev inequality with
    respect to the measure $dx/|x|^2$, we find
    \begin{equation}
      \label{usesssob0}
      \|q\|_{L^\infty(D^C_t)}  = \|Q\|_{L^\infty(A)}
      \lesssim \sum_{k \leq 3} \| \nabla^k Q\|_{L^2(A)}.
    \end{equation}
    Writing $\omega = y/|y|$, the function $R$ satisfies
    \begin{align}
      \label{eq:Rbd1}
      |R(t, y)|&\lesssim |r_L(t, \omega) - r_R(t, \omega)| + |r_L(t, \omega)| \lesssim (1 + \log t)^{1/2}
       + t,
       \\
       \label{eq:Rbd2}
      |\nabla^{1+k}_{y}R(t, y)| &\lesssim |r_L(t, \omega) - r_R(t,\omega)| + |\nabla^{1+k}_y r_L(t,\omega)|
      \lesssim  (1 + \log t)^{1/2},
    \end{align}
    for $k\le 2$.     {Furthermore, 
    $$
    \nabla_{y} Q(t, y)=\nabla_{y} (R(t, y)\omega)\cdot \nabla_x q(t,x) =
    \nabla_{y} R(t, y)\omega\cdot \nabla_x q(t,x) + R(t, y) \nabla_y \omega\cdot \nabla_x q(t,x)
    $$
    The second term above can be decomposed as
    $$
     R(t, y) \nabla_y \omega\cdot \nabla_x q(t,x)= \sum_{k\le 1}  {\Omega^k q(t,x)}
    $$
    Applying another derivative we then obtain 
    $$
    |\nabla^{2}_{y} Q(t, y)|\lesssim |\nabla^{2}_{y} R(t, y)| |\nabla_x q| + |\nabla_{y} R(t, y)|^2 |\nabla^2_x q|+
    |\nabla_{y} R(t, y)| |\nabla_x \Omega q|+|\Omega^2 q|
    $$
    Using \eqref{eq:Rbd2},
    $$
      |\nabla^{2}_{y} Q(t, y)|\lesssim  (1 + \log t)^{1/2} |\nabla_x q| +  (1 + \log t) |\nabla^2_x q|+
    (1 + \log t)^{1/2}|\nabla_x \Omega q|+|\Omega^2 q|
    \lesssim
        \sum_{|I| \leq 2} |(Z^I_{\mB} q)(t,x)|,
   $$
    where we used  the
    fact that our vector fields satisfy
    $(1 + \log t)^m |\nabla^m q| + (1+t)^m |\nas^m q|\leq 
    \sum_{|I| \leq m} |Z_{\mB} q|$. A similar inequality holds for the third derivatives,
   $$
      |\nabla^{3}_{y} Q(t, y)|\lesssim          \sum_{|I| \leq 3} |(Z^I_{\mB} q)(t,x)|.
   $$}

}

	Returning to \eqref{usesssob0}, changing variables and using that
	$|r_R - r_L|^{-1} \lesssim (1 +\log t)^{-1/2}$, we therefore have
	\begin{equation}
	 \|q \|_{L^\infty(D^C_t)}
	 \lesssim
	 \sum_{|I| \leq 3}
	  \left(
		\int_{D^C_t}\frac{1}{(1 + \log t)^{1/2}}|Z_{\mB}^I q|\right)^{1/2}.
	 \label{}
	\end{equation}
	}
\end{proof}

To the left of the left shock, we use the standard Klainerman-Sobolev inequality.
\begin{lemma}
 If $q \in C^\infty(D_t^L)$ then
 \begin{equation}
  (1+|u|)^{1/2} |q(t,x)| \lesssim
  \sum_{|I| \leq 3}
	\| Z^I q(t,\cdot)\|_{L^2(D^L_t)}
  % \left(\int_{D^L_t} |Z^I q(t,x)|^2\, dr dS(\omega)\right)^{1/2}
  \label{KSL}
 \end{equation}
\end{lemma}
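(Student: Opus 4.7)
This is a standard Klainerman-Sobolev inequality, adapted to the cylindrical measure $dx/r^2 = dr\, dS(\omega)$ used throughout the paper. The plan is to combine a Sobolev embedding on $\mathbb{S}^2$ (requiring two angular derivatives, provided by the rotations $\Omega_{ij}$) with a weighted one-dimensional Sobolev estimate in the radial variable. The weight $(1+|u|)^{1/2}$ is produced by integration by parts combined with the identity \eqref{pointofuv}, which gives $(1+|u|)|\partial_u q| + (1+v)|\partial_v q| \lesssim \sum_{Z \in \mathcal{Z}_m} |Zq|$, and hence $(1+|u|)|\partial_r q| \lesssim \sum_Z |Zq|$ in $D^L_t$, using that $u = t-r$ is positive with $|u| \leq v$ there.

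Concretely, I would first fix $(t,x_0) \in D^L_t$ with $x_0 = r_0\omega_0$ and apply $L^\infty(\mathbb{S}^2) \hookrightarrow H^2(\mathbb{S}^2)$ to obtain
\[
|q(t, r_0\omega_0)|^2 \lesssim g(r_0), \qquad g(r) := \sum_{|K|\leq 2} \int_{\mathbb{S}^2} |\Omega^K q(t, r\omega)|^2\, dS(\omega).
\]
I would then introduce $F(r) = (1+|t-r|)\,g(r)$ and, noting $|F'(r)| \leq g(r) + 2(1+|u|) \sum_{|K|\leq 2} \|\Omega^K q\|_{L^2(\mathbb{S}^2)} \|\partial_r \Omega^K q\|_{L^2(\mathbb{S}^2)}$, use the fundamental theorem of calculus in $r$ to express $F(r_0)$ as a boundary term plus the $r$-integral of $F'$. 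Cauchy-Schwarz in $r$, together with $(1+|u|)|\partial_r \Omega^K q| \lesssim \sum_Z |Z\Omega^K q|$, then gives
\[
\int |F'(r)|\, dr \lesssim \sum_{|K| \leq 2} \|\Omega^K q\|_{L^2(dr\,dS)}\left(\|\Omega^K q\|_{L^2(dr\,dS)} + \sum_Z \|Z\Omega^K q\|_{L^2(dr\,dS)}\right) \lesssim \sum_{|I|\leq 3} \|Z^I q\|^2_{L^2(D^L_t)},
\]
and the claim follows after taking square roots.

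The main technical point is handling the boundary of $D^L_t$: the origin $r = 0$ (where the fields $\Omega_{0i}$, $S$ degenerate) and the timelike left shock (where $|u| \sim s^{1/2}$ so $1+|u|$ is not small). In the actual applications in the paper, $q$ is typically of the form $rZ^I\phi_L$, which vanishes at $r = 0$, so integrating from $r_0$ down to $0$ produces no boundary term. For the general statement one can use an averaging argument: writing $F(r_0) \leq F(r_1) + \int_{r_0}^{r_1}|F'|\,dr$ for $r_1$ ranging over a suitable subinterval of $[0, r_L(t,\omega_0)]$ and averaging in $r_1$ converts the boundary term $F(r_1)$ into a bulk term that is absorbed into $\int g(r)\, dr$. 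This, together with the uniform proximity of $r_L(t,\omega)$ to $r_L(t,\omega_0)$ on $\mathbb{S}^2$ guaranteed by \eqref{betaLassump}, closes the estimate.
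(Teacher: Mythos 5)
Your argument is the standard Klainerman--Sobolev proof (Sobolev embedding on $\mathbb{S}^2$ plus a weighted radial integration using $(1+|u|)\,|\partial_r q|\lesssim\sum_{Z\in\mathcal{Z}}|Zq|$, which follows from \eqref{pointofuv} since $1\lesssim u\leq v$ in $D^L_t$), and this is exactly what the paper invokes here without proof, in the same spirit as its one-line justification of the $D^R$ analogue. The only thin spot is the application of the spherical Sobolev embedding at radii in the thin collar near the shock, where the full sphere of radius $r_0$ need not lie in $D^L_t$ because $r_L(t,\omega)$ varies with $\omega$; this requires a routine extra step (e.g.\ first moving radially below the collar, or straightening the boundary as in the central-region lemma), while your handling of the $r=0$ end and of the weight via the averaging argument is sound.
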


\section{The modified energy and scalar currents}
\label{modifiedproofsecm}

In this section we prove multiplier identities for solutions of equations
of the form
\begin{equation}
 \pa_\mu (h^{\mu\nu} \pa_\nu\psi) + \pa_\mu P^\mu = F,
 \label{modelwaveapdx}
\end{equation}
where $h$ is either a perturbation of the Minkowski metric or the metric
$\mB$. These identities are used to prove the energy estimates in
Section \ref{ensec2}. In the Minkowskian case we use Proposition \ref{mainminkidentprop}
and in the central region we use Proposition \ref{mainmBidentprop}.
The assumptions in the upcoming results are designed to capture
the behavior of the multiplier fields we will be using (see Section
\ref{fields}). In particular the condition \eqref{Xgrowth} will be immediate
for all of our fields. We remind the reader that for our applications, $\gamma$
will behave roughly like $1/v \pa \psi$ and $P$ will collect various lower-order terms.
\begin{prop}[The modified multiplier identity in the Minkowskian case]
	\label{mainminkidentprop}
	Suppose that $\psi$ satisfies the equation \eqref{modelwaveapdx} and let $\gamma = h^{-1} - m^{-1}$.
	Let $X = X^u \pa_u+ X^v\pa_v$ where $X^u = X^u(u,v)$, $X^v =X^v(u,v)$, and suppose
	that $\gamma, X$ satisfy the assumptions \eqref{pert1}, and morever that $X$
	satisfies
	$|X^\ell_m|, |X^n_m| \gtrsim 1$ and 
	\begin{equation}
		{
		\left(\frac{|X^\ell_m|^{1/2}}{ |X^n_m|^{1/2}}+\frac{|X^n_m|^{1/2}}{|X^\ell_m|^{1/2}}\right)
		} \frac{1+|u|}{1+v}
		+  {\frac{|\pa X| }{|X^n_m|^{1/2}|X^\ell_m|^{1/2}} }(1+|u|)
		+ \frac{|\pa X^u|}{|X^n_m|^{1/2}}(1+ |u|)
		\lesssim 1
	 \label{Xgrowth}
	\end{equation}
	when $|u| \leq v/8$.
	Then the identity
	 \begin{equation}
		 \left( \pa_\mu(h^{\mu\nu}\pa_\nu \psi) + \pa_\mu P^\mu\right) X\psi
		 = \pa_\mu J_{X, m, P}^\mu + K_{X, m, P}
		 +\pa_\mu \mJ_{X, \gamma, P}^\mu + \mK_{X, \gamma, P},
	  \label{}
	 \end{equation}
 holds, where the energy current
 $J_{X, m}$ and scalar current $K_{X, m}$ are defined as in \eqref{energycurrent} and \eqref{scalarcurrent}.
  The modified energy current $\mJ_{X, \gamma, P}$ is given explicitly in
  \eqref{explicitmJ} and the modified scalar current $\mK_{X, \gamma, P}$ is given
  explicitly in \eqref{explicitmK}, and these quantities satisfy the following estimates.
  If $\zeta$ is any one-form with $|\zeta| = 1$,
  for any $\delta > 0$,
  in the region $|u| \leq v/8$, the modified energy current
 $\mJ_{X, \gamma, P}$ satisfies the estimates
	\begin{multline}
	|\zeta(\mJ_{X, \gamma, P})|
	\lesssim
	 \delta |X^\ell_m||\evm \psi|^2
	+ \left(1 + \frac{1}{\delta}\right)|\gamma| |\pa \psi|_{X,m}^2
	+  |\zeta(X)| |\gamma| |\pa \psi|^2
	+
	|\slashed{\zeta}|^2 |\pa \psi|_{X,m}^2
	\\
  +  \left(1 + \frac{1}{\delta}\right) |X| |P|^2
  +  |X^n_m|^{1/2} |P| |\pa \psi|_{X, m}.
	 \label{zetamJ}
	\end{multline}
	When $|u| \geq v/8$, we instead have
	\begin{equation}
  |\zeta(\mJ_{X, P})|
	\lesssim |\gamma| |X| |\pa \psi|^2 + |P| |X| |\pa \psi|.
  \label{zetamJfar}
 \end{equation}

	In the region $|u| \leq v/8$, the modified scalar current satisfies
	\begin{multline}
	 |\mK_{X, \gamma, P}|\lesssim
	 \left( |\nabla \gamma| + \frac{1}{1+|u|} |\gamma|
 	+ \frac{|X^\ell_m|^{1/2}}{|X^n_m|^{1/2}}\left( |\nabla_{\evm} \gamma|
	+ |\nas \gamma|\right)
 	\right)  |\pa \psi|_{X, m}^2
 	+ |X^n_m| |F|  |\pa \psi|_{X, m}\\
	+ \left( |\nabla P| +  \frac{|P|}{1+|u|}
	+ \frac{|X^\ell_m|^{1/2}}{|X^n_m|^{1/2}} \left( |\nabla_{\evm} P|
	+ |\nas P|\right) \right)|X^n_m|^{1/2} |\pa \psi|_{X, m}
	\\
  +
   |P| |\pa_u X^v| |\evm \psi|
	+
	|P||X|  \left( |F| + \frac{1}{1+v} |P|\right)
	 \label{modifiedKbound}
	\end{multline}
		and in the region $|u| \geq v/8$, we instead have 
	\begin{equation}
	 |\mK_{X, \gamma, P}|
	 \lesssim |\nabla \gamma | |X| |\pa \psi|^2 + |\gamma| |\pa X| |\pa \psi|^2
	 + |\nabla P| |X| |\pa \psi|
	 +
	  \left(\frac{1}{r} +\frac{1}{1+v}\right)|X| \left(|\gamma| |\pa \psi|^2+ |P| |\pa \psi|\right)
	 \label{trivialKbound}
	\end{equation}
    as well as
	\begin{equation}
	 |\mK_{X, \gamma, P}|
	 \lesssim |\mathcal{L}_X\gamma | |\pa \psi|^2 + |\gamma| |\pa X| |\pa \psi|^2
	 + |\mathcal{L}_X P|  |\pa \psi|
	 +\frac{1}{1+v}|X| \left(|\gamma| |\pa \psi|^2+ |P| |\pa \psi|\right)
	  + |\pa X| |P| |\pa \psi|,
	 \label{trivialKboundlie}
	\end{equation}
	where $\mathcal{L}_X\gamma$ denotes the Lie derivative of the tensor
	field $\gamma$ with respect to $X$ and $\mathcal{L}_XP$ denotes the Lie
	derivative of the vector field $P$ with respect to $X$.
\end{prop}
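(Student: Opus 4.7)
\textbf{Proof proposal for Proposition \ref{mainminkidentprop}.}

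The plan is to begin from the unmodified identity \eqref{mainlinident}, namely
\begin{equation*}
(\pa_\mu(h^{\mu\nu}\pa_\nu\psi) + \pa_\mu P^\mu)X\psi = \pa_\mu J^\mu_{X,m,P} + K_{X,m,P} + \pa_\mu J^\mu_{X,\gamma,P} + K_{X,\gamma,P},
\end{equation*}
obtained by splitting $h^{-1}=m^{-1}+\gamma$, and then to redistribute the terms coming from $K_{X,\gamma,P}$ via integration by parts so as to isolate the dangerous contributions and re-express them in a form that can be controlled. Following the diagnosis in Section \ref{modmultsec}, the obstructions come from the two pieces $\pa_u X^v\,\gamma^{uu}(\pa_u\psi)\pa_v\psi$ and $\pa_u X^v\,\gamma^{uv}(\pa_u\psi)\pa_v\psi$ of $K_{X,\gamma}$ (plus their analogues with $\pa_v X^u$), together with the analogous terms from the lower-order current $P$. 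Handling these requires moving one $\pa_u$ off of $\gamma^{uu}\pa_u\psi$, producing a term of the form $\gamma^{uu}\pa_u\psi\, X^v\pa_u\pa_v\psi$ which we then rewrite using the equation \eqref{modelwaveapdx}.

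First I would rewrite $K_{X,\gamma,P}$ in null coordinates using the general formula \eqref{KXformula}, identify the ``bad'' terms explicitly (those with a factor $\pa_u X^v$ or $\pa_vX^u$ paired with $(\pa_u\psi)\pa_v\psi$ or $(\pa_u\psi)\pa_v\psi\cdot P$), and isolate them from the ``good'' terms (which already admit a bound of the form $|\nabla\gamma||\pa\psi|_{X,m}^2$). Second, I would integrate by parts in $u$, extracting a divergence piece that gets appended to $J_{X,\gamma,P}$, defining $\widetilde{J}_{X,\gamma,P}$, and a bulk piece of the form $\gamma^{uu}\,\pa_u\psi\,X^v\,\pa_u\pa_v\psi$ (plus $P$-counterparts). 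Third, I would use the equation $\pa_\mu(h^{\mu\nu}\pa_\nu\psi)+\pa_\mu P^\mu=F$ to replace $4\pa_u\pa_v\psi$ by $\sDelta\psi + \pa_\mu(\gamma^{\mu\nu}\pa_\nu\psi) + \pa_\mu P^\mu - F$. The angular Laplacian and the linear $\gamma$ terms are tame because they carry a factor $1/(v-u)^2$ or $|\gamma|$ respectively, the nonlinear $\gamma^{\mu\nu}\pa_\mu\gamma\cdot\pa_\nu\psi$ contributions are cubic (and hence absorbed by the $|\nabla\gamma||\pa\psi|_{X,m}^2$ budget under \eqref{pert1}), and the $F$ term is recorded in $\widetilde{K}_{X,\gamma,P}$ with the weight $|X^n_m||F||\pa\psi|_{X,m}$. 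This yields the explicit formulas in \eqref{explicitmJ}--\eqref{explicitmK}.

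Once the identity is in place, the bounds \eqref{zetamJ}--\eqref{trivialKboundlie} are verified by inspection of the terms. For \eqref{zetamJ} one uses $|\zeta(X)|\le |X|$ for the generic piece and Cauchy--Schwarz in the form $|X^n_m|^{1/2}|P||\pa\psi|_{X,m}$ for the $P$-terms, absorbing the dangerous $(\evm\psi)^2$ coefficient into $\delta|X^\ell_m|(\evm\psi)^2$ with the help of \eqref{Xgrowth}. For \eqref{modifiedKbound} the key observation is that every term generated in the integration-by-parts step carries either $\nabla\gamma$, a factor $(1+|u|)^{-1}$ from $\pa X^v/X^n_m$ (again using \eqref{Xgrowth}), or $|X^\ell_m|^{1/2}/|X^n_m|^{1/2}$ from the angular/$\evm$ components where the equation is used; the $|P||\pa_uX^v||\evm\psi|$ term is the only survivor that cannot be absorbed further and is recorded separately. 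The estimates \eqref{trivialKbound}, \eqref{trivialKboundlie} in the exterior regime $|u|\ge v/8$ follow from the naive bounds \eqref{naiveKbd} and \eqref{scalarcurrentbd0Lie} respectively, since there we have no smallness to gain and the $1/r$ and $1/(1+v)$ weights are comparable.

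The main obstacle I anticipate is bookkeeping: the integration by parts generates many cross terms, several of which look like they might violate the sharp weights on the right-hand side of \eqref{modifiedKbound}, and it must be checked that each of them is either genuinely of the stated form or can be absorbed using only the structural hypothesis \eqref{Xgrowth} rather than a stronger assumption. In particular, the term $|P||\pa_uX^v||\evm\psi|$ in \eqref{modifiedKbound} is borderline and only admits an $\evm\psi$, not an $|\pa\psi|_{X,m}$, factor; tracking which $\pa X$ coefficients can be paired with which derivatives of $\psi$ (so as not to waste the smallness $|X^\ell_m|^{1/2}/|X^n_m|^{1/2}\cdot(1+|u|)/(1+v)\lesssim 1$) will require care, as will ensuring that the contribution $|P|\,|X|\,(|F|+(1+v)^{-1}|P|)$ absorbs all tail terms produced when the equation is applied to the $P^\mu$ pieces.
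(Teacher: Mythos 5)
Your core mechanism is the one the paper uses: isolate the components $\gamma^{uu},\gamma^{uv},\gamma^{vu}$ of the perturbation and the component $P^u$ of the lower-order current that get paired with the large coefficient $X^v$, integrate by parts in $u$ so that the combination $\pa_u\pa_v\psi$ appears, eliminate it using the equation in the form \eqref{modelwavenull}, and then verify \eqref{zetamJ} and \eqref{modifiedKbound} term by term using \eqref{pert1} and \eqref{Xgrowth}. This is precisely the content of Lemmas \ref{J1K1lem}, \ref{J2K2lem} and \ref{littlejklem}, so on the wave-zone side your plan is sound.

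The gap is in your final step, where you assert that the far-region bounds \eqref{trivialKbound} and \eqref{trivialKboundlie} for $|u|\geq v/8$ ``follow from the naive bounds \eqref{naiveKbd} and \eqref{scalarcurrentbd0Lie}.'' Those naive bounds apply to the \emph{unmodified} currents $J_{X,\gamma,P}$, $K_{X,\gamma,P}$; but $\mJ,\mK$ must be single, globally defined objects, and after your global integration by parts and equation substitution they are no longer the naive currents in the region $|u|\geq v/8$. For instance $\mK$ then contains terms such as $\check{\gamma}\,F$, $\gamma^{uu}\pa_u\psi\,X^v F$ and $P^u X^v\pa_\mu P^\mu$, which are not majorized by the right-hand sides of \eqref{trivialKbound} or \eqref{trivialKboundlie}, and the null-frame quantities it involves ($\gamma^{uu}$, $\nas=\tfrac1r\Omega$, etc.) degenerate at $r=0$, which is exactly where \eqref{trivialKboundlie} is needed (it feeds the Morawetz estimate of Proposition \ref{morawetzlem} in $D^L$). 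The missing device is the localization of the modification to the wave zone: the paper fixes a smooth cutoff $\chi=\chi_0(|u|/4v)$, splits $\gamma=\gamma_\chi+\gamma_{1-\chi}$ and $P=P_\chi+P_{1-\chi}$, performs the integration-by-parts/equation procedure only on the $\chi$-pieces (supported in $|u|\leq v/2$, hence away from $r=0$), and treats the $(1-\chi)$-pieces with the unmodified identity \eqref{mainlinident}; in $|u|\geq v/8$ the modified currents then reduce, up to harmless $\chi'$-error terms, to the standard ones, and \eqref{trivialKbound}, \eqref{trivialKboundlie} do become consequences of \eqref{naiveKbd} and \eqref{scalarcurrentbd0Lie}. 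Without this (or an equivalent patching of the two regimes into one identity) the estimates you claim for $|u|\geq v/8$ do not hold for the currents your construction produces.
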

\begin{remark}
 It is better to use \eqref{trivialKboundlie} near $r = 0$ since it
 avoids a spurious singularity at the origin. We have written the above in
 this form because $|\mathcal{L}_X\gamma|$ and $|\nabla \gamma|$ are invariant
 under coordinate changes, and this is convenient since in rectangular
 coordinates, the components of $\gamma$ are constants and therefore
 both quantities are easy to compute. The quantites $|\pa X|$ are of course
 not invariant under coordinate changes but they are easy to handle.
\end{remark}

For the estimates in the central region, the metric $h$ will be a perturbation
of the metric $\mBB$ (defined in \eqref{mBBdef}) and the analogue of the above is the following.
\begin{prop}[The modified multiplier identity in the central region]
	\label{mainmBidentprop}
	Suppose that $\psi$ satisfies the equation \eqref{modelwave} and let
	$\gamma = h^{-1} - \mBB^{-1}$ with notation as in
	section \ref{generalenergies}. Let $X = v \pa_v + X^u \pa_v$ where
	$X^u = X^u(u,v)$ and suppose that $\gamma, X$ satisfy
	the bounds \eqref{pert1}, and moreover that
	$1+s \gtrsim |X^u| \gtrsim (1+s)^{-1/2}$ and $|\pa X^u|\lesssim \frac{1}{1+v}$.
	Then we have the identity
 \begin{equation}
	 \left( \pa_\mu(h^{\mu\nu}\pa_\nu \psi) + \pa_\mu P^\mu\right) X\psi
	 = \pa_\mu J_{X, \mBB}^\mu + K_{X, \mBB}
	 +\pa_\mu \mJ_{X, \gamma, P}^\mu + \mK_{X, \gamma, P},
  \label{mBidentapp}
 \end{equation}
 holds, where the energy current
 $J_{X, \mBB}$ and scalar current $K_{X, \mBB}$ are defined as in \eqref{energycurrent} and \eqref{scalarcurrent}.
 The modified energy current $\mJ_{X, \gamma, P}$ is given explicitly in
 \eqref{explicitmJmB} and the modified scalar current $\mK_{X, \gamma, P}$ is given
 explicitly in \eqref{explicitmKmB}, and these quantities satisfy the following estimates.

 If $\zeta$ is any one-form with $|\zeta| = 1$, then when $|u| \lesssim s^{1/2}$,
 the modified energy current
 $\mJ_{X, \gamma, P}$ satisfies the bound
		\begin{align}
			|\zeta(\mJ_{X, \gamma, P})|
			&\lesssim
			\delta v|\evmB \psi|^2
			+ \left(\delta + \epsilon +  \frac{\epsilon}{\delta}\right)\frac{1}{(1+v)(1+s)^{1/2}} |\pa \psi|_{X, \mB}^2
			% + \left(1 + \frac{1}{\delta}\right)|\gamma| |\pa \psi|_{X,\mB}^2
			+ |\zeta(X)| |\gamma| |\pa \psi|^2
			+
			 \epsilon |\zeta(J_{X,\gamma_a})|
			 \\
			 &+  |\slashed{\zeta}|^2 |\pa \psi|_{X, \mB}^2
			 +\left(1 + \frac{1}{\delta}\right) v |P|^2
			 + \frac{1}{(1+s)^{1/2}} |P| |\pa \psi|
		 \label{zetamBJ}
		\end{align}

	The modified scalar current $\mK_{X, \gamma, P}$
	satisfies
	\begin{align}
		\label{mBmodifiedKbound}
		|\mK_{X, \gamma, P}| &\lesssim
		\left( |\nabla \gamma| + \frac{|\gamma|}{1+s}
		+ \frac{|X^\ell_{\mB}|^{1/2}}{|X^n_{\mB}|^{1/2}}
		\left(|\nabla_{\evm} \gamma|+ |\nas \gamma|\right)
		\right)  |\pa \psi|_{X, \mB}^2
		+ \frac{1}{(1+v)^{1/4}} |F|  |\pa \psi|_{X, \mB}\\
		&+   \left( |\nabla P^u| + \frac{|P^u|}{1+s}\right)|X^n_{\mB}|^{1/2}|\pa \psi|_{X, \mB}\\
		&+
		 \left(|\nabla_{\evmB} P| + |\nas P|
		+ \frac{1}{1+v} |\nabla P| + \frac{1}{1+v} |P|
		 \right) |X^\ell_{\mB}|^{1/2}|\pa \psi|_{X, \mB}\\
		&+ \epsilon\left( \frac{1}{(1+v)^{3/2}} |\pa \psi|^2 + \frac{1}{(1+v)^{1/2}} (|\evmB \psi|^2 + |\nas \psi|^2)
		\right)\\
&+
\frac{1}{(1+s)^{1/2}} \left( |\nabla P^u| + \frac{|P^u|}{1+v}\right) |\pa \psi|
+v|P| \left(|\nabla P| + \frac{|P|}{1+v} + |F|\right).
	\end{align}
	\end{prop}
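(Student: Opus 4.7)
}
The plan is to follow the same strategy as in the proof of Proposition \ref{mainminkidentprop}, adapting it to the fact that the reference metric is now $\mBB$ rather than $m$. I would start from the basic unmodified identity
\[
\bigl(\pa_\mu(h^{\mu\nu}\pa_\nu \psi) + \pa_\mu P^\mu\bigr)X\psi
= \pa_\mu J^{\mu}_{X,h,P} + K_{X,h,P},
\]
with $J_{X,h,P}$ and $K_{X,h,P}$ as in \eqref{JPdef}--\eqref{KPdef}, and split $h^{-1}=\mBB^{-1}+\gamma$. The linear piece yields $\pa_\mu J^\mu_{X,\mBB} + K_{X,\mBB}$ directly. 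The task is therefore to show that the perturbative part
$\pa_\mu(\gamma^{\mu\nu}\pa_\nu\psi)X\psi + \text{(}P\text{ contributions)}$
can be rewritten as $\pa_\mu \mJ^\mu_{X,\gamma,P} + \mK_{X,\gamma,P}$ where these quantities satisfy \eqref{zetamBJ} and \eqref{mBmodifiedKbound}.

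As explained in Section \ref{modmultsec}, the naive bound \eqref{naiveKbd} is too weak because, for our multiplier $X = v\evmB + X^u n$, the coefficient $X^v=v$ is large, and the terms $\pa_u(\gamma^{uu}\pa_u\psi)\,X^v\evmB\psi$ and $\pa_v(\gamma^{uv}\pa_u\psi)\,X^v\evmB\psi$ cannot be closed uniformly in time. The key step is to integrate by parts once in each such dangerous term so that the second derivative $\pa_u\evmB\psi$ appears, and then to eliminate it using the equation \eqref{modelwaveapdx}, trading $\pa_u\evmB\psi$ for (a) the angular Laplacian $\sDelta\psi$ of $\psi$, which contributes a benign term of the form $v|\nas\psi|^2$ to $\mJ$ (this is where the explicit $\delta v|\evmB\psi|^2$ term in \eqref{zetamBJ} is eventually hidden after a further Cauchy--Schwarz), (b) quadratic terms in $\gamma^{\alpha\beta}\pa_\beta\psi$, which are absorbed into the modified current using the smallness of $\gamma$ from \eqref{pert1}, and (c) the current $P$ and inhomogeneity $F$. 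The resulting explicit formulas for $\mJ^\mu_{X,\gamma,P}$ and $\mK_{X,\gamma,P}$ are the analogues of \eqref{explicitmJ}--\eqref{explicitmK} in the Minkowskian case, adapted to $\mBB$.

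For the pointwise bounds, the estimate on $\zeta(\mJ_{X,\gamma,P})$ follows by Cauchy--Schwarz on each explicit term in $\mJ$, using $|\gamma|\lesssim \epsilon(1+v)^{-1}(1+s)^{-1/2}$ and $X^\ell_\mB |\gamma|\lesssim \epsilon X^n_\mB$ from \eqref{pert1} to absorb the nonresonant pieces into $|\pa\psi|^2_{X,\mB}$. The genuinely dangerous piece, coming from $X^v\gamma^{uu}(\pa_u\psi)^2$, is handled by writing it as $\delta v|\evmB\psi|^2$ plus $(\delta+\epsilon+\epsilon/\delta)(1+v)^{-1}(1+s)^{-1/2}|\pa\psi|^2_{X,\mB}$ via a weighted Cauchy--Schwarz --- this is the origin of the first two terms in \eqref{zetamBJ}. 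The scalar current bound follows from the version of \eqref{KXgammacov} adapted to $\mBB$: one expands $K_{X,\gamma}$, applies the product rule, and uses the assumption $|\pa X^u|\lesssim (1+v)^{-1}$, $(1+s)^{-1/2}\lesssim |X^n_\mB|\lesssim 1+s$, to convert the schematic bounds $|\nabla_X\gamma||\pa\psi|^2 + |\pa X||\gamma||\pa\psi|^2$ into the anisotropic form \eqref{mBmodifiedKbound}; cross-terms between $n$ and $\evmB$ derivatives are absorbed into the $|X^\ell_\mB|^{1/2}/|X^n_\mB|^{1/2}$ weighting.

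The main obstacle, and the place where the argument differs meaningfully from the Minkowski case, is the interaction with the null-condition piece $\gamma_a^{\mu\nu}=\tfrac{u}{vs}a^{\mu\nu}$ hidden inside $\mBB$: when we integrate by parts to create the $\pa_u\evmB\psi$ factor, the commutator $[\pa_u,\evmB]$ and the $u$-derivatives of $\gamma_a$ produce borderline terms. The null condition $a^{\mu\nu}\pa_\mu u\,\pa_\nu u = 0$, together with the symbol estimate $(1+v)^k|\pa^k a|\lesssim 1$, is precisely what is needed to reabsorb these into the $\epsilon|\zeta(J_{X,\gamma_a})|$ contribution in \eqref{zetamBJ} and the $\epsilon$-weighted $|\pa\psi|^2$ terms on the fourth line of \eqref{mBmodifiedKbound}. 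The remaining lower-order $P$-contributions are then bounded straightforwardly using $|X^n_\mB|\gtrsim(1+s)^{-1/2}$ and the algebraic form of \eqref{JPdef}--\eqref{KPdef}.
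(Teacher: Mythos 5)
Your overall strategy is the same as the paper's: isolate the dangerous products of $X^v=v$ with $u$-derivative terms, integrate by parts to produce the good combination $n\evmB\psi$, eliminate it with the equation (picking up $\sDelta\psi$, quadratic terms, $P$, $F$), and use the null condition on $a$ together with the smallness of $\tfrac{u}{vs}$ to absorb the contributions tied to $\gamma_a$ into $\epsilon|\zeta(J_{X,\gamma_a})|$ and the $\epsilon$-weighted bulk terms. Up to minor misattributions of where individual terms in \eqref{zetamBJ} originate, this is the paper's argument (Lemmas \ref{J1K1lemmB} and \ref{J2K2lemmB}).

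However, there is a genuine gap in your treatment of the current $P$. You propose to bound "the remaining lower-order $P$-contributions \ldots straightforwardly," but the term $\pa_u P^u\, X^v\pa_v\psi$ arising from $\pa_\mu P^\mu\, X\psi$ is itself of the dangerous type and cannot be estimated directly: a direct Cauchy--Schwarz gives $v\,|\pa_u P^u|\,|\pa_v\psi|\lesssim |X^\ell_{\mB}|^{1/2}|\nabla P^u|\,|\pa\psi|_{X,\mB}$, i.e.\ the weight $v^{1/2}$ on $|\nabla P^u|$, whereas \eqref{mBmodifiedKbound} only allows $|X^n_{\mB}|^{1/2}\lesssim (1+s)^{1/2}$ on $|\nabla P^u|$ (the large weight $|X^\ell_{\mB}|^{1/2}$ is permitted only on $\evmB$ and angular derivatives of $P$, or on $\tfrac{1}{1+v}|\nabla P|$). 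This distinction is not cosmetic: in the application $P^u$ contains the linear commutation errors $P^u_{I,null}\sim \tfrac{1}{1+v}\sum_{|J|\le|I|-1}\pa Z_{\mB}^J\psi$, which barely fail to be time-integrable, so the stronger weight would prevent the energy estimates from closing (see the remark after Lemma \ref{littlejklemmB} and Lemma \ref{timeintegrability-center-linear}). The fix, as in Lemma \ref{littlejklemmB}, is to apply the same integration-by-parts-plus-equation device to $\pa_u P^u\,X^v\evmB\psi$; this is precisely what generates the boundary current $P^u v\,\evmB\psi$ and hence the terms $\delta v|\evmB\psi|^2+\bigl(1+\tfrac1\delta\bigr)v|P|^2$ in \eqref{zetamBJ} (which you instead attributed to the angular Laplacian), as well as the asymmetric $P^u$ versus $(\evmB,\nas)P$ weights in \eqref{mBmodifiedKbound}. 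Relatedly, when you use the equation to remove $n\evmB\psi$, the resulting factor $\check\gamma\,\pa_\mu P^\mu$ must also be estimated keeping $\pa_u P^u$ separate (as in \eqref{specialPumB}); a symmetric treatment of all components of $P$ again produces weights incompatible with the stated bound.
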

	\begin{remark}
	 It will be important for our applications to keep track of the component
	 $P^u$ separately from the others; see Lemma \ref{higherordereqncentral}. The remaining
	 components only enter nonlinearly or after being differentiated
	 in the $\evmB$ or $\nas$ directions.
%     (for which there are stronger
%	 estimates available than for a generic derivative $\nabla$).
	\end{remark}
	Propositions \ref{mainminkidentprop} and \ref{mainmBidentprop} follow from the upcoming sequence
	of lemmas and the proofs can be found at the end of the section.

	\subsection{The proof of Proposition \ref{mainminkidentprop}}

We are going to need a slightly different result in the
region near the light cone $u \sim 0$ and the region away from
the light cone. We fix a $C^\infty$ cutoff function $\chi_0$ with  $\chi_0(\rho) = 1$ when
$|\rho| \leq 1$ and $\chi_0(\rho) = 0$ when $|\rho| \geq 2$.
We then set $\chi(u, v) = \chi_0(|u|/4v)$ so that $\chi \equiv 1$ when $|u| \leq v/4$
and $\chi \equiv 0$ when $|u| \geq v/2$. Also $\nabla \chi$ is supported only
in the region $v/4 \leq |u| \leq v/2$, and $|\nabla \chi|\lesssim \frac{1}{1+v} |\chi_0'|$.
Writing $\gamma^{\mu\nu} = \gamma_\chi^{\mu\nu} + \gamma_{1-\chi}^{ {\mu\nu}}$,
with $\gamma_\chi = \chi \gamma$ and $\gamma_{1-\chi} = (1-\chi)\gamma$,
we then have the following bound
\begin{equation}
 |\nabla_X \gamma_{\chi}|
 \lesssim
 \chi |\nabla_X \gamma| + \frac{1}{1+v} |\chi_0'| |X| |\gamma|
 \label{}
\end{equation}
and similarly with $\chi$ replaced with $1-\chi$. We will use these bounds
repeatedly in what follows.

The contribution from $ \gamma_{1-\chi}$ and
$P_{1-\chi} = (1-\chi)P$ can be handled using the standard identity
\eqref{mainlinident} so we just write
\begin{equation}
	\pa_\mu(\gamma_{1-\chi}^{\mu\nu}\pa_\nu\psi + P_{1-\chi}^\mu) X\psi
	= \pa_\mu J_{X, \gamma_{1-\chi}, P_{1-\chi}}^\mu
	+ K_{X, \gamma_{1-\chi}, P_{1-\chi}},
 \label{}
\end{equation}
where, by \eqref{zetaJbound0}, \eqref{naiveKbd}, we have
\begin{equation}
 |\zeta(J_{X, \gamma_{1-\chi}, P_{1-\chi}})|
 \lesssim |\gamma_{1-\chi}| |X||\pa \psi|^2 +
 |P_{1-\chi}| |X| |\pa \psi|,
 \label{1minuschiJ}
\end{equation}
\begin{multline}
 |K_{X, \gamma_{1-\chi}, P_{1-\chi}}|
 \lesssim
 (1-\chi)\left(|\nabla \gamma| |X| |\pa\psi|^2
 + |\gamma| |\pa X| |\pa \psi|^2
 + (|\pa P| |X| + |\pa X| |P|) |\pa \psi|\right)
 \\
 + \frac{1}{1+v} |\chi_0'|
 \left(|\gamma| |X| |\pa\psi|^2 + |P| |X| |\pa \psi|\right)
 + \frac{1}{r}(1-\chi) \left(|\gamma| |X| |\pa \psi|^2 +|P| |X| |\pa \psi| \right).
 \label{1minuschiK}
\end{multline}
To get the bound \eqref{trivialKboundlie} involving the Lie derivative,
we just use the bound \eqref{scalarcurrentbd0Lie}.

We now carry out the calculation for $\gamma_\chi = \chi \gamma$.
We start by handling the ``good'' terms, which are those which do
not involve products between $X^v$ and $u$-derivatives of $\psi$.
\begin{lemma}
	\label{J1K1lem}
Under the hypotheses of Proposition \ref{mainminkidentprop},
with $\gamma_\chi = \chi \gamma$, we have
 \begin{equation}
  \pa_\mu(\gamma_\chi^{\mu\nu}\pa_\nu \psi)X\psi =
	\pa_u(\gamma_\chi^{uu}\pa_u \psi) X^v \pa_v \psi + \pa_\mu {J}^{1,\mu}_X
	+ {K}_X^1,
  \label{}
 \end{equation}
 where $\widetilde{J}^{1,\mu}_X$ and $\widetilde{K}_X^1$
 are given explicitly in \eqref{j1explicit},
 and satisfy the following bounds. For any $\delta > 0$,
 \begin{align}
  |\zeta({J}^{1,\mu}_X)| &\lesssim
	\chi \delta |X^\ell_m||\evm \psi|^2
	+\chi \left(\left(1 + \frac{1}{\delta}\right)|\gamma| |\pa \psi|_{X,m}^2
	+ |\zeta(X)| |\gamma| |\pa \psi|^2\right), \label{J1boundm}\\
	|{K}_X^1| &\lesssim
	\left( \chi|\nabla \gamma| + (\chi+|\chi_0'|)\frac{1}{1+|u|} |\gamma|
	+ \chi\frac{|X^\ell_m|^{1/2}}{|X^n_m|^{1/2}} |\nabla_{\evm} \gamma|
	\right)  |\pa \psi|_{X, m}^2
	+ \chi\frac{|X^n_m|}{|X^\ell_m|^{1/2}} |{F}|  |\pa \psi|_{X, m}\\
	&+ \chi |X^n_m|^{1/2} \left( |\nabla P| + \frac{1}{1+v} |P|\right) |\pa \psi|_{X, m}
  \label{K1boundm}
	\end{align}
\end{lemma}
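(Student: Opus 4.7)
The plan is to expand the left-hand side into its null-coordinate components, isolate the single term in which $X^v$ pairs with a $\pa_u$-derivative of $\gamma_\chi^{uu}\pa_u\psi$, and reorganize everything else as a divergence plus a scalar current with the claimed structure. First I would split $X\psi = X^u\pa_u\psi + X^v\pa_v\psi$ and write
\begin{equation}
\pa_\mu(\gamma_\chi^{\mu\nu}\pa_\nu\psi)X\psi = \pa_\mu(\gamma_\chi^{\mu\nu}\pa_\nu\psi)X^u\pa_u\psi + \pa_\mu(\gamma_\chi^{\mu\nu}\pa_\nu\psi)X^v\pa_v\psi.
\end{equation}
The first piece I would treat using the standard identity \eqref{mainlinident} applied to the spherically symmetric multiplier $X^u\pa_u$; since \eqref{Xgrowth} gives $|\pa X^u|(1+|u|)\lesssim |X^n_m|^{1/2}$, every error term that appears is of a size compatible with the $|X^n_m|^{1/2}|\pa\psi|_{X,m}$ budget in \eqref{K1boundm}, and in particular never produces a dangerous $|X^\ell_m|$ factor.

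For the second piece, I would expand $\pa_\mu(\gamma_\chi^{\mu\nu}\pa_\nu\psi)X^v\pa_v\psi$ as a sum over the four combinations $(\mu,\nu)\in \{u,\bar u\}\times\{u,\bar u\}$, writing $\bar u$ for $v$ or an angular index. The combination $\mu=\nu=u$ is exactly the extracted ``bad'' term $\pa_u(\gamma_\chi^{uu}\pa_u\psi)X^v\pa_v\psi$. For the $\mu\neq u$ combinations, the differentiation $\pa_\mu\in\{\pa_v,\nas\}$ can be integrated by parts against $X^v\pa_v\psi$: the resulting commutators do not see $\pa_u X^v$, and the remaining boundary/volume terms either fit directly into $|X^\ell_m|^{1/2}|\nabla_{\evm}\gamma||\pa\psi|_{X,m}^2$ or land on $\chi$. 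For the $\mu=u,\nu\neq u$ combination, $\gamma_\chi^{u\nu}\pa_\nu\psi$ already involves a derivative $\pa_\nu\psi$ controlled by the large weight $X^\ell_m$, so an integration by parts in $u$ pays for itself after Young's inequality.

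The bound \eqref{J1boundm} on $\zeta(J^1_X)$ would then follow from the Young inequality with parameter $\delta$ applied to the residual cross terms: a contribution of the form $X^v|\gamma^{u\nu}||\pa_\nu\psi||\pa_u\psi|$ (with $\nu\neq u$) splits as $\delta X^\ell_m|\evm\psi|^2 + \delta^{-1}|\gamma|\,X^\ell_m|\pa_u\psi|^2$, after which the second assumption in \eqref{pert1}, namely $X^\ell_m|\gamma|\lesssim \epsilon X^n_m$, turns the second piece into $\delta^{-1}|\gamma||\pa\psi|_{X,m}^2$. Derivatives that fall on $\chi$ produce a factor $|\pa\chi|\lesssim (1+v)^{-1}|\chi_0'|$; since $\chi_0'$ is supported where $|u|\sim v$, this is comparable to $(1+|u|)^{-1}|\chi_0'|$ and feeds into the $(\chi+|\chi_0'|)(1+|u|)^{-1}|\gamma|$ term in \eqref{K1boundm}. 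The scalar current $K^1$ is then read off from the remaining bulk contributions and the $\pa_\mu P^\mu$ inhomogeneity, with bounds following from \eqref{Xgrowth} and \eqref{pert1}.

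The main obstacle will be bookkeeping: there are many cross terms between the two indices of $\gamma_\chi^{\mu\nu}$, the two components of $X$, and the cutoff $\chi$, and each must be routed either into $\pa_\mu J^{1,\mu}_X$ (via integration by parts that avoids differentiating $X^v$) or into $K^1_X$ (via Young and \eqref{pert1}), without double-counting the $F$ contribution. The delicate point is ensuring that the extracted term $\pa_u(\gamma_\chi^{uu}\pa_u\psi)X^v\pa_v\psi$ is the \emph{only} residue after integration by parts, i.e.\ that no other combination produces a term of the form $|\pa X^v|\cdot|\gamma|\cdot|\pa\psi|^2$ which would not be controllable by the weights in \eqref{K1boundm}.
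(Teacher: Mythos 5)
Your extraction of the single bad term $\pa_u(\gamma_\chi^{uu}\pa_u\psi)X^v\pa_v\psi$ and your treatment of the $X^u\pa_u$ pairing by the standard identity do match the paper, but the mechanism you propose for the remaining cross terms is not the paper's and, as written, does not close. The paper handles all components of $\gamma_\chi$ outside $\{(u,u),(u,v),(v,u)\}$ with the \emph{packaged} multiplier identity (so no isolated second derivatives ever appear), collects the cross terms into $\check\gamma\,\pa_u\pa_v\psi$ with $\check\gamma=\gamma_\chi^{uv}+\gamma_\chi^{vu}$, and then substitutes the wave equation \eqref{modelwavenull} for $\pa_u\pa_v\psi$, turning it into $\tfrac14\sDelta\psi-\pa_\mu(\gamma^{\mu\nu}\pa_\nu\psi)+F-\pa_\mu P^\mu$. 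This substitution is the whole point of the ``modified'' identity (see Section \ref{modmultsec}), and it is the only source of the $|F|$- and $P$-terms in \eqref{K1boundm} and of the explicit currents in \eqref{j1explicit} (which contain $\check\gamma F$ and $-\check\gamma\pa_\mu P^\mu$). Your argument never invokes the equation, so it cannot produce or justify those terms; your closing worry about ``double-counting the $F$ contribution'' has no content in your scheme, since a purely integration-by-parts argument generates no $F$ contribution at all.

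Concretely, the claim that after integrating $\pa_\mu\in\{\pa_v,\nas\}$ by parts against $X^v\pa_v\psi$ ``the remaining boundary/volume terms fit directly'' into the first-order bounds is false: that move leaves second-order residues such as $\gamma_\chi^{vu}X^v\,\pa_u\psi\,\pa_v^2\psi$ and $\gamma_\chi^{Au}X^v\,\pa_u\psi\,\nas_A\pa_v\psi$, which are not quadratic forms in $\pa\psi$ and are not controlled by any term on the right of \eqref{K1boundm}; iterating your integration by parts merely cycles the mixed derivative back and forth against the bad derivative $\pa_u\psi$ with the large weight $X^v$. To avoid the equation one must instead recombine symmetric index pairs so that the second derivatives assemble into perfect derivatives of first-order quadratics (e.g.\ $\check\gamma X^v\pa_u\pa_v\psi\,\pa_v\psi=\tfrac12\check\gamma X^v\pa_u\bigl[(\pa_v\psi)^2\bigr]$, and the $(u,A)$ and $(A,u)$ residues combine into $\pa_v\bigl[\pa_u\psi\,\nas_A\psi\bigr]$ up to commutators), and then verify that the new flux terms, e.g.\ $\zeta_v\,\gamma_\chi^{Au}X^v\,\pa_u\psi\,\nas\psi$, still respect the rigid structure of \eqref{J1boundm}, in which every contribution involving $\pa_u\psi$ carries a factor $|\gamma|$ or $|\zeta(X)|$ — exactly the feature that makes the bound usable at the shocks, where only $\tfrac{1}{(1+v)(1+s)^{1/2}}X^n_m(\pa_u\psi)^2$ is controlled. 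None of this bookkeeping is present in your proposal, and note also that the admissible weight on $|\nabla_{\evm}\gamma|$ in \eqref{K1boundm} is $|X^\ell_m|^{1/2}/|X^n_m|^{1/2}$, not $|X^\ell_m|^{1/2}$.
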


\begin{proof}

\textit{Step 1: Separating the bad terms}

We start by separating out
the terms with $(\mu,\nu) \in\{(u, u),  (v, u), (u, v)\}$,
\begin{multline}
 \pa_\mu(\gamma_\chi^{\mu\nu}\pa_\nu\psi)
 = \pa_u(\gamma_\chi^{uu}\pa_u\psi) + \pa_v(\gamma_\chi^{vu}\pa_u\psi)
 + \pa_u (\gamma_\chi^{uv}\pa_v \psi)
 + \pa_\mu({\gamma}_{1}^{\mu\nu} \pa_\nu \psi)\\
 =
  \pa_u(\gamma^{uu}_\chi\pa_u\psi) + (\gamma_\chi^{vu} + \gamma_\chi^{uv})
	\pa_v\pa_u\psi
 + \pa_\mu({\gamma}_1^{\mu\nu} \pa_\nu \psi)
 + (\pa_v \gamma^{vu}_\chi)\pa_u \psi
 + (\pa_u \gamma^{uv}_\chi) \pa_v\psi,
 \label{startmodifiedapp}
\end{multline}
where ${\gamma}_1^{\mu\nu}$ vanishes when
$(\mu,\nu) \in \{(u,u), (v, u), (u, v)\}$,
\begin{equation}
 {\gamma}_1^{\mu\nu} = \gamma_\chi^{\mu\nu} - \delta^{\mu}_{u} \delta^{\nu}_{u} \gamma_\chi^{uu}
 -\delta^{\mu}_{v}\delta^{\nu}_{u} \gamma_\chi^{vu}
 -\delta^{\mu}_u\delta^{\nu}_v \gamma_{\chi}^{uv}.
 \label{gamma1def}
\end{equation}
We first deal with the contribution from $\gamma_1$ into \eqref{startmodifiedapp}.
Writing $\gamma_1(\pa\psi, \pa \psi) = \gamma_1^{\mu\nu}\pa_\mu\psi\pa_\nu\psi$,
\begin{equation}
 |\gamma_1(\pa \psi, \pa \psi)| \lesssim \chi|\gamma| \left(|\pa_v\psi| + |\nas \psi|\right)|\pa \psi|,
 \label{}
\end{equation}
and if $X = X^u\pa_u + X^v\pa_v$ then
\begin{multline}
 |(X^\alpha\pa_\alpha \gamma_1^{\mu\nu}) \pa_\mu\psi\pa_\nu\psi|
 \lesssim |X^\alpha \pa_\alpha \gamma_1| \left(|\pa_v\psi| + |\nas \psi|\right)|\pa \psi|\\
 \lesssim \left(\chi |\nabla_X \gamma|
 + \frac{\chi + |\chi_0'|}{1+v} |X| |\gamma|\right)\left(|\pa_v\psi| + |\nas \psi|\right)|\pa \psi|,
 \label{}
\end{multline}
where we bounded $|X^\alpha\pa_\alpha \gamma|\lesssim |\nabla_X \gamma| +
|\Gamma| |X||\gamma|$, where the Christoffel symbols $\Gamma$ satisfy
$|\Gamma| \lesssim \frac{1}{r} \lesssim \frac{1}{1+v}$ on the support of
$\chi$. We also note that
\begin{equation}
 |\pa_\mu X^\alpha \gamma_1^{\mu\nu}\pa_\nu \psi \pa_\alpha \psi|
 \lesssim \chi |\pa X^u| |\gamma| |\pa \psi|^2
 + \chi |\pa X| |\gamma||\pa_v\psi| |\pa \psi|.
 \label{}
\end{equation}

As a result, we have the identity
\begin{equation}
 \pa_\mu({\gamma}_1^{\mu\nu} \pa_\nu \psi) X\psi
 = \pa_\mu J_{X, \gamma_1}^\mu + K_{X, \gamma_1}
 \label{}
\end{equation}
where
\begin{align}
 J_{X, \gamma_1}^\mu &= \gamma_1^{\mu\nu}\pa_\nu \psi X\psi -
 \frac{1}{2}X^\mu \gamma_1(\pa\psi,\pa \psi),\label{JXgamma1}\\
 K_{X, \gamma_1} &=\pa_\mu X^\alpha \gamma_1^{\mu\nu} \pa_\nu \psi \pa_\alpha \psi
 - \frac{1}{2} \pa_\alpha X^\alpha \gamma_1(\pa\psi, \pa \psi)
 - \frac{1}{2} (X^\alpha\pa_\alpha \gamma_1^{\mu\nu}) \pa_\mu \psi\pa_\nu \psi.
 \label{}
\end{align}
which satisfy
\begin{align}
 |\zeta(J_{X, \gamma_1})| &\lesssim \chi
 \left(|\gamma| |\pa \psi| |X\psi| + |\zeta(X)||\gamma| |\pa \psi|^2\right),\label{zetaXbd0}\\
 |K_{1, X}| & \lesssim \left(\chi |\nabla_X \gamma|
 + \frac{\chi + |\chi_0'|}{1+v} |X| |\gamma| + \chi |\pa X| |\gamma|\right)\left(|\pa_v\psi| + |\nas \psi|\right)|\pa \psi|
 + \chi|\pa X^u| |\gamma| |\pa \psi|^2.
\end{align}
We now bound
\begin{equation}
 |\gamma| |\pa \psi| |X\psi|
 \lesssim |X^n| |\gamma||\pa \psi|^2 + |X^\ell| |\gamma| |\pa \psi| |\evm \psi|
 \lesssim \delta |X^\ell_m| |\evm \psi|^2 +\left(1 +  \frac{1}{\delta}\right)|\gamma||\pa \psi|^2_{X, m}
 \label{J1boundm0}
\end{equation}
for any $\delta >0$,
and so the first line of \eqref{zetaXbd0} is bounded by the right-hand side of
\eqref{J1boundm}. We also have
\begin{multline}
 |\nabla_X \gamma| (|\pa_v \psi| + |\nas \psi|) |\pa \psi|
 \lesssim
 |X^n_m| |\nabla \gamma| |\pa \psi|^2
 + \frac{|X^\ell_m|^{1/2}}{|X^n_m|^{1/2}}
 |\nabla_{\evm} \gamma| |X^\ell_m|^{1/2} (|\pa_v \psi| + |\nas \psi|)
 |X^n_m|^{1/2} |\pa \psi|\\
 \lesssim
 \left(|\nabla \gamma| + \frac{|X^\ell_m|^{1/2}}{|X^n_m|^{1/2}}
 |\nabla_{\evm} \gamma| \right)|\pa \psi|_{X, m}^2,
 \label{}
\end{multline}
as well as
\begin{equation}
 \frac{|X|}{1+v} |\gamma| \left( |\pa_v \psi| + |\nas \psi|\right) |\pa \psi|
 \lesssim
 \left[  {\left(\frac{|X^\ell_m|^{1/2}}{ |X^n_m|^{1/2}}+\frac{|X^n_m|^{1/2}} {|X^\ell_m|^{1/2}}\right)}    \frac{1+|u|}{1+v}\right] \frac{|\gamma|}{1+|u|}
 |\pa \psi|_{X,m}^2,
 \label{growthuseerror1}
\end{equation}
and similarly
\begin{equation}
 |\pa X| |\gamma| \left( |\pa_v \psi| + |\nas \psi|\right) |\pa \psi|
 \lesssim
 \left[{\frac{|\pa X| }{|X^n_m|^{1/2}|X^\ell_m|^{1/2}} (1+|u|)}\right] \frac{|\gamma|}{1+|u|}
 |\pa \psi|_{X,m}^2,
 \label{growthuseerror2}
\end{equation}
and after using the hypotheses \eqref{Xgrowth} on $X$, these satisfy \eqref{K1boundm}.

\textit{Step 2: Using the equation}

We now deal with the second term in \eqref{startmodifiedapp}.
Introducing
\begin{equation}
 \check{\gamma} = \gamma_{\chi}^{vu} + \gamma_{\chi}^{uv},
 \label{}
\end{equation}
and using the equation \eqref{modelwaveapdx}
written in the form \eqref{modelwavenull}, the second term in \eqref{startmodifiedapp}
is
\begin{align}
 \check{\gamma} \pa_v\pa_u\psi
 &= \check{\gamma}\left(\frac{1}{4}\sDelta \psi
 - \pa_\mu(\gamma^{\mu\nu}\pa_\nu \psi)
 +  F - \pa_\mu P^\mu\right)\\
 &= \nas \cdot \left(\frac{1}{4}\check{\gamma} \nas \psi\right)
 - \pa_\mu( \check{\gamma} \gamma^{\mu\nu}\pa_\nu \psi)
 + \check{\gamma} F
 - \check{\gamma}\pa_\mu P^\mu
 + (\pa_\mu \check{\gamma})\gamma^{\mu\nu}
 \pa_\nu \psi
 - \frac{1}{4}\nas\check{\gamma}\cdot \nas \psi,
 \label{}
\end{align}
so \eqref{startmodifiedapp} reads
\begin{equation}
 \pa_\mu(\gamma_\chi^{\mu\nu}\pa_\nu\psi)
 = \pa_u(\gamma_\chi^{uu}\pa_u\psi) + \pa_\mu({\gamma}_1^{\mu\nu} \pa_\nu \psi)
 + \pa_\mu({\gamma}_2^{\mu\nu} \pa_\nu \psi)
 + F_1 + F_2,
 \label{modified0}
\end{equation}
where ${\gamma}_2^{\mu\nu}$ is given by
\begin{equation}
 {\gamma}_2^{\mu\nu} = \frac{1}{4} \slashed{\Pi}^{\mu\nu}\check{\gamma} {-}  \gamma^{\mu\nu}\check{\gamma}
 ,
 \qquad
 \slashed{\Pi}^{\mu\nu} = m^{\alpha\beta} \slashed{\Pi}_\alpha^\mu \slashed{\Pi}_\beta^\nu,
 \label{gamma2def}
\end{equation}
{where $\slashed{\Pi}$ denotes projection to the tangent space
to the spheres $v+u = constant$ and $v-u = constant$, defined as in \eqref{angularproj}}.
The terms $F_1, F_2$ are
\begin{align}
 F_1 &= \check{\gamma} F,\label{F1}\\
 F_2 &= - \check{\gamma}\pa_\mu P^\mu
 + {\pa_\mu}\check{\gamma} \gamma^{\mu\nu}
 \pa_\nu \psi
 - \frac{1}{4}\nas\check{\gamma}\cdot \nas \psi+ (\pa_v \gamma^{vu}_\chi)\pa_u \psi
 + (\pa_u \gamma^{uv}_\chi) \pa_v\psi,
 \label{F2}
\end{align}
and we now verify that $(|F_1|  +|F_2|)|X\psi|$ is bounded by the
right-hand side of \eqref{K1boundm}. First, noting that
$|X \psi| \lesssim |X^\ell_m|^{1/2} |\pa \psi|_{X, m}$, we have
\begin{equation}
 |F_1| |X\psi|
 \lesssim
 |\gamma||F| |X^\ell_m|^{1/2} |\pa \psi|_{X, m}
 \lesssim
 \frac{|X^n|}{|X^\ell_m|^{1/2}} |F| |\pa \psi|_{X, m}.
 \label{}
\end{equation}

We also have $|\pa_\mu P^\mu| \lesssim |\nabla P| + \frac{1}{1+v} |P|$
on the support of $\chi$, and so on the support of $\chi$,
\begin{equation}
 |\gamma \pa_\mu P^\mu| |X \psi|
 \lesssim |\gamma| \left( |\nabla P| + \frac{1}{1+v} |P|\right)
 |X^\ell_m|^{1/2} |\pa \psi|_{X, m}
 \lesssim
 \left[ \frac{|X^n_m|}{|X^\ell_m|^{1/2}} \right]
 \left(|\nabla P| + \frac{1}{1+v} |P|\right) |\pa \psi|_{X, m},
 \label{}
\end{equation}
which is bounded by the right-hand side of \eqref{K1boundm}.
Bounding
\begin{equation}
 |\pa_v \gamma^{vu}_\chi|\lesssim
 |\nabla_v \gamma_\chi| + \frac{1}{r} |\gamma_\chi|
 \lesssim
 \chi|\nabla_{\evm} \gamma| + \frac{\chi + |\chi_0'|}{1+v} |\gamma|,
 \label{}
\end{equation}
we find
\begin{multline}
 |\pa_v \gamma^{uv}_\chi| |\pa \psi| |X\psi|
 \lesssim \frac{|X^\ell_m|^{1/2}}{|X^n_m|^{1/2}} |\pa_v \gamma^{uv}_\chi| |\pa \psi|_X^2
 \\
 \lesssim \chi \frac{|X^\ell_m|^{1/2}}{|X^n_m|^{1/2}}|\nabla_{\evm} \gamma| |\pa \psi|_X^2
 +
 (\chi + |\chi_0'|)
 \left[
 \frac{|X^\ell_m|^{1/2}}{|X^n_m|^{1/2}}
 \frac{1+|u|}{1+v}
 \right]
  \frac{|\gamma|}{1+|u|} |\pa \psi|_X^2,
 \label{}
\end{multline}
which is bounded by the right-hand side of \eqref{K1boundm}. By a similar
argument and the bound
\begin{equation}
 |\pa_u \gamma^{uu}_\chi| |\pa_v \psi| |X\psi|
 \lesssim |\pa \gamma_\chi| |\pa \psi|_{X, m}^2,
 \label{}
\end{equation}
this term is also bounded by the right-hand side of \eqref{K1boundm},
and similarly $|\nas \gamma| |\nas \psi| |X\psi|
\lesssim |\nas \gamma| |\pa \psi|_{X, m}$, and as a result
 $
 (|F_1|  + |F_2|)|X\psi|$
 is bounded by the right-hand side of \eqref{K1boundm}.

We now multiply the expression \eqref{modified0} by $X \psi
= (X^v\pa_v + X^u\pa_u)\psi$. We will need to treat the product
$\pa_u(\gamma^{uu}\pa_u\psi) X^v\pa_v \psi$ differently from
the other terms this generates and so we write
\begin{equation}
	\pa_\mu(\gamma_\chi^{\mu\nu}\pa_\nu \psi) X\psi
	= \pa_u(\gamma_\chi^{uu}\pa_u\psi) X^v\pa_v \psi
	+ {J}_{X}^{1,\mu} + {K}_{X}^1.
 \label{modified1}
\end{equation}
Here,
\begin{equation}
    {J}_{X}^{1,{\mu}} = J_{X, \gamma_1}^{{\mu}} + J_{X, \gamma_2}^{{\mu}} + J_{X^u\pa_u,\gamma_\chi}^{{\mu}},
	\qquad
	K_{X}^1 = K_{X, \gamma_1}
	+ K_{X, \gamma_2} + K_{X^u\pa_u,\gamma_\chi} + (F_1 + F_2) X\psi,
	\label{j1explicit}
\end{equation}
where we have used the identity \eqref{JKrelation} and where
the $J_{X, \gamma}$
are defined as in % \eqref{energycurrent}
\eqref{JXgamma1}.
The quantity $J_X^{1,\mu}$ satisfies the bound
\begin{equation}
 |\zeta(J_X^1)|\lesssim
 \chi \left(|\gamma| |\pa \psi| |X\psi| + |\zeta(X)| |\gamma| |\pa \psi|^2 +
 |X^u||\gamma| |\pa \psi|^2\right),
\end{equation}
which can be bounded by the right-hand side of \eqref{J1boundm} as in
\eqref{J1boundm0}.

To handle $K^1_X$, we write $K_{X, \gamma_2} = \slashed{K}_X + \check{K}_X$ where
$\slashed{K}$ collects the terms involving angular derivatives and $\check{K}$
collects the terms involving products between $\gamma$ and $\check{\gamma}$,
\begin{align}
	\slashed{K}_X &= \frac{1}{8} \pa_\alpha \left(\slashed{\Pi}^{\mu\nu}\check{\gamma} X^\alpha\right)
 \pa_\mu \psi \pa_\nu \psi
 =\frac{1}{8} \pa_\alpha X^\alpha |\nas \psi|^2  \check{\gamma}
 + \frac{1}{8} ( X^\alpha\pa_\alpha \check{\gamma} ) |\nas \psi|^2
 \label{slashedK}\\
 \check{K}_X &= -\frac{1}{2} \pa_\alpha \left( \gamma^{\mu\nu} \check{\gamma} X^\alpha\right)
 \pa_\mu \psi \pa_\nu \psi
 + \pa_\mu X^\alpha \gamma^{\mu\nu} \check{\gamma} \pa_\nu \psi \pa_\alpha \psi.
 \label{checkedK}
 % \\
 % K_{X^u\pa_u,\gamma} &= \frac{1}{2} \pa_u(X^u \gamma^{\mu\nu})\pa_\mu\psi
 % \pa_\nu \psi - \pa_\mu X^u \gamma^{\mu\nu}\pa_\nu \psi \pa_u \psi,
\end{align}

Noting the bounds
\begin{equation}
 |\check{\gamma}| \lesssim \chi |\gamma|,
 \qquad
 |\nabla_X \check{\gamma}| \lesssim \chi |\nabla_X \gamma| + \frac{|X|}{1+v} |\chi_0'| |\gamma|,
 \label{}
\end{equation}
and that on the support of $\chi$, the Christoffel symbols $\Gamma$ satisfy
$|\Gamma| \lesssim \frac{1}{1+v}$, we have the bounds
\begin{align}
	|\slashed{K}_X|
	&\lesssim \chi |\pa X| |\gamma| |\nas \psi|^2 + \chi |\nabla_X \gamma| |\nas \psi|^2
	+ \frac{|X|}{1+v} \left( \chi + |\chi'_0|\right) |\gamma| |\nas \psi|^2,
	\\
 |\check{K}_X|
 &\lesssim
 \chi |\pa X| |\gamma|^2 |\pa \psi|^2 + \chi |\nabla_X \gamma| |\gamma| |\pa \psi|^2
 + \frac{|X|}{1+v} \left( \chi + |\chi'_0|\right) |\gamma|^2 |\pa \psi|^2.
 % |K_{X^u\pa_u,\gamma}|
 % &\lesssim
 % \chi|X^u| |\nabla \gamma| |\pa \psi|^2
 % + \chi|\pa X^u| |\gamma| |\pa \psi|^2 + \frac{\chi + |\chi_0'|}{1+v} |X| |\gamma| |\pa \psi|^2.
 \label{}
\end{align}
In particular,
\begin{equation}
 |\slashed{K}_X|
 \lesssim \chi \left[ \frac{|\pa X|}{|X^\ell_m|} (1+|u|) \right]
 \frac{|\gamma|}{1+|u|} |\pa \psi|^2_{X, m}
 +\chi |\nabla \gamma| |\pa \psi|_{X, m}^2
 +\left( \chi + |\chi'_0|\right)
  \left[ \frac{1 +|u|}{1+v} \right] \frac{|\gamma|}{1+|u|} |\pa \psi|_{X,m}^2,
 \label{}
\end{equation}
and, bounding $|\gamma| \leq \frac{|X^n_m|}{|X^\ell_m|}$
and
$|\nabla_X \gamma||\gamma| \lesssim |X^n_m| |\nabla \gamma|$,
\begin{multline}
 |\check{K}_X|\lesssim
 \chi \left[\frac{|\pa X|}{|X^\ell_m|} {|X^n|^{1/2}} (1+|u|) \right]\frac{|\gamma|}{1+|u|}
 |\pa \psi|_{X, m}^2\\
 + \chi |\nabla \gamma| |\pa \psi|_{X,m}^2
 + \left( \chi + |\chi'_0|\right) \left[\frac{1+|u|}{1+v}\right] \frac{|\gamma|}{1+|u|}
 |\pa \psi|_{X, m}^2,
 \label{}
\end{multline}
as needed.

From the formula 
\begin{equation}
 K_{X^u\pa_u,\gamma_\chi} = \frac{1}{2} \pa_u(X^u \gamma_{\chi}^{\mu\nu})\pa_\mu\psi
 \pa_\nu \psi - \pa_\mu X^u \gamma_\chi^{\mu\nu}\pa_\nu \psi \pa_u \psi,
 \label{}
\end{equation}
we also have
\begin{multline}
 |K_{X^u\pa_u,{\gamma_\chi}}|
 \lesssim
 \chi|X^u| |\nabla \gamma| |\pa \psi|^2
 + \chi|\pa X^u| |\gamma| |\pa \psi|^2 + \frac{\chi + |\chi_0'|}{1+v} |X^u| |\gamma| |\pa \psi|^2\\
 \lesssim
 \chi |\nabla \gamma| |\pa \psi|_{X, m}^2
 + \chi \left[ \frac{|\pa X^u|}{|X^n_m|} (1 +|u|)\right] \frac{|\gamma|}{1+|u|}
 |\pa \psi|_{X, m}^2
 + (\chi + |\chi_0'|)
 \left[ \frac{1+|u|}{1+v}\right] \frac{|\gamma|}{1+|u|} |\pa \psi|_{X, m}^2,
 \label{}
\end{multline}
which also satisfies the needed bounds.

% It remains to bound the contribution from $F_1, F_2$. We have
% \begin{equation}
%  |F_1| \leq \chi |\gamma| |F|,
%  \label{}
% \end{equation}
% and to bound $F_1$ we use the bounds
% \begin{equation}
%  |\pa_v \gamma| \lesssim |\nabla_{\ell^{m}} \gamma|
%  + \frac{1}{1+v} |\gamma|,
%  \qquad
%  |\pa_u \gamma| \lesssim |\nabla \gamma| + \frac{1}{1+v} |\gamma|,
%  \label{}
% \end{equation}
% using again that the Christoffel symbols
% $\Gamma$ satisfy $|\Gamma| \lesssim \frac{1}{r} \lesssim \frac{1}{1+v}$
% on the support of $\chi$. These give
% \begin{equation}
%  |F_2| \lesssim \chi \left( |\gamma| |\pa P| + |\gamma|^2 |\pa \psi| +
%  |\nabla \gamma| (|\pa_v \psi| + |\nas \psi|) + |\nabla_{v} \gamma||\pa \psi|
%  \right)
%  +\frac{\chi + |\chi_0'|}{1+v} |\gamma| |\pa \psi|,
%  \label{}
% \end{equation}
% and the result follows.
\end{proof}

We now manipulate the first term on the right-hand side of \eqref{modified1}.
\begin{lemma}
	\label{J2K2lem}
	Under the hypotheses of Proposition \ref{mainminkidentprop}, we have
 \begin{equation}
  \pa_u(\gamma_\chi^{uu}\pa_u \psi) X^v \pa_v \psi = \pa_\mu J_X^{2, \mu}
	+ K^2_X,
	\label{J2K2ident}
	\end{equation}
	where $J_X^{2,\mu}, K_X^2$ are given explicitly in \eqref{j2k2} and satisfy
\begin{align}
  |\zeta(J_X^{2})|
	&\lesssim \chi \left( |\slashed{\zeta}|^2 |\pa \psi|_{X,m}^2
  +  |\gamma| |\pa \psi|_{X,m}^2\right)\\
	|K_X^2|
	&\lesssim
	\left(
	\chi |\nabla \gamma| +
  (\chi + |\chi_0'|)  \frac{|\gamma|}{1+|u|}
	+\chi
	\frac{|X^\ell_m|^{1/2}}{|X^n_m|^{1/2}} |\nas \gamma|
  \right)
  |\pa \psi|_{X, m}^2\\
	&+ \chi|X^n_m|^{1/2}\left( |F| + |\nabla P|\right) |\pa \psi|_{X,m}
	+ \chi |X^n_m|^{1/2} \frac{|P|}{1+|u|} |\pa \psi|_{X, m}.
 \label{}
\end{align}
\end{lemma}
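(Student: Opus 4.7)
The plan is to carry out integration by parts in the $u$-direction to trade the outer $\pa_u$ on $\gamma_\chi^{uu}\pa_u\psi$ for a combination of total derivatives and terms involving $\pa_u\pa_v\psi$; the latter can then be replaced by the angular Laplacian and lower-order/nonlinear quantities via the wave equation \eqref{modelwavenull}. Concretely, I would begin by writing
\begin{equation}
\pa_u(\gamma_\chi^{uu}\pa_u\psi)\, X^v\pa_v\psi
=\pa_u\bigl(\gamma_\chi^{uu}\pa_u\psi\, X^v\pa_v\psi\bigr)
-\gamma_\chi^{uu}\pa_u\psi\,(\pa_u X^v)\pa_v\psi
-\gamma_\chi^{uu}\pa_u\psi\, X^v\pa_u\pa_v\psi.
\end{equation}
The first term on the right is already a component of $J_X^{2,u}$. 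The second is a true bulk error whose Cauchy--Schwarz bound is $|\pa_uX^v||\gamma||\pa\psi|^2\lesssim \bigl[\tfrac{|\pa_u X^v|}{|X^n_m|}(1+|u|)\bigr]\tfrac{|\gamma|}{1+|u|}|\pa\psi|_{X,m}^2$, which is absorbed by \eqref{Xgrowth} and contributes to $K_X^2$ in the form $|\gamma||\pa\psi|_{X,m}^2/(1+|u|)$.

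For the third term, I would invoke the equation in the form $-4\pa_u\pa_v\psi=-\sDelta\psi-\pa_\mu(\gamma^{\mu\nu}\pa_\nu\psi)-\pa_\mu P^\mu+F$ to replace $\pa_u\pa_v\psi$. This produces four pieces, of which the leading one is $\tfrac{1}{4}\gamma_\chi^{uu}\pa_u\psi\, X^v\sDelta\psi$. I would rewrite this as
\begin{equation}
\tfrac14\gamma_\chi^{uu}\pa_u\psi\, X^v\sDelta\psi
=\tfrac14\nas\!\cdot\!\bigl(\gamma_\chi^{uu}\pa_u\psi\, X^v\nas\psi\bigr)
-\tfrac14\nas\bigl(\gamma_\chi^{uu}\pa_u\psi\, X^v\bigr)\!\cdot\!\nas\psi,
\end{equation}
so the first term contributes to the angular components of $J_X^{2,\mu}$ (responsible for the $|\slashed{\zeta}|^2|\pa\psi|_{X,m}^2$ bound after Cauchy--Schwarz: $|\slashed{\zeta}||\gamma||\pa_u\psi||X^v||\nas\psi|\lesssim |\slashed{\zeta}|^2|X^n_m||\pa_u\psi|^2+|\gamma|^2|X^\ell_m||\nas\psi|^2\lesssim |\slashed{\zeta}|^2|\pa\psi|_{X,m}^2+|\gamma||\pa\psi|_{X,m}^2$ using \eqref{pert1}). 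The second piece expands into terms $|\nas\gamma||\pa\psi||X^v||\nas\psi|$ and $|\gamma||\pa^2\psi||X^v||\nas\psi|$; the former is controlled by Cauchy--Schwarz as $\tfrac{|X^\ell_m|^{1/2}}{|X^n_m|^{1/2}}|\nas\gamma|\cdot|X^n_m|^{1/2}|\pa_u\psi|\cdot|X^\ell_m|^{1/2}|\nas\psi|\lesssim \tfrac{|X^\ell_m|^{1/2}}{|X^n_m|^{1/2}}|\nas\gamma||\pa\psi|_{X,m}^2$, while the latter yields $|\gamma||\pa\psi|_{X,m}^2/(1+|u|)$ after using $|\pa_u\nas\psi|\lesssim |\nabla\pa\psi|$ and Cauchy--Schwarz (the factor of $X^v/(1+|u|)$ being controlled by \eqref{Xgrowth}).

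The remaining three contributions from the equation are handled as follows. The quadratic-in-$\gamma$ term $\tfrac14\gamma_\chi^{uu}\pa_u\psi\, X^v\pa_\mu(\gamma^{\mu\nu}\pa_\nu\psi)$ is cubic in $\gamma$ and $\pa\psi$; bounding one factor of $\gamma$ by $\epsilon/((1+v)(1+s)^{1/2})$ from \eqref{pert1} and using $|X^v|\lesssim |X^\ell_m|$ reduces it to a multiple of $|\nabla\gamma||\pa\psi|_{X,m}^2$ up to factors that fit into the $|\gamma|/(1+|u|)|\pa\psi|_{X,m}^2$ bucket. The $P$-contribution $\tfrac14\gamma_\chi^{uu}\pa_u\psi X^v\pa_\mu P^\mu$, written out, produces the $|X^n_m|^{1/2}|\nabla P||\pa\psi|_{X,m}$ and $|X^n_m|^{1/2}|P|/(1+|u|)\cdot|\pa\psi|_{X,m}$ error terms (using the support condition $|u|\leq v/2$ so that $1/r\lesssim 1/(1+v)$ and the weighted smallness of $\gamma$). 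Finally the source term gives $|X^n_m|^{1/2}|F||\pa\psi|_{X,m}$. Collecting, I set $J_X^{2,\mu}$ equal to the total-derivative terms produced above and $K_X^2$ the remaining bulk quantities; the stated estimates then follow. The main technical nuisance — not conceptually hard, but requiring care — is keeping track of the precise weights $|X^\ell_m|^{1/2}/|X^n_m|^{1/2}$ when distributing Cauchy--Schwarz across factors, since an unbalanced split would fail to close because the coefficients $X^v$ are much larger than $X^u$; condition \eqref{Xgrowth} is tailored so that all such splits do close.
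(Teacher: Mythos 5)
Your overall strategy is the paper's: integrate by parts in $u$, substitute the equation for $\pa_u\pa_v\psi$, and integrate by parts on the sphere for the $\sDelta$ piece. But there is a genuine gap at the point where you estimate the terms containing second derivatives of $\psi$. After the angular integration by parts you are left with $\gamma_\chi^{uu}\,X^v\,\nas\pa_u\psi\cdot\nas\psi$ (note $\nas X^v=0$, so this is the only surviving dangerous piece), and you propose to bound it by $|\gamma|\,|\pa\psi|_{X,m}^2/(1+|u|)$ "using $|\pa_u\nas\psi|\lesssim|\nabla\pa\psi|$ and Cauchy--Schwarz". That cannot work: the right-hand side of the claimed bound for $K_X^2$ contains no second derivatives of $\psi$, and no pointwise inequality (nor the coefficient condition \eqref{Xgrowth}, which only constrains $X$) converts $\nabla\pa\psi$ into $\pa\psi$. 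Avoiding exactly this term is the whole point of the "modified" currents. The paper's fix is a further integration by parts: write $\nas\pa_u\psi\cdot\nas\psi=\tfrac12\pa_u|\nas\psi|^2+[\nas,\pa_u]\psi\cdot\nas\psi$ (the commutator is $O((1+v)^{-1}|\nas\psi|)$ on the support of $\chi$), and then move the $\pa_u$ off $|\nas\psi|^2$, so that the divergence piece $\tfrac18\,\delta^{\mu u}X^v\gamma_\chi^{uu}|\nas\psi|^2$ is added to $J_X^{2,\mu}$ and the bulk remainder only involves $\pa_u(X^v\gamma_\chi^{uu})\,|\nas\psi|^2$, which is first order in $\psi$ and fits the stated bound.

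The same issue recurs, unaddressed, in your treatment of $\gamma_\chi^{uu}\pa_u\psi\,X^v\pa_\mu(\gamma^{\mu\nu}\pa_\nu\psi)$: this is not simply "cubic in $\gamma$ and $\pa\psi$" — expanded it contains $\gamma_\chi^{uu}X^v\,\gamma^{\mu\nu}\pa_u\psi\,\pa_\mu\pa_\nu\psi$, again a second derivative that cannot be absorbed into $|\nabla\gamma|\,|\pa\psi|_{X,m}^2$ or $|\gamma|\,|\pa\psi|_{X,m}^2/(1+|u|)$. The paper puts it in divergence form, adding $X^v\gamma_\chi^{uu}\gamma^{\mu\nu}\pa_u\psi\pa_\nu\psi-\tfrac12\delta^{\mu u}X^v\gamma_\chi^{uu}\,\gamma(\pa\psi,\pa\psi)$ to $J_X^{2,\mu}$, leaving only terms where derivatives fall on the coefficients $X^v\gamma_\chi^{uu}\gamma^{\mu\nu}$. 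These extra flux components are also precisely why the stated bound for $|\zeta(J_X^2)|$ carries the $|\gamma|\,|\pa\psi|_{X,m}^2$ term. Your handling of the $\pa_u X^v$ commutator term, the $P$- and $F$-contributions, and the weight bookkeeping via \eqref{pert1} and \eqref{Xgrowth} is in line with the paper; but without the two additional integrations by parts the decomposition you define does not yield a $K_X^2$ satisfying the claimed estimate, so the proof as written does not close.
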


\begin{proof}

Using the equation \eqref{modelwaveapdx} again,
\begin{multline}
 \pa_u(\gamma_{\chi}^{uu}\pa_u\psi) X^v\pa_v \psi
 = \pa_u (\gamma_{\chi}^{uu}\pa_u \psi X^v\pa_v \psi)
 - \gamma_{\chi}^{uu}\pa_u \psi X^v \pa_u \pa_v \psi -  \gamma_{\chi}^{uu}\pa_u X^v\pa_u \psi \pa_v \psi\\
 = \pa_u(\gamma_{\chi}^{uu}\pa_u \psi X^v\pa_v\psi)
 - \frac{1}{4} X^v \gamma_{\chi}^{uu} \pa_u\psi \sDelta \psi
 + \gamma_{\chi}^{uu}\pa_u\psi X^v \pa_\mu(\gamma^{\mu\nu}\pa_\nu \psi)
 \\
 -\gamma_\chi^{uu} \pa_u\psi X^v \pa_\mu P^\mu
 + \gamma_{\chi}^{uu}\pa_u\psi X^v F - \gamma_{\chi}^{uu} \pa_u X^v\pa_u \psi\pa_v \psi.
 \label{modified2}
\end{multline}
The first and second terms on the third line here are bounded by
\begin{equation}
 |\gamma_{\chi}^{uu}\pa_u\psi X^v F|
 \lesssim \chi |\gamma||X| |\pa \psi| |F| \lesssim \chi|X^n_m|^{1/2} |F| |\pa \psi|_{X,m},
 \label{}
\end{equation}
where we used the assumptions \eqref{pert1}, and
\begin{multline}
 |\gamma_\chi^{uu} \pa_u\psi X^v \pa_\mu P^\mu|
 \lesssim \chi |\gamma| |X| |\pa \psi| |\nabla P|
 + \frac{\chi}{1+v} |\gamma| |X| |\pa \psi| |P|\\
 \lesssim \chi |X^n_m|^{1/2} |\nabla P| |\pa \psi|_{X, m}
 + \chi \left[\frac{1+|u|}{1+v}\right] \frac{|X^n_m|^{1/2}}{1+|u|} |P| |\pa \psi|_{X, m},
 \label{}
\end{multline}
as needed.

The second term on the right-hand side of \eqref{modified2} is
\begin{multline}
  -\frac{1}{4} X^v \gamma_{\chi}^{uu} \pa_u\psi \sDelta \psi
	= -\frac{1}{4}\nas \cdot\left(X^v \gamma_{\chi}^{uu} \pa_u \psi \nas \psi\right)
	+ \frac{1}{8} \pa_u \left( X^v \gamma_{\chi}^{uu} |\nas \psi|^2\right)
	\\
	- \frac{1}{8} \pa_uX^v  \gamma_{\chi}^{uu} |\nas \psi|^2
	+ \frac{1}{4} X^v \nas \gamma_{\chi}^{uu} \pa_u \psi \nas \psi
	- \frac{1}{8} X^v \pa_u\gamma_{\chi}^{uu} |\nas \psi|^2
	{+ \frac{1}{4} [\nas, \pa_u] \psi \cdot \nas \psi},
 \label{modified3}
\end{multline}
which can be written in the form
\begin{equation}
-\frac{1}{4} X^v \gamma_{\chi}^{uu} \pa_u\psi \sDelta \psi
= \pa_\mu\slashed{\widehat{J}}^{\mu}_X + \slashed{\widehat{K}}_X
\label{modified4}
\end{equation}
with
\begin{align}
 \slashed{\widehat{J}}^{\mu}_X
 &= -\frac{1}{4}X^v \gamma_{\chi}^{uu} \pa_u\psi \nas^\mu \psi
 + \frac{1}{8} \delta^{\mu u} X^v \gamma_{\chi}^{uu} |\nas \psi|^2,\\
 \slashed{\widehat{K}}_X
 &= \frac{1}{4} X^v \nas \gamma_{\chi}^{uu} \pa_u \psi \nas \psi
 - \frac{1}{8} X^v \pa_u\gamma_{\chi}^{uu} |\nas \psi|^2
 {+ \frac{1}{4}X^v\gamma_{\chi}^{uu} [\nas, \pa_u] \psi \cdot \nas \psi}.
 \label{}
\end{align}
These satisfy
\begin{align}
 |\zeta(\slashed{\widehat{J}})|
 &\lesssim
 \chi |X||\gamma|\left(|\pa \psi||\zeta(\nas \psi)|
 + |\nas \psi|^2\right),\\
 |\slashed{\widehat{K}}_X|
 &\lesssim
 \chi |X| |\nas \gamma| |\pa \psi| |\nas \psi|
 + \chi |X| |\nabla \gamma| |\nas \psi|^2
 + \frac{\chi + |\chi_0'|}{1+v}|X| |\gamma| |\nas \psi|^2,
 \label{}
\end{align}
using the same arguments as in the previous lemma to handle terms involving
derivatives of $\gamma_\chi$,{and where we bounded $|[\nas, \pa_u]\psi| \lesssim \frac{1}{1+v} |\nas \psi|$
on the support of $\chi$, which follows after writing $\nas = \frac{1}{r} \Omega$ and noting
that $[\pa_u, \Omega] = 0$.}
As a result,
\begin{equation}
 |\zeta(\slashed{\widehat{J}})|
 \lesssim \chi |X| |\gamma| \left( |\slashed{\zeta}|^2 |\pa \psi|^2 + |\nas \psi|^2\right)
 \lesssim \chi \epsilon |\slashed{\zeta}|^2 |\pa \psi|_{X,m}^2
 + \chi |\gamma| |\pa \psi|_{X,m}^2,
 \label{}
\end{equation}
and
\begin{equation}
 |\slashed{\widehat{K}}_X|
 \lesssim \chi\frac{|X^\ell_m|^{1/2}}{|X^n_m|^{1/2}} |\nas \gamma| |\pa \psi|_{X, m}^2
 + \chi |\nabla \gamma| |\pa \psi|_{X, m}^2 +
 (\chi + |\chi_0'|) \left[\frac{1+|u|}{1+v} \right] \frac{|\gamma|}{1+|u|}
 |\pa \psi|_{X, m}^2,
 \label{}
\end{equation}
as needed.

Similarly, the third term in \eqref{modified2} is
\begin{multline}
 \gamma_{\chi}^{uu}\pa_u\psi X^v \pa_\mu(\gamma^{\mu\nu}\pa_\nu \psi)
 = \pa_\mu \left(X^v\gamma_{\chi}^{uu}\gamma^{\mu\nu}  \pa_u \psi \pa_\nu \psi\right)
 - \frac{1}{2} \pa_u \left(X^v\gamma_{\chi}^{uu}\gamma( \pa\psi, \pa\psi)\right)
 \\
 - \pa_\mu (X^v\gamma_{\chi}^{uu}) \gamma^{\mu\nu} \pa_u\psi\pa_\nu\psi
 + {\frac{1}{2} \pa_u\left(X^v \gamma_{\chi}^{uu} \gamma^{\mu\nu}\right) \pa_\mu\psi\pa_\nu\psi}
 \label{}
\end{multline}
 where $\gamma(\pa\psi,\pa\psi) = \gamma^{\mu\nu}\pa_\mu\psi\pa_\nu\psi$. This
can be written in the form
\begin{equation}
 \gamma_{\chi}^{uu}\pa_u\psi X^v \pa_\mu(\gamma^{\mu\nu}\pa_\nu \psi)
 = \pa_\mu \widehat{J}_{X}^\mu + \widehat{K}_{X}^\mu
 \label{}
\end{equation}
with
\begin{align}
 \widehat{J}_{X}^\mu &= X^v \gamma_{\chi}^{uu} \gamma^{\mu\nu}\pa_u\psi \pa_\nu \psi
 - \frac{1}{2} \delta^{\mu u} X^v \gamma_{\chi}^{uu} \gamma(\pa \psi, \pa \psi),\\
 \widehat{K}_{X} &= -\pa_\mu (X^v\gamma_{\chi}^{uu}) \gamma^{\mu\nu} \pa_u\psi\pa_\nu\psi
 + {\frac{1}{2} \pa_u\left(X^v \gamma_{\chi}^{uu} \gamma^{\mu\nu}\right) \pa_\mu\psi\pa_\nu\psi}.
 \label{}
\end{align}
These satisfy
\begin{align}
 |\zeta(\widehat{J}_X)|
 &\lesssim \chi |X| |\gamma|^2 |\pa \psi|^2,\\
 |\widehat{K}_X|
 &\lesssim {\chi}(|X| |\nabla \gamma| + |\pa X| |\gamma|)|\gamma| |\pa \psi|^2
 + \frac{\chi + |\chi_0'|}{1+v}|X| |\gamma|^2 |\pa \psi|^2,
 \label{}
\end{align}
which satisfy the needed bounds after using the same arguments we have now used
many times.
Combining the above, we have arrived at \eqref{J2K2ident} where $J^{2}_X$ and $K^2_X$
are given by
\begin{equation}
 J^2_X = \slashed{J}_X + \widehat{J}_X,
 \qquad
 K_X^2 = \slashed{K}_X + \widehat{K}_X -\gamma_\chi^{uu} \pa_u\psi X^v \pa_\mu P^\mu
 + \gamma_{\chi}^{uu}\pa_u\psi X^v F-
 \gamma_{\chi}^{uu} \pa_u X^v\pa_u \psi\pa_v \psi.
 \label{j2k2}
\end{equation}

\end{proof}

%
% ===== UPDATE: =====
%
% Noting that
% \begin{align}
%  |(\nabla_X \gamma_2)(\pa\psi, \pa\psi)|
%  &\lesssim
%  \chi |\nabla_X\gamma| \left(|\pa_v\psi| + |\nas \psi|\right)|\pa \psi|
%  + \chi|\nabla_X \gamma| |\gamma| |\pa \psi|^2
%  \\
%  |\gamma_2(\pa\psi, \pa \psi)|
%  &\lesssim \chi |\gamma|\left(|\pa_v\psi| + |\nas \psi|\right)|\pa \psi|,
%  \label{}
% \end{align}
% the bounds \eqref{energycurrentbd0}, \eqref{scalarcurrentbd0} give
% \begin{align}
% 	|\zeta(J_{X, \gamma_2})| &\lesssim \chi |\gamma| |\pa \psi| |X\psi| + |\zeta(X)| |\gamma| |\pa\psi|^2\\
% 	|\zeta(J_{X^u\pa_u, \gamma})|
% 	 &\lesssim \chi|X^u| |\gamma | |\pa\psi|^2\\
%  |K_{X, \gamma_2}| &\lesssim
%  \chi \left(|\nabla_X \gamma| + |\pa X| |\gamma|\right)(|\pa_v\psi| + |\nas \psi|)|\pa \psi|
%  + \chi|\nabla_X\gamma| |\gamma| |\pa\psi|^2
%  \\
%  &\qquad+\chi |\gamma|\left(|\pa X| |\pa_v \psi|  + |\pa X^u| |\pa \psi|\right)|\pa \psi|
%  + \frac{1}{1+v} (\widetilde{\chi} +\chi) |X| |\gamma| |\pa \psi|^2
%  \label{KXgamma2bd}
%  \\
%  |K_{X^u\pa_u, \gamma}|
%  &\lesssim \left(\chi |X^u| |\nabla \gamma| + \chi |\pa X^u| |\gamma|
%  + \frac{1}{1+v}(\widetilde{\chi} +\chi)|X^u| |\gamma| \right) |\pa \psi|^2.
%  \label{KXgammaubd}
% \end{align}

Finally, we handle the contribution from $P_\chi$.

\begin{lemma}
	\label{littlejklem}
Under the hypotheses of Proposition \ref{mainminkidentprop}, we have
 \begin{equation}
  \pa_\mu P_\chi^\mu X\psi = \pa_\mu j_{P,X}^\mu + k_{P, X},
  \label{littleident}
 \end{equation}
 where $j_{P, X}$ and $k_{P,X}$ are given explicitly in
 \eqref{littlej}-\eqref{littlek} and satisfy
 the following bounds. For any $\delta > 0$,
\begin{align}
 |\zeta(j_{X,P})|
 &\lesssim\chi \delta |X| |\evm \psi|^2
 + \chi \left(1 + \frac{1}{\delta}\right) |X| |P|^2
 + \chi |\slashed{\zeta}|^2 |\pa \psi|_{X, m}^2
 + \chi |X^n_m|^{1/2} |P| |\pa \psi|_{X, m},
 % \chi |X^\ell_m|^{1/2} |P| |\pa \psi|_{X, m}
 \label{littlejbd}\\
 % |k_{X,P}| &\lesssim ????\\
 % &
 % \chi(|\pa_vP| + |\nas P|)|X| |\pa\psi|
 % + \chi( |\pa P| |X| + |P| |\pa X|)|\gamma||\pa \psi|
 % + \chi|P| |X| |F|
 % + \frac{1}{1+v} \widetilde{\chi} |P| |X| |\pa\psi|,
 % \label{}\\
 |k_{X, P}|&\lesssim
  \left( \chi|X^n_m|^{1/2} |\nabla P| + (\chi+|\chi_0'|)|X^n_m|^{1/2} \frac{|P|}{1+|u|}
 + \chi|X^\ell_m|^{1/2} \left(|\nabla_{\evm} P|
 + |\nas P|\right) \right)|\pa \psi|_{X, m}\\
 &
 +
 \chi |P||X|  \left( |F| + \frac{1}{1+v} |P|\right)
 +
 \chi |P| |\pa_u X^v| |\evm \psi|.
 \label{littlekbd}
\end{align}
\end{lemma}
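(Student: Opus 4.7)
The strategy parallels Lemmas \ref{J1K1lem} and \ref{J2K2lem}: isolate the dangerous cross-term $\pa_u P_\chi^u \cdot X^v \evm \psi$, since this is the only place where we are forced to invoke the wave equation \eqref{modelwaveapdx} to gain control; every other term admits a direct integration by parts producing currents and bulk errors that fit within the claimed bounds after Cauchy-Schwarz. The $\delta$-dependence in $|\zeta(j_{X,P})|$ arises because the combination $|P||X\psi|$ is not of the form $|\gamma||\pa\psi|^2_{X,m}$, and must be split as $|P||X\psi| \lesssim \delta |X||\evm\psi|^2 + \delta^{-1}|X||P|^2$, which is the origin of both the $\delta |X||\evm\psi|^2$ flux term and the $(1+\delta^{-1})|X||P|^2$ bulk error on the right side.

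The steps are as follows. First, decompose $\pa_\mu P^\mu_\chi \cdot X\psi = \pa_u P^u_\chi \cdot X^v\evm\psi + R$, where $R$ collects $(\pa_v P^v_\chi + \nas\cdot \slashed{P}_\chi) X\psi$ together with $\pa_\mu P_\chi^\mu \cdot X^u n\psi$ and $\pa_u P^u_\chi \cdot X^u n\psi$. All terms in $R$ can be handled as in Step~1 of Lemma~\ref{J1K1lem}: write them as $\pa_\mu(P_\chi^\mu X\psi) - P_\chi^\mu \pa_\mu(X\psi)$, bound the boundary term via $|P^\mu||X\psi| \lesssim |X^n_m|^{1/2}|P||\pa\psi|_{X,m}$, and bound the bulk terms using $|\pa(X\psi)| \lesssim |\pa X||\pa\psi| + |X^u||\pa^2\psi| + |X^v||\pa_v\pa\psi|$ together with the hypotheses \eqref{Xgrowth}, producing the contributions $\chi|X^n_m|^{1/2}|\nabla P||\pa\psi|_{X,m}$, $\chi|X^\ell_m|^{1/2}(|\nabla_{\evm}P| + |\nas P|)|\pa\psi|_{X, m}$, and the cutoff-derivative piece $|\chi_0'||X^n_m|^{1/2}|P|/(1+|u|)\cdot|\pa\psi|_{X,m}$, accounting for the $(\chi+|\chi_0'|)$ factor.

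For the dangerous term, integrate by parts in $u$ to obtain $\pa_u(P_\chi^u X^v \evm \psi) - P_\chi^u \pa_u X^v \evm \psi - P_\chi^u X^v \pa_u \evm \psi$. The first summand contributes to $j_{X,P}$ and supplies the $\chi|X^n_m|^{1/2}|P||\pa\psi|_{X,m}$ boundary term; the second directly yields the $\chi|P||\pa_u X^v||\evm\psi|$ bulk error. For the last summand, use the equation in the form $-4\pa_u \evm\psi = -\sDelta\psi + \pa_\mu(\gamma^{\mu\nu}\pa_\nu\psi) + \pa_\mu P^\mu - F$. The $\sDelta\psi$ piece $X^v P^u_\chi \sDelta\psi$ is treated as in \eqref{modified3}--\eqref{modified4}, integrating by parts in the angular variable and then in $u$, generating the angular flux $|\slashed{\zeta}|^2|\pa\psi|^2_{X,m}$ in $|\zeta(j_{X,P})|$ and the bulk $\chi|X^\ell_m|^{1/2}|\nas P||\pa\psi|_{X,m}$. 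The $\gamma$-divergence piece is integrated by parts once more, its bulk terms being absorbed into $\chi|X^n_m|^{1/2}|\nabla P||\pa\psi|_{X,m}$ via the smallness $|\gamma|\lesssim \epsilon/((1+v)(1+s)^{1/2})$ from \eqref{pert1}. The forcing gives $\chi|P||X||F|$, and the $\pa_\mu P^\mu$ reappearance, after a further integration by parts against $P^u_\chi X^v$, produces the quadratic $\chi|X||P|^2/(1+v)$.

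The main obstacle is the careful bookkeeping of derivatives falling on $P$ versus on $X$, using the hypothesis \eqref{Xgrowth} to trade factors of $(1+|u|)/(1+v)$ against mixed weights $|X^\ell_m|^{1/2}/|X^n_m|^{1/2}$ and $|X^n_m|^{1/2}/|X^\ell_m|^{1/2}$; this parallels the management in \eqref{growthuseerror1}--\eqref{growthuseerror2}, but now with $P$ playing the role formerly played by the product $\gamma \cdot \pa\psi$. A further subtle point is that when using the equation, the reappearance of $\pa_\mu P^\mu$ should not be re-expanded (otherwise we risk an infinite loop); instead, it is integrated by parts against $P^u_\chi X^v$ as a single divergence, which is what forces the quadratic $|X||P|^2$ error to appear at the level of both $j_{X,P}$ and $k_{X,P}$.
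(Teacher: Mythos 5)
Your core strategy is the paper's: isolate the single dangerous term $\pa_u P^u_\chi\, X^v\pa_v\psi$, integrate by parts in $u$, replace $\pa_u\evm\psi$ using the equation, and treat the resulting $\sDelta$, $\gamma$, $F$ and $\pa_\mu P^\mu$ pieces with one further integration by parts or directly, using $|X||\gamma|\lesssim |X^n_m|$ from \eqref{pert1} and the $\delta$-split $|P||X||\pa_v\psi|\lesssim \delta|X||\evm\psi|^2+\delta^{-1}|X||P|^2$ for the flux. That part is essentially correct and matches \eqref{littlej}--\eqref{littlek}.

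The gap is in your treatment of the remainder $R$. You propose to write $(\pa_vP^v_\chi+\nas\cdot\slashed P_\chi)X\psi$ (and the $X^u$-term) as $\pa_\mu(P^\mu_\chi X\psi)-P^\mu_\chi\pa_\mu(X\psi)$, i.e.\ to move the derivative off $P$ and onto $X\psi$. The resulting bulk terms contain genuine second derivatives of $\psi$ with the large weight, e.g.\ $P^v_\chi X^v\pa_v^2\psi$ and $\slashed P_\chi\cdot X^v\nas\pa_v\psi$; these do not appear in \eqref{littlekbd}, cannot be absorbed by $|\pa\psi|_{X,m}$, and in the application (where $\psi$ is $Z^I\psi$ at top order) are not controlled by the energies -- the only second-derivative combination the scheme is allowed to produce is $\pa_u\pa_v\psi$, which the equation removes. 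So your route would not deliver the claimed bound, even though the list of final contributions you write down is the right one. The correct move, and the paper's, is to bound these terms \emph{directly}, with no integration by parts: they pair good derivatives of $P$ ($\pa_vP$, $\nas P$) with $X\psi$, or the bad derivative $\pa_uP^u$ with the small weight $X^u=X^n_m$, and Cauchy--Schwarz alone gives the $\chi|X^\ell_m|^{1/2}(|\nabla_{\evm}P|+|\nas P|)|\pa\psi|_{X,m}$, $\chi|X^n_m|^{1/2}|\nabla P||\pa\psi|_{X,m}$ and $(\chi+|\chi_0'|)$ terms; only $\pa_uP^u_\chi X^v\pa_v\psi$ needs the equation. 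A related smaller point: for the $\sDelta$ piece only the angular integration by parts is needed (as in $\tfrac14 P^u_\chi X^v\sDelta\psi=\nas\cdot(\tfrac14P^u_\chi X^v\nas\psi)-\tfrac14X^v\nas P^u_\chi\cdot\nas\psi$); the additional $u$-integration by parts you import from \eqref{modified3}--\eqref{modified4} is specific to Lemma \ref{J2K2lem}, and here it would generate $X^v\pa_uP^u_\chi|\nas\psi|^2$, again a bad derivative of $P$ with the large weight $X^\ell_m$, outside \eqref{littlekbd}.
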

\begin{proof}
Here the problematic term
is $\pa_u P_\chi^u X^v\pa_v\psi$ so we separate it out and write
\begin{equation}
 \pa_\mu P_\chi^\mu X\psi =
 \pa_u P_\chi^u X^v\pa_v \psi +
 \pa_u P_\chi^u X^u \pa_u \psi
 + \pa_\mu \widetilde{P}_\chi^\mu X \psi
 =  \pa_u P_\chi^u X^v\pa_v \psi +
 \pa_u P_\chi^u X^u \pa_u \psi
+ \widetilde{k}_{P, X}
 \label{startP}
\end{equation}
To handle the last term we do not need to integrate by parts and
we just bound it directly by
\begin{align}
 |\widetilde{k}_{P, X}|
 &\lesssim
 \chi(|\pa_v P| + |\nas P|) |X\psi|
 + \frac{\chi + |\chi_0'|}{1+v} |P| |X \psi|
 \\
 &\lesssim
 \chi |X^\ell_m|^{1/2} (|\pa_v P| + |\nas P|) |\pa \psi|_{X, m}
 + \frac{\chi + |\chi_0'|}{1+v} |X^\ell_m|^{1/2} |P| |\pa \psi|_{X, m}
 \\
 &\lesssim
 \chi|X^\ell_m|^{1/2} (|\pa_v P| + |\nas P|) |\pa \psi|_{X, m}
 +(\chi + |\chi_0'|) \left[ \frac{|X^\ell_m|^{1/2}}{|X^n_m|^{1/2}}
 \frac{1+|u|}{1+v} \right] |X^n_m|^{1/2} \frac{|P|}{1+|u|} |\pa \psi|_{X, m},
 \label{goodPbdlittlek}
\end{align}
as needed. It is also straightforward to bound the second term on the right-hand side of \eqref{startP}
by \eqref{littlekbd}.

Using the equation for $\psi$, the first term in \eqref{startP} is
\begin{multline}
 \pa_u P_\chi^u X^v\pa_v \psi = \pa_u (P_\chi^u X^v\pa_v \psi)
 - P_\chi^u X^v \pa_u\pa_v \psi
 - P_\chi^u \pa_u X^v \pa_v \psi\\
 =
 \pa_u (P_\chi^u X^v\pa_v \psi)
 + \frac{1}{4} P_\chi^u X^v\sDelta \psi
 - P_\chi^uX^v\pa_\mu(\gamma^{\mu\nu}\pa_\nu \psi)
 + P_\chi^u X^v \pa_\mu P^\mu
 - P_\chi^uX^v F
 - P_\chi^u \pa_u X^v \pa_v \psi.
 \label{modifiedP1}
\end{multline}
Now we perform the same steps as in the previous lemma.
The second term in \eqref{modifiedP1} is
\begin{equation}
 \frac{1}{4} P_\chi^u X^v\sDelta \psi
 = \nas\cdot(\frac{1}{4} P_\chi^u X^v \nas \psi)
 - \frac{1}{4} X^v \nas P_\chi^u  \cdot \nas \psi,
 \label{}
\end{equation}
and the third is
\begin{equation}
 - P_\chi^uX^v\pa_\mu(\gamma^{\mu\nu}\pa_\nu \psi)
 = \pa_\mu(-P_\chi^uX^v \gamma^{\mu\nu}\pa_\nu \psi)
 + \pa_\mu(P_\chi^u X^v) \gamma^{\mu\nu}\pa_\nu \psi,
 \label{}
\end{equation}
so we have the identity \eqref{littleident} with
\begin{align}
 j_{X,P}^\mu &=  P_\chi^u X^v \left( \delta^{\mu u} \pa_v \psi + \frac{1}{4} \nas^\mu \psi - \gamma^{\mu\nu}
 \pa_\nu \psi\right),\label{littlej}\\
 k_{X,P} &= \widetilde{k}_{P, X}
 + \pa_u P^u_\chi X^u \pa_u \psi - \frac{1}{4}X^v \nas P_\chi^u\cdot \nas \psi
 + \pa_\mu(P_\chi^u X^v)\gamma^{\mu\nu}\pa_\nu \psi\\
 &+ P_\chi^u X^v \pa_\mu P^\mu
 - P_\chi^uX^v F
 - P_\chi^u \pa_u X^v \pa_v \psi\\
 &= \widetilde{k}_{P, X} + \pa_u P^u_\chi X^u \pa_u \psi + \widehat{k}_{P, X},
 \label{littlek}
\end{align}
where the first two terms in \eqref{littlek} are bounded as in
\eqref{goodPbdlittlek}.
For $j_{X, P}$,we bound
\begin{multline}
 |\zeta(j_{X, P})|
 \lesssim \chi |P| |X|(|\pa_v \psi| + |\slashed{\zeta}||\nas \psi| + |\gamma| |\pa \psi|),
 \\
 \lesssim
  \delta |X| |\evm \psi|^2
 + \chi \left(1 + \frac{1}{\delta}\right) |X| |P|^2
 + \chi |\slashed{\zeta}|^2 |\pa \psi|_{X, m}^2
 + \chi |X^n_m|^{1/2} |P| |\pa \psi|_{X, m},
 \label{}
\end{multline}
as needed, after bounding $|\gamma| |X| \lesssim |X^n_m|$.
To handle $\widehat{k}_{P, X}$, we bound
\begin{multline}
|\widehat{k}_{P, X}|
\lesssim
\chi|\nabla P| |X^n_m| |\pa \psi|
+ \chi|\nas P| |X^\ell_m| |\nas \psi|
+ \chi |\nabla P| |X| |\gamma| |\pa \psi|
+ \chi |P| |X| |\nabla P|\\
+ \chi |P| |X| |F| + \chi |P| |\pa_u X^v| |\evm \psi|
+ \frac{1}{1+v} (\chi + |\chi_0'|)|P|\left(  |X^n_m| |\pa \psi|
+ |X| |\gamma| |\pa \psi| + |X| |P|\right),
 \label{}
\end{multline}
which satisfies \eqref{littlekbd}, once more using the same arguments
we used in the previous two lemmas.
\end{proof}

\begin{proof}[Proof of Proposition \ref{mainminkidentprop}]

Combining Lemmas \ref{J1K1lem}, \ref{J2K2lem}, and \ref{littlejklem}, we arrive
at the identity
\begin{equation}
 \pa_\mu(\gamma^{\mu\nu}_\chi\pa_\nu \psi + P^\mu) X\psi
 = \pa_\mu \mJ_{X, P}^\mu + \mK_{X, P},
 \label{}
\end{equation}
where
\begin{align}
 \mJ_{X, P} &=  \mJ_X + j_{X, P} = J_X^1 + J_X^2 + j_{X, P},
 \label{explicitmJ}\\
 % J_{X, \gamma_2} + J_{X^u\pa_u, \gamma} + \slashed{\widehat{J}}_X + \widehat{J}_X
 % + j_{X, P},
 \mK_{X, P} &= \mK_X + k_{X, P}= K_X^1 + K_X^2 + k_{X, P},
 % \\
 % K_{X, \gamma_2} + K_{X^u\pa_u, \gamma} + \slashed{\widehat{K}}_X + \widehat{K}_X
 % + k_{X, P}.
 \label{explicitmK}
\end{align}
where $K_X^1, J^1_X$ are as in Lemma \ref{J1K1lem}, $K_X^2, J^2_X$ are as in
Lemma \ref{J2K2lem}, and $k_{X, P}, j_{X, P}$ are as in Lemma
\ref{littlejklem}.
To get the result, it remains only to see that \eqref{trivialKbound}
holds in the region $|u| \geq v/8$. By \eqref{1minuschiK}
it holds when $|u| \geq v/2$ and since $\mK_{X, P}$
satisfies \eqref{trivialKbound} as well, the result follows.
\end{proof}

\subsection{The proof of Proposition \ref{mainmBidentprop}}
\label{modifiedproofsecmB}
	The argument is nearly identical to the proof of the previous
	lemma, so we just indicate what the differences are, the main
	ones being that there are additional quantities
	involving $\frac{u}{vs}$ generated whenever we use the equation
	for $\pa_u\pa_v \psi$ and also that we need to keep better track
	of the terms involving $P$ since the $P$ in this region
	will satisfy worse estimates than the one we consider in the
	exterior.

  We note at this point that by our assumptions on $X^u, X^v$, we have
	\begin{equation}
	 |X^u| \lesssim |X^n_{\mB}| + \frac{1}{(1+s)^{1/2}}\lesssim X^n_{\mB},
	\end{equation}
	which we will frequently use in what follows. We also are assuming
	the condition \eqref{Xgrowth} but with $1+|u|$ replaced with
	$1+s$,
	\begin{equation}
	 |X^\ell_{\mB}|^{1/2} |X^n_{\mB}|^{1/2} \frac{1+s}{1+v}
	 + |\pa X| \frac{|X^n_{\mB}|^{1/2}}{|X^\ell_{\mB}|^{1/2}} \frac{1+s}{1+v}
	 + \frac{|\pa X^u|}{|X^n_{\mB}|^{1/2}}(1+ s)
	 \lesssim 1,
	 \label{Xgrowth2}
	\end{equation}
	which will be used to insert factors of $(1+s)^{-1}$ in front of some of
	the upcoming terms.

	We start with the following analogue
	of Lemma \ref{J1K1lem}.
	\begin{lemma}
		\label{J1K1lemmB}
	 Under the hypotheses of Proposition \ref{mainmBidentprop}, we have
	 \begin{equation}
	  \pa_\mu(\gamma^{\mu\nu}\pa_\nu\psi)X\psi = \pa_u \left(\gamma^{uu} \pa_u\psi\right) X^v \pa_v \psi
	  + \pa_\mu J_{X}^{1, \mu} + K_{X}^1
	  \label{}
	 \end{equation}
	 where
	 \begin{align}
	  |\zeta({J}^{1,\mu}_X)| &\lesssim
		\delta |X^\ell_{\mB}||\evmB \psi|^2
		+ \left(1 + \frac{1}{\delta}\right)|\gamma| |\pa \psi|_{X,\mB}^2
		+ |\zeta(X)| |\gamma| |\pa \psi|^2
		+
		 \epsilon |\zeta(J_{X,\gamma_a})|,
		  \label{mBJ1bd}\\
		|{K}_X^1| &\lesssim
		\left( |\nabla \gamma| + \frac{1}{1+s} |\gamma|
		+ \frac{|X^\ell_{\mB}|^{1/2}}{|X^n_{\mB}|^{1/2}} |\nabla_{\evm} \gamma|
		\right)  |\pa \psi|_{X, \mB}^2
		+ \frac{|X^n_{\mB}|}{|X^\ell_{\mB}|^{1/2}} |F|  |\pa \psi|_{X, \mB}\\
		&+  |X^n_{\mB}|^{1/2} \left( |\nabla P^u| + \frac{1}{1+v} |P^u|\right) |\pa \psi|_{X, \mB}\\
		&+  |X^\ev_{\mB}|^{1/2} \left( |\nabla_{\evmB} P| + |\nas P| + \frac{1}{1+v} |\nabla P|
		+ \frac{1}{1+v} |P|\right) |\pa \psi|_{X, \mB}\\
		&+ \epsilon\left( \frac{1}{(1+v)^{3/2}} |\pa \psi|^2 + \frac{1}{(1+v)^{1/2}} (|\evmB \psi|^2 + |\nas \psi|^2)
		\right)
	  \label{mBK1bd}
	 \end{align}
	\end{lemma}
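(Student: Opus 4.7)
The plan is to mirror the proof of Lemma \ref{J1K1lem} but replace every appeal to the Minkowskian wave equation with the equation satisfied in the central region, which now includes the linear term $\pa_\mu(\gamma_a^{\mu\nu}\pa_\nu \psi)$. Since $D^C_t \subset \{|u|\lesssim s^{1/2}\}$ by the assumptions \eqref{betaLassump}--\eqref{betaRassump}, the cutoff $\chi$ used in the Minkowskian argument is not needed and we may work globally. As before, I split
\begin{equation}
\pa_\mu(\gamma^{\mu\nu}\pa_\nu\psi)
= \pa_u(\gamma^{uu}\pa_u\psi) + \check\gamma\,\pa_u\pa_v\psi + \pa_\mu(\gamma_1^{\mu\nu}\pa_\nu\psi) + (\pa_v\gamma^{vu})\pa_u\psi + (\pa_u\gamma^{uv})\pa_v\psi,
\end{equation}
where $\check\gamma = \gamma^{vu}+\gamma^{uv}$ and $\gamma_1^{\mu\nu}$ is the ``good'' remainder vanishing on $(u,u)$, $(v,u)$, $(u,v)$. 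The contribution from $\gamma_1$ and from the last two terms is controlled by the same energy/integration-by-parts manipulation as in Lemma \ref{J1K1lem}, using the growth hypothesis \eqref{Xgrowth2} with $1+|u|$ replaced by $1+s$ and exploiting that $|X^n_{\mB}||X^\ell_{\mB}|\gtrsim 1/(1+s)^{1/2}\cdot(1+v)$, which yields the factor $|\gamma|/(1+s)$ appearing in \eqref{mBK1bd}.

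The new ingredient is the treatment of $\check\gamma\,\pa_u\pa_v\psi$. Using $\pa_u\pa_v = \pa_u\evmB - \tfrac{1}{vs}\pa_u - \tfrac{u}{vs}\pa_u^2$ and the equation $-4\pa_u\evmB\psi = -\sDelta\psi - \pa_\mu(\gamma_a^{\mu\nu}\pa_\nu\psi) - \pa_\mu(\gamma^{\mu\nu}\pa_\nu\psi) - \pa_\mu P^\mu + F$, I rewrite
\begin{equation}
\check\gamma\,\pa_u\pa_v\psi = \tfrac{1}{4}\check\gamma\sDelta\psi + \tfrac{1}{4}\check\gamma\,\pa_\mu(\gamma_a^{\mu\nu}\pa_\nu\psi) + \tfrac{1}{4}\check\gamma\,\pa_\mu(\gamma^{\mu\nu}\pa_\nu\psi) + \tfrac{1}{4}\check\gamma\,\pa_\mu P^\mu - \tfrac{1}{4}\check\gamma F - \tfrac{\check\gamma}{vs}\pa_u\psi - \tfrac{u\check\gamma}{vs}\pa_u^2\psi.
\end{equation}
The $\sDelta\psi$ piece is handled exactly as in Lemma \ref{J1K1lem} via integration by parts, producing an angular flux contribution to $J_X^1$ and a scalar current contributed to $K_X^1$ of the form $|X^\ell_{\mB}|^{1/2}|X^n_{\mB}|^{-1/2}|\nas\gamma|\cdot|\pa\psi|^2_{X,\mB}$. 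The $\check\gamma\pa_\mu(\gamma_a^{\mu\nu}\pa_\nu\psi)$ term is dealt with by a further integration by parts, giving a flux of the form $\check\gamma\,\gamma_a^{\mu\nu}\pa_\nu\psi\,X\psi$ whose size is bounded by $|\gamma|\cdot|\gamma_a|\cdot|X||\pa\psi|^2 \leq \epsilon|\zeta(J_{X,\gamma_a})|$ using $|\gamma|\leq \epsilon(1+v)^{-1}(1+s)^{-1/2}$ and absorbing into the linear $\gamma_a$ current. The bulk remainder of that integration by parts (involving derivatives of $\check\gamma$ and $\gamma_a$) uses the null structure \eqref{intronullcondn0}, giving the bound $\epsilon\bigl((1+v)^{-3/2}|\pa\psi|^2+(1+v)^{-1/2}(|\evmB\psi|^2+|\nas\psi|^2)\bigr)$ that appears on the fourth line of \eqref{mBK1bd}.

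The quadratic-in-$\gamma$ contribution from $\check\gamma\pa_\mu(\gamma^{\mu\nu}\pa_\nu\psi)$, the remainders $\check\gamma\pa_\mu P^\mu$, $\check\gamma F$, and the ``Burgers correction'' terms $-\tfrac{\check\gamma}{vs}\pa_u\psi$, $-\tfrac{u\check\gamma}{vs}\pa_u^2\psi$ are all acceptable error: each carries an extra factor of either $|\gamma|$ or $\frac{1}{vs}$, which gives enough smallness so that, after using the growth condition \eqref{Xgrowth2} to trade a $(1+s)$ for a factor in the multiplier norm, they fit into the right-hand side of \eqref{mBK1bd}. In particular, the $P$-dependent remainders must be split: the $u$-component $P^u$ is paired with $|X^n_{\mB}|^{1/2}$ factors coming from $\pa_u$-derivatives of $\check\gamma$ (since $X^n_{\mB}\lesssim 1+s$), while the $\evmB$- and angular components are paired with $|X^\ell_{\mB}|^{1/2}=v^{1/2}$; this structure is what distinguishes the $P^u$ lines from the other $P$ lines in \eqref{mBK1bd}.

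The main obstacle will be the bookkeeping around the $\gamma_a$ current: one must verify that its contribution really does get absorbed into $\epsilon|\zeta(J_{X,\gamma_a})|$ without producing uncontrolled bulk terms, and that the null condition $a^{\mu\nu}\pa_\mu u\pa_\nu u=0$ actually gives the $(1+v)^{-1/2}$ improvement for the $\evmB$ and $\nas$ derivatives in the bulk estimate. The secondary technical point is that the Burgers correction $-\tfrac{u}{vs}\pa_u^2\psi$ contains a second-order derivative, so it must be paired with the multiplier and an integration by parts in $u$, analogous to Step 2 of Lemma \ref{J2K2lem}; but because it carries $|u|/(vs)\lesssim 1/(v(1+s)^{1/2})$, one factor of $|X^n_{\mB}|$ is automatically gained and the resulting expression is perturbative.
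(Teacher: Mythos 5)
Your proposal is correct and follows essentially the same route as the paper: separate the $(u,u)$, $(u,v)$, $(v,u)$ components, use the central-region equation to trade the mixed second derivative for $\sDelta\psi$, the $\gamma_a$ term, the nonlinear terms, $\pa_\mu P^\mu$ and $F$ (keeping $P^u$ paired with $|X^n_{\mB}|^{1/2}$ and the remaining components with $|X^\ell_{\mB}|^{1/2}$), and absorb the $\gamma_a$ contribution into $\epsilon|\zeta(J_{X,\gamma_a})|$ plus a bulk term controlled by the null condition as in Lemma \ref{nulllemmascalarcurrent}. The only difference is organizational: rather than treating the Burgers correction $-\tfrac{u\check\gamma}{vs}\pa_u^2\psi$ by a separate integration by parts inside this lemma (which, when paired with $X^v\pa_v\psi$, regenerates $\pa_u\pa_v\psi$ and forces another use of the equation), the paper writes the cross term directly as $\check\gamma\, n\evmB\psi$ plus a divergence and folds the correction into $\gamma^{uu}_{\mB}=\gamma^{uu}-\gamma^{vu}\tfrac{u}{vs}$, deferring it to Lemma \ref{J2K2lemmB}.
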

	\begin{remark}
	 The linear terms above (the last terms on the right-hand side of
	 \eqref{mBJ1bd} and \eqref{mBK1bd}) are generated by the linear term
	 $\pa_\mu(\tfrac{u}{vs}a^{\mu\nu}\pa_\nu \psi)$ in our equation after we use
	 the equation for $n\evmB$, and they do not cause any serious difficulties.
	 The quantity $\zeta(J_{X,\gamma_a})$ is handled in Lemma \ref{nulllemmacurrent}
	 and uses the fact that $\gamma_a$ satisfies a null condition
	 (see \eqref{intronullcondn0}).
	\end{remark}
\begin{proof}

\textit{Step 1: Separating the bad terms}

	Since we will
	only need this argument in the region $|u| \ll v$, we do not need
	to introduce cutoff functions as in the proof of the previous
    result. Following the steps from the proof of Lemma \ref{J1K1lem},
	the identity \eqref{startmodifiedapp} is replaced by
\begin{multline}
 \pa_\mu(\gamma^{\mu\nu}\pa_\nu\psi)
 = \pa_u(\gamma^{uu} \pa_u\psi) + \pa_v(\gamma^{vu}\pa_u\psi)
 + \pa_u(\gamma^{uv}\pa_v\psi)
 + \pa_\mu({\gamma}_{1}^{\mu\nu} \pa_\nu \psi)\\
 =
  \pa_u(\gamma^{uu}\pa_u\psi) + (\gamma^{uv} + \gamma^{uv}) n\ell^{\mB}\psi
	+ \pa_\mu(\gamma_1^{\mu\nu}\pa_\nu \psi)
 + \pa_\mu({\gamma}_{1,\mB}^{\mu\nu} \pa_\nu \psi)\\
 + (\pa_v \gamma^{vu})\pa_u \psi
  + (\pa_u \gamma^{uv})\pa_v\psi
	+ \pa_u \left(\gamma^{uv} + \gamma^{uv}\right)\frac{u}{vs} \pa_u\psi,
 \label{startmodifiedmB}
\end{multline}
where, with $\gamma_1$ as in \eqref{gamma1def}, we have introduced
\begin{equation}
 \gamma_{1, \mB}^{\mu\nu} =  -
 (\gamma^{uv} + \gamma^{vu}) \frac{u}{vs} \delta^{\mu u}\delta^{\nu u}.
 \label{}
\end{equation}
We then have the identity
\begin{equation}
 \pa_\mu((\gamma_1^{\mu\nu} + \gamma_{1, \mB}^{\mu\nu})\pa_\nu \psi)
 = \pa_\mu J_{X, \gamma_1 + \gamma_{1, \mB}}^\mu
 + K_{X, \gamma_1 + \gamma_{1, \mB}},
 \label{}
\end{equation}
where recall
\begin{align}
 J_{X, \gamma}^\mu &= \gamma^{\mu\nu}\pa_\nu \psi X\psi -\frac{1}{2}X^\mu
 \gamma(\pa\psi, \pa \psi),\\
 K_{X, \gamma}
 &= \pa_\mu X^\alpha \gamma^{\mu\nu} \pa_\nu \psi \pa_\alpha \psi
  - \frac{1}{2} (\pa_\alpha X^\alpha) \gamma(\pa \psi, \pa \psi)
 - \frac{1}{2} (X^\alpha \pa_\alpha \gamma^{\mu\nu}) \pa_\mu\psi \pa_\nu \psi.
 \label{}
\end{align}
The bounds for $J_{X, \gamma_1}$ and $K_{X, \gamma_1}$ can be handled
just as in Lemma \ref{J1K1lem}, with the only difference being that we
use the bound \eqref{Xgrowth2} in place of the assumption \eqref{Xgrowth}
to introduce powers of $(1+s)^{-1}$.
For the contribution from $\gamma_{1, \mB}$, we just note that
\begin{multline}
 |\zeta(J_{X, \gamma_{1,\mB}})|
 \lesssim \frac{1}{vs^{1/2}} |\gamma| |\pa \psi| |X \psi|
 + \frac{1}{vs^{1/2}} |\gamma| |X^n_{\mB}| |\pa \psi|^2\\
 \lesssim \frac{1}{vs^{1/2}} |X^\ell_{\mB}|
 |\gamma| |\pa \psi| |\pa \psi|_{X, \mB}
 \lesssim |\gamma| |\pa\psi|_{X, \mB}^2,
 \label{}
\end{multline}
where we used that $\frac{|X^\ell_{\mB}|}{vs^{1/2}}
= \frac{1}{(vs)^{1/2}} \lesssim |X^n_{\mB}|$.
Using the straightforward bound \eqref{naiveKbd}
we get that $K_{X,\gamma_{1, \mB}}$ is bounded by the right-hand side of
\eqref{mBK1bd}.

\textit{Step 2: Using the equation}

Recalling that the equation in this region reads
\begin{equation}
	-4n\evmB \psi + \sDelta \psi + \pa_\mu\left( \gamma_a^{\mu\nu}\pa_\nu \psi\right)
	+ \pa_\mu (\gamma^{\mu\nu}\pa_\nu \psi) + \pa_\mu P^\mu = F,
 \label{dudvmBmodel}
\end{equation}
the identity \eqref{modified0} is replaced by
\begin{multline}
 \pa_\mu(\gamma^{\mu\nu}\pa_\nu\psi)
 = \pa_u\left({\gamma}_{\mB}^{uu} \pa_u\psi
 \right) + \pa_\mu({\gamma}_1^{\mu\nu} \pa_\nu \psi)
 + \pa_\mu(\gamma_{1,\mB}^{\mu\nu}\pa_\nu \psi) + \pa_\mu({\gamma}_2^{\mu\nu} \pa_\nu \psi)\\
 + \pa_\mu \left( \check{\gamma} \gamma_a^{\mu\nu}\pa_\nu \psi\right)
 + F_1 + F_{2, \mB} + F_{a},
 \label{mBgammaident0}
\end{multline}
where $\check{\gamma} = \gamma^{uv} + \gamma^{uv}$,
 $\gamma_2$ is as in \eqref{gamma1def}, \eqref{gamma2def}
but with $\gamma_\chi$ replaced with $\gamma$, where the above
quantities are defined as follows. First,
\begin{equation}
 {\gamma}_{\mB}^{uu} = \gamma^{uu} - \gamma^{vu} \frac{u}{vs},
 \label{}
\end{equation}
 $F_1$ is as in \eqref{F1}, and $F_{2,\mB}$ is as in \eqref{F2} except
that there are additional terms, generated by the third term in
\eqref{startmodifiedmB},
\begin{align}
 F_{2,\mB} = - \check{\gamma}\pa_\mu P^\mu
 + \check{\gamma} \gamma^{\mu\nu}
 \pa_\nu \psi
 - \frac{1}{4}\nas\check{\gamma}\cdot \nas \psi+ (\pa_v \gamma^{vu})\pa_u \psi
 + (\pa_u \gamma^{uv}) \pa_v\psi
 + \frac{u}{vs} \pa_u \check{\gamma} \pa_u\psi,
 \label{f2mBbd}
\end{align}
and, finally
\begin{equation}
 F_{a} = -\gamma_a^{\mu\nu}\pa_\mu \check{\gamma} \pa_\nu \psi.
 \label{}
\end{equation}

The quantity $|F_{2, \mB} X\psi|$ can be bounded just as how we controlled
$|F_{2} X\psi|$ starting in equation \eqref{F2}, except that we want to keep track
of the $u$ component of $P$ separately, and so we write
\begin{multline}
 |\pa_\mu P^\mu \check{\gamma}| |X \psi|
 \lesssim
 \left(|\pa_u P^u|
 + |\pa_v P^v| + |\nas P|\right) |\check{\gamma}||X^\ell_{mB}|^{1/2} |\pa \psi|_{X, \mB}
 \\
 \lesssim |X^n_{\mB}|^{1/2} |\pa P^u| |\pa \psi|_{X, \mB}
 + |X^\ell_{\mB}|^{1/2} \left( |\nabla_{\evmB} P|  + |\nas P|
 + \frac{1}{1+v} |\pa P| + \frac{1}{1+v} |P|\right)
 |\pa \psi|_{X, \mB},
 \label{specialPumB}
\end{multline}
where we used that $|\gamma| |X^\ell_{\mB}|^{1/2} \lesssim \frac{1}{(1+v)^{1/2}}
\lesssim |X^n_{\mB}|^{1/2}$ and bounded $|\pa_v P^v|\lesssim |\evmB P^v|
+ \frac{1}{1+v} |\pa_u P^v|$.

We note at this point that $\gamma_{\mB}^{uu}$ satisfies the bounds
\begin{equation}
 |\gamma^{uu}_{\mB}|\lesssim |\gamma|,
 \qquad
 |\nabla\gamma^{uu}_{\mB}| \lesssim |\nabla\gamma| +  \frac{1}{1+v}|\gamma|,
 \label{gammamBbd}
\end{equation}
if $|u| \lesssim s^{1/2}$, say.

As in \eqref{modified1}, the identity \eqref{mBgammaident0} gives
\begin{equation}
 \pa_\mu(\gamma^{\mu\nu}\pa_\nu \psi)X\psi = \pa_u \left(\gamma^{uu}_{\mB} \pa_u\psi\right) X^v \pa_v \psi
 + \pa_\mu J_{X}^{1, \mu} + K_{X, \mB}^1,
 \label{}
\end{equation}
where
\begin{align}
	{J}_{X}^{1,\mu} &= J_{X, \gamma_1} + J_{X, \gamma_2}+ J_{X, \check{\gamma} \gamma_a} + J_{X^u\pa_u,\gamma}
	,
	\label{J1K1mB2}
	\\
	K_{X}^1 &= K_{X, \gamma_1}
	+ K_{X, \gamma_2} + K_{X^u\pa_u,\gamma}
	+ K_{X, \check{\gamma} \gamma_a}+ (F_1 + F_{2,\mB} + F_a) X\psi.
 \label{J1K1mB}
\end{align}

Apart from the contribution from the quantity $\check{\gamma}\gamma_a$,
the rest of the argument from Lemma \ref{J1K1lem} then goes through without
change, using \eqref{gammamBbd} to handle the terms contributed by $\gamma_{\mB}^{uu}$,
and so the quantities in \eqref{J1K1mB} satisfy the bounds in Lemma \ref{mainmBidentprop}.
Using \eqref{energycurrentbd0} and \eqref{scalarcurrentbd0} it is not
hard to see that the terms contributed by $a$ into
\eqref{J1K1mB2} and \eqref{J1K1mB} are bounded by \eqref{zetamBJ} and
\eqref{mBmodifiedKbound}, respectively. To handle the contribution
from $\gamma_a$, we just bound
\begin{equation}
 |\zeta(J_{X, \check{\gamma} \gamma_a})|
 \lesssim |\check{\gamma}| |\zeta(J_{X, \check{\gamma} \gamma_a})|
 \lesssim \epsilon |\zeta(J_{X,\gamma_a})|,
 \label{}
\end{equation}
which appears in \eqref{mBJ1bd}.
Using Lemma
 \ref{nulllemmascalarcurrent} and straightforward estimates for
$\check{\gamma}$, $K_{X, \check{\gamma} \gamma_a}$ is bounded by the
last term on the right-hand side of \eqref{mBK1bd}.
\end{proof}

% To handle the contribution from the quantity $a$, we just note that by the
% null condition \eqref{intronullcondn}, by \eqref{energycurrentbd0} we have
% \begin{multline}
%  |\zeta(J_{X, \check{\gamma}\tfrac{u}{vs} a})|
%  \lesssim \frac{1}{(1+v)(1+s)^{1/2}} |\gamma| |\pa \psi| |X \psi|\\
%  + \frac{1}{(1+v)(1+s)^{1/2}}|\zeta(X)|  |\pa \psi| \left(|\evmB\psi| + \frac{1}{(1+v)(1+s)^{1/2}} |n\psi|
%  + |\nas \psi|\right),
%  \label{}
% \end{multline}
% bounding $|\pa_v\psi| \lesssim |\evmB \psi| + \frac{1}{(1+v)(1+s)^{1/2}} |n\psi|$.
% Using \eqref{scalarcurrentbd0} it is not hard to see that
% $K_{X, \check{\gamma}
% \begin{equation}
%  |K_{X, \check{\gamma} \tfrac{u}{vs} a}|
%  \lesssim
%  |(\nabla_X A)(\pa\psi,\pa\psi)| + |\pa X| |A(\pa\psi, \pa\psi)|
%  + |A||\pa \psi|\left(|\pa X^u||\pa \psi| +|\pa X^v| |\pa v\psi|\right)
%  \lesssim
%  \frac{|X|}{(1+v)^2(1+s)^{1/2}} |\gamma| |\pa \psi|^2
%  + \frac{1}{(1+v)(1+s)^{1/2}}|\nabla_X \gamma||\pa \psi|^2
%  \label{}
% \end{equation}

The next step is the analogue of Lemma \ref{J2K2lem} with
$\gamma^{uu}$ replaced with ${\gamma}^{uu}_{\mB}$.
\begin{lemma}
	\label{J2K2lemmB}
Under the hypotheses of Proposition \ref{mainmBidentprop}, we have
 \begin{equation}
  \pa_u \left(\gamma^{uu}_{\mB} \pa_u \psi\right) X^v\pa_v\psi
	= \pa_\mu J_{X}^{2,\mu} + K_X^2,
  \label{}
 \end{equation}
 where $J_X^2, K_X^2$ are given explicitly in \eqref{j2mBexplicit}-\eqref{k2mBexplicit}
 and satisfy
 \begin{align}
  |\zeta(J_X^2)| &\lesssim |\slashed{\zeta}|^2 |\pa \psi|_{X, \mB}^2
	+ |\gamma| |\pa \psi|_{X, \mB}^2 + \epsilon |\zeta(J_{X, \gamma_a})|\\
	|K_X^2| &\lesssim \left( |\nabla \gamma|  + \frac{|\gamma|}{1+s}
	+ \frac{|X^\ell_{\mB}|^{1/2}}{|X^n_{\mB}|^{1/2}} |\nas \gamma|\right) |\pa \psi|_{X,\mB}^2
  \label{}
 \end{align}
\end{lemma}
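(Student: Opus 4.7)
The plan is to mimic the proof of Lemma \ref{J2K2lem} but replace the Minkowskian wave equation by the $\mB$--wave equation \eqref{dudvmBmodel}, and account for the fact that $\evmB = \pa_v + \tfrac{u}{vs}\pa_u$ is now the ``good'' null direction rather than $\pa_v$. The starting identity is the same as in the Minkowskian case:
\begin{equation}
\pa_u(\gamma^{uu}_{\mB}\pa_u\psi) X^v\pa_v\psi
= \pa_u\!\left(\gamma^{uu}_{\mB}\pa_u\psi\, X^v\pa_v\psi\right)
- \gamma^{uu}_{\mB}\pa_u\psi\, X^v \pa_u\pa_v\psi
-\gamma^{uu}_{\mB}\pa_u X^v\pa_u\psi \pa_v\psi.
\end{equation}
The key step is to use the identity $\pa_u\pa_v\psi = \pa_u\evmB\psi - \tfrac{1}{vs}\pa_u\psi - \tfrac{u}{vs}\pa_u^2\psi$ to convert $\pa_u\pa_v\psi$ into $\pa_u\evmB\psi$ plus two ``correction'' terms that are small by the factor $\tfrac{1}{vs^{1/2}}$ on the support of our estimates. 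We then replace $\pa_u\evmB\psi$ using the equation \eqref{dudvmBmodel}, which gives
\begin{equation}
4\pa_u\evmB \psi = \sDelta \psi + \pa_\mu(\gamma_a^{\mu\nu}\pa_\nu \psi) + \pa_\mu(\gamma^{\mu\nu}\pa_\nu\psi)+ \pa_\mu P^\mu - F.
\end{equation}
Inserting this into the expression and following the same integration-by-parts scheme as in Lemma \ref{J2K2lem}, the contribution of $\sDelta\psi$ produces the angular divergence $\slashed{\widehat J}^\mu_X$ and angular scalar current $\slashed{\widehat K}_X$, while the contribution of $\pa_\mu(\gamma^{\mu\nu}\pa_\nu\psi)$ produces the analogues of $\widehat J_X, \widehat K_X$ as before. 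The contribution of the null-condition term $\pa_\mu(\gamma_a^{\mu\nu}\pa_\nu\psi)$ generates, after integration by parts, an energy current contribution bounded by $\epsilon |\zeta(J_{X,\gamma_a})|$ (appearing in the stated bound) together with scalar current terms controlled via Lemma \ref{nulllemmascalarcurrent}; the $\pa_\mu P^\mu$ terms yield the $P$--dependent error terms in $K_X^2$, treated exactly as the analogous terms in Lemma \ref{J2K2lem}, being careful (as was done in \eqref{specialPumB}) to separate the $P^u$ component so that $\nabla P^u$ appears with weight $|X^n_{\mB}|^{1/2}$ and the remaining $\nabla P^v, \nas P$ derivatives appear with weight $|X^\ell_{\mB}|^{1/2}$.

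The correction terms from $\pa_u\pa_v\psi - \pa_u\evmB\psi$ are handled by direct integration by parts. The term $\gamma^{uu}_{\mB}\pa_u\psi\, X^v\tfrac{u}{vs}\pa_u^2\psi$ is rewritten as
\begin{equation}
\tfrac12 \pa_u\!\left(\tfrac{uX^v}{vs}\gamma^{uu}_{\mB}(\pa_u\psi)^2\right) - \tfrac12\pa_u\!\left(\tfrac{uX^v}{vs}\gamma^{uu}_{\mB}\right)(\pa_u\psi)^2,
\end{equation}
which contributes to $J_X^{2,u}$ a term of size $\tfrac{|u|}{vs}|X^\ell_{\mB}||\gamma||\pa\psi|^2 \lesssim |\gamma||\pa\psi|^2_{X,\mB}$ (using $\tfrac{|u|}{vs}|X^\ell_{\mB}| \lesssim (1+s)^{-1/2}\cdot(1+s)^{1/2} \sim 1$ along $D^C$, more carefully $\tfrac{|u|X^v}{vs}\lesssim \tfrac{1}{s^{1/2}}$) and to $K_X^2$ a term of size $(\tfrac{|\gamma|}{1+s}+|\nabla\gamma|)|\pa\psi|^2_{X,\mB}$ after using the assumption $|\pa X^u|\lesssim \tfrac{1}{1+v}$ and the symbol bound \eqref{gammamBbd} for $\gamma^{uu}_{\mB}$. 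The lower-order correction $\gamma^{uu}_{\mB}\pa_u\psi\, X^v\tfrac{1}{vs}\pa_u\psi$ is already in the $K_X^2$ form and obeys the same bound.

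Collecting everything, the resulting energy current and scalar current are
\begin{align}
\label{j2mBexplicit}
J_X^{2,\mu} &= \slashed{\widehat J}^\mu_X + \widehat J^\mu_X
+ \tfrac12\delta^{\mu u}\tfrac{uX^v}{vs}\gamma^{uu}_{\mB}(\pa_u\psi)^2 + J^\mu_{X,\check\gamma\gamma_a},
\\
\label{k2mBexplicit}
K_X^2 &= \slashed{\widehat K}_X + \widehat K_X + K_{X,\check\gamma\gamma_a}
- \gamma^{uu}_{\mB}\pa_u\psi\, X^v\pa_\mu P^\mu
+ \gamma^{uu}_{\mB}\pa_u\psi\, X^v F - \gamma^{uu}_{\mB}\pa_u X^v\pa_u\psi\pa_v\psi
\nonumber\\&\qquad
- \tfrac12\pa_u\!\left(\tfrac{uX^v}{vs}\gamma^{uu}_{\mB}\right)(\pa_u\psi)^2
- \tfrac{1}{vs}X^v\gamma^{uu}_{\mB}(\pa_u\psi)^2,
\end{align}
where $\slashed{\widehat J},\slashed{\widehat K},\widehat J,\widehat K,J_{X,\check\gamma\gamma_a},K_{X,\check\gamma\gamma_a}$ are defined exactly as in Lemma \ref{J2K2lem} with $\gamma^{uu}$ replaced by $\gamma^{uu}_{\mB}$.

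The main technical obstacle is bookkeeping rather than any new analytic idea: one must check that when $\pa_u\evmB\psi$ is replaced via the equation, every resulting term fits inside the RHS of the claimed bounds, using the crucial quantitative input $|X^n_{\mB}|\gtrsim(1+s)^{-1/2}$ (which controls several ratios of multiplier components that arise), the smallness \eqref{pert1} of $\gamma$, and the improved weight $(1+s)^{-1}$ coming from the $\evmB$--versus--$\pa_v$ correction. The null structure of $\gamma_a$ is absorbed into the $\epsilon|\zeta(J_{X,\gamma_a})|$ term exactly as in the proof of Lemma \ref{J1K1lemmB}.
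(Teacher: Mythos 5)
Your proposal follows the paper's proof of Lemma \ref{J2K2lemmB} essentially verbatim: integrate by parts in $u$, rewrite $\pa_u\pa_v\psi$ as $\pa_u\evmB\psi$ minus the $\tfrac{u}{vs}$-correction terms, substitute the equation \eqref{dudvmBmodel} for $\pa_u\evmB\psi$, and then run the Minkowskian scheme of Lemma \ref{J2K2lem} term by term, with the $\gamma_a$ contribution absorbed into $\epsilon|\zeta(J_{X,\gamma_a})|$ and handled via Lemma \ref{nulllemmascalarcurrent}, and the corrections handled by one further $u$-integration by parts. The only deviations are bookkeeping (the identically vanishing $\pa_u X^v$ term, how the $\tfrac{u}{2vs}(\pa_u\psi)^2$ boundary piece is folded into the current $\check{J}_{X,\mB}$, and your finer separation of $P^u$, which the paper instead performs in Lemmas \ref{J1K1lemmB} and \ref{littlejklemmB} and here treats crudely using the smallness of $\gamma$), so this is the same argument.
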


\begin{proof}
	Recalling
$X^v = v$, the identity
\eqref{modified2} is replaced with
\begin{align}
 \pa_u &\left(\gamma^{uu}_{\mB} \pa_u \psi\right) X^v\pa_v\psi\\
 &= \pa_u\left( \gamma^{uu}_{\mB}v \pa_u\psi \pa_v\psi\right)
 - \gamma^{uu}_{\mB} v\pa_u \psi n\ell^{\mB} \psi
 + \gamma^{uu}_{\mB} v \pa_u \psi \pa_u\left( \frac{u}{vs} \pa_u\psi\right)\\
 % - \gamma^{uu}_{\mB} \pa_u X^v \pa_u\psi \pa_v \psi\\
 &=\pa_u\left( v\gamma^{uu}_{\mB}\pa_u\psi \pa_v\psi\right) + \frac{1}{2}
 \pa_u\left( \gamma^{uu}_{\mB} \frac{u}{s} (\pa_u\psi)^2\right)
 -\frac{1}{2}\pa_u \left( \gamma^{uu}_{\mB}\right) \frac{u}{s} (\pa_u\psi)^2
 % &- \gamma^{uu}_{\mB} \pa_u X^v \pa_u\psi \pa_v \psi
 - \gamma^{uu}_{\mB} v \pa_u \psi n\ell^{\mB} \psi.
 \label{}
\end{align}

Following the same steps that led to \eqref{j2k2}, we get
\begin{multline}
 \pa_u \left(\gamma^{uu}_{\mB} \pa_u \psi\right) X^v\pa_v\psi
 = \pa_\mu \slashed{J}_{X, \mB} + \pa_\mu \widehat{J}_{X, \mB} +
 \pa_\mu \check{J}_{X, \mB}^\mu + \pa_\mu (\check{J}^\prime_{X, \gamma_a})^\mu
 \\
 +
 \slashed{K}_{X,\mB} + \widehat{K}_{X,\mB}
 + \check{K}_{X, \mB} + \check{K}^\prime_{X, \gamma_a},
 \label{}
\end{multline}
with
\begin{align}
 \slashed{\widehat{J}}^{\mu}_{X,\mB}
 &= -\frac{1}{4}X^v \gamma_{\mB}^{uu} \pa_u\psi \nas^\mu \psi
 + \frac{1}{8} \delta^{\mu u} X^v \gamma_{\mB}^{uu} |\nas \psi|^2,
 \\
\widehat{J}_{X,\mB}^\mu &= X^v \gamma_{\mB}^{uu} \gamma^{\mu\nu}\pa_u\psi \pa_\nu \psi
 - \frac{1}{2} \delta^{\mu u} X^v \gamma_{\mB}^{uu} \gamma(\pa \psi, \pa \psi),
 \\
 \check{J}_{X, \mB}^\mu &= \delta^{\mu u} \left( \gamma^{uu}_{\mB}v \pa_u\psi
 \left(\pa_v\psi + \frac{u}{2vs} \pa_u\psi\right) \right),
 % - v\gamma_{\mB}^{uu}\gamma^{\mu\nu}_a \pa_u \psi  \pa_\nu \psi,
 \\
 \slashed{K}_{X, \mB}
 &= \frac{1}{4} v \nas \gamma_{\mB}^{uu} \pa_u \psi \nas \psi
 - \frac{1}{8} v \pa_u\gamma_{\mB}^{uu} |\nas \psi|^2,
 \\
 \widehat{K}_{X,\mB}
 &=-\pa_\mu (v\gamma_{\mB}^{uu}) \gamma^{\mu\nu} \pa_u\psi\pa_\nu\psi
 + \frac{1}{2} v \pa_u\gamma_{\mB}^{uu} \gamma (\pa\psi,\pa\psi),
 \\
 \check{K}_{X, \mB}&=
 -\gamma_{\mB}^{uu}v \pa_u\psi  \pa_\mu P^\mu
 + \gamma_{\mB}^{uu}\pa_u\psi v F
  -\frac{1}{2} \pa_u { \gamma^{uu}_{\mB} }\frac{u}{s} (\pa_u\psi)^2,
 \label{}
\end{align}
and where the contributions from $\gamma_a$ are collected in
\begin{align}
 (\check{J}^\prime_{X, \gamma_a})^\mu
 &= \gamma_{\mB}^{uu}\left( \gamma_a^{\mu\nu} v \pa_v\psi
 \pa_\nu \psi - \frac{1}{2} \delta^{\mu u} v \gamma_a(\pa \psi, \pa \psi)\right),
 \label{checkjprime}\\
 \check{K}^\prime_{X, \gamma_a}
 &=
 \frac{1}{2} \pa_u \left( \gamma_{\mB}^{uu} \gamma_a^{\mu\nu}\right) v \pa_\mu\psi
 \pa_\nu \psi
 - \pa_\mu( \gamma_{\mB}^{uu} v) \gamma_a^{\mu\nu}\pa_u \psi \pa_\nu \psi.
 \label{checkkprime}
\end{align}

In light of \eqref{gammamBbd}, the above energy currents $\slashed{J}, \widehat{J}$ satisfy
the same bounds as those in Lemma \ref{J2K2lem}, while $\check{J}$ satisfies
\begin{equation}
 |\zeta(\check{J}_{X, \mB})| \lesssim |X| |\gamma| |\pa \psi| |\pa_v \psi|
 + \frac{u}{s}|\gamma| |\pa \psi|^2
 \lesssim |X||\gamma| |\ell^{\mB}\psi||\pa \psi|
 + |X^n_{\mB}| |\gamma| |\pa \psi|^2,
 \label{}
\end{equation}
where we used that $(1+v)^{-1}(1+s)^{-1/2}|X| \lesssim X^n_{\mB}$ by the
assumption on $X$ and the assumption that $|u|\lesssim s^{1/2}$.
Similarly, the scalar currents $\slashed{K}, \widehat{K}$ satisfy
the same bounds as those in Lemma \ref{J2K2lemmB}, and $\check{K}$ satisfies
\begin{equation}
 |\check{K}_{X, \mB}|
 \lesssim
 |X| |\gamma||\pa \psi| \left(|\nabla P| + |F|\right)
 + |\gamma| |\pa \psi| |P|
 + |X^n_{\mB}| \left(
 |\nabla \gamma| + \frac{1}{1+v} |\gamma|\right) |\pa \psi|^2.
 \label{}
\end{equation}
Here, we used that $v \lesssim |X|$ since $X^v = v$ and we are assuming that
$|X^n_{\mB}|\lesssim X^v$.
To control the last term in the definition of $\check{K}$, we used that
by assumption $|X^n_{\mB}| \gtrsim (1+s)^{1/2}$, which, combined with
\eqref{gammamBbd} and the assumption $|u|\lesssim (1+s)^{1/2}$,
gives
\begin{multline}
 |\pa_u (\gamma_{\mB}^{uu}) \frac{u}{s}| |\pa \psi|^2 \lesssim
  \frac{1}{(1+s)^{1/2}} |\nabla \gamma| |\pa \psi|^2
	+ \frac{1}{(1+v)(1+s)^{1/2}} |\gamma||\pa \psi|^2\\
	\lesssim |X^n_{\mB}| \left( |\nabla \gamma|  +\frac{1}{1+v} |\gamma|\right)|\pa \psi|^2.
 \label{}
\end{multline}
The quantities \eqref{checkjprime}-\eqref{checkkprime}
can be handled in the same way we handled the quantities
$J_{X, \check{\gamma} \gamma_a}$ and $K_{X, \check{\gamma} \gamma_a}$ above
and this gives the result with
\begin{align}
 J_{X}^2 &= \widehat{\slashed{J}}_{X,\mB} + \widehat{J}_{X, \mB} + \check{J}_{X, \mB}
 +\check{J}^\prime_{X},\label{j2mBexplicit}\\
 K_{X}^2 &= \widehat{\slashed{K}}_{X,\mB} + \widehat{K}_{X, \mB} + \check{K}_{X, \mB}
 +\check{K}^\prime_{X}.
 \label{k2mBexplicit}
\end{align}

\end{proof}

It remains to prove the analogue of Lemma \ref{littlejklem}.
For this we will argue almost exactly as in that result, but we will
need to keep track of some of the terms a little differently.
\begin{lemma}
	\label{littlejklemmB}
	Under the hypotheses of Proposition \ref{mainmBidentprop},
	\begin{equation}
	 \pa_\mu P^\mu X\psi = \pa_\mu j^\mu_{P, X} + k_{P, X}
	 \label{}
	\end{equation}
	where $j_{P, X}$ and $k_{P, X}$ are given explicitly
	in \eqref{littlejmB}-\eqref{littlekmB}
	and satisfy the following bounds. For $\delta > 0$,
	\begin{align}
	 |\zeta(j_{X,P})|
	 &\lesssim \delta v |\evmB \psi|^2
	 + \delta \frac{1}{(1+v)(1+s)^{1/2}} |\pa \psi|_{X, \mB}^2
	 +  \left(1 + \frac{1}{\delta}\right) (1+v) |P^u|^2
	 +  |\slashed{\zeta}|^2 |\pa \psi|_{X, \mB}^2,
	 \label{littlejbdmB}\\
	 |k_{X, P}|&\lesssim
	  \left(  |\nabla P^u| + \frac{|P^u|}{1+s}\right)|X^n_{\mB}|^{1/2}|\pa \psi|_{X, \mB}\\
	 &+  \left(|\nabla_{\evmB} P|
	 + |\nas P| + \frac{1}{1+v}|\nabla P| + \frac{1}{1+v} |P|\right)|X^\ell_{\mB}|^{1/2} |\pa \psi|_{X, {\mB}}\\
	 &+
	 \frac{1}{(1+s)^{1/2}} \left( |\nabla P^u| + \frac{|P^u|}{1+v}\right) |\pa \psi|
	 +v|P| \left(|\nabla P| + \frac{|P|}{1+v} + |F|\right).
		\label{littlekbdmB}
 \end{align}
\begin{remark}
 For our applications, $P = P_{I, lin} + P_{I,nl}$ where $P_{I,nl}$
 collects lower-order nonlinear commutation errors and
 $P_{I, lin}$ collects lower-order linear commutation errors,
 both of which appear after commuting the equation with $Z_{\mB}^I$.
 For our estimates, the nonlinear errors are not particularly
 dangerous but the linear terms are somewhat complicated to handle,
 because (see \eqref{nullformulacommute}) the $u$-component
 $P^{u}_{I, lin}$ behaves like
 $\frac{1}{1+v} \sum_{|J| \leq |I|-2} \pa Z_{\mB}^J \psi$, up
 to better-behaved terms. Using our bootstrap assumptions, this
 (just) fails to be bounded in $L^1_tL^2_x$. This issue is dealt with
 in Lemma \ref{timeintegrability-center-linear}.
\end{remark}
\end{lemma}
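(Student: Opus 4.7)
The plan is to mirror the argument of Lemma \ref{littlejklem}, with modifications to handle the additional linear term $\pa_\mu(\gamma_a^{\mu\nu}\pa_\nu\psi)$ verifying the null condition and the different relationship between $\pa_u\pa_v$ and $n\evmB$ in the central region. First I would split
\begin{equation}
\pa_\mu P^\mu X\psi = \pa_u P^u X^v \pa_v \psi + \pa_u P^u X^u \pa_u \psi + \pa_\mu \widetilde{P}^\mu X\psi,
\end{equation}
where $\widetilde{P}^\mu$ vanishes for $\mu = u$. The last two terms can be bounded directly: the middle term by $|\pa_u P^u||X^u||\pa_u\psi| \lesssim (1+s)^{-1/2}|\nabla P^u|\,|\pa\psi|$ since $|X^u|\lesssim |X^n_{\mB}| \lesssim (1+s)^{-1/2}$ (cf.\ the constraint $|X^u|\lesssim 1+s$ and the structure of $X$), and the $\widetilde{P}^\mu$ piece by expanding derivatives and bounding $|X\psi|\lesssim |X^\ell_{\mB}|^{1/2}|\pa\psi|_{X,\mB}$, using $|\Gamma|\lesssim (1+v)^{-1}$ for the connection coefficients.

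The main work is the problematic term $\pa_u P^u X^v \pa_v \psi$. I would integrate by parts in $u$,
\begin{equation}
\pa_u P^u X^v \pa_v \psi = \pa_u(P^u X^v \pa_v\psi) - P^u X^v \pa_u\pa_v\psi - P^u \pa_u X^v \pa_v \psi,
\end{equation}
and then trade $\pa_u\pa_v\psi$ using the identity $\pa_u\pa_v\psi = n\evmB\psi - \pa_u(\tfrac{u}{vs}\pa_u\psi)$ combined with the equation \eqref{dudvmBmodel}, which gives
\begin{equation}
\pa_u\pa_v\psi = \tfrac{1}{4}\sDelta\psi + \tfrac{1}{4}\pa_\mu(\gamma_a^{\mu\nu}\pa_\nu\psi) + \tfrac{1}{4}\pa_\mu(\gamma^{\mu\nu}\pa_\nu\psi) + \tfrac{1}{4}\pa_\mu P^\mu - \tfrac{1}{4}F - \pa_u\!\left(\tfrac{u}{vs}\pa_u\psi\right).
\end{equation}
Multiplying by $-P^u X^v$ and integrating by parts the $\sDelta$ term (as in \eqref{modified3}) and the divergence terms involving $\gamma_a$, $\gamma$, and $P$ (as in \eqref{modifiedP1}) produces the energy current $j_{P,X}$ and the remaining bulk contributions enter $k_{P,X}$.

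For the bounds, the main new contribution over the Minkowskian calculation is the piece generated by the extra term $P^u X^v \pa_u(\tfrac{u}{vs}\pa_u\psi)$. Integrating by parts once more in $u$ produces a boundary-type contribution $\pa_u(P^u X^v \tfrac{u}{vs}\pa_u\psi)$ absorbed into $j_{P,X}$, and bulk terms of the form $\pa_u(P^u X^v)\tfrac{u}{vs}\pa_u\psi$; the key observation is that $\tfrac{u}{vs}\lesssim \tfrac{1}{(1+v)(1+s)^{1/2}}\cdot(1+s)^{1/2}|u|/v\lesssim (1+v)^{-1}(1+s)^{-1/2}\cdot(1+s)^{1/2}\cdot(1+s)^{-1/2}$ and this factor combines favorably with $|X^\ell_{\mB}|=v$ and $|X^n_{\mB}|\gtrsim(1+s)^{-1/2}$ to yield the acceptable bound $(1+s)^{-1/2}|P^u||\pa\psi|$ appearing in \eqref{littlekbdmB}. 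The contribution from the $\gamma_a$ term is handled using the null structure as in Lemma \ref{J1K1lemmB}, bounding $|\gamma_a||P^u||X^v| |\pa\psi|\lesssim \epsilon |P^u||\pa\psi|$ and exploiting that at least one derivative must be transverse to $\pa_u$. The remaining error terms coming from the application of $\pa_\mu$ to $P$ in the formula for $\pa_u\pa_v\psi$ give quadratic-in-$P$ contributions controlled by $v|P|(|\nabla P| + (1+v)^{-1}|P| + |F|)$ in $k_{P,X}$.

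The main obstacle is that, unlike in the Minkowskian case where the weight $X^v$ could be absorbed via $|X^\ell_m|^{1/2}/|X^n_m|^{1/2}$ without loss, in the $\mB$ case the tight relation $X^\ell_{\mB}|\gamma|\sim X^n_{\mB}\epsilon$ means we must carefully extract the $(1+s)^{-1/2}$ factor wherever possible. In particular, the term $\pa_u P^u$ in the integrated-by-parts expression is paired with $X^v = v$, and without the weak symbol structure of $X^n_{\mB}$ this would produce $v|\nabla P^u||\pa\psi|$, which is not controllable in $L^1_tL^2_x$ at the level we need. The fix is to bound $v|\nabla P^u||\pa_v\psi| \lesssim (1+s)^{1/2}|\nabla P^u|\cdot (1+v)^{1/2}|\pa_v\psi| \lesssim |X^n_{\mB}|^{1/2}|\nabla P^u||\pa\psi|_{X,\mB}$, which is exactly the form appearing in \eqref{littlekbdmB} and explains why the $u$-component $P^u$ is kept separate from the other components in the final estimates.
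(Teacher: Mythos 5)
Your proposal follows essentially the same route as the paper's proof: split off the dangerous term $\pa_u P^u X^v\pa_v\psi$, integrate by parts in $u$, use the equation \eqref{dudvmBmodel} to trade the resulting second-order term $\pa_u\pa_v\psi$ (equivalently $n\evmB\psi$) for $\sDelta\psi$, the $\gamma$- and $\gamma_a$-divergences, $\pa_\mu P^\mu$ and $F$, integrate the $\sDelta$ and metric-perturbation divergences by parts once more, and use $|\tfrac{u}{vs}|\lesssim (1+v)^{-1}(1+s)^{-1/2}$ for the extra Burgers-metric term. The paper merely rewrites $\pa_v\psi=\evmB\psi-\tfrac{u}{vs}\pa_u\psi$ before, rather than after, the integration by parts; this is equivalent bookkeeping, and your extra piece $\pa_u\big(P^uv\tfrac{u}{vs}\pa_u\psi\big)$ is indeed admissible in the current and bulk as you claim.

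Two intermediate inequalities are false as written, though both are repaired within your own framework. First, you estimate $\pa_uP^u\,X^u\pa_u\psi$ using ``$|X^u|\lesssim|X^n_{\mB}|\lesssim(1+s)^{-1/2}$''; the hypotheses of Proposition \ref{mainmBidentprop} only give $|X^n_{\mB}|\gtrsim(1+s)^{-1/2}$ and allow $|X^u|\sim 1+s$ (this is exactly the decay multiplier $X_C$, with $X^u=s+\tfrac{u}{s}$), so the claimed bound $(1+s)^{-1/2}|\nabla P^u||\pa\psi|$ fails; instead use $|X^u|\lesssim|X^n_{\mB}|$ together with $|X^n_{\mB}|^{1/2}|\pa_u\psi|\le|\pa\psi|_{X,\mB}$, which lands this term on the first line of \eqref{littlekbdmB}. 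Second, for the $\gamma_a$-contributions the bound ``$v|\gamma_a||P^u||\pa\psi|\lesssim\epsilon|P^u||\pa\psi|$'' is too weak to close \eqref{littlejbdmB} (one loses a factor $(1+s)^{1/2}$ against the allowed $(1+v)|P^u|^2$); what is needed, and all the paper uses (no null structure is required for these terms), is $v|\gamma_a|\lesssim(1+s)^{-1/2}$, after which weighted Cauchy--Schwarz and $|X^n_{\mB}|\gtrsim(1+s)^{-1/2}$ give precisely the terms in \eqref{littlejbdmB} and the third line of \eqref{littlekbdmB}. Relatedly, keep $vP^u\pa_\mu P^\mu$ in the bulk as in \eqref{modifiedP1} rather than integrating it by parts: moving it into the current would produce $v|P^u||P|$, which \eqref{littlejbdmB} does not control. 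Finally, the chain $v\lesssim(1+s)^{1/2}(1+v)^{1/2}$ in your closing paragraph is false, but that discussion is not load-bearing, since after the integration by parts the quantity $v|\nabla P^u||\pa_v\psi|$ never has to be estimated.
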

\begin{proof}
	As in Lemma \ref{littlejklem} it is only the term
	$\pa_u P^u X^\ell_{\mB} \evmB \psi$ that needs to treated
	by integrating by parts, so we write
\begin{equation}
 \pa_u P^u X^v \pa_v\psi = \pa_u P^u X^v \ell^{\mB} \psi
  -
 X^v \frac{u}{vs} \pa_u P^u \pa_u \psi,
 \label{}
\end{equation}
and following the argument from Lemma \ref{littlejklem} and recalling $X^v = v$, this leads to
$\pa_\mu P^\mu X\psi = \pa_\mu j_{X, P}^\mu + k_{X, P}$ with
\begin{align}
 j_{X,P}^\mu &= P^u v \left( -\delta^{\mu u} \ell^{\mB} \psi + \frac{1}{4} \nas^\mu \psi - \gamma^{\mu\nu}
 \pa_\nu \psi - \gamma_a^{\mu\nu}\pa_\nu \psi\right),\label{littlejmB}\\
 k_{X,P} &= \pa_\mu \widetilde{P}^\mu X\psi - \frac{1}{4}v \nas P^u\cdot \nas \psi
 + \pa_\mu(P^u v)\gamma^{\mu\nu}\pa_\nu \psi
 + v P^u \pa_\mu P^\mu
 - v P^u F\\
 &\qquad-\frac{u}{s} \pa_u P^u \pa_u \psi + \pa_\mu(v P^u)\gamma_a^{\mu\nu}\pa_\nu \psi.
 \label{littlekmB}
\end{align}
Now, we have
\begin{equation}
 |\zeta(j_{X, P})|\lesssim
 v |P^u||\evmB \psi| + v |P^u| |\zeta(\nas \psi)|
 + v|\gamma| |P^u| |\pa \psi|
 + v |\gamma_a | |P^u| |\pa \psi|.
 \label{zetacentral0}
\end{equation}
The first two terms here are bounded by
\begin{equation}
  v |P^u||\evmB \psi| + v |P^u| |\zeta(\nas \psi)|
	\lesssim \delta v |\evmB \psi|^2 + v |\slashed{\zeta}|^2 |\nas \psi|^2
	+ \left(1 + \frac{1}{\delta}\right) v |P^u|^2.
 \label{}
\end{equation}
For the third term in \eqref{zetacentral0}, we bound
\begin{multline}
 v |\gamma| |P^u| |\pa \psi|
 \lesssim
 \frac{\epsilon}{(1+s)^{1/2}} |P^u| |\pa \psi|
 \lesssim \delta \frac{1}{1+v} \frac{1}{1+s} |\pa \psi|^2
 + \frac{1}{\delta} (1+v)|P^u|^2\\
 \lesssim \delta \frac{1}{(1+v)(1+s)^{1/2}} |\pa \psi|_{X, \mB}^2
 + \frac{1}{\delta} (1+v)|P^u|^2,
 \label{}
\end{multline}
where we used that $|X^n| \gtrsim (1+s)^{-1/2}$.
Since $v |\gamma_a| = v |\frac{u}{vs} a| \lesssim \frac{1}{s^{1/2}}$,
in the same way we have
\begin{equation}
 v |\gamma_a | |P^u| |\pa \psi|
 \lesssim \frac{1}{(1+s)^{1/2}} |P^u| |\pa \psi|
 \lesssim \delta  \frac{1}{(1+v)(1+s)^{1/2}} |\pa \psi|_{X,\mB}|^2
 + \frac{1}{\delta} (1+v)|P^u|^2,
 \label{}
\end{equation}
and combining the above we get \eqref{littlejbdmB}.

For the quantity $k_{X, P}$, we first bound
\begin{equation}
 |\pa_v P^v | |X\psi| + |\nas \slashed{P}| |X \psi|
 \lesssim |X^\ell_{\mB}|^{1/2} \left(|\nabla_{\evmB} P| + |\nas P|
 + \frac{1}{1+v} |\nabla P|  + \frac{1}{1+v} |P|\right) |\pa \psi|_{X, \mB},
 \label{}
\end{equation}
which is bounded by the right-hand side of \eqref{littlekbdmB},
since we easily have $|X^\ell_{\mB}|^{1/2} (1+v)^{-1} \lesssim |X^n_{\mB}|^{1/2}
(1+s)^{-1}$. The second term in $k_{X, P}$ is bounded by
\begin{equation}
 |v \nas P \cdot \nas \psi|
 \lesssim v^{1/2} |\nas P| |\pa \psi|_{X,\mB}
 = |X^\ell_{\mB}|^{1/2} |\nas P| |\pa \psi|_{X, \mB}.
 \label{}
\end{equation}
Next, using the bound $|\gamma| \lesssim \epsilon (1+v)^{-1}(1+s)^{-1/2}$, we have
\begin{equation}
 |\pa (v P^u) ||\gamma| |\pa \psi|
 \lesssim \frac{\epsilon}{(1+s)^{1/2}}\left( |\nabla P^u| + \frac{|P^u|}{1+v}\right)
 |\pa \psi|,
 \label{}
\end{equation}
and since $|\gamma_a|\lesssim \frac{1}{(1+v)(1+s)^{1/2}}$ we also have
\begin{equation}
 |\pa(vP^u)| |\gamma_a| |\pa \psi|
 \lesssim \frac{1}{(1+s)^{1/2}} \left( |\nabla P^u| + \frac{|P^u|}{1+v}\right) |\pa \psi|.
 \label{}
\end{equation}
We also have
\begin{equation}
 \left|\frac{u}{s}\right| |\pa_u P^u| |\pa \psi|
 \lesssim
 \frac{1}{(1+s)^{1/2}}\left( |\nabla P^u| + \frac{|P^u|}{1+v} \right)|\pa \psi|,
 \label{}
\end{equation}
and bounding
\begin{equation}
 |v P^u \pa_\mu P^\mu|
 + |v P^u F|
 \lesssim v |P| \left( |\nabla P| + \frac{|P|}{1+v} + |F|\right),
 \label{}
\end{equation}
we get the result.
\end{proof}

\begin{proof}[Proof of Proposition \ref{mainmBidentprop}]

By Lemmas \ref{J1K1lemmB}-\ref{littlejklemmB}
the identity \eqref{mBidentapp} holds with $\mJ$ and $\mK$ given by
\begin{align}
 \mJ_{X, P} &=  \mJ_X + j_{X, P} = J_X^1 + J_X^2 + j_{X, P},
 \label{explicitmJmB}\\
 \mK_{X, P} &= \mK_X + k_{X, P}= K_X^1 + K_X^2 + k_{X, P},
 \label{explicitmKmB}
\end{align}
where $J_X^1, K_X^1$ are given in \eqref{J1K1mB}, $J_X^2, K_X^2$ are given by
\begin{equation}
 J_{X}^2 = \slashed{\widehat{J}}_{X,\mB} + \widehat{J}_{X,\mB} + \check{J}_{X, \mB},
 \qquad
 K_X^2 = \slashed{K}_{X, \mB} + \widehat{K}_{X,\mB} + \check{K}_{X, \mB},
 \label{}
\end{equation}
and $j_{X, P}, k_{X, P}$ are given in \eqref{littlejmB}-\eqref{littlekmB}.
After bounding $|\gamma| \lesssim \frac{\epsilon}{(1+v)(1+s)^{1/2}}$ in
\eqref{mBJ1bd}, we get the stated bounds.
\end{proof}

\subsection{Estimates for a linear term verifying the null condition}
In the central region, we need to deal with a linear term
$\pa_\mu(\gamma_a^{\mu\nu}\pa_\nu \psi)$ with $\gamma_a^{\mu\nu} =\frac{u}{vs} a^{\mu\nu}$
where $a^{\mu\nu} = a^{\nu\mu}$ satisfies the null
 condition \eqref{intronullcondn0}. In the next lemma we control the
 energy current contributed by this term, and in Lemma \ref{nulllemmascalarcurrent}
 we handle the scalar current. Thanks to the smallness of the coefficient
 $u/vs$ along the shocks and the fact that $a$ verifies the null condition,
 these terms can be treated perturbatively.
 \begin{lemma}
	 \label{nulllemmacurrent}
	 Define $\gamma_a$ as in the above paragraph.
	 Suppose that
	 $X^\ell_{\mB} = v$ and that $(1+s) \gtrsim |X^n_{\mB}|\gtrsim (1+s)^{-1/2}$.
	Suppose that the condition \eqref{largestart} holds.
    With the energy current $J_{X, \gamma_a}$ defined as in \eqref{energycurrent}
%    $Q^{\gamma_a}$ defined as in \eqref{emdef0},
			there are continuous functions $c^i_0$ with $ c^i_0(0) = 0$,
			for $i = 1,2,3$ so that
	\begin{equation}
	 |dt(J_{X, \gamma_a})| \lesssim c_0^1(\epsilon_0) |\pa \psi|_{X, \mB}^2,
	 \label{QAtimeslice}
	\end{equation}
	and if the assumptions \eqref{betaLassump} and \eqref{betaRassump} hold,
	then at $\Gamma^A \in \{\Gamma^L, \Gamma^R\}$, with $\zeta^A$ as in
	\eqref{zetadef},
	\begin{equation}
	 |\zeta^{A}(J_{X, \gamma_a})| \lesssim
	  c_0^2(\epsilon_0)|\pa \psi|_{X, \mB, \SL}^2
	 \label{QAspacelike}
	\end{equation}
	% and if the assumption \eqref{betaRassump} holds, then
	% \begin{multline}
	%  |Q^{\gamma_a}_{\mB}(N^L_{\mB}, X)|\\
	%  \lesssim c_0^3(\epsilon_0)
	%  \left( |X^\ell_{\mB} | |\evmB \psi|^2 + \frac{|X^n_{\mB}|}{(1+v)(1+s)^{1/2}}
	%  |n \psi|^2 + \left( |X^n_{\mB}| + \frac{|X^\ell_{\mB}|}{(1+v)(1+s)^{1/2}}\right) |\nas \psi|^2\right).
	%  \label{QAtimelike}
	% \end{multline}
 \end{lemma}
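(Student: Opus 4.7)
I would prove both estimates by directly computing the components of $J_{X,\gamma_a}$ using the definition in \eqref{energycurrent}, and then exploiting two facts in tandem: first, the null condition $a^{\mu\nu}\pa_\mu u\pa_\nu u=0$, which kills the most dangerous $(n\psi)^2$ contribution to $\gamma_a(\pa\psi,\pa\psi)$ so that \eqref{intronullcondn} gives $|a(\xi,\tau)|\lesssim |\overline{\xi}||\tau|+|\xi||\overline{\tau}|$; and second, the smallness $|u|/(vs)\lesssim 1/(v(1+s)^{1/2})$ which is automatic in the central region since $|u|\lesssim (1+s)^{1/2}$ there. Combined with \eqref{largestart}, the factor $(1+s)^{-1/2}\leq (1+\log t_0)^{-1/2}$ produces the continuous function $c_0^i(\epsilon_0)$ vanishing at zero.

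For \eqref{QAtimeslice}, I would write $dt(J_{X,\gamma_a})=\tfrac12(J^u_{X,\gamma_a}+J^v_{X,\gamma_a})$ and use the crude bound $|\gamma_a|\lesssim (1+v)^{-1}(1+s)^{-1/2}$ together with the assumption $X^\ell_{\mB}=v$, $X^n_{\mB}\lesssim 1+s$ to obtain
\begin{equation}
|dt(J_{X,\gamma_a})|\lesssim \frac{1}{(1+s)^{1/2}}\bigl(|n\psi|^2+|\evmB\psi|^2+|\nas\psi|^2\bigr).
\end{equation}
Since $X^n_{\mB}\gtrsim (1+s)^{-1/2}$ and $X^\ell_{\mB}=v\gtrsim (1+s)^{-1/2}$, each term on the right is controlled by $|\pa\psi|^2_{X,\mB}$ with the prefactor $(1+s)^{-1/2}=c_0^1(\epsilon_0)$. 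Here the null structure is not strictly needed, but it is compatible.

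For \eqref{QAspacelike}, the shock conormal \eqref{zetadef} decomposes as $\zeta^A=-\tfrac12 du+\tfrac{1}{2v}\dot\beta^A_s\,dv+\tfrac{1}{2r}\nabla_\omega\beta^A_s\cdot\slashed{dx}$, where by \eqref{betaLassump}-\eqref{betaRassump} the $dv$ coefficient is $O((1+v)^{-1}(1+s)^{-1/2})$ and the angular coefficient is $O(\epsilon_2(1+v)^{-1}(1+s)^{1/2})$. The dominant term is $-\tfrac12 J^u_{X,\gamma_a}$, and this is where the null condition is essential: because $a^{uu}=0$, the contracted term $\gamma_a^{u\nu}\pa_\nu\psi$ contains no $n\psi$ and satisfies $|\gamma_a^{u\nu}\pa_\nu\psi|\lesssim \frac{|u|}{vs}|\opa\psi|$, while $|\gamma_a(\pa\psi,\pa\psi)|\lesssim \frac{|u|}{vs}|\opa\psi||\pa\psi|$ by \eqref{intronullcondn}. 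Writing $|\opa\psi|\lesssim |\evmB\psi|+|\nas\psi|+\tfrac{1}{v(1+s)^{1/2}}|n\psi|$ and estimating $|X\psi|\lesssim X^n_{\mB}|n\psi|+v|\evmB\psi|$, $|X^u|\lesssim X^n_{\mB}$, one obtains after Cauchy--Schwarz a bound where every cross-term produced carries at least one extra factor $(1+s)^{-1/2}$ relative to the coefficients defining $|\pa\psi|^2_{X,\mB,\SL}$ in \eqref{plusnorm}. The subleading contributions $J^v_{X,\gamma_a}$ and $\slashed{J}_{X,\gamma_a}$ do not benefit from null cancellation, but the small coefficients in $\zeta^A$ (the $(1+v)^{-1}(1+s)^{-1/2}$ and $\epsilon_2$ factors) already compensate, yielding the same conclusion.

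The main obstacle will be the book-keeping in the shock estimate: one must verify that each bilinear produced by Cauchy--Schwarz lands in the correct SL coefficient. Specifically, when we split $\frac{X^n_{\mB}}{vs^{1/2}}|\nas\psi||n\psi|$ by AM--GM, the $|n\psi|^2$ piece must be absorbed by the SL weight $\frac{\rs^A}{(1+v)(1+s)^{1/2}}X^n_{\mB}$ (which matches exactly, giving constant $c_0\sim 1$), while the $|\nas\psi|^2$ piece must be absorbed by $X^n_{\mB}|\nas\psi|^2$ at a cost $(1+v)^{-1}(1+s)^{-1/2}\leq c_0(\epsilon_0)$; this asymmetric allocation is what makes the null condition both necessary and sufficient precisely at the scale $|u|\sim (1+s)^{1/2}$.
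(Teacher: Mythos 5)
Your overall strategy is the same as the paper's (estimate $J_{X,\gamma_a}$ directly, using the null condition \eqref{intronullcondn0} together with the smallness $|u|/(vs)\lesssim (1+v)^{-1}(1+s)^{-1/2}$ and \eqref{largestart}), but as written your argument does not deliver the small constants $c_0^i(\epsilon_0)$, and that smallness is the entire content of the lemma: downstream (e.g.\ in Lemma \ref{timeslicelemma}, where $\zeta(J_{X,\gamma_a,P})$ appears with no small prefactor) these bounds are absorbed into the coercive term $Q^{\mB}(X,N^{D_t}_{\mB})$, so an $O(1)$ constant is useless. The first gap is in the time-slice estimate, where you claim the null structure ``is not strictly needed'' and use only $|\gamma_a|\lesssim (1+v)^{-1}(1+s)^{-1/2}$. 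The term $\zeta(X)\,\gamma_a(\pa\psi,\pa\psi)$ with $\zeta=dt$ then contains $v\cdot\frac{u}{vs}a^{uu}(\pa_u\psi)^2\sim (1+s)^{-1/2}(\pa_u\psi)^2$, and for the top-order multiplier $X_T$, whose $X^n_{T,\mB}\sim (1+s)^{-1/2}$, this is \emph{comparable} to $X^n_{\mB}(\pa_u\psi)^2$, hence only bounded by $|\pa\psi|^2_{X,\mB}$ with constant $\sim 1$. Your bookkeeping hides this: the prefactor $(1+s)^{-1/2}$ in your intermediate bound is consumed when converting the unweighted $|n\psi|^2$ into $X^n_{\mB}|n\psi|^2$, so it cannot simultaneously serve as $c_0^1(\epsilon_0)$. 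The fix is exactly the null condition: since $a^{uu}=0$, the dangerous $(\pa_u\psi)^2$ contribution is absent, every surviving term carries at least one factor of $\opa\psi\in\{\evmB\psi,\nas\psi,\frac{1}{(1+v)(1+s)^{1/2}}n\psi\}$, and after weighting one gains a genuine factor such as $(1+v)^{-1/2}\leq c_0^1(\epsilon_0)$ (this is how the paper argues, via an intermediate estimate for general unit one-forms $\zeta$).

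The second gap is in the shock estimate: you concede that after an equal-weight AM--GM the $|n\psi|^2$ piece of the cross term $\frac{X^n_{\mB}}{vs^{1/2}}|\nas\psi||n\psi|$ ``matches exactly'' the weight $\frac{\rs^A}{(1+v)(1+s)^{1/2}}X^n_{\mB}$ with constant $\sim 1$. That only proves the weaker $O(1)$ bound, not \eqref{QAspacelike}. In fact the matching is not sharp: the square of the cross-term coefficient, $\frac{(X^n_{\mB})^2}{v^2 s}$, is smaller than the product of the two relevant $\SL$-weights, $\frac{X^n_{\mB}}{(1+v)(1+s)^{1/2}}\cdot X^n_{\mB}$, by a factor $\sim \frac{1}{v s^{1/2}}$, so an asymmetric Cauchy--Schwarz (e.g.\ with weight $v^{-1/4}$ on the $|n\psi|$ factor) puts a small constant in front of \emph{both} pieces. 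This is what the paper's proof achieves, ending with $|\zeta^A(J_{X,\gamma_a})|\lesssim (1+v)^{-1/8}|\pa\psi|^2_{X,\mB,\SL}$, and $(1+v)^{-1/8}\leq c_0^2(\epsilon_0)$ by \eqref{largestart}. With these two repairs — use $a^{uu}=0$ in both estimates and balance the Cauchy--Schwarz so that every term retains a residual negative power of $1+v$ — your computation closes and coincides with the paper's argument.
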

 \begin{proof}
	We first prove that under our hypotheses,
	if $\zeta$ is a one-form with $|\zeta| \leq 1$,
		then when $|u|\lesssim (1+s)^{1/2}$,
		\begin{equation}
		 (1+v)(1+s)^{1/2}|\zeta(J_{X, \gamma_a})|
		 \lesssim  \left(\frac{(1+s)^{1/2}}{(1+v)^{1/2}} +
		 |\overline{\zeta}|\frac{(1+v)^{1/2}}{(1+s)^{1/4}}\right)
		 |\pa \psi|_{X, \mB}^2
		 + |\opa \psi|_{X, \mB}^2,
		 \label{zetaJA}
		\end{equation}
			where we are abusing notation slightly and writing
			\begin{equation}
			 |\opa \psi|_{X, \mB}^2 = |X^\ell_{\mB}|\left( |\evmB \psi|^2 +  |\nas \psi|^2
			 + \frac{1}{(1+v)(1+s)^{1/2}}|n \psi|^2\right).
			 \label{}
			\end{equation}
To prove this, we start by noting that by \eqref{energycurrent}, for any one-form $\zeta$ with $|\zeta|\leq 1$ we have
	\begin{multline}
	 (1+v)(1+s)^{1/2}|\zeta(J_{X,\gamma_a})|
	 \lesssim |\gamma_a(\zeta, \pa \psi)| |X\psi| + |\gamma_a(\pa \psi, \pa \psi)|
	 |\zeta(X)|\\
	 \lesssim
	\left( |\overline{\zeta}| |\pa \psi| + |\zeta| |\opa \psi|\right)|X\psi|
	+ |\pa \psi| |\opa \psi| |\zeta(X)|
	 \label{encurrentbd}
	\end{multline}
	by \eqref{intronullcondn}. Now we bound
	\begin{equation}
	 |\overline{\zeta}| |\pa \psi||X\psi|
	 \lesssim
	 |\overline{\zeta}| |\pa \psi|\left(|X^\ell_{\mB}| |\ell^{\mB}|
	 +|X^n_{\mB}| |n\psi|\right)
	 \lesssim |\overline{\zeta}| \left( 1 + \frac{|X^\ell_{\mB}|^{1/2}}{|X^n_{\mB}|^{1/2}}\right) |\pa \psi|_{X, \mB}^2
	 \label{}
	\end{equation}
	and bounding $|\zeta| \leq 1$ and $|\opa \psi|\lesssim |\evmB \psi| + |\nas \psi|
	+ \frac{1}{(1+v)(1+s)^{1/2}} |n\psi|$, we find
	\begin{equation}
	 |\zeta| |\opa \psi||X\psi|
	 \lesssim |\opa \psi| \left(|X^\ell_{\mB}| |\ell^{\mB}|
	 +|X^n_{\mB} |n\psi|\right)
	 \lesssim
	 \frac{|X^n_{\mB}|^{1/2}}{|X^\ell_{\mB}|^{1/2}} |\pa \psi|_{X, \mB}^2
	 + |\opa \psi|_{X, \mB}^2,
	 \label{}
	\end{equation}

	To handle the last term in \eqref{encurrentbd}, we note that by the
	assumptions on the vector field $X$ we have
	$|X^\ell_{\mB}| |\opa \psi|^2 \lesssim |\pa \psi|^2_{X, \mB}$, since
	$|\opa \psi| \lesssim |\evmB \psi| + |\nas \psi| + \frac{1}{(1+v)(1+s)^{1/2}} |n \psi|$
	and $|X^\ell_{\mB}|(1+v)^{-1}(1+s)^{-1/2} \lesssim |X^n_{\mB}|$. We therefore have
	\begin{equation}
	 |\pa \psi| |\opa \psi||\zeta(X)|\lesssim
	|X^n_{\mB}| |\pa \psi||\opa \psi|  +
	|\zeta_v||X^\ell_{\mB}| |\pa \psi||\opa \psi|
	\lesssim
	\left(\frac{|X^n_{\mB}|^{1/2}}{|X^\ell_{\mB}|^{1/2}}
	+ |\zeta_v| \frac{|X^\ell_{\mB}|^{1/2}}{|X^n_{\mB}|^{1/2}}\right)|\pa \psi|^2_{X, \mB}
	 \label{}
	\end{equation}
	Combining the above, we have
	\begin{equation}
		(1+v)(1+s)^{1/2}|\zeta(J_{X, \gamma_a})|
 	 \lesssim
	 \left(\frac{|X^n_{\mB}|^{1/2}}{|X^\ell_{\mB}|^{1/2}} + |\overline{\zeta}|\frac{|X^\ell_{\mB}|^{1/2}}{|X^n_{\mB}|^{1/2}}\right)
	 |\pa \psi|_{X, \mB}^2
	 + |\opa \psi|_{X, \mB}^2
	 \label{zetaJA0}
	\end{equation}
	which gives \eqref{zetaJA} after using the assumptions on $X$.

	To prove \eqref{QAtimeslice} we take $\zeta = dt$ and bounding
	$|\overline{\zeta}| \lesssim 1$ we find
	\begin{equation}
	 |dt(J_{X, \gamma_a})|
	 \lesssim \frac{1}{(1+v)^{1/2}} |\pa \psi|_{X, \mB}^2
	 \lesssim c_0(\epsilon_0)^1|\pa \psi|_{X, \mB}^2.
	 \label{}
	\end{equation}

	We just prove \eqref{QAspacelike} at the right shock, the proof
	at the left shock being identical.
	We first note that by definition of $|\pa\psi|_{X, \mB, +}$ from
	\eqref{plusnorm},
	\begin{align}
	 \frac{1}{(1+v)(1+s)^{1/2}} |\pa \psi|_{X, \mB}^2
	 &\lesssim |\pa \psi|_{X, \mB, \SL}^2,
	 \\
	 |\opa \psi|_{X, \mB}^2
	 &\lesssim \left(1 + \frac{1}{|X^n_{\mB}|}\right) |\pa \psi|_{X, \mB, \SL}^2
	 \lesssim (1+s)^{1/2}|\pa \psi|_{X, \mB, \SL}^2,
	 \label{}
	\end{align}
	where in the last step we used the assumption on $X$.

	Taking $\zeta = \zeta^{\Gamma^R}$, we have
	$|\overline{\zeta}|\lesssim \frac{(1+s)^{1/2}}{1+v}$ and so
	bounding $\frac{(1+s)^{1/2}}{(1+v)^{1/2}} +
	|\overline{\zeta}|\frac{(1+v)^{1/2}}{(1+s)^{1/4}} \lesssim
	\frac{1}{(1+v)^{1/4}}$, say, \eqref{QAspacelike} gives
	\begin{equation}
	 |\zeta(J_{X, \gamma_a})|
	 \lesssim \frac{1}{(1+v)^{1/8}} |\pa \psi|_{X, \mB, \SL}^2,
	 \label{}
	\end{equation}
	say, which gives the result.

\end{proof}

\begin{lemma}
 \label{nulllemmascalarcurrent}
 Suppose that $X = X^u(u,v)\pa_u + X^v(u,v)\pa_v$ satisfies $|\pa X| \lesssim 1$,
  $X^v = v$, $|X^u|\lesssim (1+s)$,
 $|\pa_v X^u|\lesssim \frac{1}{1+v}$. Then
 \begin{equation}
  |K_{X, \gamma_a}| \lesssim \frac{1}{(1+v)^{3/2}} |\pa \psi|^2 + \frac{1}{(1+v)^{1/2}} |\opa \psi|^2.
  \label{nulllemmascalarcurrentbd}
 \end{equation}
\end{lemma}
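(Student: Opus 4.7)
The plan is to expand the scalar current from \eqref{scalarcurrent},
\begin{equation}
K_{X,\gamma_a} = \tfrac{1}{2}(X^\alpha \pa_\alpha \gamma_a^{\mu\nu})\pa_\mu\psi\pa_\nu\psi + \tfrac{1}{2}(\pa_\alpha X^\alpha)\gamma_a(\pa\psi,\pa\psi) - \pa_\mu X^\alpha \gamma_a^{\mu\nu}\pa_\nu\psi\pa_\alpha\psi,
\end{equation}
and bound each of the three pieces using the null condition \eqref{intronullcondn0} on $a$ together with the symbol condition $(1+v)^k|\pa^k a|\lesssim 1$. The key observation, used throughout, is that since $a^{uu}=0$ the same holds for $\pa_\alpha a^{\mu\nu}$ (since $\pa_\alpha a^{uu}=0$) and therefore \eqref{intronullcondn} applies to both $\gamma_a$ and $\pa\gamma_a$, giving $|\gamma_a(\pa\psi,\pa\psi)|\lesssim |\gamma_a|\,|\pa\psi|\,|\opa\psi|$ and the analogous bound with $X\gamma_a$ in place of $\gamma_a$. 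Combined with the elementary AM--GM inequality
\begin{equation}
\tfrac{1}{1+v}|\pa\psi|\,|\opa\psi|\lesssim \tfrac{1}{(1+v)^{3/2}}|\pa\psi|^2 + \tfrac{1}{(1+v)^{1/2}}|\opa\psi|^2,
\end{equation}
this will yield \eqref{nulllemmascalarcurrentbd} provided each prefactor is at most $O(1/(1+v))$.

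For the second term, $|\pa_\alpha X^\alpha|\lesssim 1$ and $|\gamma_a|=|u||a|/(vs)\lesssim 1/(1+v)$ (valid in the region of interest where $|u|\lesssim s$), so the bound is immediate. For the first term I compute
\begin{equation}
X^\alpha\pa_\alpha\gamma_a^{\mu\nu} = X^u\pa_u\!\left(\tfrac{u}{vs}\right)a^{\mu\nu}+X^v\pa_v\!\left(\tfrac{u}{vs}\right)a^{\mu\nu}+\tfrac{u}{vs}X^\alpha\pa_\alpha a^{\mu\nu},
\end{equation}
and bound each summand using $|X^u|\lesssim 1+s$, $X^v=v$, $\pa_u(u/(vs))=1/(vs)$, $|v\pa_v(u/(vs))|\lesssim |u|/(vs)$, together with the symbol condition $(1+v)|\pa a|\lesssim 1$. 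This gives $|X^\alpha\pa_\alpha \gamma_a|\lesssim 1/(1+v)$, which by the null structure produces the desired bound.

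For the last term $\pa_\mu X^\alpha \gamma_a^{\mu\nu}\pa_\nu\psi\pa_\alpha\psi$, since $X$ is spherically symmetric only the components with $\mu,\alpha\in\{u,v\}$ contribute. The $(\mu,\alpha)=(u,u)$ case vanishes by $\gamma_a^{uu}=0$; the $(u,v)$ and $(v,v)$ cases carry a factor $\pa_v\psi=\opa\psi$, so the null bound together with $|\gamma_a|\lesssim 1/(1+v)$ and $|\pa X|\lesssim 1$ yields an integrand $\lesssim (1+v)^{-1}|\pa\psi|\,|\opa\psi|$; and the $(v,u)$ case benefits from the extra smallness $|\pa_v X^u|\lesssim 1/(1+v)$, which, combined with $|\gamma_a|\lesssim 1/(1+v)$, gives the harmless term $(1+v)^{-2}|\pa\psi|^2$ absorbed into the $|\pa\psi|^2$ part of the bound. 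There is no real obstacle beyond keeping track of which index pair can and cannot use the null condition; once that bookkeeping is done, a final AM--GM splitting with weight $(1+v)^{-1/2}$ delivers \eqref{nulllemmascalarcurrentbd}.
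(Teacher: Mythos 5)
Your argument is correct and is essentially the paper's own proof: the same expansion of $K_{X,\gamma_a}$ from \eqref{scalarcurrent}, the same key observation that derivatives of $a$ inherit the null condition since $\pa_\mu u$ is constant in null coordinates (this is the paper's Lemma \ref{nullformlemma}), the same bounds $|\gamma_a|\lesssim (1+v)^{-1}$ and $|X^\alpha\pa_\alpha\gamma_a|\lesssim (1+v)^{-1}$ using $X\tfrac{u}{vs}\lesssim \tfrac1v$, and the same final splitting $\tfrac{1}{1+v}|\pa\psi||\opa\psi|\lesssim (1+v)^{-3/2}|\pa\psi|^2+(1+v)^{-1/2}|\opa\psi|^2$. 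One cosmetic slip: the $(\mu,\alpha)=(u,u)$ contribution does not vanish entirely—only its $\nu=u$ part does, because $\gamma_a^{uu}=0$—but the surviving pieces carry a good derivative $\pa_\nu\psi$ with $\nu\neq u$ and are absorbed exactly as in your $(u,v)$ and $(v,v)$ cases, so the estimate stands.
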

\begin{proof}
 From \eqref{scalarcurrent} and the assumption that $|\pa X|\lesssim 1$, we have
 \begin{equation}
  |K_{X, \gamma_a}|
	\lesssim
	|\gamma_a(\pa \psi, \pa \psi)|
	+ |\gamma_{a, X}(\pa \psi, \pa \psi)|
	+ |\pa_\mu X^\alpha \gamma_a^{\mu\nu} \pa_\nu \psi \pa_\alpha \psi|,
  \label{Knull}
 \end{equation}
 where $\gamma_{a, X}^{\mu\nu} = X^\alpha \pa_\alpha \gamma_a^{\mu\nu}$. The third
 term here is bounded by
 \begin{equation}
  |\pa_\mu X^\alpha \gamma_a^{\mu\nu} \pa_\mu \psi \pa_\alpha \psi|
	\lesssim
	|\opa X^\alpha \pa_\alpha \psi| |\gamma_a| |\pa \psi|
	+ |\gamma_a| |\opa \psi | |\pa \psi|
	\lesssim |\gamma_a| |\opa \psi| |\pa \psi|
	+ \frac{1}{1+v} |\gamma_a| |\pa \psi|^2.
  \label{}
 \end{equation}
 Now, $X^\alpha\pa_\alpha \gamma_a^{\mu\nu} = \left(X\tfrac{u}{vs}\right)
 a^{\mu\nu} + \tfrac{u}{vs} X^\alpha \pa_\alpha a^{\mu\nu}$. Since
 $|X\tfrac{u}{vs}| \lesssim \frac{1}{v}$ by assumption, using the upcoming
 Lemma \ref{nullformlemma} we therefore have
 \begin{equation}
  |\gamma_{a, X}(\pa \psi, \pa \psi)|
	\lesssim \frac{1}{1+v} |\pa \psi| |\opa \psi|,
  \label{}
 \end{equation}
 and since $|\gamma_a| \lesssim (1+v)^{-1}$, after bounding
 $|\gamma_a(\pa \psi, \pa \psi)|\lesssim (1+v)^{-1} |\opa\psi| |\pa \psi|$,
 by \eqref{Knull} and the above we have
 \begin{equation}
  |K_{X, \gamma_a}| \lesssim \frac{1}{1+v} |\pa \psi| |\opa \psi|
	\lesssim \frac{1}{(1+v)^{3/2}} |\pa \psi|^2 + \frac{1}{(1+v)^{1/2}} |\opa\psi|^2,
  \label{}
 \end{equation}
 as needed.
\end{proof}
\begin{lemma}[Commutation with null forms in null coordinates]
	\label{nullformlemma}
	Suppose that $a = a^{\alpha\beta}$ are smooth functions satisfying the symbol
	condition \eqref{strongsymbol} and the null condition \eqref{intronullcondn0}.
	Let $a^{\mu\nu}$ denote the components of $a$ expressed in the null
	coordinates $(u, v, \theta_1, \theta_2)$.
	For any vector field $X$, $a^{\mu\nu}_X = X^\alpha\pa_\alpha a^{\mu\nu}$ also satisfies
	\eqref{intronullcondn0}. In particular,
		\begin{equation}
		 | a_X(\xi, \tau)|
		 \lesssim \frac{|X|}{1+v} \left(|\overline{\xi}||\tau| + |\xi| |\overline{\tau}|\right).
		 \label{nullformcommextrav}
		\end{equation}
\end{lemma}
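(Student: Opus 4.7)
The strategy rests on the observation that in the null coordinate system $(u,v,\theta^1,\theta^2)$, the null condition \eqref{intronullcondn0} reduces to the statement $a^{uu} \equiv 0$ as a function on spacetime. Indeed, since $u$ is the first null coordinate, $\pa_\mu u = \delta_\mu^u$ when $\pa_\mu$ denotes partial differentiation with respect to the null coordinates, and therefore $a^{\mu\nu}\pa_\mu u \pa_\nu u = a^{uu}$. Thus the null condition is simply the identity $a^{uu} \equiv 0$.

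The plan is then a two-line argument. First, since $a^{uu}$ is identically zero, so is any directional derivative of this function, and in particular $a_X^{uu} = X^\alpha \pa_\alpha a^{uu} = 0$. Equivalently $a_X^{\mu\nu}\pa_\mu u \pa_\nu u = 0$, so $a_X$ inherits the null condition \eqref{intronullcondn0}. Second, applying the general pointwise bound \eqref{intronullcondn} to $a_X$ gives
\begin{equation}
|a_X(\xi,\tau)| \lesssim \sup_{\mu,\nu}|a_X^{\mu\nu}|\,\bigl(|\overline{\xi}||\tau| + |\xi||\overline{\tau}|\bigr),
\end{equation}
so it remains only to estimate the size of the coefficients $a_X^{\mu\nu}$. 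By the symbol condition \eqref{strongsymbol} on $a$, each derivative gains a factor of $1/(1+v)$, hence $|\pa a^{\mu\nu}| \lesssim 1/(1+v)$ and
\begin{equation}
|a_X^{\mu\nu}| = |X^\alpha \pa_\alpha a^{\mu\nu}| \leq |X|\cdot |\pa a| \lesssim \frac{|X|}{1+v}.
\end{equation}
Combining these two estimates immediately yields \eqref{nullformcommextrav}.

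There is no serious obstacle here; the only small point of care is that the symbol condition \eqref{strongsymbol} is formulated for the rectangular components of $a$, while I am using it for the null components. This is harmless because the change-of-basis coefficients between the two frames are homogeneous functions of $\omega = x/|x|$ (bounded, with derivatives gaining at worst $1/r \lesssim 1/(1+v)$), so the symbol condition propagates to the null components without loss. I would simply note this at the outset and then run the two-step argument above.
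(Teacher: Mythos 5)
Your proof is correct and is essentially the paper's argument: both reduce the null condition in null coordinates to the identity $a^{uu}\equiv 0$ (the paper phrases this via Leibniz, using that the components $\pa_\mu u$ are constant in these coordinates), conclude $a_X^{uu}=0$ by differentiating, and then combine the bound \eqref{intronullcondn} with the symbol condition to get the factor $|X|/(1+v)$. Your remark about transferring the symbol condition between rectangular and null components is the same harmless point the paper treats implicitly.
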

\begin{proof}
The bound \eqref{nullformcommextrav} follows from the null condition
\eqref{intronullcondn0} as in \eqref{intronullcondn} along with
the fact that $|X^\alpha\pa_\alpha a|\lesssim |X|(1+v)^{-1}$ by the symbol
condition. To prove that
this condition holds, we just note that
\begin{equation}
 a_X^{\mu\nu}\pa_\nu u \pa_\mu u = ((X^\alpha\pa_\alpha) a^{\mu\nu})\pa_\mu u\pa_\nu u
 = -a^{\mu\nu} X^\alpha\pa_\alpha (\pa_\mu u \pa_\nu u),
 \label{}
\end{equation}
since $a^{\mu\nu}\pa_\mu u \pa_\nu u = 0$. Since $\pa_\mu u, \pa_\nu u$ are constants
in our coordinate system, the result follows.

\end{proof}

\bibliographystyle{abbrv}

\textsc{Daniel Ginsberg, Department of Mathematics, Brooklyn College (CUNY), Brooklyn, NY 11210}
\newline
\textit{Email address:} \texttt{daniel.ginsberg@brooklyn.cuny.edu}
\newline
\textsc{Igor Rodnianski, Department of Mathematics, Princeton University, Princeton, NJ 08544}
\textit{Email address:} \texttt{irod@princeton.edu}

\end{document}